\documentclass[a4paper,10pt]{report}
\usepackage{mathtext}
\usepackage[T1,T2A]{fontenc} 
\usepackage[cp1251]{inputenc}
\usepackage[english]{babel}
\usepackage{amsmath} 
\usepackage{amsfonts}
\usepackage{amssymb}
\usepackage{mathrsfs}
\usepackage{amsthm}
\usepackage{titling}
\usepackage{arcs}
\usepackage{graphicx}
\usepackage{wrapfig}
\usepackage{multicol}
\usepackage[nottoc]{tocbibind}

\usepackage{makeidx}
\usepackage{xcolor}
\usepackage{euscript}
\usepackage{imakeidx}

\DeclareMathOperator{\sign}{sign}
\DeclareMathOperator{\supp}{supp}
\DeclareMathOperator{\interior}{int}

\DeclareMathOperator{\indconv}{ind\,conv}
\DeclareMathOperator{\cl}{clos}

\DeclareMathOperator{\dist}{dist}

\newcommand{\B}{\boldsymbol{B}}

\newcommand{\df}{\buildrel\mathrm{def}\over=}
\newcommand{\bell}{\boldsymbol{b}}
\newcommand{\Bell}{\boldsymbol{B}_{\eps}}

\newcommand{\BMO}{\mathrm{BMO}}
\newcommand{\eps}{\varepsilon}
\newcommand{\epsmax}{\eps_\mathrm{max}}

\newcommand{\av}[2]{\langle {#1}\rangle_{{}_{#2}}}
\newcommand{\Ch}{\Omega_{\mathrm{ch}}}

\newcommand{\Rt}{\Omega_{\scriptscriptstyle\mathrm{R}}}
\newcommand{\Lt}{\Omega_{\scriptscriptstyle\mathrm{L}}}
\newcommand{\Ang}{\Omega_{\mathrm{ang}}}

\newcommand{\Troll}{\Omega_{\mathrm{tr}}}
\newcommand{\RTroll}{\Omega_{\mathrm{tr},{\scriptscriptstyle\mathrm{R}}}}
\newcommand{\LTroll}{\Omega_{\mathrm{tr},{\scriptscriptstyle\mathrm{L}}}}

\newcommand{\DR}{D_{\scriptscriptstyle\mathrm{R}}}
\newcommand{\DL}{D_{\scriptscriptstyle\mathrm{L}}}

\newcommand{\CR}{C_{\scriptscriptstyle\mathrm{R}}}
\newcommand{\CL}{C_{\scriptscriptstyle\mathrm{L}}}

\newcommand{\hreff}[1]{\hyperref[#1]{\ref{#1}}}
\newcommand{\Bird}{\Omega_{\mathrm{bird}}}

\newcommand{\FF}{\mathcal F} 
\newcommand{\Fr}{F_{\scriptscriptstyle\mathrm{R}}}
\newcommand{\Fl}{F_{\scriptscriptstyle\mathrm{L}}}
\newcommand{\FFr}{\mathfrak{F}_{\scriptscriptstyle\mathrm{R}}}
\newcommand{\FFl}{\mathfrak{F}_{\scriptscriptstyle\mathrm{L}}}

\renewcommand{\le}{\leqslant}

\renewcommand{\leq}{\leqslant}
\renewcommand{\geq}{\geqslant}
\newcommand{\BG}{\mathfrak{B}}

\newcommand{\MTTR}{\Omega_{\mathrm{Mtr,{\scriptscriptstyle R}}}}
\newcommand{\MTTL}{\Omega_{\mathrm{Mtr,{\scriptscriptstyle L}}}}
\newcommand{\MTC}{\Omega_{\mathrm{Mcup}}}
\newcommand{\MTB}{\Omega_{\mathrm{Mbird}}}
\newcommand{\ClMTC}{\Omega_{\mathrm{ClMcup}}}
\newcommand{\ato}{a^{\mathrm{top}}}
\newcommand{\bto}{b^{\mathrm{top}}}
\newcommand{\abot}{a^{\mathrm{bot}}}
\newcommand{\bbot}{b^{\mathrm{bot}}}
\newcommand{\tr}{t^{\scriptscriptstyle\mathrm{R}}}
\newcommand{\tl}{t^{\scriptscriptstyle\mathrm{L}}}
\newcommand{\FixedBoundary}{\partial_{\mathrm{fixed}}}
\newcommand{\FreeBoundary}{\partial_{\mathrm{free}}}
\newcommand{\GammaFixed}{\Gamma^{\mathrm{fixed}}}
\newcommand{\GammaFree}{\Gamma^{\mathrm{free}}}
\newcommand{\uur}{u_{\mathrm{r}}}
\newcommand{\uul}{u_{\mathrm{l}}}
\newcommand{\E}{\mathbb{E}}

\newcommand{\kappachord}{\kappa_{\mathrm{chord}}}

\newcommand{\vf}{\varphi}
\newcommand{\Class}{\boldsymbol{A}}
\newcommand{\dfi}{\partial_{\mathrm{fixed}}}
\newcommand{\dfree}{\partial_{\mathrm{free}}}
\newcommand{\R}{\mathbb{R}}
\newcommand{\ff}{\mathrm{f}} 
\newcommand{\scalp}[2]{\left \langle #1, #2 \right\rangle}
\newcommand{\const}{\mathrm{Const}}

\newcommand{\vt}{\vartheta}
\newcommand{\vtl}{\vt^{^\mathrm{L}}}

\newcommand{\eq}[2]{\begin{equation}\label{#1}#2\end{equation}}
\newcommand{\eqstar}[1]{\begin{equation*}{#1}\end{equation*}}

\newcommand{\tors}{\mathfrak{K}} 
\newcommand{\torsion}{\mathbf{T}} 
\newcommand{\geps}{\mathbf{g}} 

\newcommand{\wl}{w_{_{\mathrm{L}}}} 
\renewcommand{\wr}{w_{_\mathrm{R}}} 

\newcommand{\tts}{s}
\newcommand{\ttl}{s_{_\mathrm{L}}} 
\newcommand{\ttr}{s_{_\mathrm{R}}} 

\newcommand{\Sl}{S_{_\mathrm{L}}} 
\newcommand{\Sr}{S_{_\mathrm{R}}} 
\newcommand{\laml}{\lambda_{_\mathrm{L}}} 
\newcommand{\lamr}{\lambda_{_\mathrm{R}}} 

\newcommand{\psir}{\psi_{_\mathrm{R}}}
\newcommand{\psil}{\psi_{_\mathrm{L}}}

\newcommand{\kappal}{\kappa_{_\mathrm{L}}}
\newcommand{\kappar}{\kappa_{_\mathrm{R}}}

\newcommand{\set}[2]{\{{#1}\mid{#2}\}}
\newcommand{\Set}[2]{\Big\{{#1}\,\Big|\;{#2}\Big\}}

\textwidth=15.92cm
\oddsidemargin=0cm
\evensidemargin=0cm

\theoremstyle{plain}
\theoremstyle{plain}\newtheorem{Le}{Lemma}[section]
\theoremstyle{definition}\newtheorem{Def}[Le]{Definition}
\theoremstyle{plain}\newtheorem{St}[Le]{Proposition}
\theoremstyle{plain}
\theoremstyle{plain}\newtheorem{Th}[Le]{Theorem}
\theoremstyle{plain}\newtheorem{Cor}[Le]{Corollary}
\theoremstyle{remark}\newtheorem{Rem}[Le]{Remark}
\theoremstyle{plain}
\theoremstyle{plain}\newtheorem{Cond}[Le]{Condition}
\numberwithin{section}{chapter}
\numberwithin{equation}{section}
\numberwithin{figure}{chapter}

\renewcommand{\chaptermark}[1]{}

\newcommand{\dett}[2]{\det\begin{pmatrix} #1\\ #2 \end{pmatrix}}
\newcommand{\dettt}[3]{\det\begin{pmatrix} #1\\ #2\\ #3 \end{pmatrix}}

\makeindex

\makeindex[name=symbol,title={Index of symbols}]

\begin{document}
\title{Bellman functions on simple non-convex domains in the plane}
\author{Paata~Ivanisvili \and Dmitriy~Stolyarov \and Vasily~Vasyunin \and Pavel~Zatitskii}
\date{}
\maketitle

\begin{abstract}

The present paper provides a generalization of the previous authors' work on Bellman functions for integral functionals on~$\BMO$. Those Bellman functions are the minimal locally concave functions on parabolic strips in the plane. Now we describe the algorithm for constructing minimal locally concave functions
on a planar domain that is a difference of two unbounded convex domains. This leads to many sharp estimates for functions in the classes like~$\BMO$,~$A_p$, or the Gehring classes. 
\let\thefootnote\relax\footnote{\bf The research is supported by RSF grant 19-71-10023}
\end{abstract}

%
%
%
%
%
%
%
%
%
%
%
%
%
%
%
%
%
%
%
%
%
%
%
%
%
%
%
%
%
%
%
%
%
%
%
%
%
%
%
%
%
%
%
%
%
%
%
%
%
%
%
%
%
%
%
\tableofcontents

\chapter{Introduction}
The Bellman functions and their probabilistic counterparts, the Burkholder functions, play an important role in modern harmonic analysis and probability theory. In fact, these are the two names for one object. The foundations were laid in the groundbreaking papers~\cite{Burkholder1984} and~\cite{NT1997} (see~\cite{NTV2001} as well); we refer the reader to the monographs~\cite{Osekowski2012} and~\cite{VV2021} for a general description of the field; see shorter introductions in~\cite{Osekowski2013} and~\cite{SV2015}.  

The field may be, roughly speaking, divided into two parts. The first part uses Bellman functions as tools for proving inequalities without concern on their sharpness. Here, one usually has to come up with a sufficiently good \emph{Bellman supersolution}. The second part aims to provide sharp solutions to Bellman optimization problems. This significantly narrows the area of application since the exact Bellman function is often extremely difficult to describe. However, the result usually comes not only with the sharp constant for the inequality in question, but also with good understanding of the optimal functions. 

The present volume provides the proofs of the results announced in~\cite{IOSVZ2015_bis}. We will concentrate on a special class of optimization problems that are designed to study summability properties of functions in classes like~$\BMO$ or~$A_p$. We refer the reader to~\cite{IOSVZ2015_bis} and Subsections~\ref{s111} and~\ref{s112} below for a long list of specific already solved problems falling under the scope of our considerations. In a sense, our main target is to provide a theory for solving optimization problems of a very special kind; this encircles only a tiny part of inequalities amenable to the Bellman function method. However, the constructions and ideas arising in this small subfield are universal and are used in other more involved Bellman problems; see Subsections~\ref{s113},~\ref{s114}, and~\ref{S12} below. In the forthcoming section, we briefly motivate our study, describe the field, and outline the main examples. We also show how many parts of the present work serve as useful tools for related problems and explain the relationship with the classical moment method and themes in differential geometry. Section~\ref{S13} contains the plan of paper. In Section~\ref{S14}, we discuss some related questions that are not covered by the present volume.

\section{Motivation, examples, and links to classical topics}\label{S11}
Let~$\Xi_0$ be a strictly convex unbounded open proper subset of~$\R^2$. Let~$\Xi_1$ be another strictly convex unbounded open set whose closure lies in~$\Xi_0$ entirely; we set~$\Omega = \cl\Xi_0 \setminus \Xi_1$. Let~$I$ be an interval in~$\R$. Consider the class~$\Class_\Omega(I)$ of~$\R^2$-valued summable functions~$\varphi\colon I \to \partial \Xi_0$ such that the point~$\av{\varphi}{J}$ does not belong to~$\Xi_1$ for any subinterval~$J \subset I$; here and in what follows, the symbol~$\av{\psi}{E}$ denotes the average of a locally summable function~$\psi$ over a set~$E$ of non-zero measure:
\eqstar{
\av{\psi}{E} = \frac{1}{|E|} \int_E \psi(t)\,dt,\quad |E|\ \text{stands for the Lebesgue measure of}\ E\subset \R.
} 
We assume that both boundaries of~$\Xi_0$ and~$\Xi_1$ may be parametrized by the first coordinate. In other words, they are graphs of convex functions in the standard coordinate system.

Let~$\ff\colon \partial \Xi_0\to \R$ be a function. Assume~$\ff$ is measurable and does not grow too fast at infinity. We address the question of finding sharp estimates for the quantity~$\av{\ff(\varphi)}{I}$ under the conditions~$\varphi \in \Class_\Omega(I)$ and~$\av{\varphi}{I} = x$. This setting naturally leads to the introduction of the Bellman function
\eq{BellmanFunction}{
\B_\Omega(x;\ff) = \sup\Set{\av{\ff(\varphi)}{I}}{\av{\varphi}{I} = x, \quad \varphi \in \Class_{\Omega}(I)},\qquad x\in \Omega.
}
Using dilation invariance of the average, one may prove that the definition of~$\B$ does not depend on the particular choice of~$I$. There are two main classes of examples: those related to~$\BMO$ spaces and to the Muckenhoupt classes~$A_p$.

\subsection{$\BMO$ case}\label{s111}
The case where
\eqstar{
\Omega = \Set{x\in \R^2}{x_1^2 \leq x_2 \leq x_1^2 + 1},
}
corresponds to optimizing an integral functional~$\av{f(\varphi)}{I}$ on the ball of unit radius of the~$\BMO$ space. Here we have chosen
\eqstar{
\Xi_0 = \Set{x\in \R^2}{x_1^2 < x_2}\quad \text{and}\quad \Xi_1 = \Set{x\in \R^2}{x_1^2 + 1 < x_2}.
}
Note that if~$\varphi \colon I \to \partial \Xi_0$ belongs to~$\Class_{\Omega}(I)$, then its first coordinate,~$\varphi_1\colon I \to \R$, satisfies the inequality
\eqstar{
\av{\varphi_1^2}{J} - (\av{\varphi_1}{J})^2 \leq 1 
}
for any subinterval~$J\subset I$. This may be rewritten as
\eqstar{
\av{\big|\varphi_1 - \av{\varphi_1}{J}\big|^2}{J} \leq 1,\qquad \forall \ J \ \text{subinterval}\ I.
}
This is equivalent to the inequality~$\|\varphi_1\|_{\BMO} \leq 1$, provided we equip the space~$\BMO(I)$ with the quadratic seminorm. The general~$\BMO_p$ seminorm is 
\eqstar{
\|\psi\|_{\BMO_p(I)} = \sup\Set{\Big(\av{|\psi - \av{\psi}{J}|^p}{J}\Big)^\frac{1}{p}}{J\ \text{is a subinterval of}\ I}.
}
For our considerations, the main norm is~$\BMO_2$, so we omit the subscript~$2$ in this case. By the John--Nirenberg inequality, these norms define one and the same Banach space of functions. We refer the reader to~\cite{Stein1993} for the general~$\BMO$ theory.

The~$\BMO$ setting suggests it might be useful to consider intermediate domains
\eq{ParabolicStrip}{
\Omega_\eps = \Set{x\in \R^2}{x_1^2 \leq x_2 \leq x_1^2 + \eps^2},
}
which is the set theoretical difference of~$\Xi_0$ and
\eqstar{
\Xi_\eps = \Set{x\in \R^2}{x_1^2 + \eps^2 < x_2}.
}
The class~$\Class_{\Omega_\eps}(I)$ generated by~$\Omega_\eps$ corresponds to the~$\eps$-ball of~$\BMO$. From now on, we fix some~$\eps > 0$.

The choice~$\ff(t,t^2) = e^t$ and the analytic expression
\eq{ExponentialFormula}{
\B_{\Omega_\eps}(x; e^t) = \frac{1-\sqrt{x_1^2 - x_2 + \eps^2}}{1-\eps}e^{x_1 - \eps + \sqrt{x_1^2 - x_2 + \eps^2}},\qquad x\in \eqref{ParabolicStrip},\ \eps < 1,
}
for the Bellman function~\eqref{BellmanFunction} found in~\cite{SV2011}, lead to sharp integral forms of the John--Nirenberg inequality. This formula explains what we mean by the words `compute the Bellman function'. While~$\B$ is completely defined by~\eqref{BellmanFunction}, one can extract almost no information from this definition. By computation of a Bellman function we mean finding a good analytical expression for it. Usually our expressions are not as brief as~\eqref{ExponentialFormula}. They are composed of integrations, differentiations, and also include specific functions. What is more, usually we define functions by cases, and the splitting into cases is defined in terms of inequalities for solutions to certain analytic equations. Though sometimes such a description is implicit, we always prove that the solutions exist and are unique. 

The choice~$\ff(t,t^2)= |t|^p$ leads to sharp inequalities between~$\BMO_p$ norms; this case was considered in~\cite{SV2012}. The sharp constants in the classical forms of the John--Nirenberg inequalities were found in~\cite{VV2014}. The paper~\cite{Osekowski2015} derives sharp embeddings of~$\BMO$ into Lorentz spaces from the Bellman functions in~\cite{VV2014} (see~\cite{KO2021} for similar estimates for~$\BMO$ martingales) and~\cite{Osekowski2015_bis} uses the case where~$f(t) = \ff(t,t^2)$ is piecewise affine to derive the sharp embedding of~$\BMO$ into the weak~$L_\infty$-space. The paper~\cite{Osekowski2021} employs Bellman functions of the described type to find sharp constants in the embeddings of~$\BMO$ into weighted~$\BMO$. The papers~\cite{SVZ2020} and~\cite{VZZ2022} find the sharp constants in the multiplicative inequalities~$\|\varphi\|_{L_q} \leq c_{p,q}\|\varphi\|_{L_p}^{p/q}\|\varphi\|_{\BMO}^{1-p/q}$. The reasoning relies on a three-dimensional Bellman function. In the first paper, it is almost immediately constructed from two-dimensional functions from~\cite{SV2012}. The function of the second paper is quite complicated. For the sharp symmetric form of the John--Nirenberg inequality, see the recent preprint~\cite{Dobronravov2023}. The theory for the case of a generic~$\ff$ was created in~\cite{ISVZ2018} (see the short report~\cite{IOSVZ2012}, a simpler case study in~\cite{IOSVZ2015}, and an example of application of that theory in~\cite{Vasyunin2014}).

We note that the Bellman function technique described above is perfectly suited for working with~$\BMO_2$, however, it meets significant difficulties when applied to~$\BMO_p$ norms with general~$p$. All the papers cited above work with~$\BMO_2$ (the only application of the Bellman function technique to~$\BMO_p$ is given in~\cite{Slavin2015} and~\cite{SV2016}). For results about sharp constants in John--Nirenberg inequalities on~$\BMO_1$, see~\cite{Korenovski1990},~\cite{Lerner2013}, and~\cite{Slavin2015} (the first two papers do not use Bellman function techniques). 

The function~$\B_{\Omega_\eps}(\,\cdot\,;f)$ for different values of~$\eps$ are related to each other. The paper~\cite{ISVZ2018} suggested to compute them all simultaneously. To be more precise, first, the function~$\B$ is computed for sufficiently small~$\eps$. Then, one continuously increases~$\eps$ and tracks the evolution of~$\B$. This provides an algorithm for construction of~$\B$ for any~$\eps$. In this paper, we will follow a similar route. This justifies the need for a continuous family of convex domains~$\Xi_\eps$ that connect~$\Xi_0$ with~$\Xi_1$.

\subsection{The~$A_{p_1,p_2}$ classes}\label{s112}
Let~$p_1$ and~$p_2$,~$p_1 > p_2$, be real numbers and let~$Q > 1$. Consider the domain
\eq{Ap1p2}{
\Omega_{p_1,p_2,Q} = \Set{x\in \R^2}{x_1,x_2 > 0\ \text{and}\ x_2^{1/{p_2}} \leq x_1^{1/{p_1}} \leq Q x_2^{1/{p_2}}}. 
}
This domain corresponds to the so-called~$A_{p_1,p_2}$ classes introduced in~\cite{Vasyunin2008}. A weight~$w\colon I \to \R_+$ belongs to~$A_{p_1,p_2}$ if the quantity
\eqstar{
[w]_{p_1,p_2} = \sup\Set{\av{w^{p_1}}{J}^{\frac{1}{p_1}}\av{w^{p_2}}{J}^{-\frac{1}{p_2}}}{J\ \text{is a subinterval of}\ I}
}
is finite. One may see that~$[w]_{p_1,p_2}\leq Q$ if and only if~$\varphi \in \Class_{\Omega_{p_1,p_2,Q}}$, where~$\varphi = (w^{p_1}, w^{p_2})$.

The paper~\cite{Vasyunin2008} generalizes~\cite{Vasyunin2003}, where the first sharp Bellman functions for Muckenhoupt weights were found; those Bellman functions lead to sharp constants in the Reverse H\"older inequality. The scale~$A_{p_1,p_2}$ includes the classical Muckenhoupt classes~$A_p$. More precisely,~$A_{p,-\frac{1}{p-1}}$ coincides with~$A_p$ when~$p\in (1,\infty)$. The case~$p_1=1$ corresponds to the so-called Gehring classes. The paper~\cite{DW2009} is devoted to this particular case. The sharp constants for the embedding of a Gehring class into~$A_p$ are found. That paper uses the Bellman function generated by~$\ff$ of the form~$\ff(x) = x_1^q$ for some~$q$. The corresponding sharp weak-type Reverse H\"older inequalities for~$A_{p_1,p_2}$-weights were established in~\cite{Reznikov2013}; the latter paper uses the Bellman function for~$\ff(x) = \chi_{[1,\infty)}(x_1)$. See~\cite{Rey2016} for the limiting case of the Muckenhoupt class~$A_1$.

We note that the definition~\eqref{Ap1p2} extends naturally to the limiting cases where some of the parameters~$p_1$ and~$p_2$ is equal to~$1$ and~$\infty$. The case of~$A_\infty$ equipped with the so-called Hruschev's `norm' 
\eqstar{
[w]_{\infty} =  \sup\Set{\av{w}{J}\exp(-\av{\log w}{J})}{J\ \text{is a subinterval of}\ I}
}introduced in~\cite{Hruschev1984}, is of particular importance. This corresponds to the domain
\eq{AinftyDomain}{
\Omega_{1,\infty,Q} = \Set{x\in \R^2}{e^{x_1} \leq x_2 \leq Qe^{x_1}}, 
}
where~$Q > 1$, meaning~$[w]_{\infty} \leq Q$ whenever~$\varphi = (\log w, w)$ belongs to~$\Class_{\Omega_{1,\infty,Q}}$. This domain and the study of the corresponding Bellman functions go back to~\cite{Vasyunin2003}. See~\cite{BR2014} for the study of this particular case and applications of the obtained inequalities. The paper~\cite{Osekowski2019} delivers sharp forms of the principle that the logarithm of an~$A_\infty$ Muckenhoupt weight belongs to~$\BMO$; the reasoning relies on a Bellman function on the domain~\eqref{AinftyDomain} with~$\ff(x) = x_1^2$. In~\cite{Slavin2015} and~\cite{SV2016}, the Bellman functions for~$\ff(x) = |x_1|^p$ are computed and used to establish sharp forms of the John--Nirenberg inequalities for~$\BMO_p$ with~$p\ne 2$.

\subsection{Monotonic rearrangements and geometry of the Bellman function}\label{s113}
One may wonder why does a complicated optimization problem~\eqref{BellmanFunction} have a transparent solution. The heuristic reason might be described in several ways. First, the notion of a monotonic rearrangement appears useful in this context. For a function~$\xi\colon I \to \R$ define its non-increasing rearrangement~$\xi^*\colon [0,|I|] \to \R$ by the formula
\eqstar{
\xi^*(t) = \inf\Set{\lambda}{|\set{s\in I}{\xi(s) > \lambda}| \leq t}.
}
Note that~$\av{f(\xi)}{I} = \av{f(\xi^*)}{[0,|I|]}$ for any reasonable function~$f$. It was proved in~\cite{Klemes1985} that the monotonic rearrangement operator does not increase the~$\BMO_1$ norm\footnote{The said paper also contains hints to the proof that the same is true for arbitrary~$p\in [1,\infty)$.}. A similar principle also holds for the~$A_p$ constant:~$[w^*]_p \leq [w]_p$ (see~\cite{BSW1992} for the case~$p=1$ and~\cite{Korenovski1992} for  arbitrary~$p$). Recall we have assumed~$\partial\Xi_0$ may be parametrized with the first coordinate. A function~$\varphi\colon I \to \partial \Xi_0$ is called non-increasing, provided its first coordinate~$\varphi_1$ does not increase. The notion of monotonic rearrangement of a function~$\varphi \colon I \to \partial \Xi_0$ is defined accordingly: this is a non-increasing function~$\varphi^*\colon [0,|I|]\to \partial \Xi_0$ that has the same distribution. The paper~\cite{SZ2016} suggests a proof of a similar principle in the generality of~$\Class_\Omega$:~$\varphi^* \in\Class_\Omega([0,|I|])$, provided~$\varphi \in \Class_\Omega(I)$. The reasoning is based on Bellman ideas. See~\cite{SVZ2015} for dyadic classes and~\cite{SZ2023} for generalization to Campanato-type norms on~$\mathrm{VMO}$. Thus, since the monotonic rearrangement of a function in~$\Class_\Omega$ also belongs to~$\Class_\Omega$, we may restrict the class of functions~$\varphi$ in~\eqref{BellmanFunction} to non-increasing ones. This often simplifies the investigation of the function~$\B$ and gives hope for a closed formula for~$\B$.

Another reason for a good formula for~$\B$ is its geometric description proved in~\cite{SZ2016} (see~\cite{SZ2022} for a generalization): this function is the pointwise minimal among all locally concave functions on~$\Omega$ that satisfy the boundary condition~$\B(x) = \ff(x)$,~$x\in \partial \Xi_0$. The main idea of~\cite{SZ2016} is that the Bellman function~\eqref{BellmanFunction} has a probabilistic representation in terms of certain discrete martingales; it resembles the representation formula for harmonic functions in terms of the Brownian motion. See, e.\,g.,~\cite{Krylov1989}, for much more general representation formulas for solutions of a Hamilton--Jacobi--Bellman equation; the difference between~\cite{SZ2016} and~\cite{Krylov1989} is that the latter paper uses continuous time martingales. The development of these ideas allowed to transfer all the results about Bellman problems for functions on the interval to the line and the circle, see~\cite{SZ2021}. In particular, the sharp constants in various forms of the John--Nirenberg inequalities and Reverse H\"older inequalities for Muckenhoupt weights are the same for functions on the interval, the circle, and the line. We note that the question of finding any reasonable sharp forms of the said inequalities for functions of several variables is widely open (see~\cite{CSS2011} for questions about dimensional dependence of constants). Some dimensional estimates are obtained with the help of monotone rearrangement estimates (see~\cite{BDG2022}) and with the semigroup approach (see~\cite{SZ2016}).

\subsection{Miscellaneous direct applications}\label{s114}
We will describe two recent applications. The first one deals with sharp estimates of distribution of martingales whose square function is bounded. Namely, it appears that the Bellman function~\eqref{BellmanFunction} for the~$\BMO$ case (i.\,e., defined on~\eqref{ParabolicStrip} with arbitrary~$\ff$) delivers good bounds for the following optimization problem: maximize~$\E f(\psi)$ provided~$\psi$ is the limit value of a martingale with the square function uniformly bounded by~$1$ and prescribed mean. It appears that one may treat~$\psi$ as a function whose~$\BMO([0,1])$ norm is bounded by one. The paper~\cite{SVZZ2022} (see also the short report~\cite{SVZZ2019}) describes the class of~$f$ for which this principle leads to the exact solution of the initial problem; for other~$f$, the said principle provides a fine supersolution.

The second application in~\cite{SlZ2021} says that the Bellman function~\eqref{BellmanFunction} delivers dimension-free bounds for the classes of functions like~$\BMO(\R^d)$ or~$A_p(\R^d)$ provided one replaces averages over balls in the definitions of these spaces and Bellman functions with averages over specific semigroup kernels and uses the associated Garsia-type norms. The main feature of these semigroup averages is that they are of martingale nature, which allows to apply martingale representations from~\cite{SZ2016}. It is unclear whether the resulting estimates are sharp.

\subsection{Relationship with the moment method and differential geometry}\label{S12}
Assume for a while that~$\Xi_1$ is an empty set. Let~$(t,g(t))$ be a parametrization of~$\partial\Xi_0$. Then, we arrive at the following optimization problem:
\eqstar{
\int_0^1\varphi_1 = x_1;\quad \int_0^1 g(\varphi_1) = x_2;\qquad \int_0^1f(\varphi_1) \longrightarrow \max.
}
Here we have set~$I = [0,1]$. This is a simple example of the moment problem. We refer the reader to~\cite{KS1966},~\cite{Kemperman1973}, and~\cite{KN1977} for the description of the moment method. The solution to the problem is described geometrically. Let~$H$ be the convex hull of the three-dimensional curve~$\gamma(t) = (t,g(t),f(t))$. Then, the desired maximum equals
\eqstar{
\sup\Set{s\in \R}{(x_1,x_2,s) \in H}.
}
Similar to~\eqref{BellmanFunction}, we will denote this quantity by~$\bell(x)$. Assume for a while that~$\Xi_0$ is bounded. Using the Carath\'eodory theorem on convex hulls, we see that there exists an optimal~$\varphi_1$ that attains at most~$3$ values. One may establish a similar principle for unbounded domains. As a result of these considerations, we see that the graph of~$\bell$ is somehow flat: each point on this graph is a convex combination of at most~$3$ points of the boundary curve~$\gamma$. The most common case is that~$(x,\bell(x))$ is a convex combination of two points of the boundary curve,~$A$ and~$B$. Then, we call the segment~$[A,B]$ (as well as its projection onto the~$x_1x_2$-plane) that connects them a \emph{chord}. In differential geometry, the tangent plane to the graph of~$\bell$ at~$(x,\bell(x))$ is called a \emph{bitangent plane}, because it is tangent to the boundary curve at both points~$A$ and~$B$. For us, the condition on the points~$A$ and~$B$ for existence of the bitangent plane is expressed in the \emph{cup equation} (see equation~\eqref{urlun1} below). The cup equation described in the present paper has been already used by the authors in several more complicated Bellman problems, see~\cite{HSVZ2018},~\cite{Ivanisvili2015},~\cite{SVZ2020},~\cite{VZZ2022}, and~\cite{ISZ2015}.

A solution to the  classical moment method as described in the books above works  with the case where the boundary curve is somehow regular (say, it has positive torsion), this is related to the notion of a Chebyshev system. According to Chapters III and IV in \cite{KN1977} the boundary of the convex hull of a space curve $(\gamma_{1}(t), \ldots, \gamma_{n+1}(t))$ given on an interval $[a,b]$, where $\gamma_{j}(t)$ are continuous functions,  can be described in terms of {\em upper and lower principal representations} provided that the systems $(1, \gamma_{1}(t), \ldots, \gamma_{n}(t))$ and $(1, \gamma_{1}(t), \ldots, \gamma_{n+1}(t))$ are $T_{+}$-system on $[a,b]$. Recall that the system of continuous functions $(\gamma_{0}(t), \ldots, \gamma_{m}(t))$ is called $T_{+}$-system if 
\begin{align}\label{chebplus}
\mathrm{det}(\{ \gamma_{i}(t_{j})\}_{i,j=0}^{m}) >0 \quad \text{on} \quad \Sigma = \{ a\leq t_{0}<\ldots<t_{m}\leq b\}.
\end{align}
Verification of the condition (\ref{chebplus}) on the set $\Sigma$ may be cumbersome. If the map $t \mapsto (\gamma_{0}(t), \ldots, \gamma_{m}(t))$ is of class $C^{m}((a,b))$ then (Chapter VIII, \cite{KS1966}) the following easier condition 
\begin{align}\label{markovplus}
\mathrm{det}(\{\gamma_{i}^{(j)}(t)\}_{i,j=0}^{k})>0 \quad \text{on} \quad (a,b)
\end{align}
 for all $k=1,\ldots, m$, implies (\ref{chebplus}). The system $(\gamma_{0}(t), \ldots, \gamma_{m}(t))$  satisfying (\ref{markovplus}) is called $M_{+}$-system. Clearly if the system $(\gamma_{0}(t), \ldots, \gamma_{m}(t))$ is $M_{+}$-system then any system $(\gamma_{0}(t), \ldots, \gamma_{k}(t))$ is also $M_{+}$-system for any $k$, $0\leq k \leq m$. Notice that the system $(1, t, g(t), f(t))$ is $M_{+}$-system if and only if $g''(t)>0$  and $g''(t)f'''(t)-f''(t)g'''(t)>0$ on $(a,b)$.

 If~$\gamma$ does not satisfy any regularity assumptions of type (\ref{chebplus}), the combinatorial structure of the boundary of the convex hull can be extremely complicated. We refer the reader to~\cite{Sedykh1977} and~\cite{Zakalyukin1977} for descriptions of singularities of convex hulls of generic curves. See also~\cite{Ghomi2017} and~\cite{Sedykh1994} for relationship with the four vertex theorems. 

The minimal concave and locally concave functions may be treated as concave (and locally concave) solutions of the degenerate homogeneous Monge--Amp\`ere equation (see Subsection~\ref{s223} below). See~\cite{CNS1986} and~\cite{Guan1998} for regularity results. Note that the latter paper studies minimal locally concave functions on non-convex domains from a different point of view. In that paper,  the boundary conditions are imposed on the whole boundary. We impose the boundary conditions on the convex (or \emph{fixed}, see~\cite{SZ2016}) part of the boundary. This comes naturally from the Bellman function setting. The difference between the two settings is huge.

\section{Description of exposition}\label{S13}
The main body of the paper is divided into four chapters. The general line of reasoning follows~\cite{ISVZ2018}. 

Chapter~\ref{C2} contains a more formal introduction. In Section~\ref{s21}, we state the problem, list the requirements on the domains and the function~$f$, define the Bellman function, and describe its simple properties. By simple properties we mean those that follow from the definition directly. We also provide some examples that explain the meaning of various conditions imposed on~$\Omega$ and~$f$. Section~\ref{s22} surveys the duality theory from~\cite{SZ2016}, introduces the strategy of proof, in particular, the notion of optimizers, and ends with formal description of our results. We suggest consulting Chapter~\ref{C2} for definitions while reading the further plan of the paper or return to this section after reading Chapter~\ref{C2}. 

We describe the local possible foliations in Chapter~\ref{C3}. In Section~\ref{s31}, we present a general description of what a foliation may look like. Section~\ref{s31bis} introduces a \emph{fence} --- a foliation that consists of line segments attached to the fixed boundary.  A fence may consist of chords, or tangents, or both of them. We study the conditions for the function constructed from this foliation to be locally concave. Section~\ref{s32} contains the description of tangent domains and the corresponding standard candidates. In Section~\ref{s33}, we characterize chordal domains. Though the reasoning here follows the lines of~\cite{ISVZ2018}, some non-trivial modifications are needed to make calculations simpler. Section~\ref{s33old} is devoted to the construction of a cup. Here the construction significantly differs from the one in~\cite{ISVZ2018}, it is quite lengthy and involved; Section~\ref{s33old} is split into several subsections. We introduce~\emph{forces} in Section~\ref{s34bis}. The forces are certain quantities that express the concavity of the constructed candidate. The main feature of these quantities is that they are negative and decrease as~$\eps$ increases. The construction of the Bellman function relies heavily on these monotonicity formulas for the forces. We state and prove them in Section~\ref{s34bis}. We also introduce the notion of the~\emph{tail} of a force. We warn the reader that we have slightly changed the point of view concerning forces: now the domain of a force coincides with its tails by definition. In~\cite{ISVZ2018}, the forces were defined on rays or the entire line and could attain both positive and negative values. In the present paper, they are always negative. Section~\ref{s34} contains the classification of linearity domains and Section~\ref{s35} describes combinatorial properties of foliations. The material of the latter two sections is similar to the material of the corresponding sections in~\cite{ISVZ2018}.

Chapter~\ref{C4} describes the evolution of Bellman candidates and, in particular, contains the proof that a Bellman candidate exists for any admissible~$f$ and~$\eps$. In Section~\ref{s41}, we describe the case of simple picture, when the foliation consists of tangents, angles, cups, and, possibly, simple multicups. We also prove that, given an admissible~$f$, there exists~$\eps_1$ such that for any~$\eps < \eps_1$, there exists a simple Bellman candidate for~$f$ and~$\eps$. Section~\ref{s42} contains the local monotonicity formulas for forces, the proofs of the existence of roots of balance equations, and other lemmas that will be used in further proofs. In Section~\ref{s43}, we describe the local evolutional scenarios. Roughly speaking, the cups and full multicups grow, the trolleybuses shrink, the multibirdies and multitrolleybuses desintegrate. We state and prove a rigorous proposition for each of these principles. Section~\ref{s44} contains the combinatorial reasoning that glues together local evolutional scenarios. Since this combinatorial reasoning is identical to the one in~\cite{ISVZ2018}, we omit it, providing a detailed citation. 

Chapter~\ref{C5} provides the theory of optimizers. In particular, it is proved that all the Bellman candidates constructed in Chapter~\ref{C4} coincide with the corresponding Bellman functions. The opening Section~\ref{s51} contains geometric description of optimizers: it is useful to represent them as special curves in~$\Omega$ called delivery curves (they deliver an optimizer to a given point in~$\Omega$). Section~\ref{s52} considers all local foliations and constructs optimizers for each of them. Then, these ``local'' optimizers are glued together in Section~\ref{s53}. The concluding Section~\ref{s54} considers various situations when the summability conditions on~$f$ are violated. We study the question of finiteness of the Bellman function in this setting and provide simple if and only if conditions on~$f$ assuming this function satisfies the regularity condition that the torsion of~$\gamma$ changes sign a finite number of times. Sometimes, the Bellman function is finite when the summability assumptions are violated. In this case, the optimizers do not exist. However, we provide an optimizing sequence for each point.

\begin{figure}[h!]
\includegraphics[width=1\textwidth]{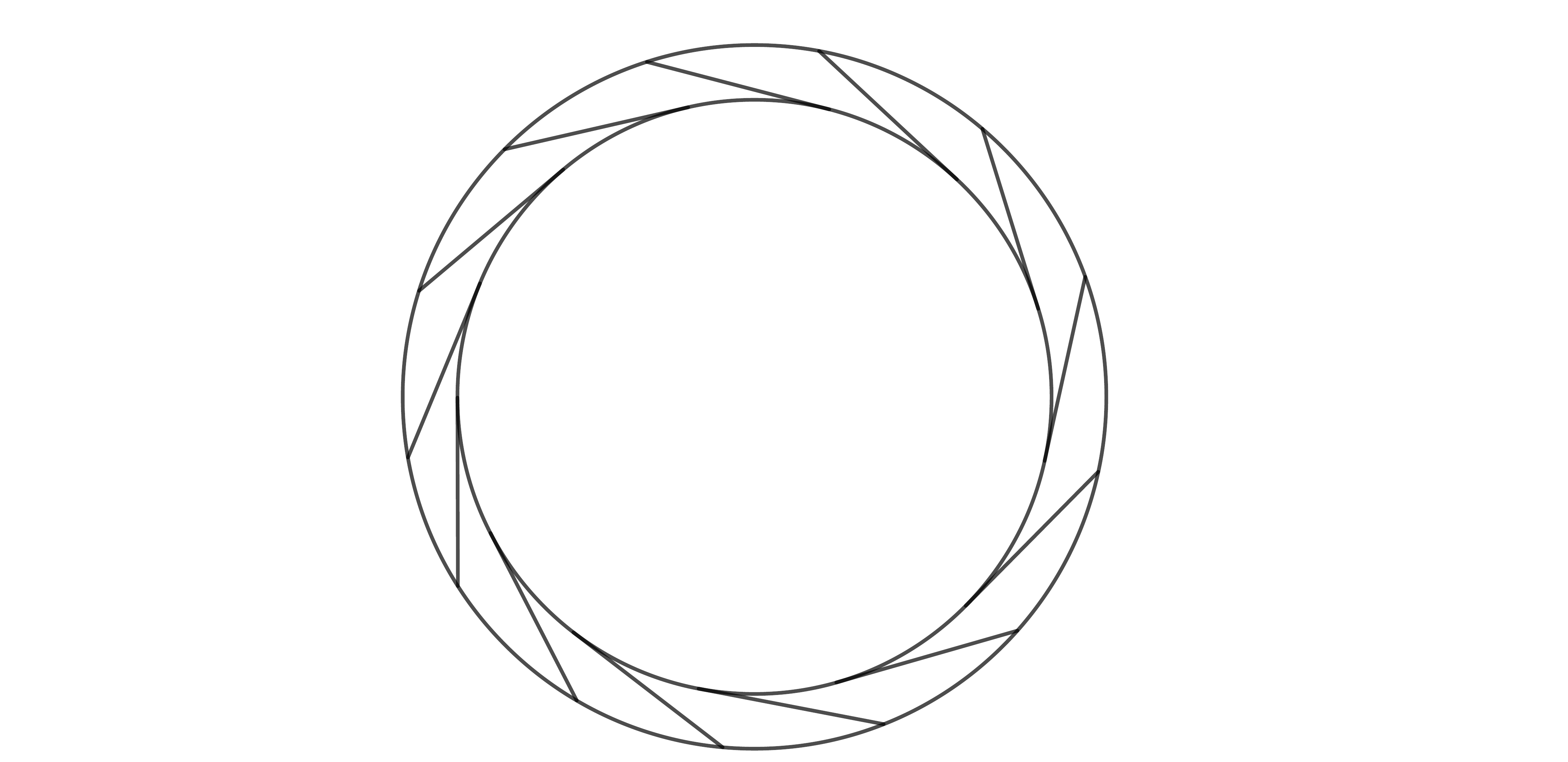}
\caption{Impossible foliation.}
\label{fig:imp}
\end{figure}

\section{Related questions}\label{S14}
\subsection{Case of bounded~$\Omega$}\label{s141}
We may consider the case where~$\Omega_0$ and~$\Omega_1$ are both bounded domains. Say, the case where
\eqstar{
\Omega_0 = \Set{x\in \R^2}{x_1^2 + x_2^2 < 1},\qquad \Omega_1 = \Set{x\in \R^2}{x_1^2 + x_2^2 < 1-\delta}
}
naturally corresponds to the~$\delta$-`ball' of the class of~$\BMO$ mappings of~$[0,1]$ into the unit circle~$S^1$. See the third example in Section~$2$ of~\cite{SZ2022}. While it seems that all the constructions of the present paper are applicable in this situation, there are two subtleties that distinguish the case where~$\Omega_0$ is bounded.

First, the boundaries of~$\Omega_0$ and~$\Omega_1$ are not graphs of functions anymore. Therefore, to adjust the methods of the current paper to this situation, we need to find coordinate-free geometric versions of our reasonings. For some of them, this is not difficult. Say, a torsion is a geometric notion that does not depend on the choice of the coordinate system. Similar, forces can be expressed geometrically as exterior products of certain vectors. However, we are not able to find a geometric interpretation for most of the material of Section~\ref{s33old}.

Second, some topological effects related to the four vertex theorem~(see~\cite{Ghomi2017}) come into play. That theorem says that a regular curve that bounds a convex surface changes the sign of its torsion at least four times. Based on the experience of the present paper, it is natural to expect that the points where the torsion changes sign from~$+$ to~$-$ are origins of cups (at least for the case where~$\delta$ is sufficiently small) and there are vertices of angles in neighborhoods of the points where the torsion changes sign from~$-$ to~$+$. In particular, the foliation of the type presented on Figure~\ref{fig:imp} is impossible.

\subsection{Higher dimensional cases}\label{s142}
One may consider the case where~$\Omega_0$ and~$\Omega_1$ are subsets of~$\R^d$ and state a similar Bellman function problem for them. For example, such a setting appears if one wishes to work with vector-valued~$\BMO$ functions (see the second example in~\cite{SZ2022}). As it was proved in~\cite{SZ2022}, the assertion that the Bellman function is the minimal locally concave function holds true in this generality. Its geometry for arbitrary boundary conditions might be involved and difficult to describe. However, if one assumes that the domains~$\Omega_0$ and~$\Omega_1$ posses rotational symmetry, and~$f$ is also rotationally symmetric, then, maybe, the computation of the Bellman function may be reduced to the computation of certain functions of the types described in the present paper. In particular, the problems for vector-valued~$\BMO$ functions fit into this framework. This has an analogy in probabilistic setting: many Burkholder problems in~\cite{Osekowski2012} allow generalizations to martingales attaining values in a Hilbert space.

\chapter{Setting and sketch of proof}\label{C2}

\section{Setting}\label{s21}

{
\subsection{Domains}

\label{cond}

Suppose that we have a family of $C^3$-smooth curves $\geps(\,\cdot\,; \eps)\colon \mathbb{R} \to \mathbb{R}^2$, 
\hbox{$\eps\in[0,\epsmax)$,}\index[symbol]{$\epsmax$} let $\geps(t;\eps) = (\geps_1(t;\eps),\geps_2(t;\eps))$, $t \in \mathbb{R}$. In particular, we assume $\frac{\partial}{\partial t}\geps(t;\eps) \ne 0$.
We assume that $\geps(t,\eps)$ is $C^{3,1}$-smooth with respect to $(t,\eps)$ meaning { the mixed derivative $\frac{\partial^4}{\partial t^3 \partial \eps} \geps$ is continuous}. For any $\eps$ fixed we will write 
$\geps(t) = (\geps_1(t), \geps_2(t))$ omitting $\eps$ and considering it as a function of one variable. 
By $g(t)$ we will denote $\geps(t;0)$. Suppose that:

\eq{eq250702}{
\geps_1'>0;
}
\eq{eq230402}{
\geps_2''\geps_1'-\geps_2'\geps_1''>0,
}
in particular, $\geps(\mathbb{R})$ is a graph of a strictly convex function.

Let $\Xi_\eps$\index[symbol]{$\Xi_\eps$} be the strict epigraph of $\geps(\,\cdot\,;\eps)$:
$$
\Xi_\eps = \big\{(\geps_1(t;\eps),x_2) \in \mathbb{R}^2 \colon t \in \mathbb{R}, \ x_2>\geps_2(t;\eps)\big\}.
$$

We assume that $\Xi_{\eps_1} \supset \Xi_{\eps_2}$ if $\eps_1<\eps_2$. Moreover, we assume that 
$\geps(t;\eps_1) \notin \cl \Xi_{\eps_2}$ for any $t \in \mathbb{R}$ and $\eps_1<\eps_2$.

For any $\eps \in(0,\epsmax)$ we assume that for any $t \in \mathbb{R}$ there are two tangent lines from 
the point $g(t)$ to $\Xi_\eps$, and that each of these lines intersects with the curve $g(\,\cdot\,)$ 
twice. We note that this assumption slightly differs from the cone condition used in~\cite{SZ2016} and~\cite{SZ2022}: the cone condition 
imposed on the domain by those two papers is weaker than the present one. 
Let $\wl(t;\eps) = \geps(\ttl;\eps)$ and $\wr(t;\eps) = \geps(\ttr;\eps)$ be the tangency points with the curve 
$\geps(\,\cdot\,;\eps)$, here $\ttl$ and $\ttr$ are some functions of $t$ and $\eps$; we choose $\ttr<\ttl$. 
Let $\Sl(t;\eps)= [g(t), \wl(t;\eps)]$ and $\Sr(t;\eps)= [g(t), \wr(t;\eps)]$ \index[symbol]{$\wl$}\index[symbol]{$\wr$}\index[symbol]{$\Sl$}\index[symbol]{$\Sr$} be the tangent segments. 
Here $\mathrm{L}$ and $\mathrm{R}$ mean left and right with respect to tangency point correspondingly. 
We will often omit indices $\mathrm{L}$ and $\mathrm{R}$ and write simply $S(t;\eps)$ and $w(t;\eps)$ or 
even $S(t)$ and $w(t)$. 

\begin{Rem}\label{rem250701}
For $\eps$ fixed the functions $t\mapsto \ttl$ and $t\mapsto \ttr$ are $C^2-$smooth and have positive derivatives: $\ttl'>0, \ttr'>0$.
\end{Rem}
\begin{proof}
It follows from~\eqref{eq250702} that the functions $\ttl$ and $\ttr$  are increasing.

Let either $\tts = \ttl$ or $\tts = \ttr$. This number is defined by the tangency equation
\eq{eq250701}{
\big(\geps_2(t; 0) - \geps_2(\tts; \eps)\big)\geps_1'(\tts;\eps) - \big(\geps_1(t; 0) - \geps_1(\tts; \eps)\big)\geps_2'(\tts;\eps) =0.
}
Partial derivative of the left hand side with respect to $t$ is
$$
\geps_2'(t; 0)\geps_1'(\tts;\eps) - \geps_1'(t; 0)\geps_2'(\tts;\eps),
$$ 
which is nonzero, because the segment connecting $\geps(t;0)$ and $\geps(\tts;\eps)$ is transversal to the curve $\geps(\,\cdot\,;0)$. 

Partial derivative of the left hand side of~\eqref{eq250701} with respect to $\tts$ is equal to
$$
\big(\geps_2(t; 0) - \geps_2(\tts; \eps)\big)\geps_1''(\tts;\eps) - \big(\geps_1(t; 0) - \geps_1(\tts; \eps)\big)\geps_2''(\tts;\eps).
$$
This is nonzero because $\geps_1(t; 0) - \geps_1(\tts; \eps) \ne 0$ and due to~\eqref{eq250701} and \eqref{eq230402}.

The implicit function theorem guarantees that $\tts$ and $t$ are $C^2$-smooth functions of each other and that $\tts'(t)$ is finite and nonzero. Therefore, $\tts'(t)>0$.
\end{proof}

\begin{figure}[h!]
\includegraphics[width=\textwidth]{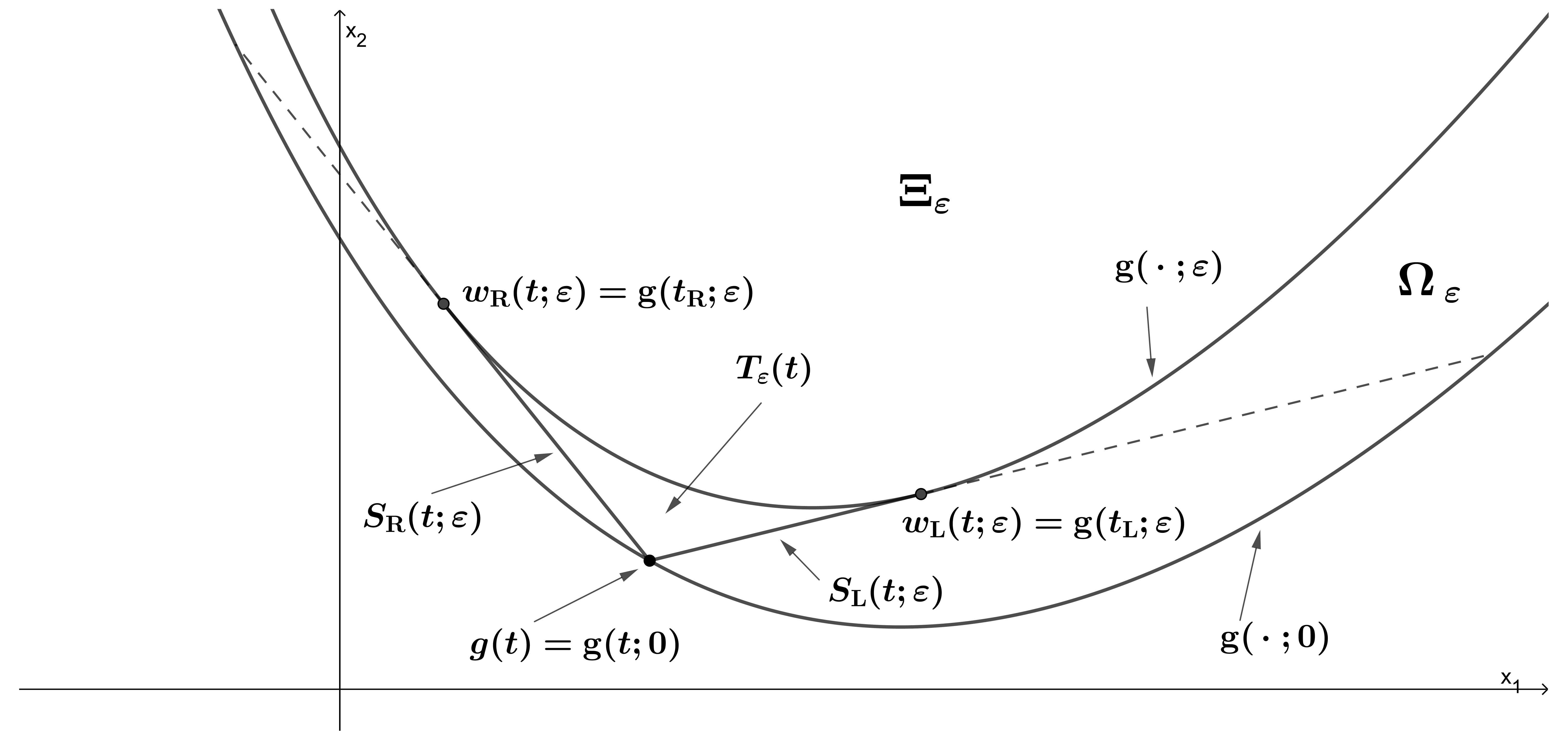}
\caption{Triangle $T_\eps(t)$.}
\label{fig:triangle}
\end{figure}

Let $T_\eps(t)$\index[symbol]{$T_\eps$} be the closed curvilinear triangle with the vertex $g(t)$ whose sides are $\Sl(t;\eps)$, 
$\Sr(t;\eps)$, and the part of the curve $\geps(\,\cdot\,;\eps)$ between the two tangency points, 
see Figure~\ref{fig:triangle}. We define a domain\index[symbol]{$\Omega_\eps$} 
\eq{eq230401}{
\Omega_\eps = \cup_{t \in \mathbb{R}} T_\eps(t).
}
One may see that~$\Omega_\eps = \cl\Xi_0 \setminus \Xi_\eps$.
We give several examples of admissible domains $\Omega_\eps$ on Figure~\ref{fig:domain_examples}, see Subsection~\ref{sec280301} for more information concerning these examples.
\begin{figure}[h!]
\includegraphics[width=0.45\textwidth]{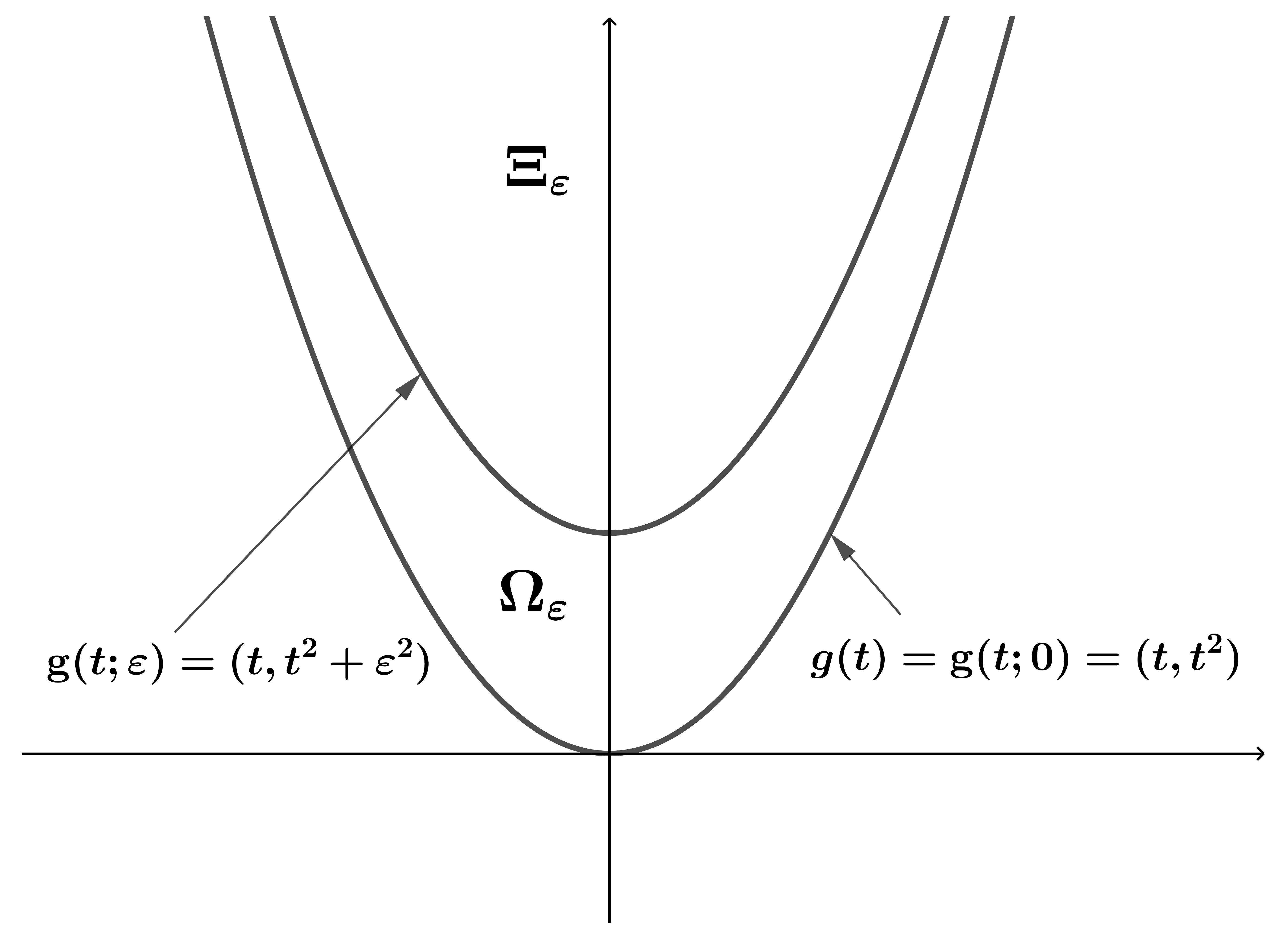}
\includegraphics[width=0.45\textwidth]{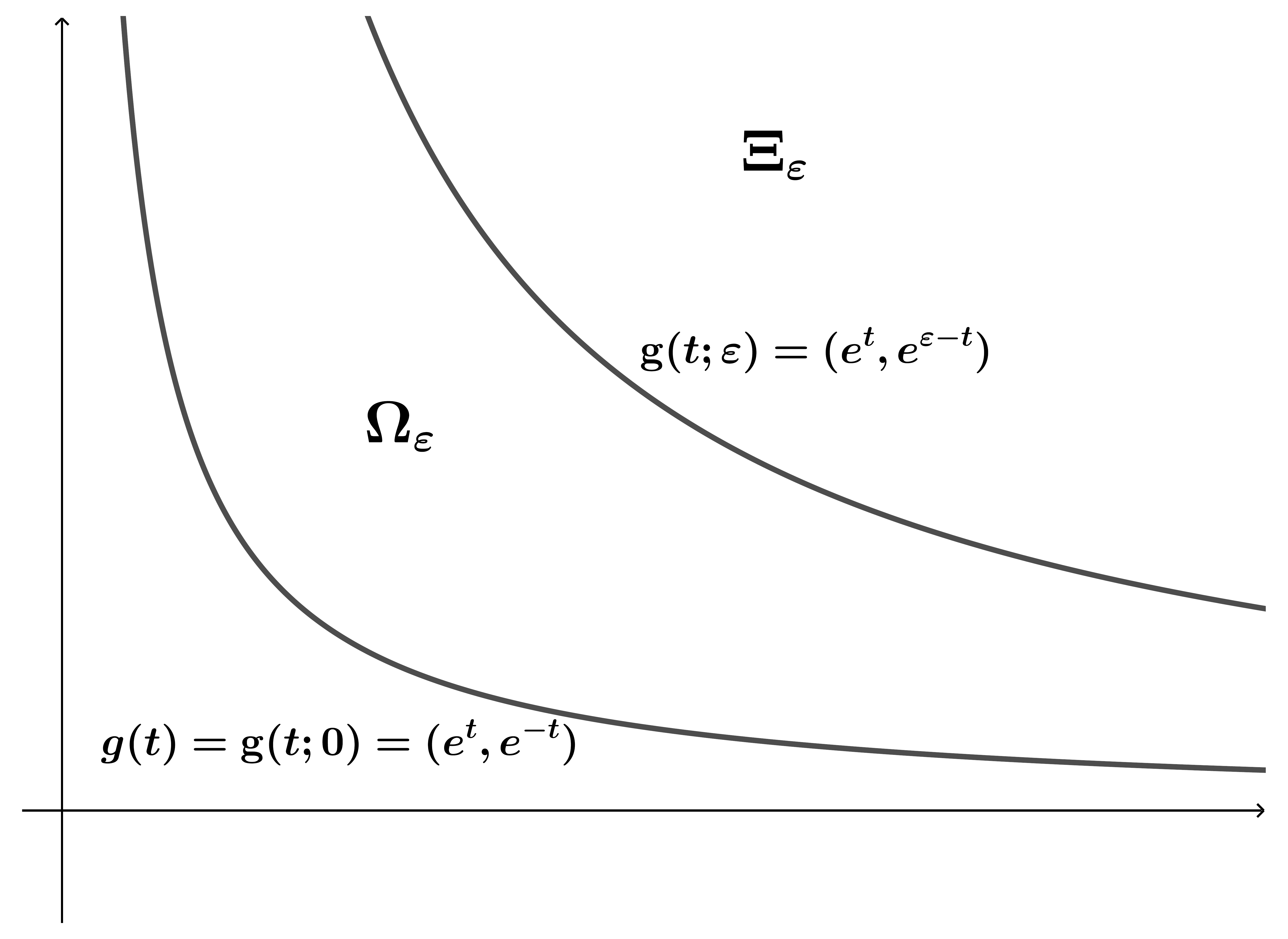}
\includegraphics[width=0.45\textwidth]{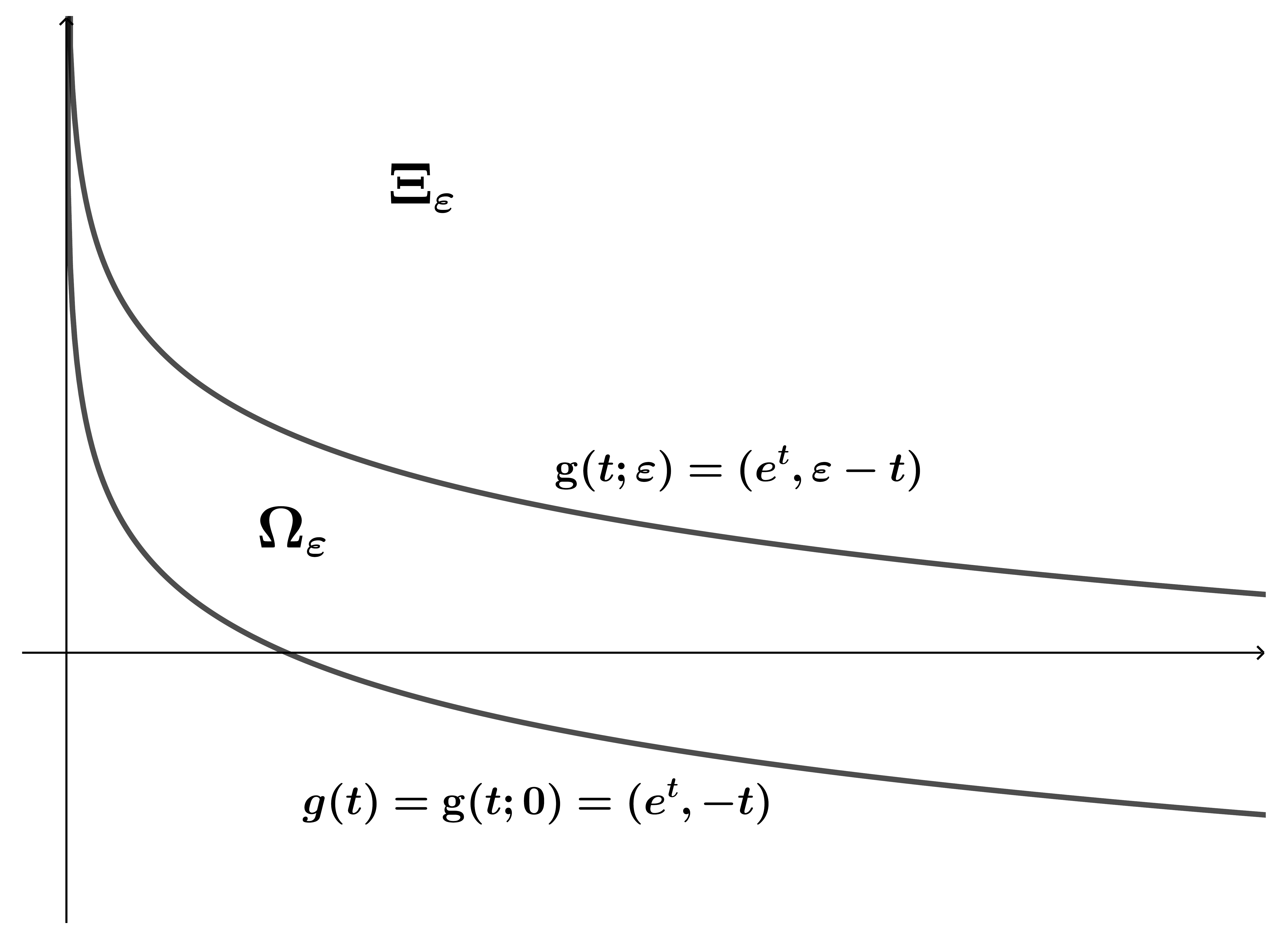}
\includegraphics[width=0.45\textwidth]{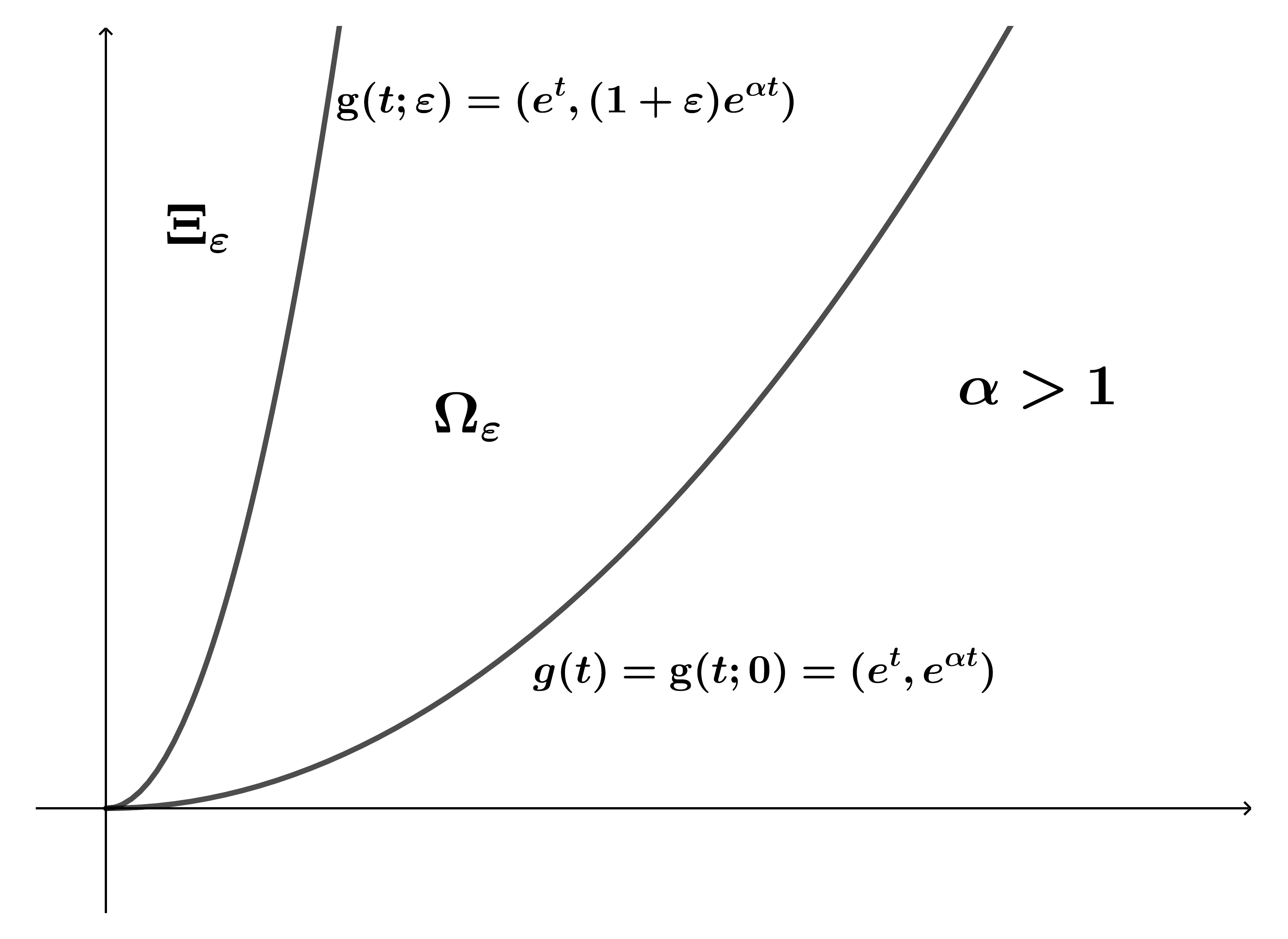}
\includegraphics[width=0.45\textwidth]{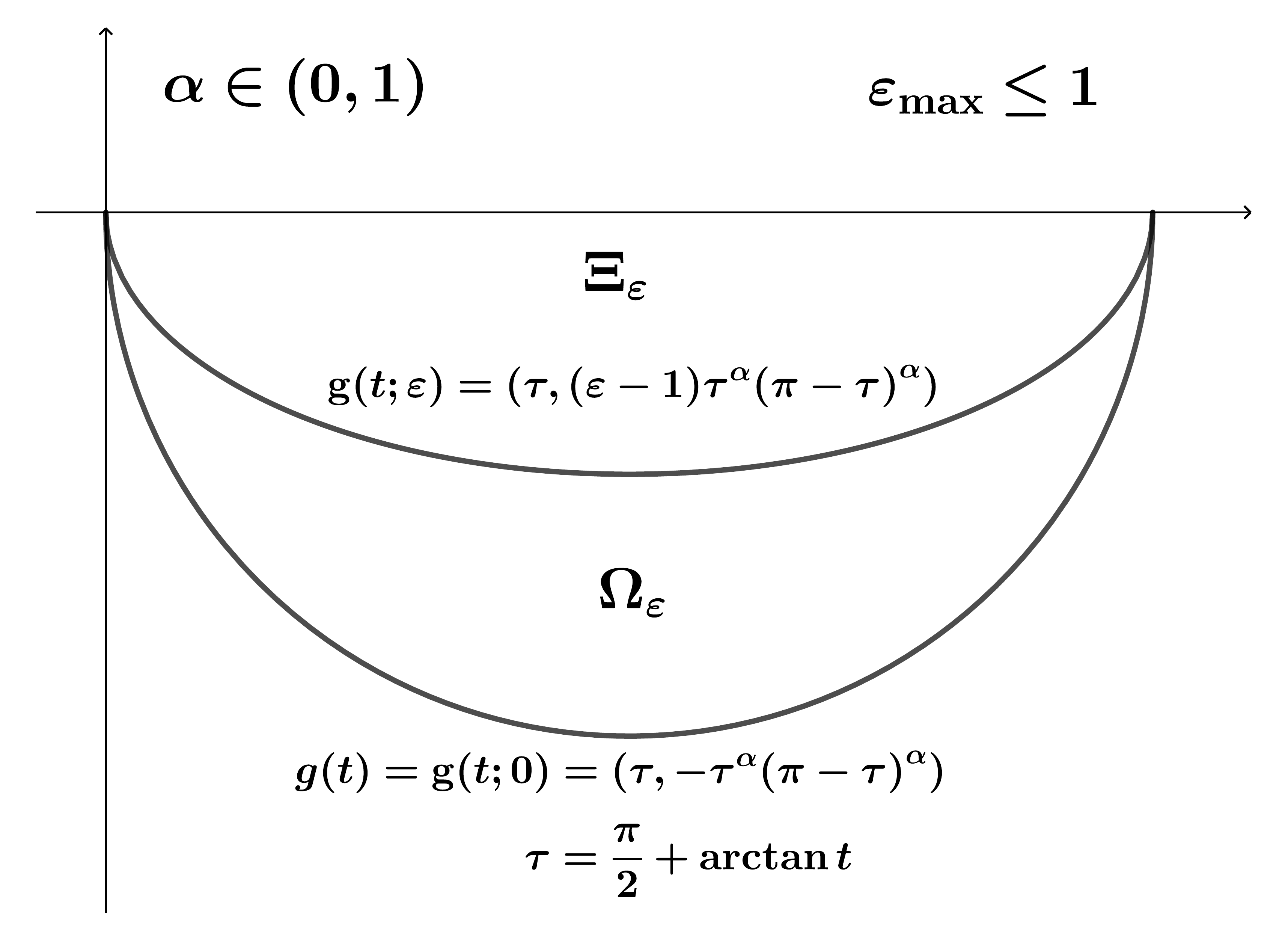}
\includegraphics[width=0.45\textwidth]{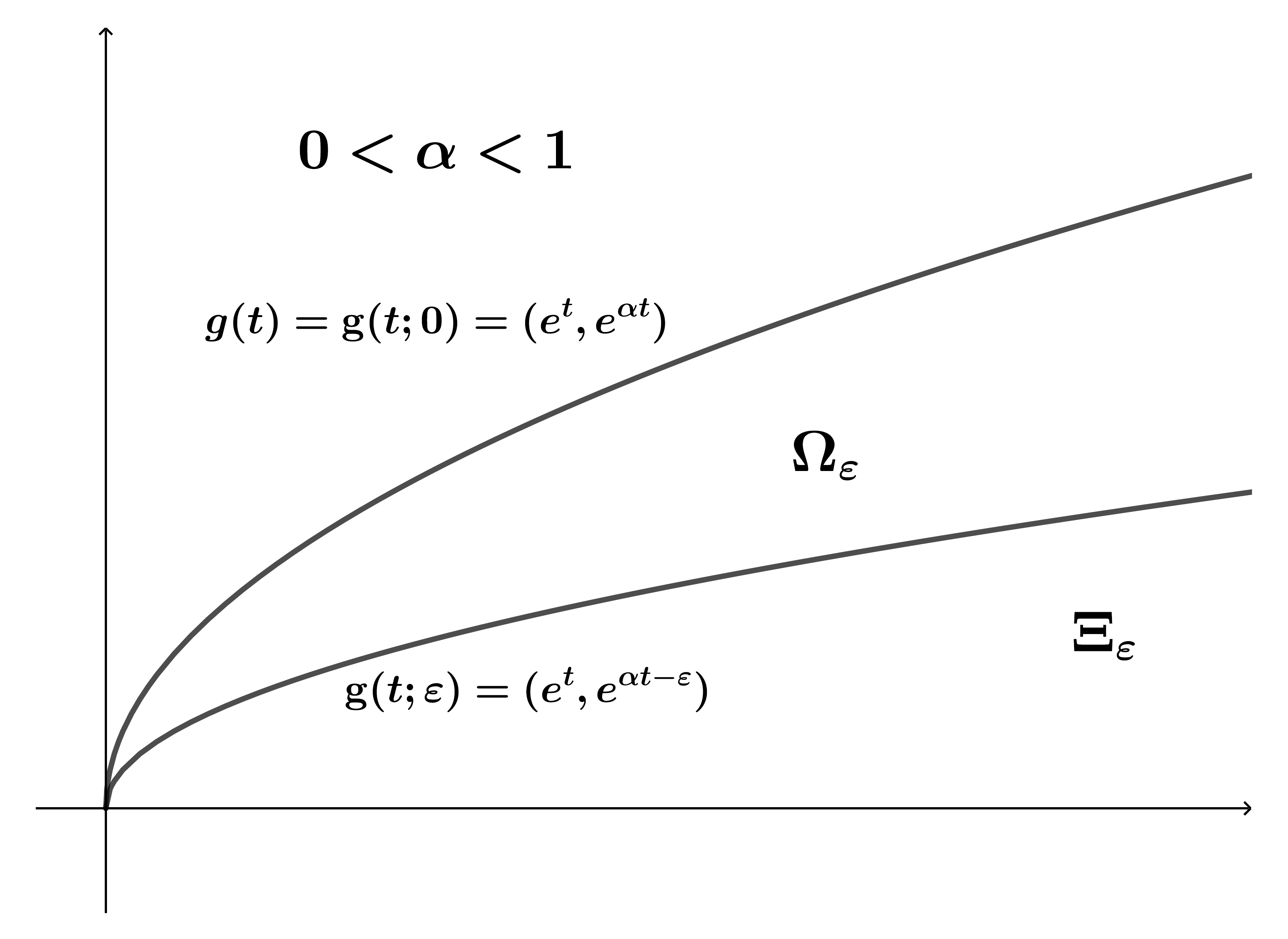}
\caption{Examples of the domains $\Omega_\eps$.}
\label{fig:domain_examples}
\end{figure}
The domain on the latter picture 
on Figure~\ref{fig:domain_examples} formally does not satisfy~\eqref{eq230402}, but after 
application of a suitable isometry of the plane, it becomes admissible for the theory. The parametrization 
by $t$ running from $-\infty$ to $+\infty$ is convenient for theoretical construction, but sometimes there 
are more natural parametrizations, as we can see on Figure~\ref{fig:domain_examples}.

We also make the following technical assumption. Let $\kappal(t;\eps)$ and $\kappar(t;\eps)$ be the slopes 
of the sides $\Sl$ and $\Sr$ of the triangle $T_\eps(t)$, i.\,e.:\index[symbol]{$\kappa$}
\eq{eq211002}{
\kappa = \frac{w_2 - g_2}{w_1 - g_1}.
}
It follows from the implicit function theorem that the functions $\ttl,\ttr,\wl,\wr,$ and $\kappa$ are 
$C^{2,1}$-smooth with respect to $(t,\eps)$ in the same sense as before. The domains $\Xi_\eps$ decrease by inclusion, therefore,
the function $\kappa$ is monotone with respect to $\eps$: $\kappal$ increases while $\kappar$ decreases. 
We assume a bit more: $\frac{\partial}{\partial \eps}\kappa(t,\eps) \ne 0$ for any $t$ and~$\eps$.

The curve $g(\mathbb{R})$ is called the {\it fixed boundary} \index{fixed boundary} of the domain 
$\Omega_\eps$ and is denoted by $\dfi\Omega_\eps$. The curve $\geps(\mathbb{R};\eps)$ is called 
the {\it free boundary} \index{free boundary}  of $\Omega_\eps$ and is denoted by $\dfree\Omega_\eps$. 
We will use the sign $\preceq$~\label{ordering} for two points on $\dfi\Omega$ to indicate their disposition: 
$g(t)\preceq g(s)$ if $t \leq s$.

Since the segment $S(t)$ is tangent to the curve $\tau \mapsto \geps(\tau;\eps)$, there exist a scalar function 
$\lambda$\index[symbol]{$\lambda$} satisfying the following vector-valued identity:
\eq{eq150701}{
\lambda(t) (g(t) - w(t)) = w'(t).
}
Here are two cases: $w = \wr$ defines $\lambda = \lamr(t;\eps)$ and $w = \wl$ defines $\lambda=\laml(t;\eps)$, 
correspondingly. Let us note that  $\lamr>0$ and $\laml<0$ pointwise, the strict inequalities follow from $w'(t) = \geps'(\tts;\eps)\tts'(t) \ne 0$ that is guarantied by~Remark~\ref{rem250701}. 
We formulate important assumptions on the domain $\Omega$ in terms of $\lambda$ and $w$: for all $\eps \in (0,\epsmax)$,
\begin{align}
\big(1+|\wr(t;\eps) - g(t)|\big)\cdot \exp\Big(-\int_t^{t_0} \lamr(\tau;\eps)\,  d\tau\Big) \to 0, \qquad 
t \to -\infty, \quad t_0 \in \mathbb{R};\label{eq111001}
\\
\big(1+|\wl(t;\eps)- g(t)|\big)\cdot \exp\Big(\int_{t_0}^t \laml(\tau;\eps)\,  d\tau\Big) \to 0, 
\qquad t \to +\infty, \quad t_0 \in \mathbb{R}. \label{eq111002}
\end{align}
These conditions are technical. If some of them fail, the general theory works, but some explanations 
need minor changes. The details are given in~Subsection~\ref{s54}.

Recall a definition from~\cite{IOSVZ2015_bis}. 

\begin{Def}
Let $\Omega=\Omega_\eps$, let $I\subset\R$ be an interval and $\vf \colon I\to\dfi\Omega$ 
a summable function. We say that the function $\vf$ belongs to the class $\Class_\Omega(I)$\index[symbol]{$\Class_\Omega$} if
$\av\vf{J}\in\Omega$ for every subinterval~$J\subset I$.
\end{Def}

\subsection{Extremal problem and the Bellman function}\label{s211}

\begin{Def}
Let $\ff \colon \dfi\Omega_\eps \to \mathbb{R}$ be some measurable function. We define the Bellman function
\begin{equation}\label{Bf}
	\Bell(x;\,\ff) \;\df 
	\sup\big\{\av{\ff(\vf)}{I}  \colon \av{\vf}{I} = x,\;\vf \in \Class_{\Omega_{\eps}}(I)\big\}.
\end{equation}\index[symbol]{$\Bell$}

\end{Def}

In this subsection, we summarize some easy properties of the Bellman function defined by formula~\eqref{Bf} 
and try to explain the choice of the Bellman function in view of the extremal problem we study. We will often 
use the function $f \colon \mathbb{R}\to \mathbb{R}$ associated with $\ff$ given above:
$$
f(t) = \ff\big(g(t)\big), \quad t \in \mathbb{R}.
$$

We will also use symbols $\Bell(\,\cdot\,)$ and $\Bell(\,\cdot\, ;\,f)$ for the Bellman function~\eqref{Bf} 
depending on what we would like to emphasize. When $\eps$ and $f$ are fixed we will omit them and write simply 
$\boldsymbol{B}(x)$. Now, we formulate the easiest properties of the Bellman function  that do not need 
any conditions on $f$.

\begin{Rem}\label{Rescaling}
The Bellman function $\Bell$ does not depend on the interval~$I$ where the class $\Class_{\Omega_{\eps}}$ is defined.
\end{Rem}

Indeed, using a linear change of variables one can transform a function $\vf \in \Class_{\Omega_{\eps}}(I)$ 
into another function $\tilde{\vf} \in \Class_{\Omega_{\eps}}(\tilde{I})$ so that all the averages in 
formula~\eqref{Bf} do not change. Thus, the supremum defined by formula~\eqref{Bf} is taken over the same 
subset of the real numbers.

The next remark allows us to estimate integral functionals from below. One can consider another Bellman function,
$$
\Bell^{\min}(x;\,\ff) \;\df 
	\inf\big\{\av{\ff(\vf)}{I}  \colon \av{\vf}{I} = x,\;\vf \in \Class_{\Omega_{\eps}}(I)\big\}.
$$
Of course, the minimal Bellman function can be easily expressed in terms of the maximal one.

\begin{Rem}\label{minBell}
$\Bell^{\min}(x;\,f) = -\Bell(x;\,-f)$.
\end{Rem}

Now we begin to study the domain of $\Bell$. The following definition seems to be useful for all further reasoning.


The Bellman function~\eqref{Bf} is defined everywhere on $\mathbb{R}^2$. However, this function is equal 
to $-\infty$ if the supremum is taken over the empty set. 
We drop such points
from the domain of the Bellman function. 
\begin{Def}\index{Bellman! domain}
By the \emph{Bellman domain} of the function $\Bell$ we mean the set of points $x \in \mathbb{R}^2$  for which there are no functions $\vf \in \Class_{\Omega_{\eps}}$ satisfying $\av{\vf}{I} = x$.
\end{Def}

\begin{St}\label{parstr}
The domain $\Omega_\eps$ defined in~\eqref{eq230401} coincides with the Bellman domain of~$\Bell$.
\end{St}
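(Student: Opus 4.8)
The claim is that the Bellman domain of $\Bell$ --- i.e. the set of points $x$ for which \emph{there exists} a function $\vf\in\Class_{\Omega_\eps}(I)$ with $\av{\vf}{I}=x$ --- equals $\Omega_\eps$. (Note: as literally stated the ``Bellman domain'' is the set of $x$ with \emph{no} such $\vf$, but this is evidently a typo and the intended assertion is the non-emptiness characterization; I prove the latter.) The strategy is a two-sided inclusion: (i) every admissible average lands in $\Omega_\eps$, and (ii) every point of $\Omega_\eps$ is an admissible average.

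\textbf{Inclusion $\subseteq$.} Suppose $\vf\in\Class_{\Omega_\eps}(I)$ with $\av{\vf}{I}=x$. First, $x\in\cl\Xi_0$: indeed $\vf$ takes values on $\dfi\Omega_\eps=g(\mathbb R)=\partial\Xi_0$, and $\Xi_0$ is convex, so any average of $\dfi\Omega_\eps$-valued functions lies in $\cl\Xi_0$. Second, $x\notin\Xi_\eps$: this is built into the definition of $\Class_{\Omega_\eps}(I)$ by taking $J=I$. Since $\Omega_\eps=\cl\Xi_0\setminus\Xi_\eps$, we get $x\in\Omega_\eps$.

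\textbf{Inclusion $\supseteq$.} Fix $x\in\Omega_\eps=\cup_{t\in\mathbb R}T_\eps(t)$, so $x$ lies in some closed curvilinear triangle $T_\eps(t_0)$ with vertex $g(t_0)$ and sides $\Sl(t_0;\eps)$, $\Sr(t_0;\eps)$, and the free-boundary arc between the tangency points $\wl(t_0;\eps)$ and $\wr(t_0;\eps)$. The idea is to build $\vf$ so that $\av{\vf}{I}=x$ and the averages over \emph{all} subintervals also stay inside $\Omega_\eps$. There are two cases. If $x=g(t_0)$ is already on the fixed boundary, take $\vf\equiv x$. Otherwise, draw the chord through $g(t_0)$ and $x$; it hits the free boundary arc (or one of the tangent segments followed by the arc) and, crucially, hits the fixed boundary $g(\mathbb R)$ at a second point $g(t_1)$ --- here one uses the assumption that each tangent line from $g(t_0)$ meets the curve $g(\cdot)$ twice, which guarantees that the whole triangle $T_\eps(t_0)$ is ``seen'' from $g(t_0)$ by chords whose far endpoint is again on the fixed boundary. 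Write $x=\alpha g(t_0)+(1-\alpha)g(t_1)$ with $\alpha\in[0,1]$, and let $\vf$ be the two-valued function on $I=[0,1]$ equal to $g(t_0)$ on a subinterval of length $\alpha$ and $g(t_1)$ on the complementary subinterval. Then $\av{\vf}{I}=x$, and the average over any subinterval $J$ is a convex combination of $g(t_0)$ and $g(t_1)$, hence lies on the segment $[g(t_0),g(t_1)]\subset\cl\Xi_0\setminus\Xi_\eps=\Omega_\eps$ (the segment avoids $\Xi_\eps$ because it is a chord of the convex curve $g(\cdot)$ lying ``below'' $\partial\Xi_\eps$ --- this is exactly the two-tangents configuration of $T_\eps(t_0)$). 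Thus $\vf\in\Class_{\Omega_\eps}(I)$, proving $x$ is in the Bellman domain.

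\textbf{Main obstacle.} The delicate point is the $\supseteq$ direction: one must check that for \emph{every} $x\in T_\eps(t_0)$ the connecting chord from $g(t_0)$ does have its second endpoint on the \emph{fixed} boundary (not just somewhere on $\partial\Xi_0$ extended), and that the resulting segment $[g(t_0),g(t_1)]$ genuinely avoids the open set $\Xi_\eps$. This is precisely where the standing assumptions ``there are two tangent lines from $g(t)$ to $\Xi_\eps$, each intersecting $g(\cdot)$ twice'' and the monotonicity/convexity conditions \eqref{eq250702}, \eqref{eq230402} are used; one should argue that the tangency points $\wr(t_0)$, $\wl(t_0)$ and the chord endpoints sweep out $T_\eps(t_0)$ monotonically as the direction of the chord rotates, filling the triangle. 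The rest is routine convexity bookkeeping.
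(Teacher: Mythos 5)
Your first inclusion is fine (in fact it is immediate: taking $J=I$ in the definition of $\Class_{\Omega_\eps}$ already gives $\av{\vf}{I}\in\Omega_\eps$), and your reading of the ``Bellman domain'' definition as containing a typo is correct. The problem is in the $\supseteq$ direction.

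The chord you propose --- the line through the vertex $g(t_0)$ of the triangle $T_\eps(t_0)$ and the point $x$ --- does not work. The sides $\Sl(t_0)$ and $\Sr(t_0)$ both emanate from $g(t_0)$, so for any $x$ in the interior of $T_\eps(t_0)$ not lying on one of these two segments, the ray from $g(t_0)$ through $x$ exits the triangle through the free-boundary arc, and it does so \emph{transversally} (the only lines through $g(t_0)$ tangent to $\Xi_\eps$ are the ones carrying $\Sl(t_0)$ and $\Sr(t_0)$). Past that crossing the ray enters the open convex set $\Xi_\eps$ before eventually returning to $\partial\Xi_0$ at $g(t_1)$. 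Hence the segment $[g(t_0),g(t_1)]$ meets $\Xi_\eps$, and the two-valued function built on it has subinterval averages inside $\Xi_\eps$; it is not in $\Class_{\Omega_\eps}$. So the construction fails for essentially every interior point of the triangle, which is exactly the ``main obstacle'' you flagged but did not resolve.

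The paper's construction avoids this by choosing a different line: through $x$ one draws a line \emph{tangent to} $\dfree\Omega_\eps$ (such a tangent exists because $x\notin\Xi_\eps$ and $\Xi_\eps$ is open and convex). By tangency, this line does not enter $\Xi_\eps$ at all, and by the standing assumption (each tangent line to $\Xi_\eps$ from a point of the fixed boundary meets the curve $g(\cdot)$ twice) it intersects $\dfi\Omega_\eps$ at two points $g(a)$ and $g(b)$ with $x\in[g(a),g(b)]\subset\cl\Xi_0\setminus\Xi_\eps=\Omega_\eps$. The two-valued step function supported on $\{g(a),g(b)\}$ then has all its subinterval averages on $[g(a),g(b)]$, hence in $\Omega_\eps$. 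Replace your chord through the vertex by this tangent chord and the argument goes through.
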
 

\begin{proof}
First, by the definition of $\Class_{\Omega_{\eps}}$, for any $\vf \in \Class_{\Omega_{\eps}}$ we have 
$\av{\vf}{} \in \Omega_{\eps}$. Second, for any given point $x \in \Omega_\eps$ there exists 
some $t \in \mathbb{R}$ such that $x \in T_\eps(t)$. We draw the tangent to $\dfree\Omega_\eps$ passing through $x$. 
It follows from conditions on the domain that this line intersects $\dfi\Omega_\eps$ twice, say, at $g(a)$ and $g(b)$. 
It is important that $x \in [g(a),g(b)] \subset \Omega_\eps$. Let $\alpha \in [0,1]$ be such that 
$x = \alpha g(a) + (1-\alpha)g(b)$. Define the function $\vf$ as follows:
\begin{equation*}			
\vf = g(a) \chi_{_{[0,\alpha)}}+ g(b) \chi_{_{[\alpha,1]}}.
\end{equation*}
It is clear that $\av{\vf}{[0,1]} = x$ and $\av{\vf}{J}\in [g(a),g(b)] \subset \Omega_\eps$ for any 
$J \subset [0,1]$. Therefore $\vf \in \Class_{\Omega_\eps}$.
\end{proof}

This proposition says nothing whether the Bellman function is finite on~$\Omega_\eps$, it only cuts off those 
points of the plane where it is \emph{a priori} infinite.

\begin{St}\label{Pr040601}
$\Bell(g(t); f) = f(t)$ for all $t \in \mathbb{R}$.
\end{St}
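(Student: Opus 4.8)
The plan is to establish two inequalities.

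\textbf{Lower bound $\Bell(g(t);f)\ge f(t)$.} By Remark~\ref{Rescaling} we may take $I=[0,1]$ and test the definition \eqref{Bf} with the constant function $\vf\equiv g(t)$. Since $g(t)=\geps(t;0)\notin\cl\Xi_\eps$ for $\eps>0$ and $\Omega_\eps=\cl\Xi_0\setminus\Xi_\eps$, the point $g(t)$ lies in $\Omega_\eps$; hence every average $\av{\vf}{J}=g(t)$ belongs to $\Omega_\eps$, so $\vf\in\Class_{\Omega_\eps}(I)$ and $\av{f(\vf)}{I}=f(t)$. This shows $\Bell(g(t);f)\ge f(t)$.

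\textbf{Upper bound.} The key observation is that $g(t)$ is an extreme point of $\cl\Xi_0$ in the strongest sense: \eqref{eq230402} with $\eps=0$ says that $g$ is strictly convex, so the tangent line $L$ to the graph of $g$ at $g(t)$ is a supporting line of $\Xi_0$ that meets $\cl\Xi_0$ only at the single point $g(t)$. Let $\ell$ be an affine functional vanishing on $L$ and strictly positive on $\Xi_0$. Take any $\vf\in\Class_{\Omega_\eps}(I)$ with $\av{\vf}{I}=g(t)$. Since $\vf$ takes values in $\dfi\Omega_\eps=g(\mathbb{R})\subset\cl\Xi_0$, we have $\ell(\vf)\ge0$ pointwise, while by linearity $\av{\ell(\vf)}{I}=\ell\big(\av{\vf}{I}\big)=\ell(g(t))=0$. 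Therefore $\ell(\vf)=0$ almost everywhere on $I$, which forces $\vf(s)\in L\cap\cl\Xi_0=\{g(t)\}$ for a.e.\ $s$. Thus $\vf\equiv g(t)$ a.e., and consequently $\av{f(\vf)}{I}=f(t)$. Taking the supremum over all such $\vf$ yields $\Bell(g(t);f)\le f(t)$, and together with the lower bound this proves the claim.

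\textbf{Main obstacle.} The argument is short; the only points requiring attention are (i) verifying that the constant function is admissible, which relies on the separation assumption $\geps(t;0)\notin\cl\Xi_\eps$, and (ii) the use of strict convexity \eqref{eq230402} to ensure the supporting line $L$ touches $\cl\Xi_0$ at the single point $g(t)$ — were $\partial\Xi_0$ to contain a segment through $g(t)$, non-constant competitors $\vf$ could arise and the conclusion would fail.
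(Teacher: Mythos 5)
Your proof is correct and is essentially the paper's argument: the paper likewise observes that strict convexity of the fixed boundary forces any test function whose average lies on $\dfi\Omega_\eps$ to be constant a.e., so the supremum in \eqref{Bf} is taken over the single value $f(t)$. You merely make explicit the two points the paper leaves implicit — the admissibility of the constant test function and the supporting-line separation argument behind "constant a.e." — so no substantive difference.
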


\begin{proof}
The function $\vf$ whose average $\av{\vf}{}$ lies on the fixed boundary is constant almost everywhere, 
because the curve $\dfi\Omega_\eps$ is a graph of a strictly convex function. This constant coincides 
with the average, thus $\vf = g(t)$. Consequently, the set we are taking the supremum over consists of 
a single number $f(t)$, therefore $\Bell(g(t); f) = f(t)$.
\end{proof}

\subsection{Conditions on $f$}\label{s212}
The function~$f$ will be under some restrictions. These restrictions are of two types. The first type comes 
from quantitative aspects. We are interested in finite Bellman functions, we want the function~$\ff(\vf)$ 
to be integrable for any function $\vf \in \Class_{\Omega_\eps}$, $\eps<\eps_{\max}$, in order to have well 
defined averages in~\eqref{Bf}. These conditions are expressed in terms of summability properties of $f$. 
The second type of conditions corresponds to regularity properties of $f$. These conditions make 
the structure of the Bellman function less complicated, thus able to be described. 

We begin with conditions of the second type. We require $f \in C^2(\mathbb{R})$. Consider the curve 
\eq{defgamma}{
\gamma(t) = \big(g(t),f(t)\big), \qquad t \in \mathbb{R},
}\index[symbol]{$\gamma$}
which is the graph of the boundary condition, see Proposition~\ref{Pr040601}. We introduce the following object: 
\eq{eq28042001}{
\torsion (t) = \dettt{\gamma'(t)}{\gamma''(t)}{\gamma'''(t)} = 
\det
\begin{pmatrix}
g_1'(t)& g'_2(t) & f'(t)\\
g_1''(t)& g''_2(t) & f''(t)\\
g_1'''(t)& g_2'''(t) & f'''(t)\\
\end{pmatrix},
\qquad t \in \mathbb{R}.}
We will impose such conditions on $f$ that $\torsion$\index[symbol]{$\torsion$} will be a signed measure, see Condition~\ref{reg}. The sign of $\torsion$ 
coincides with the sign of the torsion of the curve $\gamma$. Since we assume $\gamma$ to be $C^2$, 
the measure $\torsion$ does not have atoms.

\begin{Def}\label{solidroot}
Let $\mu$ be a signed measure on the line. The complement of its support is an open subset of the line, 
so it is a union of several intervals (finite or countable number of them). We call the closure of each 
such interval \emph{a solid root} of $\mu$. If $\mu$ is neither positive nor negative in every neighborhood 
of its solid root, then such a solid root is called \emph{essential}. 
\end{Def}\index{solid root}\index{essential root}

Let $\torsion = \torsion _+ + \torsion_-$ be the Hahn decomposition of the measure $\torsion$. The set 
$\supp \torsion_+ \cap \supp \torsion_-$ is the set of points where $\torsion$ ``changes its sign''. 
This set is closed. The points of~$\supp \torsion_+ \cap \supp \torsion_-$ are also called essential roots. 
Therefore, an essential root is a maximal by inclusion connected subset of the line such that $\torsion$ 
vanishes on it but is neither negative nor positive in every its neighborhood. The regularity condition 
we will impose on $\torsion$ is that it has only finite number of essential roots. If $\gamma$ were $C^3$-smooth, 
this condition would be the same as if the function $\torsion$ had only finite number of changes of sign. 

Let us introduce the following functions:\index[symbol]{$\kappa_2$}\index[symbol]{$\kappa_3$}\index[symbol]{$\tors$}
\eq{eqKappas}{
\kappa_2 = \frac{g_2'}{g_1'}, \qquad \kappa_3 = \frac{f'}{g_1'}, \qquad \tors = \frac{\kappa_3'}{\kappa_2'}.
}
Note that $\kappa_2$ and $\kappa_3$ are the slopes of the projections of $\gamma'$  onto $(x_1x_2)$- and 
$(x_1x_3)$-coordinate planes.

Since $g_1'>0$, $\kappa_2'\ne 0$ (see~\eqref{eq250702} and~\eqref{eq230402}), and
\eq{eq211003}{
\torsion = \tors' \cdot (g_1')^3 \cdot (\kappa_2')^2,
}
the signs of $\torsion$ and $\tors'$ coincide. We formulate the condition discussed above in terms of $\tors$:

\begin{Cond}\label{reg}
\index{condition! conditions on~$f$}
The function~$f$ is two times continuously differentiable\textup,~$\tors$ is piecewise monotone
and has only finite number of monotonicity intervals.
\end{Cond}

\begin{Rem}\label{Rem100301}
Condition~\ref{reg} implies that the functions $f$ and $\kappa_3$ are piecewise monotone and change their 
sign only finite number of times.  
\end{Rem}
\begin{proof}
The function $\tors$ changes its sign only finite number of times, $\kappa_2'>0$, therefore the same is true for 
$\kappa_3' = \kappa_2' \tors$, and hence for $\kappa_3$. But $g_1'>0$ and $f' = \kappa_3 g_1'$, thus $f'$ has 
only finite number of changes of sign, therefore the same is true for $f$.
\end{proof}

The essential roots of $\torsion$ will play a significant role in what follows, therefore we fix the notation for them.

\begin{Def}
\label{roots}
The essential roots of~$\torsion$ (or $\tors'$) are closed intervals (which can be single points or rays) 
$c_0, c_1, \ldots, c_n$ and $v_1, v_2, \ldots, v_n$ such that $c_0 < v_1 < c_1 < v_2 < \cdots < v_n < c_n$. 
The measure $\torsion$ ``changes sign'' from '$-$' to '+' at $v_i$, from '+' to '$-$' at~$c_i$. 
\end{Def}

We make an agreement that if in a neighborhood of $-\infty$ we have $\torsion < 0$, then $c_0 = -\infty$. 
Similarly, if in a neighborhood of $+\infty$ we have $\torsion> 0$, then $c_n = +\infty$. What is more, $v_i$ or $c_i$ 
is an interval (not a point) if and only if it is an essential solid root in the sense of Definition~\ref{solidroot}. 

In the light of our definition, sometimes we will have to treat the intervals as if they were points. We write 
$\dist(x,y)$ for the usual distance between subsets $x$ and $y$ of the real line. We will need it only to denote 
the distance between either two intervals or an interval and a point. Moreover, sometimes we will write, for 
example, $a_n \rightarrow w$ where $w$ is a root, e.\,g., it can be an interval. In such situations we mean that 
for every neighborhood of $w$ all but finite number of members of $\{a_n\}_n$ lie in it. What is more, the set 
of intervals has an essential ordering: $[a,b]$ is less than $[c,d]$ if and only if $b < c$. We have already 
used this ordering in Definition~\ref{roots}. We will also often use the notation~$\mathfrak{a}^{\mathrm{r}}$ 
and~$\mathfrak{a}^{\mathrm{l}}$ for the right and the left endpoints of the interval~$\mathfrak{a}$.

Let us turn to the conditions of the first type (summability conditions at infinities).

\begin{Cond}
\label{sum}
\index{condition! conditions on~$f$}

For any $\eps< \eps_{\max}$ we have
\eq{eq141001a}{
\int_{-\infty}^v f'(\tau) \exp\Big(\!-\!\int_{\tau}^v \lamr(s;\eps)\,  ds \Big)\ d\tau > - \infty.
}
\eq{eq211001a}{
\int_v^{+\infty} f'(\tau) \exp\Big(\int_v^{\tau} \laml(s;\eps)\,  ds \Big)\ d\tau < + \infty. 
}
for any $v\in \mathbb{R}$. Recall that $\laml$ and $\lamr$ are defined in~\eqref{eq150701}.
\end{Cond}

These conditions look cumbersome, but it will be clear that they appear quite naturally, see Chapter~\ref{C5}, 
Propositions~\ref{st140701} and~\ref{st140702}. If~\eqref{eq141001a} fails, but~\eqref{eq111001} holds, 
then the function $B$ is infinite because one can construct a special test function $\vf$ (that is an optimizer 
in Proposition~\ref{OptimizerRightTangentsInfty}, see also Definition~\ref{Opt})  such that $\av{\ff(\vf)}{} = +\infty$. 
Similarly, if~\eqref{eq211001a} fails, but~\eqref{eq111002} holds, then the function $B$ is also infinite. See Subsection~\ref{sec100101} for the details.

\subsection{Smoothness conditions}
\label{Sec091201}

Here we collect all the smoothness conditions we impose on the functions we discussed in this section. 

We suppose that $\geps$ is $C^{3,1}$-smooth and then the functions $\ttl, \ttr, \wl,\wr, \kappal,\kappar$ 
are $C^{2,1}$-smooth with respect to $(t,\eps)$ and $\kappa_2$ is $C^2$-smooth. We also have 
$\frac{\partial}{\partial t}\geps(t,\eps)>0$, $\frac{\partial}{\partial \eps}\kappa(t,\eps) \ne 0$. 
The functions $\lamr, \laml$ are $C^{1,1}$. The function $f$ is $C^2$ with $f''$ being piecewise monotone 
with finite number of monotonicity intervals, therefore $\kappa_3 \in C^1$ and $\tors$ is a piecewise 
monotone continuous function with finite number of monotonicity intervals, and $\tors'$ (as well as $\torsion$) 
is a signed continuous measure. 

\section{Locally concave functions}\label{s22}

We will use abstract results from~\cite{SZ2016} concerning relation between the Bellman function and locally 
concave functions. We start with the definition of local concavity.

\begin{Def}
\label{LocalConcavity}\index{locally concave function}
The function $G\colon \Omega \mapsto \mathbb{R}$ is said to be \emph{locally concave} on $\Omega$ if 
it is concave on every convex subdomain of $\Omega$. We introduce the set of locally concave functions 
on $\Omega$ majorising $\ff$ on $\dfi\Omega$:
\eq{Lambda}{
\Lambda_{\Omega,f} \df \big\{G\colon G \text{ is locally concave on } \Omega,\; G|_{\dfi\Omega}\geq\ff\big\}.
}\index[symbol]{$\Lambda_{\Omega,f}$}
Define the function~$\BG_{\Omega,f}$\index[symbol]{$\BG_{\Omega,f}$} to be the pointwise minimum of the functions from $\Lambda_{\Omega,f}$:
\eq{MinimalLocallyConcave}{
\BG_{\Omega,f}(x) = \inf_{G \in \Lambda_{\Omega,f}} G(x), \quad x \in \Omega.
}
\end{Def}

It is not difficult to see that a function is locally concave if and only if it is concave on every segment 
that belongs to $\Omega$ entirely.

The proposition below resembles Proposition~6.1 in~\cite{SZ2016}. It follows from Proposition~6.1 in~\cite{SZ2016} 
for the case of infinite limits $\lim_{t\to \pm\infty}g(t)$. If some of the limits is finite, one should use 
projective transform trick to reduce it to the case of infinite limits, see Appendix~A in~\cite{SZ2016}.

\begin{St}
\label{Summability}
Let the function~$f$ be locally bounded from below\textup, and let the function $\BG_{\Omega_\eps,\max(f,0)}$ 
be finite. Then the integral~$\av{\ff(\vf)}{I}$ is well defined for all~$\vf \in \Class_{\Omega_\eps},$ 
and~$\Bell(\, \cdot\, ;f) = \BG_{\Omega_\eps,f}$.
\end{St}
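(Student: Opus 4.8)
The plan is to deduce this statement from its analogue, Proposition~6.1 in~\cite{SZ2016}, so the bulk of the work is checking that our present setup meets the hypotheses of that abstract result and then transferring its conclusion. First I would recall that Proposition~6.1 in~\cite{SZ2016} asserts exactly the identity $\Bell(\,\cdot\,;f)=\BG_{\Omega,f}$ together with the well-definedness of $\av{\ff(\vf)}{I}$, under a \emph{cone condition} on the domain and under the assumption that $f$ is locally bounded below and $\BG_{\Omega,\max(f,0)}$ is finite. So the first step is to verify that the domains $\Omega_\eps$ of Subsection~\ref{cond} satisfy the cone condition of~\cite{SZ2016}: this is immediate, since we already remarked after the two-tangent-lines assumption that our hypothesis is \emph{stronger} than the cone condition used there. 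The second step is to observe that the remaining hypotheses of that proposition --- $f$ locally bounded below, $\BG_{\Omega_\eps,\max(f,0)}$ finite --- are precisely what we assume here, so nothing extra is needed.

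The only genuine issue is that Proposition~6.1 in~\cite{SZ2016} is stated for domains whose fixed boundary $g(\mathbb{R})$ has \emph{infinite} limits at $\pm\infty$ (equivalently, the fixed boundary has no finite endpoints), whereas here we permit $\lim_{t\to\pm\infty}g(t)$ to be finite (e.g.\ the $A_{p_1,p_2}$ and $A_\infty$ domains~\eqref{Ap1p2},~\eqref{AinftyDomain}). To handle this I would, following Appendix~A of~\cite{SZ2016}, apply a projective transformation of the plane that sends the finite endpoint(s) of $\dfi\Omega_\eps$ to infinity. One must check that: (i) the transformed domain still satisfies the cone condition, (ii) the class $\Class$ is covariant under the transformation in the appropriate weighted sense --- averages $\av{\vf}{J}$ are mapped to the corresponding weighted averages, as explained in Appendix~A of~\cite{SZ2016}, so that membership in $\Class_{\Omega_\eps}$ is preserved --- and (iii) local concavity is a projective invariant, so $\BG$ transforms correctly and finiteness of $\BG_{\Omega_\eps,\max(f,0)}$ is preserved. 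Once these three points are in place, Proposition~6.1 of~\cite{SZ2016} applies to the transformed picture and we pull the identity back.

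I expect the main obstacle to be precisely the bookkeeping in this projective reduction: making sure that the supremum defining $\Bell$ in~\eqref{Bf}, taken over $\vf\in\Class_{\Omega_\eps}(I)$ with $\av{\vf}{I}=x$, corresponds exactly --- value for value --- to the analogous supremum in the transformed domain, including the behaviour of the boundary function $\ff$ under the change of variables and the fact that the transformation acts on $\R^2$ but the target of $\ff$ is one-dimensional. Since all of this is carried out in detail in~\cite{SZ2016}, I would not reproduce it; instead I would cite Appendix~A of~\cite{SZ2016} for the reduction and Proposition~6.1 there for the conclusion, remarking only that our stronger tangency assumption makes the cone condition automatic. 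This is exactly the level of detail signalled by the sentence preceding the statement (``It follows from Proposition~6.1 in~\cite{SZ2016}\dots one should use projective transform trick\dots'').
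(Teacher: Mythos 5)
Your proposal matches the paper's treatment exactly: the paper gives no separate proof of Proposition~\ref{Summability}, but justifies it in the two sentences preceding the statement by citing Proposition~6.1 of~\cite{SZ2016} (whose cone condition is weaker than our tangency assumption) and the projective-transform reduction of Appendix~A there for the case of finite limits of $g$ at $\pm\infty$. Your additional bookkeeping about covariance of the class and of local concavity under the projective map is a reasonable elaboration of the same argument, not a different route.
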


Proposition~\ref{Summability} will serve as one of the main ingredients in the proof of Theorem~\ref{MT}.

\subsection{Monge--Amp\`ere equation and ruled surfaces}\label{s223}

Using simple convex geometry arguments one can see that~$\BG_{\Omega_\eps,f}$ is affine in some directions. 
The precise statement is as follows.

\newcommand{\itememph}[1]{\item \emph{#1}}
\begin{Th}
\label{ConvexGeometry}
Let~$f$ satisfy Conditions~\textup{\ref{reg}, \ref{sum}}. Then the function $\BG = \BG_{\Omega_\eps,f}$  
satisfies the following conditions.
	\textup{\begin{enumerate}
	\itememph{For every point $x \in \interior\Omega_{\eps}$ there is a nonzero vector $\Theta(x)$ such that $\BG$ 
	is affine along the line $\ell(x)= x + \mathbb{R}\Theta(x)$ in a neighborhood of $x$.  If there are at least 
	two noncollinear vectors $\Theta_1$ and $\Theta_2$ with this property\textup, then $\BG$ is affine in a neighborhood 
	of~$x$. If this vector is unique, by the \index{extremal segment}{\bf extremal segment} we call the maximal \textup{(}by inclusion\textup{)} segment of $\ell(x)$ containing $x$ such that $\BG$ is affine on this segment.}
	\itememph{The function~$\BG$ is differentiable and its gradient is constant along each extremal segment.}
	\itememph{The extremals cannot intersect the free boundary transversally\textup, only tangentially.}
	\end{enumerate}}
\end{Th}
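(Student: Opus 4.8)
The three assertions of Theorem~\ref{ConvexGeometry} are essentially ``convex-geometric'' consequences of the fact that $\BG$ is the minimal locally concave majorant of $\ff$ on the non-convex domain $\Omega_\eps$, together with the structural information about $\Omega_\eps$ (every point of $\Omega_\eps$ lies on a tangent segment that can be extended to a chord $[g(a),g(b)]\subset\Omega_\eps$, cf.\ the proof of Proposition~\ref{parstr}). First I would fix $x\in\interior\Omega_\eps$ and argue that $\BG$ cannot be strictly concave at $x$ in every direction: if it were, one could subtract a small paraboloid-type bump near $x$ and stay locally concave while still majorising $\ff$ (the boundary condition is untouched since $x$ is interior and the bump is supported in a small convex neighborhood contained in $\Omega_\eps$), contradicting minimality. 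This yields a direction $\Theta(x)$ in which $\BG$ is affine in a neighborhood of $x$, proving the first sentence of item~(1). The dichotomy ``unique direction versus affine neighborhood'' is then the elementary linear-algebra statement that a function concave on a convex open set and affine along two noncollinear directions through a common point is affine on the whole set; this gives the notion of extremal segment.

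For item~(2), I would show that along an extremal segment $L$ the gradient of $\BG$ is constant. Since $\BG$ restricted to $L$ is affine, the directional derivative in the direction of $L$ is constant; the content is in the transverse direction. Here I would use local concavity on thin convex neighborhoods straddling $L$ and the usual supporting-hyperplane argument: if the gradient jumped transversally at some interior point $y\in L$, then $\BG$ would be strictly concave at $y$ in some direction transverse to $L$, and one could again shave off a bump to violate minimality, or alternatively one invokes the standard fact (cf.\ the homogeneous Monge--Amp\`ere setting, Subsection~\ref{s223}) that extremal segments of a concave function foliate a neighborhood with constant gradient along each leaf. Differentiability of $\BG$ at interior points follows: at a point with a unique extremal direction the function is affine transversally to nothing and concave, so it is differentiable; at a point with an affine neighborhood it is trivially smooth. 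I would phrase this via: a concave function is differentiable precisely where its superdifferential is a singleton, and the affine-along-$\ell(x)$ property forces the superdifferential to be a single linear functional.

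For item~(3), suppose an extremal segment $L$ meets the free boundary $\dfree\Omega_\eps$ at an endpoint $p=\geps(\tau;\eps)$ transversally. Near $p$, the free boundary is a strictly convex curve (by~\eqref{eq230402}) bounding $\Omega_\eps$ from the ``inside,'' so a segment hitting it transversally would, extended slightly, leave $\Omega_\eps$; but an extremal segment is by definition the maximal segment in $\ell(x)$ on which $\BG$ is affine and which lies in $\Omega_\eps$, and local concavity together with the boundary behaviour forces that this segment, if it touches $\dfree\Omega_\eps$, does so tangentially — otherwise one could push past $p$ along the convex side and extend the affinity, contradicting maximality, or the concavity inequality would fail on a small convex triangle formed by $L$ and the free boundary arc. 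The precise mechanism is that transversal intersection with a strictly convex free boundary creates a thin convex lens-shaped region inside $\Omega_\eps$ on one side of $L$, and concavity of $\BG$ on that lens combined with affinity on $L$ propagates the affine behaviour across $p$, contradicting that $p\in\dfree\Omega_\eps$ is an endpoint.

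\textbf{Main obstacle.} The delicate point is item~(2) — proving the gradient is genuinely constant (not just the tangential derivative) along the full extremal segment, including behaviour at its endpoints, and simultaneously extracting global differentiability of $\BG$ on $\interior\Omega_\eps$. This requires carefully combining the minimality of $\BG$ with local concavity on suitably chosen small convex neighborhoods, and ruling out ``creases'' of $\BG$ transverse to the foliation; the bump-subtraction argument must be set up so that the perturbed function remains locally concave on \emph{every} convex subdomain of $\Omega_\eps$, not merely on the small neighborhood where the bump lives, which is where the non-convexity of $\Omega_\eps$ makes the argument more subtle than in the classical convex-domain case.
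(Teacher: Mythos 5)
First, a point of comparison: the paper does not prove Theorem~\ref{ConvexGeometry} at all. Immediately after the statement it says that the theorem ``only helps us to guess the Bellman function'' and that it will follow from the general considerations, e.g.\ Theorem~\ref{Final}. In other words, the paper's route is entirely constructive: it builds an explicit foliated candidate $B$ in Chapters~\ref{C3}--\ref{C4}, proves via optimizers in Chapter~\ref{C5} that $B=\Bell=\BG_{\Omega_\eps,f}$, and only then reads off the structure asserted in the theorem from the explicit formulas. Your proposal is a direct convex-geometric attack on the minimal locally concave function itself, which is a genuinely different (and, if completed, more general) route; the price is that you must establish regularity and foliation structure abstractly, which is exactly what the paper avoids.

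As a direct proof, your argument has a real gap in item~(2). Affinity of $\BG$ along $\ell(x)$ forces every supporting affine function at an interior point $y$ of the extremal segment $L$ to contain the line of the graph over $L$, but this only confines the superdifferential $\partial\BG(y)$ to a one-parameter family of supporting planes through that line; it does not make it a singleton. A ``crease'' of $\BG$ along $L$ (two distinct supporting planes meeting along the graph over $L$) is fully compatible with concavity plus affinity along $L$, so your sentence ``the affine-along-$\ell(x)$ property forces the superdifferential to be a single linear functional'' is false as stated. Excluding creases requires a separate minimality argument (e.g.\ showing that a crease along $L$ would allow one to replace $\BG$ near $L$ by the minimal concave function with the same values on the boundary of a thin convex neighborhood of $L$, strictly lowering it, and then verifying that the concatenation is still \emph{locally} concave on all of $\Omega_\eps$ — the one-sided derivative matching at the junction is where the work is). Relatedly, your item~(1) bump-subtraction is cleaner if phrased as ``replace $\BG$ on a small closed convex neighborhood $U\subset\interior\Omega_\eps$ by the minimal concave function on $U$ with boundary data $\BG|_{\partial U}$''; strict concavity ``in every direction'' is not well defined for a nonsmooth concave function. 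Item~(3) is also only heuristic as written: the ``lens'' on the convex side of a transversal chord to $\dfree\Omega_\eps$ lies in $\Xi_\eps$, i.e.\ \emph{outside} $\Omega_\eps$, so concavity on it gives you nothing; the actual obstruction to a transversal endpoint has to come from comparing the extremal segments of nearby points, which you have not set up.
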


This theorem provides a partition of $\Omega_{\eps}$ into sets of two types. The sets of the first type are 
extremal segments, along which $\BG$ is affine. We note (and this can be easily proved) that the extremal segments
cannot ``stop'' inside $\Omega_{\eps}$. One of the endpoints of an extremal segment belongs to the fixed boundary, the second one could lie either on the fixed boundary as well, or is the point of tangency with the free boundary. The sets of the second type are the two-dimensional domains where $\BG$ is affine, 
we call them \emph{domains of linearity}.

We should make a remark on the term ``domain''. We call a domain every open connected set united with some 
(or none) part of its boundary.

We will not prove Theorem~\ref{ConvexGeometry}, because, from a formal point of view, we do not need it 
(however, it will follow from our general considerations, e.\,g., Theorem~\ref{Final} far below). It only 
helps us to guess the Bellman function. It leads us to the notion of a \index{Bellman! candidate} \emph{Bellman candidate}.

\begin{Def}
\label{candidate}\index{foliation}
Let $B$ be a continuous locally concave function on a subdomain $\omega$ of $\Omega_{\eps}$ satisfying the boundary 
condition on $\omega\cap \dfi\Omega_\eps$. The function $B$ is called a \emph{Bellman candidate} on $\omega$ 
if there exists a \emph{foliation} on $\omega$. This means that $\omega$ can be represented as a union of several 
domains,~$\omega = \cup_{i}\omega^i$,  such that $B \in C^1({\omega^i})$ and $B$ is either affine in $\omega^i$ 
(thus $\omega^i$ is a domain of linearty) or $\omega^i$ is foliated by straight line segments along which the 
differential of $B$ is constant (they are the extremal segments).
\end{Def}
 
If $B$ is twice differentiable at some inner point $x \in \Omega_{\eps}$, then $\frac{d^2\!B}{dx^2} \le 0$, i.\,e., 
the second differential of $B$ is negative-definite, because $B$ is locally concave. This matrix 
has the vector $\Theta(x)$ in the kernel, thus its determinant is zero. This remark clarifies the name of 
the subsection: the achieved equation is called the \index{Monge--Amp\'ere equation} \emph{homogeneous Monge--Amp\`ere equation}:

\eq{MA}{
B_{x_1x_1}B_{x_2x_2} - B_{x_1x_2}^2 = 0.
}
The homogeneous Monge-Amp\`ere equation must hold almost everywhere for the function $\BG$, because a locally 
concave function is almost everywhere twice differentiable. However, it does not have to hold everywhere, 
because the Bellman function does not have to be $C^2$-smooth even for very smooth boundary values~$f$. For example, see the function $N_{\eps,p}$ from Lemma 6.1 in \cite{SV2012} with $p>2$: it is not $C^2$-smooth, however its boundary data is $C^\infty$ for $p$ being even integer greater than $2$. 
 
\subsection{Optimizers}\label{s224}

Let $B$ be a Bellman candidate in the whole domain $\Omega_{\eps}$, i.\,e., let it satisfy Definition~\ref{candidate} 
with $\omega = \Omega_{\eps}$. This subsection provides a method of verification that the candidate $B$ coincides 
with the Bellman function. By Proposition~\ref{Summability}, there is the inequality $\Bell \leq B$. To prove the 
reverse inequality, $B(x) \leq \Bell(x)$, for a point $x$, $x \in \Omega_{\eps}$, it is sufficient to find a function 
$\vf \in \Class_{\Omega_{\eps}}$ with $\av{\vf}{} = x$ such that $B(x) \leq \av{\ff(\vf)}{}$. Indeed, by the 
definition of the Bellman function, formula \eqref{Bf}, $\av{\ff(\vf)}{} \leq \Bell(x)$, consequently, 
$B(x) \leq \Bell(x)$. We introduce some notions.

\begin{Def}
\label{TestFunction}\index{test function}
Let $x\in\Omega_\eps$. We call a function $\vf\in\Class_{\Omega_\eps}$ a \emph{test function} for $x$ if $\av{\vf}{}=x$. 
\end{Def}

\begin{Def}
\label{Opt}\index{optimizer}
Let $x\in\Omega_\eps$ and let $B$ be a Bellman candidate. We call a measurable function $\vf$ an \emph{optimizer} 
for $B$ at $x$ if it satisfies two conditions:
  \begin{itemize}
  \item $\vf$ is a test function for $x$;  
  \item $B(x) = \av{\ff(\vf)}{}$.
 \end{itemize} 
\end{Def}

So, in order to prove that a candidate $B$ coincides with the Bellman function, it suffices to provide at 
least one optimizer for each point $x$ in $\Omega_{\eps}$ and the candidate $B$. What is the way to do this? 
We may consider only monotone optimizers (with respect to the ordering $\preceq$ defined in Subsection~\ref{s21}). 
This follows from the fact that the class $\Class_{\Omega_{\eps}}$ is invariant with respect to monotonic 
rearrangement\index{monotonic rearrangement}. This fact is proved in~\cite{SZ2016}, Corollary~3.12.

The following property of monotonic rearrangement is useful for us:
$\av{\ff(\vf)}{} = \av{\ff(\vf^*)}{},$
where $\vf^*$ is a monotone rearrangement  of $\vf$. Therefore, $\vf^*$ is an optimizer provided $\vf$ is. 
We do not need this consideration formally, but it helps us to guess the optimizers. In the light of it, 
we consider only monotone optimizers. A more detailed discussion concerning optimizers is postponed until 
Chapter~\ref{C5}.

\subsection{General principles and description of results}

One of our main aims is to prove the following theorem. 

\begin{Th}
\label{MT}
Let $f$ satisfy Conditions~\textup{\ref{reg},~\ref{sum}}. Then 
\begin{equation*}
\Bell(x;\,f) = \BG_{\Omega_\eps,f}.
\end{equation*}
\end{Th}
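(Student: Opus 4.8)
The plan is to prove Theorem~\ref{MT} by squeezing $\Bell(\,\cdot\,;f)$ between $\BG_{\Omega_\eps,f}$ from above and a Bellman candidate from below. By Proposition~\ref{Summability}, once we know $\BG_{\Omega_\eps,\max(f,0)}$ is finite (which will be a byproduct of the explicit construction), we automatically get $\Bell(\,\cdot\,;f) = \BG_{\Omega_\eps,f}$, so one half of the theorem is handed to us by the abstract duality theory of~\cite{SZ2016}. Thus the real content is to produce, for the given admissible $f$ and each $\eps < \epsmax$, a \emph{Bellman candidate} $B$ on all of $\Omega_\eps$ (in the sense of Definition~\ref{candidate}) together with an optimizer at every point, as described in Subsection~\ref{s224}: then $B \le \Bell$ pointwise, while local concavity plus the boundary condition give $B \ge \BG_{\Omega_\eps,f} \ge \Bell$, forcing equality everywhere.

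First I would invoke the construction of Chapters~\ref{C3} and~\ref{C4}. Chapter~\ref{C3} builds the local pieces of a foliation---tangent domains, chordal domains, cups, multicups, trolleybuses, birdies---and attaches to each a standard candidate that is locally concave and $C^1$ across its boundary; the \emph{forces} introduced in Section~\ref{s34bis} encode the (negative) concavity defect and satisfy monotonicity in $\eps$. Then I would use the evolutional argument of Chapter~\ref{C4}: for $\eps < \eps_1$ a simple candidate exists (Section~\ref{s41}), and as $\eps$ increases one tracks the local scenarios (cups grow, trolleybuses shrink, multibirdies and multitrolleybuses disintegrate) and glues them combinatorially (Section~\ref{s44}, citing~\cite{ISVZ2018}) to get a candidate $B = B_{\eps,f}$ on $\Omega_\eps$ for every $\eps < \epsmax$. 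This is exactly the step where the assumptions pay off: Condition~\ref{reg} (finitely many essential roots of $\torsion$) controls the combinatorial complexity of the foliation, and Condition~\ref{sum} guarantees the relevant integrals defining the candidate on tangent domains near $\pm\infty$ converge, so $B$ is finite.

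Next I would establish $B \le \Bell$ by constructing optimizers. Using monotonic rearrangement invariance of $\Class_{\Omega_\eps}$ (Corollary~3.12 of~\cite{SZ2016}), it suffices to find monotone optimizers. Chapter~\ref{C5} does this: represent a putative optimizer as a \emph{delivery curve} in $\Omega_\eps$ (Section~\ref{s51}), build a local optimizer for each type of foliation piece (Section~\ref{s52})---a constant on a chord, an appropriate absolutely continuous profile along a tangent fan, the corresponding measures on cups---and glue them along the delivery curve (Section~\ref{s53}). For each $x \in \Omega_\eps$ this yields $\vf \in \Class_{\Omega_\eps}$ with $\av{\vf}{} = x$ and $\av{\ff(\vf)}{} = B(x)$, whence $B(x) \le \Bell(x;f)$ by definition~\eqref{Bf}. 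Combining, $\BG_{\Omega_\eps,f} \le \Bell(\,\cdot\,;f) \le B = $ (a locally concave majorant of $\ff$) so $B \ge \BG_{\Omega_\eps,f}$, and all four quantities coincide.

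The main obstacle is the construction and gluing of the candidate in Chapter~\ref{C4}: verifying that the local pieces---particularly the cup of Section~\ref{s33old}, which the introduction flags as ``quite lengthy and involved'' and genuinely different from~\cite{ISVZ2018}---fit together into a globally $C^1$, locally concave function as $\eps$ sweeps up to $\epsmax$. The delicate point is that the evolution can change the combinatorial type of the foliation (births and deaths of cups, disintegration of multitrolleybuses), and one must show the forces stay negative through each transition and that the monotonicity formulas of Section~\ref{s34bis} survive the gluing; the summability Condition~\ref{sum} and its interplay with~\eqref{eq111001}--\eqref{eq111002} must be tracked carefully near the ends of the line so the tangent-domain candidates remain finite. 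Once the candidate exists, the optimizer side is comparatively routine given the machinery of Chapter~\ref{C5}, and the final identification with $\Bell$ is immediate from Proposition~\ref{Summability} and the definition of the Bellman function.
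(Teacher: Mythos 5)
Your proposal follows the paper's own route: build a Bellman candidate by the evolutional construction of Chapters~\ref{C3}--\ref{C4}, deduce $\Bell(\,\cdot\,;f)=\BG_{\Omega_\eps,f}$ from Proposition~\ref{Summability}, and identify the candidate with $\Bell$ via the optimizers of Chapter~\ref{C5}. The only detail you gloss over is that $\max(f,0)$ need not itself satisfy Condition~\ref{reg}, so the required finiteness of $\BG_{\Omega_\eps,\max(f,0)}$ is obtained in the paper by applying the construction to an auxiliary admissible $\tilde f$ coinciding with $\max(f,0)$ outside a compact set (using Remark~\ref{Rem100301}); this is a one-line fix rather than a gap.
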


In fact, in the whole paper we study how to construct the minimal locally concave function~$\BG_{\Omega_\eps, f}$. 
In Chapters~\ref{C3} and~\ref{C4}, for every $f$ satisfying Conditions~\ref{reg} and~\ref{sum}, we construct specific 
locally concave functions~$G$ on $\Omega_\eps$ with the boundary conditions $G|_{\dfi\Omega_\eps}=\ff$. This functions are called 
Bellman candidates, see Definition~\ref{candidate}.  Due to Remark~\ref{Rem100301}, the function $f$ does not 
change its sign out of some compact set, therefore it is easy to find a function $\tilde f$ satisfying 
Conditions~\ref{reg} and~\ref{sum}, which coincides with $\max(f,0)$ outside a compact set. This would imply 
that the function $\BG_{\Omega_\eps,\tilde f}$ is finite, whence $\BG_{\Omega_\eps, \max(f,0)}$ is finite as well. 
Proposition~\ref{Summability} now gives the statement of Theorem~\ref{MT}. In particular, the function $G$ 
constructed in our considerations satisfies the inequality $\Bell(\,\cdot\,;\,f) \leq G$ on~$\Omega_{\eps}$. 
We use optimizers to prove that $G$ coincides with $\Bell(\,\cdot\,;\,f)$, this will be done in Chapter~\ref{C5}, see
Theorem~\ref{GlobalOptimizer}. 
\bigskip

\bigskip

\bigskip

\centerline{\bf Main results}

\begin{itemize}
\item[1.] We provide an algorithm to calculate the Bellman function $\Bell$ for $f$ satisfying Conditions~\ref{reg}, \ref{sum} and describe the evolution of this function with respect to~$\eps$; as a corollary, we obtain Theorem~\ref{MT}.
\item[2.] We show that for any $C^1$ function $f$ not satisfying either of Conditions~\ref{sum} 
$\Bell(\,\cdot\,;\,f)=+\infty$ everywhere except the fixed boundary.
\item[3.] We investigate the case when the technical conditions~\eqref{eq111001} and~\eqref{eq111002} for the domain do not
hold. The theory still works with minor modifications: it could happen that there exists no optimizer for some points $x \in \Omega_\eps$;
in such a case, instead of one optimizer we construct an optimizing sequence of functions $\vf_n \in \Class_{\Omega_\eps}$ such 
that $\av{\vf_n}{I} = x$ and $\av{\ff(\vf_n)}{I}\to\Bell(x;\, f)$. We also give criteria for the function $\Bell$ to be finite when conditions~\eqref{eq111001} and~\eqref{eq111002} could fail.
\end{itemize} 

\bigskip

\bigskip

Some explanation is needed. By building the Bellman function we mean mainly the construction of the 
corresponding foliation. This foliation evolves continuously and obeys certain monotonicity rules that are also described. In the first 
point we intend to provide some expression for $\Bell$ that contains integrals, differentiation, and solution 
of some implicit equations. We always prove that those equations are well solvable, i.\,e.,  do not have 
infinite number of solutions.
}

\newcommand{\kappaw}{\kappa}

\chapter{Patterns for Bellman candidates}\label{C3}

\section{Preliminaries}
\label{s31}
{
The purpose of this section is to construct Bellman candidates (see Definition~\ref{candidate}) on various domains. 
The global foliation for the Bellman function may occur to be rather complicated, but its local structure is easy 
to describe. We give some heuristics to classify local Bellman candidates.

Consider a minimal locally concave function and its foliation provided by Theorem~\ref{ConvexGeometry}. 
We recall that this foliation consists of segments, which are called extremal segments, and linearity 
domains\index{domain! linearity domain}. 

The extremal segments are of two types: those that connect two points on the fixed boundary and those that connect 
a point on the fixed boundary with a point on the free one. First type extremal segments are called 
\emph{chords}\index{chords}, second type extremal segments are called \emph{tangents}\index{tangents}. 
We note that a chord can be tangent to the free boundary. Such a chord is called a \emph{long} one.

It is convenient to classify linearity domains by the number of their points on the fixed boundary.
We distinguish the linearity domains 
that have one point on the fixed boundary, the ones that have two points on the fixed boundary, and all 
the others. A more detailed classification will be provided later in Subsection~\ref{s34}.

A global foliation is glued from local ones. We explain the informal meaning of the word ``glue'' we use. 
Consider two subdomains~$\Omega^1$ and~$\Omega^2$ of~$\Omega_{\eps}$. Let~$B_1$ be a Bellman candidate 
on~$\Omega^1$, let~$B_2$ be a Bellman candidate on~$\Omega^2$. Suppose that~$B_1 = B_2$ on~$\Omega^1\cap\Omega^2$. 
Consider the function~$B$ defined on the union domain~$\Omega = \Omega^1 \cup \Omega^2$ as a concatenation 
of~$B_1$ and~$B_2$ (i.\,e.,~$B = B_1$ on~$\Omega^1$ and~$B = B_2$ on~$\Omega^2$). Suppose that this 
function~$B$ is~$C^1$-smooth. In such a case, it is locally concave, provided the functions~$B_1$ and~$B_2$ are, see Proposition~\ref{ConcatenationOfConcaveFunctions} below. 
Thus it is a Bellman candidate on~$\Omega$. Its foliation coincides with the foliation for~$B_1$ on~$\Omega^1$ 
and with the foliation for~$B_2$ on~$\Omega^2$. We say that the foliation for~$B$ is glued from the foliations 
for~$B_1$ and~$B_2$. 

We have used the following fact in the explanation: a~$C^1$-concatenation of two locally concave functions 
is locally concave. To formulate this claim rigorously, we need several new notions. 

\begin{Def}
\label{InducedConvexSet}\index{induced convex set}\index{induced convex set}
Suppose that~$\Omega$ is a subdomain of~$\Omega_{\eps}$. We call~$\Omega$ an \emph{induced convex} 
set if for every segment~$l \subset \Omega_{\eps}$ the set~$\Omega \cap l$ is convex. As usual, for 
any set $\omega \subset \Omega_\eps$ we define its \emph{induced convex hull} $\indconv(\omega)$ as 
the minimal induced convex set which contains $\omega$. 
\end{Def}

All the domains we use for building Bellman candidates are induced convex.

\begin{St}
\label{ConcatenationOfConcaveFunctions}
Suppose that the domains~$\Omega^1$ and~$\Omega^2$ are induced convex in~$\Omega_{\eps}$. 
Suppose that a~$C^1$-smooth function~$B$ is locally concave on each of the domains~$\Omega^i,$ $i=1,2$. 
Then it is locally concave on~$\Omega^1 \cup\Omega^2$.
\end{St}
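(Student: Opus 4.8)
The plan is to reduce local concavity on $\Omega^1\cup\Omega^2$ to concavity of $B$ along an arbitrary segment $\ell$ that lies entirely in $\Omega^1\cup\Omega^2$; by the remark following Definition~\ref{LocalConcavity} this is all that needs to be checked. Fix such a segment $\ell$ and parametrize it affinely by $t\in[0,1]$, writing $\beta(t) = B(\ell(t))$. Since $\Omega^1$ and $\Omega^2$ are induced convex, the sets $\ell\cap\Omega^1$ and $\ell\cap\Omega^2$ are (relatively) convex subsets of $\ell$, i.e. subintervals, and their union is all of $[0,1]$. The function $\beta$ is continuous on $[0,1]$ (because $B$ is continuous, being $C^1$), it is concave on $\ell\cap\Omega^1$ and on $\ell\cap\Omega^2$ separately (because $B$ is locally concave on each $\Omega^i$ and these intersections are segments inside the respective domains), and $\beta\in C^1([0,1])$ because $B$ is globally $C^1$ on the union and $\ell$ is a straight segment, so $\beta'(t)$ is just the directional derivative of $B$ in the direction of $\ell$.

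First I would dispose of the trivial configurations: if $\ell\subset\Omega^1$ or $\ell\subset\Omega^2$ we are done immediately, and if the two subintervals $\ell\cap\Omega^1$, $\ell\cap\Omega^2$ are disjoint then their union cannot be the connected set $[0,1]$, contradiction — so they must overlap, or at least share an endpoint. The remaining case is the genuine one: after relabeling, $\ell\cap\Omega^1 = [0,b]$ and $\ell\cap\Omega^2 = [a,1]$ with $0\le a\le b\le 1$ (the overlap being $[a,b]$, possibly a single point). The key observation is that a $C^1$ function on $[0,1]$ whose derivative is nonincreasing on $[0,b]$ and nonincreasing on $[a,1]$ has a derivative that is nonincreasing on all of $[0,1]$: indeed $\beta'$ is nonincreasing on $[0,b]\cup[a,1] = [0,1]$ since the two intervals cover $[0,1]$ and $\beta'$ is a single well-defined continuous function there. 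Hence $\beta'$ is nonincreasing on $[0,1]$, so $\beta$ is concave on $[0,1]$, which is exactly what we wanted.

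The only place where the $C^1$ hypothesis on the concatenation does real work is in gluing the two monotonicity statements for $\beta'$ into one: without it, $\beta'$ could jump upward at a transition point and destroy concavity. With it, $\beta'$ is an honest continuous function defined on the whole of $[0,1]$ that is monotone on a covering pair of subintervals, hence monotone. I also want to be slightly careful that "concave on $\ell\cap\Omega^i$" genuinely gives "$\beta'$ nonincreasing on that interval": this is the standard characterization of concavity for a $C^1$ function on an interval, and $\beta = B\circ\ell$ restricted to $\ell\cap\Omega^i$ is $C^1$ and concave because $B$ is locally concave on the induced convex set $\Omega^i$ and $\ell\cap\Omega^i$ is a segment contained in $\Omega^i$.

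The main obstacle, such as it is, is purely bookkeeping: correctly enumerating how $\ell$ can meet the two domains (contained in one, overlapping, or meeting in a single transition point) and making sure the induced-convexity hypothesis is invoked to guarantee each $\ell\cap\Omega^i$ is an interval rather than a more complicated set — if those intersections were not convex, the covering-intervals argument for $\beta'$ would collapse. Once that is set up, the analytic core is the one-line lemma about $C^1$ functions with piecewise-monotone derivative on a covering pair of intervals, and there is no real computation to grind through.
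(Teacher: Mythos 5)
Your proof is correct and follows essentially the same route as the paper's: restrict to a segment, use induced convexity to see that the two intersections are subintervals covering it, and glue the two concavity statements using the $C^1$-smoothness at the transition. The paper phrases the gluing step more tersely (concavity on each piece plus smoothness at a common point), while you spell it out via monotonicity of the directional derivative, but the substance is identical.
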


\begin{proof}
To prove the claim we establish that the restriction of~$B$ to every segment~$l\subset\Omega^1\cup\Omega^2$ 
is concave. We have~$l = \big(l\cap\Omega^1\big) \cup \big(l\cap\Omega^2\big)$. Each of the sets~$l\cap\Omega^1$ 
and~$l\cap\Omega^2$ is convex, i.\,e., they are either segments or empty sets (the latter case is trivial). 
By the hypothesis,~$B$ is concave on each of these segments. Using~$C^1$-smoothness of~$B$ in a common point 
of these segments, we get that~$B|_{l}$ is concave.  
\end{proof}

Now we can state 
that a~$C^1$-smooth concatenation of two Bellman candidates is a Bellman candidate provided their domains 
are induced convex. 

We turn to building Bellman candidates. Usually, we will give only sufficient conditions for a foliation 
and a function~$f$ that generate a Bellman candidate. However, to be ready to construct the Bellman function, 
we have to examine all possible local Bellman candidates. So, the conditions we provide are usually also necessary. 
To make the story shorter, sometimes we will not prove this necessity, because we do not need it.

To describe combinatorial properties of foliations, we associate a special oriented graph with each foliation. 
Generally, its vertices correspond to the linearity domains, whereas its edges correspond to the domains 
of extremal segments. A vertex is incident to an edge if the corresponding two domains are adjacent. We postpone 
a more detailed description of the graph to Subsection~\ref{s345}. 

\section{Fence}\label{s31bis}
We start with investigation of the properties of a Bellman candidate defined on a family of extremal segments with an endpoint on the fixed boundary. 

Recall that $g=(g_1,g_2)\colon\R \to \dfi\Omega$  is the parametrization of the fixed boundary of $\Omega$ 
such that $g_1'>0$ and $\kappa_2' = \big(g_2'/g_1'\big)'>0$. Let $I$ be some interval in $\mathbb{R}$. 
Suppose that there exists a family of segments $S(t)=[g(t),w(t)]$, $t \in I$, which foliate a subdomain 
$\Omega(I)\subset \Omega$. We assume that the foliation is sufficiently smooth: $w \in C^1(I)$. We can consider 
$t$ as a function from $\Omega(I)$ to $I$ such that $x \in S(t(x))$ for $x \in \Omega(I)$.  We assume 
the function $t$ to be~$C^1$-smooth as well, and also $\frac{\partial}{\partial x_2}t  \ne 0$ (it implies that 
the segments $S(t)$ are not vertical, i.\,e., $w_1(t)\ne g_1(t)$). We denote the slope of the segment $S(t)$ 
by $\kappaw(t)$: 
\eq{kappa}{
\kappaw(t) \df \frac{w_2(t)-g_2(t)}{w_1(t)-g_1(t)}.
}\index[symbol]{$\kappa$}

\begin{St}
\label{St280701}
Suppose $B$ is a $C^1$-smooth function on $\Omega(I)$ that is affine on each segment $S(t),$ and 
its gradient $\beta = (\beta_1, \beta_2) = \nabla B$\index[symbol]{$\beta$} is constant on $S(t),$ $t \in I$. Then $B$ 
is $C^2$-smooth. Moreover\textup, there is a representation
\eq{fenceB2}{
B(x) = f(t) + \big(x_1-g_1(t)\big)\Big[\kappa_3(t)+\big(\kappa(t)-\kappa_2(t)\big)\beta_2(t)\Big], \quad t = t(x),
}
where $\beta_2$ is given by 
\eq{beta2new}{
\beta_2(t) = \exp\Big(\!-\!\int_{t_0}^t\frac{\kappa_2'}{\kappa_2-\kappa}\Big)
\left(\int_{t_0}^t \exp\Big(\int_{t_0}^\tau\frac{\kappa_2'}{\kappa_2-\kappa}\Big)
\frac{\kappa_3'(\tau)}{\kappa_2(\tau)-\kappa(\tau)}\,d\tau+\beta_2(t_0) \right), \quad t, t_0 \in I,
}
and $\kappa_2,$ $\kappa_3$ are defined in~\eqref{eqKappas}\textup, here $\beta_2(t_0)$ is an arbitrary number.
\end{St}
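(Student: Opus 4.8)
The statement has two parts: first, that a $C^1$ function $B$ affine on each segment $S(t)$ with gradient constant along $S(t)$ is automatically $C^2$; second, the explicit formulas \eqref{fenceB2} and \eqref{beta2new}. I would organize the argument around writing down, differentiating, and solving an ODE for $\beta_2$ along the parameter $t$.

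\textbf{Step 1: set up the basic relations.} Since $B$ is affine on $S(t)$ with gradient $\beta(t) = (\beta_1(t),\beta_2(t))$ constant on that segment, along the fixed boundary we have $B(g(t)) = f(t)$, so for $x \in S(t)$,
\eq{planfence1}{
B(x) = f(t) + \beta_1(t)(x_1 - g_1(t)) + \beta_2(t)(x_2 - g_2(t)).
}
Using that $x_2 - g_2(t) = \kappa(t)(x_1 - g_1(t))$ on $S(t)$ (here I use $w_1(t)\ne g_1(t)$, which is exactly the nondegeneracy assumed via $\partial t/\partial x_2 \ne 0$), and that the gradient of the right-hand side in the $x_1$-direction along $\dfi\Omega$ must be compatible with $\frac{d}{dt} f(t) = \nabla B(g(t))\cdot g'(t)$, I get the relation $f'(t) = \beta_1(t) g_1'(t) + \beta_2(t) g_2'(t)$, i.e.\ $\beta_1(t) = \kappa_3(t) - \kappa_2(t)\beta_2(t)$ after dividing by $g_1'$ (recall \eqref{eqKappas}). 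Substituting back into \eqref{planfence1} gives precisely \eqref{fenceB2}, \emph{provided} we know $\beta_2(t)$. So the content is really to pin down $\beta_2$.

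\textbf{Step 2: derive the ODE for $\beta_2$.} The key constraint not yet used is that $\beta = \nabla B$ is genuinely the gradient of a single smooth function on the two-dimensional domain $\Omega(I)$, not just on each segment. I would exploit this by computing $\nabla B$ at $x$ two ways: differentiating the expression \eqref{fenceB2} (with $t = t(x)$) directly, and comparing with $\beta(t(x))$. The compatibility condition — equivalently, that the $t$-derivatives of $\beta_1$ and $\beta_2$ match the requirement that $B$ be $C^1$ across nearby segments — yields a first-order linear ODE. Concretely, differentiating the identity $\beta_2(t) = B_{x_2}(x)$ along the curve $w(t)$ (the free endpoint, which moves in a $C^1$ fashion) and using $w_2' - \kappa w_1' $ relations together with $w'(t) \parallel$ direction of $S(t)$ is not available here (that was the tangent case); instead the cleanest route is: the function $t \mapsto \beta_2(t)$ must satisfy
\eq{planfence2}{
\beta_2'(t)\,(\kappa_2(t) - \kappa(t)) + \beta_2(t)\,\kappa_2'(t) = \kappa_3'(t),
}
which is obtained by differentiating the first-coordinate gradient identity $\beta_1 + \kappa\beta_2 = \kappa_3 + (\kappa - \kappa_2)\beta_2$... more carefully: differentiate $f'(t) = \beta_1 g_1' + \beta_2 g_2'$ in $t$ and combine with the $C^1$-matching of $B_{x_1}$ across segments (which forces $\frac{d}{dt}[\beta_1(t) - g_1'(t)\cdot(\text{slope of }B\text{ in }x_1)]$ to vanish appropriately). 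Once \eqref{planfence2} is established, it is a standard linear ODE whose solution by the integrating factor $\exp\!\big(\int_{t_0}^t \frac{\kappa_2'}{\kappa_2 - \kappa}\big)$ is exactly \eqref{beta2new}, with $\beta_2(t_0)$ free.

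\textbf{Step 3: $C^2$-smoothness.} Finally, from \eqref{beta2new} the function $\beta_2$ is $C^1$ (its integrand is continuous since $\kappa_2, \kappa_3 \in C^1$, $\kappa \in C^1$ by the foliation smoothness hypothesis $w \in C^1$, and $\kappa_2 - \kappa \ne 0$ because the segments are non-vertical and transversal), hence $\beta_1$ is $C^1$ too; then \eqref{fenceB2} exhibits $B$ as a $C^1$ function of $(x, t)$ composed with the $C^1$ map $x \mapsto t(x)$, and differentiating once more shows $B \in C^2$.

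\textbf{Main obstacle.} The delicate point is Step 2 — rigorously extracting the ODE \eqref{planfence2} purely from the $C^1$-hypothesis plus the geometric constraint that the gradient is constant on each leaf, without accidentally assuming more regularity than given. The honest way is to first prove $B \in C^2$ locally by a bootstrapping argument (express $B_{x_1}, B_{x_2}$ as $C^0$ functions that are constant on leaves; a function constant on a $C^1$ foliation and continuous is as smooth as the foliation parameter), and only then differentiate; one must be careful that $t(x)$ being merely assumed $C^1$ is enough, and that the non-vanishing of $\kappa_2 - \kappa$ (equivalently $w_1 \ne g_1$, guaranteed since $S(t)$ is not vertical) keeps every division legitimate. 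I would flag that $\kappa_2 \ne \kappa$ everywhere on $I$ is the running nondegeneracy hypothesis making the integrating factor well defined.
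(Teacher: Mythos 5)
Your overall route is the same as the paper's: write $B(x)=f(t)+\beta_1(t)(x_1-g_1(t))+\beta_2(t)(x_2-g_2(t))$, use the boundary identity $\langle\beta,g'\rangle=f'$ to get $\beta_1=\kappa_3-\kappa_2\beta_2$ and hence \eqref{fenceB2}, then compare the gradient of this representation with the assumed $\beta(t(x))$ to force $(\kappa_2-\kappa)\beta_2'+\kappa_2'\beta_2=\kappa_3'$, and solve by the integrating factor. That is exactly what the paper does (the ODE is obtained by differentiating the representation in $x_2$, using $B_{x_2}=\beta_2$ and $\partial t/\partial x_2\ne0$), so Steps 1 and 2 are fine in substance even though your description of where the ODE comes from wanders a bit.

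The one genuine weak point is the regularity bootstrap you invoke before you are allowed to write $\beta_2'$. The principle you state --- ``a function constant on a $C^1$ foliation and continuous is as smooth as the foliation parameter'' --- is false: with the trivial foliation of a strip by vertical segments ($t(x)=x_1$, smooth), the function $x\mapsto|x_1-c|$ is continuous and constant on leaves but not $C^1$. Continuity of $B_{x_2}$ only gives $\beta_2\in C^0$, which does not license differentiating it to get \eqref{planfence2}. The paper's fix is short and worth knowing: evaluate \eqref{fenceB2} at the $C^1$ curve $x=w(t)$ and solve for $\beta_2(t)$ in terms of $B(w(t))$, $f(t)$, $w_1(t)-g_1(t)$, $\kappa$, $\kappa_2$, $\kappa_3$; since $B$ and $w$ are $C^1$ and both $w_1-g_1$ and $\kappa-\kappa_2$ are nonvanishing, this exhibits $\beta_2$ as a $C^1$ function of $t$, after which $\beta_1=\kappa_3-\kappa_2\beta_2$ is $C^1$ and the differentiation in your Step 2 is legitimate; $C^2$-smoothness of $B$ then follows as you say. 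With that substitution your argument closes.
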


\begin{proof} 
We have 
\eq{fenceB}{
B(x) = f(t) + \beta_1(t) (x_1-g_1(t))+ \beta_2(t) (x_2-g_2(t)),
}
since the function $B$ is affine on $S(t)$. 
Differentiating the boundary equality $B(g(t))=f(t)$, we obtain
\eq{nabla1}{
\scalp{\beta(t)}{g'(t)} = f'(t),
}
which is equivalent to (divide by $g_1'$) 
\eq{nabla11}{
\beta_1+\kappa_2\beta_2=\kappa_3.
}

Plugging this into~\eqref{fenceB}, we get
\eq{eq200702}{
B(x) = f(t) + (\kappa_3(t) - \kappa_2(t) \beta_2(t))(x_1-g_1(t))+ \beta_2(t) (x_2-g_2(t)).
}
We use the formula
\eq{tofxy}{
\kappaw(t) =\frac{x_2-g_2(t)}{x_1-g_1(t)}, \quad x \in S(t),
}
to rewrite~\eqref{eq200702} in the form~\eqref{fenceB2}.

It remains to prove~\eqref{beta2new}. Let us note that~\eqref{fenceB2} at the point $x = w(t)$ implies 
$C^1$-smoothness of $\beta_2$, which in its turn implies $C^2$-smoothness of $B$ because  $\beta_1$ is 
$C^1$-smooth as well due to~\eqref{nabla11}. Differentiating~\eqref{eq200702} with respect to $x_2$ and 
using the relations $\kappa_2 g_1'=g_2'$, $\kappa_3 g_1'=f'$, we get
\eq{eq200703}{
\Big((\kappa_3'-\kappa_2'\beta_2-\kappa_2\beta_2')(x_1-g_1(t))+\beta_2'(x_2-g_2(t))\Big)\cdot
\frac{\partial}{\partial x_2}t = 0.
}
We use that $\frac{\partial}{\partial x_2}t\ne 0$ and~\eqref{tofxy} to obtain
\eq{diffbeta2}{
(\kappa_2-\kappa)\beta_2'+\kappa_2'\beta_2=\kappa_3'.
}

Solution $y$ of the equation $y'(t)+K_1(t)y(t) =K_2(t)$ is given by the formula
\eq{firstordersolution}{
y(t) = \exp\Big(-\int_{t_0}^t K_1(\tau)\, d\tau \Big)
\left(\int_{t_0}^t \exp\Big(\int_{t_0}^\tau K_1(s)\,d s\Big)K_2(\tau)\,d\tau+y(t_0)\right),
}
where $y(t_0)$ is an arbitrary parameter. 
Applying this to~\eqref{diffbeta2} with $y = \beta_2$, we obtain~\eqref{beta2new}. 
\end{proof}

In what follows we will need a slightly different representation for $\beta_2$ that can be obtained using integration by parts.

\begin{Cor}
Under conditions of Proposition~\textup{\ref{St280701}} for $t, t_0 \in I$ we have\textup:
\eq{beta2}{
\beta_2(t) = \tors(t)-\exp\Big(-\int_{t_0}^t\frac{\kappa_2'}{\kappa_2-\kappa}\Big)
\left(\int_{t_0}^t \exp\Big(\int_{t_0}^\tau\frac{\kappa_2'}{\kappa_2-\kappa}\Big)\tors'(\tau)\,d\tau+\const \right),
}
\eq{betaprime}{
\beta_2'(t) = \frac{\kappa_2'(t)}{\kappa_2(t)-\kappa(t)}\exp\Big(\!-\!\int_{t_0}^t\frac{\kappa_2'}{\kappa_2-\kappa}\Big)
\left(\int_{t_0}^t \exp\Big(\int_{t_0}^\tau\frac{\kappa_2'}{\kappa_2-\kappa}\Big)\tors'(\tau)\,d\tau+\const \right),
}
where $\const = \beta_2'(t_0)\frac{\kappa_2(t_0)-\kappa(t_0)}{\kappa_2'(t_0)} =\tors(t_0)-\beta_2(t_0)$.
\end{Cor}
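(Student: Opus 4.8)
The plan is to obtain \eqref{beta2} and \eqref{betaprime} from \eqref{beta2new} by a single integration by parts in the integral $\int_{t_0}^t \exp\big(\int_{t_0}^\tau \frac{\kappa_2'}{\kappa_2-\kappa}\big)\frac{\kappa_3'(\tau)}{\kappa_2(\tau)-\kappa(\tau)}\,d\tau$, using that $\tors = \kappa_3'/\kappa_2'$ by \eqref{eqKappas} and hence $\frac{\kappa_3'}{\kappa_2-\kappa} = \tors\cdot\frac{\kappa_2'}{\kappa_2-\kappa}$. First I would observe that $\frac{\kappa_2'(\tau)}{\kappa_2(\tau)-\kappa(\tau)}\exp\big(\int_{t_0}^\tau \frac{\kappa_2'}{\kappa_2-\kappa}\big)$ is exactly $\frac{d}{d\tau}\exp\big(\int_{t_0}^\tau \frac{\kappa_2'}{\kappa_2-\kappa}\big)$, so the integrand in \eqref{beta2new} is $\tors(\tau)\,\frac{d}{d\tau}E(\tau)$, where I abbreviate $E(\tau) = \exp\big(\int_{t_0}^\tau \frac{\kappa_2'}{\kappa_2-\kappa}\big)$ (so $E(t_0)=1$).

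Then integration by parts gives
\[
\int_{t_0}^t \tors(\tau)\,E'(\tau)\,d\tau = \tors(t)E(t) - \tors(t_0) - \int_{t_0}^t \tors'(\tau)E(\tau)\,d\tau.
\]
Substituting this into \eqref{beta2new}, whose prefactor is $E(t)^{-1}$, yields
\[
\beta_2(t) = \tors(t) - E(t)^{-1}\Big(\int_{t_0}^t \tors'(\tau)E(\tau)\,d\tau + \tors(t_0) - \beta_2(t_0)\Big),
\]
which is precisely \eqref{beta2} with $\const = \tors(t_0) - \beta_2(t_0)$. To check the alternative expression $\const = \beta_2'(t_0)\frac{\kappa_2(t_0)-\kappa(t_0)}{\kappa_2'(t_0)}$, I would evaluate \eqref{diffbeta2} at $t = t_0$: it reads $(\kappa_2-\kappa)\beta_2' + \kappa_2'\beta_2 = \kappa_3' = \kappa_2'\tors$ there, so $\beta_2'(t_0)\frac{\kappa_2(t_0)-\kappa(t_0)}{\kappa_2'(t_0)} = \tors(t_0) - \beta_2(t_0)$, matching.

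Finally, for \eqref{betaprime} I would differentiate \eqref{beta2} directly. Writing $\beta_2(t) = \tors(t) - E(t)^{-1}\big(F(t) + \const\big)$ with $F(t) = \int_{t_0}^t \tors'(\tau)E(\tau)\,d\tau$, we have $F'(t) = \tors'(t)E(t)$ and $\big(E^{-1}\big)' = -E^{-1}\frac{\kappa_2'}{\kappa_2-\kappa}$, so
\[
\beta_2'(t) = \tors'(t) - E(t)^{-1}\tors'(t)E(t) + \frac{\kappa_2'(t)}{\kappa_2(t)-\kappa(t)}E(t)^{-1}\big(F(t)+\const\big),
\]
and the first two terms cancel, leaving exactly \eqref{betaprime}. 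There is no real obstacle here; the only point requiring a little care is bookkeeping the sign in $E'$ versus the $E^{-1}$ prefactor and the identification of the two forms of $\const$, which is why I would spell out the $t=t_0$ evaluation of \eqref{diffbeta2} explicitly.
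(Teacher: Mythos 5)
Your proof is correct and follows essentially the same route as the paper: both rest on the observation that the integrand in \eqref{beta2new} equals $\tors(\tau)\cdot\frac{d}{d\tau}\exp\big(\int_{t_0}^\tau\frac{\kappa_2'}{\kappa_2-\kappa}\big)$, followed by a single integration by parts, with \eqref{betaprime} and the identification of the two forms of $\const$ then coming from \eqref{diffbeta2}. The only detail the paper adds that you omit is the remark that $\tors'$ is a signed measure (Condition~\ref{reg}), so the integrals in \eqref{beta2} and \eqref{betaprime} and the integration by parts are to be understood in that sense.
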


\begin{proof}
Let us integrate by parts in the right hand side of~\eqref{beta2new} using the fact that 
$$
\exp\Big(\int_{t_0}^\tau\frac{\kappa_2'}{\kappa_2-\kappa}\Big)\frac{\kappa_3'(\tau)}{\kappa_2(\tau)-\kappa(\tau)} = 
\bigg(\exp\Big(\int_{t_0}^\tau\frac{\kappa_2'}{\kappa_2-\kappa}\Big)\bigg)'\! \cdot \tors(\tau).
$$
We note that $\tors'$ is a signed measure due to Condition~\ref{reg}, and the integrals in~\eqref{beta2} 
and~\eqref{betaprime} are considered as the integrals with respect to this measure. Formula~\eqref{betaprime} 
immediately follows from~\eqref{beta2} and~\eqref{diffbeta2}, this also proves that the constants in~\eqref{betaprime} 
and~\eqref{beta2} are the same.
\end{proof}

We formulate a proposition converse to~\ref{St280701}.

\begin{St}
\label{St280702}
Suppose that the function $\beta_2$ is given by~\eqref{beta2new} and the function $B$ is defined 
by~\eqref{fenceB2} on the domain $\Omega(I)$. Then the function $B$ is $C^2$-smooth on $\Omega(I)$ 
and affine on each segment~$S(t)$. Its gradient coincides with $(\beta_1(t),\beta_2(t))$ on $S(t),$  
here $\beta_1$ is given by~\eqref{nabla11}.
\end{St}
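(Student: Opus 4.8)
The plan is to verify directly that the function $B$ defined by~\eqref{fenceB2}, together with $\beta_1$ from~\eqref{nabla11} and $\beta_2$ from~\eqref{beta2new}, has all the asserted properties, essentially by reversing the chain of computations in the proof of Proposition~\ref{St280701}. The statement is a converse, so the natural structure is: (i) check smoothness, (ii) check that $B$ is affine on each $S(t)$ with the claimed gradient, and (iii) check the boundary condition $B(g(t))=f(t)$ is consistent.

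First I would rewrite~\eqref{fenceB2} back into the form~\eqref{eq200702}, i.e. $B(x)=f(t)+(\kappa_3(t)-\kappa_2(t)\beta_2(t))(x_1-g_1(t))+\beta_2(t)(x_2-g_2(t))$, using the identity~\eqref{tofxy}, $\kappa(t)=(x_2-g_2(t))/(x_1-g_1(t))$ valid on $S(t)$; this is just the algebraic step in the original proof run in reverse, and it shows $B$ is manifestly affine on each segment $S(t)$ with gradient $(\beta_1(t),\beta_2(t))$, where $\beta_1=\kappa_3-\kappa_2\beta_2$ is exactly~\eqref{nabla11}. Evaluating~\eqref{eq200702} at $x=g(t)$ gives $B(g(t))=f(t)$, so the boundary condition holds. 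For smoothness: the functions $\kappa_2,\kappa_3$ are $C^1$ (indeed $\kappa_2\in C^2$) and $\kappa=\kappaw$ is $C^1$ by the standing assumptions on the foliation ($w\in C^1(I)$ and $w_1\ne g_1$); the formula~\eqref{beta2new} for $\beta_2$ is the standard solution of the linear ODE~\eqref{diffbeta2}, hence $\beta_2\in C^1(I)$ (one differentiates under the integral sign), and then $\beta_1\in C^1(I)$ by~\eqref{nabla11}. Since $t(\cdot)$ is assumed $C^1$ on $\Omega(I)$, formula~\eqref{eq200702} exhibits $B$ as a $C^1$ function of $x$.

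To upgrade $C^1$ to $C^2$ I would reuse the argument already present in the proof of Proposition~\ref{St280701}: evaluating~\eqref{fenceB2} at the endpoint $x=w(t)$ expresses $B(w(t))$ in terms of $C^1$ data, and since $t\mapsto w(t)$ parametrizes the free boundary of $\Omega(I)$ transversally to the fibers, this forces $\beta_2$ (already known $C^1$) to combine with the $C^1$ structure to give $B\in C^2$. Alternatively, and more cleanly, once we know $B$ is $C^1$, affine on each $S(t)$, with gradient constant on $S(t)$, Proposition~\ref{St280701} applies verbatim and yields $B\in C^2$; so the converse statement's $C^2$ claim is really just an invocation of the direct statement once the $C^1$ structure is established. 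Finally, one should check that the $\beta_2$ produced by~\eqref{beta2new} is consistent with the ODE~\eqref{diffbeta2}, i.e. that plugging it back reproduces $(\kappa_2-\kappa)\beta_2'+\kappa_2'\beta_2=\kappa_3'$; this is the defining property of~\eqref{firstordersolution} and needs no new work.

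The main obstacle, such as it is, is bookkeeping rather than mathematics: one must be careful that the standing regularity hypotheses (the $C^1$-smoothness of $w$ and of the level-set function $t(x)$, the non-verticality $w_1\ne g_1$, and $\partial t/\partial x_2\ne 0$) are genuinely enough to run the differentiation-under-the-integral-sign argument for $\beta_2$ and to conclude $B\in C^1$, and that the jump to $C^2$ is legitimately routed through Proposition~\ref{St280701} and not circular. I expect no genuine difficulty — the proof is a short verification, and in the paper it is presumably dispatched in a line or two by saying "reverse the computation in the proof of Proposition~\ref{St280701}."
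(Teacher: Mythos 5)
Your overall route is the paper's: rewrite \eqref{fenceB2} as \eqref{eq200702} (equivalently \eqref{fenceB}) via \eqref{tofxy} and \eqref{nabla11}, and read off the structure. But there is a genuine gap at the central step. From \eqref{eq200702} it is indeed manifest that the restriction of $B$ to a fixed segment $S(t)$ is affine, and that the affine function of $x$ appearing there has gradient $(\beta_1(t),\beta_2(t))$. It is \emph{not} manifest that this equals $\nabla B$: the function $B$ on the two-dimensional domain is obtained by substituting $t=t(x)$, so differentiating \eqref{fenceB} in $x$ produces, besides $\beta(t)$, the extra term $\big(f'(t)-\scalp{\beta(t)}{g'(t)}+\scalp{\beta'(t)}{x-g(t)}\big)\nabla t$ — this is exactly \eqref{gradB} in the paper. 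The claim $\nabla B=\beta$ is precisely the statement that this coefficient vanishes, which needs two separate facts: $\scalp{\beta(t)}{g'(t)}=f'(t)$, which is \eqref{nabla11}, and $\scalp{\beta'(t)}{x-g(t)}=0$ on $S(t)$, which is the ODE \eqref{diffbeta2} rewritten through \eqref{tofxy} and \eqref{nabla21}. You relegate the ODE verification to a closing remark that ``needs no new work,'' but it is the load-bearing step of the whole proof: without it the $\nabla t$ term survives, the gradient of $B$ is not $\beta$, and it is not constant on $S(t)$.

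Once $\nabla B=\beta(t(x))$ is established, the $C^2$ claim is immediate, since $\beta$ and $t$ are $C^1$ and the gradient is their composition; this is how the paper concludes. Your two suggested routes to $C^2$ are both off: evaluating \eqref{fenceB2} at $x=w(t)$ is the argument used in the proof of Proposition~\ref{St280701} to deduce smoothness of $\beta_2$ from smoothness of $B$ (the direct statement, where the implication runs the other way), and re-invoking Proposition~\ref{St280701} is only legitimate after you have proved that the gradient is constant on each $S(t)$ — which is the point at issue.
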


\begin{proof}
If we prove $\nabla B = \beta$, then we automatically get that $B$ is $C^2$-smooth, since both $\beta$ 
and $t$ are $C^1$-smooth. 

Using~\eqref{nabla11} and~\eqref{tofxy} we rewrite~\eqref{fenceB2} in the form~\eqref{fenceB}. 
Differentiating \eqref{fenceB} with respect to $x$ we obtain
\eq{gradB}{
\nabla B = f'(t) \nabla t +\beta(t) + \big(\scalp{\beta'(t)}{x-g(t)}-\scalp{\beta(t)}{g'(t)}\big)\nabla t, 
\qquad t \in I,\  x \in S(t).
}
We recall that~\eqref{nabla11} is equivalent to~\eqref{nabla1}, and therefore $\scalp{\beta(t)}{g'(t)}=f'(t)$. 
Thus, to prove that $\nabla B = \beta$  we only need to show that $\scalp{\beta'(t)}{x-g(t)} = 0$. 
The function $\beta_2$ given by~\eqref{beta2new} satisfies~\eqref{diffbeta2}. Again, using~\eqref{nabla11} 
and~\eqref{tofxy} we obtain
$$
(\kappa_3' - \kappa_2' \beta_2 - \kappa_2 \beta_2')(x_1-g_1(t))+ \beta_2' (x_2-g_2(t)) = 0,
$$ 
which is equivalent to 
\begin{equation}
\label{nabla2}
\scalp{\beta'(t)}{x-g(t)} = 0, \quad x \in S(t).
\end{equation}
\end{proof}

\begin{Rem}
Dividing~\eqref{nabla2} by~$x_1-g_1(t),$ we obtain the following relation:
\eq{nabla21}{
\beta_1'+\kappa\beta_2'=0.
}
\end{Rem}

Concluding this subsection, we name the domains $\Omega(I)$ as defined above by the term \emph{fences}\index{fence}. 
We will need two types of fences: the points $w(t)$ are either on $\dfree\Omega$ or on $\dfi \Omega$. In the 
first case we have a family of tangents to $\dfree\Omega$, and $\Omega(I)$ is called a tangent domain, 
and in the second case we have a family of chords, and $\Omega(I)$ is called a chordal domain. We will 
use all the relations from this subsection for both types of fences in what follows. Now we consider these 
two cases separately and show how to find the vector valued function $\beta$ and determine the candidate~$B$.

\section{Tangent domains}
\label{s32}\index{domain! tangent domain}

As it was mentioned in Section~\ref{s31}, the extremal segments are of two types: the chords and the tangents. 
This section provides a study of \emph{tangent domains}, i.\,e., fences that consist of the segments $S(t)$ 
tangent to $\dfree \Omega$, see Figure~\ref{fig:tang}.

\begin{figure}[h!]
\includegraphics[width=0.8\textwidth]{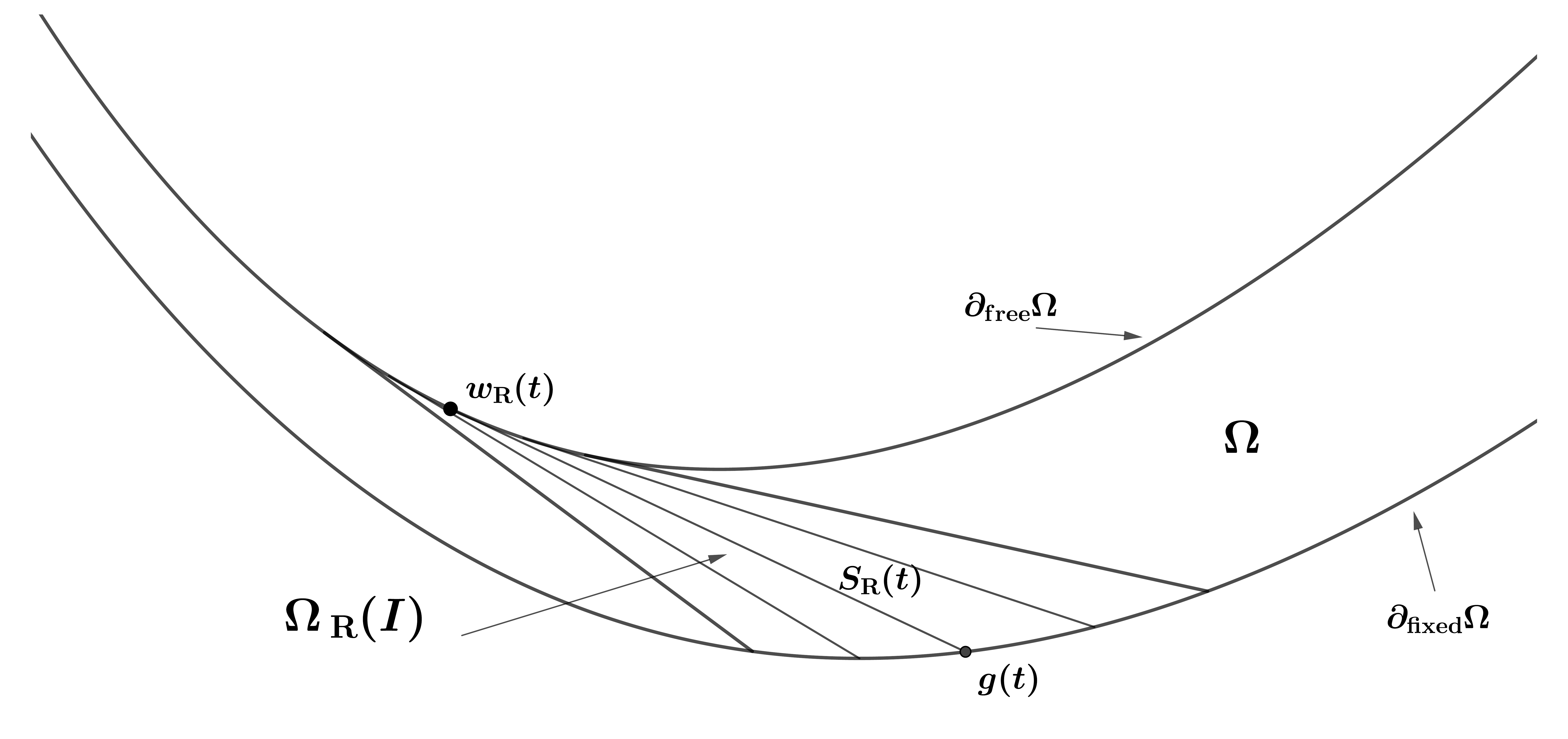}
\includegraphics[width=0.8\textwidth]{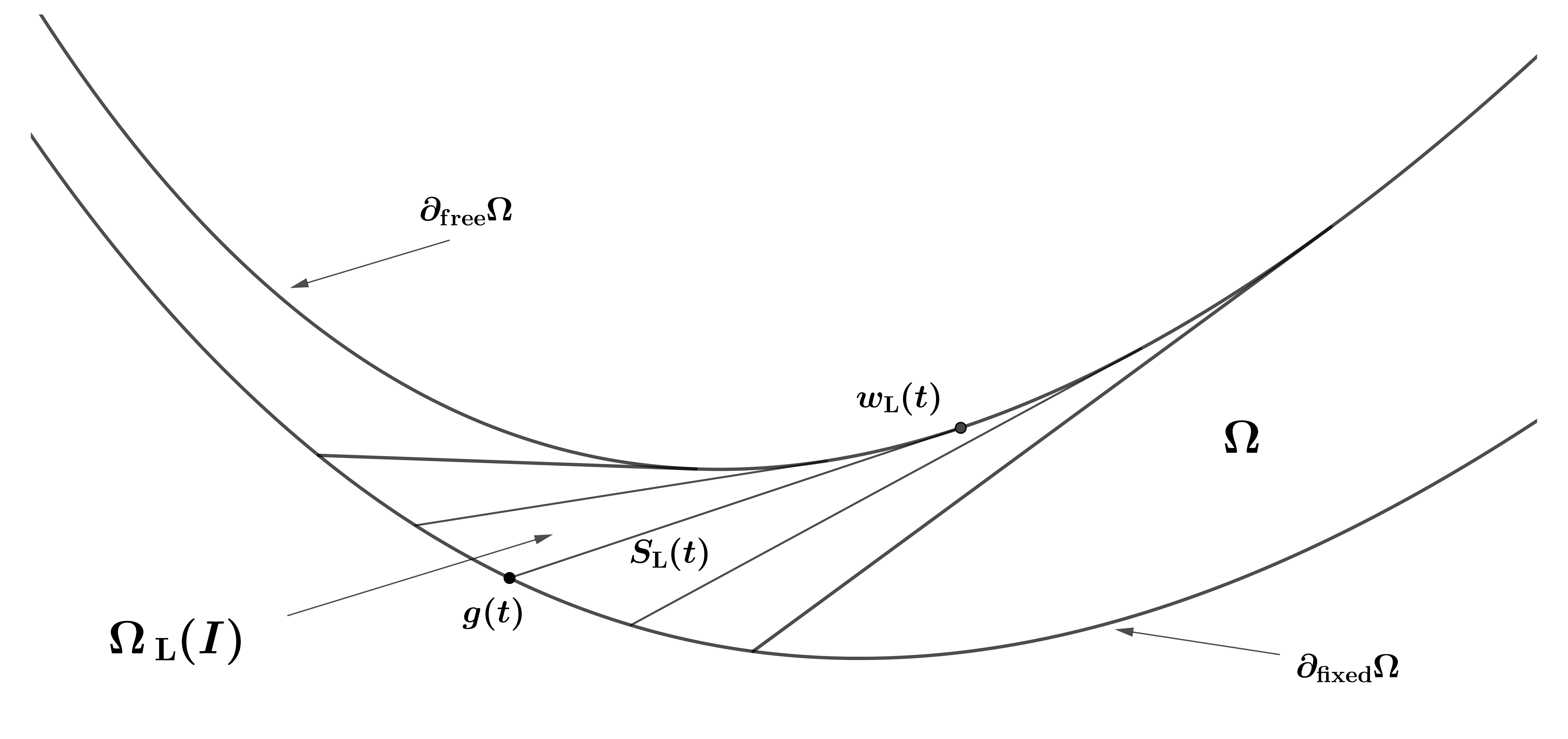}
\caption{Domains~$\Rt$ and~$\Lt$ with right and left tangents.}
\label{fig:tang}
\end{figure}

\begin{Def}
A fence $\Omega(I)$ with the foliation $S(t)=[g(t),w(t)]$, $t \in I$, is called a tangent domain if 
$w(t)\in\dfree\Omega$ and the segment $S(t)$ is tangent to $\dfree\Omega$ for $t \in I$.
\end{Def}

We note that in general there are two possibilities: either $S(t)=\Sr(t)$ (right tangents) or $S(t)=\Sl(t)$ 
(left ones), see Subsection~\ref{cond}. If we consider one of these two cases, we use notation $\Rt(I)$ 
and $\Lt(I)$, respectively. The segment $S(t)$ is tangent to $\dfree\Omega$ if and only if the function 
$\kappa$ defined in~\eqref{kappa} satisfies the following equation:
\eq{tangent}{
\kappaw(t) =\frac{w_2'(t)}{w_1'(t)}.
}

Differentiating \eqref{kappa} and using \eqref{tangent}, one can easily check that 
\eq{kappa'}{
\kappaw'=\frac{g_1'\kappaw-g_2'}{w_1-g_1}.
}

Differentiating \eqref{tofxy} with respect to $x_2$, using \eqref{kappa'} and \eqref{kappa} once more,
we find the partial derivative of the function $t=t(x)$:
\eq{t_y}{
t_{x_2}\df\frac{\partial t}{\partial x_2}=\frac1{g_2'+(x_1-g_1)\kappaw'-g_1'\kappaw}=
\frac{w_1-g_1}{(\kappaw g_1'-g_2')(x_1-w_1)}=\frac{w_1-g_1}{g_1'(\kappaw -\kappa_2)(x_1-w_1)}.
}

\begin{Rem}
\label{Rem201001}
Note that $x_1$ lies between $g_1$ and $w_1$, $\kappaw > \kappa_2$ for $\Lt(I)$ and $\kappaw < \kappa_2$ 
for $\Rt(I)$. Therefore $t_{x_2}>0$ for $\Rt(I)$ and $t_{x_2}<0$ for $\Lt(I)$. Recall that $B$ is defined by~\eqref{fenceB2} and its Hessian is degenerate, 
therefore concavity of $B$ follows from $B_{x_2x_2} < 0$. The sign of $B_{x_2x_2} = \beta_2'(t) t_{x_2}$, 
is determined by the sign of $\beta_2'$, and we obtain the following propositions.
\end{Rem}

\begin{St}
\label{NewRightTangentsCandidate}
Consider a tangent domain~$\Rt(I)$\index[symbol]{$\Rt(I)$} foliated by the family of segments $\Sr(t)=[g(t),w(t)]$ such that 
$w(t)\in \dfree\Omega$ and $\Sr(t)$ is a right tangent to $\dfree\Omega$ for $t \in I$. Suppose that 
the function~$\beta_2$ given by~\eqref{beta2new} satisfies the inequality~$\beta_2'(t)\leq 0$ for~$t\in I$. 
Then the function~$B$ given by~\eqref{fenceB2} is a Bellman candidate on~$\Rt(I)$.
\end{St}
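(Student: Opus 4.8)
The plan is to verify the three defining properties of a Bellman candidate from Definition~\ref{candidate}: continuity, local concavity, and the existence of a foliation satisfying the boundary condition on $\Rt(I) \cap \dfi\Omega$. The foliation is already given to us — it is the family of right tangent segments $\Sr(t) = [g(t), w(t)]$, $t \in I$ — so the content is entirely in checking that $B$ is locally concave and $C^1$ (hence continuous) with the correct boundary values.

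First I would invoke Proposition~\ref{St280702}: since $\beta_2$ is defined by~\eqref{beta2new} and $B$ is defined by~\eqref{fenceB2} on the domain $\Rt(I)$, that proposition tells us immediately that $B$ is $C^2$-smooth, is affine along each segment $\Sr(t)$, and has gradient $(\beta_1(t),\beta_2(t))$ on $\Sr(t)$. In particular $B$ is continuous, and the boundary condition $B(g(t)) = f(t)$ holds since~\eqref{fenceB2} evaluated at $x = g(t)$ gives exactly $f(t)$; this is precisely the defining relation~\eqref{nabla1} that went into the construction. So the only remaining thing to establish is local concavity.

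For local concavity, the key observation is Remark~\ref{Rem201001}: the Hessian of $B$ is degenerate (since $B$ is affine along the segments $\Sr(t)$, one eigenvalue is zero, i.e.\ the homogeneous Monge--Amp\`ere equation~\eqref{MA} holds), so $B$ is concave on every convex subdomain of $\Rt(I)$ as soon as $B_{x_2x_2} \le 0$ throughout. Now $B_{x_2x_2} = \beta_2'(t)\, t_{x_2}$ by the chain rule applied to the representation on $\Sr(t)$, and by the formula~\eqref{t_y} together with the sign analysis in Remark~\ref{Rem201001} we have $t_{x_2} > 0$ on a right tangent domain (because $x_1$ lies between $g_1$ and $w_1$ and $\kappa < \kappa_2$ for $\Rt(I)$, making both factors in~\eqref{t_y} of the appropriate sign). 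Hence the hypothesis $\beta_2'(t) \le 0$ on $I$ forces $B_{x_2x_2} \le 0$ on all of $\Rt(I)$, which gives local concavity and completes the verification that $B$ is a Bellman candidate.

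The main subtlety — rather than obstacle — is making sure the sign bookkeeping in~\eqref{t_y} is airtight: one must confirm that $\Rt(I)$ is induced convex (or at least that local concavity on convex subdomains follows from the pointwise Hessian sign), that $w_1(t) \ne g_1(t)$ so the segments are non-vertical and $t_{x_2}$ is well-defined and finite, and that the relevant quantities $x_1 - w_1$ and $\kappa - \kappa_2$ have the signs claimed in Remark~\ref{Rem201001} for the right-tangent configuration. All of this is already set up in Subsection~\ref{cond} and Section~\ref{s32}, so the proof is essentially a citation of Proposition~\ref{St280702} plus the sign computation of Remark~\ref{Rem201001}; no genuinely new estimate is needed.
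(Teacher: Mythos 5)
Your route is the same as the paper's: cite Proposition~\ref{St280702} for the gradient identity $\nabla B=(\beta_1,\beta_2)$ and the boundary values, then reduce concavity to the sign of $B_{x_2x_2}=\beta_2'(t)\,t_{x_2}$ using the positivity of $t_{x_2}$ from~\eqref{t_y} and Remark~\ref{Rem201001}. So the structure is right and nothing new is needed for the case $\beta_2'(t)<0$: there $\det(d^2B)=0$ and $B_{x_2x_2}<0$ force $B_{x_1x_1}\le 0$, hence negative semidefiniteness.

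The one step that does not go through as written is the boundary case $\beta_2'(t)=0$, which your hypothesis $\beta_2'\le 0$ allows. ``Degenerate Hessian and $B_{x_2x_2}\le 0$'' does not imply negative semidefiniteness: with $\det(d^2B)=0$ and $B_{x_2x_2}=0$ one only gets $B_{x_1x_2}=0$, and $B_{x_1x_1}$ could a priori be positive (think of $\mathrm{diag}(1,0)$). The paper closes exactly this case by invoking~\eqref{nabla21}: $\beta_1'+\kappa\beta_2'=0$, so $\beta_2'(t)=0$ forces $\beta_1'(t)=0$ and the entire Hessian vanishes on that segment. Add that one line and your argument is complete.
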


\begin{proof}
The function $\beta_2$ given by~\eqref{beta2new} and $\beta_1$ defined by~\eqref{nabla11} 
are the components of the gradient $\nabla B$ by Proposition~\ref{St280702}. 
Thus, we only need to verify the concavity of~$B$. If $\beta_2'(t) =0$, then 
$\beta_1'(t) =0$ by~\eqref{nabla21} and the Hessian of $B$ is the zero matrix. If $\beta_2'(t) < 0$, then 
$B_{x_2x_2} = \beta_2'(t) t_{x_2}<0$ and the Hessian is non-positive definite by Silvester's criterion. 
Thus, in both cases the proposition is proved.
\end{proof}

\begin{Rem}
If $I=[t_-,t_+]$, $\beta_2'(t_-)\leq 0,$ and the function $\torsion$ corresponding to $\gamma$ 
{\bf(}defined in~\eqref{eq28042001}{\bf)} is nonpositive on $I,$ then $\beta_2'\leq 0$ on the 
entire interval~$I$ because of~\eqref{betaprime} and~\eqref{eq211003}.
\end{Rem}

We state the following symmetrical proposition:

\begin{St}
\label{NewLeftTangentsCandidate}
Consider a tangent domain~$\Lt(I)$\index[symbol]{$\Lt(I)$} foliated by the family of segments $\Sl(t)=[g(t),w(t)]$ such that 
$w(t)\in \dfree\Omega$ and $\Sl(t)$ is a left tangent to $\dfree\Omega$ for $t \in I$. Suppose that 
the function~$\beta_2$ given by~\eqref{beta2new} satisfies the inequality~$\beta_2'(t) \geq 0$ 
for~$t \in I$. Then the function~$B$ given by~\eqref{fenceB2} is a Bellman candidate on~$\Lt(I)$.
\end{St}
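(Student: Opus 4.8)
The plan is to mirror the proof of Proposition~\ref{NewRightTangentsCandidate}, exploiting the left--right symmetry that is built into the setup. The only substantive input that changes is the sign of $t_{x_2}$: by Remark~\ref{Rem201001}, on a left tangent domain $\Lt(I)$ we have $\kappa > \kappa_2$ and $x_1$ between $g_1$ and $w_1$, so formula~\eqref{t_y} gives $t_{x_2} < 0$, whereas for $\Rt(I)$ one had $t_{x_2} > 0$. Consequently the natural concavity hypothesis flips sign: instead of $\beta_2' \le 0$ one needs $\beta_2' \ge 0$, which is exactly what is assumed.

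Concretely, first I would invoke Proposition~\ref{St280702}: since $\beta_2$ is defined by~\eqref{beta2new} and $B$ by~\eqref{fenceB2}, that proposition tells us $B$ is $C^2$-smooth on $\Omega(I) = \Lt(I)$, is affine on each segment $S(t) = \Sl(t)$, and has gradient $(\beta_1(t),\beta_2(t))$ there, with $\beta_1$ given by~\eqref{nabla11}. Also $B|_{\dfi\Omega} = \ff$ holds by construction of~\eqref{fenceB2} evaluated at $x = g(t)$, since then the bracketed term is multiplied by $x_1 - g_1(t) = 0$, leaving $B(g(t)) = f(t)$. So $B$ already satisfies all requirements of a Bellman candidate (Definition~\ref{candidate}) except that local concavity must be checked; the foliation by the segments $\Sl(t)$ is the required one, with each $S(t)$ an extremal segment along which $\nabla B$ is constant.

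For concavity, since $B$ is $C^2$ and its Hessian is degenerate (one direction, along $S(t)$, lies in the kernel — this is the homogeneous Monge--Amp\`ere structure, and follows because $\scalp{\beta'(t)}{x - g(t)} = 0$ on $S(t)$ by~\eqref{nabla2}), it suffices by Sylvester's criterion to show $B_{x_2 x_2} \le 0$. Differentiating~\eqref{fenceB} (equivalently~\eqref{eq200702}) twice in $x_2$ and using $\nabla B = \beta$ with $\beta$ constant on each $S(t)$, one gets $B_{x_2 x_2} = \beta_2'(t)\, t_{x_2}$. With $\beta_2'(t) \ge 0$ by hypothesis and $t_{x_2} < 0$ on $\Lt(I)$ by Remark~\ref{Rem201001}, we obtain $B_{x_2 x_2} \le 0$. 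If $\beta_2'(t) = 0$ at some point, then $\beta_1'(t) = 0$ by~\eqref{nabla21} and the Hessian vanishes there; otherwise it is strictly negative-semidefinite. Either way $B$ is locally concave on $\Lt(I)$, hence a Bellman candidate.

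There is no real obstacle here: the statement is the exact mirror image of Proposition~\ref{NewRightTangentsCandidate}, and the proof is a one-line sign bookkeeping on top of the already-established machinery (Propositions~\ref{St280701}, \ref{St280702}, and Remark~\ref{Rem201001}). The one point worth stating carefully rather than waving at is that $\Lt(I)$ is induced convex (so that Proposition~\ref{ConcatenationOfConcaveFunctions} can later be applied when this candidate is glued to others) and that the kernel-direction argument genuinely makes the Hessian rank one, so that the single inequality $B_{x_2x_2}\le 0$ really does imply negative-semidefiniteness — both of these are already handled in the $\Rt$ case and transfer verbatim.
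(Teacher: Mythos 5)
Your proof is correct and is exactly the argument the paper intends: the paper proves the right-tangent case (Proposition~\ref{NewRightTangentsCandidate}) via Proposition~\ref{St280702}, the identity $B_{x_2x_2}=\beta_2'(t)\,t_{x_2}$, and the sign of $t_{x_2}$ from Remark~\ref{Rem201001}, and then states the left case as the symmetric mirror image, which is precisely the sign-flip you carry out. No gaps.
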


In the final part of this subsection we consider the case of infinite tangent domains.

\begin{St}
\label{NewRightTangentsCandidateInfty}
Let $I=(-\infty,t_2),$ $t_2 \in \mathbb{R}\cup\{+\infty\}$. Consider a tangent domain~$\Rt(I)$ foliated 
by the family of segments $\Sr(t)=[g(t),w(t)]$ such that $w(t)\in \dfree\Omega$ and $\Sr(t)$ is a right 
tangent to $\dfree\Omega$ for $t \in I$. Suppose that the function~$\beta_2$ given by~\eqref{beta2} 
with $\const=0$ and $t_0=-\infty,$ i.\,e.\textup,
\eq{eq120701}{
\beta_2(t) = 
\tors(t)-\int_{-\infty}^t \exp\Big(\!-\!\int_{\tau}^t\frac{\kappa_2'}{\kappa_2-\kappa}\Big)\tors'(\tau)\,d\tau, \qquad t \in I,
}
is finite and satisfies the inequality 
\eq{eq120702}{
\beta_2'(t) = \frac{\kappa_2'(t)}{\kappa_2(t)-\kappa(t)} \int_{-\infty}^t 
\exp\Big(\!-\!\int_{\tau}^t\frac{\kappa_2'}{\kappa_2-\kappa}\Big)\tors'(\tau)\,d\tau \leq 0,\qquad t \in I.
} 
Then the function~$B$ given by~\eqref{fenceB2} is a Bellman candidate on~$\Rt(I)$.
\end{St}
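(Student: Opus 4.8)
The statement is the infinite-interval analogue of Proposition~\ref{NewRightTangentsCandidate}, so the plan is to reduce it to that finite case by an exhaustion argument, using the explicit formulas~\eqref{beta2} and~\eqref{betaprime} to control what happens as the left endpoint of the interval is sent to~$-\infty$. First I would observe that the hypothesis is precisely that the function $\beta_2$ defined by~\eqref{beta2} with the particular choice $\const = 0$, $t_0 = -\infty$ (i.e.\ formula~\eqref{eq120701}) is finite on all of $I = (-\infty,t_2)$ and that the associated $\beta_2'$, given by~\eqref{eq120702}, is nonpositive there. The point of fixing $\const = 0$ and $t_0 = -\infty$ is that this is the unique normalization under which the improper integral representation makes sense; one should check, using the convergence of the integral in~\eqref{eq120701}, that for any finite $t_0' \in I$ the function $\beta_2$ from~\eqref{eq120701} agrees with the one produced by~\eqref{beta2new}/\eqref{beta2} started at $t_0'$ with the appropriate value $\beta_2(t_0')$ — in other words, the improper-integral formula is a genuine solution of the linear ODE~\eqref{diffbeta2} on $I$.

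Granting that, the function $B$ defined by~\eqref{fenceB2} on $\Rt(I)$ is, by Proposition~\ref{St280702}, $C^2$-smooth, affine on each segment $\Sr(t)$, with gradient $(\beta_1(t),\beta_2(t))$ there; this part requires no new work. It remains to check local concavity of $B$ on the (possibly) infinite domain $\Rt(I)$. Here I would invoke the characterization that a function is locally concave iff it is concave on every segment contained in the domain. Any such segment $l \subset \Rt(I)$ is compact, hence is contained in $\Rt(I')$ for some finite subinterval $I' = [t_-, t_2'] \Subset I$. On that finite piece, the hypotheses of Proposition~\ref{NewRightTangentsCandidate} are satisfied: $\beta_2$ restricted to $I'$ is (by the consistency remark above) the solution of~\eqref{diffbeta2} with initial data $\beta_2(t_-)$, and $\beta_2' \le 0$ on $I'$ by assumption, so $B|_{\Rt(I')}$ is a Bellman candidate, in particular locally concave, in particular concave on $l$. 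Since $l$ was arbitrary, $B$ is locally concave on $\Rt(I)$, and being $C^2$, affine along the foliation, and satisfying the boundary condition, it is a Bellman candidate.

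The main obstacle is not the concavity reduction — that is soft once the consistency of the integral formula is in hand — but rather the verification that~\eqref{eq120701} really does solve~\eqref{diffbeta2} on $I$ and is consistent with the finite-$t_0$ formulas. Concretely, one must justify differentiating under the improper integral sign in~\eqref{eq120701} to recover~\eqref{eq120702}, which requires the integrability/decay implicit in the finiteness hypothesis together with the smoothness of $\kappa_2,\kappa$ from Subsection~\ref{cond}; and one must check that replacing the lower limit $-\infty$ by a finite $t_0 \in I$ only changes $\beta_2$ by adding a multiple of the homogeneous solution $\exp\!\big(-\int_{t_0}^t \kappa_2'/(\kappa_2-\kappa)\big)$ — i.e.\ the difference of the two $\const$'s. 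This is where the specific role of the normalization $\const = 0$, $t_0 = -\infty$ is used: it is the value of the constant that makes the contribution from near $-\infty$ vanish, which is exactly what lets the improper integral converge. Once this bookkeeping is done, no genuinely new idea beyond Proposition~\ref{NewRightTangentsCandidate} and the segment-wise description of local concavity is needed.
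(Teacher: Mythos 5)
Your argument is correct and is essentially the one the paper intends: the paper states Propositions \ref{NewRightTangentsCandidateInfty} and \ref{NewLeftTangentsCandidateInfty} without proof, treating them as immediate from Proposition \ref{St280702} and the pointwise Hessian computation of Proposition \ref{NewRightTangentsCandidate}, and your exhaustion by finite subintervals together with the consistency of the improper-integral formula \eqref{eq120701} with the ODE \eqref{diffbeta2} is exactly the bookkeeping needed to make that reduction explicit.
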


\begin{St}
\label{NewLeftTangentsCandidateInfty}
Let $I=(t_1,+\infty),$ $t_1 \in \mathbb{R}\cup\{-\infty\}$. Consider a tangent domain~$\Lt(I)$ foliated 
by a family of segments $\Sl(t)=[g(t),w(t)]$ such that $w(t)\in \dfree\Omega$ and $\Sl(t)$ is a left 
tangent to $\dfree\Omega$ for $t \in I$. Suppose that the function~$\beta_2$ given by formula \eqref{beta2} 
with $\const=0$ and $t_0=+\infty,$ i.\,e.\textup,
\eq{eq190701}{
\beta_2(t) = 
\tors(t)- \int_t^{+\infty}\exp\Big(\int_t^{\tau}\frac{\kappa_2'}{\kappa_2-\kappa}\Big)\tors'(\tau)\,d\tau, \qquad t \in I,
}
is finite and satisfies the inequality 
\eq{eq190702}{
\beta_2'(t) = \frac{\kappa_2'(t)}{\kappa_2(t)-\kappa(t)}\int_t^{+\infty}
\exp\Big(\int_t^{\tau}\frac{\kappa_2'}{\kappa_2-\kappa}\Big)\tors'(\tau)\,d\tau \geq 0,\qquad t \in I.
}
Then the function~$B$ given by formula~\eqref{fenceB2} is a Bellman candidate on~$\Lt(I)$.
\end{St}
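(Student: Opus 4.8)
The plan is to verify directly that $B$ meets Definition~\ref{candidate} on $\omega=\Lt(I)$. Two of the four requirements are immediate: $\Lt(I)$ is by hypothesis foliated by the left tangents $\Sl(t)=[g(t),w(t)]$, $t\in I$, so a foliation is in place; and evaluating \eqref{fenceB2} at $x=g(t)$ (where $x_1=g_1(t)$) gives $B(g(t))=f(t)=\ff(g(t))$, so the boundary condition on $\Lt(I)\cap\dfi\Omega_\eps$ holds. It therefore remains to show (i) that $B$ is $C^1$, affine along each $\Sl(t)$, with $\nabla B$ constant on $\Sl(t)$, and (ii) that $B$ is locally concave. Step (i) will come from Proposition~\ref{St280702}, and step (ii) will be a sign computation modeled on the proof of Proposition~\ref{NewRightTangentsCandidate}.

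For (i), I would first record that under the standing finiteness hypothesis the function $\beta_2$ of \eqref{eq190701} is well defined and $C^1$ on all of $I=(t_1,+\infty)$: formula \eqref{eq190701} presents $\beta_2$ as $\tors$ minus a convergent improper integral of the continuous kernel $\tau\mapsto\exp\big(\int_t^\tau\kappa_2'/(\kappa_2-\kappa)\big)$ against the atomless signed measure $\tors'$, hence $\beta_2$ is continuous; and $\beta_2$ satisfies the linear ODE \eqref{diffbeta2}, since \eqref{eq190701} is precisely the particular solution of \eqref{diffbeta2} with vanishing ``incoming data at $+\infty$'' — equivalently, it is the $t_0\to+\infty$ limit of the general formula \eqref{beta2}, the boundary term there tending to $0$. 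Differentiating then yields \eqref{eq190702} for $\beta_2'$, which is again continuous, so $\beta_2\in C^1(I)$. Consequently $\beta_2$ is of the form \eqref{beta2new} (for a suitable choice of $t_0\in I$ and initial value), and Proposition~\ref{St280702} applies: the function $B$ of \eqref{fenceB2}, with $\beta_1$ given by \eqref{nabla11}, is $C^2$-smooth on $\Lt(I)$, affine on each $\Sl(t)$, and $\nabla B=(\beta_1(t),\beta_2(t))$ there.

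For (ii), the argument is the mirror image of the one for $\Rt$ in Proposition~\ref{NewRightTangentsCandidate}. Since $B$ is $C^2$ and its Hessian vanishes in the direction of $\Sl(t)$ (so $\det D^2B\equiv0$), Sylvester's criterion reduces $D^2B\le0$ to $B_{x_2x_2}\le0$. Now $B_{x_2x_2}=\beta_2'(t)\,t_{x_2}$, and by Remark~\ref{Rem201001} a left-tangent domain has $\kappa>\kappa_2$ with $x_1$ between $g_1$ and $w_1$, whence $t_{x_2}<0$ (see \eqref{t_y}). Therefore $B_{x_2x_2}\le0$ holds exactly because $\beta_2'(t)\ge0$, which is hypothesis \eqref{eq190702}; and at any $t$ where $\beta_2'(t)=0$ we also get $\beta_1'(t)=0$ from \eqref{nabla21}, so $D^2B=0$ on $\Sl(t)$. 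In all cases $D^2B\le0$, so $B$ is locally concave on $\Lt(I)$, and $B$ is thus a Bellman candidate there.

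The one genuinely delicate point — the step I would treat most carefully — is the passage to $t_0=+\infty$ in the second paragraph: one must show that \eqref{eq190701} really defines a $C^1$ solution of \eqref{diffbeta2} on the whole half-line. The subtlety is that $\tors'$ is only a signed measure, so ``differentiating under the integral sign'' must be understood through the integration-by-parts identity already used to derive \eqref{beta2} from \eqref{beta2new}, now run on $(t,+\infty)$; and the limit requires the boundary contribution $\exp\big(-\!\int_{t_0}^t\kappa_2'/(\kappa_2-\kappa)\big)\big(\text{tail of the integral}+\const\big)$ to vanish as $t_0\to+\infty$, which is exactly the content of the finiteness assumption on \eqref{eq190701} (and, for admissible domains, is underwritten by the technical condition \eqref{eq111002} together with Condition~\ref{sum}). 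Everything past that point is routine and formally identical to the bounded-interval Propositions~\ref{NewLeftTangentsCandidate} and~\ref{NewRightTangentsCandidate}.
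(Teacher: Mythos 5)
Your proposal is correct and follows essentially the same route as the paper: the paper proves Proposition~\ref{NewRightTangentsCandidate} in detail (via Proposition~\ref{St280702} for the gradient structure and Remark~\ref{Rem201001} for the sign of $t_{x_2}$, reducing concavity to the sign of $\beta_2'$) and leaves the left-tangent and infinite-interval variants, including this one, as symmetric statements without separate proof. Your explicit treatment of the passage to $t_0=+\infty$ — checking that \eqref{eq190701} still yields a $C^1$ solution of \eqref{diffbeta2} so that Proposition~\ref{St280702} applies — is exactly the only point that needs any additional care beyond the finite case, and you have identified and handled it correctly.
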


\begin{Def}
\label{211101}
The functions $B$ constructed in Propositions~\ref{NewRightTangentsCandidate}, \ref{NewLeftTangentsCandidate}, 
\ref{NewRightTangentsCandidateInfty}, and \ref{NewLeftTangentsCandidateInfty} are called \emph{standard candidates} 
for the corresponding tangent domains $\Rt(I)$ and $\Lt(I)$ if the inequalities for $\beta_2'$ are strict in 
the interior of $I$, i.\,e., $\beta_2'<0$ for~$\Rt(I)$ and $\beta_2'> 0$ for~$\Lt(I)$.
\end{Def}

We would like to comment on the convergence of the integrals in~\eqref{eq120701} and~\eqref{eq190701}.

\begin{Rem}
Convergence of the integral in~\eqref{eq120701} is equivalent to convergence of the integral in 
Condition~\eqref{eq141001a}, this will be proved in Lemma~\ref{Lem190701} below. Moreover, if the integral 
in~\eqref{eq141001a} is equal to $+\infty$, then the inequality in~\eqref{eq120702} fails, i.\,e., there is no standard 
candidate on~$\Rt(-\infty,t_2)$ for any $t_2\in\mathbb{R}$.

Symmetrically, convergence of the integral in~\eqref{eq190701} is equivalent to convergence of the 
integral in Condition~\eqref{eq211001a}. Moreover, if the integral in~\eqref{eq211001a} is equal to $-\infty$, 
then the inequality in~\eqref{eq190702} fails, i.\,e., there is no standard candidate on~$\Lt(t_1,+\infty)$ for any $t_1\in\mathbb{R}$.
\end{Rem}

\section{Chordal domains}
\label{s33}
As it was mentioned in Section~\ref{s31}, the extremal segments are of two types: the chords and the tangents. 
In this section we study \emph{chordal domains}, i.\,e., the domains that consist of chords 
(see Figure~\ref{fig:chd}).\index{domain! chordal domain}
\begin{figure}[h!]
\includegraphics[width=0.8\textwidth]{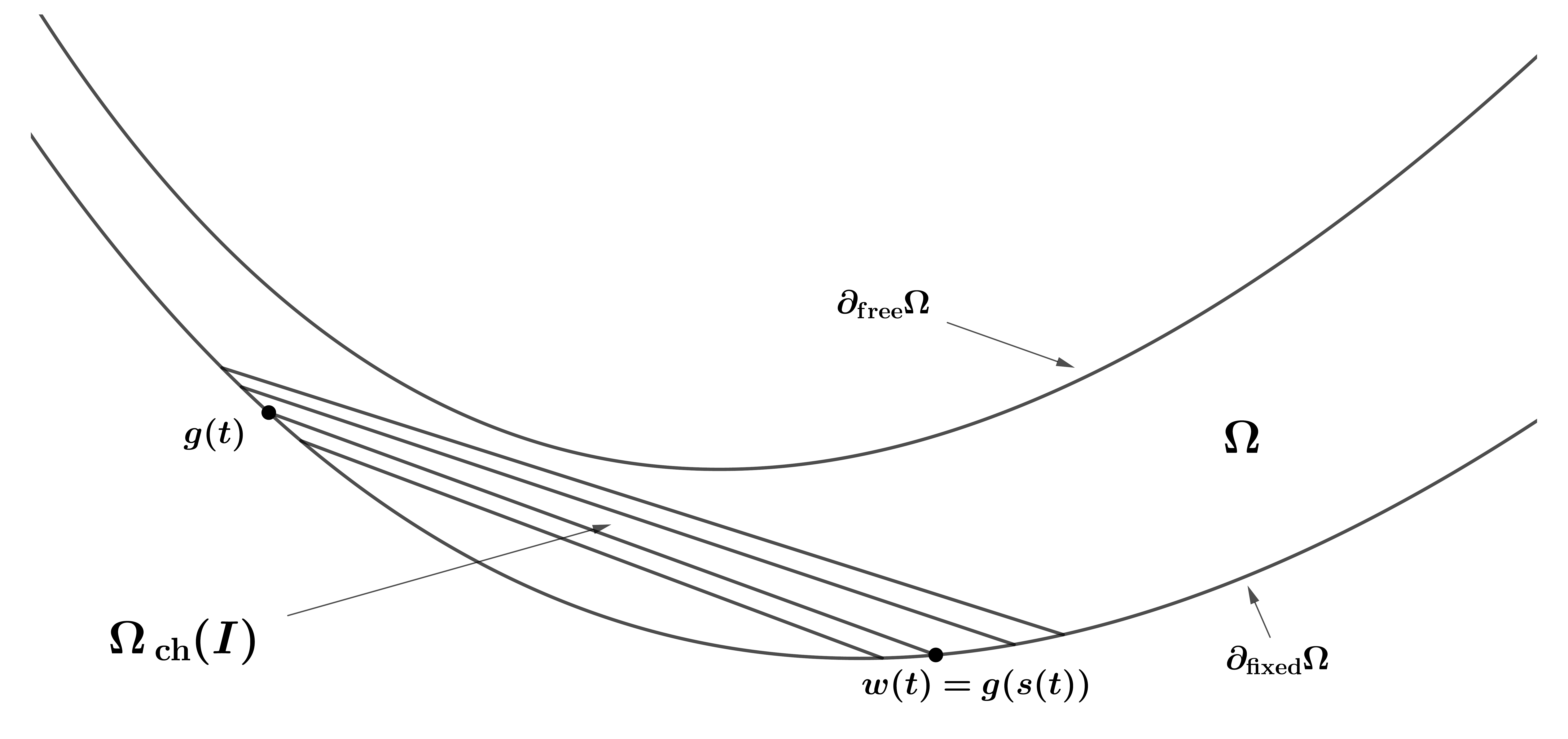}
\caption{Chordal domain~$\Ch(I)$.}
\label{fig:chd}
\end{figure}

\begin{Def}
A fence $\Omega(I)$ with the foliation $S(t)=[g(t),w(t)]$, $t \in I$, is called a \emph{chordal domain} if 
$w(t)\in \dfi \Omega$ for $t \in I$. A chordal domain is denoted by $\Ch(I)$\index[symbol]{$\Ch(I)$}.
\end{Def}

The function $B$ is uniquely determined in a chordal domain by its boundary values and linearity on segments:
\eq{eq250901}{
B\big(\alpha g(t)+(1-\alpha)g(s)\big)=\alpha f(t)+(1-\alpha)f(s), \qquad \alpha \in[0,1],
}
where the function $s \colon I \to \mathbb{R}$ is defined by the relation $g(s(t))=w(t)$ for $t \in I$. 
Let us note that the function $s$ is decreasing. Our aim in this subsection is to provide conditions when 
the function $B$ given by~\eqref{eq250901} supplies us with a Bellman candidate.

The vectors $\gamma'(t),\gamma'(s),$ and $\gamma(t)-\gamma(s)$ belong to the same tangent plane to the graph of $B$, therefore,
\eq{urlun1}{
\det
\begin{pmatrix}
\gamma'(t)\\
\gamma'(s)\\
\gamma(s)-\gamma(t)
\end{pmatrix}
=
\det
\begin{pmatrix}
g_1'(t)& g'_2(t) & f'(t)\\
g_1'(s)& g'_2(s) & f'(s)\\
g_1(s)-g_1(t)& g_2(s)-g_2(t) & f(s)-f(t)\\
\end{pmatrix}=0.
}
We will refer to this equation as the \emph{cup equation}.\index{equation!cup equation}\index{cup!cup equation}

Denote by $\theta = \theta(x_1,x_2)$ the vector orthogonal to the graph of $B$ at the point $(x_1,x_2,B(x_1,x_2))$: 
\eq{eq28042002}{
\theta=\nabla (B(x_1,x_2)-x_3) = (\beta(t(x_1,x_2)),-1).
} 
We can consider $\theta$ as a function of $t$ only: $\theta(t) = (\beta(t),-1)$. 
This vector is orthogonal to the tangent vector of the boundary curve $\gamma$, i.\,e., 
$\scalp{\theta(t)}{\gamma'(t)}=0$, which is equivalent to
\eq{nabla1a}{
\scalp{\beta(t)}{g'(t)}=f'(t).
}

Now we are going to find $\beta'$. We differentiate equation \eqref{nabla1a} and obtain 
\eq{eqbetaprime}{
\scalp{\beta'(t)}{g'(t)}=f''(t)-\scalp{\beta(t)}{g''(t)}= -\scalp{\theta(t)}{\gamma''(t)},
}
where $\theta$ is defined by~\eqref{eq28042002}.

We know that $\scalp{\theta(t)}{\gamma'(t)}=\scalp{\theta(t)}{\gamma'(s)}=0$ and $\scalp{\theta(t)}{(0,0,1)}=-1$. 
Therefore, for any $v \in \mathbb{R}^3$ we have $\scalp{\rule{0pt}{9pt}\theta(t)}{v+\scalp{\theta(t)}{v}(0,0,1)}=0$, 
whence 
$$
\det
\begin{pmatrix}
\gamma'(t)\\
\gamma'(s)\\
v+\scalp{\theta(t)}{v}(0,0,1)
\end{pmatrix}
=
0,
$$
or
$$
\scalp{\theta(t)}{v}
\det
\begin{pmatrix}
g'(t)\\
g'(s)\\
\end{pmatrix}
=
-\det
\begin{pmatrix}
\gamma'(t)\\
\gamma'(s)\\
v
\end{pmatrix}.
$$
We use it in \eqref{eqbetaprime} and obtain
\eq{eqbetaprime2}{
\scalp{\beta'(t)}{g'(t)} 
=-\scalp{\theta(t)}{\gamma''(t)}
=\frac{
\det
\begin{pmatrix}
\gamma'(t)\\
\gamma'(s)\\
\gamma''(t)
\end{pmatrix}
}
{
\det
\begin{pmatrix}
g'(t)\\
g'(s)\\
\end{pmatrix}
}=
\frac{
\det
\begin{pmatrix}
g_1'(t)& g'_2(t) & f'(t)\\
g_1'(s)& g'_2(s) & f'(s)\\
g_1''(t)& g_2''(t) & f''(t)\\
\end{pmatrix}}
{
\det
\begin{pmatrix}
g_1'(t)& g'_2(t)\\
g_1'(s)& g'_2(s)\\
\end{pmatrix}
}
.
}

We have a system of equations~\eqref{eqbetaprime2} and~\eqref{nabla21} (with $w(t) = g(s)$) for $\beta'(t)$. 
We can solve it and obtain
\eq{betaprime2}{
\beta'_2(t)= 
\frac{1}{g_1'(t)}
\cdot 
\frac{1}{\kappa_2(t)-\kappaw(t)}
\cdot
\frac{
\det
\begin{pmatrix}
g_1'(t)& g'_2(t) & f'(t)\\
g_1'(s)& g'_2(s) & f'(s)\\
g_1''(t)& g_2''(t) & f''(t)\\
\end{pmatrix}}
{
\det
\begin{pmatrix}
g_1'(t)& g'_2(t)\\
g_1'(s)& g'_2(s)\\
\end{pmatrix}
},
}
where $\kappaw(t)=\frac{g_2(s)-g_2(t)}{g_1(s)-g_1(t)}$ was defined by~\eqref{kappa} and $\kappa_2=\frac{g_2'}{g_1'}$. 
We recall that $g_1'>0$ {\bf(}see~\eqref{eq250702}{\bf)}. 
It follows from convexity of $\dfi \Omega$ that for $t<s$ the following inequality holds:
\eq{260701}{
\frac{g_2'(t)}{g_1'(t)} = \kappa_2(t) < \kappaw(t) < \kappa_2(s)= \frac{g_2'(s)}{g_1'(s)}
}
(and for $s<t$ both inequalities are opposite), therefore,
$\sign\beta_2'(t)=-\sign\det
\begin{pmatrix}
\gamma'(t)\\
\gamma'(s)\\
\gamma''(t)
\end{pmatrix}.$

Note that $\sign \frac{\partial t}{\partial x_2} = \sign (t-s)$, whence 
\eq{concB}{
\sign \frac{\partial^2 B}{\partial x_2^2} = \sign\big[\beta'_2(t) \frac{\partial t}{\partial x_2}\big]=
\sign\Bigg[(s-t)\det
\begin{pmatrix}
\gamma'(t)\\
\gamma'(s)\\
\gamma''(t)
\end{pmatrix}\Bigg].
}

We formulate a proposition that gives sufficient conditions for the function~$B$ defined by~\eqref{eq250901} to be a Bellman candidate on the chordal domain~$\Ch(I)$.

\begin{St}
\label{NewLightChordalDomainCandidate}
Consider a chordal domain~$\Ch(I)$ foliated by the family of segments $S(t)=[g(t),g(s(t))],$ where the function 
$s\colon I \to \mathbb{R}$ satisfies \eqref{urlun1}. Let $B$ be defined on $\Ch(I)$ by 
the boundary conditions $B\circ g= f$ and by linearity on each segment $S(t),$ $t \in I,$ i.\,e.\textup, by~\eqref{eq250901}. Suppose that 
\eq{eq260901}{
(t-s)\det
\begin{pmatrix}
\gamma'(t)\\
\gamma'(s)\\
\gamma''(t)
\end{pmatrix}\geq 0,\qquad t \in I.
}
Then the function $B$ is a Bellman candidate on $\Ch(I)$.
\end{St}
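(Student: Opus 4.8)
The plan is to verify that $B$, as defined by~\eqref{eq250901}, is a Bellman candidate on $\Ch(I)$, that is, that it is continuous, locally concave on $\Ch(I)$, satisfies the boundary condition on $\Ch(I)\cap\dfi\Omega_\eps$, and admits a foliation in the sense of Definition~\ref{candidate}. The existence of the foliation is built in: $\Ch(I)$ is by construction foliated by the chords $S(t)=[g(t),g(s(t))]$, $B$ is affine on each of them, and the boundary condition $B\circ g=f$ holds by definition. So the real content is local concavity; since the segments $S(t)$ foliate $\Ch(I)$ and $B$ is affine on each of them (hence the Hessian is degenerate everywhere $B$ is $C^2$), it suffices to check that $B$ is $C^1$-smooth on $\Ch(I)$ and that $\frac{\partial^2 B}{\partial x_2^2}\le 0$ pointwise, then invoke Silvester's criterion exactly as in the proof of Proposition~\ref{NewRightTangentsCandidate}.

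First I would recall that the whole machinery of Section~\ref{s31bis} applies: the chordal domain is a fence with $w(t)=g(s(t))\in\dfi\Omega$, so by Proposition~\ref{St280702} (with $\beta_2$ determined by the fence equations and $\beta_1$ by~\eqref{nabla11}) the function $B$ is automatically $C^2$-smooth on $\Ch(I)$, affine on each $S(t)$, with gradient $(\beta_1(t),\beta_2(t))$ constant on $S(t)$. One should check that the $B$ produced this way coincides with the one defined by the boundary values and linearity in~\eqref{eq250901}; this is immediate because both are affine on each $S(t)$ and agree at the two endpoints $g(t)$ and $g(s(t))=w(t)$ on the fixed boundary, where both equal $f$. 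Thus the $C^1$ (in fact $C^2$) regularity is free, and I do not need to worry about gluing across a non-smooth seam.

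Next I would assemble the sign computation, which is already essentially done in the text preceding the statement. Formula~\eqref{betaprime2} expresses $\beta_2'(t)$, and the convexity chain~\eqref{260701} shows $\kappa_2(t)-\kappa(t)$ has the sign of $s-t$ (and $g_1'>0$), so that $\sign\beta_2'(t)=-\sign\det\!\begin{pmatrix}\gamma'(t)\\\gamma'(s)\\\gamma''(t)\end{pmatrix}$. Combining this with $\sign\frac{\partial t}{\partial x_2}=\sign(t-s)$ yields~\eqref{concB}:
\[
\sign\frac{\partial^2 B}{\partial x_2^2}=\sign\!\Big[(s-t)\det\!\begin{pmatrix}\gamma'(t)\\\gamma'(s)\\\gamma''(t)\end{pmatrix}\Big].
\]
Hypothesis~\eqref{eq260901} says exactly that $(t-s)\det\!\begin{pmatrix}\gamma'(t)\\\gamma'(s)\\\gamma''(t)\end{pmatrix}\ge 0$, i.e. $(s-t)\det(\cdots)\le 0$, so $\frac{\partial^2 B}{\partial x_2^2}\le 0$ on $\Ch(I)$. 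Since the Hessian of $B$ is degenerate (rank at most one, as $B$ is affine along the foliation) and $B_{x_2x_2}\le0$, the Hessian is negative semidefinite, hence $B$ is concave on every vertical segment inside $\Ch(I)$; because $\Ch(I)$ is induced convex and $B$ is affine along the non-vertical extremal segments, local concavity on all of $\Ch(I)$ follows (arguing as in Remark~\ref{Rem201001} and Proposition~\ref{NewRightTangentsCandidate}). Together with the boundary condition and the foliation, this shows $B$ is a Bellman candidate on $\Ch(I)$.

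The only genuinely delicate point — and the one I would be careful to state rather than gloss — is the degenerate-case bookkeeping when $\beta_2'(t)=0$ at some points or when $t=s$ is approached (the diagonal of the chord parametrization, where~\eqref{betaprime2} has a $0/0$ form but the limit is controlled by the cup equation~\eqref{urlun1}). On the set where $\beta_2'=0$ one uses~\eqref{nabla21} to conclude $\beta_1'=0$ as well, so the Hessian is the zero matrix there and concavity is trivial; elsewhere the strict argument above applies. I expect this case analysis, rather than any deep idea, to be the main obstacle, and it is handled exactly as in the tangent-domain propositions. I would close by noting that continuity of $B$ up to $\Ch(I)\cap\dfi\Omega$ is immediate from~\eqref{eq250901}.
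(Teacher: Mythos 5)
Your proof is correct in substance and ends exactly where the paper's does: the sign computation \eqref{concB} together with the degenerate Hessian gives local concavity, and the boundary condition and foliation are built into \eqref{eq250901}. The one place where your packaging is shakier than the paper's is the reduction of the gradient-constancy step to Proposition~\ref{St280702}. That proposition presupposes that $\beta_2$ has the form \eqref{beta2new}, i.e.\ solves the ODE \eqref{diffbeta2}; a generic choice of the free constant there does \emph{not} produce a function equal to $f$ at the second endpoint $g(s(t))$, so the coincidence with the chordal $B$ of \eqref{eq250901} is not ``immediate'' --- it is equivalent to the cup equation, which is precisely the substantive input. The paper avoids this circularity by going the other way: it defines $\beta(t)$ directly from \eqref{urlun1} as the vector making $(\beta,-1)$ orthogonal to $\gamma'(t)$, $\gamma'(s)$, and $\gamma(s)-\gamma(t)$, checks that this $\beta$ satisfies \eqref{nabla1a} and \eqref{nabla2} (the computation \eqref{eqbetaprime2}--\eqref{betaprime2} you cite is exactly this verification), notes that \eqref{fenceB} holds at both endpoints of the chord and hence on all of $S(t)$ by linearity, and then reads off $\nabla B=\beta(t)$ from \eqref{gradB}. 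If you reorganize your first paragraph along those lines --- establish \eqref{nabla1a} and \eqref{nabla2} for the cup-equation $\beta$ first, and only then observe that $\beta_2$ consequently satisfies \eqref{diffbeta2} --- the rest of your argument, including the $\beta_2'=0$ degenerate case, goes through as written.
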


\begin{proof}
We need to check that $\nabla B$ is constant on each chord.
Using~\eqref{urlun1} we find a vector $\beta=\beta(t)$, such that $\theta=(\beta,-1)$ is orthogonal 
to $\gamma'(t), \gamma'(s)$ and $\gamma(s)-\gamma(t)$. The function $\beta$ chosen in such a way satisfies 
relations~\eqref{nabla1a}, \eqref{nabla2}. Identity~\eqref{fenceB} holds for $x=g(t)$ and $x=g(s)$, 
therefore, by linearity it holds for any $x \in S(t)$. Then, by~\eqref{gradB}, $\nabla B=\beta(t)$, therefore $\nabla B$ is 
constant on $S(t)$. Under the hypothesis of the proposition, concavity of $B$ follows from~\eqref{concB}.
\end{proof}

\begin{Def}\index{standard candidate! cup}\index{cup! standard candidate} 
The function $B$ constructed in Proposition~\ref{NewLightChordalDomainCandidate} is called 
a \emph{standard candidate} for the chordal domain $\Ch(I)$ if the inequality in~\eqref{eq260901} is strict 
for $t$ in the interior of~$I$.
\end{Def}

\begin{Def}\index{cup}\index{cup! origin}
Let $I = [t_-,t_+]$ and either $s(t_-)=t_-$ or $s(t_+)=t_+$ for a function $s\colon I \to \mathbb{R}$ 
corresponding to the chordal domain~$\Ch(I)$. Then the chordal domain $\Ch(I)$ is called a \emph{cup} 
originated at $g(c)$, where $c=s(c)$ coincides with either $t_-$ or $t_+$, respectively. The point $g(c)$ 
as well as the value of the parameter $c$ is called the \emph{origin of the cup}. 
\end{Def}

The origin $g(c)$ of a cup $\Ch(I)$ is a very special point on the curve $g$. It should satisfy 
the following torsion equation:
\eq{eqtor}{
\det
\begin{pmatrix}
\gamma'(c)\\
\gamma''(c)\\
\gamma'''(c)
\end{pmatrix}=0,
}
which can be obtained by accurate passage to the limit $t\to c$ and $s \to c$ in~\eqref{urlun1}.

Geometrically the chordal domains $\Ch(I)$ and $\Ch(s(I))$ are the same. Usually we do not need to distinguish them, 
so we will use the following common notation. Let $I=[t_-,t_+]$. Denote the numbers $t_\pm$ and $s(t_\pm)$ by 
$a_0,a_1,b_1$ and $b_0$ in the increasing ordering, i.\,e., $[g(a_0),g(b_0)]$ is the top chord of the domain 
and $[g(a_1),g(b_1)]$ is the bottom one. Denote such a chordal domain by~$\Ch([a_0,b_0],[a_1,b_1])$. 
If one of these chords is unimportant, we will use $\ast$ instead: 
$$
\Ch([a_0,b_0],\ast) \qquad \text{or} \qquad \Ch(\ast,[a_1,b_1]).
$$

\section{Around the cup}
\label{s33old}

\subsection{Differentials}

\begin{Def}
\label{differentials}\index{differentials}\index[symbol]{$\DL$}\index[symbol]{$\DR$}
Suppose that a pair~$(s,t)$ satisfies the cup equation~\eqref{urlun1}. Let $s<t$. We call the 
following two expressions the differentials, the left and the right one correspondingly:
\eq{e334}{
\DL(s,t) \df \frac{
\det
\begin{pmatrix}
\gamma'(s)\\
\gamma'(t)\\
\gamma''(s)
\end{pmatrix}}
{
\det
\begin{pmatrix}
g'(s)\\
g'(t)
\end{pmatrix}
},
\qquad\qquad		
\DR(s,t) \df \frac{
\det
\begin{pmatrix}
\gamma'(t)\\
\gamma'(s)\\
\gamma''(t)
\end{pmatrix}}
{
\det
\begin{pmatrix}
g'(t)\\
g'(s)
\end{pmatrix}
}.
}
\end{Def}

It is clear that formally $\DL(s,t) = \DR(t,s)$, but it will be more convenient to keep two different symbols 
for these expressions. 

\begin{Le}
\label{Lem310803}
Suppose that the pair~$(s,t)$ satisfies the cup equation~\eqref{urlun1}. Then\index[symbol]{$\CR$}\index[symbol]{$\CL$}
\eq{lincoef}{
\gamma(t)-\gamma(s)=\CR\gamma'(t)-\CL\gamma'(s),
}
where
$$
\CL(s,t)=\frac{\det
\begin{pmatrix}
g(s)-g(t)\\
g'(t)
\end{pmatrix}
}{\det
\begin{pmatrix}
g'(s)\\
g'(t)
\end{pmatrix}
},
\qquad
\qquad
\CR(s,t)=\frac{\dett{g(t)-g(s)}{g'(s)}}{\dett{g'(t)}{g'(s)}}.
$$
\end{Le}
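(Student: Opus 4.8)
The plan is to prove the decomposition~\eqref{lincoef} directly by linear algebra, exploiting that the cup equation~\eqref{urlun1} says exactly that the three vectors $\gamma'(s)$, $\gamma'(t)$, and $\gamma(t)-\gamma(s)$ are linearly dependent in $\mathbb{R}^3$. First I would observe that $\gamma'(s)$ and $\gamma'(t)$ are linearly independent: their projections onto the first two coordinates are $g'(s)$ and $g'(t)$, and $\det\begin{pmatrix} g'(s)\\ g'(t)\end{pmatrix} = g_1'(s)g_2'(t) - g_1'(t)g_2'(s) \ne 0$ because this quantity equals $g_1'(s)g_1'(t)\big(\kappa_2(t)-\kappa_2(s)\big)$ and $\kappa_2$ is strictly monotone by~\eqref{eq230402} (while $g_1' > 0$ by~\eqref{eq250702}). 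Hence $\{\gamma'(s),\gamma'(t)\}$ spans a $2$-dimensional subspace, and since~\eqref{urlun1} forces $\gamma(t)-\gamma(s)$ into the span of these two vectors, there exist unique scalars $\CR$ and $-\CL$ with $\gamma(t)-\gamma(s) = \CR\gamma'(t)-\CL\gamma'(s)$. This establishes the representation; it remains to identify the coefficients.

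To compute $\CR$ and $\CL$ explicitly, I would project~\eqref{lincoef} onto the first two coordinates, obtaining the $2$-dimensional vector identity
\eqstar{
g(t)-g(s) = \CR\, g'(t) - \CL\, g'(s).
}
This is a $2\times 2$ linear system for $(\CR,\CL)$ with coefficient matrix having rows $g'(t)$ and $-g'(s)$, which is invertible by the nondegeneracy just noted. Solving by Cramer's rule gives
\eqstar{
\CR = \frac{\det\begin{pmatrix} g(t)-g(s)\\ g'(s)\end{pmatrix}}{\det\begin{pmatrix} g'(t)\\ g'(s)\end{pmatrix}},
\qquad
\CL = \frac{\det\begin{pmatrix} g(s)-g(t)\\ g'(t)\end{pmatrix}}{\det\begin{pmatrix} g'(s)\\ g'(t)\end{pmatrix}},
}
which are precisely the asserted formulas. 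The extraction of each coefficient is the standard $2\times 2$ determinant computation: replacing the appropriate row of the coefficient matrix by the right-hand side vector. One should double-check the sign bookkeeping coming from the ordering of rows in each determinant (the paper writes the system with $g'(t)$ above $g'(s)$ for $\CR$ but $g'(s)$ above $g'(t)$ for $\CL$, and the numerator of $\CL$ carries $g(s)-g(t)$ rather than $g(t)-g(s)$), but these are consistent once one fixes a convention.

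There is essentially no hard obstacle here; the only point requiring genuine care is the invertibility argument, i.e., verifying $\det\begin{pmatrix} g'(s)\\ g'(t)\end{pmatrix} \ne 0$, which is where the strict convexity hypothesis~\eqref{eq230402} enters — without it the decomposition~\eqref{lincoef} would not be well-posed. I would state this explicitly at the start of the proof. Everything else is routine $2\times 2$ linear algebra, and the third coordinate of~\eqref{lincoef} (the $f$-component) is then automatically satisfied precisely because of the cup equation~\eqref{urlun1}, so no separate verification in the $f$-direction is needed.
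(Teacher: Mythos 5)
Your proof is correct and follows essentially the same route as the paper's: establish the two-dimensional identity $g(t)-g(s)=\CR g'(t)-\CL g'(s)$ (the paper packages this as the cyclic identity~\eqref{3videntity}, you obtain it by Cramer's rule — the same computation), then observe that the cup equation~\eqref{urlun1} makes the three space vectors linearly dependent, so the same coefficients lift to~\eqref{lincoef}. Your explicit check that $\det\begin{pmatrix} g'(s)\\ g'(t)\end{pmatrix}\ne 0$ via~\eqref{eq250702} and~\eqref{eq230402} is a point the paper leaves implicit, but it is the same argument.
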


\begin{Rem}
If $u,v,w \in \mathbb{R}^2$, then
\eq{3videntity}{
u \dett{w}{v}+v \dett{u}{w}+w\dett{v}{u}=0.
}
\end{Rem}

\begin{proof}[Proof of Lemma~\ref{Lem310803}]
It follows from~\eqref{3videntity}, that
\eq{lincoef2}{
g(t)-g(s)=\CR g'(t)-\CL g'(s).
}
The vectors $\gamma(t)-\gamma(s)$, $\gamma'(s)$, $\gamma'(t)$ are linearly dependent and the coefficients of the 
linear dependence are the same as for their two-dimensional projections $g(t)-g(s)$, $g'(t)$ and $g'(s)$.
\end{proof}

\begin{Rem}
\label{Rem310801}
For $s \ne t$ we have 
$\sign(\CR)=-\sign(\CL)=\sign(t-s)$ {\bf(}see~\eqref{260701}{\bf)}.
\end{Rem}

\begin{Cor}
Let $\Ch(I)$ be a chordal domain with the corresponding function $s=s(t),$ $t\in I$. Then 
\eq{clcr1}{
\CL \dettt{\gamma'(s)}{\gamma'(t)}{\gamma''(s)}ds+\CR \dettt{\gamma'(t)}{\gamma'(s)}{\gamma''(t)}dt=0.
} 
\end{Cor}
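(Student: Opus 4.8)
The plan is to obtain $\eqref{clcr1}$ by differentiating the cup equation $\eqref{urlun1}$ along the chordal domain. Since $w\in C^1(I)$ and $g_1'>0$, the function $s\colon I\to\mathbb{R}$ determined by $g(s(t))=w(t)$ is $C^1$, and since $\gamma\in C^2$ the map
\[
F(s,t)\df\dettt{\gamma'(t)}{\gamma'(s)}{\gamma(s)-\gamma(t)}
\]
is of class $C^1$ in $(s,t)$. By $\eqref{urlun1}$ we have $F(s(t),t)\equiv 0$ on $I$, so the chain rule gives $F_s(s(t),t)\,s'(t)+F_t(s(t),t)=0$; it remains to identify $F_s$ and $F_t$ and rewrite them via $\CL,\CR$.

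Differentiating $F$ in the second slot $s$: the first row $\gamma'(t)$ is unaffected, the second row produces $\gamma''(s)$, and the third row produces $\gamma'(s)$, so that last summand has two equal rows and vanishes; hence $F_s=\dettt{\gamma'(t)}{\gamma''(s)}{\gamma(s)-\gamma(t)}$. Symmetrically, differentiating in $t$: the first row produces $\gamma''(t)$, the second row is unaffected, and the third row produces $-\gamma'(t)$, which is proportional to the first row, so that summand vanishes; hence $F_t=\dettt{\gamma''(t)}{\gamma'(s)}{\gamma(s)-\gamma(t)}$.

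Next I would substitute the decomposition $\gamma(s)-\gamma(t)=\CL\gamma'(s)-\CR\gamma'(t)$ from Lemma~$\ref{Lem310803}$ into the last row of each expression. In $F_s$ the $\gamma'(t)$-part has a repeated first row and drops out, leaving $F_s=\CL\dettt{\gamma'(t)}{\gamma''(s)}{\gamma'(s)}$; cycling the three rows (an even permutation) rewrites this as $\CL\dettt{\gamma'(s)}{\gamma'(t)}{\gamma''(s)}$. In $F_t$ the $\gamma'(s)$-part has a repeated second row and drops out, leaving $F_t=-\CR\dettt{\gamma''(t)}{\gamma'(s)}{\gamma'(t)}$; swapping the first and third rows (an odd permutation) rewrites this as $\CR\dettt{\gamma'(t)}{\gamma'(s)}{\gamma''(t)}$. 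Plugging these into $F_s s'+F_t=0$ and multiplying through by $dt$ yields exactly $\eqref{clcr1}$.

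The argument is essentially a one-line computation, and the only delicate point is the bookkeeping: keeping track of the sign under the two row permutations, and noting which of the three terms arising from each differentiation has two collinear rows and therefore vanishes. No smoothness beyond $\gamma\in C^2$ and $s\in C^1$ is needed, so in particular the identity is meaningful even though $\gamma'''$ (and the torsion $\torsion$) are available only as measures.
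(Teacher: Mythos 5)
Your proposal is correct and follows essentially the same route as the paper: differentiate the cup equation~\eqref{urlun1} along the chordal domain, discard the terms with collinear rows, and substitute the linear dependence $\gamma(t)-\gamma(s)=\CR\gamma'(t)-\CL\gamma'(s)$ from Lemma~\ref{Lem310803} to land on~\eqref{clcr1}. The sign bookkeeping in your row permutations checks out, so no further comment is needed.
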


\begin{proof}
We differentiate~\eqref{urlun1} and obtain
$$
0=\dettt{\gamma''(t)}{\gamma'(s)}{\gamma(t)-\gamma(s)}dt + 
\dettt{\gamma'(t)}{\gamma''(s)}{\gamma(t)-\gamma(s)}ds=
\CR \dettt{\gamma''(t)}{\gamma'(s)}{\gamma'(t)}dt-\CL \dettt{\gamma'(t)}{\gamma''(s)}{\gamma'(s)}ds
$$
according to~\eqref{lincoef}.
\end{proof}

By~\eqref{e334}, the equality~\eqref{clcr1} can be rewritten in the following form:
\begin{equation}
\CL\DL ds=\CR\DR dt.
\end{equation}

\begin{Cor}
\begin{equation}\label{DLDRform1}
\DR=\frac{\dettt{\gamma'(t)}{\gamma(t)-\gamma(s)}{\gamma''(t)}}{\dett{g'(t)}{g(t)-g(s)}},\qquad\qquad
\DL=\frac{\dettt{\gamma'(s)}{\gamma(s)-\gamma(t)}{\gamma''(s)}}{\dett{g'(s)}{g(s)-g(t)}}.
\end{equation}
\end{Cor}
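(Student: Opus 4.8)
The plan is to obtain both identities directly from the linear dependence relation~\eqref{lincoef} of Lemma~\ref{Lem310803}, using nothing more than multilinearity and the alternating property of determinants. I would handle $\DR$ first and then repeat the argument with $s$ and $t$ interchanged to get $\DL$.

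For $\DR$, I would start from the claimed numerator $\dettt{\gamma'(t)}{\gamma(t)-\gamma(s)}{\gamma''(t)}$ and substitute $\gamma(t)-\gamma(s)=\CR\gamma'(t)-\CL\gamma'(s)$ from~\eqref{lincoef}. Expanding linearly in the middle row, the summand carrying $\CR\gamma'(t)$ is a $3\times3$ determinant with two equal rows and so vanishes, leaving $-\CL\dettt{\gamma'(t)}{\gamma'(s)}{\gamma''(t)}$. Applying the very same manoeuvre to the denominator $\dett{g'(t)}{g(t)-g(s)}$, now using the two-dimensional relation~\eqref{lincoef2} $g(t)-g(s)=\CR g'(t)-\CL g'(s)$, turns it into $-\CL\dett{g'(t)}{g'(s)}$. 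Cancelling the common factor $-\CL$ — legitimate since Remark~\ref{Rem310801} gives $\CL\ne0$ whenever $s\ne t$, which holds under the standing hypothesis $s<t$ of Definition~\ref{differentials} — leaves exactly $\dettt{\gamma'(t)}{\gamma'(s)}{\gamma''(t)}\big/\dett{g'(t)}{g'(s)}$, which is $\DR(s,t)$ as defined in~\eqref{e334}. This proves the first formula.

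For $\DL$ I would argue symmetrically: write $\gamma(s)-\gamma(t)=-\CR\gamma'(t)+\CL\gamma'(s)$, so that in $\dettt{\gamma'(s)}{\gamma(s)-\gamma(t)}{\gamma''(s)}$ the $\CL\gamma'(s)$ term now dies (repeated row), leaving $-\CR\dettt{\gamma'(s)}{\gamma'(t)}{\gamma''(s)}$; likewise the denominator $\dett{g'(s)}{g(s)-g(t)}$ becomes $-\CR\dett{g'(s)}{g'(t)}$. Cancelling $-\CR$ (again nonzero for $s\ne t$ by Remark~\ref{Rem310801}) yields $\dettt{\gamma'(s)}{\gamma'(t)}{\gamma''(s)}\big/\dett{g'(s)}{g'(t)}=\DL(s,t)$ by~\eqref{e334}, as claimed.

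There is no genuine obstacle here; the computation is one line once~\eqref{lincoef} is in hand. The only points requiring a moment's care are that the ``two equal rows'' cancellation is being used in $\R^3$ for the $\gamma$-determinants and in $\R^2$ for the $g$-determinants (both valid), and that one must invoke the nondegeneracy $s\ne t$ — equivalently, that the chord $[g(s),g(t)]$ is not tangent to $\dfi\Omega$ at either endpoint, which is Remark~\ref{Rem310801} — in order to divide by $\CL$ (resp.\ $\CR$).
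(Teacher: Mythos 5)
Your proof is correct and follows exactly the route the paper takes: the paper's own proof consists of the single sentence that the identities ``immediately follow from~\eqref{lincoef}, \eqref{lincoef2}, and~\eqref{e334}'', and your substitution of the linear dependence into numerator and denominator, killing the repeated-row terms and cancelling the common factor $-\CL$ (resp.\ $-\CR$), is precisely the intended computation. The nondegeneracy $\CL,\CR\ne 0$ for $s\ne t$ from Remark~\ref{Rem310801} is correctly invoked to justify the cancellation.
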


\begin{proof}
This immediately follows from~\eqref{lincoef}, \eqref{lincoef2}, and~\eqref{e334}.
\end{proof}

\begin{Le}
Let $\Ch(I)$ be a chordal domain with the corresponding function $s=s(t),$ $t\in I$. Then\textup, 
\eq{eq121201}{
d\DR(s,t)=\frac{\dettt{\gamma'(t)}{\gamma''(t)}{\gamma'''(t)}}{\dett{g'(t)}{g''(t)}}dt+
\left(\frac{\dett{g'(t)}{g'''(t)}}{\dett{g'(t)}{g''(t)}}-
\frac{\dett{g''(t)}{g(t)-g(s)}}{\dett{g'(t)}{g(t)-g(s)}}\right)\DR(s,t) dt.
}
\end{Le}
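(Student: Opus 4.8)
The plan is to differentiate the second expression for $\DR$ from~\eqref{DLDRform1}, namely
\[
\DR(s,t)=\frac{\dettt{\gamma'(t)}{\gamma(t)-\gamma(s)}{\gamma''(t)}}{\dett{g'(t)}{g(t)-g(s)}},
\]
viewing everything as a function of $t$ along the chordal domain, with $s=s(t)$. Write $\DR = N/M$ with numerator $N = \dettt{\gamma'(t)}{\gamma(t)-\gamma(s)}{\gamma''(t)}$ and denominator $M = \dett{g'(t)}{g(t)-g(s)}$, so that $d\DR = (dN)/M - \DR\,(dM)/M$. The key point is that every term arising from differentiating $N$ or $M$ that involves $ds$, or involves $\gamma'(s)$ or $g'(s)$ multiplied by the appropriate cofactor, either vanishes by the cup equation~\eqref{urlun1} or can be rewritten using the linear-dependence relation~\eqref{lincoef}--\eqref{lincoef2}. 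So the dependence on $s$ enters only through the vectors $\gamma(t)-\gamma(s)$ and $g(t)-g(s)$ appearing explicitly, and after simplification through $\DR$ itself.

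Concretely, differentiating $M$ gives $dM = \dett{g''(t)}{g(t)-g(s)}dt + \dett{g'(t)}{g'(t)}dt - \dett{g'(t)}{g'(s)}\,ds$; the middle determinant is zero, so $dM = \dett{g''(t)}{g(t)-g(s)}dt - \dett{g'(t)}{g'(s)}\,ds$. Differentiating $N$ produces, besides the term $\dettt{\gamma'(t)}{\gamma(t)-\gamma(s)}{\gamma'''(t)}dt$ (second row differentiated gives a determinant with two rows $\gamma'(t)$, hence zero; first and third rows differentiated give the two surviving $dt$-terms), also a $ds$-term $-\dettt{\gamma'(t)}{\gamma'(s)}{\gamma''(t)}\,ds$. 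The $ds$-terms from $dN$ and from $-\DR\,dM$ combine: using $\CL\DL\,ds = \CR\DR\,dt$ from the corollary preceding~\eqref{DLDRform1}, and the definitions~\eqref{e334} of $\DL$, $\DR$ together with~\eqref{lincoef}, one checks that the net $ds$-contribution is proportional to $\dett{g'(t)}{g(t)-g(s)}$ times a $dt$-multiple of $\DR$, i.e.\ it folds back into the stated formula. This is the algebraic heart of the computation.

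For the two $dt$-terms that remain: the first, $\dettt{\gamma'(t)}{\gamma(t)-\gamma(s)}{\gamma'''(t)}dt / M$, is handled by substituting $\gamma(t)-\gamma(s) = \CR\gamma'(t)-\CL\gamma'(s)$ from~\eqref{lincoef}; the $\gamma'(t)$ part dies (repeated row), leaving $-\CL\dettt{\gamma'(t)}{\gamma'(s)}{\gamma'''(t)}dt / M$. Now I express $\CL$ and $M$ in terms of the $g$-projections and compare with $\dett{g'(t)}{g''(t)}$: the identity~\eqref{3videntity} applied to $g'(t),g''(t),g'''(t)$ (or directly Cramer-type manipulations) rewrites $-\CL\dettt{\gamma'(t)}{\gamma'(s)}{\gamma'''(t)}/M$ as a combination of $\dettt{\gamma'(t)}{\gamma''(t)}{\gamma'''(t)}/\dett{g'(t)}{g''(t)}$ and $\DR\cdot\dett{g'(t)}{g'''(t)}/\dett{g'(t)}{g''(t)}$; this produces the first term of~\eqref{eq121201} and the $\dett{g'(t)}{g'''(t)}/\dett{g'(t)}{g''(t)}$ piece of the coefficient of $\DR$. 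The remaining $dt$-term, $-\DR\dett{g''(t)}{g(t)-g(s)}dt/M = -\DR\,dt\cdot \dett{g''(t)}{g(t)-g(s)}/\dett{g'(t)}{g(t)-g(s)}$, is exactly the last summand in~\eqref{eq121201}.

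The main obstacle I anticipate is bookkeeping: making sure the $ds$-terms cancel cleanly against each other rather than leaving a spurious residue, and correctly matching the coefficients of $\DR$ across the two sources. The cleanest route is probably to avoid $ds$ entirely at the start by differentiating the form with $g(t)-g(s)$ explicit but immediately using~\eqref{lincoef}--\eqref{lincoef2} to eliminate $\gamma(t)-\gamma(s)$ and $g(t)-g(s)$ wherever a $\gamma'(t)$ or $g'(t)$ row is already present, so that repeated-row vanishing kills the $s$-dependence before it can differentiate. Once that is set up, the rest is two or three applications of~\eqref{3videntity} and the definition of $\DR$, which I would present as a short sequence of displayed equalities rather than spelling out every $2\times2$ minor.
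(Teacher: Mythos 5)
Your overall route is the same as the paper's: differentiate the representation $\DR=N/M$ with $N=\dettt{\gamma'(t)}{\gamma(t)-\gamma(s)}{\gamma''(t)}$ and $M=\dett{g'(t)}{g(t)-g(s)}$ from~\eqref{DLDRform1}, use~\eqref{lincoef}--\eqref{lincoef2} to replace $\gamma(t)-\gamma(s)$ by $-\CL\gamma'(s)$ modulo repeated rows, and then apply~\eqref{3videntity} to $g'(t),g''(t),g'''(t)$ to trade $\gamma'''(t)$ for a combination of $\gamma''(t)$ and $\gamma'(t)$. Your third paragraph handles the two surviving $dt$-terms exactly as the paper does, and correctly.

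The step that would fail as written is your treatment of the $ds$-terms. The net $ds$-contribution to $d(N/M)$ is
\[
\frac{ds}{M}\Bigl(-\dettt{\gamma'(t)}{\gamma'(s)}{\gamma''(t)}+\DR\,\dett{g'(t)}{g'(s)}\Bigr),
\]
and this vanishes identically by the definition~\eqref{e334} of $\DR$: the bracket equals $-\DR\dett{g'(t)}{g'(s)}+\DR\dett{g'(t)}{g'(s)}$. In other words, the partial derivative of the right-hand side of~\eqref{DLDRform1} with respect to $s$ is zero --- this is precisely how the paper disposes of the $s$-dependence, and no further input is needed. You instead propose to convert a supposedly surviving $ds$-residue into a $dt$-multiple of $\DR$ via $\CL\DL\,ds=\CR\DR\,dt$ and let it ``fold back into the stated formula.'' That cannot be right: your $dt$-terms alone already produce the full coefficient of $\DR\,dt$ in~\eqref{eq121201}, so any additional nonzero contribution routed in from the $ds$-side would make the formula false. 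The relation $\CL\DL\,ds=\CR\DR\,dt$ plays no role in this lemma. (A smaller slip: in $dN$ the first-row term $\dettt{\gamma''(t)}{\gamma(t)-\gamma(s)}{\gamma''(t)}$ also vanishes because rows one and three coincide, so only the third-row term survives, not two.)
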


\begin{proof}
Let us differentiate $\DR$ using the representation~\eqref{DLDRform1}. First, note that the partial derivative 
with respect to $s$ is equal to zero:
$$
\frac{\dettt{\gamma'(t)}{-\gamma'(s)}{\gamma''(t)}}{\dett{g'(t)}{g(t)-g(s)}}-
\frac{\dettt{\gamma'(t)}{\gamma(t)-\gamma(s)}{\gamma''(t)}}{\dett{g'(t)}{g(t)-g(s)}^2}\dett{g'(t)}{-g'(s)}
\stackrel{{\genfrac{}{}{0pt}{-2}{\scriptscriptstyle\eqref{DLDRform1}}{\scriptscriptstyle\eqref{e334}}}}{=}
-\DR \frac{\dett{g'(t)}{g'(s)}}{\dett{g'(t)}{g(t)-g(s)}}+\DR \frac{\dett{g'(t)}{g'(s)}}{\dett{g'(t)}{g(t)-g(s)}}.
$$

Second, the partial derivative with respect to $t$ is equal to:
\eq{dr3}{
\frac{\dettt{\gamma'(t)}{\gamma(t)-\gamma(s)}{\gamma'''(t)}}{\dett{g'(t)}{g(t)-g(s)}}-
\frac{\dett{g''(t)}{g(t)-g(s)}}{\dett{g'(t)}{g(t)-g(s)}}\DR.
}
We use formulas~\eqref{lincoef} 
and~\eqref{lincoef2} to rewrite the first summand in~\eqref{dr3}:
\eq{dr1}{
\frac{\dettt{\gamma'(t)}{\gamma(t)-\gamma(s)}{\gamma'''(t)}}{\dett{g'(t)}{g(t)-g(s)}}=
\frac{\dettt{\gamma'(t)}{\gamma'(s)}{\gamma'''(t)}}{\dett{g'(t)}{g'(s)}}.
}

It follows from~\eqref{3videntity} for the vectors $g'(t)$, $g''(t)$, and $g'''(t)$, that 
$$
\dett{g'(t)}{g''(t)}\gamma'''(t)+\dett{g'''(t)}{g'(t)}\gamma''(t)+\dett{g''(t)}{g'''(t)}\gamma'(t)=
\left(0,\ 0,\  
\dettt{\gamma'(t)}{\gamma''(t)}{\gamma'''(t)}
\right).
$$
We express $\gamma'''(t)$ from this identity and substitute it into~\eqref{dr1} to obtain
$$
\frac{\dettt{\gamma'(t)}{\gamma'(s)}{\gamma'''(t)}}{\dett{g'(t)}{g'(s)}}=
\frac{\dettt{\gamma'(t)}{\gamma''(t)}{\gamma'''(t)}}{\dett{g'(t)}{g''(t)}}-
\frac{\dettt{\gamma'(t)}{\gamma'(s)}{\gamma''(t)}}{\dett{g'(t)}{g'(s)}}
\frac{\dett{g'''(t)}{g'(t)}}{\dett{g'(t)}{g''(t)}}.
$$

This formula together with~\eqref{dr1} and~\eqref{dr3}, and definition~\eqref{e334} finishes the proof.
\end{proof}

\begin{Rem}
Formula~\eqref{eq121201} can be rewritten in terms of the functions $\kappa$, $\kappa_2$, $\kappa_3$, and $\tors$ {\bf(}see~\eqref{eq211002} and~\eqref{eqKappas}{\bf)}:
\eq{eq150101}
{
\frac{d\DR(s(t),t)}{dt} = \kappa_2'g_1'\tors'+ \Big[\frac{(g_1'\kappa_2')'}{g_1'\kappa_2'} - 
\frac{\kappa_2'}{\kappa_2-\kappa}\Big]\DR(s(t),t).
}
\end{Rem}

\begin{proof}
The coincidence of the first summands of~\eqref{eq150101} and~\eqref{eq121201} follows from~\eqref{eq211003} and the identity
$$
\dett{g'(t)}{g''(t)} = g_1'g_2''-g_1''g_2' = (g_1')^2\kappa_2'.
$$
As for the second summand, we write the following chain of elementary equalities:
$$
\dett{g'(t)}{g(t)-g(s)}=g_1'\big(g_2(t)-g_2(s)\big)-g_2'\big(g_1(t)-g_1(s)\big)=
g_1'\big[g_2(t)-g_2(s)-\kappa_2\big(g_1(t)-g_1(s)\big)\big].
$$
The coefficient in the big parentheses in~\eqref{eq121201} is equal to
\begin{align*}
\frac{\dett{g'(t)}{g'''(t)}}{\dett{g'(t)}{g''(t)}}\!-\!
\frac{\dett{g''(t)}{g(t)\!-\!g(s)}}{\dett{g'(t)}{g(t)\!-\!g(s)}}
&=\frac{\partial}{\partial t}\log \frac{\dett{g'(t)}{g''(t)}}{\dett{g'(t)}{g(t)\!-\!g(s)}}
=
\frac{\partial}{\partial t}\log \frac{\kappa_2'g_1'}{g_2(t) \!-\! g_2(s) \!-\! \kappa_2\big(g_1(t)\!-\!g_1(s)\big)} = \!\!\!\!\!\!
\\
&=\rule{0pt}{20pt}\frac{(\kappa_2'g_1')'}{\kappa_2'g_1'}- \frac{g_2' - 
\kappa_2' \big(g_1(t)-g_1(s)\big)-\kappa_2g_1'}{\big(g_1(t)-g_1(s)\big)(\kappa-\kappa_2)}=
\frac{(\kappa_2'g_1')'}{\kappa_2'g_1'}+\frac{\kappa_2'}{\kappa-\kappa_2}.\qedhere
\end{align*}
\end{proof}

\subsection{Cup construction}

\begin{St}
\label{St3108}
Suppose that the torsion of $\gamma$ changes its sign from $+$ to $-$ at the point $t_0,$ i.\,e.\textup, 
$\torsion$ is positive in a left neighborhood of $t_0$ and is negative in a right one. Then\textup, for 
$\delta>0$ sufficiently small there exists a chordal domain $\Ch(t_0,t_0+\delta)$ with the standard Bellman 
candidate on it.
\end{St}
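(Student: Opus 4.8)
\textbf{Proof plan for Proposition~\ref{St3108}.}

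The plan is to construct the function $s(t)$ governing the chordal domain via the implicit function theorem, then verify that the resulting $B$ is a standard candidate by checking the concavity inequality~\eqref{eq260901} near the origin. First I would set up the cup equation~\eqref{urlun1} as an equation $F(s,t)=0$ in two variables near the diagonal point $(t_0,t_0)$. Since $F(t_0,t_0)=0$ trivially and the naive linearization degenerates on the diagonal, the standard trick is to introduce new coordinates adapted to the diagonal: write $s = c + u$, $t = c + v$ with $c$ near $t_0$, and show that $F$ vanishes to high order along $s=t$, so that after dividing by an appropriate power of $(t-s)$ one obtains a non-degenerate equation. Concretely, expanding $\gamma(s)-\gamma(t)$ and $\gamma'(s)$ about $t$ in Taylor series, the determinant in~\eqref{urlun1} is $O((t-s)^3)$, and its leading coefficient is a multiple of $\torsion(t_0)=\det(\gamma',\gamma'',\gamma''')$ evaluated near $t_0$. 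Here I would use the torsion equation~\eqref{eqtor}: at the origin $c=t_0$ the torsion vanishes, so in fact the leading term degenerates further and the next-order term, involving $\torsion'(t_0)$, becomes the governing quantity. This is exactly why the hypothesis that $\torsion$ changes sign (hence $\torsion$ has a simple-type zero at $t_0$ with the appropriate sign of derivative) is needed: it makes the reduced equation solvable with $s(t_0)=t_0$ and $s'(t_0)=-1$ (the chord collapses to the tangent direction at the origin).

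Next, having produced a $C^1$ (in fact as smooth as the regularity of $\gamma$ allows, using Condition~\ref{reg} and the $C^2$ assumption on $f$) function $s\colon [t_0,t_0+\delta]\to\mathbb{R}$ with $s(t_0)=t_0$, $s(t)<t_0<t$ fails — rather $s(t)<t$ and $s$ decreasing — I would define $B$ on $\Ch(t_0,t_0+\delta)$ by~\eqref{eq250901}. By Proposition~\ref{NewLightChordalDomainCandidate}, it suffices to check the sign condition~\eqref{eq260901}, namely that $(t-s)\DR(s,t)\ge 0$ with strict inequality in the interior, where $\DR$ is the right differential from~\eqref{e334}. Since $t-s>0$, this reduces to showing $\DR(s(t),t)>0$ for $t\in(t_0,t_0+\delta)$. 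I would analyze $\DR$ using the differential formula~\eqref{eq150101}: near the origin, $\DR(s(t_0),t_0)=0$ (the numerator $\det(\gamma'(t),\gamma'(s),\gamma''(t))$ vanishes as $s\to t\to t_0$ together with the torsion), and~\eqref{eq150101} gives $\frac{d}{dt}\DR = \kappa_2'g_1'\tors' + (\cdots)\DR$; since $\torsion>0$ just left of $t_0$ means $\tors'<0$ just left and... — more precisely I need to track signs carefully: $\torsion$ going from $+$ to $-$ at $t_0$ corresponds to $\tors'$ (which has the sign of $\torsion$ by~\eqref{eq211003}) being positive to the left and negative to the right. A cleaner route is to write $\DR(s(t),t)$ via~\eqref{DLDRform1} or integrate~\eqref{eq150101} as a first-order linear ODE using~\eqref{firstordersolution}, obtaining $\DR(s(t),t)$ as an integral of $\tors'$ against a positive kernel over a range that, after the change of variables $s\leftrightarrow t$, picks up the positive part of $\torsion$ near $t_0^-$; this yields $\DR>0$ on the interior for $\delta$ small.

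The main obstacle I expect is the degenerate implicit function theorem step: because the cup equation vanishes identically on the diagonal to third order and, at the origin, to even higher order due to $\torsion(t_0)=0$, one cannot simply apply the implicit function theorem to $F$ itself. The careful bookkeeping of which Taylor coefficient is the first non-vanishing one — and checking that the hypothesis "$\torsion$ changes sign at $t_0$" (rather than merely "$\torsion(t_0)=0$") is precisely what guarantees that coefficient is non-zero with the right sign — is the technical heart of the argument. A supplementary subtlety is regularity: $\gamma$ is only assumed $C^2$ with $\torsion$ a signed measure (Condition~\ref{reg}), so the expansions must be done at the level of $\tors$ being piecewise monotone and $\tors'$ a measure, and the solvability of the reduced equation and the sign of $\DR$ should be phrased in integral form via~\eqref{firstordersolution} applied to~\eqref{eq150101}, rather than via pointwise derivatives of $\torsion$. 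Once $s(t)$ is in hand with the correct boundary behavior at the origin and $\DR>0$ is established on the open interval, Proposition~\ref{NewLightChordalDomainCandidate} and the definition of standard candidate immediately finish the proof.
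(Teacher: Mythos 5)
There are two genuine problems with your plan. The first is a sign error at the decisive step. You restate the concavity condition \eqref{eq260901} as ``$(t-s)\DR(s,t)\ge 0$'' and conclude that you must prove $\DR(s(t),t)>0$. But \eqref{eq260901} involves the raw determinant $\dettt{\gamma'(t)}{\gamma'(s)}{\gamma''(t)}$, whereas $\DR$ from \eqref{e334} is that determinant divided by $\dett{g'(t)}{g'(s)}$, which is \emph{negative} when $t>s$ (see \eqref{260701}). Hence for $t>s$ the condition \eqref{eq260901} is equivalent to $\DR(s,t)\le 0$, with $\DR<0$ required in the interior for a standard candidate. This is consistent with the paper's global convention that both differentials are negative inside chordal domains, and it is exactly what Corollary~\ref{Cor310801} proves: under the hypothesis that $\tors$ is nondecreasing then nonincreasing across $t_0$ (which is what ``$\torsion$ changes sign from $+$ to $-$'' gives via \eqref{eq211003}), any pair $(s,t)$ with $s<t_0<t$ solving the cup equation satisfies $\DR<0$ and $\DL<0$. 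Your proposed conclusion $\DR>0$ would mean the candidate is \emph{convex} along the vertical direction, i.e.\ no cup exists; so the sign bookkeeping is not a cosmetic issue but reverses the statement you need.

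The second problem is that the existence of $s(t)$ is not actually established. You correctly identify that the cup equation degenerates to high order on the diagonal and even further at $t_0$, but the proposed remedy (Taylor-expand, divide by $(t-s)^3$, locate the first non-vanishing coefficient, apply a degenerate implicit function theorem) is not carried out, and it cannot be carried out pointwise at the stated regularity: $f$ is only $C^2$ and $\tors'$ is merely a signed measure, so the ``first non-vanishing Taylor coefficient'' need not exist. The paper avoids the diagonal degeneracy entirely by fixing the chord length $\theta=t-s>0$ as the parameter and studying $h(t,\theta)=\dettt{\gamma'(t)}{\gamma'(t-\theta)}{\gamma(t)-\gamma(t-\theta)}$ for $t\in[t_0,t_0+\theta]$. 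The integral representation \eqref{eq280701}, which expresses this determinant as an integral of $\tors'$ against a positive kernel, gives $h(t_0,\theta)>0>h(t_0+\theta,\theta)$, so a root $t_\theta$ exists by the intermediate value theorem; the inequalities $\DR<0$, $\DL<0$ then give $\partial h/\partial t<0$ at any root, whence uniqueness, and the implicit function theorem (now applied at a point where the relevant derivative is nonzero) yields $\partial t_\theta/\partial\theta\in(0,1)$ and hence the monotone function $s$. Your side remark that one could instead integrate \eqref{eq150101} via \eqref{firstordersolution} to control the sign of $\DR$ is a reasonable alternative to Corollary~\ref{Cor310801} once a root is known to exist, but without the fixed-$\theta$ parametrization (or some substitute) the construction of the family of chords never gets off the ground. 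Incidentally, the claimed value $s'(t_0)=-1$ is neither needed nor justified; the paper only establishes $\partial t_\theta/\partial\theta\in(0,1)$, i.e.\ that $t_\theta$ increases and $t_\theta-\theta$ decreases.
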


Proposition~\ref{St3108} emphasizes the importance of Condition~\ref{reg} because cups can originate only at the points $t_0 = c_i$, see Definition~\ref{roots}.

First, we need to obtain several useful formulas.

\begin{Le}
\label{Le310801}
For any $s,t$ the following identity holds\textup:

\begin{align}
\label{eq280701}
\det
\begin{pmatrix}
\gamma'(t)\\
\gamma'(s)\\
\gamma(t)-\gamma(s)
\end{pmatrix}
&=g_1'(t)g_1'(s)\int_{s}^t\tors'(\sigma)
\bigg(\int_{s}^\sigma\int_\sigma^t\kappa_2'(u)\kappa_2'(v)\big(g_1(v)-g_1(u)\big)dv\,du\bigg)d\sigma=
\\
\label{eq150801}
&=-g_1'(t)g_1'(s) \int_{s}^t \tors(\sigma)
\bigg(\int_s^t \kappa_2'(\sigma) \kappa_2'(v)\big(g_1(v)-g_1(\sigma)\big)dv \bigg)d\sigma.
\end{align}
\end{Le}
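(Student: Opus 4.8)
The plan is to express the determinant $\dettt{\gamma'(t)}{\gamma'(s)}{\gamma(t)-\gamma(s)}$ as an iterated integral, starting from the fact that it vanishes together with its first-order behaviour when $s$ and $t$ collapse to a common point, and that its only ``source term'' is the torsion $\torsion$, equivalently $\tors'$. Concretely, fix $s$ and regard the left-hand side as a function of $t$; differentiating in $t$ kills the $\gamma'(t)$ row paired against itself, so the $t$-derivative is $\dettt{\gamma''(t)}{\gamma'(s)}{\gamma(t)-\gamma(s)}$, and differentiating once more produces $\dettt{\gamma'''(t)}{\gamma'(s)}{\gamma(t)-\gamma(s)} + \dettt{\gamma''(t)}{\gamma'(s)}{\gamma'(t)}$. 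The first of these two terms is the one that, after using the $3\times3$ vector identity~\eqref{3videntity} applied to $g'(t), g''(t), g'''(t)$ to eliminate $\gamma'''(t)$, reveals the factor $\dettt{\gamma'(t)}{\gamma''(t)}{\gamma'''(t)} = \torsion(t)/\big((g_1')^3(\kappa_2')^2\big)\cdot(\ldots)$; the point is that $\torsion$, and hence $\tors'$, appears linearly. So the cleanest route is: first establish that $\DR(s,t)$ satisfies the ODE~\eqref{eq121201} already proved (or re-derive the analogous relation directly for the determinant), integrate it once in $t$ with the initial condition coming from $t\to s$ (where $\gamma(t)-\gamma(s)\to 0$), and then integrate a second time.

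In more detail, I would proceed as follows. Using~\eqref{DLDRform1} we have $\dettt{\gamma'(t)}{\gamma'(s)}{\gamma(t)-\gamma(s)} = -\DR(s,t)\cdot\dett{g'(t)}{g(t)-g(s)}$ (sign bookkeeping via the row swap), and $\dett{g'(t)}{g(t)-g(s)} = g_1'(t)\big(g_2(t)-g_2(s)-\kappa_2(t)(g_1(t)-g_1(s))\big)$; moreover by the mean-value-in-integral identity $g_2(t)-g_2(s)-\kappa_2(t)(g_1(t)-g_1(s)) = -\int_s^t\kappa_2'(u)(g_1(t)-g_1(u))\,du$ obtained by integrating $\frac{d}{du}\big[g_2(u)-g_2(t)+\kappa_2(t)(g_1(t)-g_1(u))\big]$ — actually cleaner: $g_2(t)-g_2(s) = \int_s^t \kappa_2(u) g_1'(u)\,du$ and $g_1(t)-g_1(s)=\int_s^t g_1'(v)\,dv$, so after an integration by parts one gets a double integral $\int_s^t\int_s^t \kappa_2'(u) g_1'(v)\,[\text{indicator}]\,du\,dv$ type expression; I will arrange it as $\int_s^t\kappa_2'(u)(g_1(u)-g_1(s))\,du$ or the symmetric variant matching the target. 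Then, from the already-established ODE for $\DR$, I integrate $\DR(s,\cdot)$ from $s$ to $t$ using $\DR(s,s^+)$-type boundary data — and here one must check that the boundary term vanishes, which follows because $\dett{g'(t)}{g(t)-g(s)}\to 0$ as $t\to s$. Combining the integrated formula for $\DR$ with the product formula for the determinant and then expanding the remaining single integral of $g_1'$ into a second iterated integral yields~\eqref{eq280701}. The identity~\eqref{eq150801} then follows from~\eqref{eq280701} by integration by parts in $\sigma$: write $\tors'(\sigma)\,d\sigma = d(\tors(\sigma))$, differentiate the inner double integral $\int_s^\sigma\int_\sigma^t \kappa_2'(u)\kappa_2'(v)(g_1(v)-g_1(u))\,dv\,du$ with respect to $\sigma$ (Leibniz rule: the two boundary contributions at $u=\sigma$ and $v=\sigma$ collapse the corresponding inner integral, and since $g_1(v)-g_1(u)$ vanishes on the diagonal $u=v=\sigma$ those boundary terms vanish, leaving $-\kappa_2'(\sigma)\int_s^t\kappa_2'(v)(g_1(v)-g_1(\sigma))\,dv$ after reconciling signs), and note the boundary terms of the integration by parts at $\sigma=s$ and $\sigma=t$ vanish since the inner double integral is empty there.

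The routine parts are the two levels of integration and the repeated use of~\eqref{3videntity} and the Leibniz rule; I will not write those out in full. The main obstacle, and the only place demanding real care, is the sign and orientation bookkeeping: there are row permutations in passing between $\dettt{\gamma'(t)}{\gamma'(s)}{\gamma(t)-\gamma(s)}$, $\DR(s,t)$, and $\DL(s,t)$ (recall $\DL(s,t)=\DR(t,s)$ but with a definite sign governed by~\eqref{260701} and Remark~\ref{Rem310801}), together with the fact that on the domain of a cup one has $s<t$ but the chordal function $s(t)$ is decreasing, so the orientation of the interval $[s,t]$ must be tracked consistently. I would fix the convention $s<t$ throughout Lemma statement, verify one endpoint case carefully, and then let symmetry and the established formula~\eqref{clcr1}/\eqref{eq121201} carry the rest. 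A secondary check is the vanishing of all boundary terms in the two integrations by parts, which I will justify by the order of vanishing of $g_1(v)-g_1(u)$ on the diagonal together with $g_1'>0$ from~\eqref{eq250702}.
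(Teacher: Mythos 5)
There is a genuine gap, and it sits exactly at the step you flag as the crux of your ``more detailed'' plan. The identity you propose,
$\dettt{\gamma'(t)}{\gamma'(s)}{\gamma(t)-\gamma(s)} = -\DR(s,t)\cdot\dett{g'(t)}{g(t)-g(s)}$,
is false: the left-hand side is precisely the cup-equation expression~\eqref{urlun1}, which vanishes whenever $(s,t)$ satisfies the cup equation, while $\DR(s,t)$ and $\dett{g'(t)}{g(t)-g(s)}$ are generically nonzero there. What~\eqref{DLDRform1} actually states is $\DR=\dettt{\gamma'(t)}{\gamma(t)-\gamma(s)}{\gamma''(t)}\big/\dett{g'(t)}{g(t)-g(s)}$ --- the numerator carries $\gamma''(t)$, not $\gamma'(s)$, so it is a different determinant. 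Worse, the whole $\DR$-machinery you want to lean on (Definition~\ref{differentials}, formula~\eqref{DLDRform1}, and the ODE~\eqref{eq121201}) is only defined, respectively only valid, for pairs $(s,t)$ on the cup-equation locus; \eqref{eq121201} is moreover a total derivative along a chordal family $s=s(t)$, not a partial $t$-derivative at fixed $s$. Lemma~\ref{Le310801}, by contrast, must hold for \emph{arbitrary} $s,t$, so no route through $\DR$ can work. Your first-paragraph alternative (differentiate twice in $t$ and integrate back) also does not close: $\partial_t^2\,\dettt{\gamma'(t)}{\gamma'(s)}{\gamma(t)-\gamma(s)}$ still contains the term $\dettt{\gamma'''(t)}{\gamma'(s)}{\gamma(t)-\gamma(s)}$ with the increment $\gamma(t)-\gamma(s)$ inside, so you get an integro-differential relation rather than a second-order ODE with $\tors'$ as a clean source.

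The identity is proved far more directly: write $\gamma(t)-\gamma(s)=\int_s^t\gamma'(\tau)\,d\tau$, pull the factor $g_1'$ out of each row so the $3\times3$ determinant becomes $g_1'(t)g_1'(s)g_1'(\tau)$ times a determinant with first column of ones and remaining columns $\kappa_2,\kappa_3$; row-reduce to a $2\times2$ determinant of increments, expand each increment as $\int\kappa_2'$, $\int\kappa_3'$ over the appropriate subintervals, use $\kappa_3'=\tors\,\kappa_2'$ and $\tors(v)-\tors(u)=\int_u^v\tors'$, and finally apply Fubini so that the $\tau$-integration of $g_1'$ produces the factor $g_1(v)-g_1(u)$. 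This gives~\eqref{eq280701} with all signs forced by the algebra rather than by orientation bookkeeping. Your derivation of~\eqref{eq150801} from~\eqref{eq280701} by integration by parts in $\sigma$ (with the boundary terms vanishing because the inner double integral is empty at $\sigma=s,t$) is correct and is exactly how the second identity is obtained; only the first identity needs to be repaired.
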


\begin{proof}
\begin{multline*}
\det
\begin{pmatrix}
\gamma'(t)\\
\gamma'(s)\\
\gamma(t)-\gamma(s)
\end{pmatrix}
=
\int_s^t
\det
\begin{pmatrix}
\gamma'(t)\\
\gamma'(s)\\
\gamma'(\tau)
\end{pmatrix} d\tau
=
\int_s^t
g_1'(t)g_1'(s)g_1'(\tau)
\det
\begin{pmatrix}
1& \kappa_2(t)&\kappa_3(t) \\
1& \kappa_2(s)&\kappa_3(s) \\
1& \kappa_2(\tau)&\kappa_3(\tau) 
\end{pmatrix} d\tau=
\allowdisplaybreaks\\
=-g_1'(t)g_1'(s)
\int_s^t
g_1'(\tau)
\det
\begin{pmatrix}
\kappa_2(t)-\kappa_2(\tau)&\kappa_3(t)-\kappa_3(\tau) \\
\kappa_2(\tau)-\kappa_2(s) &\kappa_3(\tau)-\kappa_3(s)
\end{pmatrix} d\tau=
\allowdisplaybreaks\\
=-g_1'(t)g_1'(s)
\int_s^t
g_1'(\tau)
\int_\tau^t\int_s^\tau
\det
\begin{pmatrix}
\kappa_2'(v)&\kappa_3'(v) \\
\kappa_2'(u)&\kappa_3'(u)
\end{pmatrix}  du\,dv\,d\tau=
\allowdisplaybreaks
\\
=g_1'(t)g_1'(s)
\int_s^t
g_1'(\tau)
\int_\tau^t\int_s^\tau
\kappa_2'(v)\kappa_2'(u)
\int_{u}^v \tors'(\sigma)\,d\sigma\,du\,dv\,d\tau=
\allowdisplaybreaks
\\
=g_1'(t)g_1'(s)
\int_s^t
\tors'(\sigma)
\bigg(
\int_s^\sigma 
\int_\sigma^t
\kappa_2'(u)\kappa_2'(v)
\int_u^v
g_1'(\tau)\,d\tau\,dv\,du
\bigg) d\sigma
=\allowdisplaybreaks
\\
=g_1'(t)g_1'(s)\int_{s}^t \tors'(\sigma)
\bigg(\int_{s}^\sigma\int_\sigma^t\kappa_2'(u)\kappa_2'(v)\big(g_1(v)-g_1(u)\big)dv\,du\bigg)d\sigma 
\stackrel{\hbox{\tiny integration by parts}}{=}
\allowdisplaybreaks
\\
 =-g_1'(t)g_1'(s)\int_{s}^t\tors(\sigma)\bigg(\int_\sigma^t \kappa_2'(\sigma)\kappa_2'(v)\big(g_1(v)-g_1(\sigma)\big)dv-
\int_{s}^\sigma\kappa_2'(u)\kappa_2'(\sigma)\big(g_1(\sigma)-g_1(u)\big)du\bigg)d\sigma=
\allowdisplaybreaks
\\
=-g_1'(t)g_1'(s)\int_{s}^t\tors(\sigma)\bigg(\int_s^t\kappa_2'(\sigma)\kappa_2'(v)\big(g_1(v)-g_1(\sigma)\big)dv\bigg)d\sigma.
\end{multline*}
\end{proof}

\begin{Cor}
For any $s,t$ the following identity holds true\textup:
\eq{eq150802}{
\begin{aligned}
\det
\begin{pmatrix}
\gamma''(t)\\
\gamma'(s)\\
\gamma(t)-\gamma(s)
\end{pmatrix}
&=\frac{g_1''(t)}{g_1'(t)}
\det \begin{pmatrix}
\gamma'(t)\\
\gamma'(s)\\
\gamma(t)-\gamma(s)
\end{pmatrix}-\\
&\quad-g_1'(t)g_1'(s)\kappa_2'(t)\int_s^t \big(\tors(u) -\tors(t)\big)\kappa_2'(u)\big(g_1(t)-g_1(u)\big)du.
\end{aligned}
}
\end{Cor}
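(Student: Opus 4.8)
The plan is to run the same kind of computation as in the proof of Lemma~\ref{Le310801}, now keeping $\gamma''(t)$ in the first row. First I would write, exactly as there, $\dettt{\gamma''(t)}{\gamma'(s)}{\gamma(t)-\gamma(s)} = \int_s^t \dettt{\gamma''(t)}{\gamma'(s)}{\gamma'(\tau)}\,d\tau$, since $\gamma(t)-\gamma(s) = \int_s^t \gamma'(\tau)\,d\tau$ and the determinant is linear in its last row. Then I use the parametrization structure: $\gamma'(r) = g_1'(r)\,(1,\kappa_2(r),\kappa_3(r))$ for any $r$, and consequently $\gamma''(t) = g_1''(t)\,(1,\kappa_2(t),\kappa_3(t)) + g_1'(t)\,(0,\kappa_2'(t),\kappa_3'(t))$. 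Plugging this decomposition of $\gamma''(t)$ into the integrand and pulling out the scalars $g_1'(s)$ and $g_1'(\tau)$ splits $\dettt{\gamma''(t)}{\gamma'(s)}{\gamma'(\tau)}$ into two parts.

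The part carrying the factor $g_1''(t)$ contains the row $(1,\kappa_2(t),\kappa_3(t)) = \gamma'(t)/g_1'(t)$, so it reassembles into $\frac{g_1''(t)}{g_1'(t)}\dettt{\gamma'(t)}{\gamma'(s)}{\gamma'(\tau)}$; integrating in $\tau$ recovers $\frac{g_1''(t)}{g_1'(t)}\dettt{\gamma'(t)}{\gamma'(s)}{\gamma(t)-\gamma(s)}$, the first term on the right-hand side of~\eqref{eq150802}. For the part carrying $g_1'(t)$, I would subtract the $\gamma'(s)$-row from the $\gamma'(\tau)$-row — which replaces the latter by $(0,\kappa_2(\tau)-\kappa_2(s),\kappa_3(\tau)-\kappa_3(s))$ without changing the determinant — and expand along the first column. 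Only the $\gamma'(s)$-row contributes, leaving $-\dett{\kappa_2'(t)&\kappa_3'(t)}{\kappa_2(\tau)-\kappa_2(s)&\kappa_3(\tau)-\kappa_3(s)}$. Writing the bottom row as $\int_s^\tau (\kappa_2'(u),\kappa_3'(u))\,du$, this equals $-\int_s^\tau\bigl(\kappa_2'(t)\kappa_3'(u) - \kappa_3'(t)\kappa_2'(u)\bigr)\,du$, and the substitution $\kappa_3' = \tors\,\kappa_2'$ (see~\eqref{eqKappas}) turns the integrand into $\kappa_2'(t)\kappa_2'(u)\bigl(\tors(u)-\tors(t)\bigr)$.

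Assembling, the second part equals $-\,g_1'(t)g_1'(s)\kappa_2'(t)\int_s^t g_1'(\tau)\!\int_s^\tau \kappa_2'(u)\bigl(\tors(u)-\tors(t)\bigr)\,du\,d\tau$, and changing the order of integration (the region is $s\le u\le\tau\le t$) together with $\int_u^t g_1'(\tau)\,d\tau = g_1(t)-g_1(u)$ yields exactly $-\,g_1'(t)g_1'(s)\kappa_2'(t)\int_s^t\bigl(\tors(u)-\tors(t)\bigr)\kappa_2'(u)\bigl(g_1(t)-g_1(u)\bigr)\,du$, which is the second term of~\eqref{eq150802}. The computation is routine; the only thing to watch is the orientation bookkeeping — the row order inside the $3\times3$ determinants, and the convention $\int_s^t=-\int_t^s$ when $s>t$ — handled exactly as in Lemma~\ref{Le310801}, so there is no genuine obstacle here: the identity is a direct corollary of the same determinant-and-Fubini machinery, together with the parametrization relations for $\gamma$ and the definition of $\tors$.
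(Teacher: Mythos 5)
Your computation is correct, but it is not the route the paper takes. The paper's proof is a two-line observation: differentiate the already-established identity~\eqref{eq150801} with respect to $t$. On the left, $\frac{\partial}{\partial t}\dettt{\gamma'(t)}{\gamma'(s)}{\gamma(t)-\gamma(s)}$ equals exactly $\dettt{\gamma''(t)}{\gamma'(s)}{\gamma(t)-\gamma(s)}$, because the other term produced by the product rule has $\gamma'(t)$ in two rows and vanishes; on the right, the product rule applied to the factor $g_1'(t)$ gives the first summand $\frac{g_1''(t)}{g_1'(t)}\dettt{\gamma'(t)}{\gamma'(s)}{\gamma(t)-\gamma(s)}$, and differentiating the double integral (both of whose upper limits are $t$) produces the two contributions that combine into $\kappa_2'(t)\int_s^t(\tors(u)-\tors(t))\kappa_2'(u)(g_1(t)-g_1(u))\,du$. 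You instead re-expand the determinant from scratch by the method of Lemma~\ref{Le310801}: write $\gamma(t)-\gamma(s)=\int_s^t\gamma'(\tau)\,d\tau$, decompose $\gamma''(t)=g_1''(t)\gamma'(t)/g_1'(t)+g_1'(t)\,(0,\kappa_2'(t),\kappa_3'(t))$, and reduce the second piece via row operations, $\kappa_3'=\tors\,\kappa_2'$, and Fubini. Every step checks out (including the sign from expanding along the first column and the orientation conventions for $s>t$). What the paper's route buys is brevity, at the price of differentiating a double integral with a moving limit in both slots; what yours buys is independence from~\eqref{eq150801} — you only need the parametrization relations and~\eqref{eqKappas} — and a derivation that makes the appearance of $\tors(u)-\tors(t)$ transparent rather than an artifact of combining two boundary terms. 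Either argument is acceptable here.
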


\begin{proof}
The result is obtained by the direct differentiation of~\eqref{eq150801}: the first summand is obtained 
from the differentiation of the factor $g_1'(t)$, the second on is the result of the differentiation of the integral.
\end{proof}

\begin{Le}
\label{eq310802}
Let $s,t,c \in \mathbb{R}$ and $s<c<t$. Suppose that a function $\Phi$ is continuous on $[s,t],$ nondecreasing 
on $(s,c),$ nonincreasing on $(c,t),$ and not constant on $[s,t]$. Let functions~$\phi$ and~$\psi$ on $[s,t]$ 
be such that $\phi>0$ and $\psi'>0$. Suppose that
\eq{eq150804}{
\int_{s}^t \Phi(\sigma) \bigg(\int_s^t \phi(\sigma) \phi(v)\big(\psi(v)-\psi(\sigma)\big)dv \bigg)d\sigma=0.
}
Then 
\eq{eq150803}{
\int_s^t \big(\Phi(u) -\Phi(t)\big)\phi(u)\big(\psi(t)-\psi(u)\big)du>0.
}
\end{Le}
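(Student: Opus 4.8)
The plan is to normalise the two double integrals against the weight $\phi$ and reduce everything to one scalar inequality. Introduce the measure $d\mu=\phi(\sigma)\,d\sigma$ on $[s,t]$, set $M=\int_s^t\phi$, and write $\Av{\psi}{\mu}=M^{-1}\int_s^t\psi\phi$ and $\Av{\Phi}{\mu}=M^{-1}\int_s^t\Phi\phi$ for the $\mu$-averages. Computing the inner integral in \eqref{eq150804} gives $\int_s^t\phi(\sigma)\phi(v)\big(\psi(v)-\psi(\sigma)\big)\,dv=M\phi(\sigma)\big(\Av{\psi}{\mu}-\psi(\sigma)\big)$, so the hypothesis \eqref{eq150804} becomes precisely the ``zero covariance'' identity
\[
\int_s^t\Phi\,\psi\,\phi=M\,\Av{\Phi}{\mu}\,\Av{\psi}{\mu}.
\]
Expanding the left-hand side of \eqref{eq150803} in the same way and substituting this identity, all cross terms collapse and one gets
\[
\int_s^t\big(\Phi(u)-\Phi(t)\big)\phi(u)\big(\psi(t)-\psi(u)\big)\,du=M\big(\psi(t)-\Av{\psi}{\mu}\big)\big(\Av{\Phi}{\mu}-\Phi(t)\big).
\]
Since $\phi>0$ we have $M>0$, and since $\psi'>0$ we have $\psi(\sigma)<\psi(t)$ for $\sigma<t$, hence $\Av{\psi}{\mu}<\psi(t)$. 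Thus the whole statement reduces to the single strict inequality $\Av{\Phi}{\mu}>\Phi(t)$.

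To prove $\Av{\Phi}{\mu}>\Phi(t)$, put $h=\Phi-\Av{\Phi}{\mu}$. Then $\int_s^t h\phi=0$, and subtracting $\Av{\Phi}{\mu}$ times $\int\psi\phi$ from the covariance identity gives $\int_s^t h\,\psi\,\phi=0$ as well; moreover $h$ is continuous, nondecreasing on $[s,c]$, nonincreasing on $[c,t]$, and not constant, so $h$ changes sign. Argue by contradiction: assume $h(t)\geq0$. Monotonicity on $[c,t]$ forces $h\geq h(t)\geq0$ there; if also $h(s)\geq0$, monotonicity on $[s,c]$ would give $h\geq0$ on all of $[s,t]$, contradicting $\int h\phi=0$ with $h\not\equiv0$; hence $h(s)<0$. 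Let $p=\inf\{\sigma\in[s,t]:h(\sigma)\geq0\}$; then $p\in(s,c]$, $h<0$ on $[s,p)$, and $h\geq0$ on $[p,t]$ (here one uses monotonicity of $h$ on $[s,c]$ and $h(p)\geq0$). Using $\int_s^t h\phi=0$,
\[
0=\int_s^t h\,\psi\,\phi=\int_s^p h\,(\psi-\psi(p))\,\phi+\int_p^t h\,(\psi-\psi(p))\,\phi.
\]
On $[s,p)$ both $h$ and $\psi-\psi(p)$ are strictly negative, so the first integrand is strictly positive on the whole interval and the first integral is $>0$; on $[p,t]$ both factors are $\geq0$, so the second integral is $\geq0$. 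This contradicts the equality with $0$, so $h(t)<0$, i.e. $\Av{\Phi}{\mu}>\Phi(t)$, which finishes the proof.

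I expect the main obstacle to be exactly this last step, the inequality $\Av{\Phi}{\mu}>\Phi(t)$: the preceding algebra is routine, but here the hypothesis \eqref{eq150804} (the cup equation, in the intended application) is genuinely indispensable — it enters as the relation $\int h\,\psi\,\phi=0$, and without it the desired sign is simply false. The delicate point is to combine the unimodal shape of $\Phi$ with the strict monotonicity of $\psi$ so as to localise the sign change of $h$ to the single interval $[s,p)$ on the left and keep the first of the two integrals above strictly positive; the non-constancy of $\Phi$ is what makes that interval nondegenerate and hence yields strictness in \eqref{eq150803}.
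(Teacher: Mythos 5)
Your proof is correct and follows essentially the same route as the paper's: the paper normalizes by adding constants to $\psi$ and $\Phi$ so that $\int_s^t\Phi\phi=\int_s^t\Phi\phi\psi=0$ and the claim becomes $\Phi(t)<0$, which is exactly your reduction to $\Phi(t)<\Av{\Phi}{\mu}$ with $h=\Phi-\Av{\Phi}{\mu}$ satisfying $\int h\phi=\int h\psi\phi=0$. The final contradiction is also the same device — locate the sign-change point ($\sigma_0$ in the paper, $p$ in your argument) and test $h$ against $\psi-\psi(p)$ to get a one-signed integrand whose integral vanishes — so no changes are needed.
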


\begin{proof}
The inequality~\eqref{eq150803} and the equality~\eqref{eq150804} do not change if we add a constant to the function~$\psi$. 
Therefore we may assume that $\psi(t)=0$. Under this assumption $\psi<0$ on $(s,t)$.

We rewrite~\eqref{eq150804} in the following way:
\eq{eq210801}{
\int_s^t \Phi(\sigma) \phi(\sigma)\,d\sigma \ \int_s^t\phi(v)\psi(v)\,dv 
= \int_s^t \Phi(\sigma)\phi(\sigma)\psi(\sigma)\,d\sigma \ \int_s^t \phi(v)\,dv.
}
We see that the problem is not sensitive to adding a constant to $\Phi$. The integral $\int \phi \psi$ is 
strictly negative, so we can assume $\int_s^t \Phi \phi \psi =0$ by adding a constant to $\Phi$, if necessary. It now follows from~\eqref{eq210801} that 
$\int_s^t \Phi \phi =0$. Under  our assumptions,~\eqref{eq150803} takes the following form 
$$
\Phi(t) \int \phi \psi>0,
$$
which is equivalent to $\Phi(t)<0$. If it is not true, then $\Phi(t)\geq 0$ and $\Phi \geq 0$ on $[c,t]$. 
We can find a point $\sigma_0\in(s,c)$, such that $\Phi\geq0$ on $(\sigma_0,t)$ and $\Phi\leq0$ on $(s,\sigma_0)$. However,
$$
\int_s^t \Phi(\sigma) \phi(\sigma) (\psi(\sigma)-\psi(\sigma_0))d\sigma = 0,
$$ 
while the integrand is non-negative. Thus the integrand is identically zero, therefore $\Phi$ vanishes as well. 
This contradicts our requirement that $\Phi$ is not constant. 
\end{proof}

\begin{Cor}
\label{Cor310801}
Let $s,t,t_0 \in \mathbb{R}$ and $s<t_0<t$. Suppose that $\tors$ is nondecreasing on $(s,t_0),$ nonincreasing 
on $(t_0,t),$ and is not constant on $[s,t]$. Suppose that the pair $(s,t)$ satisfies the cup equation~\eqref{urlun1}. 
Then $\DR(s,t)<0$ and $\DL(s,t)<0$.
\end{Cor}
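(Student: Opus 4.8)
The plan is to derive closed integral formulas for $\DR(s,t)$ and $\DL(s,t)$ out of the already-established identities of Lemma~\ref{Le310801} and its Corollary, and then read off the sign from Lemma~\ref{eq310802}. I will treat $\DR(s,t)$ first and obtain $\DL(s,t)$ by running the same computation on the reflected interval.

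First I would use that the pair $(s,t)$ satisfies the cup equation~\eqref{urlun1}, i.e.\ $\dettt{\gamma'(t)}{\gamma'(s)}{\gamma(t)-\gamma(s)}=0$. Plugging this into~\eqref{eq150802} kills its first summand and gives
$$\dettt{\gamma''(t)}{\gamma'(s)}{\gamma(t)-\gamma(s)}=-g_1'(t)g_1'(s)\kappa_2'(t)\int_s^t\big(\tors(u)-\tors(t)\big)\kappa_2'(u)\big(g_1(t)-g_1(u)\big)\,du.$$
Next, using the linear dependence~\eqref{lincoef}, $\gamma(t)-\gamma(s)=\CR\gamma'(t)-\CL\gamma'(s)$, the left-hand side equals $-\CR\dettt{\gamma'(t)}{\gamma'(s)}{\gamma''(t)}$, so by definition~\eqref{e334}, together with $\CR\,\dett{g'(t)}{g'(s)}=\dett{g(t)-g(s)}{g'(s)}$, I get $\DR(s,t)=\dfrac{g_1'(t)g_1'(s)\kappa_2'(t)}{\dett{g(t)-g(s)}{g'(s)}}\,J_t$, where $J_t$ denotes the integral above. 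For $s<t$ one has $\CR>0$ and $\dett{g'(t)}{g'(s)}<0$ by Remark~\ref{Rem310801} and~\eqref{260701}, hence $\dett{g(t)-g(s)}{g'(s)}<0$; since also $g_1',\kappa_2'>0$ (by~\eqref{eq250702},~\eqref{eq230402},~\eqref{eqKappas}), this shows $\sign\DR(s,t)=-\sign J_t$.

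Now I would invoke Lemma~\ref{eq310802} with $\Phi=\tors$, $\phi=\kappa_2'$, $\psi=g_1$, $c=t_0$. Its hypotheses hold: $\phi>0$, $\psi'>0$, $\tors$ is continuous by Condition~\ref{reg}, and the monotonicity/non-constancy assumptions on $\Phi$ are exactly those of the Corollary. The normalization~\eqref{eq150804} is precisely the cup equation: by~\eqref{eq150801}, $\int_s^t\tors(\sigma)\big(\int_s^t\kappa_2'(\sigma)\kappa_2'(v)(g_1(v)-g_1(\sigma))\,dv\big)\,d\sigma=-\dettt{\gamma'(t)}{\gamma'(s)}{\gamma(t)-\gamma(s)}\big/(g_1'(t)g_1'(s))=0$. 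Therefore~\eqref{eq150803} yields $J_t>0$, whence $\DR(s,t)<0$. To get $\DL(s,t)<0$, I would repeat the argument after the reflection $\sigma\mapsto s+t-\sigma$: identities~\eqref{eq150801} and~\eqref{eq150802} hold for arbitrary $s,t$ (in particular with $s$ and $t$ interchanged), and the hypotheses of Lemma~\ref{eq310802} are preserved by this reflection (the peak of $\tors$ moves from $t_0$ to $s+t-t_0$, still strictly inside $(s,t)$, and one replaces $\psi$ by $-g_1(s+t-\,\cdot\,)$ so that $\psi'>0$). The same chain of equalities, now with $\CL<0$ and $\dett{g'(s)}{g'(t)}>0$, gives $\sign\DL(s,t)=-\sign\int_s^t(\tors(u)-\tors(s))\kappa_2'(u)(g_1(u)-g_1(s))\,du$, and the reflected form of~\eqref{eq150803} makes that integral positive, so $\DL(s,t)<0$.

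The main obstacle is purely bookkeeping: matching the orientation of the three $3\times3$ determinants appearing in~\eqref{eq150802}, in the definition~\eqref{e334}, and in the cup equation~\eqref{urlun1} (each differs by a row transposition, hence a sign), and correctly tracking the signs of $\CR$, $\CL$, $\dett{g'(t)}{g'(s)}$ and $\dett{g(t)-g(s)}{g'(s)}$ for $s<t$ via~\eqref{260701} and Remark~\ref{Rem310801}. Once these signs are pinned down, everything reduces to a single substitution into the identities already proved above and one application of Lemma~\ref{eq310802}.
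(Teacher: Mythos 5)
Your proof is correct and follows essentially the same route as the paper's: apply Lemma~\ref{eq310802} with $\Phi=\tors$, $\phi=\kappa_2'$, $\psi=g_1$, $c=t_0$, where \eqref{eq150804} is supplied by \eqref{eq150801} together with the cup equation, then convert \eqref{eq150803} into the sign of $\dettt{\gamma''(t)}{\gamma'(s)}{\gamma(t)-\gamma(s)}$ via \eqref{eq150802} and finally into $\DR<0$ via \eqref{lincoef}, $\CR>0$, and $\dett{g'(t)}{g'(s)}<0$. The only cosmetic difference is that you solve explicitly for $\DR$ as a multiple of the integral $J_t$ rather than first concluding the determinant is negative, and your sign bookkeeping checks out.
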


\begin{proof}
We will prove only $\DR(s,t)<0$, the remaining inequality is symmetric. 
 
We want to use Lemma~\ref{eq310802} with $\Phi =\tors$, $\psi = g_1$, $\phi = \kappa_2'$, and $c=t_0$. 
Then,~\eqref{eq150804} holds due to~\eqref{eq150801}. Thus we obtain~\eqref{eq150803}, 
which is equivalent to 
$$
\det
\begin{pmatrix}
\gamma''(t)\\
\gamma'(s)\\
\gamma(t)-\gamma(s)
\end{pmatrix}<0
$$ 
by~\eqref{eq150802}. We use~\eqref{lincoef} to rewrite this in terms of $\DR$:
$$
0> \dettt{\gamma''(t)}{\gamma'(s)}{\gamma(t)-\gamma(s)}=\CR \dettt{\gamma''(t)}{\gamma'(s)}{\gamma'(t)}
=-\CR\DR\dett{g'(t)}{g'(s)}.
$$
We recall that $\CR>0$ and $\dett{g'(t)}{g'(s)}<0$ when $t>s$. Thus, we obtain the claimed inequality $\DR<0$. 
\end{proof}

\begin{proof}[Proof of Proposition~\ref{St3108}]
Take $\theta_0>0$ to be a small number such that the torsion $\torsion$ of $\gamma$ is positive
 on $(t_0-\theta_0,t_0)$ and is negative on $(t_0,t_0+\theta_0)$. Recall that the sign of $\torsion$ 
coincides with the sign of $\tors'$, see~\eqref{eq211003}. 

Consider the following function $h$: 
$$h(t,\theta)=\dettt{\gamma'(t)}{\gamma'(t-\theta)}{\gamma(t)-\gamma(t-\theta)},\quad
\theta\in(0,\theta_0],\ t\in [t_0,t_0+\theta].
$$
This function is continuous and differentiable on its domain. It follows from the conditions on the curve $g$ 
that $\kappa_2'(u)\kappa_2'(v)(g_1(v)-g_1(u))>0$ for any $u,v$ such that $u<v$. For any $\theta\in(0,\theta_0]$ 
the measure $\tors'$ is positive on $(t_0-\theta,t_0)$ and is negative on $(t_0,t_0+\theta)$, therefore 
formula~\eqref{eq280701} implies that $h(t_0,\theta)>0>h(t_0+\theta,\theta)$ for any $\theta\in(0,\theta_0]$. 
So, for any such $\theta$ there exists $t_\theta \in (t_0,t_0+\theta)$ such that $h(t_\theta,\theta)=0$. 
We will show that such $t_\theta$ is unique for any $\theta \in (0,\theta_0]$.

Corollary~\ref{Cor310801} implies that $\DR = \DR(t_\theta-\theta,t_\theta)$ and 
$\DL = \DL(t_\theta-\theta,t_\theta)$ are negative. We use formula~\eqref{lincoef} to obtain
$$
\frac{\partial h}{\partial t}(t_\theta,\theta)
=\dettt{\gamma''(t_\theta)}{\gamma'(t_\theta-\theta)}{\gamma(t_\theta)-\gamma(t_\theta-\theta)}
+\dettt{\gamma'(t_\theta)}{\gamma''(t_\theta-\theta)}{\gamma(t_\theta)-\gamma(t_\theta-\theta)}
=\dett{g'(t_\theta)}{g'(t_\theta-\theta)}(\CL\DL-\CR\DR)<0, 
$$
because $\dett{g'(t_\theta)}{g'(t_\theta-\theta)}<0$, $\CR>0>\CL$, $\DR<0$, and $\DL<0$. It follows that 
the function $h(\cdot,\theta)$ changes its sign from plus to minus at any root on $(t_0,t_0+\theta)$. 
Therefore, it has only one root $t_\theta$ there.

We now calculate 
$$
\frac{\partial h}{\partial \theta}(t_\theta,\theta)
=-\dettt{\gamma'(t_\theta)}{\gamma''(t_\theta-\theta)}{\gamma(t_\theta)-\gamma(t_\theta-\theta)}
=-\dett{g'(t_\theta)}{g'(t_\theta-\theta)}\CL\DL>0. 
$$
It now follows from the implicit function theorem that $t_\theta$ is a differentiable function of $\theta$ and 
$$
\frac{\partial t_\theta}{\partial \theta} = \frac{\CL \DL}{\CL\DL-\CR\DR} \in(0,1)
$$ 
for $\theta \in (0,\theta_0)$. Thus, $t_\theta -\theta$ decreases. 
Moreover, $t_\theta \in (t_0,t_0+\theta)$ for any $\theta$, therefore, $\lim_{\theta \to 0+} t_\theta = t_0$. 

Let $\delta = t_{\theta_0}-t_0$, $I = [t_0,t_{\theta_0}]$. We define the function 
$s\colon I \to [t_{\theta_0}-\theta_0,t_0]$ by the rule $s(t_\theta) = t_\theta-\theta$ for $\theta \in (0,\theta_0]$, 
and $s(t_0)=t_0$. This function decreases, and the corresponding family of segments $S(t) =[g(t),g(s(t))]$, $t \in I$, 
forms a chordal domain. By the definition of $t_\theta$ the pair $(s(t),t)$ satisfies the cup equation~\eqref{urlun1} 
for any $t \in I$, and moreover $\DR(s,t)<0$, $\DL(s,t)<0$. Therefore this chordal domain satisfies the conditions of 
Proposition~\ref{NewLightChordalDomainCandidate} and the proof is finished. 
\end{proof}

\subsection{How to grow a chordal domain from a single chord}

Now we turn to a more general situation. In this subsection we work under the following assumption.

\begin{Cond}
\label{StartChord}
There is a pair of numbers $a_0,b_0,$ $a_0<b_0,$ satisfying the cup equation~\eqref{urlun1}\textup, i.\,e.\textup, 
$$
\dettt{\gamma'(a_0)}{\gamma'(b_0)}{\gamma(b_0)-\gamma(a_0)} = 0,
$$
such that for some $\theta_0>0$ the determinant 
$$
L(t)=\dettt{\gamma'(t)}{\gamma'(a_0)}{\gamma'(b_0)} 
$$ 
is negative for $t \in (b_0,b_0+\theta_0)$ and is positive for $t\in (a_0-\theta_0,a_0)$.
\end{Cond}

\begin{Rem}\label{Rem230301}
If $(a_0,b_0)$ satisfies the cup equation~\eqref{urlun1} and the inequalities $\DR(a_0,b_0)<0$ and $\DL(a_0,b_0)<0$, 
then Condition~\ref{StartChord} holds.
\end{Rem}

\begin{proof}
It follows immediately from the fact that $L(a_0)=L(b_0)=0$ and 
\begin{equation*}
\frac{L'(a_0)}{\DL(a_0,b_0)}=\frac{L'(b_0)}{\DR(a_0,b_0)} = \dett{g'(a_0)}{g'(b_0)}>0.\qedhere
\end{equation*}
\end{proof}

We are ready to present the main result of this subsection.
\begin{St}
\label{St010901}
Under Condition~\textup{\ref{StartChord}} there exists $\delta>0$ and two differentiable functions 
$a,b \colon [0,\delta] \to \mathbb{R}$ such that
\begin{enumerate}
\item $a'<0<b'$\textup, $a(0)=a_0$, $b(0)=b_0$\textup;
\item $b(\theta)-a(\theta)=b_0-a_0+\theta$ and the pair $(a(\theta),b(\theta))$ satisfies the cup equation~\eqref{urlun1} 
for any $\theta \in [0,\delta]$\textup;
\item $\DL(a(\theta),b(\theta))<0$ and $\DR(a(\theta),b(\theta))<0$ for any $\theta \in (0,\delta]$\textup.
\end{enumerate}
\end{St}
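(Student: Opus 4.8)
The plan is to obtain $(a(\theta),b(\theta))$ from the implicit function theorem applied to the cup equation together with the prescribed width. Write
\eqstar{
G(a,b)\df\dettt{\gamma'(a)}{\gamma'(b)}{\gamma(b)-\gamma(a)},\qquad a<b,
}
so that the cup equation~\eqref{urlun1} for the pair $(a,b)$ reads $G(a,b)=0$; by Condition~\ref{StartChord} we have $G(a_0,b_0)=0$, and $G$ is $C^1$ since $\gamma\in C^2$. To enforce $b-a=(b_0-a_0)+\theta$ I substitute $b=a+(b_0-a_0)+\theta$ and set $\Psi(a,\theta)\df G\big(a,\,a+(b_0-a_0)+\theta\big)$. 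First I would solve $\Psi(a,\theta)=0$ near $(a_0,0)$ for $a=a(\theta)$; then $b(\theta)\df a(\theta)+(b_0-a_0)+\theta$ automatically gives $b(0)=b_0$, the width identity, and the cup equation of item~(2).

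For the implicit function theorem I need $\partial_a\Psi(a_0,0)=\partial_aG(a_0,b_0)+\partial_bG(a_0,b_0)\ne0$. Differentiating $G$, the determinants produced by $\partial_a(\gamma(b)-\gamma(a))=-\gamma'(a)$ and $\partial_b(\gamma(b)-\gamma(a))=\gamma'(b)$ vanish (two proportional rows), so $\partial_aG(a_0,b_0)=\dettt{\gamma''(a_0)}{\gamma'(b_0)}{\gamma(b_0)-\gamma(a_0)}$ and $\partial_bG(a_0,b_0)=\dettt{\gamma'(a_0)}{\gamma''(b_0)}{\gamma(b_0)-\gamma(a_0)}$. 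Now I plug in the decomposition $\gamma(b_0)-\gamma(a_0)=\CR\gamma'(b_0)-\CL\gamma'(a_0)$ from Lemma~\ref{Lem310803} (legitimate since $(a_0,b_0)$ solves the cup equation), discard the vanishing proportional-row determinants, reorder rows, and recognize the differentials of Definition~\ref{differentials}. This yields
\begin{align*}
\partial_aG(a_0,b_0)&=\CL\,\DL(a_0,b_0)\,\dett{g'(a_0)}{g'(b_0)},\\
\partial_bG(a_0,b_0)&=-\CR\,\DR(a_0,b_0)\,\dett{g'(a_0)}{g'(b_0)},
\end{align*}
hence $\partial_a\Psi(a_0,0)=\big(\CL\,\DL(a_0,b_0)-\CR\,\DR(a_0,b_0)\big)\dett{g'(a_0)}{g'(b_0)}$.

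To sign this, note $\dett{g'(a_0)}{g'(b_0)}>0$ for $a_0<b_0$ by~\eqref{eq250702},~\eqref{eq230402}, and $\CR>0>\CL$ by Remark~\ref{Rem310801}. Condition~\ref{StartChord} forces $L'(a_0)\le0$ and $L'(b_0)\le0$ (here $L$ is the $C^1$ function $L(t)=\dettt{\gamma'(t)}{\gamma'(a_0)}{\gamma'(b_0)}$, which vanishes at $a_0,b_0$, is positive just left of $a_0$, and negative just right of $b_0$); the same row reorderings as above give $L'(a_0)=\DL(a_0,b_0)\dett{g'(a_0)}{g'(b_0)}$ and $L'(b_0)=\DR(a_0,b_0)\dett{g'(a_0)}{g'(b_0)}$, so $\DL(a_0,b_0)\le0$ and $\DR(a_0,b_0)\le0$; in every situation where the proposition is applied these are strict (this is exactly the hypothesis of Remark~\ref{Rem230301}), and I take $\DL(a_0,b_0)<0$, $\DR(a_0,b_0)<0$. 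Then $\partial_a\Psi(a_0,0)>0$, and the implicit function theorem produces $\delta>0$ and a $C^1$ (in particular differentiable) function $a\colon[0,\delta]\to\R$ with $a(0)=a_0$ and $\Psi(a(\theta),\theta)=0$; put $b(\theta)=a(\theta)+(b_0-a_0)+\theta$. Differentiating $\Psi(a(\theta),\theta)=0$, using $\partial_\theta\Psi(a_0,0)=\partial_bG(a_0,b_0)$ and $b'=a'+1$, a short computation gives $a'(0)=\CR\DR/(\CL\DL-\CR\DR)<0$ and $b'(0)=\CL\DL/(\CL\DL-\CR\DR)>0$ (all differentials evaluated at $(a_0,b_0)$); shrinking $\delta$, the strict inequalities $a'<0<b'$ persist on $[0,\delta]$, which is item~(1). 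Finally, $\DL$ and $\DR$ are continuous in their arguments (they involve only $\gamma',\gamma''$, and $g'$), so from $\DL(a_0,b_0)<0$, $\DR(a_0,b_0)<0$ a further shrinking of $\delta$ forces $\DL(a(\theta),b(\theta))<0$ and $\DR(a(\theta),b(\theta))<0$ for all $\theta\in(0,\delta]$, which is item~(3).

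The computations of $\partial_aG,\partial_bG$ and of $L'(a_0),L'(b_0)$ come down to careful bookkeeping with the several cyclic and transposition row permutations relating $G$, $L$, and the differentials $\DL,\DR$, while carrying the signs of $\CL,\CR$ from Remark~\ref{Rem310801}; this is routine but must be done precisely. The genuine obstacle is the non-vanishing of the Jacobian $\partial_a\Psi(a_0,0)$: Condition~\ref{StartChord} alone only gives $\DL(a_0,b_0)\le0$ and $\DR(a_0,b_0)\le0$, and the degenerate configuration $\DL(a_0,b_0)=\DR(a_0,b_0)=0$ (at which $\nabla G(a_0,b_0)$ also vanishes) has to be excluded — in practice it is, since the chords fed into this proposition are always produced together with strictly negative differentials (cf. Proposition~\ref{St3108} and Remark~\ref{Rem230301}).
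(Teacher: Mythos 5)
Your computation is correct as far as it goes, and in the non-degenerate case it reproduces the paper's argument: the paper's function $h(t,\theta)$ is your $\Psi$ up to relabeling, and your formulas $\partial_a\Psi=(\CL\DL-\CR\DR)\dett{g'(a_0)}{g'(b_0)}$ and $b'=\CL\DL/(\CL\DL-\CR\DR)$ match \eqref{eq280703}--\eqref{eq280705}. The derivation $\DL(a_0,b_0)\le 0$, $\DR(a_0,b_0)\le 0$ from Condition~\ref{StartChord} via the signs of $L'(a_0),L'(b_0)$ is also right.

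The gap is your final step, where you declare that the degenerate configuration never occurs ``in practice'' and assume $\DL(a_0,b_0)<0$, $\DR(a_0,b_0)<0$. That assumption is exactly the hypothesis of Remark~\ref{Rem230301}, i.e.\ it is strictly stronger than Condition~\ref{StartChord}, and the whole point of stating the proposition under the weaker condition is to cover the cases where one or both differentials vanish at $(a_0,b_0)$. These cases do arise: Proposition~\ref{InductionStepForLongChords} verifies Condition~\ref{StartChord} directly and invokes Proposition~\ref{St010901} precisely when $\DR(a_0,b_0)=0$ or $\DL(a_0,b_0)=0$ (a long chord whose differential has just vanished, e.g.\ the upper chord of a full multicup). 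The statement itself signals this: item~(3) asks for strict negativity only on $(0,\delta]$, not at $\theta=0$. In the degenerate cases your argument breaks in two places. If exactly one differential vanishes, the implicit function theorem still applies (since $\CL\DL-\CR\DR\ne 0$), but your proof of item~(3) by continuity from $\theta=0$ collapses: continuity only gives $\DR(a(\theta),b(\theta))$ close to $0$, not negative. If both vanish, $\partial_a\Psi(a_0,0)=0$ and even the existence of the branch $a(\theta)$ needs a separate argument. The paper handles existence by an intermediate-value argument based on the sign inequalities \eqref{eq180901}--\eqref{eq180902} (derived from Condition~\ref{StartChord} after normalizing $f'(a_0)=f'(b_0)=0$), and handles item~(3) in Lemma~\ref{Lem180901}: when one differential vanishes it integrates the ODE \eqref{eq150101} for $\DR$ along the flow with an integrating factor and uses that $\tors'$ is not identically zero near $b_0$; when both vanish it replaces $f$ by a function constant on $[a_0,b_0]$ and applies Corollary~\ref{Cor310801}. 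Some version of this analysis is unavoidable, so your proof as written establishes only the easy case of the proposition.
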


\begin{proof}
We can always assume that $f'(a_0)=f'(b_0)=0$. If it is not the case, then we can replace the function $f$ by 
$f+c_1 g_1+c_2g_2$ with appropriate $c_1,c_2 \in \mathbb{R}$. 
In such a case, the determinants in the cup equation and in the differentials do not depend on the choice of $c_1$ and $c_2$. Since $\dett{g'(a_0)}{g'(b_0)}\ne 0$, 
we can choose the constants $c_1,c_2$ in such a way that $f'(a_0)=f'(b_0)=0$ for the modified function. 
In this situation the sign of $L(t)$ coincides with the sign of $f'(t)$, therefore Condition~\ref{StartChord} 
implies that $f'$ is negative on the right neighborhood of $b_0$. From the cup~\eqref{urlun1} we obtain $f(a_0)=f(b_0)$.

Let us prove that $\kappa_3'$ is non-positive in some right neighborhood of $b_0$. The function $\tors$ is 
piecewise monotone by Condition~\ref{reg}, therefore we can find a right neighborhood of $b_0$ where it is 
monotone and does not change its sign. Then, $\kappa_3' = \tors \kappa_2'$ does not change its sign 
in the same right neighborhood of $b_0$ as well. We know that $\kappa_3(b_0)=0$ and  
\eq{eq290703}{
\kappa_3 = \frac{f'}{g_1'}< 0 \quad \text{to the right of } b_0,
}
therefore $\kappa_3'\leq 0$ there. Without loss of generality, in what follows we assume that 
\eq{eq280702}{
\kappa_3'\leq 0 \quad \text{ on } (b_0,b_0+\theta_0).   
}

We write the following inequality for $t\in (b_0,b_0+\theta_0)$:
$$
f(t)-f(a_0)=f(t)-f(b_0)=\int_{b_0}^t f'(\tau)\,d\tau
=\int_{b_0}^t g_1'(\tau)\kappa_3(\tau)\,d\tau\geq\big(g_1(t)-g_1(b_0)\big)\kappa_3(t)
$$
because $g_1'>0$ and $\kappa_3'\leq 0$. We use this inequality to estimate the determinant:
$$
\begin{aligned}
\dettt{\gamma'(t)}{\gamma'(a_0)}{\gamma(t)-\gamma(a_0)} &\leq 
\det
\begin{pmatrix}
g_1'(t)& g_2'(t)&f'(t)\\
g_1'(a_0)& g_2'(a_0)& 0\\
g_1(t)-g_1(a_0)& g_2(t)-g_2(a_0)& (g_1(t)-g_1(b_0))\kappa_3(t)
\end{pmatrix}=
\\
&=f'(t)g_1'(a_0)
\det
\begin{pmatrix}
1& \kappa_2(t)& 1\\
1& \kappa_2(a_0)& 0\\
g_1(t)-g_1(a_0)& g_2(t)-g_2(a_0)& g_1(t)-g_1(b_0)
\end{pmatrix}.
\end{aligned}
$$
When $t$ approaches $b_0$ from the right, the latter determinant tends to 
$$
g_2(b_0)-g_2(a_0)-\kappa_2(a_0)\big(g_1(b_0)-g_1(a_0)\big).
$$
This expression is strictly positive due to convexity of the curve $g$. 
Since $f'(t)<0$, this leads to the inequality 
\eq{eq180901}{
\dettt{\gamma'(t)}{\gamma'(a_0)}{\gamma(t)-\gamma(a_0)}<0
}
for $t>b_0$ sufficiently close to $b_0$. Reducing $\theta_0$ if needed, 
we can assume that~\eqref{eq280702} and~\eqref{eq180901}  
hold for all $t \in (b_0,b_0+\theta_0)$. 

Similarly, we can prove that for all $t$ in a left neighborhood of $a_0$ we have $\kappa_3'(t)\geq 0$ and 
\eq{eq180902}{
\dettt{\gamma'(b_0)}{\gamma'(t)}{\gamma(b_0)-\gamma(t)}>0.
}
We can reduce $\theta_0$ once more and in what follows assume that~\eqref{eq180902} holds for all 
$t \in (a_0-\theta_0,a_0)$. Consider the function 
$$
h(t,\theta) = \dettt{\gamma'(t)}{\gamma'(t-(b_0-a_0)-\theta)}{\gamma(t)-\gamma(t-(b_0-a_0)-\theta)}, 
\qquad \theta \in (0,\theta_0], \quad t \in [b_0,b_0+\theta].
$$ 
This function is continuous and differentiable on its domain. It follows from~\eqref{eq180901} 
and~\eqref{eq180902} that 
$$
h(b_0,\theta)>0>h(b_0+\theta,\theta), \qquad \theta \in (0,\theta_0).
$$
Thus, for any such $\theta$ there exists $b_\theta \in (b_0,b_0+\theta)$ such that $h(b_\theta,\theta)=0$. 
We will show that such $b_\theta$ is unique. Let $a_\theta=b_\theta-(b_0-a_0)-\theta$. We would like
to verify that the functions $a(\theta)=a_\theta$ and $b(\theta)=b_\theta$ are the function whose existence
is stated in the proposition.

The remaining part of the proof is based on the following lemma.

\begin{Le}
\label{Lem180901}
If $\theta \in (0,\theta_0)$ is sufficiently small\textup, then $\DR(a_\theta,b_\theta)<0$ 
and $\DL(a_\theta,b_\theta)<0$.
\end{Le} 

The proof of this lemma is rather technical, we present it after completing the proof of the proposition. 

The remaining part of the proof 
literally repeats the proof of Proposition~\ref{St3108}. We can assume that~$\theta_0$ is so small that the statement 
of Lemma~\ref{Lem180901} holds for all $\theta$ in the interval $(0,\theta_0)$. 
We calculate derivative of~$h$:
\eq{eq280703}{
\frac{\partial h}{\partial t}(b_\theta,\theta)
=\dett{g'(b_\theta)}{g'(a_\theta)}\cdot\big[\CL\DL-\CR\DR\big](a_\theta,b_\theta)<0.
}
Therefore the function $h(\,\cdot\,,\theta)$ has unique root on $(b_0,b_0+\theta)$. We save the notation $b_\theta$ 
for this unique root. One can calculate 
the partial derivative with respect to $\theta$:
\eq{eq280704}{
\frac{\partial h}{\partial \theta}(b_\theta,\theta)
=-\dett{g'(b_\theta)}{g'(a_\theta)}\cdot \big[\CL\DL\big](a_\theta,b_\theta)>0. 
}
The implicit function theorem implies that $b_\theta$ is a differentiable function of $\theta$ and 
\eq{eq280705}{
\frac{\partial b_\theta}{\partial \theta} = \frac{\CL \DL}{\CL\DL-\CR\DR} \in(0,1)
} 
for $\theta \in (0,\theta_0)$. It follows that $a_\theta=b_\theta-\theta-(b_0-a_0)$ decreases and 
$$
\lim_{\theta \to 0+} b_\theta = b_0, \qquad \lim_{\theta \to 0+} a_\theta = a_0.
$$
Proposition~\ref{St010901} is proved.
\end{proof}

\begin{proof}[Proof of Lemma~\ref{Lem180901}]
We will deal with $\DR$ only, the other inequality is symmetric. Since  $f'(b_0)=0$ and $f'(t)< 0$ for 
$t \in (b_0,b_0+\theta_0)$, we have $f''(b_0)\leq 0$. Since also $f'(a_0)=0$, this yields $\DR(a_0,b_0)\leq 0$. If $\DR(a_0,b_0)<0$, 
then $\DR(a_\theta,b_\theta)<0$ for small $\theta$ by continuity. The same reasoning shows that $\DL(a_0,b_0)\leq 0$, 
and if $\DL(a_0,b_0)<0$, then $\DL(a_\theta,b_\theta)<0$ for $\theta$ sufficiently small.

Let us consider the case when one of $\DL(a_0,b_0)$ and $\DR(a_0,b_0)$ is strictly negative while the other one 
is zero. The case where both differentials vanish will be considered later. Without loss of generality assume $\DL(a_0,b_0)< 0$ and $\DR(a_0,b_0) = 0$. Then $f''(b_0)=0$, therefore 
$$
\kappa_3'(b_0) = \frac{f''(b_0)g_1'(b_0) - f'(b_0)g_1''(b_0)}{(g_1'(b_0))^2} = 0; 
\qquad \tors(b_0) = \frac{\kappa_3'(b_0)}{\kappa_2'(b_0)}=0.
$$
Condition~\ref{reg} implies that $\tors'$ does not change its sign in a right neighborhood of $b_0$.  
From~\eqref{eq280702} we know that $\tors$ is non-positive on $(b_0,b_0+\theta_0)$, therefore $\tors'\leq 0$ in 
a right neighborhood of $b_0$ as well. Let us note that 
\eq{eq290704}{
\tors' \text{ is not identically zero in any right neighborhood of }  b_0.}
Assume the contrary, $\tors'$ is identically zero in a right neighborhood of $b_0$. Then the function $\tors$ is constant there, and coincides with $\tors(b_0)=0$. Therefore $\kappa_3'=0$ in the same neighborhood. Then $\kappa_3$ is constant there, and is equal to $\kappa_3(b_0)=0$. This contradicts~\eqref{eq290703}.

Inequalities~\eqref{eq280703} and~\eqref{eq280704} hold for $\theta =0$, therefore by continuity they 
also hold for $\theta$ sufficiently small. Thus, 
\eq{eq280706}{
\frac{\partial b_\theta}{\partial \theta} = \frac{\CL \DL}{\CL\DL-\CR\DR} > 0. 
}
Let $F(\theta) = \DR(a_\theta,b_\theta)$. From~\eqref{eq150101} we obtain
$$
F'(\theta) = \Big(\kappa_2'(b_\theta)g_1'(b_\theta)\tors'(b_\theta) + \Big[\frac{(g_1'\kappa_2')'}{g_1'\kappa_2'}(b_\theta) - 
\frac{\kappa_2'}{\kappa_2-\kappa}(b_\theta)\Big]F(\theta) \Big) \frac{\partial b_\theta}{\partial \theta}.
$$
We use the integrating factor
$$
M(\theta) = \exp\Big(-\int_{0}^\theta \Big[\frac{(g_1'\kappa_2')'}{g_1'\kappa_2'}(b_\eta) - 
\frac{\kappa_2'}{\kappa_2-\kappa}(b_\eta)\Big]\frac{\partial b_\eta}{\partial \eta} d \eta\Big), \qquad \theta>0,
$$
to conclude
\eq{eq290702}{
\frac{d}{d \theta} \big(F(\theta) M(\theta)\big) 
= M(\theta)\kappa_2'(b_\theta)g_1'(b_\theta)\tors'(b_\theta) \frac{\partial b_\theta}{\partial \theta} \leq 0.
}
We know that $F(0)=0$, therefore $F(\eta) \leq 0$ for $\eta>0$. If $F(\eta) = 0$ for some $\eta>0$, then 
the expression in~\eqref{eq290702} vanishes on $(0,\eta)$. Therefore $\tors'(b_\theta) = 0$ on $(0,\eta)$ as well, 
which contradicts to~\eqref{eq290704}. Hence, $\DR(a_\theta,b_\theta) = F(\theta)  < 0$ for sufficiently 
small $\theta>0$. The inequality $\DL(a_\theta,b_\theta) <0$ follows from continuity and our assumption $\DL(a_0,b_0)<0$.

It remains to prove the statement for the case $\DL(a_0,b_0)=\DR(a_0,b_0) =0$. In this case $f''(a_0)=f''(b_0)=0$, 
$\tors'\leq 0$ in a right neighborhood of $b_0$ and $\tors'\geq 0$ in a left neighborhood of $a_0$. 
Instead of $f$ consider the function 
$$
\tilde f = (1-\chi_{[a_0,b_0]})f + \chi_{[a_0,b_0]}f(a_0),
$$
and the corresponding function $\tilde\tors$. The function $\tilde f$ is $C^2$-smooth. For any $\theta$ 
the pair $(a_\theta,b_\theta)$ satisfies the cup equation~\eqref{urlun1} with $\tilde f$ instead of $f$. 
Also, outside $(a_0,b_0)$ we have $\tilde\tors = \tors$. The function $\tilde\tors = \tors$ is not constant 
on $(b_0,b_\theta)$ by~\eqref{eq290704}. For any $t_0 \in (a_0,b_0)$ we obtain that $\tilde\tors$ satisfies 
the assumptions of Corollary~\ref{Cor310801}, which yields $\DR(a_\theta,b_\theta)<0$ and $\DL(a_\theta,b_\theta)<0$.
\end{proof}

\section{Forces}\label{s34bis}

\begin{Def}
\label{Def031001}
Let $B$ be a Bellman candidate on a fence $\Omega(I)$. The function
\eq{eq121001}{
\FF_I=\frac{\kappa_2-\kappa}{\kappa_2'}\beta_2' 
}
on $I$ is called its \emph{force function}.\index[symbol]{$\FF_I$}
\end{Def}

\begin{Rem}
Suppose that $\Ch(I)$ is a chordal domain with the corresponding function $s\colon I \to \mathbb{R}$, and $B$ is 
the standard Bellman candidate on $\Ch(I)$. Then 
$$
\FF_I(t) = \frac{D}{g_1'(t)\kappa_2'(t)}, \qquad t \in I,
$$ 
where $D=\DR(s,t)$ if $t>s(t)$ and $D=\DL(t,s)$ if $t<s(t)$.
\end{Rem}
\begin{proof}
This is a direct consequence of formulas~\eqref{betaprime2},~\eqref{e334}, and~\eqref{eq121001}.
\end{proof}

\begin{Rem}
The force function $\FF$ is non-positive on the domain of its definition.
\end{Rem}
\begin{proof}
If $\Omega(I)$ is a right tangent domain, then $\beta_2'\leq 0$ (see Remark~\ref{Rem201001}) and 
$\kappa_2-\kappa>0$ (this follows from geometric properties of $\Omega$). For the case of left tangents 
both inequalities are opposite. Therefore, \hbox{$\FF \leq 0$}. 

For the case of a chordal domain both differentials $\DL$ and $\DR$ are negative, thus $\FF \leq 0$.
\end{proof}

\subsection{Gluing fences}\label{SecGluFen}
In this part we present necessary and sufficient conditions for gluing two Bellman candidates on fences with 
a common point. By this we mean the following situation. 

\begin{St}\label{St260901}
Let $t_-,t_0,t_+ \in \mathbb{R},$ $t_-\leq t_0\leq t_+,$ let $I_- = [t_-,t_0],$ $I_+=[t_0,t_+]$. Suppose that 
$\Omega(I_\pm)$ are two fences with Bellman candidates $B_\pm$ on them with the corresponding functions $w_\pm,$ 
$S_\pm, \beta_\pm$. Let $\interior\Omega(I_-)\cap \interior\Omega(I_+)=\varnothing$. Suppose that 
\eq{eq260902}{
\beta_+(t_0)=\beta_-(t_0)\df \beta_0.
}
Then the function $B$ defined by the formula
\eq{eq250902}{
B(x)=
\begin{cases}
B_-(x), & x \in \Omega(I_-);\\
f(t_0)+\scalp{\beta_0}{x-g(t_0)}, & x \in \indconv(S_-(t_0)\cup S_+(t_0));\\
B_+(x), & x \in \Omega(I_+)
\end{cases}
}
is a $C^1$-smooth Bellman candidate on its domain \textup(see Definition~\ref{InducedConvexSet} of induced convex hull\textup).
\end{St}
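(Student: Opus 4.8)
The plan is to show that $B$ is well defined, continuous, $C^1$-smooth, and locally concave on its domain, so that it satisfies Definition~\ref{candidate}. The domain splits into three pieces: the two fences $\Omega(I_\pm)$ and the ``gluing wedge'' $W \df \indconv(S_-(t_0)\cup S_+(t_0))$, on which $B$ is affine with gradient $\beta_0$. First I would check that the three formulas in~\eqref{eq250902} agree on overlaps. The fences meet $W$ only along the shared segments $S_\pm(t_0)$ (this uses $\interior\Omega(I_-)\cap\interior\Omega(I_+)=\varnothing$ together with the fact that each $S_\pm(t_0)$ is a boundary extremal of its fence). On $S_-(t_0)$ the candidate $B_-$ is affine with gradient $\beta_-(t_0)=\beta_0$ and passes through $\gamma(t_0)$, hence it coincides there with the middle branch $f(t_0)+\langle\beta_0, x-g(t_0)\rangle$; likewise for $B_+$ on $S_+(t_0)$. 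Thus $B$ is well defined and continuous.

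Next I would verify $C^1$-smoothness. By Propositions~\ref{St280701}--\ref{St280702} (or~\ref{St280702}), each $B_\pm$ is $C^2$-smooth on $\Omega(I_\pm)$, and the middle branch is affine, so the only issue is smoothness across the two interfaces $S_\pm(t_0)$. Consider $S_-(t_0)$. Both $B_-$ and the middle branch are affine along $S_-(t_0)$ with the same values (shown above), so their tangential derivatives agree; and by~\eqref{eq260902} the full gradients agree, $\nabla B_-|_{S_-(t_0)} = \beta_-(t_0) = \beta_0 = \nabla(\text{middle branch})$. Since both branches are $C^1$ up to $S_-(t_0)$ from their respective sides and their gradients match on $S_-(t_0)$, the concatenation is $C^1$ across $S_-(t_0)$; the same argument works at $S_+(t_0)$. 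Hence $B\in C^1$ on the whole domain $\Omega(I_-)\cup W\cup\Omega(I_+)$.

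For local concavity I would invoke Proposition~\ref{ConcatenationOfConcaveFunctions} twice. The domains $\Omega(I_\pm)$ are induced convex (all fence domains are, as noted after Definition~\ref{InducedConvexSet}), the wedge $W$ is induced convex by construction (it is an induced convex hull), and each of $B_-$, $B_+$, and the middle affine function is locally concave on its own domain. Applying Proposition~\ref{ConcatenationOfConcaveFunctions} to the $C^1$-smooth pair $(\Omega(I_-), W)$ gives local concavity on $\Omega(I_-)\cup W$, which is again induced convex (one should check this: any segment $l\subset\Omega_\eps$ meets this union in a convex set, since it meets each piece in a convex set and the two intersection intervals, if both nonempty, share the point where $l$ crosses $S_-(t_0)$ — or more robustly, re-derive it directly from the definition using that $B$ is $C^1$). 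Then a second application with $\Omega(I_+)$ yields local concavity on the full domain. Finally, the foliation of $B$ is the concatenation of the foliations of $B_\pm$ on the fences together with $W$ as a single linearity domain, so Definition~\ref{candidate} is satisfied and $B$ is a Bellman candidate.

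The main obstacle I anticipate is the bookkeeping around the interfaces: precisely identifying that $\Omega(I_-)\cap W$ and $\Omega(I_+)\cap W$ are exactly the closed segments $S_-(t_0)$ and $S_+(t_0)$ (so that there is no hidden region where two different formulas compete), and confirming that $\Omega(I_-)\cup W$ remains induced convex so that Proposition~\ref{ConcatenationOfConcaveFunctions} can be iterated. Both follow from the geometry of fences — the segments $S(t)$ foliate $\Omega(I)$ and $S_\pm(t_0)$ are the extreme leaves — but this is the step that requires care rather than calculation.
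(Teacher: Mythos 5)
Your proposal is correct and follows essentially the same route as the paper's (very terse) proof: $C^1$-smoothness from the gradient equality~\eqref{eq260902} together with the smoothness of $B_\pm$, and local concavity via Proposition~\ref{ConcatenationOfConcaveFunctions}. The extra bookkeeping you supply about the interfaces and the induced convexity of the pieces is exactly the detail the paper leaves implicit.
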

\begin{proof}
The $C^1$-smoothness of $B$ follows from \eqref{eq260902} and the $C^1$-smoothness of $B_\pm$. The concavity of~$B$ is then implied by Proposition~\ref{ConcatenationOfConcaveFunctions}. 
\end{proof}

\begin{Cor}\label{cor250901}
Conclusion of Proposition~\textup{\ref{St260901}} holds true if we replace~\eqref{eq260902} by the forces 
equality $\FF_{I_+}(t_0)=\FF_{I_-}(t_0)$.
\end{Cor}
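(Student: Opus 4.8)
The plan is to reduce the forces equality back to the gradient equality~\eqref{eq260902} and then invoke Proposition~\ref{St260901} directly. The key observation is that on each fence the candidate is completely determined by the foliation together with a single scalar initial condition, and the force function at a point encodes exactly that scalar datum up to quantities that are continuous across the common point. Concretely, both $B_-$ and $B_+$ satisfy the boundary relation~\eqref{nabla11}, i.e.~$\beta_1 + \kappa_2 \beta_2 = \kappa_3$, where $\kappa_2(t_0)$ and $\kappa_3(t_0)$ depend only on $g$ and $f$, hence are the same whether computed from the left or from the right fence. Therefore $\beta_+(t_0) = \beta_-(t_0)$ holds as soon as $\beta_{2,+}(t_0) = \beta_{2,-}(t_0)$: the first components are then forced to agree by~\eqref{nabla11}.

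First I would spell out that it thus suffices to prove $\beta_{2,+}(t_0) = \beta_{2,-}(t_0)$. Next I would unwind the definition~\eqref{eq121001} of the force function: $\FF_{I_\pm}(t_0) = \frac{\kappa_2(t_0) - \kappa_\pm(t_0)}{\kappa_2'(t_0)}\,\beta_{2,\pm}'(t_0)$. This looks like it involves $\beta_2'$ rather than $\beta_2$, so I must explain why matching $\beta_2'$ at the single point $t_0$ already matches $\beta_2$ there. The mechanism is the first-order linear ODE~\eqref{diffbeta2}, namely $(\kappa_2 - \kappa)\beta_2' + \kappa_2' \beta_2 = \kappa_3'$, valid on both fences. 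Evaluated at $t = t_0$ this reads $\kappa_2'(t_0)\beta_{2,\pm}(t_0) = \kappa_3'(t_0) - (\kappa_2(t_0) - \kappa_\pm(t_0))\beta_{2,\pm}'(t_0) = \kappa_3'(t_0) - \kappa_2'(t_0)\,\FF_{I_\pm}(t_0)$. Since $\kappa_2'(t_0) \ne 0$ (by~\eqref{eq250702} and~\eqref{eq230402}) and $\kappa_3'(t_0)$ depends only on $f$ and $g$, the hypothesis $\FF_{I_+}(t_0) = \FF_{I_-}(t_0)$ yields $\beta_{2,+}(t_0) = \beta_{2,-}(t_0)$ immediately.

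Having established $\beta_+(t_0) = \beta_-(t_0)$, I would conclude by quoting Proposition~\ref{St260901} verbatim: the function $B$ defined by~\eqref{eq250902} is a $C^1$-smooth Bellman candidate on its domain. There is essentially no obstacle here beyond bookkeeping; the one point deserving a sentence of care is the well-definedness of the slopes $\kappa_\pm(t_0)$ — the segments $S_-(t_0)$ and $S_+(t_0)$ both emanate from $g(t_0)$ but need not be collinear, so $\kappa_-(t_0)$ and $\kappa_+(t_0)$ may differ; this is exactly why the common value of $\beta_2$ (not of $\beta_2'$) is what one wants, and the ODE identity above is precisely the device that converts the shared $\beta_2'$-adjusted quantity $\FF$ into the shared $\beta_2$. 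I would also remark that, conversely, equality of the gradients forces equality of the forces by the same computation, so Corollary~\ref{cor250901} is genuinely equivalent to the hypothesis of Proposition~\ref{St260901}, which is why it is worth recording.
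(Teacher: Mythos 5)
Your proposal is correct and follows the same route as the paper's (much terser) proof: the paper likewise combines \eqref{nabla11} with \eqref{diffbeta2} evaluated at $t_0$, using that $\kappa_2$, $\kappa_3$ (hence $\kappa_2'$, $\kappa_3'$) are fence-independent, to upgrade the forces equality to $\beta_+(t_0)=\beta_-(t_0)$ and then invoke Proposition~\ref{St260901}. Your explicit rewriting of \eqref{diffbeta2} as $\kappa_2'\beta_2=\kappa_3'-\kappa_2'\FF$ and the remark on the non-collinearity of $S_-(t_0)$ and $S_+(t_0)$ are exactly the details the paper leaves implicit.
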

\begin{proof}
Since $\kappa_2$ and $\kappa_3$ do not depend on the fences, it follows from~\eqref{nabla11} 
and~\eqref{diffbeta2} applied to~$t_0$ that $\beta_+(t_0)=\beta_-(t_0)$. 
\end{proof}

Now we consider specific cases of gluing fences we have investigated in Sections~\ref{s32},~\ref{s33}. 

\subsection{Forces and tails}
\label{Subsec362}

In Definition~\ref{Def031001} forces were defined on fences. For a given chordal domain we have 
a force function defined inside this domain. We wish to extend the forces outside for the purpose of continuation of a Bellman candidate from a chordal domain via tangents. 

Let $I=[t_-,t_+]$ be an interval, $\Ch(I)$ be a chordal domain with the corresponding function 
$s\colon I \to \mathbb{R}$, such that $s(t)\leq t$ for $t \in I$. We define a right force of the chordal 
domain $\Ch(I)$ in the following way. We define $\tr$ to be the supremum of numbers $\tau$, $\tau\geq t_+$, such that there exists 
a standard Bellman candidate on the right tangent domain $\Rt([t_+,\tau])$ satisfying  
$$
\FF_{[t_-,t_+]}(t_+)=\FF_{[t_+,\tau]}(t_+).
$$
Note that $\tr$ can be equal to $+\infty$. The concatenation of the functions $\FF_I$ and $\FF_{[t_+,\tr]}$ will be called the right force function of the chordal domain $\Ch(I)$.

\begin{Def}
\label{Def031002}\index[symbol]{$\Fr(t,\Ch(I))$}
The function 
$$
\Fr(t, \Ch(I)) = 
\begin{cases}
\FF_I(t),& t \in I;\\
\FF_{[t_+,\tr]}(t),& t \in [t_+,\tr],
\end{cases}
$$
is called the \emph{right force function} of the chordal domain $\Ch(I)$. The set $(t_-,\tr)$ is called 
the \emph{right tail} of the chordal domain $\Ch(I)$.\index[symbol]{$\tr$}
\end{Def}

\begin{Rem}\label{rem220202}
If $\tr<+\infty$, then $\FF_{[t_+,\tr]}(\tr)=0$.
\end{Rem}
\begin{proof}
This immediately follows from maximality of $\tr$, continuity and nonpositivity of the force.
\end{proof}

\begin{Rem}
\label{rem220203} 
If $t_+<\tr<+\infty$, then there exists $\bar t$, $\bar t<\tr$, such that the torsion of the curve $\gamma$ 
is strictly positive on $(\bar t,\tr)$. If $\tr>t_1\geq t_+$ and the torsion of the curve $\gamma$ is nonpositive 
on an interval $(t_1,t_2)$, then $\tr>t_2$.
\end{Rem}

\begin{proof}
Due to Condition~\ref{reg} we can find $\bar t$, $\bar t<\tr$, sufficiently close to $\tr$ such that either 
$\tors'>0$ on $(\bar t,\tr)$ or $\tors'\leq 0$ on $(\bar t,\tr)$. If  $\tors'\leq 0$, then we use~\eqref{betaprime} 
for $t_0=\bar t$, $t= \tr$ and obtain that $\FF_{[t_+,\tr]}(\tr)<0$, which contradicts to Remark~\ref{rem220202}. 
Thus $\tors'>0$ on $(\bar t,\tr)$. The second claim follows immediately from the first one.
\end{proof}

Similarly, we define the left force function and the left tail of the chordal domain. Let $I=[t_-,t_+]$ be 
an interval, $\Ch(I)$ be a chordal domain with the corresponding function $s\colon I \to \mathbb{R}$ such that 
$s(t)\geq t$ for $t \in I$. We define $\tl$ to be the infimum of the numbers $\tau$, $\tau\leq t_-$, such that there exists 
a standard Bellman candidate on the left tangent domain $\Lt([\tau,t_-])$  satisfying  
$$
\FF_{[\tau,t_-]}(t_-)=\FF_{[t_-,t_+]}(t_-).
$$
Note that $\tl$ can be equal to $-\infty$.

\begin{Def}
\label{Def031003}
\index[symbol]{$\Fl(t,\Ch(I))$}
The function 
$$
\Fl(t, \Ch(I)) = 
\begin{cases}
\FF_{[\tl,t_-]}(t),& t \in [\tl,t_-];\\
\FF_I(t),& t \in I
\end{cases}
$$
is called the \emph{left force function} of the chordal domain $\Ch(I)$. The set $(\tl,t_+)$ is called 
the \emph{left tail} of the chordal domain $\Ch(I)$.\index[symbol]{$\tl$}
\end{Def}

The following remark is the ``left'' analog of Remarks~\ref{rem220202} and~\ref{rem220203}.
\begin{Rem}\label{Rem200401}
If $\tl>-\infty$, then $\FF_{[\tl,t_-]}(\tl)=0$. If $-\infty<\tl<t_-$, then there exists $\bar t$, $\tl<\bar t$, 
such that the torsion of the curve $\gamma$ is strictly negative on $(\tl,\bar t)$. If $\tl<t_2\leq t_-$ and 
the torsion of the curve $\gamma$ is nonnegative on an interval $(t_1,t_2)$, then $\tl<t_1$.
\end{Rem}

\begin{Rem}
Suppose that $\Ch(I)$ is a chordal domain with the upper chord $[g(a),g(b)]$. Since the force function outside 
the chordal domain does not depend on the foliation inside it, we will use the notation $\Fr(t; a,b)$ instead of 
$\Fr(t,\Ch(I))$ and $\Fl(t; a,b)$ instead of $\Fl(t,\Ch(I))$ for $t$ outside the chordal domain. The forces 
satisfy the following identities:
\eq{eq101101}
{\Fr(t;a,b) =\exp\Big(-\int_{b}^t\frac{\kappa_2'}{\kappa_2-\kappa}\Big)
\left(\int_{b}^t \exp\Big(\int_{b}^\tau\frac{\kappa_2'}{\kappa_2-\kappa}\Big)\tors'(\tau)\,d\tau
+\frac{\DR(a,b)}{g_1'(b)\kappa_2'(b)} \right), \quad t \in [b,\tr]; 
}
\eq{eq101102}
{\Fl(t;a,b) =\exp\Big(\int_{t}^a\frac{\kappa_2'}{\kappa_2-\kappa}\Big)
\left(\!-\!\int_{t}^a \exp\Big(\!-\!\int_{\tau}^a\frac{\kappa_2'}{\kappa_2-\kappa}\Big)\tors'(\tau)\,d\tau
+\frac{\DL(a,b)}{g_1'(a)\kappa_2'(a)} \right), \quad t \in [\tl,a]. 
}
\end{Rem}

Analogously we define the forces and tails from the infinities. 
Suppose that  $t \in \mathbb{R}$ and there exists the standard Bellman candidate on the right tangent domain 
$\Rt(-\infty,t)$  (see Proposition~\ref{NewRightTangentsCandidateInfty}). Let $\tr \leq +\infty$ be the supremum 
of such $t$.

\begin{Def}
\label{Def031004}
The function 
$$
\Fr(t, -\infty) = \FF_{(-\infty,\tr)}(t),\quad t \in (-\infty,\tr)
$$
is called the \emph{right force function of $-\infty$}. The ray $(-\infty, \tr)$ is called the 
\emph{right tail of $-\infty$}.
\end{Def}

Similarly, we define the left force function and the left tail of $+\infty$. 
Let $\tl \geq -\infty$ be the infimum of such $t \in \mathbb{R}$ that there exists the standard Bellman candidate 
on the left tangent domain $\Lt(\tl,+\infty)$  (see Proposition~\ref{NewLeftTangentsCandidateInfty}).

\begin{Def}
\label{Def031005}
The function 
$$
\Fl(t, +\infty) = \FF_{(\tl,+\infty)}(t),\quad t \in (\tl,+\infty)
$$
is called the \emph{left force function of $+\infty$}. The ray $(\tl, +\infty)$ is called the 
\emph{left tail of $+\infty$}.
\end{Def}

\subsection{Properties of forces}\label{s332}

Though the expressions for the forces are well defined for arbitrary pairs of points~$(a_0, b_0)$, when we 
write a force concerning such a pair, we always assume that the pair~$(a_0,b_0)$ satisfies the cup equation~\eqref{urlun1}. 
We study differential properties of forces. 

\begin{St}
\label{difforcepou}
Let~$\Ch([a,b],*)$ be a chordal domain. Then its forces satisfy the following differential equation on 
the corresponding tails\textup:
\begin{equation}\label{leftder}
F' = \tors' - \beta_2' =\tors'- \frac{\kappa_2'}{\kappa_2-\kappa}F,
\end{equation}
where $F$ is $\Fr$ or $\Fl$. 
\end{St}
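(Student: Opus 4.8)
The plan is to derive both equalities from the universal first‑order relation~\eqref{diffbeta2} together with the definition~\eqref{eq121001} of the force. First I would recall that on \emph{any} fence, tangent or chordal, the second gradient component satisfies $(\kappa_2-\kappa)\beta_2'+\kappa_2'\beta_2=\kappa_3'$, i.e.~\eqref{diffbeta2}: for tangent fences this was obtained inside the proof of Proposition~\ref{St280701}, and for chordal domains it drops out of differentiating~\eqref{nabla11} and subtracting~\eqref{nabla21} (as in the proof of Proposition~\ref{NewLightChordalDomainCandidate}); moreover $\beta_2\in C^1$ with $\beta_2'$ given by~\eqref{betaprime}. By Definition~\ref{Def031002}, the right force $\Fr(\,\cdot\,,\Ch(I))$ is the concatenation of the chordal force $\FF_I$ on $I=[t_-,t_+]$ with the force of a standard right tangent candidate on $\Rt([t_+,\tr])$, and these match at $t_+$; hence~\eqref{diffbeta2} holds on the whole right tail and $\beta_2$ is continuous there. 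The left force $\Fl$ on the left tail is handled identically by left–right symmetry.

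Next I would divide~\eqref{diffbeta2} by $\kappa_2'\ne0$ and insert $\tors=\kappa_3'/\kappa_2'$ from~\eqref{eqKappas} and $F=\tfrac{\kappa_2-\kappa}{\kappa_2'}\beta_2'$ from~\eqref{eq121001}, which yields the pointwise identity $F+\beta_2=\tors$ on the tail. Since $\tors$ is continuous with $\tors'$ a non‑atomic signed measure by Condition~\ref{reg}, while $\beta_2\in C^1$, the function $F=\tors-\beta_2$ is continuous and locally of bounded variation, so $F'$ is a well‑defined signed measure.

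Finally, differentiating $F=\tors-\beta_2$ gives $F'=\tors'-\beta_2'$, an equality of signed measures (with $\beta_2'$ a continuous density); this is the first asserted equality. Substituting $\beta_2'=\tfrac{\kappa_2'}{\kappa_2-\kappa}F$, which is merely~\eqref{eq121001} rearranged (legitimate as $\kappa_2'\ne0$ and $\kappa_2-\kappa\ne0$ on a fence), gives $F'=\tors'-\tfrac{\kappa_2'}{\kappa_2-\kappa}F$, the second equality; the case $F=\Fl$ is the same. There is no genuinely hard step; the only points requiring care are the uniform validity of~\eqref{diffbeta2} across the chordal and tangent parts of the tail (and the matching at $t_\pm$), and the measure‑theoretic reading of $F'=\tors'-\beta_2'$ forced by $\tors'$ being only a signed measure.
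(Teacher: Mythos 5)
Your proposal is correct and follows exactly the route of the paper's (one-line) proof: divide~\eqref{diffbeta2} by $\kappa_2'$ to get $F+\beta_2=\tors$ via Definition~\eqref{eq121001}, then differentiate and substitute $\beta_2'=\frac{\kappa_2'}{\kappa_2-\kappa}F$. The extra care you take about the validity of~\eqref{diffbeta2} on both the chordal and tangent portions of the tail and about reading $F'$ as a signed measure is consistent with the paper's conventions and fills in details the authors leave implicit.
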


\begin{proof}
Taking into account the definition~\eqref{eq121001}, this statement may be obtained via division 
of~\eqref{diffbeta2} by $\kappa_2'$ and differentiation.
\end{proof}

\begin{Rem}\label{Rem150401}
The functions $\Fr-\tors$ and $\Fl-\tors$ are strictly increasing and  strictly decreasing on their domains, respectively.
\end{Rem}

\begin{proof}
According to~\eqref{leftder}, $F'-\tors' = -\frac{\kappa_2'}{\kappa_2-\kappa}F$. We always have $F<0$ and $\kappa_2'>0$. It remains to notice that $\kappa_2-\kappa>0$ in the case of the right force, 
and $\kappa_2-\kappa<0$ in the case of the left force.
\end{proof}

\begin{Le}
\label{DifForcePoL}
Suppose that $\Ch(I)$ is a chordal domain with the upper chord $[g(a),g(b)]$. For a fixed~$t,$ $t>b,$ consider 
the right force function $\Fr(t,a(b),b)$  as a function of~$b$. Then its  derivative with respect to $b$ satisfies 
the following equality\textup:
\eq{eq170101}{
\frac{\partial \Fr(t,a(b),b)}{\partial b} = 
\frac{\DR(a,b)}{g_1'(b)}\cdot\Big[\frac1{\kappa_2(b)\!-\!\kappar(b)}-\frac1{\kappa_2(b)\!-\!\kappachord(b)}\Big]
\cdot\exp\Big(\!-\!\!\int_{b}^t\!\frac{\kappa_2'(\tau)}{\kappa_2(\tau)\!-\!\kappar(\tau)}d\tau\Big),
}
where $\kappar$ is the slope of the right tangent {\bf(}see~\eqref{eq211002}{\bf)}\textup, and $\kappachord(b)$ 
is the slope of the upper chord of the chordal domain\textup, i.\,e.\textup,
\eq{eq160901}{
\kappachord(b) = \frac{g_2(b)-g_2(a)}{g_1(b)-g_1(a)}.
}
For a fixed $t,$ $t<a,$ we get a symmetric formula\textup: 
\eq{eq170102}{
\frac{\partial \Fl(t,a,b(a))}{\partial a} = 
\frac{\DL(a,b)}{g_1'(a)}\cdot\Big[\frac1{\kappa_2(a)\!-\!\kappal(a)}-\frac1{\kappa_2(a)\!-\!\kappachord(a)}\Big]
\cdot\exp\Big(\int_t^{a}\!\frac{\kappa_2'(\tau)}{\kappa_2(\tau)\!-\!\kappal(\tau)}d\tau\Big),
}
where $\kappal$ is the slope of the left tangent {\bf(}see~\eqref{eq211002}{\bf)}\textup, and $\kappachord(a) =\kappachord(b)$ is 
 given by~\eqref{eq160901}.
\end{Le}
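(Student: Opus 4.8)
The plan is to compute the derivative $\frac{\partial}{\partial b}\Fr(t,a(b),b)$ directly from the explicit formula~\eqref{eq101101}, treating $a = a(b)$ as a function of $b$ determined implicitly by the cup equation~\eqref{urlun1}. Write $\Fr(t,a,b) = E(t,b)\cdot\bigl(J(t,b) + K(a,b)\bigr)$, where
\[
E(t,b) = \exp\Bigl(-\int_b^t\frac{\kappa_2'}{\kappa_2-\kappa}\Bigr),\quad
J(t,b) = \int_b^t \exp\Bigl(\int_b^\tau\frac{\kappa_2'}{\kappa_2-\kappa}\Bigr)\tors'(\tau)\,d\tau,\quad
K(a,b) = \frac{\DR(a,b)}{g_1'(b)\kappa_2'(b)}.
\]
Note that the slope $\kappa$ appearing in $E$ and $J$ is the slope $\kappar$ of the right tangent from $g(\tau)$ (since we are on the right tail $t\in[b,\tr]$, the foliation there consists of right tangents), while the $\kappa$ implicit in $\DR(a,b)$ via~\eqref{e334} and~\eqref{betaprime2} is the chord slope $\kappachord(b)$ of~\eqref{eq160901} evaluated at the endpoint. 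This distinction is exactly the source of the two reciprocal terms in~\eqref{eq170101}.

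First I would differentiate $E$ and $J$ in $b$: since the lower limit of integration is $b$ in both, $\frac{\partial E}{\partial b} = E\cdot\bigl(-\frac{\kappa_2'(b)}{\kappa_2(b)-\kappar(b)}\cdot(-1)\bigr)\cdot(-1)$ — one must carefully track that $\partial_b\int_b^\tau(\cdots) = -(\cdots)|_{\text{at }b}$ — giving $\frac{\partial E}{\partial b} = \frac{\kappa_2'(b)}{\kappa_2(b)-\kappar(b)}E$; and similarly $\frac{\partial J}{\partial b}$ has a boundary term from the lower limit (which vanishes because the integrand at $\tau = b$ carries the factor... no: $\tors'(b)\exp(0)$, so it is $-\tors'(b)$) plus $\frac{\kappa_2'(b)}{\kappa_2(b)-\kappar(b)}J$ from differentiating the inner exponential. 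Combining, $E\partial_b J + (\partial_b E)J$ simplifies, and one observes the identity $\partial_b(EJ) = -E\tors'(b) + \frac{\kappa_2'(b)}{\kappa_2(b)-\kappar(b)}EJ + \dots$; the cleaner route is to use Proposition~\ref{difforcepou}: $\Fr$ satisfies $\partial_t\Fr = \tors' - \frac{\kappa_2'}{\kappa_2-\kappar}\Fr$ in $t$, but here I differentiate in $b$. The key structural fact is that $\Fr(t,a(b),b)$, as a function of $t$ on $[b,\tr]$, is the solution of that linear ODE with initial value $\Fr(b,a(b),b) = \frac{\DR(a,b)}{g_1'(b)\kappa_2'(b)} = K(a,b)$ at $t = b$. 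So by the standard variation-of-constants formula, for $t \geq b$,
\[
\Fr(t,a(b),b) = K(a,b)\,\exp\Bigl(-\int_b^t\frac{\kappa_2'}{\kappa_2-\kappar}\Bigr) + \int_b^t\exp\Bigl(-\int_\tau^t\frac{\kappa_2'}{\kappa_2-\kappar}\Bigr)\tors'(\tau)\,d\tau.
\]
The second term does not depend on $a$ and depends on $b$ only through the lower limit; its $b$-derivative is a boundary term, $-\tors'(b)$, times nothing else plus the $b$-dependence of the inner integral — but actually when $t$ is fixed and $b$ varies, $\partial_b\bigl[\int_b^t\exp(-\int_\tau^t\cdots)\tors'(\tau)d\tau\bigr] = -\exp(-\int_b^t\cdots)\tors'(b)$; meanwhile $\partial_b$ of the first term is $\bigl[\partial_b K(a(b),b)\bigr]\exp(-\int_b^t\cdots) + K(a,b)\cdot\frac{\kappa_2'(b)}{\kappa_2(b)-\kappar(b)}\exp(-\int_b^t\cdots)$.

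The main obstacle, and the heart of the computation, is to evaluate $\frac{d}{db}K(a(b),b) = \frac{d}{db}\Fr(b,a(b),b)$ — i.e.\ the total $b$-derivative of the right differential $\DR(a(b),b)$ divided by $g_1'(b)\kappa_2'(b)$, where $a = a(b)$ moves along the cup equation. For this I would use the Corollary giving $\CL\DL\,ds = \CR\DR\,dt$ (here $da$, $db$ play the roles of $ds$, $dt$), so that along the cup-equation curve $\frac{da}{db} = \frac{\CR\DR}{\CL\DL}$; combine this with formula~\eqref{eq150101} for $\frac{d\DR(s(t),t)}{dt}$ — but that formula is for the chordal-domain parametrization, so I need its analog where $b$ is the independent variable and $a = a(b)$. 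The cleanest path: compute $\partial_b[\DR(a,b)]$ and $\partial_a[\DR(a,b)]$ separately (the latter is easy from~\eqref{DLDRform1}, analogous to the computation in the proof of Lemma preceding~\eqref{eq150101} where the $\partial_s$-derivative of $\DR$ was shown to vanish — wait, there it vanished, so $\partial_a\DR(a,b) = 0$ too by the symmetric computation!). If $\partial_a\DR(a,b) = 0$, then $\frac{d}{db}\DR(a(b),b) = \partial_b\DR(a,b)$, which by the cited lemma (with the roles of the arguments swapped, $\DR$ here being $\DL$ there in the $(s,t)\leftrightarrow(a,b)$ dictionary — careful!) equals $\kappa_2'(b)g_1'(b)\tors'(b) + \bigl[\frac{(g_1'\kappa_2')'}{g_1'\kappa_2'}(b) - \frac{\kappa_2'(b)}{\kappa_2(b)-\kappachord(b)}\bigr]\DR(a,b)$. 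Dividing by $g_1'(b)\kappa_2'(b)$ and simplifying the logarithmic-derivative term against $\partial_b$ of the denominator $g_1'(b)\kappa_2'(b)$ gives $\frac{d}{db}K = \tors'(b) - \frac{\kappa_2'(b)}{\kappa_2(b)-\kappachord(b)}K(a,b)$.

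Finally I would assemble: $\partial_b\Fr(t,a(b),b) = \exp(-\int_b^t\frac{\kappa_2'}{\kappa_2-\kappar})\bigl[\frac{dK}{db} + \frac{\kappa_2'(b)}{\kappa_2(b)-\kappar(b)}K - \tors'(b)\bigr]$; substituting the expression for $\frac{dK}{db}$, the $\tors'(b)$ terms cancel, leaving $\exp(-\int_b^t\frac{\kappa_2'}{\kappa_2-\kappar})\cdot K(a,b)\cdot\bigl[\frac{\kappa_2'(b)}{\kappa_2(b)-\kappar(b)} - \frac{\kappa_2'(b)}{\kappa_2(b)-\kappachord(b)}\bigr]$, and using $K(a,b) = \frac{\DR(a,b)}{g_1'(b)\kappa_2'(b)}$ the factor $\kappa_2'(b)$ cancels against the $\kappa_2'(b)$ in the bracket — no wait, the bracket has $\kappa_2'(b)$ in each term, so it becomes $\kappa_2'(b)\bigl[\frac1{\kappa_2(b)-\kappar(b)} - \frac1{\kappa_2(b)-\kappachord(b)}\bigr]$ times $\frac{\DR(a,b)}{g_1'(b)\kappa_2'(b)} = \frac{\DR(a,b)}{g_1'(b)}\bigl[\frac1{\kappa_2(b)-\kappar(b)} - \frac1{\kappa_2(b)-\kappachord(b)}\bigr]$, which is exactly~\eqref{eq170101}. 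The formula~\eqref{eq170102} for $\Fl$ follows by the mirror-symmetric argument (swap left$\leftrightarrow$right, reverse orientation), so I would simply state that the left case is proved analogously. The one genuinely delicate point to get right is the sign and index bookkeeping in translating the lemma for $\frac{d\DR}{dt}$ into the present $(a,b)$-parametrization — in particular verifying that the relevant $\kappa$ in $K$'s own evolution equation is the chord slope $\kappachord$ and not the tangent slope $\kappar$, which is what makes the two terms in the bracket genuinely different.
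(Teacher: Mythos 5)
Your proof is correct and follows essentially the same route as the paper: differentiate the explicit expression~\eqref{eq101101} in $b$ with $a=a(b)$ on the cup-equation curve, feed in formula~\eqref{eq150101} for $\tfrac{d}{db}\DR(a(b),b)$ (with the chord slope $\kappachord$ there versus the tangent slope $\kappar$ in the exponentials), and watch the $\tors'$ and logarithmic-derivative terms cancel. Your variation-of-constants regrouping of~\eqref{eq101101} and the introduction of $K=\DR/(g_1'\kappa_2')$ are just a cleaner bookkeeping of the same computation, and your final bracket $\frac{\DR}{g_1'}\bigl[\frac1{\kappa_2-\kappar}-\frac1{\kappa_2-\kappachord}\bigr]$ matches~\eqref{eq170101}.
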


\begin{proof}
We differentiate $\Fr(t,a,b)$ (see \eqref{eq101101}) with respect to $b$, regarding $a = a(b)$:
\begin{eqnarray}
\frac{\partial \Fr(t,a(b),b)}{\partial b}\!\! &=& \!\!
\displaystyle \frac{\kappa_2'(b)}{\kappa_2(b)-\kappa(b)} \Fr(t,a,b)+ 
\bigg\{\!-\tors'(b)-\frac{\kappa_2'(b)}{\kappa_2(b)-\kappa(b)} 
\int_{b}^t  \exp\Big(\int_{b}^\tau\frac{\kappa_2'}{\kappa_2-\kappa}\Big)\tors'(\tau)\,d\tau+ \nonumber
\\ 
&\ &\rule{0pt}{25pt} +\frac{1}{g_1'(b)\kappa_2'(b)}\frac{d\DR(a(b),b)}{d b}- 
\frac{(g_1'(b)\kappa_2'(b))'}{(g_1'(b)\kappa_2'(b))^2}\DR(a,b)\bigg\}
\exp\Big(\!-\!\!\int_{b}^t\frac{\kappa_2'}{\kappa_2-\kappa}\Big) 
\label{eq150103}
\\
\!\!&=&\!\!\displaystyle \rule{0pt}{25pt}\bigg\{\!-\tors'+ \frac{\kappa_2'}{\kappa_2-\kappa} \cdot 
\frac{\DR}{g_1'\kappa_2'}+\frac{1}{g_1'\kappa_2'}\cdot\frac{d\DR}{d b}- 
\frac{(g_1'\kappa_2')'}{(g_1'\kappa_2')^2}\DR\bigg\}
\exp\Big(\!-\!\!\int_{b}^t\frac{\kappa_2'}{\kappa_2-\kappa}\Big). \nonumber
\end{eqnarray}
We use formula~\eqref{eq150101} to simplify the expression in the braces. We note that $\kappa$ 
in~\eqref{eq150103} differs from the one in~\eqref{eq150101}. We write $\kappar$ for that which 
appears in~\eqref{eq150103} and $\kappachord$ for the one in~\eqref{eq150101}:

\begin{align*}
\bigg\{\!-\tors'&+ \frac{\kappa_2'}{\kappa_2-\kappar} \cdot \frac{\DR}{g_1'\kappa_2'}+\frac{1}{g_1'\kappa_2'}\cdot
\frac{d\DR}{d b}- \frac{(g_1'\kappa_2')'}{(g_1'\kappa_2')^2}\DR \bigg\}=
\\ 
&=-\tors'+ \frac{\kappa_2'}{\kappa_2-\kappar} \cdot \frac{\DR}{g_1'\kappa_2'}+\frac{1}{g_1'\kappa_2'}\cdot 
\Big[\kappa_2'g_1'\tors'+ \Big(\frac{(g_1'\kappa_2')'}{g_1'\kappa_2'} - 
\frac{\kappa_2'}{\kappa_2-\kappachord}\Big)\DR\Big]
- \frac{(g_1'\kappa_2')'}{(g_1'\kappa_2')^2}\DR=
\\
&=\frac{\DR}{g_1'}\Big(\frac{1}{\kappa_2-\kappar}-\frac{1}{\kappa_2-\kappachord}\Big). 
\end{align*}
\end{proof}

\section{Linearity domains}
\label{s34}\index{domain! linearity domain}

As it was stated in Section~\ref{s31}, we classify linearity domains by the number of points on the fixed 
boundary. For each linearity domain~$\mathfrak{L}$ we will define the corresponding force function on $\mathfrak{L}\cap \dfi\Omega$.

\subsection{Angle}
\label{s341}
 The first linearity domain we study, an \emph{angle}\index{angle}, has only one point~$g(u)$ on the 
fixed boundary. Recall that $\Sl(u)$ and $\Sr(u)$ are the left and the right tangent segments to the free 
boundary of $\Omega$ starting at the point $g(u)$, and $T(u)$ is the closed curvilinear triangle with the 
vertex $g(u)$ whose sides are $\Sl(u)$, $\Sr(u)$, and the part of $\dfree\Omega$ between the two tangency points. 
We will use the symbol  $\Ang(u)$ for this domain of linearity; note that geometrically $\Ang(u)=T(u)$.

\begin{Def}
An affine function $B$ on an angle $\Ang(u)$\index[symbol]{$\Ang$} is called a \emph{standard candidate} on $\Ang(u)$ if the vector 
$\gamma'(u)$ is parallel to the graph of $B$ on $\Ang(u)$.
\end{Def}

An angle is the linearity domain that appeared in Proposition~\ref{St260901} in the case where $\Omega(I_-)$ is 
a right tangent domain and $\Omega(I_+)$ is a left one, see Figure~\ref{fig:ang}. The function $B$ defined by~\eqref{eq250902} is a standard candidate on the angle. See the graphical representation of this 
situation in Subsection~\ref{ElementaryGraphs}, Figure~\ref{fig:anggraph}.
 
\begin{figure}[!h]
\begin{center}
\includegraphics[width = 0.8 \linewidth]{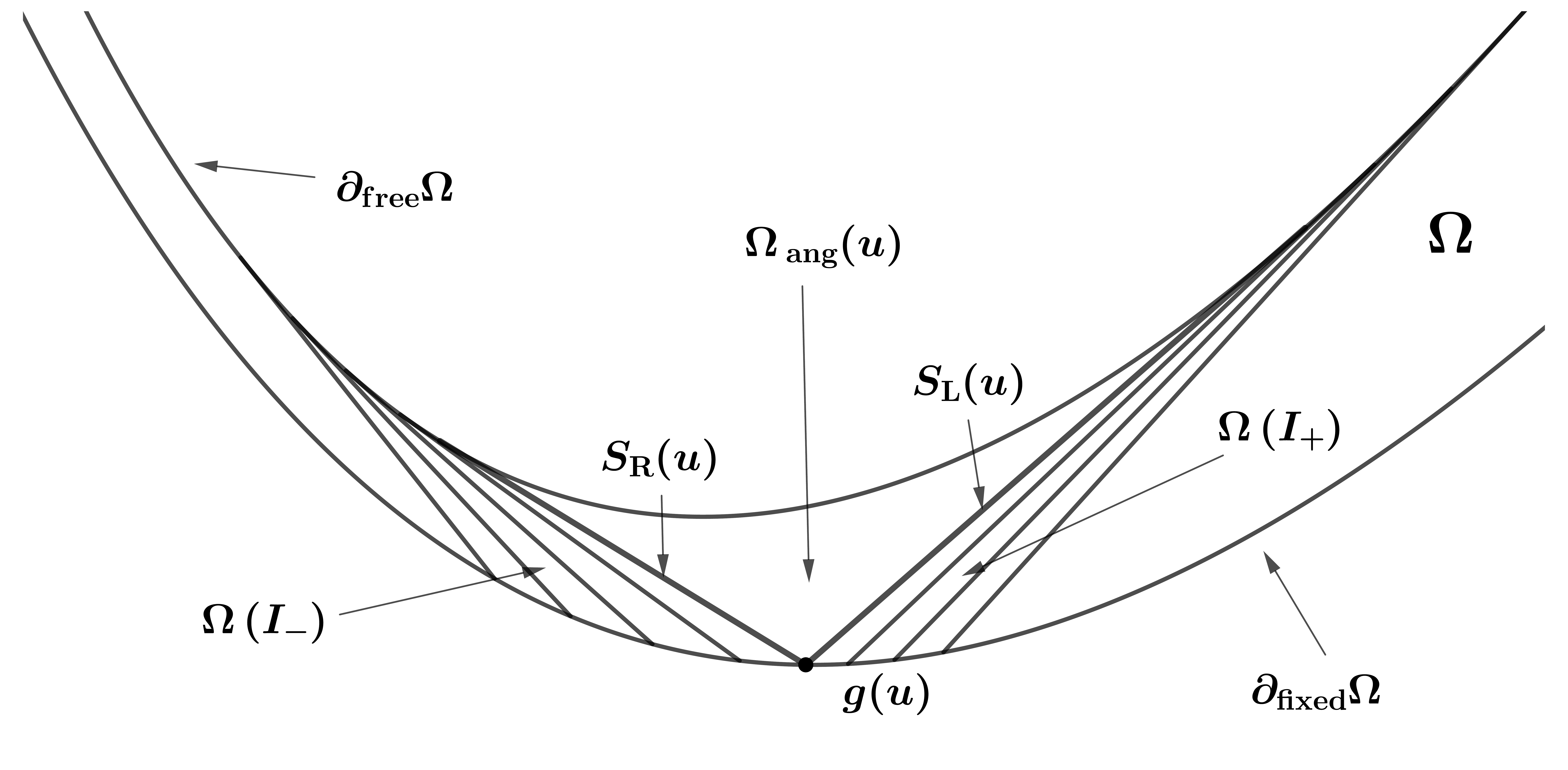}
\caption{An angle $\Ang(u)$ with adjacent domains.}
\label{fig:ang}
\end{center}
\end{figure}

\subsection{Linearity domains with two points on the fixed boundary}
\label{s342}

Consider a linearity domain~$\mathfrak{L}$ that has two points~$g(a)$ and~$g(b)$ on the fixed boundary 
$\dfi\Omega$, assuming~$a<b$ and $[g(a),g(b)] \subset \Omega$. Surely, the segment~$[g(a),g(b)]$ is 
a part of the boundary of the linearity domain. It is natural to assume that there are two extremal segments 
tangent to the free boundary, $S(a)$ and $S(b)$, and bounding our linearity domain from the left and right. 
If they have the same orientation, namely they are either $\Sl(a)$ and $\Sl(b)$ or $\Sr(a)$ and $\Sr(b)$, then 
the linearity domain is called a~\emph{trolleybus}\index{trolleybus}, the \emph{left} one or the 
\emph{right} one correspondingly. These trolleybuses will be denoted by~$\LTroll(a,b)$\index[symbol]{$\LTroll$} and~$\RTroll(a,b)$\index[symbol]{$\RTroll$}, 
see Figure~\ref{fig:ltr} and Figure~\ref{fig:ptr}.

\begin{figure}[!h]
\begin{center}
\includegraphics[width = 0.8 \linewidth]{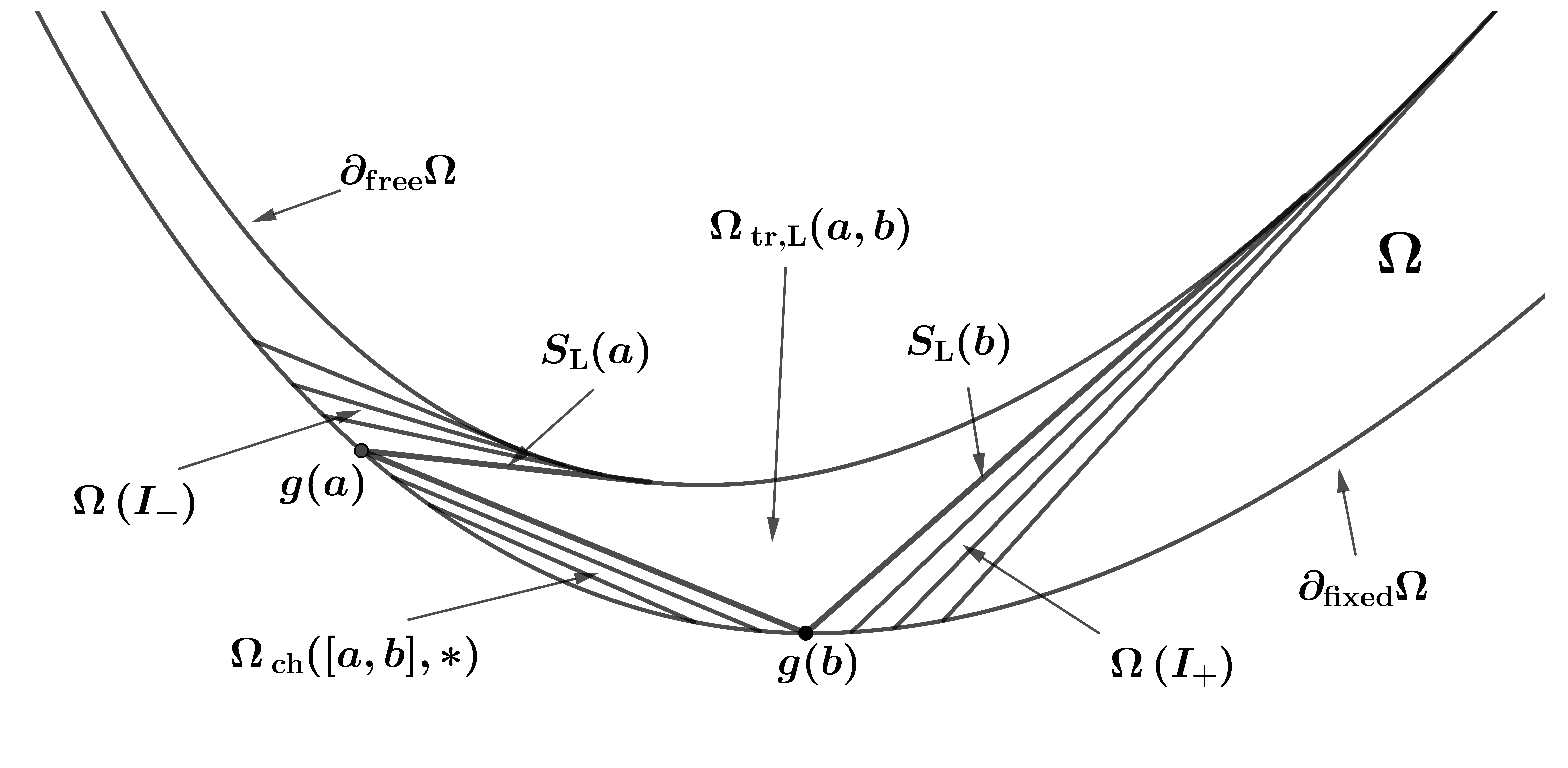}
\caption{Trolleybus~$\LTroll(a,b)$ with adjacent domains.}
\label{fig:ltr}
\end{center}
\end{figure}

\begin{figure}[!h]
\begin{center}
\includegraphics[width = 0.8 \linewidth]{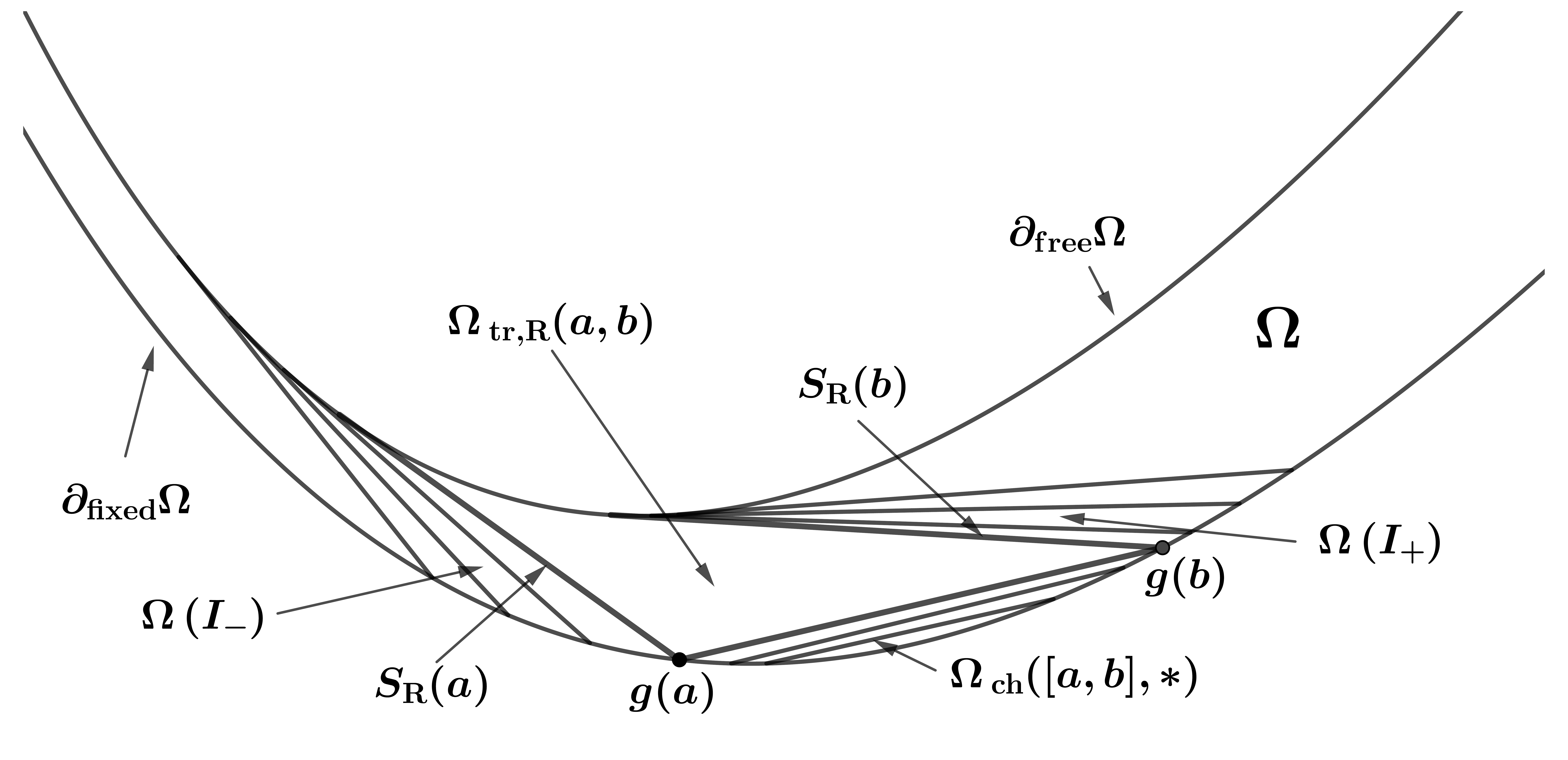}
\caption{Trolleybus~$\RTroll(a,b)$ with adjacent domains.}
\label{fig:ptr}
\end{center}
\end{figure}

\begin{figure}[!ht]
\begin{center}
\includegraphics[width = 0.8\linewidth]{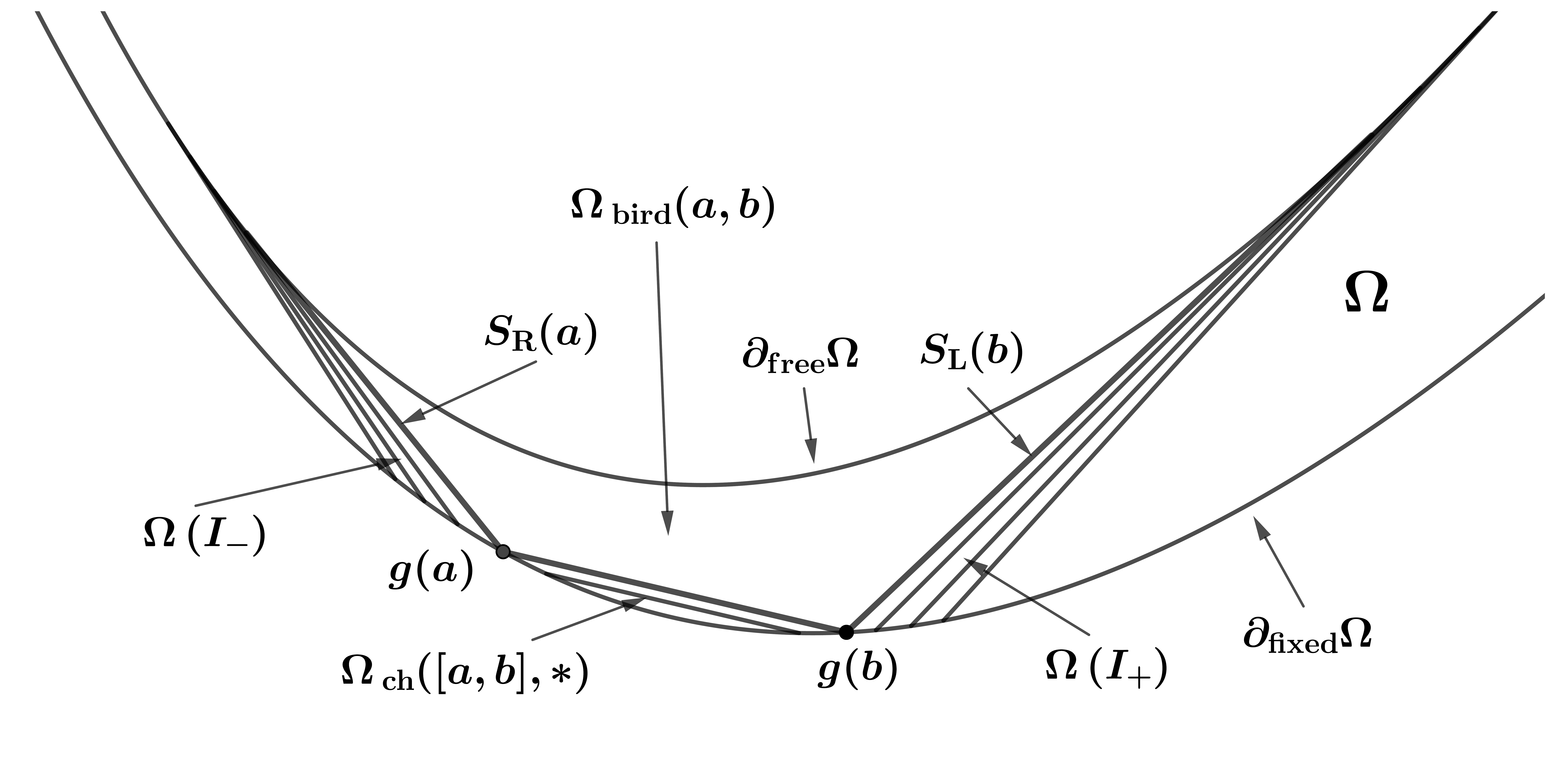}
\caption{A birdie $\Bird(a,b)$ with the adjacent domains.}
\label{fig:birdie}
\end{center}
\end{figure}

A linearity domain whose border tangents have different orientation, i.\,e., $\Sr(a)$ and $\Sl(b)$, 
is called a~\emph{birdie}\index{birdie}, see Figure~\ref{fig:birdie}, we denote it by~$\Bird(a,b)$.\index[symbol]{$\Bird$} 
It is clear that the opposite situation, $\Sl(a)$ and $\Sr(b)$, is impossible because these two segments 
intersect each other.

As in the case of an angle, we suppose that $g(a)$ and $g(b)$ are the points where we glue two different fences. 
So, there are three fences: a tangent domain $\Omega(I_-)$ adjacent to~$\mathfrak{L}$ along the tangent~$S(a)$, 
a tangent domain $\Omega(I_+)$ adjacent to~$\mathfrak{L}$ along~$S(b)$, and a chordal domain 
$\Ch([a,b],\ast)$. 

\begin{St}
\label{St131001}
Let $I_- = [t_-,a],$ $I_+=[b,t_+].$ Suppose that $B_\pm$ are Bellman candidates on $\Omega(I_\pm),$ 
and $B_{\Ch([a,b],\ast)}$ is the Bellman candidate on $\Ch([a,b],\ast)$. Suppose that the gluing conditions 
hold for each of two pairs of fences\textup, i.\,e.\textup, 
\eq{eq131001}{
\FF_{I_-}(a)=\Fl(a,\Ch([a,b],\ast)),\qquad
\FF_{I_+}(b)=\Fr(b,\Ch([a,b],\ast)).
}
Let $B_\mathfrak{L}(x)$ be the affine function defined by the equation
\eq{eq201001}{
\det
\begin{pmatrix}
g_1'(a)& g_2'(a)& f'(a)\\
g_1'(b)& g_2'(b)& f'(b)\\
x_1-g_1(a)& x_2-g_2(a)& B_\mathfrak{L}(x) - f(a)
\end{pmatrix}=0, \quad x \in \mathfrak{L}.
}
Then the function $B$ defined by the formula
\begin{equation*}
B(x)=
\begin{cases}
B_-(x), & x \in \Omega(I_-);\\
B_\mathfrak{L}(x), & x \in \mathfrak{L};\\
B_+(x), & x \in \Omega(I_+);\\
B_{\Ch([a,b],\ast)}, & x \in \Ch([a,b],\ast)
\end{cases}
\end{equation*}
is a $C^1$-smooth Bellman candidate on its domain.
\end{St}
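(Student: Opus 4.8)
The plan is to verify that the piecewise function $B$ is $C^1$-smooth across the two gluing interfaces $S(a)$ and $S(b)$, and then invoke local concavity of each piece together with Proposition~\ref{ConcatenationOfConcaveFunctions} (applied twice) to conclude that $B$ is a Bellman candidate on the union domain. The architecture mirrors Proposition~\ref{St260901}, except that now the middle linearity piece is a two-dimensional domain $\mathfrak L$ rather than a single segment, and it is glued to two different fences (plus a chordal domain below it) along two different extremal segments.

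\textbf{Step 1: identify the gradient of $B_{\mathfrak L}$ on the boundary segments.} Equation~\eqref{eq201001} defines $B_{\mathfrak L}$ as the affine function whose graph contains the point $\gamma(a)=(g(a),f(a))$ and is parallel to the vectors $\gamma'(a)$ and $\gamma'(b)$; in particular its graph also passes through $\gamma(b)$ by the cup equation~\eqref{urlun1} satisfied by $\Ch([a,b],*)$. Let $\beta_{\mathfrak L}$ denote its (constant) gradient, so that $\theta_{\mathfrak L}=(\beta_{\mathfrak L},-1)$ is orthogonal to both $\gamma'(a)$ and $\gamma'(b)$. Consequently $\scalp{\beta_{\mathfrak L}}{g'(a)}=f'(a)$ and $\scalp{\beta_{\mathfrak L}}{g'(b)}=f'(b)$, i.e. $\beta_{\mathfrak L}$ satisfies~\eqref{nabla11} at both $t=a$ and $t=b$. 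Now the gluing conditions~\eqref{eq131001}, via Corollary~\ref{cor250901} (equivalently, the reasoning in its proof using~\eqref{nabla11} and~\eqref{diffbeta2} at the point in question), force $\beta_-(a)=\beta_{\mathfrak L}$ and $\beta_+(b)=\beta_{\mathfrak L}$: indeed, on the fence $\Omega(I_-)$ the gradient at $g(a)$ is determined by $\kappa_2(a)$, $\kappa_3(a)$ and the force value $\FF_{I_-}(a)$ through~\eqref{nabla11} and~\eqref{diffbeta2}, and the same data produce $\beta_{\mathfrak L}$ at $a$; analogously at $b$. One must also match the gradient against the chordal candidate $B_{\Ch([a,b],*)}$ along $[g(a),g(b)]$: the gradient of $B_{\Ch}$ on the top chord is exactly the $\beta(t)$ furnished by Proposition~\ref{NewLightChordalDomainCandidate}, which is orthogonal to $\gamma'(a)$, $\gamma'(b)$, and $\gamma(b)-\gamma(a)$ — hence equals $\beta_{\mathfrak L}$.

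\textbf{Step 2: upgrade continuity of the gradient to $C^1$-smoothness.} Having matched the value of $B$ (all pieces agree with $f$ on the fixed boundary and the affine formulas agree on the shared segments by construction) and the gradient along each interface segment, the concatenation is $C^1$ across $S(a)$, across $S(b)$, and across the chord $[g(a),g(b)]$; inside each of the four pieces the function is already $C^1$ (in fact $C^2$ on the fences by Propositions~\ref{St280701}, \ref{St280702}, and affine, hence smooth, on $\mathfrak L$). This gives global $C^1$-smoothness of $B$ on its domain.

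\textbf{Step 3: local concavity via induced convexity.} Each of the four building blocks — $\Omega(I_-)$, $\mathfrak L$, $\Omega(I_+)$, $\Ch([a,b],*)$ — is induced convex in $\Omega_\eps$ (as noted after Definition~\ref{InducedConvexSet}, all domains used to build candidates are), and $B$ is locally concave on each of them (they are Bellman candidates by hypothesis). Applying Proposition~\ref{ConcatenationOfConcaveFunctions} successively — first glue $\Omega(I_-)$ with $\mathfrak L$, then the result with $\Omega(I_+)$, then with $\Ch([a,b],*)$, checking at each stage that the union is still induced convex and that $B$ is $C^1$ on it — yields that $B$ is locally concave on the full union. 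Together with the boundary condition $B|_{\dfi\Omega}=\ff$ inherited from the pieces and the fact that the foliations glue (the extremal segments of the fences, the chords of $\Ch$, and the trivial one-point-foliation of the affine piece $\mathfrak L$ form a foliation of the union), $B$ satisfies Definition~\ref{candidate} and is a Bellman candidate. The main obstacle is Step~1: carefully checking that the force-equality conditions~\eqref{eq131001}, which a priori only match the $\beta_2$-components through~\eqref{diffbeta2}, together with the common boundary data $\kappa_2,\kappa_3$ indeed force the \emph{full} gradient vectors to coincide at $g(a)$ and $g(b)$ across all three adjacent pieces — and in particular that the affine extension prescribed by~\eqref{eq201001} is consistent with the chordal candidate on the overlap segment $[g(a),g(b)]$, not merely at its endpoints.
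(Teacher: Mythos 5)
Your proof is correct and follows essentially the same route as the paper's: identify $B_{\mathfrak{L}}$ with the affine extension of the chordal candidate via the cup equation, use the force equalities~\eqref{eq131001} to match the full gradients at $g(a)$ and $g(b)$, and conclude by the $C^1$-concatenation machinery. The paper merely compresses your Steps 1--3 by observing that $B_{\mathfrak{L}}$ coincides with the affine middle pieces of~\eqref{eq250902} for both pairs of glued fences and then invoking Proposition~\ref{St260901} twice, whereas you inline that proposition's content (gradient matching via Corollary~\ref{cor250901} plus Proposition~\ref{ConcatenationOfConcaveFunctions}).
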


\begin{proof}
Condition~\eqref{eq230402} implies 
$$\det\begin{pmatrix}
g_1'(a)& g_2'(a)\\
g_1'(b)& g_2'(b)
\end{pmatrix} \ne 0,$$
therefore~\eqref{eq201001} defines $B_\mathfrak{L}$ correctly. The cup equation~\eqref{urlun1} guaranties 
that the affine function $B_\mathfrak{L}$ coincides with the unique affine extension of~$B_{\Ch([a,b],\ast)}$ to $\mathfrak{L}$. Therefore, it also coincides with the affine parts of the functions from~\eqref{eq250902} 
for both pairs of glued fences: for $\Omega(I_-)$ and $\Ch$, and for $\Ch$ and $\Omega(I_+)$. Thus, 
Proposition~\ref{St260901} gives that the function $B$ is a $C^1$-smooth Bellman candidate.
\end{proof}

\begin{Def}\index{standard candidate! in trolleybus}\index{standard candidate! in birdie}
The function~$B_\mathfrak{L}$ defined by~\eqref{eq201001} in the linearity domain~$\mathfrak{L}$ with 
two points on the fixed boundary is called a \emph{standard candidate} there. 
\end{Def}

\begin{Rem}
By~\eqref{eq201001} we have the following formula for the derivative of the affine function~$B_\mathfrak{L}\!:$
\eq{eq201002}{
\beta_2 = \frac{\partial B_\mathfrak{L}}{\partial x_2} = \frac{
\det\begin{pmatrix}
g_1'(a) & f'(a)\\
g_1'(b) & f'(b)
\end{pmatrix}}
{\det\begin{pmatrix}
g_1'(a) & g_2'(a)\\
g_1'(b) & g_2'(b)
\end{pmatrix}
}
=
\frac{\kappa_3(b)-\kappa_3(a)}{\kappa_2(b)-\kappa_2(a)}.
}
\end{Rem}

\begin{Def}\label{Def190401}
Let $\mathfrak{L}$ be a domain of linearity with two points on the fixed boundary: 
$g(a), g(b) \in \mathfrak{L} \cap \FixedBoundary\Omega$, $g(a) \prec g(b)$. 
Define the force function $F$ on $\{a,b\}$ as follows: 
$$
F(a) = \frac{\DL(a,b)}{g_1'(a)\kappa_2'(a)}, \qquad F(b) = \frac{\DR(a,b)}{g_1'(b)\kappa_2'(b)}.
$$
\end{Def}

Let us note that a birdie can be regarded as a union of a trolleybus and an angle (there are two symmetric ways):
\begin{equation}\label{FirstFormula}
\begin{aligned}
\Bird(a,b) &= \RTroll(a,b) \biguplus \Rt(b,b) \biguplus \Ang(b);\\
\Bird(a,b) &= \Ang(a)\biguplus \Lt(a,a) \biguplus \LTroll(a,b). 
\end{aligned}
\end{equation}
Note that the right and the left sides of the equalities are equal as 
planar domains provided we substitute~$\bigcup$ for~$\biguplus$\index[symbol]{$\biguplus$}. 
The symbol~$\biguplus$ in~\eqref{FirstFormula} means 
the following: if a function~$B$ on this domain is continuous and its restriction to each 
single subdomain of one side of the formula is a standard candidate, then this function~$B$ is a standard 
candidate for each subdomain of the other side of the formula. This easily follows from Proposition~\ref{St260901} 
and Proposition~\ref{St131001}. Graphical \label{biguplusdef} representation of this equality is presented on Figure~\ref{fig:BirdieAsAnglePlusTrol} (see Section~\ref{ElementaryGraphs} below). 

\index{standard candidate! in a single chord}
It is convenient to introduce two more ``linearity domains'' for the purposes of formalization. First, 
sometimes we will treat a single chord~$[g(a),g(b)]$, where~$(a,b)$ satisfies the cup equation~\eqref{urlun1}, 
as a linearity domain~$\mathfrak{L}$. The standard candidate~$B$ inside~$[g(a),g(b)]$ is given by linearity. 
We note that Proposition~\ref{St131001} is valid for the following two ``hidden'' subcases. The first one is 
when the chord $[g(a),g(b)]$ is tangent to the free boundary of $\Omega$, $\Omega(I_-)=\Lt(I_-)$ is a left tangent 
domain, and $\Omega(I_+)=\Rt(I_+)$ is a right tangent domain. We obtain the chordal domain with the glued 
tangent domains (see Figure~\ref{fig:cupmeets}).

\begin{figure}[!h]
\begin{center}
\includegraphics[width = 0.8 \textwidth]{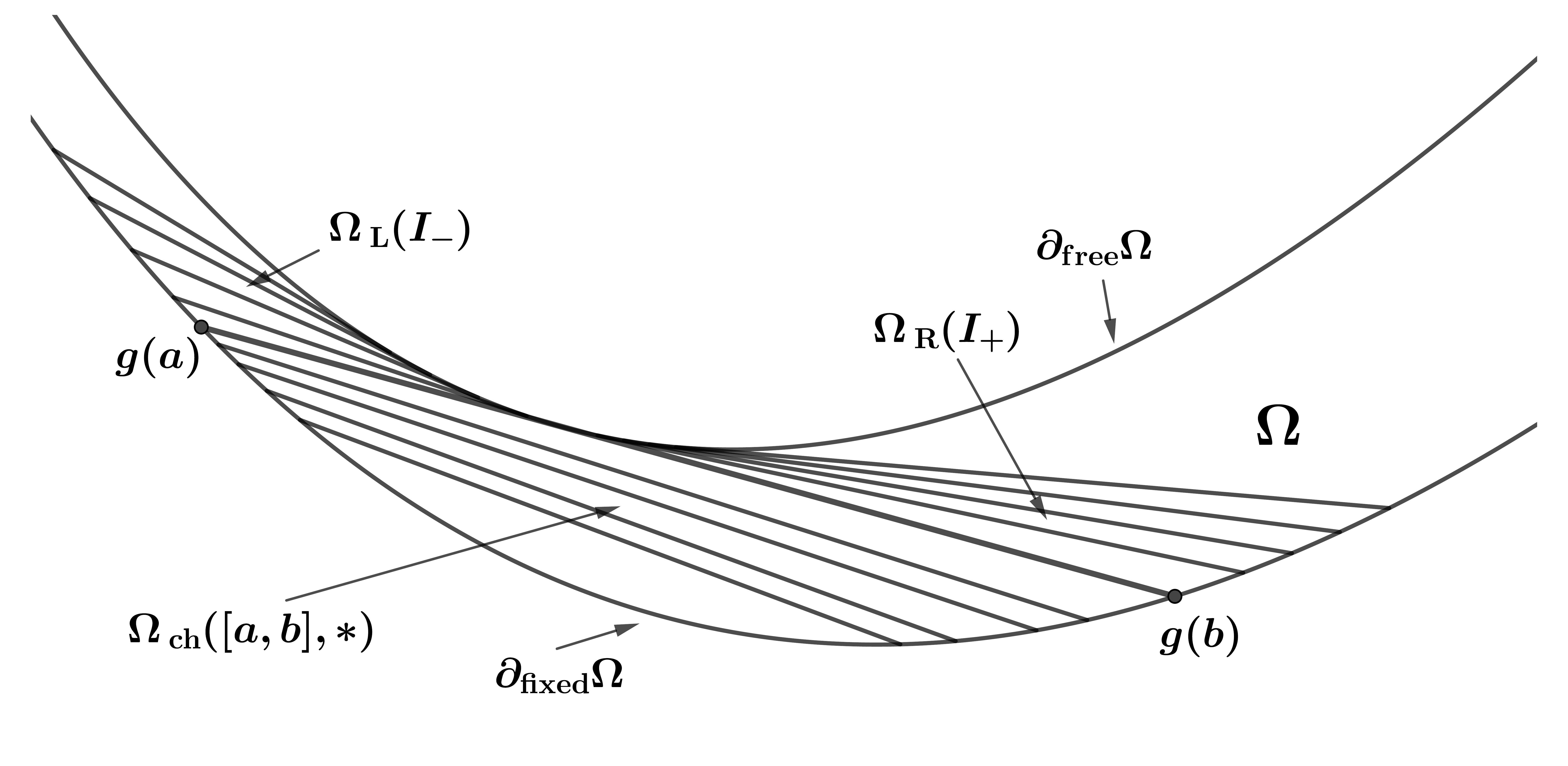}
\caption{A chordal domain $\Ch([a, b],*)$ with tangent domains attached to it.}
\label{fig:cupmeets}
\end{center}
\end{figure}

The second subcase is when the chord~$[g(a),g(b)]$ is not tangent to the free boundary and $\Omega(I_\pm)=\Ch(*,[a,b])$ 
are the right and the left parametrization of the same chordal domain, which lies above the chord $[g(a),g(b)]$. 
In this subcase we obtain two glued chordal domains $\Ch(*,[a,b])$ and $\Ch([a,b], *)$, which will appear when 
it is impossible to consider their union as one chordal domain (if either $\DR(a,b)=0$ or $\DL(a,b)=0$), 
see Figure~\ref{fig:twochdom}. 

\begin{figure}[!h]
\begin{center}
\includegraphics[width = 0.8 \linewidth]{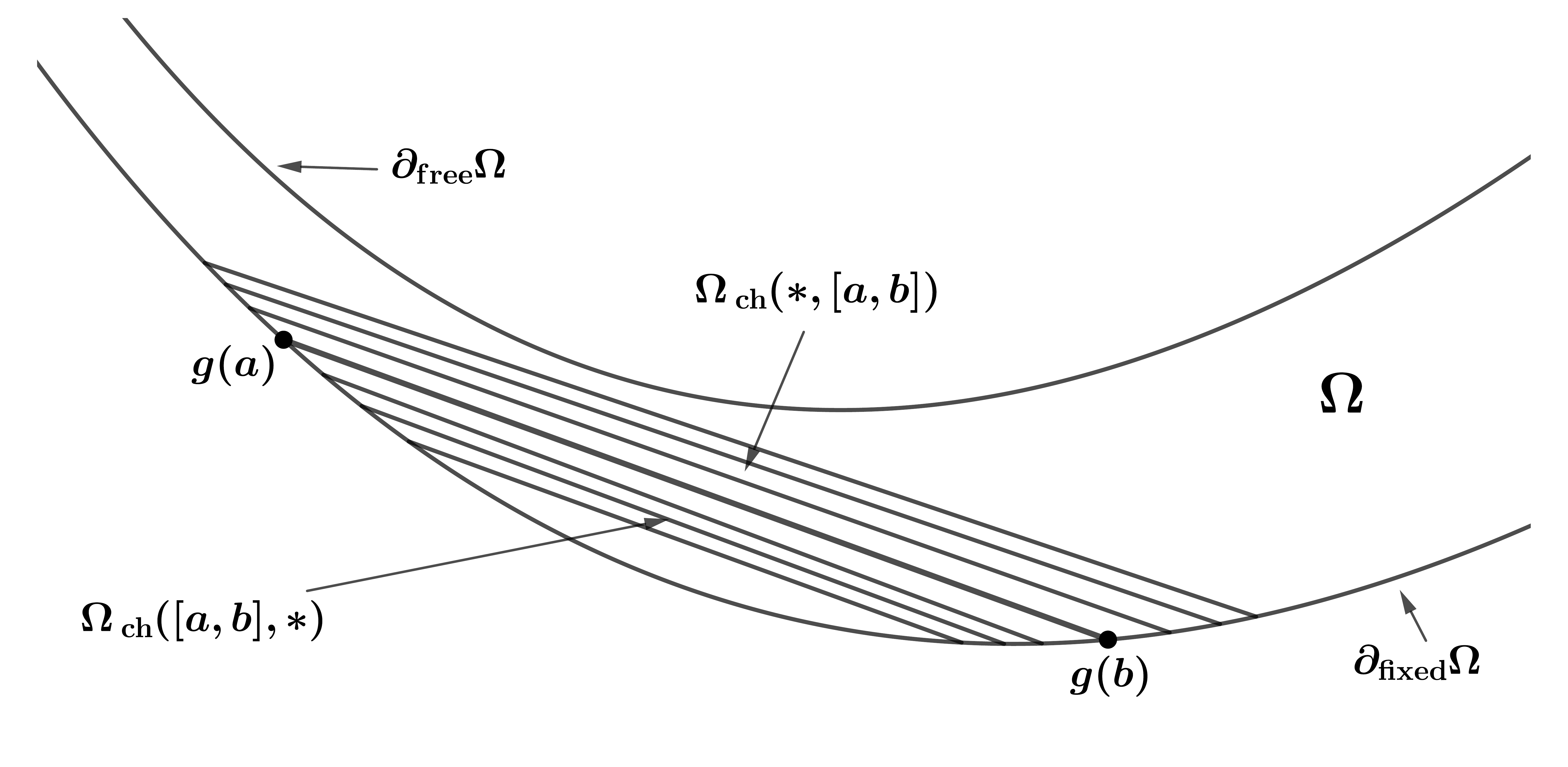}
\caption{Two chordal domains $\Ch([a, b],*)$ and $\Ch(*,[a,b])$ with the chord between them.}
\label{fig:twochdom}
\end{center}
\end{figure}

Second, sometimes it is useful to treat a single tangent~$\Rt(u,u)$ or~$\Lt(u,u)$ as a linearity 
domain~$\mathfrak{L}$. Moreover, no matter how strange it seems, it is natural to think of it as of a domain 
with two points on the fixed boundary. We treat this single tangent as a trolleybus of zero width, i.\,e., 
its base is the chord~$[g(u),g(u)]$. This can be considered as a limit of a sequence of trolleybuses on the 
chord shrinking to the point~$u$. It will appear only when~$u = c_i$ for some~$i$, where~$c_i$ is a single 
point root from Definition~\ref{roots}. The standard candidate~$B$ in this domain is obtained by passing to the limit in~\eqref{eq201001}:
\eq{Beta2InTheFifthType}{\index{standard candidate! in a single tangent}
\det
\begin{pmatrix}
g_1'(u)& g_2'(u)& f'(u)\\
g_1''(u)& g_2''(u)& f''(u)\\
x_1-g_1(u)& x_2-g_2(u)& B_\mathfrak{L}(x) - f(u)
\end{pmatrix}=0, \quad x \in \mathfrak{L}.
}
This construction will appear between two tangent domains of the same direction when the standard candidates 
on them cannot be concatenated  to be a standard candidate on the union of the domains. The obstacle of such 
a concatenation is the vanishing of the forces at the point $u$.

\subsection{Graphical representation of the elementary domains}\label{ElementaryGraphs}

\begin{wrapfigure}{r}{0.5\linewidth}
\begin{center}
\vspace{-10pt}
\includegraphics[width = \linewidth]{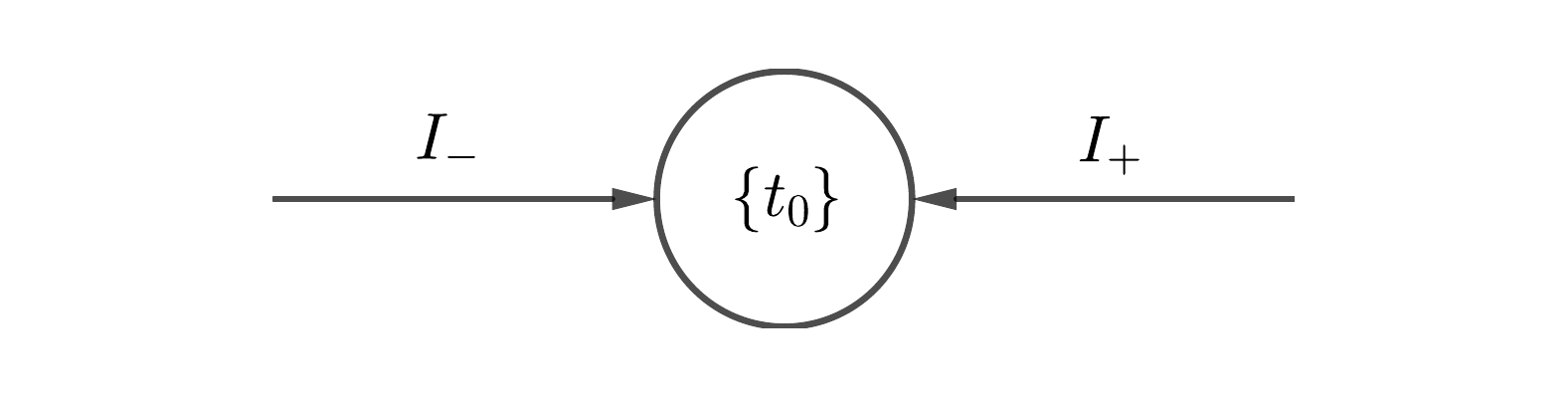}
\vspace{-10pt}
\caption{Graphical representation of an angle $\Ang(t_0)$ with adjacent tangent domains.}
\label{fig:anggraph}
\end{center}
\end{wrapfigure}

As it was said in Subsection~\ref{s31}, we give a graphical representation for combinatorial structure 
of foliations. The material of this subsection essentially repeats analogous constructions in~\cite{ISVZ2018}. 
We start with the representation of the simplest local foliations: fences (tangent domains and chordal domains) 
and some linearity domains. In Subsection~\ref{s345} we will give a graphical representation 
describing a foliation of the whole domain~$\Omega_\eps$.

\begin{figure}[h]
\begin{center}
\includegraphics[width = 0.45 \linewidth]{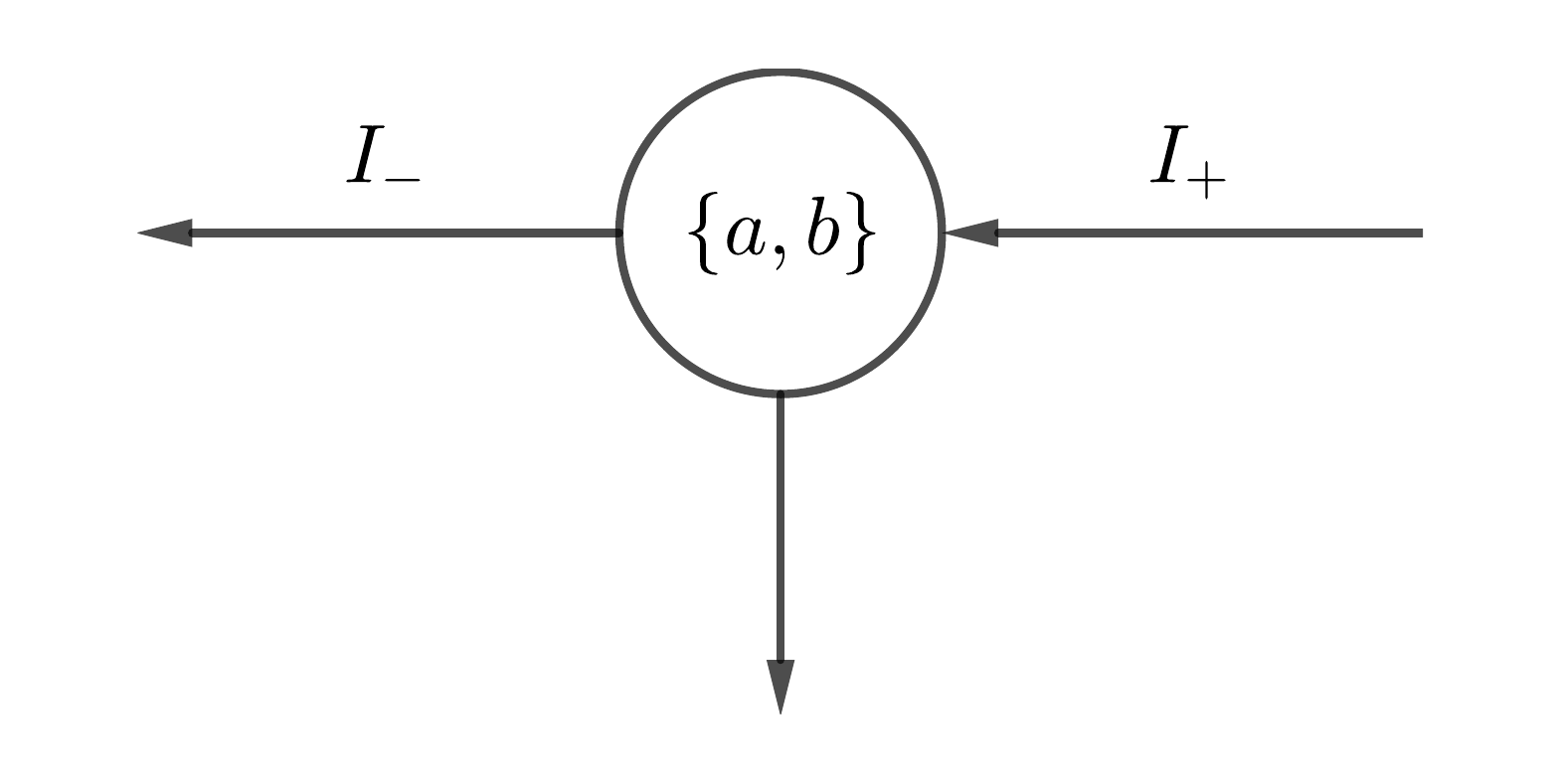}
\includegraphics[width = 0.45 \linewidth]{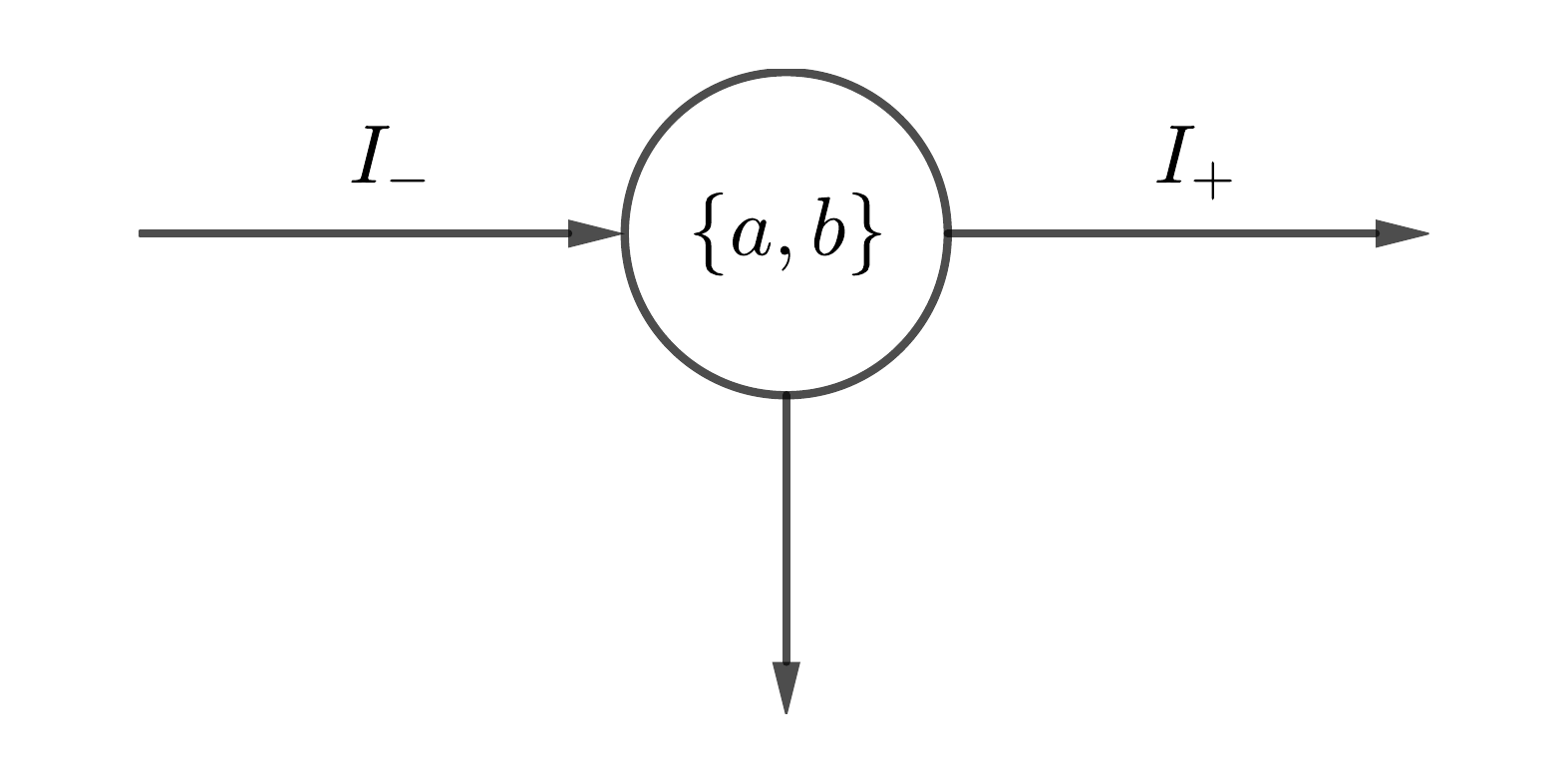}
\caption{Graphical representation of trolleybuses $\LTroll(a,b)$ and $\RTroll(a,b)$ with adjacent tangent domains and a chordal domain.}
\label{fig:gtr}
\end{center}
\end{figure}

\begin{wrapfigure}{r}{0.4\linewidth}
\begin{center}
\vspace{-50pt}
\includegraphics[width = 0.3\linewidth]{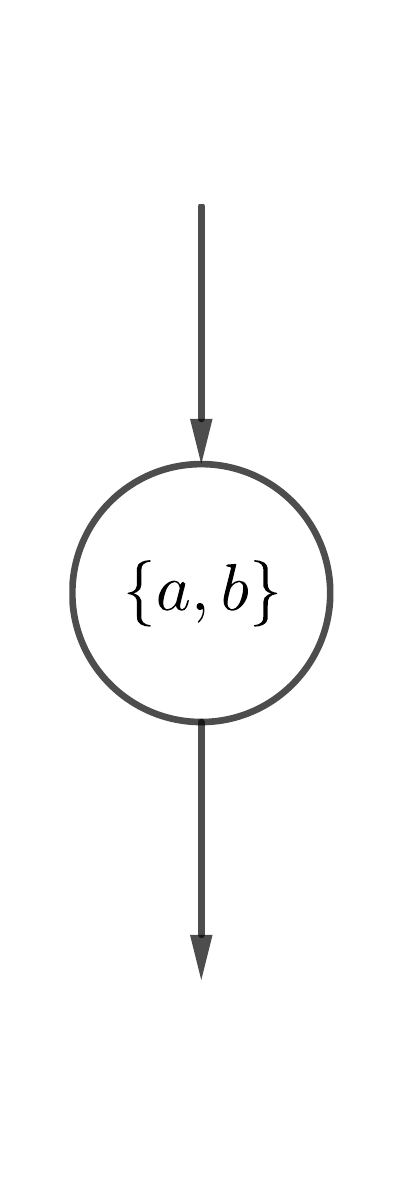}
\caption{Graphical representation of two chordal domains $\Ch([a, b],*)$ and $\Ch(*,[a, b])$ glued along the chord.}
\vspace{-30pt}
\label{fig:twochdomgraph}
\end{center}
\end{wrapfigure}
Usually vertices will correspond to linearity domains and oriented edges will always correspond to fences. 
We draw a vertex of the graph that corresponds to an angle and two incoming edges, which correspond to the 
neighbor tangent domains, see Figure~\ref{fig:anggraph}. We equip the elements of the graph with numerical 
parameters corresponding to the points on the fixed boundary.

Now we give a graphical representation of possible variants described in Proposition~\ref{St131001} (trolleybus 
and birdie). We draw a vertex for the domain of linearity and edges for the fences --- a chordal domain and two 
tangent domains. The edge corresponding to the chordal domain is outgoing. We will draw the edges corresponding to the tangent domains horizontally, and their directions agree 
with the directions of the tangents (either left or right). A trolleybus always has one incoming and two outgoing edges, 
and for a birdie we have two incoming and one outgoing edge, see Figure~\ref{fig:gtr} for the graphs corresponding to trolleybuses and the left drawing on Figure~\ref{fig:cup_graph} 
for the birdie.

We give a graphical representation of a full chordal domain with two neighbor tangent domains, see 
the right drawing on Figure~\ref{fig:cup_graph}. The vertex corresponds to the ``linearity domain'' being the chord $[g(a),g(b)]$, and the 
edges correspond to the chordal domain and tangent domains. Figure~\ref{fig:twochdomgraph} gives a graphical 
representation of two chordal domains $\Ch([a,b],*)$ and $\Ch(*,[a,b])$ glued along the chord 
(Figure~\ref{fig:twochdom}). The vertex here corresponds to the chord $[g(a),g(b)]$, and the incoming and outgoing 
edges correspond to $\Ch(*,[a,b])$ and $\Ch([a,b],*)$ respectively.

\begin{figure}[h!]
\includegraphics[width = 0.45\linewidth]{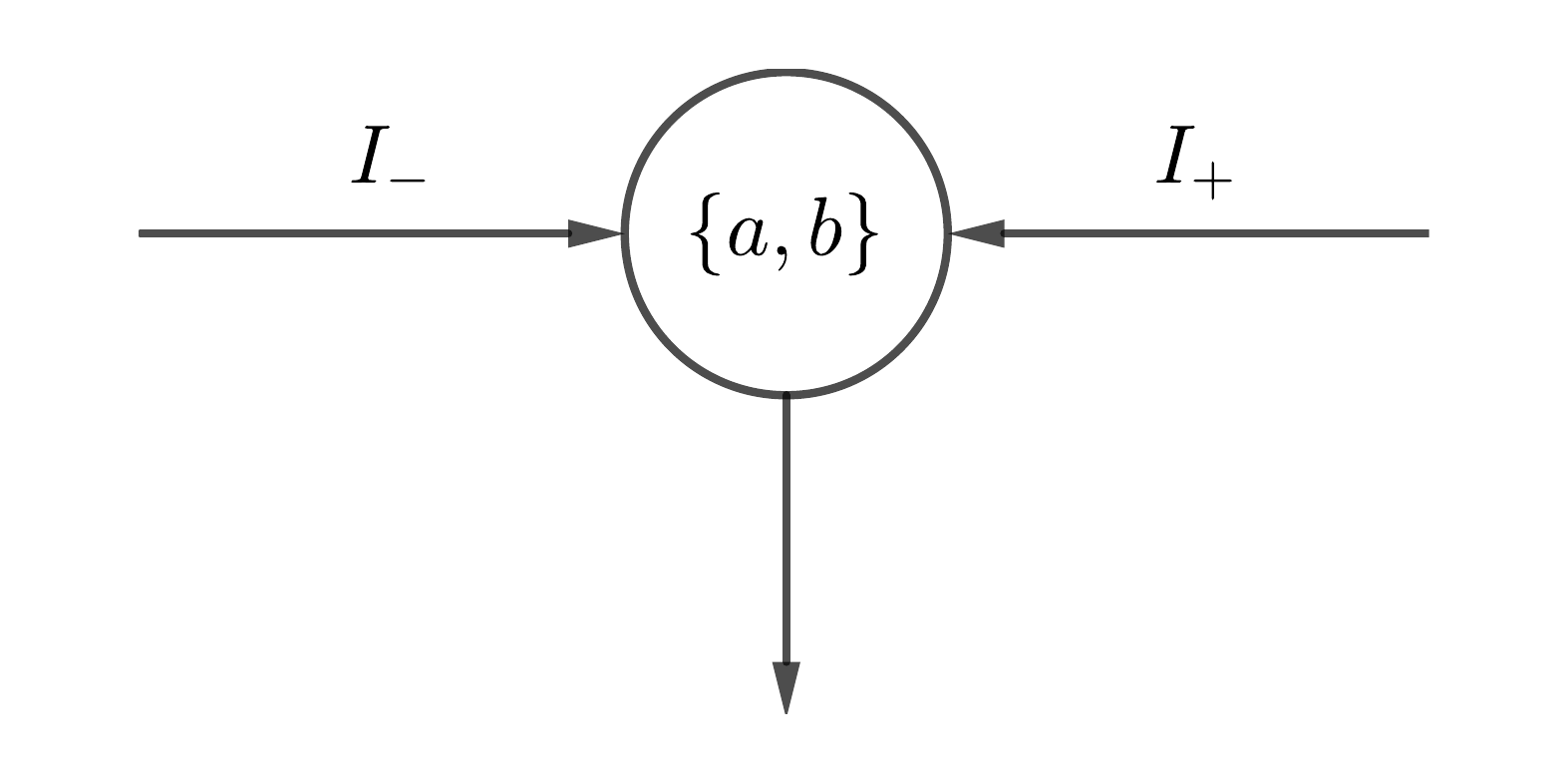}
\includegraphics[width = 0.45\linewidth]{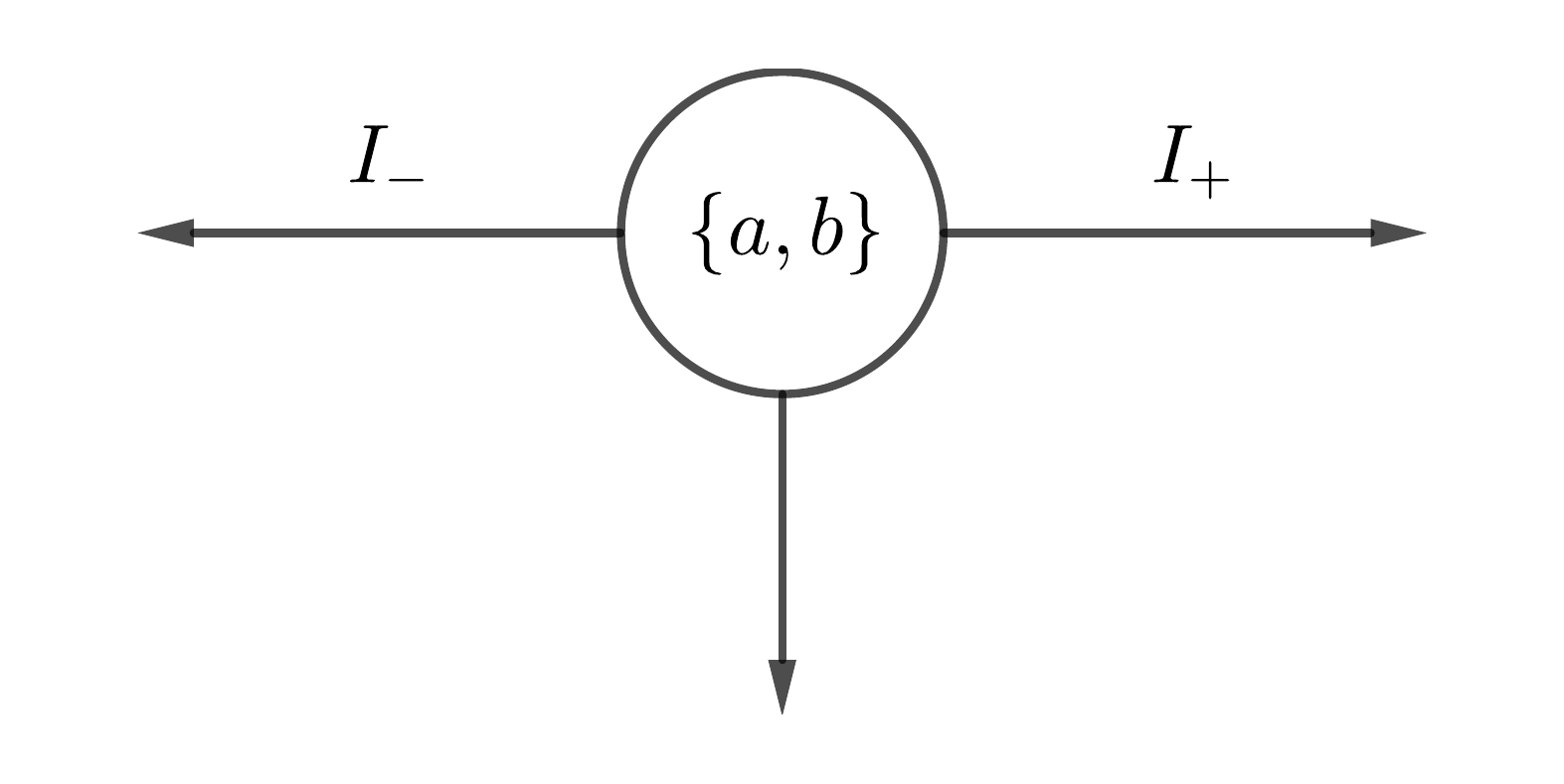}
\caption{Graphical representation of a birdie $\Bird(a,b)$ and of a full chordal domain $\Ch([a, b],*)$ with adjacent domains.
}
\label{fig:cup_graph}
\end{figure}

Figure~\ref{fig:twotandomgraph} gives a graphical representation of two tangent domains $\Lt(t_-,u)$ and 
$\Lt(u,t_+)$ glued along a ``trolleybus''~$\Lt(u,u)$ of the zero width, and for the symmetric case when 
$\Rt(t_-,u)$ and $\Rt(u,t_+)$ glued along a ``trolleybus''~$\Rt(u,u)$. The vertex here corresponds to the 
``trolleybus'' and the edges --- to the tangent domains.

\begin{figure}[!h]
\includegraphics[width = 0.45\linewidth]{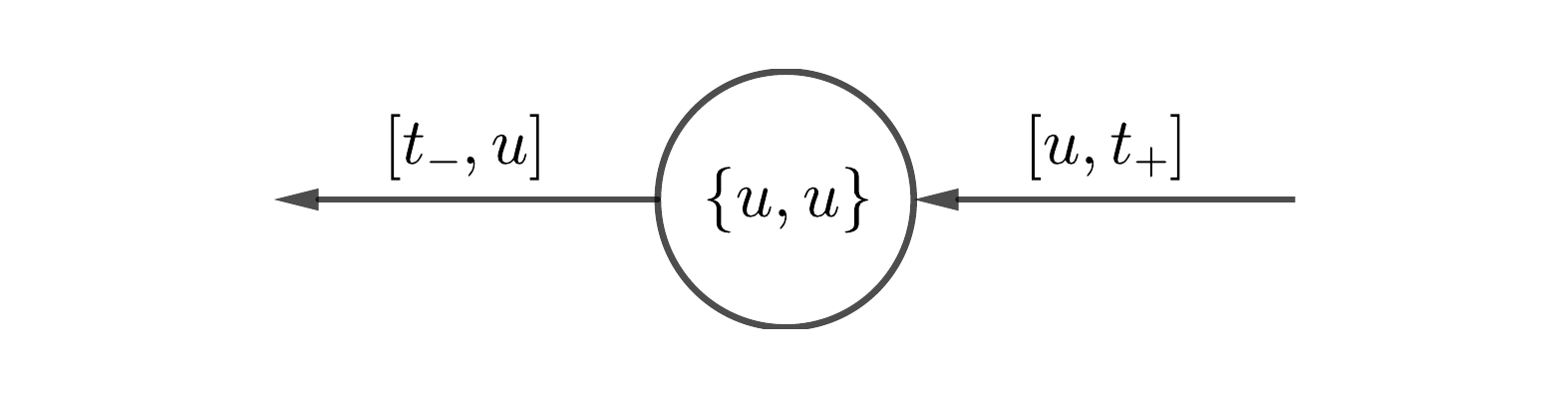}
\includegraphics[width = 0.45\linewidth]{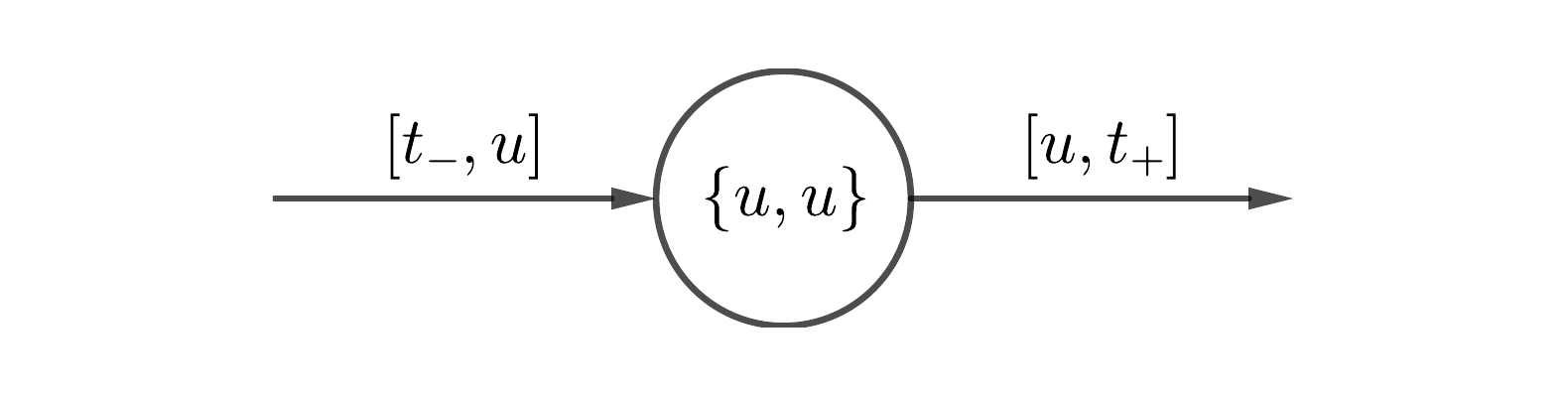}
\caption{Graphical representation of two tangent domains glued along a ``trolleybus'' of the zero width: the left and the right cases.}
\label{fig:twotandomgraph}
\end{figure}

Finally, Figure~\ref{fig:BirdieAsAnglePlusTrol} demonstrates the graphical representation of the equality~\eqref{FirstFormula}. 
\begin{figure}[!h]
\includegraphics[width = \linewidth]{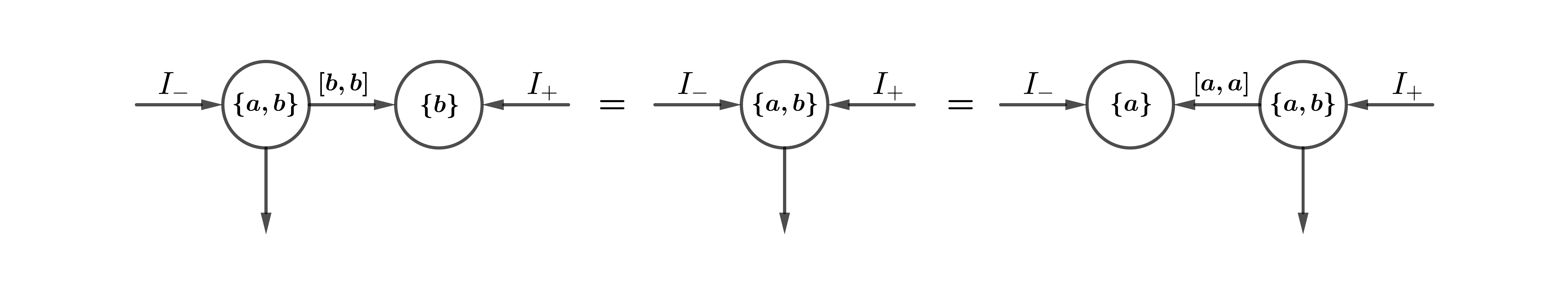}
\caption{Graphical representation of the equality ``birdie = angle + trolleybus''.}
\label{fig:BirdieAsAnglePlusTrol}
\end{figure}

\subsection{Multifigures}
\index{multifigures}\label{s343}

We begin with a structural agreement. For each linearity domain~$\mathfrak{L}$ we make the following finiteness assumption: the intersection 
$\mathfrak{L} \cap \FixedBoundary\Omega$ is assumed to be a union of finite number of arcs (one or two of these arcs 
may be unbounded, i.\,e., parametrized by a ray),
\begin{equation*}
\mathfrak{L} \cap \FixedBoundary\Omega = \cup_{i=1}^k \{g(t) \mid t \in \mathfrak{a}_i\},
\end{equation*} 
where~$\{\mathfrak{a}_i\}_{i=1}^k$ is a finite set of disjoint closed intervals, which can be single points. 
The curvilinear arc that corresponds to~$\mathfrak{a}_i$ is $g(\mathfrak{a}_i)$. We remind the reader the notation 
introduced in Subsection~\ref{s212}: the left endpoint of~$\mathfrak{a}_i$ is~$\mathfrak{a}_{i}^{\mathrm l}$ 
and the right endpoint is~$\mathfrak{a}_i^{\mathrm r}$. As we will see, due to Condition~\ref{reg} all the linearity domains we need satisfy this finiteness assumption. 

Consider a linearity domain~$\mathfrak{L}$. We know that the 
set~$\{\gamma(a) \colon g(a) \in \mathfrak{L} \cap \FixedBoundary\Omega\}$ lies in a two-dimensional plane 
in~$\mathbb{R}^3$. Therefore, there exists a function~$P_{\mathfrak{L}}$, which is a linear combination of 
$g_1$, $g_2,$ and a constant function, such that 
\begin{equation}
\label{PolynomialForLinearityDomain}
f(a) = P_{\mathfrak{L}}(a), \quad g(a) \in \mathfrak{L} \cap \FixedBoundary\Omega.
\end{equation}
Surely, the converse is also true: if there exists some linear combination~$P_{\mathfrak{L}}$ of $g_1,g_2$, 
and a constant function such that~\eqref{PolynomialForLinearityDomain} holds true, then there 
exists an affine function~$B_{\mathfrak{L}}$ such that~$B_{\mathfrak{L}}(g(a)) = f(a)$ for 
all~$g(a) \in \mathfrak{L} \cap \FixedBoundary\Omega$. 
Namely, if~$P_{\mathfrak{L}} = \beta_0+\beta_1 g_1+ \beta_2 g_2$, then
\begin{equation}\label{CandidateInMultifigure}
B_{\mathfrak{L}}\big(x_{1},x_{2}\big) = \beta_0+\beta_1 x_1+\beta_2 x_2.
\end{equation}
This function~$B_{\mathfrak{L}}$ is a Bellman candidate in~$\mathfrak{L}$. 

\begin{Rem}
Similar to the case where the linearity domain has only two points on the fixed boundary\textup, 
the identities~\eqref{eq201001} and~\eqref{eq201002} hold true for any~$g(a),g(b)\in \mathfrak{L}\cap\FixedBoundary$. 
All the vectors~$\gamma'(a)$ such that $g(a) \in\mathfrak{L} \cap \FixedBoundary\Omega$ lie in one plane orthogonal to the vector 
$(\beta_1,\beta_2,-1)$.
\end{Rem}

\begin{Def}\index{standard candidate! in multifigure}\index{standard candidate! in multifigure}
The function~$B$ defined by formula~\eqref{eq201001} in the linearity domain~$\mathfrak{L}$, where~$g(a)$ 
and~$g(b)$ are arbitrary points from $\mathfrak{L} \cap \FixedBoundary\Omega$\textup,
is called the \emph{standard candidate} in~$\mathfrak{L}$. 
\end{Def}

As we have verified, the standard candidate in~$\mathfrak{L}$ does not depend on the choice of~$a$ and~$b$ 
in the definition and coincides with the function given by~\eqref{CandidateInMultifigure}.
 
In the following lemma, we use  Definition~\ref{differentials}.
\begin{Le}
\label{ThreePointsOnOneLine}
Let~$a_1,$ $a_2,$ and~$a_3$ be such that~$\gamma'(a_i),$ $i = 1,2,3,$ lie in one plane. 
If~$a_1 \leq a_2 \leq a_3,$ then
\begin{align*}
\DL(a_1,a_2)&=\DL(a_1,a_3);
\\
\DR(a_1,a_3)&=\DR(a_2,a_3);
\\
\DR(a_1,a_2)&=\DL(a_2,a_3).
\end{align*}
\end{Le}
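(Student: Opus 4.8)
The plan is to derive all three identities from the multilinearity of the determinants appearing in Definition~\ref{differentials}; the only substantive point is to pick, in each case, the right linear relation among $\gamma'(a_1),\gamma'(a_2),\gamma'(a_3)$ and to check that its coefficients do not vanish, so that they may be cancelled between numerator and denominator.

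First I would record two easy consequences of the hypotheses. The assumption that $\gamma'(a_1),\gamma'(a_2),\gamma'(a_3)$ lie in one plane means that these three vectors of $\R^3$ belong to a common two-dimensional linear subspace (the one orthogonal to $(\beta_1,\beta_2,-1)$, exactly as for the standard candidate in a multifigure), hence are linearly dependent. On the other hand, \eqref{eq250702} and \eqref{eq230402} give $g_1'>0$ and $\kappa_2=g_2'/g_1'$ strictly increasing, so that for $a_i\ne a_j$
\[
\dett{g'(a_i)}{g'(a_j)} = g_1'(a_i)g_1'(a_j)\big(\kappa_2(a_j)-\kappa_2(a_i)\big)\ne 0;
\]
thus no two of $g'(a_1),g'(a_2),g'(a_3)$ are proportional, and a fortiori no two of $\gamma'(a_1),\gamma'(a_2),\gamma'(a_3)$ are. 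Consequently each $\gamma'(a_i)$ is a linear combination of the other two with \emph{both} coefficients nonzero, and the same relation holds for the two-dimensional projections $g'(a_i)$. I would also reduce to the case $a_1<a_2<a_3$, the cases with coincidences being trivial (or obtained by continuity in the formulas below).

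Then I would treat the three identities in turn. For $\DL(a_1,a_2)=\DL(a_1,a_3)$: write $\gamma'(a_3)=\mu\gamma'(a_1)+\nu\gamma'(a_2)$ with $\nu\ne 0$, hence also $g'(a_3)=\mu g'(a_1)+\nu g'(a_2)$; substituting into the numerator and denominator of $\DL(a_1,a_3)$ and using that the parts proportional to $\gamma'(a_1)$ resp.\ $g'(a_1)$ contribute nothing gives
\[
\dettt{\gamma'(a_1)}{\gamma'(a_3)}{\gamma''(a_1)}=\nu\,\dettt{\gamma'(a_1)}{\gamma'(a_2)}{\gamma''(a_1)},\qquad
\dett{g'(a_1)}{g'(a_3)}=\nu\,\dett{g'(a_1)}{g'(a_2)},
\]
and cancelling $\nu$ leaves $\DL(a_1,a_3)=\DL(a_1,a_2)$. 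For $\DR(a_1,a_3)=\DR(a_2,a_3)$ the argument is the same with the relation $\gamma'(a_1)=\alpha\gamma'(a_2)+\beta\gamma'(a_3)$, $\alpha\ne 0$: both the numerator $\dettt{\gamma'(a_3)}{\gamma'(a_1)}{\gamma''(a_3)}$ and the denominator $\dett{g'(a_3)}{g'(a_1)}$ of $\DR(a_1,a_3)$ pick up the factor $\alpha$, which cancels against the corresponding expressions for $\DR(a_2,a_3)$. For $\DR(a_1,a_2)=\DL(a_2,a_3)$ I would use $\gamma'(a_1)=c\,\gamma'(a_2)+d\,\gamma'(a_3)$ with $d\ne 0$; since $\DR(a_1,a_2)$ and $\DL(a_2,a_3)$ both carry $\gamma'(a_2)$ in the first row and $\gamma''(a_2)$ in the last, substituting into the middle row of the numerator and into the $2\times2$ determinant in the denominator produces a common factor $d$, whose cancellation yields the claim.

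The computation is entirely routine bilinearity and trilinearity of determinants; the one place that genuinely needs care --- and the natural main obstacle --- is the verification that the cancellation coefficients $\nu,\alpha,d$ are nonzero, which makes the cancellations legitimate and which rests precisely on the strict convexity of $\dfi\Omega$, i.e.\ on \eqref{eq230402}, through the fact that no two of the vectors $g'(a_i)$ are parallel.
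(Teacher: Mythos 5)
Your proof is correct and follows essentially the same route as the paper: express one of the three vectors $\gamma'(a_i)$ as a linear combination of the other two, push the relation down to the projections $g'(a_i)$, and cancel the common coefficient between numerator and denominator by multilinearity. The only difference is that you explicitly justify the non-vanishing of that coefficient via strict convexity of $\dfi\Omega$, a point the paper leaves implicit; this is a welcome extra care, not a divergence in method.
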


\begin{proof}
Let us prove the first identity, the others are similar. We find the coefficients $\alpha_1$ and $\alpha_2$ such that 
$\gamma'(a_3)=\alpha_1\gamma'(a_1)+\alpha_2\gamma'(a_2)$. Then $g'(a_3)=\alpha_1 g'(a_1)+\alpha_2 g'(a_2)$. 
We substitute this to $\DL(a_1,a_3)$ and obtain
$$
\DL(a_1,a_3) = \frac{
\det
\begin{pmatrix}
\gamma'(a_1)\\
\gamma'(a_3)\\
\gamma''(a_1)
\end{pmatrix}}
{
\det
\begin{pmatrix}
g'(a_1)\\
g'(a_3)
\end{pmatrix}
}=
\frac{\alpha_2
\det
\begin{pmatrix}
\gamma'(a_1)\\
\gamma'(a_2)\\
\gamma''(a_1)
\end{pmatrix}}
{\alpha_2
\det
\begin{pmatrix}
g'(a_1)\\
g'(a_2)
\end{pmatrix}
}=
\DL(a_1,a_2).
$$ 
\end{proof}

\begin{Def}
Let $\mathfrak{L}$ be a linearity domain such that the intersection $\mathfrak{L} \cap \FixedBoundary\Omega$ contains at least two points, 
let $g(a) \in \mathfrak{L} \cap \FixedBoundary\Omega$. By analogy with Definition~\ref{Def190401} we define the value of
the force function $F$ at $a$ as 
$$
F(a) = \frac{D}{g_1'(a)\kappa_2'(a)},
$$
where $D=\DL(a,b)$ if $a<b$ and $D=\DR(b,a)$ if $a>b$, and $b \ne a$ is any point, such that 
$g(b) \in \mathfrak{L} \cap \FixedBoundary\Omega$. 
\end{Def}

\begin{Rem}
The force function is well defined \textup(does not depend on the choice of the point $b$\textup) on the set $\{a \in \mathbb{R}\colon g(a) \in \mathfrak{L}\}$ due to 
Lemma~\ref{ThreePointsOnOneLine}.
\end{Rem}

\begin{Le}
\label{ThreePointsAndTheArea}
Let~$a_1$\textup,~$a_2$\textup, and~$a_3$ be such that~$\gamma'(a_i)$\textup, $i = 1,2,3$\textup, lie in one plane. 
Suppose that the pairs~$(a_1,a_2)$ and~$(a_2,a_3)$ satisfy the cup equation. Then~$(a_1,a_3)$ satisfies the cup 
equation as well.
\end{Le}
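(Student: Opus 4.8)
\textbf{Proof proposal for Lemma~\ref{ThreePointsAndTheArea}.}

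The plan is to exploit the geometric content of the cup equation together with the hypothesis that the three velocity vectors $\gamma'(a_1),\gamma'(a_2),\gamma'(a_3)$ are coplanar. Recall that the cup equation $\det\begin{pmatrix}\gamma'(t)\\\gamma'(s)\\\gamma(s)-\gamma(t)\end{pmatrix}=0$ expresses that the chord joining $\gamma(s)$ and $\gamma(t)$ lies in the plane spanned by $\gamma'(s)$ and $\gamma'(t)$, i.e.\ that there is a bitangent plane to $\gamma$ touching at both parameter values. The key structural fact is Lemma~\ref{Lem310803}: whenever $(s,t)$ satisfies the cup equation, we have the explicit linear dependence $\gamma(t)-\gamma(s)=\CR(s,t)\gamma'(t)-\CL(s,t)\gamma'(s)$, and crucially the coefficients $\CL,\CR$ are those of the \emph{two-dimensional} identity~\eqref{lincoef2}, hence depend only on $g$ and not on $f$.

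First I would record that the coplanarity hypothesis means there exist scalars $\alpha_1,\alpha_2$, not both zero, with $\gamma'(a_2)=\alpha_1\gamma'(a_1)+\alpha_2\gamma'(a_3)$; since the first two coordinates of $\gamma'$ are $g'$ and $\det\begin{pmatrix}g'(a_1)\\g'(a_3)\end{pmatrix}\ne0$ (convexity of $\dfi\Omega$, see~\eqref{eq230402}), the same relation holds for $g'$: $g'(a_2)=\alpha_1 g'(a_1)+\alpha_2 g'(a_3)$. Next I would use Lemma~\ref{Lem310803} on the two pairs known to satisfy the cup equation:
\begin{align*}
\gamma(a_2)-\gamma(a_1)&=\CR(a_1,a_2)\gamma'(a_2)-\CL(a_1,a_2)\gamma'(a_1),\\
\gamma(a_3)-\gamma(a_2)&=\CR(a_2,a_3)\gamma'(a_3)-\CL(a_2,a_3)\gamma'(a_2).
\end{align*}
Adding these and substituting $\gamma'(a_2)=\alpha_1\gamma'(a_1)+\alpha_2\gamma'(a_3)$ expresses $\gamma(a_3)-\gamma(a_1)$ as a linear combination of $\gamma'(a_1)$ and $\gamma'(a_3)$ alone. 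That is precisely the statement that $\gamma(a_3)-\gamma(a_1)$ lies in the span of $\gamma'(a_1),\gamma'(a_3)$, which is equivalent to $\det\begin{pmatrix}\gamma'(a_1)\\\gamma'(a_3)\\\gamma(a_3)-\gamma(a_1)\end{pmatrix}=0$, i.e.\ the cup equation~\eqref{urlun1} for the pair $(a_1,a_3)$.

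I expect the only mildly delicate point to be the degenerate case $\alpha_2=0$ (or $\alpha_1=0$), i.e.\ when $\gamma'(a_2)$ is parallel to $\gamma'(a_1)$ (resp.\ $\gamma'(a_3)$); but then $g'(a_2)\parallel g'(a_1)$, forcing $\kappa_2(a_1)=\kappa_2(a_2)$, which contradicts $\kappa_2'>0$ unless $a_1=a_2$, a trivial case in which the claim is immediate. So in the nondegenerate range both $\alpha_1,\alpha_2\ne0$ and the computation above goes through cleanly; alternatively, one can avoid cases entirely by noting that the whole argument is a statement about the three-dimensional vectors $\gamma'(a_i)$ and $\gamma(a_i)$ living in a common affine plane — coplanarity of the three tangent directions plus two bitangency relations forces the third — and this can also be phrased purely via rank considerations on the $3\times3$ matrices, sidestepping the explicit $\CL,\CR$ coefficients. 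Either route works; I would present the explicit-coefficient version since Lemma~\ref{Lem310803} is already available.
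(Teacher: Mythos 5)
Your proof is correct and is essentially the paper's argument: the paper simply observes that the cup equations place $\gamma(a_2)-\gamma(a_1)$ and $\gamma(a_3)-\gamma(a_2)$ in the common plane spanned by the $\gamma'(a_i)$, so their sum $\gamma(a_3)-\gamma(a_1)$ lies there too, forcing the determinant in~\eqref{urlun1} for $(a_1,a_3)$ to vanish. Your explicit-coefficient version via Lemma~\ref{Lem310803} and the relation $\gamma'(a_2)=\alpha_1\gamma'(a_1)+\alpha_2\gamma'(a_3)$ is just a more computational rendering of the same idea (and the degeneracy worry about $\alpha_1,\alpha_2$ is unnecessary, since the substitution works regardless of their values).
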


\begin{proof}
The cup equations for the pairs $(a_1,a_2)$ and $(a_2,a_3)$ mean 
that the vectors $\gamma(a_2)-\gamma(a_1)$ and $\gamma(a_3)-\gamma(a_2)$ lie in the same plane as all the $\gamma'(a_i)$, hence 
$\gamma(a_3)-\gamma(a_1)$ also lies there. Therefore the cup equation holds for the pair $(a_1,a_3)$.
\end{proof}

Now we are equipped to describe all the remaining linearity domains. We start with the domains 
that are not separated from the free boundary of~$\Omega$. 
\begin{figure}[h]
\begin{center}
\includegraphics[width = 0.59\linewidth]{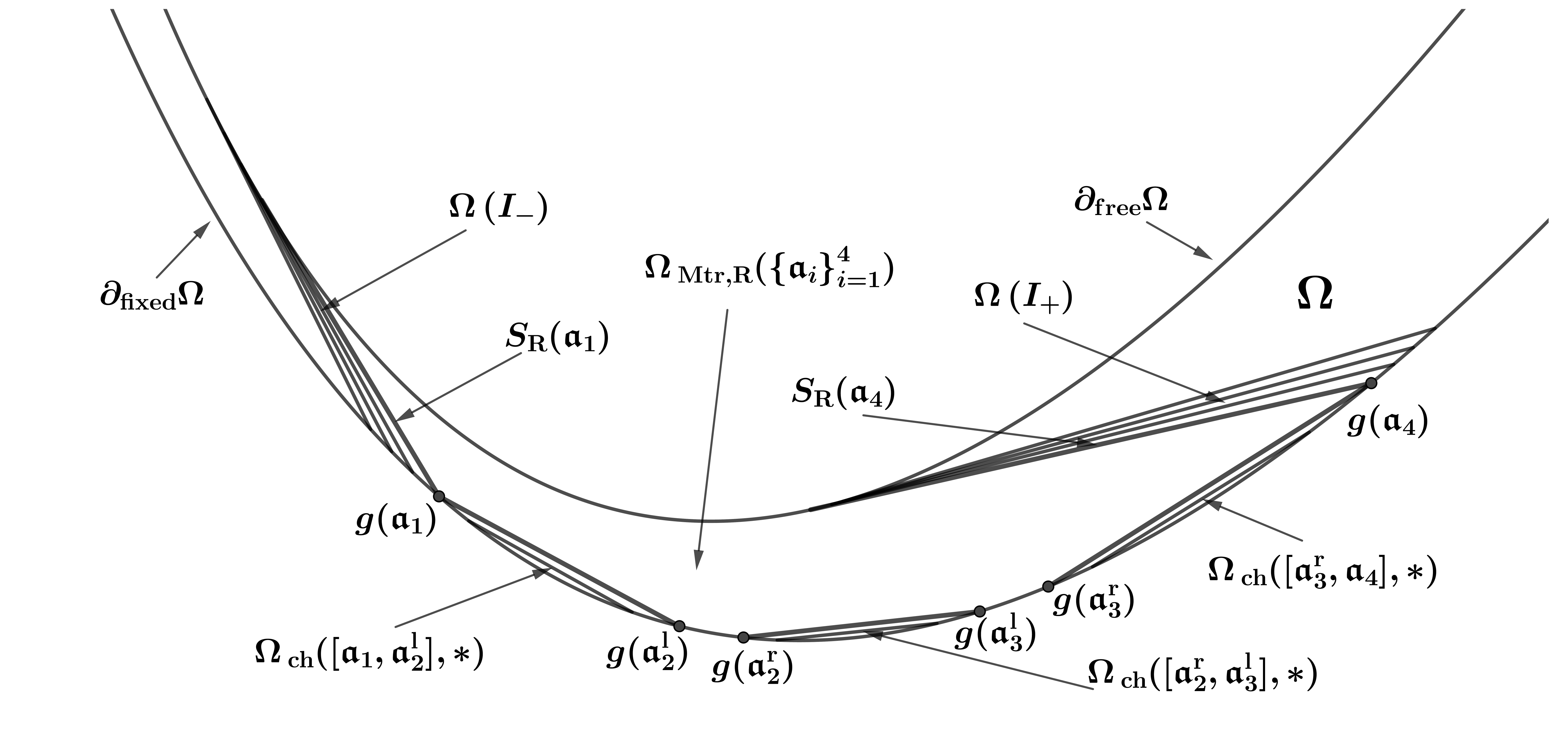}
\includegraphics[width = 0.39\linewidth]{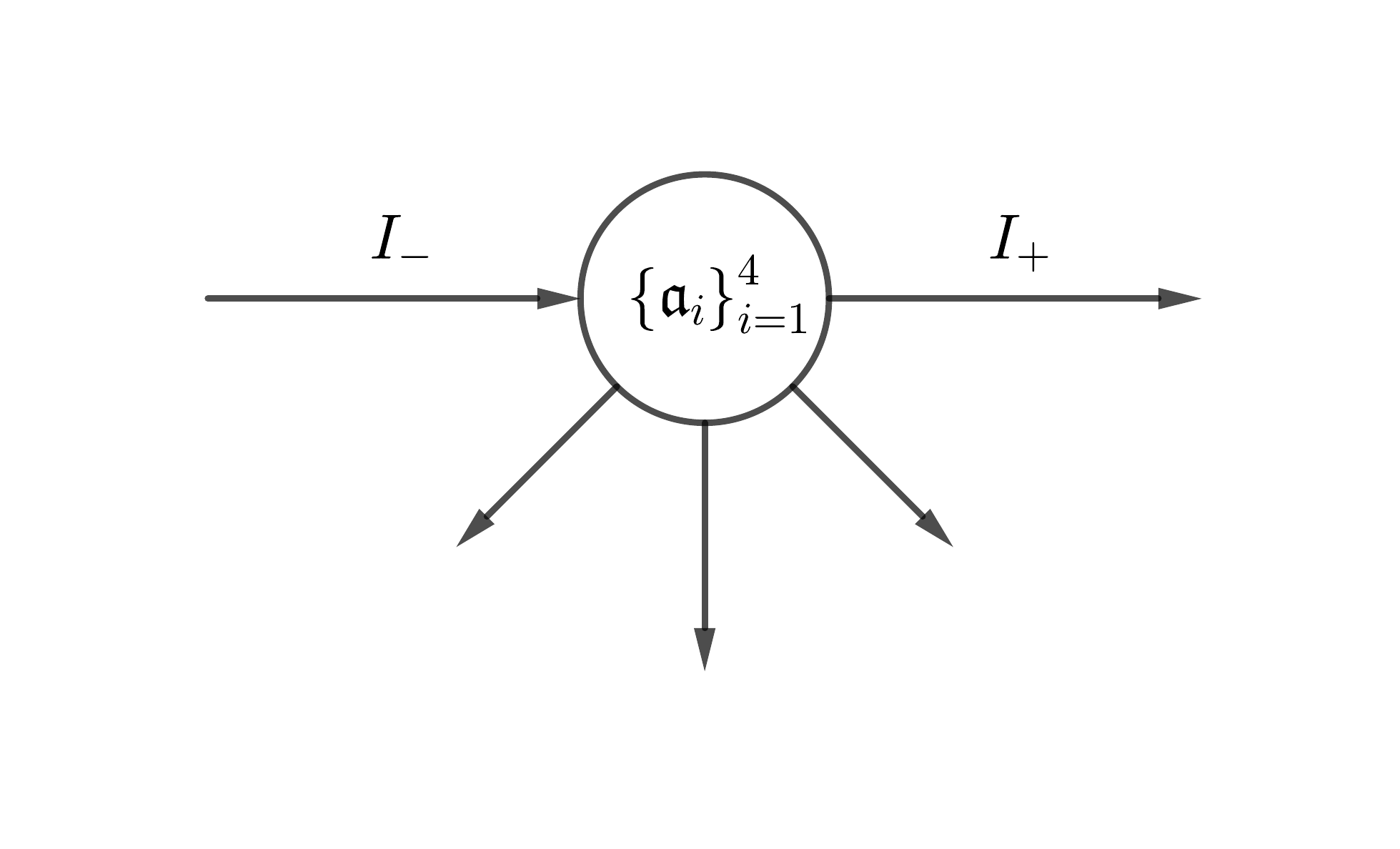}
\caption{A right multitrolleybus for $k = 4$ with adjacent domains and their graphical representation.}
\label{fig:multitroll}
\label{fig:MtrGr}
\end{center}
\end{figure}
The boundary of such a domain, provided it is compact, consists of the arcs~$g(\mathfrak{a}_i)$, $i = 1,2,\ldots, k$, 
the chords~$[g(\mathfrak{a}_i^{\mathrm r}),g(\mathfrak{a}_{i+1}^{\mathrm l})]$, $i = 1,2, \ldots, k-1$, 
two tangents $S(\mathfrak{a}_1^{\mathrm l})$ and $S(\mathfrak{a}_{k}^{\mathrm r})$,  
and the arc of the free boundary. 
We classify the multifigures with respect to the orientation of these tangents. Namely, if 
we have $\Sr(\mathfrak{a}_1^{\mathrm l})$ and $\Sr(\mathfrak{a}_{k}^{\mathrm r})$, then we get 
a \emph{right multitrolleybus}\index{multitrolleybus} 
denoted by~$\MTTR(\{\mathfrak{a}_i\}_{i=1}^k)$\index[symbol]{$\MTTR$}, see Figure~\ref{fig:multitroll}; if they are 
$\Sl(\mathfrak{a}_1^{\mathrm l})$ and $\Sl(\mathfrak{a}_{k}^{\mathrm r})$, then we have 
a \emph{left multitrolleybus} denoted by~$\MTTL(\{\mathfrak{a}_i\}_{i=1}^k)$\index[symbol]{$\MTTL$}.
If we have $\Sl(\mathfrak{a}_1^{\mathrm l})$ and $\Sr(\mathfrak{a}_{k}^{\mathrm r})$, then the linearity domain is called 
a~\emph{multicup}\index{multicup} and is denoted by~$\MTC(\{\mathfrak{a}_i\}_{i=1}^k)$\index[symbol]{$\MTC$}, see Figure~\ref{fig:multicup4}. 
\begin{figure}[h]
\begin{center}
\includegraphics[width = 0.59\linewidth]{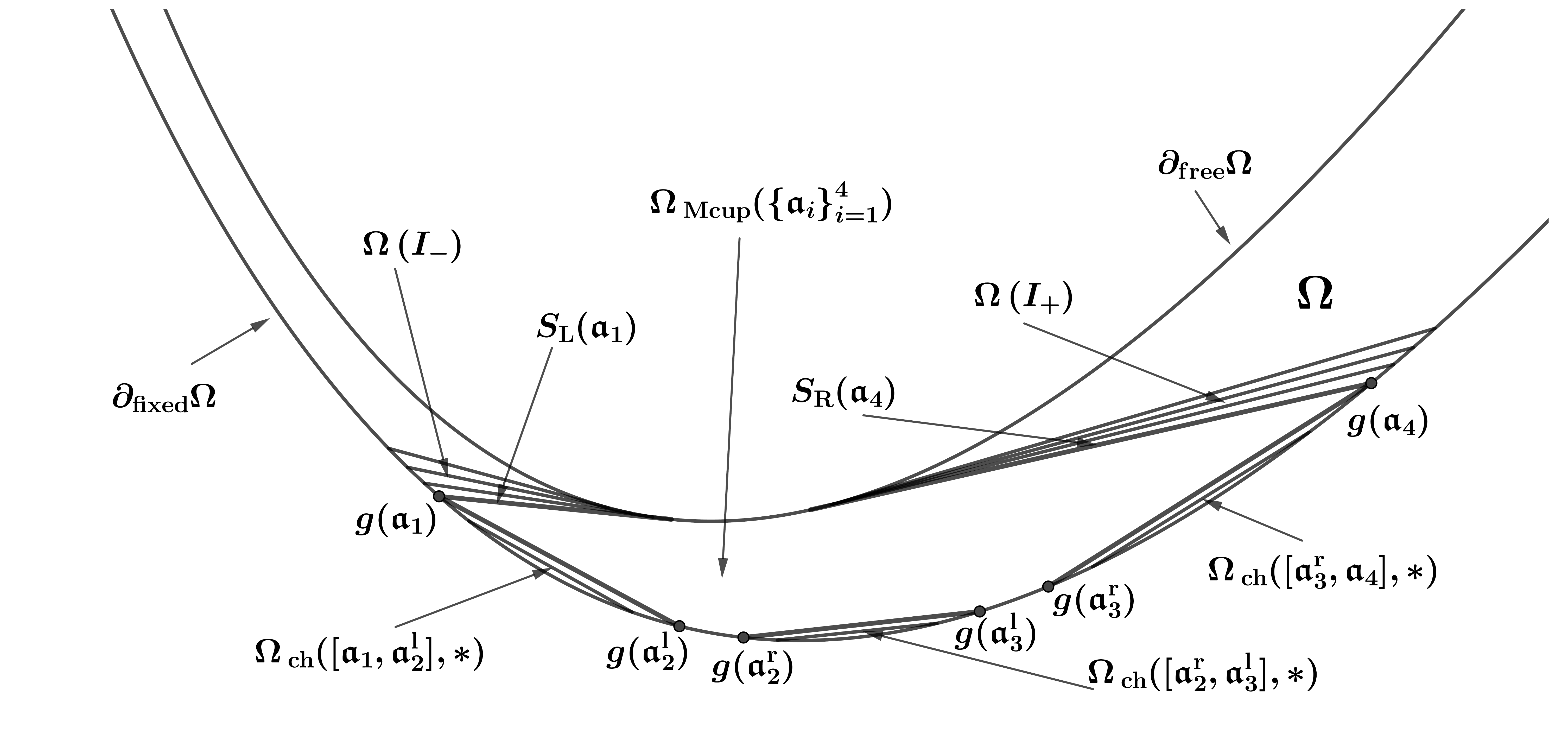}
\includegraphics[width = 0.4\linewidth]{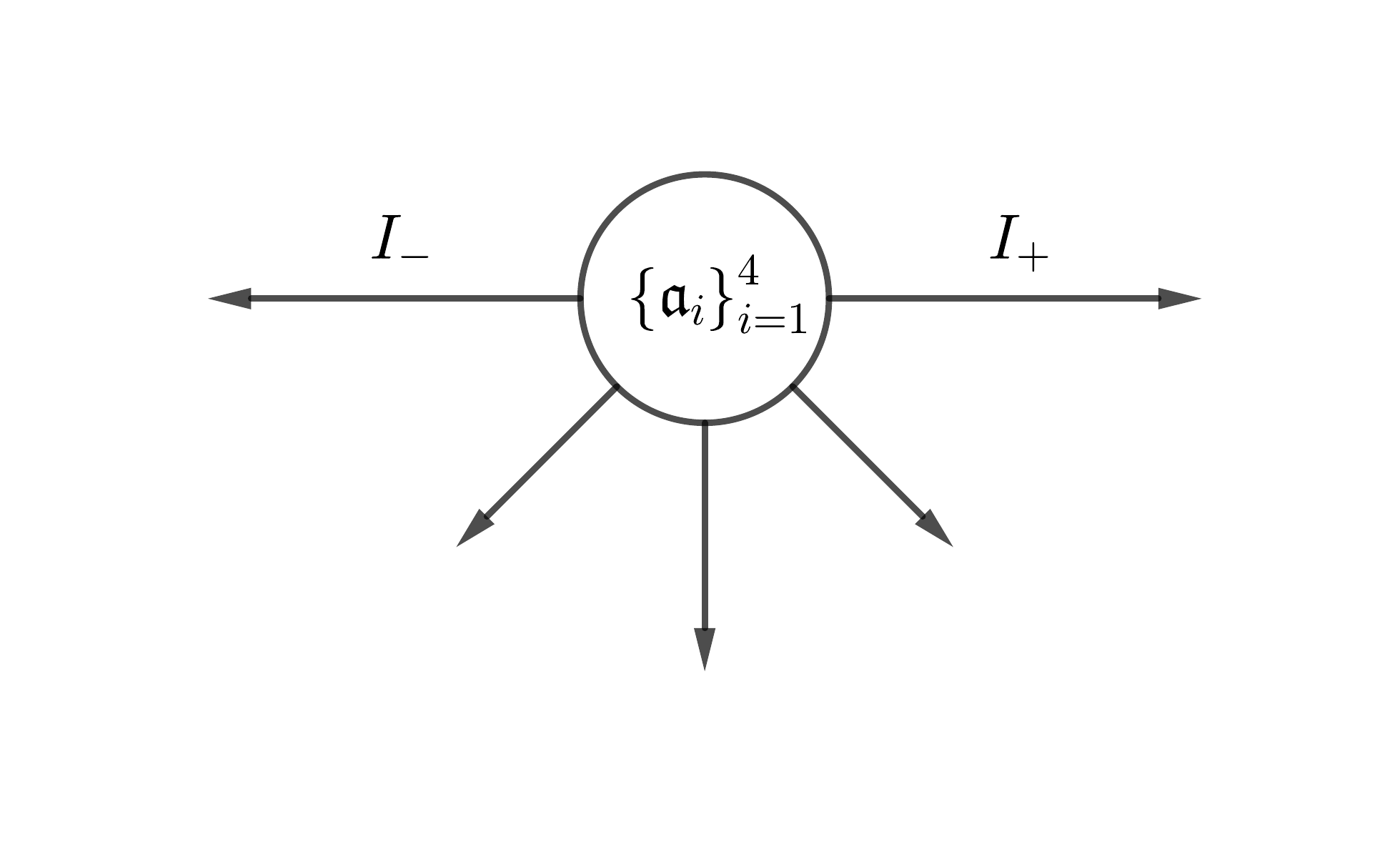}
\caption{A mutlicup for $k=4$ with adjacent domains and their graphical representation.}
\label{fig:multicup4}
\label{fig:McupGr}
\end{center}
\end{figure}

We distinguish the case where the two border tangents $\Sl(\mathfrak{a}_1^{\mathrm l})$ and 
$\Sr(\mathfrak{a}_{k}^{\mathrm r})$ lie on one line and say that in this case the multicup is \emph{full}. 
Finally, if we have  $\Sr(\mathfrak{a}_1^{\mathrm l})$ and $\Sl(\mathfrak{a}_{k}^{\mathrm r})$, then 
the domain of linearity $\mathfrak{L}$ is called a~\emph{multibirdie}\index{multibirdie} and is denoted 
by~$\MTB(\{\mathfrak{a}_i\}_{i=1}^k)$\index[symbol]{$\MTB$}, see Figure~\ref{fig:multibirdie}.
\begin{figure}[h]
\begin{center}
\includegraphics[width = 0.59\linewidth]{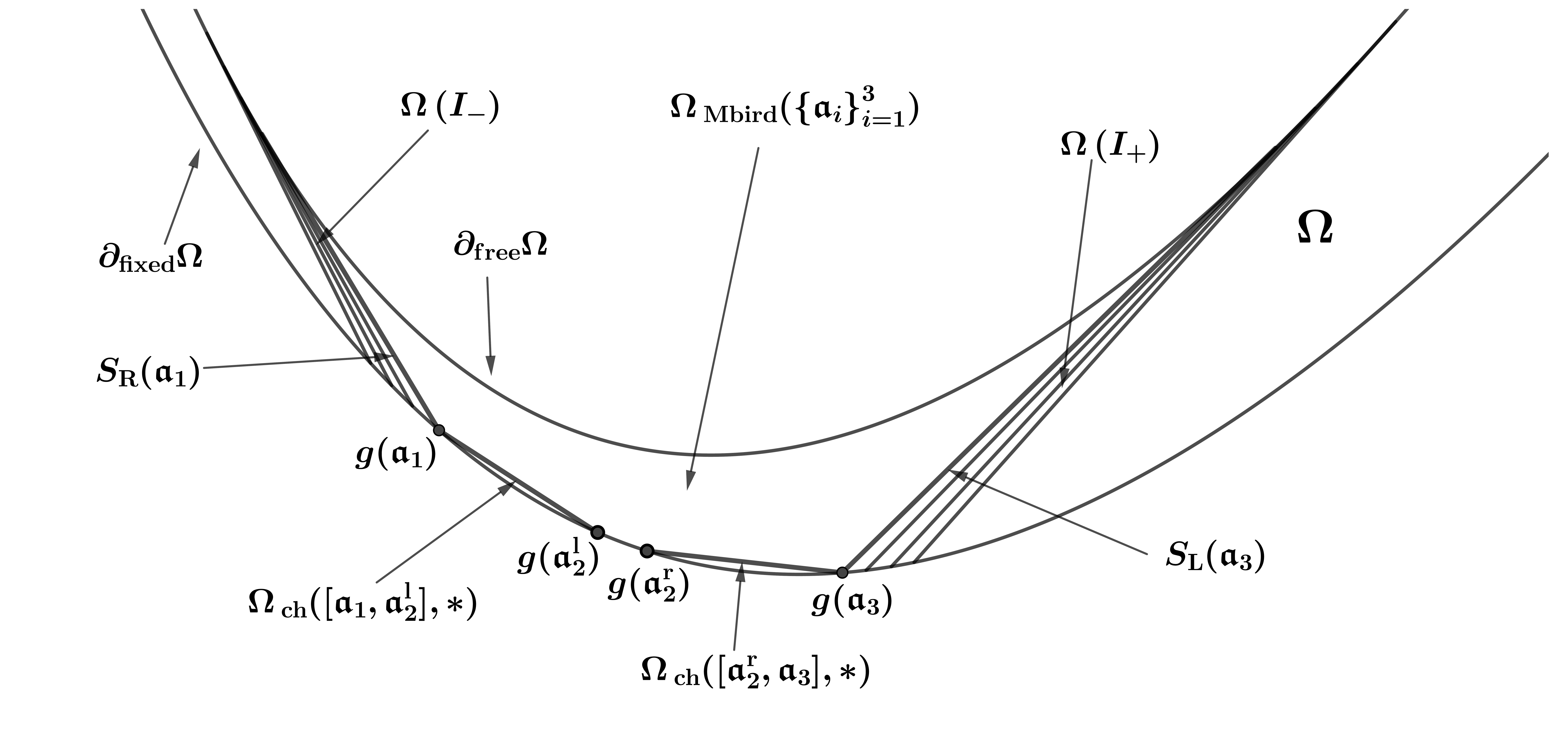}
\includegraphics[width = 0.4\linewidth]{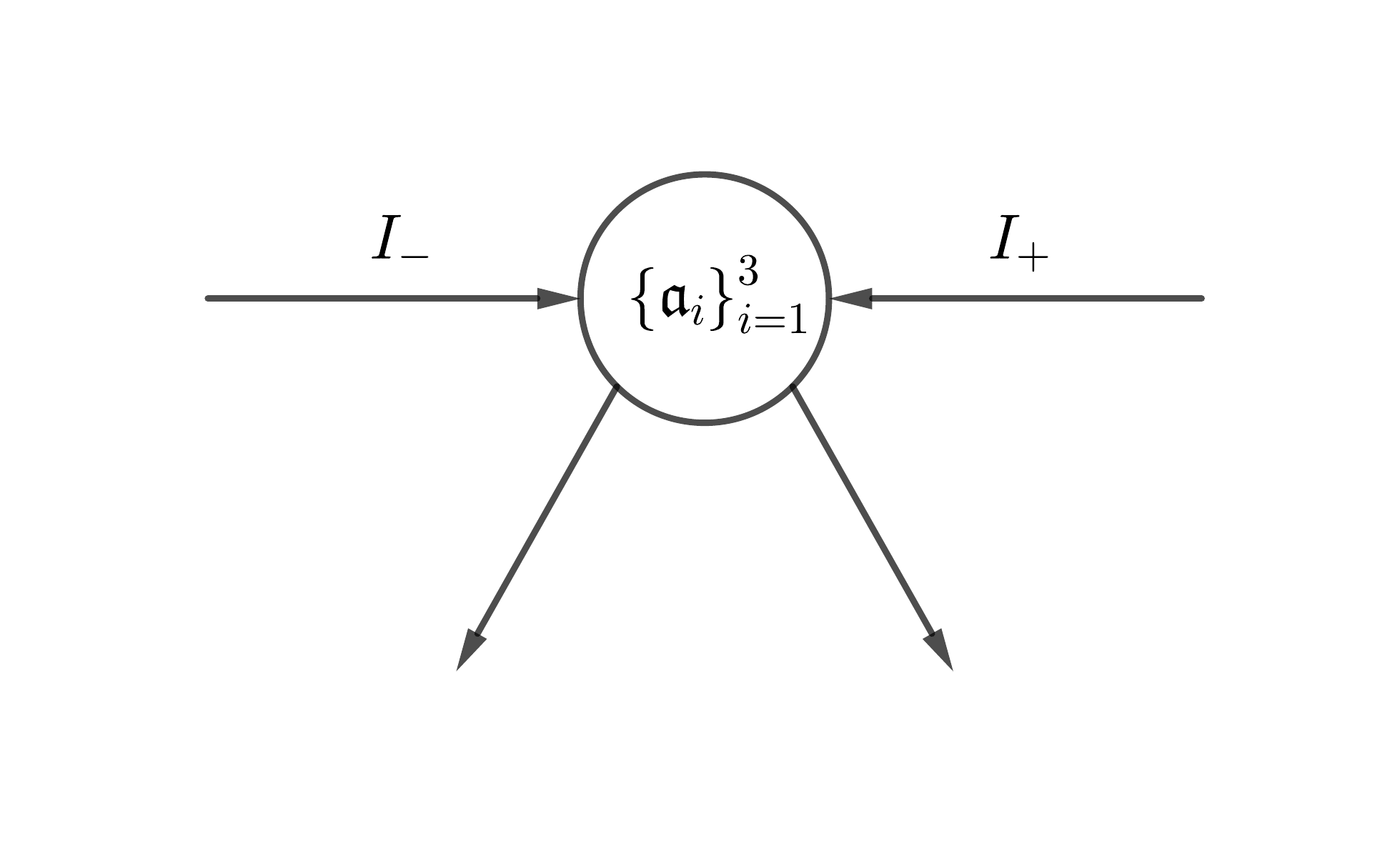}
\caption{A multibirdie for $k=3$ with adjacent domains and their graphical representation.}
\label{fig:multibirdie}
\label{fig:MtbGr}
\end{center}
\end{figure}
Graphical representation for a multifigure~$\mathfrak{L}$ built over~$\{\mathfrak{a}_i\}_{i=1}^k$ 
is drawn by the following rule. The domain~$\mathfrak{L}$ corresponds to a single vertex. 
It has~$k-1$ outgoing edges representing the chordal
domains~$\Ch([\mathfrak{a}_i^{\mathrm r},\mathfrak{a}_{i+1}^{\mathrm l}],*)$,~$i = 1,2,\ldots,k-1$. 
There are two more edges corresponding to two tangent domains surrounding~$\mathfrak{L}$. 
They are both outgoing if~$\mathfrak{L}$ is a multicup and both incoming in the case
where~$\mathfrak{L}$ is a multibirdie. If~$\mathfrak{L}$ is a multitrolleybus, then it 
has one incoming and one outgoing edge. We provide examples of graphs for the multifigures drawn on
Figures~\ref{fig:multitroll},~\ref{fig:multicup4}, and~\ref{fig:multibirdie}.

We will also consider unbounded domains of linearity: a multicup $\MTC(\{\mathfrak{a}_i\}_{i=1}^k)$ with at least one of the intervals $\mathfrak{a}_1$ or $\mathfrak{a}_k$ being a ray, a right multitrolleybus $\MTTR(\{\mathfrak{a}_i\}_{i=1}^k)$, where $\mathfrak{a}_k$ is a ray that lasts to $+\infty$, or a left multitrolleybus $\MTTL(\{\mathfrak{a}_i\}_{i=1}^k)$, where $\mathfrak{a}_1$ is a ray that lasts to $-\infty$. Such domains do not have one of border tangents. In such a case, the vertex representing this domain does not have the corresponding edge. 

Consider the case of a linearity domain $\mathfrak{L}$ that is separated from the free boundary. 
The boundary of $\mathfrak{L}$ consists of the arcs~$g(\mathfrak{a}_i)$,~$i = 1,2,\ldots,k$, 
the chords~$[g(\mathfrak{a}_{i}^{\textup r}),g(\mathfrak{a}_{i+1}^{\textup l})]$,~$i = 1,2,\ldots,k-1$, 
and the chord~$[g(\mathfrak{a}_{1}^l),g(\mathfrak{a}_{k}^{r})]$. Such a construction is called 
a \emph{closed multicup}\index{multicup! closed multicup} and is denoted by $\ClMTC(\{\mathfrak{a}_i\}_{i=1}^k)$\index[symbol]{$\ClMTC$}. 
It is represented graphically in the following way. It has one incoming edge 
representing~$\Ch(*,[\mathfrak{a}_1^{\mathrm l},\mathfrak{a}_k^{\mathrm r}])$ and several outgoing edges 
corresponding to the chordal 
domains~$\Ch([\mathfrak{a}_i^{\mathrm r},\mathfrak{a}_{i+1}^{\mathrm l}],*)$,~$i=1,2,\ldots,k-1$. 
For example, it may look like the one on Figure~\ref{fig:ClMcupGr}.
\begin{figure}[h]
\begin{center}
\includegraphics[width = 0.59\linewidth]{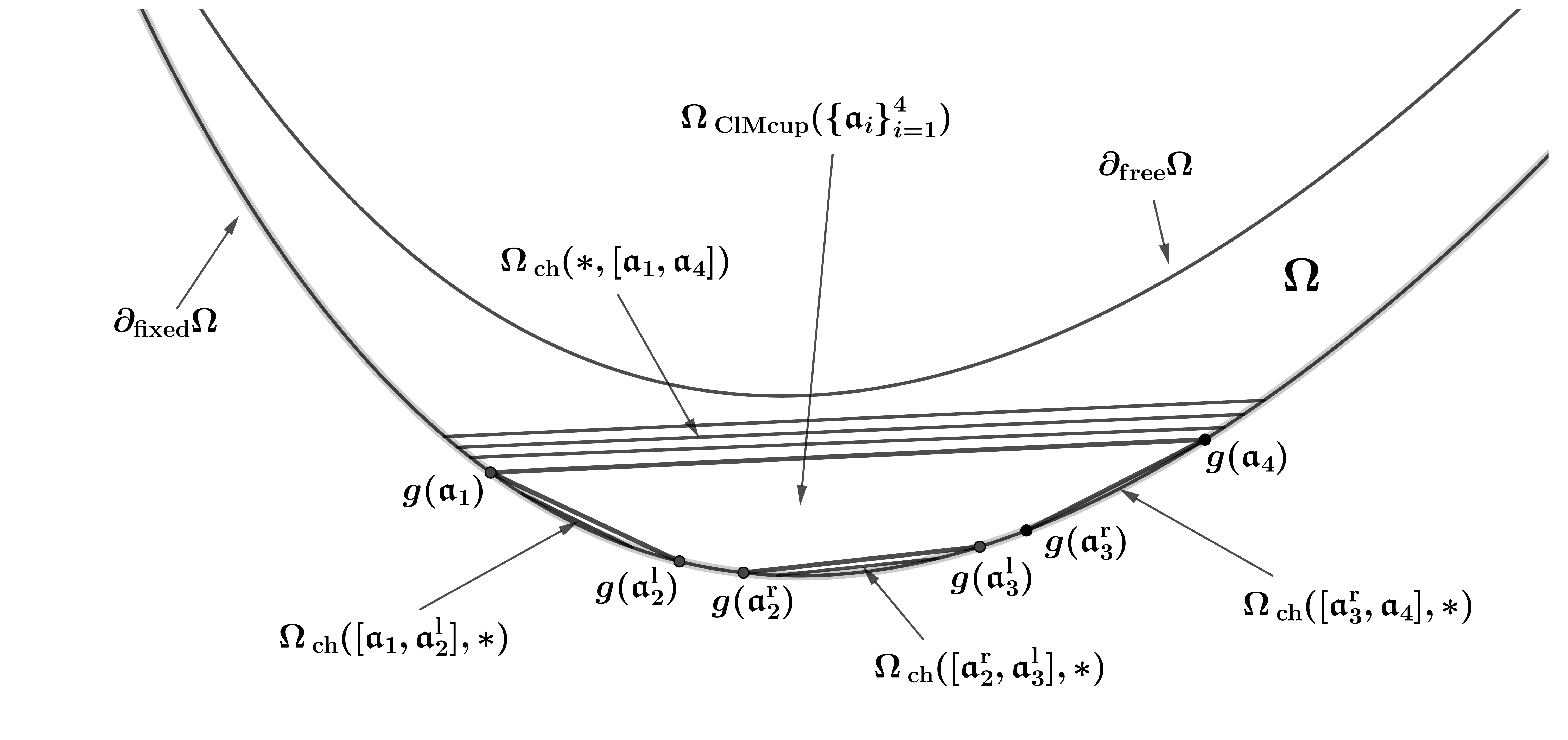}
\includegraphics[width = 0.4\linewidth]{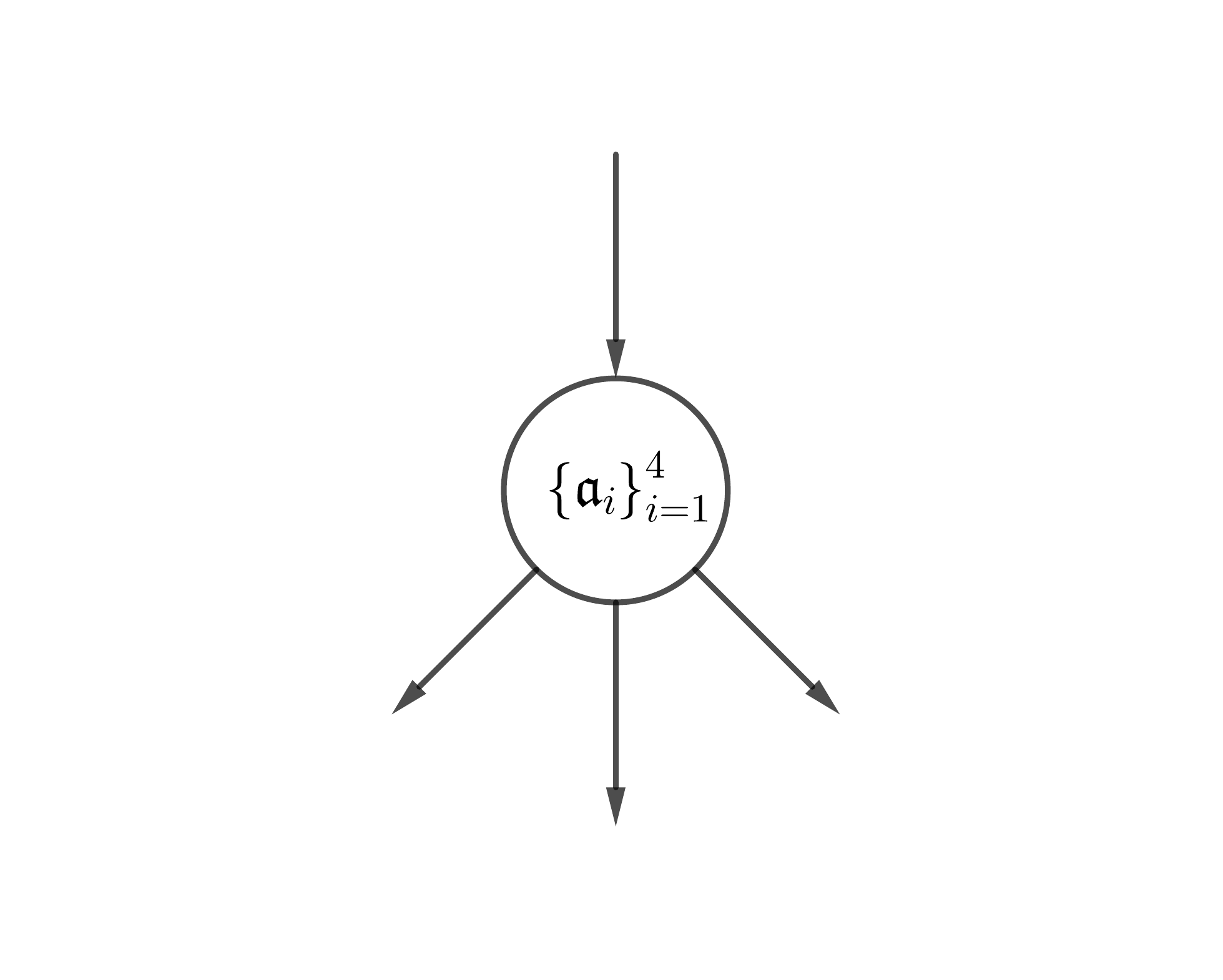}
\caption{
An example of the graph for a closed multicup with adjacent chordal domains.}
\label{fig:ClMcupGr}
\end{center}
\end{figure}

The following proposition gives sufficient conditions for concatenation of a linearity domain with 
the surrounding fences (tangent domains and chordal domains). Formally, it is more general 
than~Proposition~\ref{St131001}, but the proof is similar.

\begin{St}
\label{St211101}
Let $\mathfrak{L}$ be a domain of linearity surrounded by several fences. Suppose that the function $B$ is 
a standard candidate on $\mathfrak{L}$ and on each surrounding fence. 
Let $a =\inf\{t \in \mathbb{R} \colon g(t)\in \mathfrak{L}\}$ and 
$b=\sup\{t \in \mathbb{R} \colon g(t)\in \mathfrak{L}\}$. If the force function is continuous at $a$ and $b$, 
then $B$ is a $C^1$-smooth Bellman candidate on its domain.   
\end{St}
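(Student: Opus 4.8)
The plan is to reduce Proposition~\ref{St211101} to Proposition~\ref{St260901} (or rather to its consequence Corollary~\ref{cor250901}) applied repeatedly, by decomposing the linearity domain~$\mathfrak{L}$ together with its surrounding fences into overlapping pieces each of which is a gluing of exactly two fences. First I would fix the notation: enumerate the arcs $g(\mathfrak{a}_1),\dots,g(\mathfrak{a}_k)$ of $\mathfrak{L}\cap\FixedBoundary\Omega$ in increasing order, so that $a=\mathfrak{a}_1^{\mathrm l}$ and $b=\mathfrak{a}_k^{\mathrm r}$; the surrounding fences are the chordal domains $\Ch([\mathfrak{a}_i^{\mathrm r},\mathfrak{a}_{i+1}^{\mathrm l}],*)$ for $i=1,\dots,k-1$, possibly a chordal domain below $[g(a),g(b)]$ (the closed multicup case) or instead the two border tangent domains $\Omega(I_-)$ along $S(a)$ and $\Omega(I_+)$ along $S(b)$, and additional tangent/chordal fences attached along the arcs $g(\mathfrak{a}_i)$ if those arcs are nondegenerate (a standard candidate on $\mathfrak{L}$ restricted to such an arc matches a fence there). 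The hypothesis says $B$ is a standard candidate on $\mathfrak{L}$ and on every surrounding fence, and that the force function is continuous at $a$ and at $b$.

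Next I would record the key algebraic fact: since $B$ is a standard candidate on $\mathfrak{L}$, its gradient $\beta=(\beta_1,\beta_2)$ is \emph{constant} on all of $\mathfrak{L}$ (the set $\{\gamma(t):g(t)\in\mathfrak{L}\cap\FixedBoundary\Omega\}$ lies in one plane, cf.\ \eqref{CandidateInMultifigure} and the Remark following it), and in particular $\beta$ on $\mathfrak{L}$ equals the common value $\beta_0$ predicted by \eqref{nabla11} and \eqref{diffbeta2} at \emph{each} boundary point $g(t)$, $g(t)\in\mathfrak{L}\cap\FixedBoundary\Omega$, because $\kappa_2,\kappa_3$ do not depend on the figure. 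Now for each interface between $\mathfrak{L}$ and an adjacent fence one must check the $C^1$-matching. For the chordal fences $\Ch([\mathfrak{a}_i^{\mathrm r},\mathfrak{a}_{i+1}^{\mathrm l}],*)$ this is exactly the content of Proposition~\ref{St131001}: the cup equation guarantees that the affine extension of the chordal candidate to $\mathfrak{L}$ coincides with $B_{\mathfrak{L}}$, so the gradients agree on the shared chord. For the interfaces along the arcs $g(\mathfrak{a}_i)$, the gradient of $B$ on the attached fence at the point $g(t)$ is dictated by \eqref{nabla11}–\eqref{diffbeta2}, hence equals $\beta_0$; so again the two gradients agree. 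The only interfaces where matching is not automatic are the two outermost ones — along $S(a)$ with $\Omega(I_-)$ and along $S(b)$ with $\Omega(I_+)$ (or the bottom chord in the closed multicup case) — and this is precisely where we invoke continuity of the force function at $a$ and $b$: by Corollary~\ref{cor250901}, the equalities $\FF_{\Omega(I_-)}(a)=F_{\mathfrak{L}}(a)$ and $\FF_{\Omega(I_+)}(b)=F_{\mathfrak{L}}(b)$ (continuity of $F$ at $a$ and $b$) force $\beta_{\Omega(I_-)}(a)=\beta_0=\beta_{\Omega(I_+)}(b)$, giving the $C^1$-match there as well.

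Having established that $\nabla B$ agrees across every interface and that $B$ is continuous (by hypothesis), I would conclude $C^1$-smoothness of $B$ on the whole union. Then local concavity follows from Proposition~\ref{ConcatenationOfConcaveFunctions}: every domain in sight ($\mathfrak{L}$, each fence) is induced convex, and a $C^1$-concatenation of locally concave functions over induced convex domains is locally concave; a finite induction over the $k-1$ chordal fences, the two (or one) border tangent/chordal fences, and the arc-fences handles the general case. Finally, the resulting foliation of the union — the foliation of $\mathfrak{L}$ being the trivial one (a single linearity domain) glued to the foliations of the fences — shows $B$ satisfies Definition~\ref{candidate}, so $B$ is a Bellman candidate on its domain. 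I expect the main obstacle to be purely bookkeeping: carefully listing \emph{all} fences that can surround a general multifigure (chordal fences between consecutive arcs, the two border fences, and the fences sitting on top of the nondegenerate arcs $\mathfrak a_i$), and verifying that for every one of them other than the two outermost the gradient match is \emph{automatic} from \eqref{nabla11}–\eqref{diffbeta2}, so that continuity of the force is genuinely needed only at the two extreme points $a$ and $b$. The geometric/analytic content is entirely contained in Propositions~\ref{St260901} and~\ref{St131001} and Corollary~\ref{cor250901}; the present proposition is their combinatorial packaging.
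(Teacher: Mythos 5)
Your proposal is correct and takes essentially the same route as the paper, which does not write the argument out but merely remarks that the proof is ``similar'' to that of Proposition~\ref{St131001}: the gradient matches automatically across every inner chord interface because the cup equation makes the affine extension of the chordal candidate coincide with $B_{\mathfrak L}$, force continuity at $a$ and $b$ together with Corollary~\ref{cor250901} handles the two outermost interfaces, and local concavity then follows from Proposition~\ref{ConcatenationOfConcaveFunctions}. The only (harmless) slip is your mention of fences ``attached along the arcs $g(\mathfrak a_i)$'' --- those arcs lie on the fixed boundary $\dfi\Omega_\eps$, so no such fences exist, and since you treat those interfaces as automatic anyway, nothing in the argument breaks.
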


\begin{Rem}
In the previous proposition, if $\mathfrak{L}$ is a closed multicup, then it is surrounded by the chordal 
domains and the condition of continuity of the force function always takes place. 
\end{Rem}

Now is the time to define tails of a linearity domain $\mathfrak{L}$ that contains at least two points on 
the lower boundary. As in~Proposition~\ref{St211101} we define 
$a=\inf\{t\in\mathbb{R}\colon g(t)\in\mathfrak{L}\}$ and $b=\sup\{t\in\mathbb{R}\colon g(t)\in\mathfrak{L}\}$. 

\begin{Def}
\label{Rem211101}
We define the tails and the forces of a linearity domain~$\mathfrak{L}$ as the tails and the forces for 
the chord $[g(a),g(b)]$ \textup(which possibly does not lie in $\Omega$\textup).
Namely, see Definitions~\ref{Def031002},~\ref{Def031003} of the tails and 
formulas~\eqref{eq101101},~\eqref{eq101102} for the corresponding forces.
\end{Def}

\section{Combinatorial properties of foliations}
\label{s35} 

The material of this section essentially repeats Section~3.5 in~\cite{ISVZ2018}. The reason for this repetition 
is a slight change of the notation we are forced to do since some natural parameters used in~\cite{ISVZ2018} do not 
exist in our general setting. 

\subsection{Gluing composite figures}
\label{s344}
In this subsection we present several formulas that allow us to consider a part of the foliation as 
a union of elementary domains in different ways. An example has already been given in~\eqref{FirstFormula} 
(see also the description of the symbol~$\biguplus$ on page~\pageref{biguplusdef}).

We start with the formula which describes gluing of an angle $\Ang(a)$ with a long chord $[g(a),g(b)]$ 
(see Figure~\ref{CupPlusAngleRGraph}). Their union forms a trolleybus $\Troll(a,b)$:
\eq{CupPlusAngleR}{
\Ang(a) \biguplus \Lt(a,a) \biguplus \ [g(a),g(b)] = 
\RTroll(a,b).
}
\begin{figure}[h]
\includegraphics[width = 0.59\linewidth]{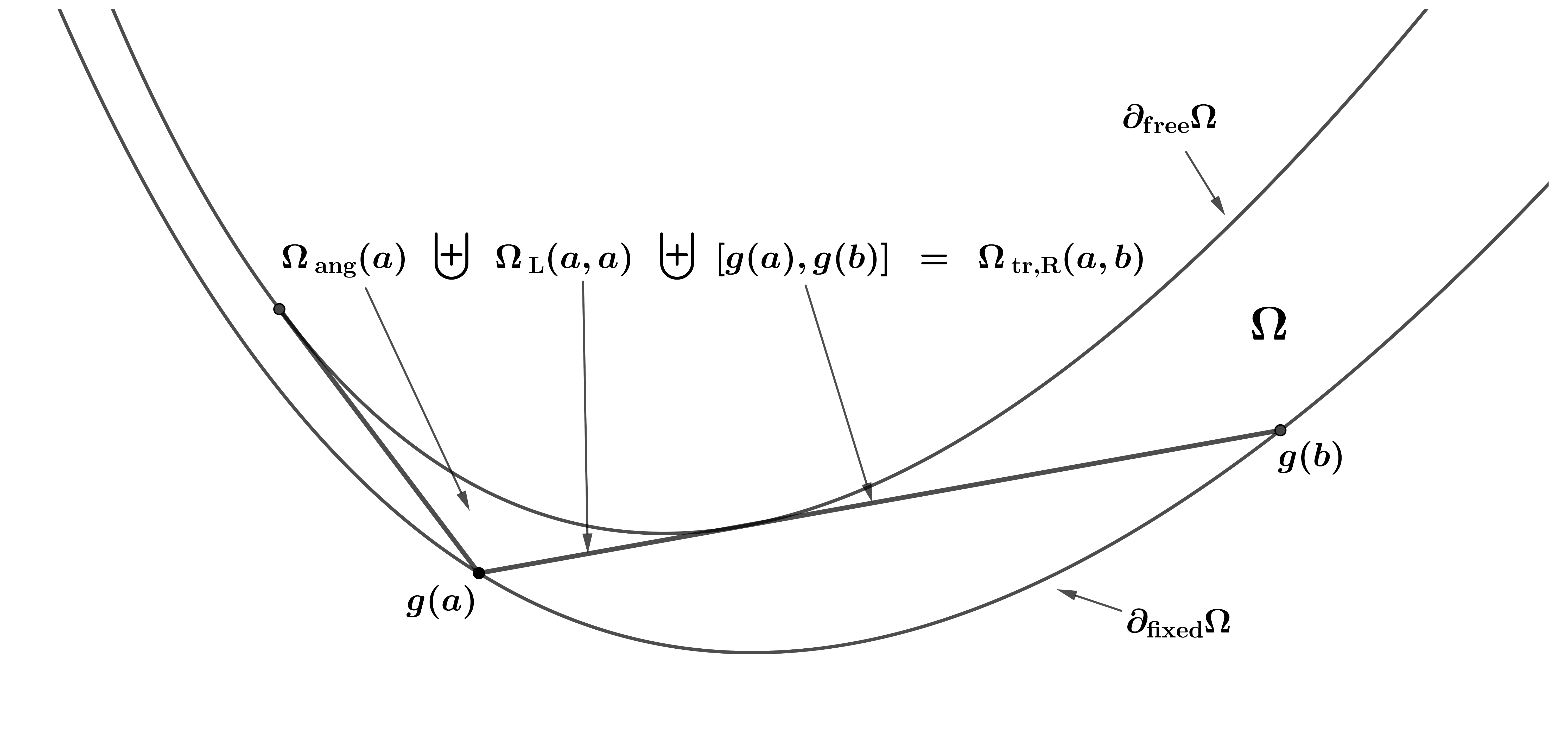}
\raisebox{35pt}{\includegraphics[width = 0.4\linewidth]{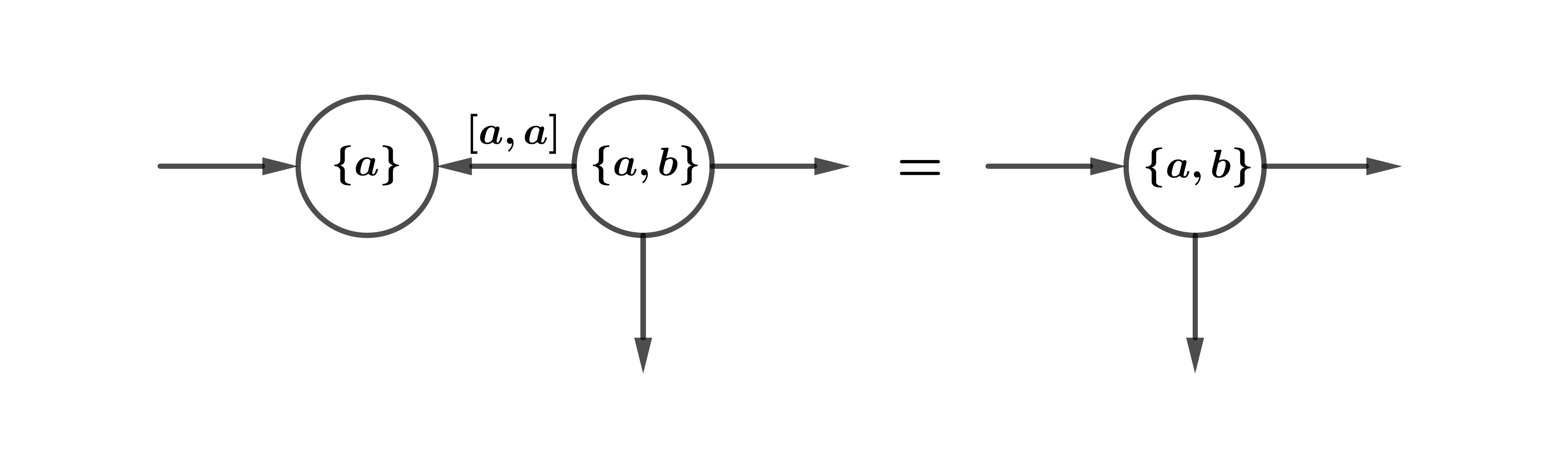}}
\caption{A graphical representation of formula~\eqref{CupPlusAngleR}.}
\label{CupPlusAngleRGraph}
\end{figure}

Similarly,
\eq{CupPlusAngleL}{
[g(a),g(b)]\ \biguplus \Rt(b,b) \biguplus \Ang(b)   = \LTroll(a,b).
}
Both these formulas can be informally named as~``{\bf angle + long chord = trolleybus}''.

We have already considered an example of a more complicated formula~\eqref{FirstFormula}:
\eq{RTrolleybusPlusAngle}{
\RTroll(a,b)  \biguplus \Rt(b,b) \biguplus \Ang(b) = \Bird(a,b);
}
\eq{LTrolleybusPlusAngle}{
\Ang(a) \biguplus \Lt(a,a)  \biguplus \LTroll(a,b)= \Bird(a,b),
}
which can be informally named as ``{\bf birdie = angle + trolleybus}''.

We provide the same-fashioned formulas for other domains. We leave their verification to the reader.

\medskip
\paragraph{{\bf Angle + multicup = multitrolleybus}}
\eq{AnglePlusMulticupLeft}{
\Ang(\mathfrak{a}_1^{\mathrm l}) \biguplus \Lt(\mathfrak{a}_1^{\mathrm l},\mathfrak{a}_1^{\mathrm l}) 
\biguplus \MTC(\{\mathfrak{a}_i\}_{i=1}^k) = \MTTR(\{\mathfrak{a}_i\}_{i=1}^k);
}

\begin{figure}[h]
\includegraphics[width=0.49\linewidth]{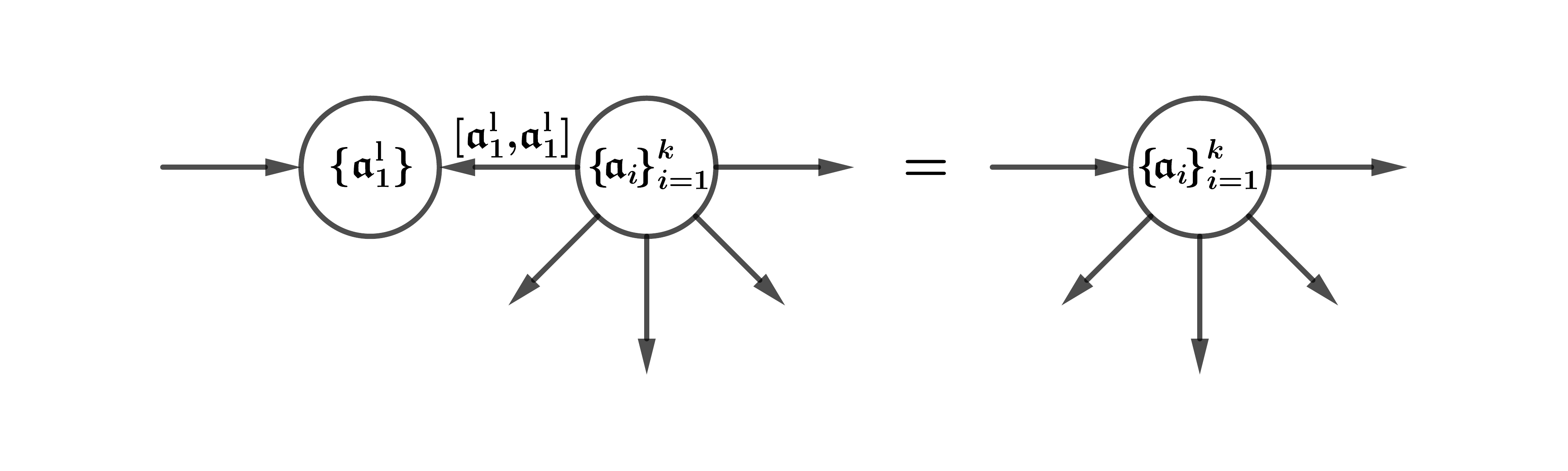}
\includegraphics[width=0.49\linewidth]{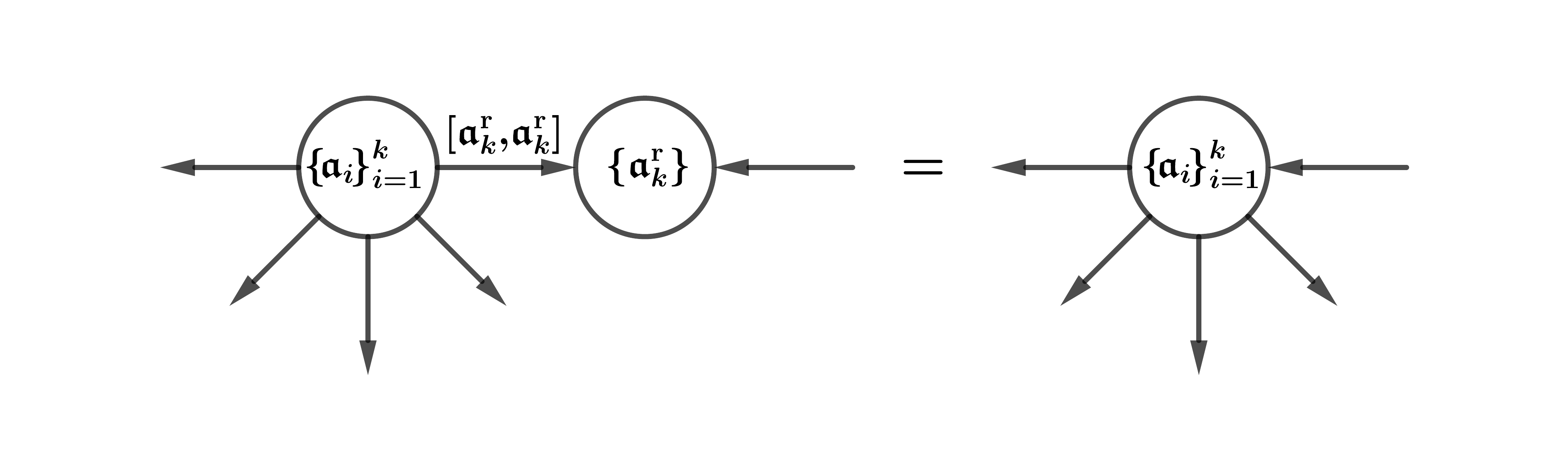}
\caption{The graphs for formulas~\eqref{AnglePlusMulticupLeft} and~\eqref{AnglePlusMulticupRight}.}
\label{fig_AnglePlusMcup}
\end{figure}

\eq{AnglePlusMulticupRight}{
\MTC(\{\mathfrak{a}_i\}_{i=1}^k) \biguplus  \Rt(\mathfrak{a}_k^{\mathrm r},\mathfrak{a}_k^{\mathrm r}) 
\biguplus \Ang(\mathfrak{a}_k^{\mathrm r})  = \MTTL(\{\mathfrak{a}_i\}_{i=1}^k).
}

\paragraph{{\bf Long chord + multibirdie = multitrolleybus}}
\eq{ChordalDomainPlusMultibirdieL}{
[g(a_0), g(\mathfrak{a}_1^{\mathrm l})] \biguplus \Rt(\mathfrak{a}_1^{\mathrm l},\mathfrak{a}_1^{\mathrm l})
\biguplus \MTB(\{\mathfrak{a}_i\}_{i=1}^k) = \MTTL(\{a_0\} \cup\{\mathfrak{a}_i\}_{i=1}^k),
}
where $[g(a_0), g(\mathfrak{a}_1^{\mathrm l})]$ is a long chord;

\eq{ChordalDomainPlusMultibirdieR}{
\MTB(\{\mathfrak{a}_i\}_{i=1}^k) \biguplus \Lt(\mathfrak{a}_k^{\mathrm r},\mathfrak{a}_k^{\mathrm r}) 
\biguplus \ [g(\mathfrak{a}_k^{\mathrm r}), g(a_{k+1})] =\MTTR(\{\mathfrak{a}_i\}_{i=1}^k \cup \{a_{k+1}\}),
}
where $[g(\mathfrak{a}_k^{\mathrm r}), g(a_{k+1})]$ is a long chord.

\begin{figure}[h]
\includegraphics[width = 0.49\linewidth]{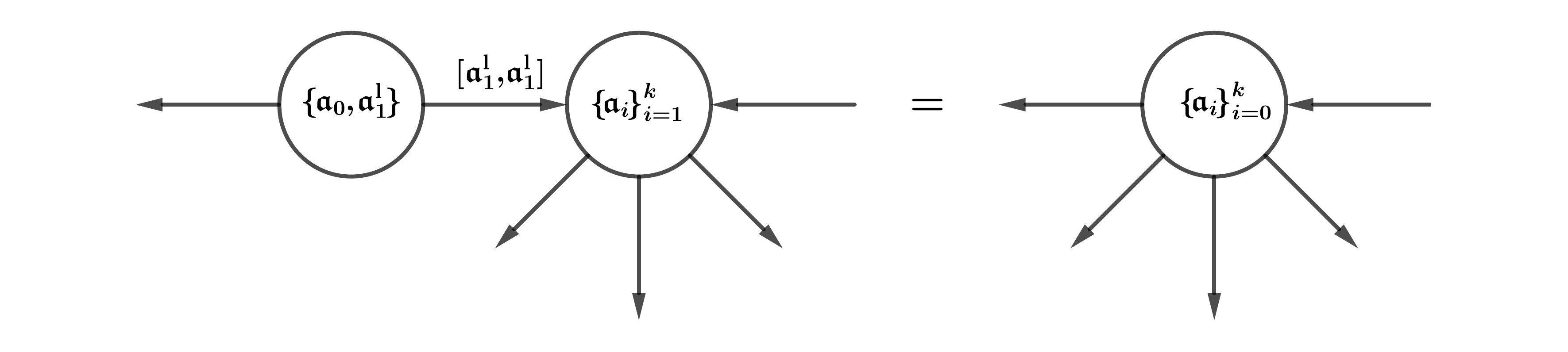}
\includegraphics[width = 0.49\linewidth]{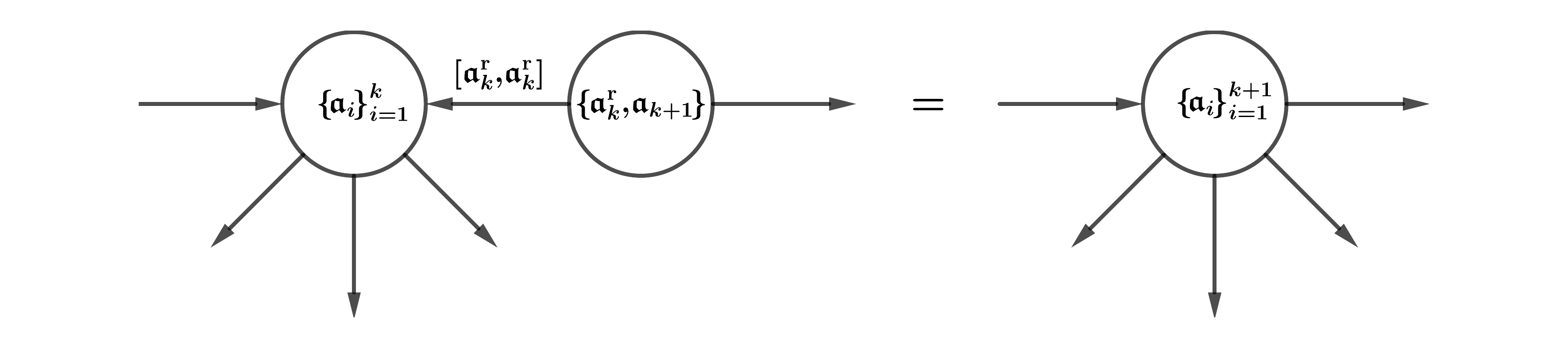}
\caption{The graphs for formulas~\eqref{ChordalDomainPlusMultibirdieL} and~\eqref{ChordalDomainPlusMultibirdieR}.}
\label{ChordalDomainPlusMultibirdieRGraph}
\end{figure}

\paragraph{{\bf Angle + multitrolleybus = multibirdie}}
\eq{AnglePlusMultiTrollebusR}{
\MTTR(\{\mathfrak{a}_i\}_{i=1}^k) \biguplus \Rt(\mathfrak{a}_k^{\mathrm r},\mathfrak{a}_k^{\mathrm r}) 
\biguplus \Ang(\mathfrak{a}_k^{\mathrm r}) = \MTB(\{\mathfrak{a}_i\}_{i=1}^k);
}
\eq{AnglePlusMultiTrollebusL}{
\Ang(\mathfrak{a}_1^{\mathrm l}) \biguplus \Lt(\mathfrak{a}_1^{\mathrm l},\mathfrak{a}_1^{\mathrm l}) 
\biguplus \MTTL(\{\mathfrak{a}_i\}_{i=1}^k)= \MTB(\{\mathfrak{a}_i\}_{i=1}^k).
}

\begin{figure}[h]
\includegraphics[width=0.49\linewidth]{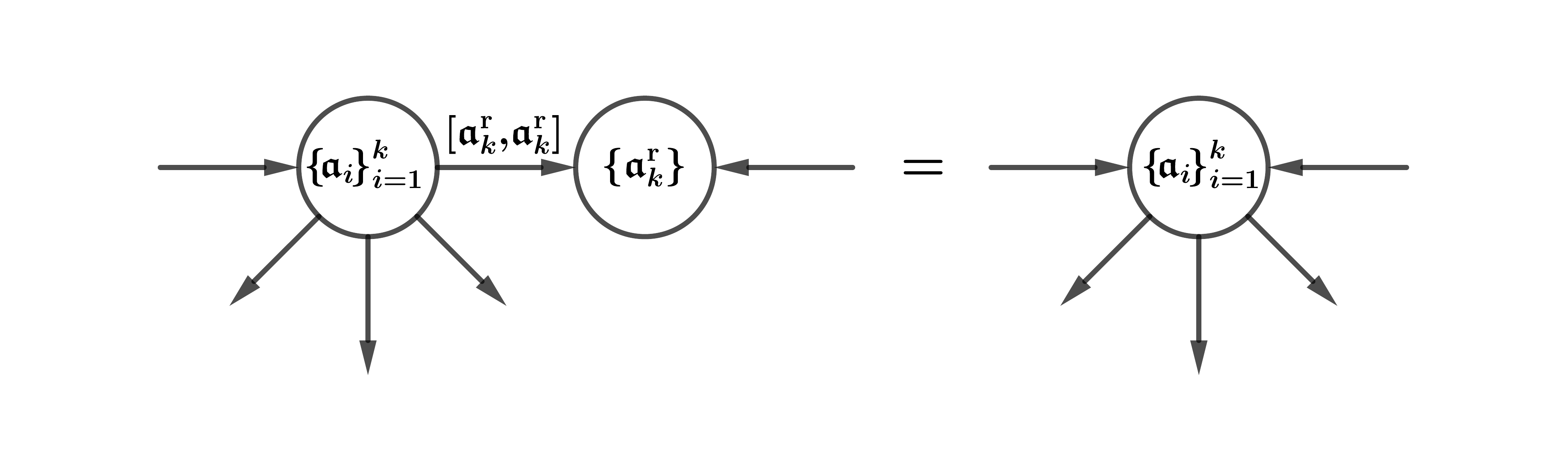}
\includegraphics[width=0.49\linewidth]{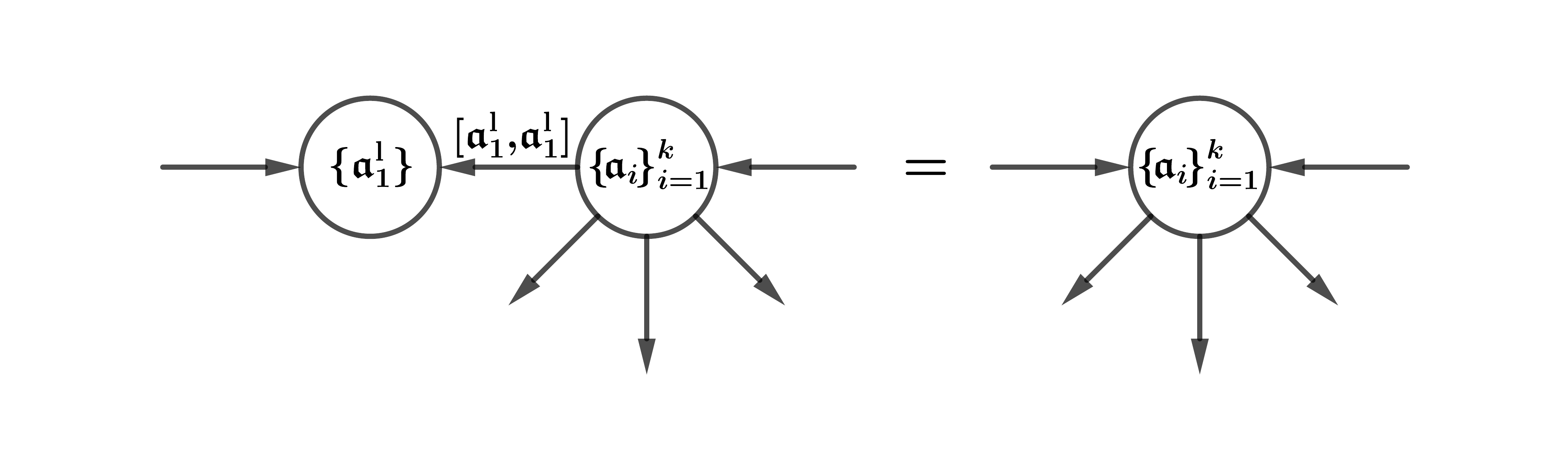}
\caption{The graphs for formulas~\eqref{AnglePlusMultiTrollebusR} and~\eqref{AnglePlusMultiTrollebusL}.}
\end{figure}

\paragraph{{\bf Long chord + multitrolleybus = multicup}}
\begin{equation}\label{ChordalDomainPlusMultitrolleybusR}
[g(a_0), g(\mathfrak{a}_1^{\mathrm l})]  \biguplus \Rt(\mathfrak{a}_1^{\mathrm l},\mathfrak{a}_1^{\mathrm l})
\biguplus \MTTR(\{\mathfrak{a}_i\}_{i=1}^k) = \MTC(\{a_0\} \cup\{\mathfrak{a}_i\}_{i=1}^k),
\end{equation}
where $[g(a_0), g(\mathfrak{a}_1^{\mathrm l})]$ is a long chord;

\begin{equation}\label{ChordalDomainPlusMultitrolleybusL}
\MTTL(\{\mathfrak{a}_i\}_{i=1}^k) \biguplus \Lt(\mathfrak{a}_k^{\mathrm r},\mathfrak{a}_k^{\mathrm r}) 
\biguplus\  [g(\mathfrak{a}_k^{\mathrm r}), g(a_{k+1})] = \MTC(\{\mathfrak{a}_i\}_{i=1}^k \cup \{a_{k+1}\}),
\end{equation}
where $[g(\mathfrak{a}_k^{\mathrm r}), g(a_{k+1})]$ is a long chord.
\begin{figure}[h]
\includegraphics[width=0.49\linewidth]{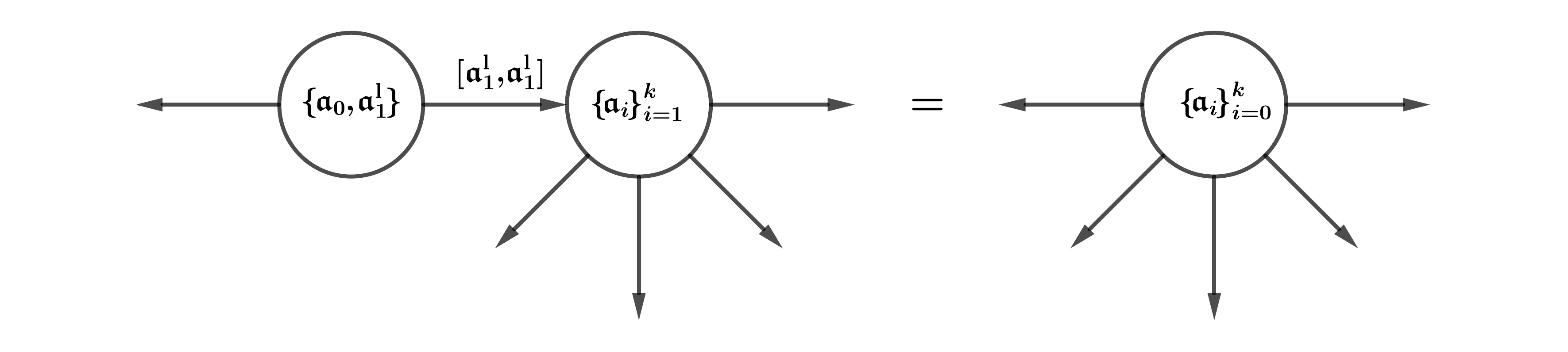}
\includegraphics[width=0.49\linewidth]{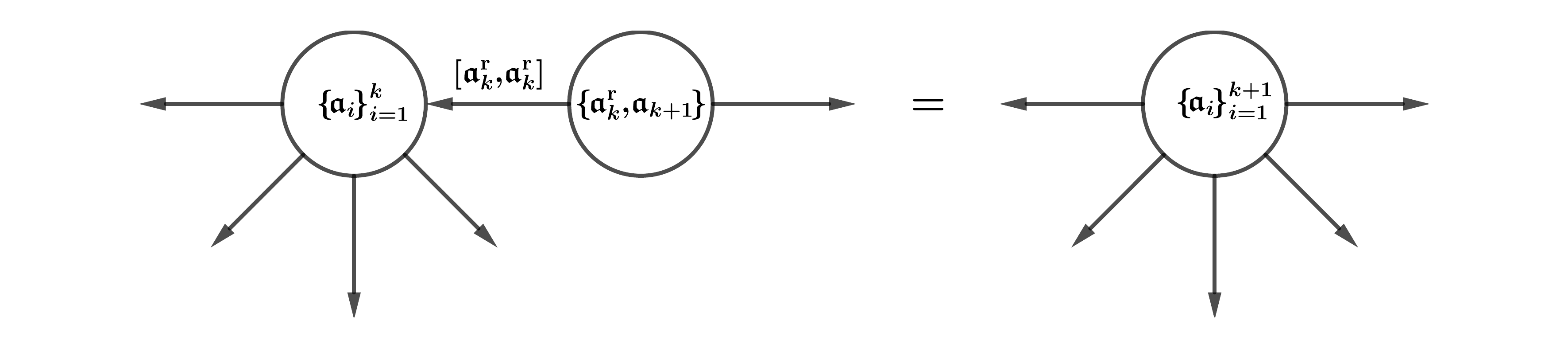}
\caption{The graphs for formulas~\eqref{ChordalDomainPlusMultitrolleybusR} and~\eqref{ChordalDomainPlusMultitrolleybusL}.}
\end{figure}



\paragraph{{\bf Multitrolleybus = trolleybus parade}}
\begin{equation}\label{RMultitrolleybusDesintegration}
\MTTR(\{\mathfrak{a}_i\}_{i=1}^k) \!=\! \Big(\biguplus_{i=1}^{k}\MTTR(\{\mathfrak{a}_i\})\Big) 
\biguplus \Big(\biguplus_{i=1}^{k-1}\!\big( \Rt(\mathfrak{a}_{i}^{\mathrm r},\mathfrak{a}_{i}^{\mathrm r}) 
\biguplus \RTroll(\mathfrak{a}_i^{\mathrm r},\mathfrak{a}_{i+1}^{\mathrm l}) 
\biguplus \Rt(\mathfrak{a}_{i+1}^{\mathrm l},\mathfrak{a}_{i+1}^{\mathrm l})\big)\Big);
\end{equation}
\begin{equation}\label{LMultitrolleybusDesintegration}
\MTTL(\{\mathfrak{a}_i\}_{i=1}^k) \!=\! \Big(\biguplus_{i=1}^{k}\MTTL(\{\mathfrak{a}_i\})\Big) 
\biguplus \Big(\biguplus_{i=1}^{k-1}\!\big( \Lt(\mathfrak{a}_{i}^{\mathrm r},\mathfrak{a}_{i}^{\mathrm r}) 
\biguplus \LTroll(\mathfrak{a}_i^{\mathrm r},\mathfrak{a}_{i+1}^{\mathrm l}) 
\biguplus \Lt(\mathfrak{a}_{i+1}^{\mathrm l},\mathfrak{a}_{i+1}^{\mathrm l})\big)\Big).
\end{equation}
In~\eqref{RMultitrolleybusDesintegration}, if some~$\mathfrak{a}_i$ is a single point, 
then we omit degenerate tangent domains~$\Rt(\mathfrak{a}_i^{\mathrm l},\mathfrak{a}_i^{\mathrm l})$ 
and $\Rt(\mathfrak{a}_i^{\mathrm r},\mathfrak{a}_i^{\mathrm r})$ and replace the multitrolleybus 
$\MTTR(\{\mathfrak{a}_i\})$ by the degenerate tangent domain~$\Rt(\mathfrak{a}_i,\mathfrak{a}_i)$. 
We perform similar replacements of the left tangent domains and multitrolleybuses 
in~\eqref{LMultitrolleybusDesintegration}.

\medskip
\paragraph{{\bf Multibirdie = right multitrolleybus + angle + left multitrolleybus}}
\begin{multline}\label{MultibirdieDesintegration}
\MTB(\{\mathfrak{a}_i\}_{i=1}^k) = 
\\
=\MTTR(\{\mathfrak{a}_i\}_{i=1}^{j-1}\cup \{\mathfrak{a}_j^{\mathrm{l}}\}) 
\biguplus\Rt(\mathfrak{a}_j^{\mathrm{l}},\mathfrak{a}_j^{\mathrm{l}})\biguplus
\MTB(\{\mathfrak{a}_j\}) \biguplus
\Lt(\mathfrak{a}_j^{\mathrm{r}},\mathfrak{a}_j^{\mathrm{r}})\biguplus
\MTTL(\{\mathfrak{a}_j^{\mathrm{r}}\}\cup\{\mathfrak{a}_i\}_{i=j+1}^{k}).
\end{multline}
Here~$j$ is an arbitrary number,~$j = 1,2, \ldots,k$. Both multitrolleybuses can be disintegrated according 
to~\eqref{RMultitrolleybusDesintegration} and~\eqref{LMultitrolleybusDesintegration}. 
If~$\mathfrak{a}_j$ is a single point, then one should change~$\MTB(\{\mathfrak{a}_j\})$ for~$\Ang(\mathfrak{a}_j)$. 
If~$\mathfrak{a}_j$ is a solid root, then $\MTB(\{\mathfrak{a}_j\})$ can be further disintegrated in two ways 
according to~\eqref{AnglePlusMultiTrollebusL} and~\eqref{AnglePlusMultiTrollebusR}. 
\begin{figure}[h!]
\includegraphics[width = 0.5\linewidth]{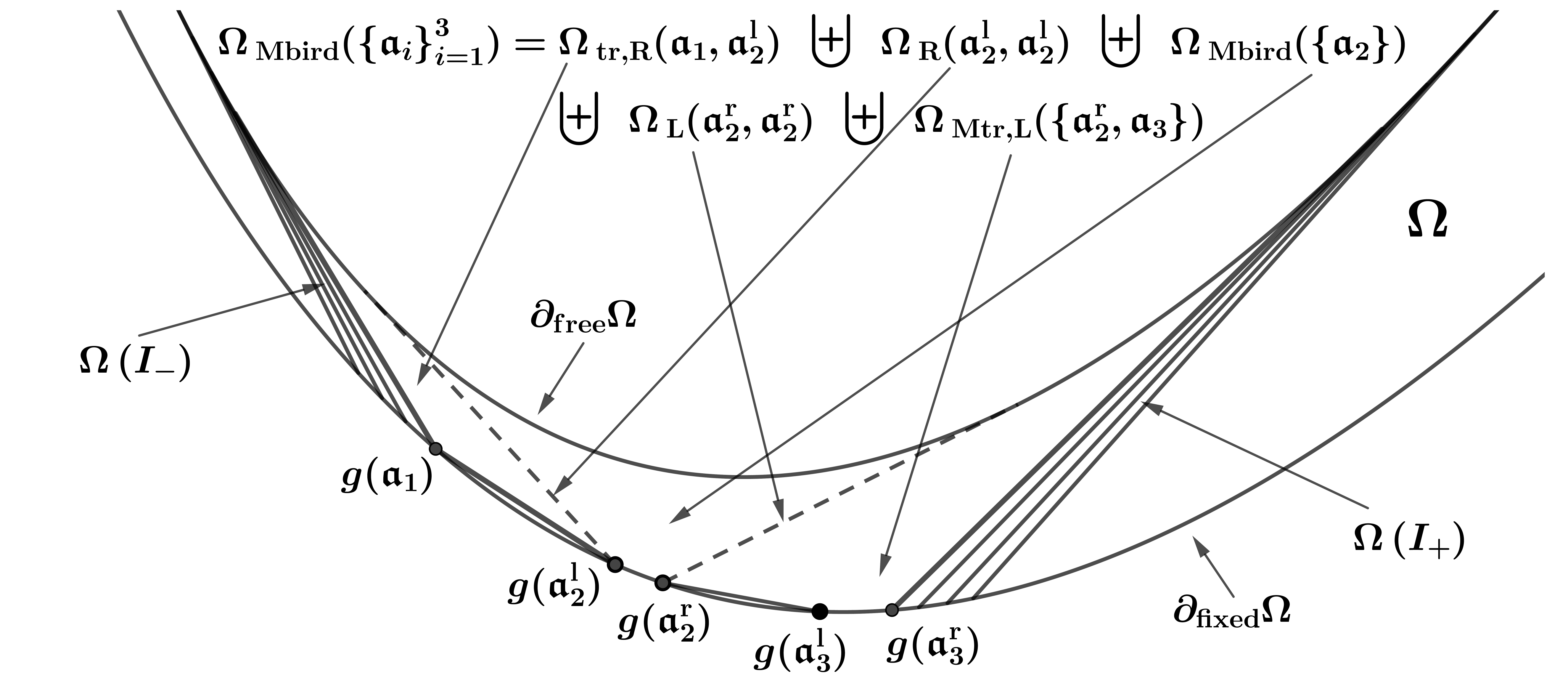}
\includegraphics[width = 0.5\linewidth]{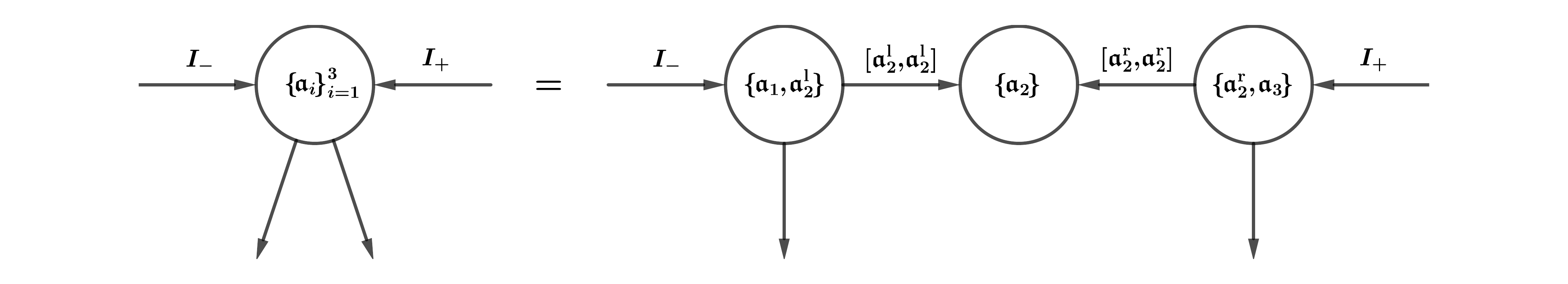}
\caption{An example for formula~\eqref{MultibirdieDesintegration} and its graphical representation.}
\label{MultibirdieDesintegrationGraph}
\end{figure}

\medskip
\paragraph{{\bf Closed multicup + trolleybus = multitrolleybus}}
\begin{equation}\label{ClosedMulticupTrolleybusR}
\ClMTC(\{\mathfrak{a_i}\}_{i=1}^k)\biguplus \Ch([\mathfrak{a}_1^{\mathrm l},\mathfrak{a}_k^{\mathrm r}],
[\mathfrak{a}_1^{\mathrm l},\mathfrak{a}_k^{\mathrm r}])  
\biguplus\RTroll(\mathfrak{a}_1^{\mathrm l},\mathfrak{a}_k^{\mathrm r}) = \MTTR(\{\mathfrak{a_i}\}_{i=1}^k);
\end{equation}
\begin{equation}\label{ClosedMulticupTrolleybusL}
\ClMTC(\{\mathfrak{a_i}\}_{i=1}^k)\biguplus \Ch([\mathfrak{a}_1^{\mathrm l},\mathfrak{a}_k^{\mathrm r}],
[\mathfrak{a}_1^{\mathrm l},\mathfrak{a}_k^{\mathrm r}])  
\biguplus\LTroll(\mathfrak{a}_1^{\mathrm l},\mathfrak{a}_k^{\mathrm r}) = \MTTL(\{\mathfrak{a_i}\}_{i=1}^k).
\end{equation}
\paragraph{{\bf Closed multicup + birdie = multibirdie}}
\begin{equation}\label{ClosedMulticupBirdie}
\ClMTC(\{\mathfrak{a_i}\}_{i=1}^k)\biguplus \Ch([\mathfrak{a}_1^{\mathrm l},\mathfrak{a}_k^{\mathrm r}],
[\mathfrak{a}_1^{\mathrm l},\mathfrak{a}_k^{\mathrm r}])  
\biguplus\Bird(\mathfrak{a}_1^{\mathrm l},\mathfrak{a}_k^{\mathrm r}) = \MTB(\{\mathfrak{a_i}\}_{i=1}^k).
\end{equation}

\subsection{General foliations}
\label{s345}

It is natural to draw a special graph~$\Gamma$\index{graph!}\index[symbol]{$\Gamma$} corresponding to a foliation to describe 
its combinatorial properties. The vertices correspond to the linearity domains. Two vertices are joined 
with an edge if there is a fence that is their common neighbor. Such a graph is drawn in the plane by 
the mapping~$\nabla B\colon \Omega_{\eps} \to \mathbb{R}^2$. However, we need to clarify some details.

We will use a small amount of graph  theory terminology. Since we study very special graphs, the use of the terminology 
will also be special. Our graphs are oriented trees (i.\,e.,  trees whose edges possess orientation). 
We call a vertex that does not have incoming edges a root, a vertex that does not have outgoing edges a leaf 
(a leaf may have several incoming edges). By a path we call an oriented path, i.\,e., we move from 
the beginning of the edge to its end while exploring the path. Other terminology is clear.

The vertices of the graph will be denoted by~$\{\mathfrak{L}_i\}_i$, the edges will be denoted 
by~$\{\mathfrak{E}_i\}_i$. Edges and vertices are of different types, moreover, they are also equipped 
with numerical parameters to be specified later. We begin the description with the edges.

Each edge~$\mathfrak{E}$ represents either a chordal domain~$\Ch([\ato,\bto],[\abot,\bbot])$ or one of 
tangent domains: either~$\Rt(\uul,\uur)$ or~$\Lt(\uul,\uur)$. The edge representing a chordal 
domain~$\Ch([\ato,\bto],[\abot,\bbot])$ is oriented from its upper neighbor to its lower neighbor. 
We consider the functions~$a$ and~$b$ associated with a chordal domain as its numeric parameters.
The edge of~$\Rt(\uul,\uur)$ is oriented from the vertex of its left neighbor to the vertex of its right neighbor. 
The edge representing~$\Lt(\uul,\uur)$ is oriented symmetrically. The closed interval $[\uul,\uur]$ is 
the numerical parameter of such an edge.

The vertices correspond to the linearity domains. For angles, trolleybuses, birdies, and multifigures 
the graphical representation was given in the subsections where they were introduced (\ref{ElementaryGraphs} 
and~\ref{s343}). These vertices are of their individual types (i.\,e., there are several vertices of the 
type ``angle'' in the graph, several vertices of the type ``birdie'', etc.). Each such vertex is equipped 
with its numerical characteristics that are the values of the parameters corresponding to its points on the 
fixed boundary $\dfi\Omega$. For example, a vertex of the type ``angle'' has one numerical parameter~$u$ 
of the point~$g(u)$ the angle is sitting on, whereas the collection of the intervals~$\{\mathfrak{a}_i\}_{i=1}^k$ 
plays the role of the numerical parameter for a vertex that has the type ``multicup'', or ``multitrolleybus'', 
or ``multibirdie''.

However, we also need some fictitious vertices, which do not correspond to any linearity domain of non-zero area. 
For example, on Figure~\ref{fig:cup_graph} the vertex representing the long chord is fictitious. There will be 
five types of such vertices.

First, there will be some~$\mathfrak{L}_i$ that correspond to long chords (the chords that are tangent to  the 
free boundary of~$\Omega_\eps$). Namely, suppose that we have a full chordal domain~$\Ch([\ato,\bto],*)$ 
such that~$\DR(\ato,\bto) \ne 0$ and~$\DL(\ato,\bto) \ne 0$, and two tangent domains,~$\Rt(\bto,u_2)$ 
and~$\Lt(u_1,\ato)$.  
Then, the vertex~$\mathfrak{L}$ corresponding to the chord~$[g(\ato),g(\bto)]$ has three outgoing edges 
representing~$\Ch([\ato,\bto],*)$, $\Rt(\bto,u_2)$, and~$\Lt(u_1,\ato)$. The set~$\{\ato,\bto\}$ is 
the numerical parameter for~$\mathfrak{L}$. The example is given on Figure~\ref{fig:cup_graph}. 

Second, there will be some vertices~$\mathfrak{L}_i$ that correspond to points of the fixed boundary. 
Suppose we have a chordal domain~$\Ch(*,[\abot,\bbot])$ with~$\abot = \bbot$ (we recall that such chordal 
domains are called cups). In our foliations, all such points will coincide with some~$c_j$ from 
Definition~\ref{roots}, see explanation around~\eqref{eqtor}. Then, the vertex~$\mathfrak{L}$ corresponding to~$\abot = \bbot=c_j$ has one 
incoming edge matching~$\Ch(*,[\abot,\bbot])$ and one numerical parameter that equals~$c_j$.

Third, sometimes we will need to paste a chord between two chordal domains (this will be done when one of 
the differentials vanishes, see Definition~\ref{differentials}). Suppose we have two chordal 
domains,~$\Ch([a_1,b_1],[a_2,b_2])$ and~$\Ch([a_2,b_2],[a_3,b_3])$. In such a case, we paste a vertex~$\mathfrak{L}$ that corresponds to the chord~$[g(a_2),g(b_2)]$, see Figure~\ref{fig:twochdomgraph}. 
It has one incoming edge and one outgoing edge and the numerical parameter~$\{a_2,b_2\}$. Long chords, one 
or both differentials of which vanish, are also considered as fictitious vertices of the third type.

Fourth, there might be one or two vertices at infinity. If we have a tangent domain~$\Omega(\uul,\uur)$ 
with $\uul=-\infty$, then there is a vertex~$\mathfrak{L}$ that corresponds to~$-\infty$. It has the numerical 
parameter~$-\infty$ and one edge representing~$\Omega(-\infty,\uur)$, that is outgoing for the case of $\Rt$ 
and incoming for the case of $\Lt$. Similarly, if we have $\uur=+\infty$, then we have a vertex~$\mathfrak{L}$ 
that corresponds to~$+\infty$ with the numerical parameter~$+\infty$ and one edge representing~$\Omega(\uul,+\infty)$, 
that is incoming for the case of $\Rt$ and outgoing for the case of $\Lt$. If $\uul=-\infty$ and $\uur=+\infty$ 
simultaneously, then we have both such vertices and one edge between them. 

Fifth, there might be a vertex corresponding to a single tangent. Suppose we have a tangent 
domain~$\Omega(u_1,u_2)$ such that~$\FF <0$ on~$[u_1,u_2]$ except for some point~$u$, where~$\FF$ 
equals zero\footnote{In such a situation,~$u = c_j$ for some root~$c_j$ (see Definition~\ref{roots}).}. 
For the case of right tangent domain, it is useful to decompose~$\Rt(u_1,u_2)$ as
\begin{equation*}
\Rt(u_1,u) \biguplus \Rt(u,u) \biguplus \Rt(u,u_2)
\end{equation*}
and paste a vertex representing~$\Rt(u,u)$ (alternatively, one can consider it as a multitrolleybus on 
a single point). It has one outgoing edge~$\Rt(u,u_2)$ and one incoming edge~$\Rt(u_1,u)$, see 
Figure~\ref{fig:twotandomgraph}. Its numerical parameter is~$u$. The same things can be done for the case 
of left tangents. We note that the fictitious vertices of the fifth type may be right and left (the same as 
the trolleybuses).

The rules listed above define the graph of the foliation. However, we provide further description to make 
its structure more transparent. It is useful to introduce a partial ordering on the set of linearity domains. 

\begin{Def}
\label{Ordering}
Let~$\mathfrak{L}_1$ and~$\mathfrak{L}_2$ be two linearity domains. We say that~$\mathfrak{L}_2$  is 
subordinate to~$\mathfrak{L}_1$  and write~$\mathfrak{L}_2 \prec \mathfrak{L}_1$  if~$\mathfrak{L}_1$  
separates~$\mathfrak{L}_2$  from the free boundary. 
\end{Def} 

Note that if $\mathfrak{L}_2 \prec \mathfrak{L}_1$, then $\mathfrak{L}_2$ is a closed multicup.  We can also let~$\mathfrak{L}_1$  
and~$\mathfrak{L}_2$  to be chords, and let~$\mathfrak{L}_2$  to be a point on the fixed boundary. 
One more thing to notice is that the numerical parameters of the vertices~$\mathfrak{L}_1$ 
and~$\mathfrak{L}_2$ are sufficient to define whether the statement~$\mathfrak{L}_2 \prec \mathfrak{L}_1$ is true.  

We explain how to construct the graph from a foliation. Our graph is a tree if we disregard the orientation. 
First, we describe its subgraph~$\GammaFree$ \label{gammafreedef}
spanned by the edges representing tangent domains. This subgraph 
describes the trace of the foliation on the free boundary. Formally, we can define~$\GammaFree$ to be the set 
of vertices that are not subordinated by any other vertex, and the edges between them. If we forget the 
orientation of edges,~$\GammaFree$ is a path, i.\,e., a tree whose vertices have degree two, except, possibly 
for two leaves at infinity. The leaves are usually fictitious vertices of the fourth type, however, if there 
is a multicup or a multitrolleybus that lasts to infinity, its vertex is a leaf (in such a case there is 
no  fictitious vertex representing the corresponding infinity). The orientation of edges has already been described. 
We only say that the roots of~$\GammaFree$ are the fictitious vertices of the first and third type (the latter, 
of course, should belong to~$\GammaFree$, i.\,e., represent a long chord), the vertices that correspond to 
multicups, and possibly, the vertices at infinity. The leaves  in~$\GammaFree$ correspond to angles, 
birdies, multibirdies, and possibly, vertices at infinities. The necessary and sufficient condition 
for~$\GammaFree$ to be a subgraph spanned by the edges corresponding to tangent domains of some foliation is 
that the foliation reconstructed from it covers the free boundary without intersections.

Second, we describe the graph~$\GammaFixed$ spanned by the edges corresponding to chordal domains. 
The graph~$\GammaFixed$ is a forest (i.\,e., a finite collection of trees). Each tree of the forest is 
oriented from its root, being any vertex of~$\GammaFree$ except for  fictitious vertices of the fourth or 
fifth types, and multifigures sitting on single arcs, to its leaves. The leaves of~$\GammaFixed$ are 
the  fictitious vertices of the second type (corresponding to the origins of cups) and closed multicups 
sitting on single arcs. All other vertices are closed multicups and  fictitious vertices of the third type. 
We note that this graph is generated by the ordering introduced in Definition~\ref{Ordering}: each 
edge~$\mathfrak{E}$ goes from~$\mathfrak{L}_1$ to $\mathfrak{L}_2$ if and only 
if~$\mathfrak{L}_2 \prec \mathfrak{L}_1$ and there are no vertices~$\mathfrak{L}_3$ such 
that~$\mathfrak{L}_2 \prec \mathfrak{L}_3 \prec \mathfrak{L}_1$. The necessary and sufficient condition 
for~$\GammaFixed$ to be a subgraph spanned by the edges corresponding to chordal domains of some foliation 
is that the linearity domains built from its vertices do not intersect, the edges are generated by 
the ordering from Definition~\ref{Ordering}. 

So, the graph of the foliation is a finite oriented tree whose vertices and ges have type (they correspond 
either to some figures or to fictitious constructions described above) and several numerical characteristics 
regarding their type. We warn the reader that we do not write down all the numerical parameters when we 
draw graphs, this makes our illustrations more clear. We would like to underline that the foliation could be restored from the graph and the numerical parameters determined by this foliation. 
\chapter{Evolution of Bellman candidates}
\label{C4}

{In this chapter we provide an algorithm for building a special Bellman candidate on~$\Omega_\eps$ 
for each~$\eps$,~$0 < \eps < \eps_{\infty}$. In Chapter~\ref{C5} we will prove that this candidate coincides 
with the Bellman function~$\Bell(\,\cdot\,;\,f)$ using optimizers. 

The algorithm starts with sufficiently small~$\eps$. In such a case, the foliation for the Bellman candidate 
can be composed of cups (multicups), angles, and tangent domains. Then we increase~$\eps$, constructing the Bellman 
candidates for larger~$\eps$. Formally, there will be statements of two kinds (they can be called ``induction 
steps of the first and second kinds'')\index{induction steps}. The first ones state that the set of~$\eps$ 
for which there is a Bellman candidate of a given structure, is open. They are of the form: ``if for some~$\eta$ 
there is a Bellman candidate with the graph~$\Gamma$, then there is some positive~$\delta$ such that for all~$\eps$ in 
$[\eta, \eta + \delta]$ the foliation with the graph~$\Gamma$ and perturbed numerical parameters provides 
a Bellman candidate for~$f$ in~$\Omega_{\eps}$''. The second ones state that the set of those~$\eps$, for which 
there is a graph~$\Gamma$ and a collection of numerical parameters that provide a Bellman candidate for~$f$ 
and~$\eps$, is closed. They are of the form: ``if for each~$\eps_n$ there is a Bellman candidate with the 
graph~$\Gamma$,~$\eps_n \nearrow \eps$, and the numerical parameters converge to some limits as~$\eps_n\to\eps$, 
then~$\Gamma$ with the limiting parameters provide a foliation for~$f$ in~$\Omega_{\eps}$''. We note that the 
limits of numerical parameters may be degenerate in a sense (for example, a trolleybus may become a fictitious 
vertex of the fifth type), so~$\Gamma$ changes after passing to the limit. Each such induction step, in its turn, 
can be reduced to similar local statements, i.\,e., statements about the evolutional behavior of lonely figures, 
e.\,g. cups, angles, etc.

The main law that rules the evolution of the foliation is ``the forces decrease (grow in absolute value) 
as~$\eps$ grows''. As a result, long chords and multicups grow (Propositions~\ref{InductionStepForChordalDomain} 
and~\ref{InductionStepForMulticup}), trolleybuses decrease (Propositions~\ref{InductionStepForRightTrolleybus} 
and~\ref{InductionStepForLeftTrolleybus}), multitrolleybuses, birdies, and multibirdies disintegrate 
(Propositions~\ref{InductionStepForMultitrolleybus},~\ref{InductionStepForLeftMultitrolleybus}, 
and~\ref{MultibirdieDesintegrationSt}). What is more, single figures can crash, formally this happens 
in the induction steps of the second kind when one of the edges has ``zero length'' at the limit. 
In the case of a crash, we use formulas from Subsection~\ref{s344} to continue the evolution.

\section{Simple picture}
\label{s41}

\begin{Def}
\label{Simple picture}
Let~$\Gamma$ be a foliation graph. We call it \textup(and the foliation itself\textup) \emph{simple} if 
it has no oriented paths longer than one and no closed multicups. 
\end{Def}\index{graph! simple graph}

Simple foliations consist of alternating cups (or multicups on single arcs; by a multicup on a single arc 
we mean~$\MTC(\{\mathfrak{a}\})$, where~$\mathfrak{a}$ is an interval) and angles connected by tangent domains. 
If~$\Gamma$ is a graph of a simple foliation consisting of~$N$ edges, then there are either~$\frac{N}{2}$, 
or~$\frac{N-1}{2}$, or~$\frac{N-2}{2}$ angles in the foliation. In~$\GammaFree$, the vertices corresponding 
to angles alternate the vertices representing multicups and long chords. Each multicup is sitting on an arc 
whose convex hull is not contained in $\Omega_{\eps}$. Each long chord has a cup below it. 
See Figure~\ref{fig:exsimpict} for the visualization. 

\begin{wrapfigure}{r}{0.45\linewidth}
\begin{center}
\includegraphics[width = 0.85\linewidth]{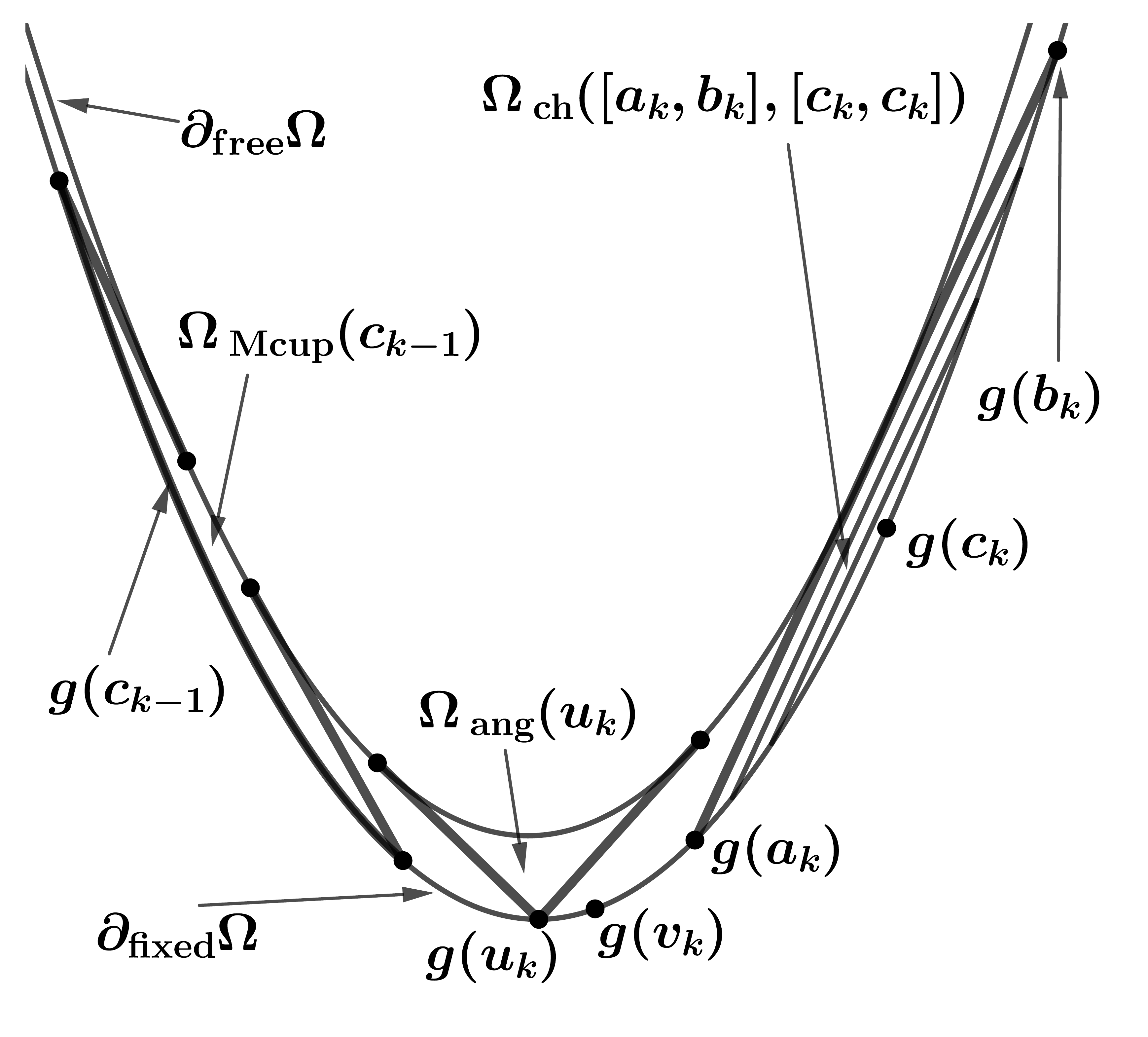}
\caption{An example of simple picture.}
\label{fig:exsimpict}
\end{center}
\end{wrapfigure}

For didactic reasons, we explain how does a simple graph generate a Bellman candidate~$B$ (similar essence 
for general graphs will be explained in Section~\ref{s44}). Suppose~$\Gamma$ is a simple graph. First, 
we consider its roots, which are long chords, multicups on solid roots, and vertices at infinities. For long 
chords and multicups, we build the standard candidate by linearity (see~\eqref{eq250901}). Second, 
consider the edges of~$\Gamma$. For the edges corresponding to chordal domains, we construct standard candidates 
again by linearity on chords  (see~\eqref{eq250901}). For each edge corresponding to a tangent domain, 
we continuously glue a standard candidate in this domain to the already built standard candidate corresponding to the 
source of the edge. This is done by choosing an appropriate~$\beta_2(t_0)$ in~\eqref{beta2new}. For tangent domains whose source is infinity, we do not have to glue anything, we simply consider 
the standard candidates on them, such standard candidates are uniquely defined, see Definition~\ref{211101}. 
In the angles, we choose the standard candidates by Proposition~\ref{St260901}. The constructed function $B$ 
is~$C^1$-smooth, and thus, by Proposition~\ref{ConcatenationOfConcaveFunctions}, it is locally concave.

In the theorem below, we use the notation for the essential roots, see Definition~\ref{roots}.

\begin{Th}
\label{SimplePicture}
For any function~$f$ satisfying Conditions~\textup{\ref{reg}} and~\textup{\ref{sum}} there exists~$\eps_1>0$ 
such that for any~$\eps < \eps_1$ there exists a simple graph and a collection of numerical parameters such 
that the function~$B$ constructed from this graph\textup,~$f$\textup, and~$\eps$ as described above is 
a~$C^1$-smooth Bellman candidate. Moreover\textup, its foliation satisfies the following properties\textup: 
the origins of the cups coincide with those~$c_i$ that are single points\textup; the multicups are sitting 
on those~$c_i$ that are intervals\textup; for any~$k = 1,2,\ldots,n,$ the parameter of the vertex~$u_k$ of 
the~$k$-th angle in~$\GammaFree$ tends to~$v_k$ as~$\eps \to 0$. 
\end{Th}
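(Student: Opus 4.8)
\textbf{Plan of proof of Theorem~\ref{SimplePicture}.}
The plan is to construct the simple foliation explicitly for small~$\eps$, piece by piece, following the combinatorial skeleton dictated by the essential roots~$c_0,v_1,c_1,\dots,v_n,c_n$ of~$\torsion$ from Definition~\ref{roots}, and then verify that the standard candidates on the pieces glue into a global $C^1$-smooth locally concave function. The backbone of~$\GammaFree$ will be: an angle sitting near each~$v_k$, a cup (if $c_i$ is a single point) or a multicup (if $c_i$ is an interval) sitting over each~$c_i$, and tangent domains filling the space between consecutive figures; if $c_0=-\infty$ (resp.\ $c_n=+\infty$) the left (resp.\ right) end is a right (resp.\ left) tangent domain running off to infinity as in Propositions~\ref{NewRightTangentsCandidateInfty} and~\ref{NewLeftTangentsCandidateInfty}. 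The first step is to build the cups: by Proposition~\ref{St3108} each single-point $c_i$ (where $\torsion$ changes sign from $+$ to $-$) is the origin of a small chordal domain carrying a standard candidate; for $c_i$ an interval I would instead use a multicup $\MTC(\{c_i\})$, whose standard candidate exists by the discussion in Subsection~\ref{s343} together with the fact that on $c_i$ the torsion vanishes so the affine extension is legitimate. The small parameter here is $\eps$: as $\eps\to0$ the free boundary $\geps(\cdot;\eps)$ approaches the fixed boundary $g$, so the tangent segments from $g(t)$ to $\Xi_\eps$ shrink, and one must check the cups/multicups still fit inside $\Omega_\eps$ and their upper chords become long (tangent to $\dfree\Omega$) for $\eps$ small; this is where condition $\frac{\partial}{\partial\eps}\kappa\ne0$ and the monotonicity of $\Xi_\eps$ in $\eps$ enter.

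The second step is the angles and the gluing. Between a cup over $c_{k-1}$ and the next cup over $c_k$, the interval $(c_{k-1},c_k)$ contains exactly one essential root $v_k$ where $\torsion$ changes sign from $-$ to $+$. I would place an angle $\Ang(u_k)$ with $u_k$ near $v_k$ and connect it to the two neighbouring cups by tangent domains: a left tangent domain $\Lt$ on $(c_{k-1}^{\mathrm r}, u_k)$ growing out of the right endpoint of the cup over $c_{k-1}$, and a right tangent domain $\Rt$ on $(u_k, c_k^{\mathrm l})$ feeding into the cup over $c_k$. By Propositions~\ref{NewRightTangentsCandidate} and~\ref{NewLeftTangentsCandidate}, these carry standard candidates provided $\beta_2'$ has the correct sign, which by the Remark after Proposition~\ref{NewRightTangentsCandidate} and formula~\eqref{betaprime} follows from the sign of $\torsion$ being $\le 0$ to the right of $v_k$ up to $c_k$ (torsion negative there) — wait, that is the wrong half; more carefully, on $(v_k,c_k)$ one has $\torsion>0$, so for the right tangents feeding into the cup one must instead check that the force inherited from the cup's right tail keeps $\beta_2'<0$ until the angle is reached, using Remark~\ref{rem220203}: the right tail of the cup over $c_k$ extends leftward past $v_k$ because $\torsion$ is positive on a left neighbourhood of $c_k$. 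The precise matching is done by the forces: by Corollary~\ref{cor250901} and Proposition~\ref{St260901}, the angle can be glued iff the left-tail force of the cup over $c_k$ and the right-tail force of the cup over $c_{k-1}$ meet at a common point $u_k$; since forces are continuous, nonpositive, and (by Remark~\ref{Rem150401}) $\Fr-\tors$ is strictly increasing while $\Fl-\tors$ is strictly decreasing, one shows these two force functions cross at a unique point, which is $u_k$, and $u_k\to v_k$ as $\eps\to0$ because as $\eps\to0$ the cups shrink to points $c_{k-1},c_k$ and the forces degenerate so that their crossing point is squeezed toward the unique interior sign-change $v_k$.

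The third step is assembling everything and checking smoothness. Having chosen the numerical parameters (cup origins $=$ the single-point $c_i$; multicups on the interval $c_i$; angle vertices $u_k$ determined by the force-matching equation), I concatenate the standard candidates via Propositions~\ref{St260901}, \ref{St131001}, and~\ref{St211101}; each local gluing is $C^1$ because at each interface the gradient $\beta$ is matched, equivalently the force is continuous there (Corollary~\ref{cor250901}). Local concavity on the union follows from Proposition~\ref{ConcatenationOfConcaveFunctions}, using that all the figures involved (tangent domains, chordal domains, angles, multicups) are induced convex. The boundary condition $B|_{\dfi\Omega_\eps}=\ff$ holds by construction on each piece. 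The claimed asymptotics ($c_i$-origins, multicups on intervals, $u_k\to v_k$) are exactly what the construction gives. The summability Condition~\ref{sum} is used precisely at the two infinite ends: it guarantees (via the Remark after Definition~\ref{211101}, to be proved as Lemma~\ref{Lem190701}) that the integrals in~\eqref{eq120701}, \eqref{eq190701} converge and $\beta_2'$ has the right sign, so the unbounded tangent domains $\Rt(-\infty,\cdot)$ and $\Lt(\cdot,+\infty)$ admit standard candidates.

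\textbf{Main obstacle.} The hard part will be the force-matching step: proving that for all sufficiently small $\eps$ the right-tail force of the cup over $c_{k-1}$ and the left-tail force of the cup over $c_k$ actually meet at a single interior point (so the angle exists and is non-degenerate), and controlling this meeting point so that it tends to $v_k$. This requires quantitative control of how the cups (hence their tails and tail-forces) depend on $\eps$ as $\eps\to0$, i.e.\ uniform estimates showing the tails extend far enough toward $v_k$ from both sides but not past it — essentially a continuity/compactness argument combined with the strict monotonicity of $\Fr-\tors$ and $\Fl-\tors$ (Remark~\ref{Rem150401}) and the behaviour of $\tors$ near its sign changes (Remarks~\ref{rem220203}, \ref{Rem200401}). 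A secondary technical point is handling the degenerate cases where some $c_i$ or $v_i$ is an interval rather than a point, where cups become multicups and angles become multibirdies resolved via the decomposition formulas of Subsection~\ref{s344}; but this is bookkeeping once the generic case is settled.
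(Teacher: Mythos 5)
Your proposal is correct and follows essentially the same route as the paper: cups via Proposition~\ref{St3108} (multicups on solid roots), angles placed at the unique root of the balance equation between consecutive tail forces, uniqueness from the strict monotonicity of $\Fr-\Fl$, and $u_k\to v_k$ by squeezing $u_k$ between the tail endpoints. The ``main obstacle'' you single out is exactly what the paper isolates as Lemma~\ref{TailForSmallEps} (tail endpoints converge to $v_{k+1}$ as $\eps\to0$, proved by showing the exponential weights in~\eqref{eq101101} degenerate since $\kappa\to\kappa_2$, together with a mean-value bound on $\Fr(b)$) and Corollary~\ref{BalanceEquationForSmallEps}, so your plan is the paper's proof.
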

The proof of this theorem will be presented at the end of this section, because it needs some preparation.

\medskip

Consider two neighbor points~$c_k$ and~$v_{k+1}$, where the torsion $\torsion$ of the boundary curve~$\gamma$ changes its sign. If $c_k$ is a solid root, then for sufficiently small $\eps$  we can build a multicup on $c_k$ and 
define the standard candidate there. If $c_k$ is a single point, for small $\eps$ we use~Proposition~\ref{St3108} 
to build a full cup on it. Let its upper chord be~$[g(a_k),g(b_k)] = [g(a_k(\eps)),g(b_k(\eps))]$ (for a cup, 
we take its upper chord, whereas for a multicup, we consider the chord connecting its endpoints). Then, 
by~\eqref{eq101101}, we have~$\Fr(u;a_k,b_k;\eps)<0$ when~$u\in(b_k,v_{k+1}]$, because $\sign(\torsion)=\sign(\tors')$ 
(see~\eqref{eq211003}). Thus, the right tail of the cup or multicup built on~$c_k$ always contains~$v_{k+1}$. 
Similarly, the left tail of the cup or multicup built over~$c_{k+1}$ contains~$v_{k+1}$. The following lemma 
says that the ends of these tails tend to $v_{k+1}$ as~$\eps \to 0$.

\begin{Le}
\label{TailForSmallEps}
Let~$(g(a_k(\eps)),g(b_k(\eps)))$ be the upper chord of a cup or of a multicup built over~$c_k$\textup, 
let~$\tr_k = \tr_k(\eps)$ be the endpoint of its right tail. Then~$\tr_k \to v_{k+1}$ as~$\eps \to 0$. 
Similarly\textup, the endpoint~$\tl_k(\eps)$ of the left tail tends to~$v_k$. A similar convergence statement 
holds for the forces coming from the infinities.
\end{Le}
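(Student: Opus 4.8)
The plan is to analyze the behavior of the cup (or multicup) built over $c_k$ and its right force $\Fr(t;a_k,b_k;\eps)$ as $\eps\to 0$, and to show that its only zero to the right of $b_k$ converges to $v_{k+1}$. First I would record what happens to the cup itself: by Proposition~\ref{St3108} (or its multicup analogue) the upper chord endpoints satisfy $a_k(\eps), b_k(\eps)\to c_k$ as $\eps\to 0$, and moreover the differentials $\DR(a_k,b_k)$, $\DL(a_k,b_k)$ are negative; in fact, from formula~\eqref{eq280701} together with $\tors'>0$ on $(c_k-\theta_0,c_k)$ and $\tors'<0$ on $(c_k,c_k+\theta_0)$, one gets quantitative control showing $\DR(a_k,b_k)\to 0$ as $\eps\to 0$ (the integral of $\tors'$ against a positive kernel over a shrinking interval). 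Hence the ``initial condition'' term $\frac{\DR(a_k,b_k)}{g_1'(b_k)\kappa_2'(b_k)}$ in formula~\eqref{eq101101} tends to $0$, and $b_k(\eps)\to c_k$.

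Next I would pass to the limit in~\eqref{eq101101}. Fix any $t_*$ with $b_k<t_*<v_{k+1}$ and any $t^*$ with $v_{k+1}<t^*$, assuming for the moment $v_{k+1}$ is a single point; the general (solid-root) case needs only cosmetic changes. For $\eps$ small, $[b_k,t^*]$ lies in a fixed compact set on which $\kappa_2'/(\kappa_2-\kappa)$ is bounded, so the exponential factors in~\eqref{eq101101} are bounded above and below; therefore $\Fr(t;a_k,b_k;\eps)$ converges, uniformly on $[t_*,t^*]$, to the function
\[
F_0(t)=\exp\Big(\!-\!\int_{c_k}^t\frac{\kappa_2'}{\kappa_2-\kappa}\Big)\int_{c_k}^t \exp\Big(\int_{c_k}^\tau\frac{\kappa_2'}{\kappa_2-\kappa}\Big)\tors'(\tau)\,d\tau,
\]
where $\kappa$ here is computed with the chord degenerating to the tangent from $g(c_k)$ (one must check this limiting $\kappa$ is well-defined and lies strictly between $\kappa_2(t)$ and the relevant slope, which follows from the domain hypotheses on $\Xi_\eps$ and Remark~\ref{Rem201001}). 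Now $\tors'\le 0$ on $(c_k,v_{k+1})$ with $\tors'$ not identically zero near $c_k$, so $F_0<0$ on $(c_k,v_{k+1}]$; and since $\tors$ changes sign from $-$ to $+$ at $v_{k+1}$, on a right neighborhood of $v_{k+1}$ we have $\tors'\ge0$, so by the monotonicity built into~\eqref{leftder} (Remark~\ref{Rem150401}, Remark~\ref{rem220203}) the function $F_0$ is strictly increasing there and crosses $0$ exactly at $v_{k+1}$. Thus $F_0(t)<0$ for $t<v_{k+1}$ close to $v_{k+1}$ and $F_0(t)>0$ for $t>v_{k+1}$ close to it.

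Finally I would convert this into convergence of the tail endpoint $\tr_k(\eps)$. Given $\delta>0$ small, by the previous paragraph $F_0(v_{k+1}-\delta)<0<F_0(v_{k+1}+\delta)$, and by uniform convergence $\Fr(v_{k+1}-\delta;a_k,b_k;\eps)<0$ and $\Fr(v_{k+1}+\delta;a_k,b_k;\eps)>0$ for all sufficiently small $\eps$. Since $\tr_k$ is, by definition (Definition~\ref{Def031002} and the remarks after it), the supremum of $\tau$ for which a standard candidate exists on $\Rt([b_k,\tau])$ with matched forces, i.e.\ essentially the first zero of $\Fr$ to the right of $b_k$ (the force is nonpositive up to $\tr_k$ and vanishes there by Remark~\ref{rem220202}), these sign conditions force $v_{k+1}-\delta<\tr_k(\eps)<v_{k+1}+\delta$ for small $\eps$. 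As $\delta$ is arbitrary, $\tr_k(\eps)\to v_{k+1}$. The statement for $\tl_k(\eps)\to v_k$ is symmetric, exchanging left and right and using $\tors'\ge 0$ on $(v_k,c_k)$; the statement for the forces coming from infinity is the same argument applied to Propositions~\ref{NewRightTangentsCandidateInfty} and~\ref{NewLeftTangentsCandidateInfty}, with $c_0=-\infty$ or $c_n=+\infty$ and formulas~\eqref{eq120701},~\eqref{eq190701} in place of~\eqref{eq101101}. The main obstacle I anticipate is the careful justification of the limit $\kappa(\cdot;\eps)\to\kappa_0$ and of the uniform convergence in~\eqref{eq101101} near the degenerating chord, and the bookkeeping when $c_k$ or $v_{k+1}$ is a solid root rather than a point.
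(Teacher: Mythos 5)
There is a genuine gap at the heart of your argument. You claim that for small $\eps$ the quantity $\kappa_2'/(\kappa_2-\kappa)$ is bounded on a fixed compact set, so that the exponential factors in~\eqref{eq101101} are bounded above and below and $\Fr(\cdot\,;a_k,b_k;\eps)$ converges uniformly to a nontrivial limit function $F_0$ with a sign change at $v_{k+1}$. This is false: as $\eps\to 0$ the free boundary collapses onto the fixed one, the tangent slope $\kappa(t;\eps)$ tends to $\kappa_2(t)$ from below (this is stated explicitly in the paper's proof), hence $\kappa_2-\kappa\to 0^+$ and $\kappa_2'/(\kappa_2-\kappa)\to+\infty$. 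Consequently the kernel $\exp\big(-\int_\tau^t\kappa_2'/(\kappa_2-\kappa)\big)$ tends to $0$ for every fixed $\tau<t$, and since $\tors'$ is a non-atomic finite measure on compacta and $\Fr(b_k)$ stays bounded, formula~\eqref{eq101101} gives $\Fr(t;\eps)\to 0$ pointwise for every fixed $t>c_k$. The limit function is identically zero; it carries no sign information, so your trapping argument via $F_0(v_{k+1}-\delta)<0<F_0(v_{k+1}+\delta)$ cannot get started. (The lower bound $\Fr(v_{k+1}-\delta;\eps)<0$ is fine and does not need any limit — it is the pre-lemma observation that $\tors'\le 0$ on $(b_k,v_{k+1})$ forces the tail to contain $v_{k+1}$ — but the upper bound $\Fr(v_{k+1}+\delta;\eps)>0$ is exactly where your argument breaks.)

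The paper's proof repairs precisely this degeneration by renormalizing: fixing $w\in(v_{k+1}^{\mathrm r},w_+)$, it multiplies~\eqref{eq101101} by the divergent factor $\exp\big(\int_w^{w_+}\kappa_2'/(\kappa_2-\kappa)\big)$ and shows that the renormalized force is bounded below by $\tors(w_+)-\tors(w)$ minus two error terms, each of which is a vanishing exponential times a bounded factor (boundedness of $\Fr(b_k)$ being itself a small lemma, proved via~\eqref{eq150802} and the mean value theorem: $\Fr(b_k)=\tors(b_k)-\tors(u)$ for some $u\in[a_k,b_k]$). Since $\tors(w_+)-\tors(w)>0$, the force at $w_+$ is strictly positive for small $\eps$ even though it tends to $0$, which is all that is needed to conclude $\tr_k<w_+$. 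If you want to keep a ``pass to the limit'' structure, you must pass to the limit in the renormalized quantity, not in the force itself.
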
 

\begin{proof}
We will deal with the case of the right tail. We will write $a$ for $a_k$ and $b$ for $b_k$. It suffices to prove 
that for each point~$w_{+}$ such that~$v_{k+1} < w_{+}$ (we also assume that~$w_+$ is not far from~$v_{k+1}$, 
we want~$\tors$ to increase on~$(v_{k+1},w_+)$), the inequality~$\tr_k < w_{+}$ holds eventually as~$\eps \to 0$. 
We use~\ref{eq101101} for the force $\Fr(t)= \Fr(t;a(\eps),b(\eps);\eps)$ outside the chordal domain 
to obtain
\eq{forcie}{
\Fr(w_+) = 
\int_{b}^{w_+} \exp\Big(\!-\!\!\int_{\tau}^{w_+} \frac{\kappa_2'}{\kappa_2-\kappa}\Big)\tors'(\tau)\,d\tau +
\exp\Big(\!-\!\!\int_{b}^{w_+}\!\frac{\kappa_2'}{\kappa_2-\kappa}\Big)\Fr(b).
}
It suffices to prove~$\Fr(w_{+}) > 0$ for $\eps$ sufficiently small. We first deal with the first summand, which will be split into two 
integrals $\int_{b}^{w_+} = \int_{b}^{v^{\mathrm{r}}_{k+1}}+\int_{v^{\mathrm{r}}_{k+1}}^{w_+}$ (as usual, 
by~$v^{\mathrm{r}}_{k+1}$ we denote the right endpoint of~$v_{k+1}$). First,
\eq{eq101103}{
\left|\int_b^{v^{\mathrm{r}}_{k+1}}\!\exp\Big(\!-\!\!\int_{\tau}^{w_+}\!\frac{\kappa_2'}{\kappa_2-\kappa}\Big)
\tors'(\tau)\,d\tau\right| \leq 
\exp\Big(\!-\!\!\!\int_{v^{\mathrm{r}}_{k+1}}^{w_+} \!\frac{\kappa_2'}{\kappa_2-\kappa}\Big)\cdot
\int_{b}^{v^{\mathrm{r}}_{k+1}}\!\left|\tors'(\tau)\right|\,d\tau.}

Take $w \in (v_{k+1}^{\mathrm{r}},w_+)$. Then,
\begin{multline}
\label{eq101104}
\int_{v^{\mathrm{r}}_{k+1}}^{w_+}\!\exp\Big(\!-\!\!\int_{\tau}^{w_+}\!\frac{\kappa_2'}{\kappa_2-\kappa}\Big)
\tors'(\tau)\,d\tau \geq
\int_{w}^{w_+} \!\exp\Big(\!-\!\!\int_{\tau}^{w_+} \!\frac{\kappa_2'}{\kappa_2-\kappa}\Big)\tors'(\tau)\,d\tau \geq
\\ 
\geq \exp\Big(\!-\!\!\int_{w}^{w_+} \!\frac{\kappa_2'}{\kappa_2-\kappa}\Big) \cdot \int_{w}^{w_+}\tors'(\tau)\,d\tau 
= \exp\Big(\!-\!\!\int_{w}^{w_+} \!\frac{\kappa_2'}{\kappa_2-\kappa}\Big) \cdot \big(\tors(w_+) - \tors(w)\big). 
\end{multline}
We multiply both sides of~\eqref{forcie} by $\exp\Big(\int_{w}^{w_+} \!\frac{\kappa_2'}{\kappa_2-\kappa}\Big)$,
use~\eqref{eq101103} and~\eqref{eq101104}, and obtain
\begin{multline}
\label{eq141102}
\exp\Big(\int_{w}^{w_+} \!\frac{\kappa_2'}{\kappa_2-\kappa}\Big)\Fr(w_+)\geq
\\
\tors(w_+) - \tors(w) - 
\exp\Big(\!-\!\!\!\int_{v^{\mathrm{r}}_{k+1}}^{w} \!\frac{\kappa_2'}{\kappa_2-\kappa}\Big) \cdot 
\int_{b}^{v^{\mathrm{r}}_{k+1}}\left|\tors'(\tau)\right|\,d\tau 
+ \exp\Big(\!-\!\!\int_{b}^{w}\!\frac{\kappa_2'}{\kappa_2-\kappa}\Big)\Fr(b).
\end{multline}
When $\eps \to 0$, $\kappa$ tends to $\kappa_2$ from below pointwise, therefore both exponents on the right 
hand side of~\eqref{eq141102} tend to zero. These exponents are multiplied by bounded factors. Indeed, 
$$
\int_{b}^{v^{\mathrm{r}}_{k+1}}\left|\tors'(\tau)\right|\,d\tau \leq \int_{c_k}^{v^{\mathrm{r}}_{k+1}}\left|
\tors'(\tau)\right|\,d\tau.
$$
Boundedness of the second factor is more cumbersome. If $(g(a),g(b))$ is 
the upper chord of a multicup, then $\Fr(b) = 0$ and there is nothing to do. Consider the case of a cup. 
In this case we use~\eqref{eq101101}: 
\eq{eq141101}{
\Fr(b) = \frac{\DR(a,b)}{g_1'(b)\kappa_2'(b)}.
}
Let us rewrite formula~\eqref{eq150802}.
After applying~\eqref{lincoef} and~\eqref{e334} its left hand side takes the following form
$$
\det
\begin{pmatrix}
\gamma''(b)\\
\gamma'(a)\\
\gamma(b)-\gamma(a)
\end{pmatrix} =
\det
\begin{pmatrix}
\gamma''(b)\\
\gamma'(a)\\
\CR\gamma'(b)-\CL\gamma'(a)
\end{pmatrix}=
\CR\det
\begin{pmatrix}
\gamma''(b)\\
\gamma'(a)\\
\gamma'(b)
\end{pmatrix}=
-\CR\DR \dett{g'(b)}{g'(a)}.
$$
Due to the same formula~\eqref{lincoef} the nonintegral term in the right hand side of~\eqref{eq150802} vanishes. 
Thus, we obtain 
$$
\CR\DR \dett{g'(b)}{g'(a)}=g_1'(b)g_1'(a)\kappa_2'(b)
\int_a^b\Big(\tors(u)-\tors(b)\Big)\kappa_2'(u)\big(g_1(b)-g_1(u)\big)\,du.
$$
The function $W(u)=\kappa_2'(u)\big(g_1(b)-g_1(u)\big)$ is nonnegative and its integral over $[a,b]$ is equal to
\begin{multline}
\int_a^b W(u)\,du = \kappa_2(u)\big(g_1(b)-g_1(u)\big)\Big|_a^b + \int_a^b \kappa_2(u)g_1'(u)\,du = 
\\
=-\kappa_2(a)\big(g_1(b)-g_1(a)\big)+g_2(b)-g_2(a)
=-\frac1{g_1'(a)}\dett{g(b)-g(a)}{g'(a)}=-\frac1{g_1'(a)}\dett{g'(b)}{g'(a)}\CR.
\end{multline}
Therefore,~\eqref{eq141101} takes the form
$$
\Fr(b) = \frac{\int_a^b \big(\tors(b)-\tors(u)\big)W(u)\,du}{\int_a^b W(u)\,du}. 
$$
By the mean value theorem, there exists some $u \in [a,b]$, such that $\Fr(b) =\tors(b)-\tors(u)$, 
which is bounded and moreover tends to zero when $\eps \to 0$. 
Therefore, the right hand side of~\eqref{eq141102} tends to $\tors(w_+)-\tors(w)$ which is positive. 
Thus, the left hand side of~\eqref{eq141102} is positive for sufficiently small $\eps$. 
\end{proof}

The notation in the following lemma is the same as in the previous one.

\begin{Le}
\label{moninttails}
The difference of forces\textup,~$\Fr(u;a_k,b_k)-\Fl(u;a_{k+1},b_{k+1})$\textup, is strictly increasing 
\textup(as a function of~$u$\textup) on the interval~$(\tl_{k+1},\tr_k) \cap (b_k,a_{k+1})$. 
\end{Le}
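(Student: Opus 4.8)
The plan is to compute the derivative of the difference of forces directly and show it is strictly positive on the indicated interval, using the differential equation for forces from Proposition~\ref{difforcepou}. Recall that on its right tail the force $\Fr(\,\cdot\,;a_k,b_k)$ satisfies $F' = \tors' - \frac{\kappa_2'}{\kappa_2-\kappa}F$ with $\kappa = \kappar$ the slope of the right tangent, while on its left tail the force $\Fl(\,\cdot\,;a_{k+1},b_{k+1})$ satisfies the same type of equation but with $\kappa = \kappal$ the slope of the left tangent. Subtracting, the $\tors'$ terms cancel, and writing $\Delta(u) = \Fr(u;a_k,b_k) - \Fl(u;a_{k+1},b_{k+1})$ we obtain
\[
\Delta'(u) = -\frac{\kappa_2'}{\kappa_2-\kappar}\,\Fr + \frac{\kappa_2'}{\kappa_2-\kappal}\,\Fl.
\]

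The key observation is a sign analysis of each summand. On the interval in question both $\Fr$ and $\Fl$ are negative (the force is non-positive, and on the open tail away from its vanishing endpoint it is strictly negative by Remark~\ref{Rem150401} together with the torsion sign information from Remarks~\ref{rem220203} and~\ref{Rem200401}; I would spell out that the interval $(\tl_{k+1},\tr_k)\cap(b_k,a_{k+1})$ lies strictly inside both tails, so neither force is zero there). We always have $\kappa_2' > 0$. For a right tangent domain $\kappa_2 - \kappar > 0$, so the first summand $-\frac{\kappa_2'}{\kappa_2-\kappar}\Fr$ is strictly positive since $\Fr < 0$. For a left tangent domain $\kappa_2 - \kappal < 0$, so $\frac{\kappa_2'}{\kappa_2-\kappal} < 0$, and multiplied by $\Fl < 0$ this second summand is also strictly positive. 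Hence $\Delta'(u) > 0$ throughout, which is exactly the claimed strict monotonicity.

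First I would fix the interval, say $J = (\tl_{k+1},\tr_k) \cap (b_k,a_{k+1})$, and note that on $J$ the point $u$ lies in the right tail of the cup/multicup over $c_k$ and in the left tail of the cup/multicup over $c_{k+1}$, so both forces are defined and the representations~\eqref{eq101101},~\eqref{eq101102} apply; from these (or directly from Proposition~\ref{difforcepou}) both forces are $C^1$ on $J$. Second, I would record that $\Fr < 0$ on $J$: it is non-positive by the general property of forces, and if it vanished at some interior point then by the ODE $F' - \tors' = -\frac{\kappa_2'}{\kappa_2-\kappa}F$ it would have to stay zero on a neighborhood, forcing $\tors'$ to vanish there, contradicting Remark~\ref{rem220203} which guarantees the torsion is not eventually non-positive approaching $\tr_k$; similarly $\Fl < 0$ on $J$ using Remark~\ref{Rem200401}. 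Third, I would carry out the one-line subtraction of the two ODEs and the sign count above.

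The only mild obstacle is making the non-vanishing of the two forces on the open interval rigorous rather than hand-waved: one must use that $\tr_k$ is defined as the \emph{supremum} of the admissible endpoints (so the force is strictly negative before it) together with the torsion-sign remarks, and symmetrically for $\tl_{k+1}$. Once that is in place, the derivative computation is immediate and the proof is short. I would not need Lemma~\ref{TailForSmallEps} or any small-$\eps$ asymptotics here; this lemma is purely about the shape of the force functions for a fixed $\eps$.
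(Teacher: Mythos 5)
Your proof is correct and follows essentially the same route as the paper: differentiate the difference, use the force ODE from Proposition~\ref{difforcepou} so that the $\tors'$ terms cancel, and check that both remaining summands are strictly positive. The paper phrases the sign count as $\beta_{2,\scriptscriptstyle\mathrm{L}}' > 0 > \beta_{2,\scriptscriptstyle\mathrm{R}}'$, citing Definition~\ref{211101} (strictness of the inequalities for standard candidates in the interior of the tangent domain), which is exactly equivalent to your unwinding via the signs of $\kappa_2-\kappa$ and the strict negativity of the forces on the open tails.
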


\begin{proof}
We differentiate the function in question with respect to~$u$, use~\eqref{leftder}, and obtain
\eq{difpowsum}{
\big(\Fr - \Fl\big)' =  -\beta_{2,\scriptscriptstyle\mathrm{R}}'+
\beta_{2,\scriptscriptstyle\mathrm{L}}'>0,
}
because~$\beta_{2,\scriptscriptstyle\mathrm{L}}' > 0> \beta_{2,\scriptscriptstyle\mathrm{R}}'$ 
on~$(\tl_{k+1},\tr_k)\cap (b_k,a_{k+1})$ by Definition~\ref{211101}. The proposition is proved.
\end{proof}

\begin{Cor}\index{equation! balance equation}
\label{BalanceEquationForSmallEps}
The balance equation 
\eq{eq250903}{
\Fr(u;a_k,b_k;\eps) = \Fl(u;a_{k+1},b_{k+1};\eps)
}
has a unique root~$u = u_{k+1}$ in~$(\tl_{k+1},\tr_k)$ for sufficiently small~$\eps$. 
\end{Cor}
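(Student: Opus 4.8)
\textbf{Proof of Corollary~\ref{BalanceEquationForSmallEps}.} The plan is to combine the two preceding lemmas with a sign-change argument at the endpoints of the interval. First I would fix $k$ and recall that by the discussion preceding Lemma~\ref{TailForSmallEps}, for sufficiently small $\eps$ the right tail of the cup (or multicup) built over $c_k$ contains $v_{k+1}$, and the left tail of the cup (or multicup) built over $c_{k+1}$ also contains $v_{k+1}$; hence both sides of~\eqref{eq250903} are defined on a common neighborhood of $v_{k+1}$, and in fact on all of $(\tl_{k+1},\tr_k)$ once this intersection is nonempty. By Lemma~\ref{TailForSmallEps}, $\tr_k(\eps)\to v_{k+1}^{\mathrm r}$-ward and $\tl_{k+1}(\eps)\to v_{k+1}^{\mathrm l}$-ward as $\eps\to 0$; in particular $\tl_{k+1}<\tr_k$ for small $\eps$, so the interval $(\tl_{k+1},\tr_k)$ is a genuine nonempty interval shrinking toward $v_{k+1}$, and it is eventually contained in $(b_k,a_{k+1})$ (the cups do not overlap for small $\eps$, again by Lemma~\ref{TailForSmallEps} applied to both tails). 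Thus the hypothesis $(\tl_{k+1},\tr_k)\cap(b_k,a_{k+1}) = (\tl_{k+1},\tr_k)$ of Lemma~\ref{moninttails} holds.

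Next I would check the signs of the difference $\Delta(u) := \Fr(u;a_k,b_k;\eps) - \Fl(u;a_{k+1},b_{k+1};\eps)$ at the two endpoints. As $u\to \tr_k^-$ we have $\Fr(\tr_k)=0$ by Remark~\ref{rem220202} (the case $\tr_k=+\infty$ cannot occur here since $\tr_k$ lies in a bounded neighborhood of $v_{k+1}$ for small $\eps$), while $\Fl(\tr_k;a_{k+1},b_{k+1};\eps)<0$ because $\tr_k$ is an interior point of the left tail $(\tl_{k+1},a_{k+1})$ of the cup over $c_{k+1}$ and the force is strictly negative in the interior of its tail — here one uses Definition~\ref{211101} and the strict negativity statement from the construction, i.e. $\beta_{2,\mathrm L}'>0$ strictly on the tail so that $\Fl<\tors\le 0$ is impossible to vanish in the interior once it has vanished only at the endpoint; more directly, $\Fl$ vanishes only at $\tl_{k+1}$. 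Hence $\Delta(\tr_k^-) = 0 - \Fl(\tr_k) > 0$. Symmetrically, as $u\to\tl_{k+1}^+$ we get $\Fl(\tl_{k+1})=0$ by Remark~\ref{Rem200401}, while $\Fr(\tl_{k+1};a_k,b_k;\eps)<0$ since $\tl_{k+1}$ is interior to the right tail of the cup over $c_k$; therefore $\Delta(\tl_{k+1}^+) = \Fr(\tl_{k+1}) - 0 < 0$.

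Finally, $\Delta$ is continuous on $(\tl_{k+1},\tr_k)$ (both forces are continuous, being built from the continuous expressions~\eqref{eq101101},~\eqref{eq101102}), changes sign from negative to positive across the interval, and is strictly increasing there by Lemma~\ref{moninttails}. A strictly increasing continuous function on an interval that takes a negative limiting value at the left end and a positive limiting value at the right end has exactly one zero in the interior. That zero is the unique root $u=u_{k+1}\in(\tl_{k+1},\tr_k)$ of the balance equation~\eqref{eq250903}, which proves the corollary. The only point needing care — and the one I would spell out most carefully — is the strict-sign claim at the endpoints: one must know that the opposing force is strictly negative (not merely nonpositive) at $\tr_k$ and at $\tl_{k+1}$ respectively, which follows from the fact that a standard candidate has strict force inequalities in the interior of its tail together with Remark~\ref{rem220203} / Remark~\ref{Rem200401} guaranteeing that the torsion is strictly signed near the free end of each tail, so the force cannot stay at zero.
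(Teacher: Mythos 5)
Your proposal is correct and follows essentially the same route as the paper's proof: establish $[\tl_{k+1},\tr_k]\subset(b_k,a_{k+1})$ via Lemma~\ref{TailForSmallEps}, evaluate the difference of forces at the two tail endpoints (where one force vanishes and the other is strictly negative, giving opposite signs), and conclude existence by continuity and uniqueness by the strict monotonicity of Lemma~\ref{moninttails}. Your extra care about strict negativity of the opposing force in the interior of a tail is the same fact the paper invokes more tersely through its standing convention that forces are strictly negative inside tails.
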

\begin{proof}
First, by Lemma~\ref{TailForSmallEps},  we have~$[\tl_{k+1},\tr_k]\subset(b_k,a_{k+1})$ for sufficiently 
small~$\eps$. By definition of tails and continuity of forces, the force function is zero at the endpoint 
of its tail: $\Fr(\tr_k;a_k,b_k;\eps) = 0$ and~$\Fl(\tl_{k+1};a_{k+1},b_{k+1};\eps) = 0$. Therefore,
\begin{equation*}
\Fr(\tl_{k+1};a_k,b_k;\eps) - \Fl(\tl_{k+1};a_{k+1},b_{k+1};\eps) = \Fr(\tl_{k+1};a_k,b_k;\eps) < 0,
\end{equation*}
because~$\tl_{k+1} \in (b_k,\tr_k)$. Similarly,
\begin{equation*}
\Fr(\tr_k;a_k,b_k;\eps) - \Fl(\tr_k;a_{k+1},b_{k+1};\eps) = -\Fl(\tr_k;a_{k+1},b_{k+1};\eps) > 0.
\end{equation*}
By the Bolzano--Weierstrass principle, the balance equation has a root~$u_{k+1}$ on~$(\tl_{k+1},\tr_k)$. 
By Lemma~\ref{moninttails}, this root is unique.
\end{proof}

\begin{Rem}
\label{InfiniteVersion}
The results of the preceding lemma and the corollary hold true if one of the cups \textup(or both\textup) sit 
at infinity\textup, i.\,e.\textup, the corresponding~$c_k$ is infinite.
\end{Rem}

Now we have all the ingredients to prove Theorem~\ref{SimplePicture}.

\begin{proof}[Proof of Theorem~\ref{SimplePicture}.]
 First, we take~$\eps$ to be a small number such that we can build a full cup around each $c_k$ (or a multicup 
if $c_k$ is a solid root) with the help of Proposition~\ref{St3108}. Moreover, we take $\eps$ to be so small 
that all these figures have no intersections. This is possible because we have only finite number of roots by 
our assumptions. Then, if~$\eps$ satisfies the assumptions of Corollary~\ref{BalanceEquationForSmallEps} 
(together with Remark~\ref{InfiniteVersion}) for each~$k$, one can paste an angle with the vertex at the 
unique root of the balance equation~\eqref{eq250903} between each pair of consecutive cups or multicups, 
see~Corollary~\ref{cor250901}. The relation~$u_{k+1}\to v_{k+1}$ is an immediate consequence of 
Lemma~\ref{TailForSmallEps} and the inclusion~$u_{k+1} \in (\tl_{k+1},\tr_k)$.
\end{proof}

\section{Preparation to evolution}
\label{s42}

In this section, we collect technical lemmas that are useful for the evolution. There will be three groups of 
lemmas. The first group consists of lemmas that describe the places where the fictitious vertices of 
the third type may occur, the second is about tails and forces, and the third one works with the balance equation.

\subsection{Structural lemmas for chords}
\label{s421}\index{domain! chordal domain}

We make a convention on chordal domains: the inequalities~$\DL < 0$ and~$\DR<0$ hold true inside the chordal 
domain. Note that the same inequalities are required to build the standard candidate in a chordal domain.

\begin{Le}
\label{zerodifrootsInside}
Let~$\Ch([a,b],*)$ be a chordal domain. If~$\DL(a,b) = 0,$ then~$\tors$ decreases on the right of~$a;$ 
if~$\DR(a,b) = 0,$ then~$\tors$ increases on the left of~$b$. 
\end{Le}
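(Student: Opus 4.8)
The statement concerns a chordal domain $\Ch([a,b],*)$ where we know $\DL(a,b)\le 0$ and $\DR(a,b)\le 0$ (the convention preceding the lemma), and we are told one of these differentials actually vanishes. The claim is a ``directional monotonicity'' of $\tors$ near the corresponding endpoint. The approach I would take is to exploit the differential equation for $\DR$ derived earlier, namely equation~\eqref{eq150101}:
\[
\frac{d\DR(s(t),t)}{dt} = \kappa_2'g_1'\tors'+ \Big[\frac{(g_1'\kappa_2')'}{g_1'\kappa_2'} - \frac{\kappa_2'}{\kappa_2-\kappa}\Big]\DR(s(t),t),
\]
together with Condition~\ref{reg} (piecewise monotonicity of $\tors$, so $\tors'$ does not change sign in a one-sided neighborhood) and the fact that $\DR$ must stay $\le 0$ throughout the chordal domain.

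\textbf{Key steps.} First, suppose $\DR(a,b)=0$ and consider $t$ slightly to the left of $b$, where $\DR(s(t),t)\le 0$ with equality at $t=b$. By Condition~\ref{reg}, there is a left neighborhood of $b$ on which $\tors$ is monotone, so $\tors'$ has a constant sign there; I want to show that sign is $\ge 0$, i.e.\ $\tors$ increases on the left of $b$. Assume for contradiction $\tors'\le 0$ (and not identically zero, else $\tors$ is locally constant and one checks $\DR$ cannot reach $0$ from below transversally --- or it is identically $0$, a degenerate case handled by continuity/the convention). Use an integrating factor $M(t)>0$ for the linear ODE~\eqref{eq150101}, exactly as in the proof of Lemma~\ref{Lem180901}: writing $F(t)=\DR(s(t),t)$, one gets $\frac{d}{dt}\big(F(t)M(t)\big) = M(t)\kappa_2'g_1'\tors'$. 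Since $\kappa_2'>0$, $g_1'>0$, and $\tors'\le 0$ on this left neighborhood, $FM$ is non-increasing there; as $F(b)M(b)=0$, we conclude $F(t)M(t)\ge 0$, hence $F(t)=\DR(s(t),t)\ge 0$ for $t$ just left of $b$. But the convention forces $\DR<0$ inside the chordal domain --- contradiction (and if $F\equiv 0$ on an interval then $\tors'\equiv 0$ there, contradicting non-constancy, or pushing us into the trivial degenerate case). Therefore $\tors'\ge 0$ to the left of $b$, i.e.\ $\tors$ increases there.

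\textbf{The symmetric half and the main obstacle.} The statement about $\DL(a,b)=0$ forcing $\tors$ to decrease on the right of $a$ is entirely symmetric: one uses the analogous ODE for $\DL(s,t)$ (formally $\DL(s,t)=\DR(t,s)$, so differentiating in $s$ gives the mirror equation), the left-neighborhood monotonicity of $\tors$ from Condition~\ref{reg} near $a$, and again the sign constraint $\DL<0$ inside the domain. I do not expect a genuine difficulty here; the main thing to be careful about is the borderline/degenerate situation where $\tors'$ is identically zero in the relevant one-sided neighborhood --- then $\tors$ is locally constant, which is consistent with ``decreases''/``increases'' in the weak (non-strict) sense the lemma presumably intends, and one should make sure the statement is read with non-strict monotonicity (or else invoke that a constant $\tors$ near the endpoint together with $\DR(a,b)=0$ would actually force $\DR\equiv 0$ on a sub-chordal-domain, which is excluded by the standing convention). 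Apart from pinning down this degenerate case, the proof is a short application of the already-established differential identity~\eqref{eq150101} plus the integrating-factor trick already used for Lemma~\ref{Lem180901}.
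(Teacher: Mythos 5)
Your proof is correct and follows essentially the same route as the paper's: both differentiate $\DR(s(t),t)$ along the chordal domain (the paper via~\eqref{eq121201}, you via its equivalent~\eqref{eq150101}), apply an integrating-factor argument, and derive a contradiction with the standing convention $\DR<0$ inside the domain. The only cosmetic difference is that the paper multiplies by $e^{A}$ with $A'+U$ a positive measure instead of using the exact integrating factor, and your concern about the degenerate case $\tors'\equiv 0$ is unnecessary, since that case forces $\DR\equiv 0$ and is excluded by the same convention.
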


\begin{proof}
We treat the case of the right differential only. The remaining case is symmetric. We will use 
formula~\eqref{eq121201}. Let 
$$
U(t) = \frac{\dett{g'(t)}{g'''(t)}}{\dett{g'(t)}{g''(t)}}-\frac{\dett{g''(t)}{g(t)-g(s)}}{\dett{g'(t)}{g(t)-g(s)}},
$$
where $s=s(t)$, $s<t$, is the corresponding to the chordal domain function defined in Section~\ref{s33}. The function 
$U$ is in fact a measure of bounded variation in a left neighborhood of $b$, therefore we can find an increasing 
function $A$, such that $A'+U$ is a positive measure in a left neighborhood of $b$.
Consider the function $V(t) = e^{A(t)}\DR(s,t)$ in the left neighborhood of $b$. We use equations~\eqref{eq121201} 
and~\eqref{eq28042001}:
$$
e^{-A(t)}V'(t) = \frac{\torsion(t)}{\dett{g'(t)}{g''(t)}}+ (A'(t)+U(t))\DR(s,t).
$$ 
If~$\tors$ does not increase to the left of~$b$, then $\torsion(t)<0$ in a small left neighborhood of $b$
according to~\eqref{eq211003} and Condition~\ref{reg}. Since $A'(t)+U(t)>0$ and $\DR(s,t)<0$, we obtain $V'(t)<0$. Therefore, $V$ decreases, which contradicts to the fact $V(t)<0$ for $t<b$ and $V(b)=0$. 
\end{proof}

\begin{Le}
\label{zerodifrootsOutside}
Suppose that a pair~$(a,b)$ satisfies the cup equation and the chord~$[g(a),g(b)]$ has nonzero tails 
\textup(i.\,e.\textup, $\tl<a$ and $b<\tr$\textup). If~$\DL(a,b) = 0$\textup, then~$\tors$ increases 
on the left of~$a$\textup; if~$\DR(a,b) = 0$\textup, then~$\tors$ decreases on the right of~$b$. 
\end{Le}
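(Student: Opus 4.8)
The plan is to mirror the proof of Lemma~\ref{zerodifrootsInside} but run the monotonicity argument \emph{outside} the chord rather than inside it. Recall that on the right tail we have the force $\Fr(t;a,b)$, and by the Remark after Definition~\ref{Def031001} this force is (up to the positive factor $g_1'\kappa_2'$) essentially $\DR$ evaluated along the tangent foliation; more importantly, the tail force obeys the first–order ODE~\eqref{leftder}, i.e. $\Fr' = \tors' - \frac{\kappa_2'}{\kappa_2-\kappa}\Fr$ on $[b,\tr]$, with $\kappa$ now the slope of the \emph{right tangent}. I will treat only the case $\DR(a,b)=0$ (the case $\DL(a,b)=0$ is symmetric, using the left tail on $[\tl,a]$ and~\eqref{eq101102}).

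First I would record the boundary data: $\DR(a,b)=0$ means, via the Remark after Definition~\ref{Def031001}, that $\Fr(b;a,b) = \frac{\DR(a,b)}{g_1'(b)\kappa_2'(b)} = 0$. Since the chord has a nonzero right tail, $\tr>b$, the force $\Fr(\,\cdot\,;a,b)$ is defined and \emph{strictly negative} on the open interval $(b,\tr)$ — this is exactly the standard–candidate requirement on a tangent domain and the nonpositivity of forces from Subsection~\ref{s34bis}; strictness holds because $\Fr(b)=0$ while $\Fr$ cannot stay identically zero (that would force $\tors'\equiv0$ by~\eqref{leftder}, contradicting the existence of a genuine tail, cf.\ the argument for~\eqref{eq290704} and Remark~\ref{rem220203}). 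So we have $\Fr(b)=0$ and $\Fr<0$ immediately to the right of $b$.

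Now suppose, for contradiction, that $\tors$ does \emph{not} decrease on the right of $b$. By Condition~\ref{reg}, $\tors$ is piecewise monotone, so on some right neighborhood $(b,b+\delta)$ it is monotone and, by our assumption, nondecreasing; hence $\tors'\ge 0$ there, and by~\eqref{eq211003} $\torsion\ge0$ there. I would then introduce the integrating factor $M(t)=\exp\bigl(\int_b^t \frac{\kappa_2'}{\kappa_2-\kappa}\bigr)$, which is positive on $(b,\tr)$ (here $\kappa_2-\kappa>0$ for a right tangent), and rewrite~\eqref{leftder} as $\frac{d}{dt}\bigl(M(t)\Fr(t)\bigr) = M(t)\,\tors'(t) \ge 0$ on $(b,b+\delta)$. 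Since $M(b)\Fr(b)=0$, integrating from $b$ gives $M(t)\Fr(t)\ge 0$, hence $\Fr(t)\ge 0$, on $(b,b+\delta)$ — directly contradicting $\Fr<0$ just to the right of $b$. Therefore $\tors$ must decrease on the right of $b$.

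The only subtlety — and the one step I would be careful about — is justifying that $\Fr$ is strictly negative (not merely nonpositive) on a punctured right neighborhood of $b$, since the whole contradiction hinges on that strictness. I would handle it exactly as in the proof of Lemma~\ref{zerodifrootsInside} / Remark~\ref{rem220203}: if $\Fr$ vanished on an interval $(b,b')$, then $\frac{d}{dt}(M\Fr)=M\tors'=0$ there, so $\tors'\equiv0$, so $\tors$ is constant near $b$ on the right; combined with $\DR(a,b)=0$ (which gives $\tors(b)=\kappa_3'(b)/\kappa_2'(b)$ determined by $f''(b)$) and the cup equation this forces the foliation degenerate in a way incompatible with $b<\tr$, contradiction. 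With strictness in hand, the sign argument above closes the proof; the left-differential case is obtained verbatim by replacing $\Fr$, $[b,\tr]$, $\kappa_2-\kappa>0$ with $\Fl$, $[\tl,a]$, $\kappa_2-\kappa<0$ and reversing the direction of monotonicity, using~\eqref{eq101102} and Remark~\ref{Rem150401}.
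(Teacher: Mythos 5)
Your proof is correct and is essentially the paper's argument: the paper simply reads off from the explicit tail formula~\eqref{eq101101} (with $\DR(a,b)=0$) that, $\tors$ being monotone near $b$ by Condition~\ref{reg}, a nondecreasing $\tors$ would force $\Fr\ge 0$ on a right neighborhood of $b$, contradicting $\tr>b$ — and your integrating-factor computation of~\eqref{leftder} reproduces exactly that formula. The strictness of $\Fr<0$ on $(b,\tr)$ that you labor over in your final paragraph needs no separate (and in your sketch somewhat vague) argument: it is built into the hypothesis of a nonzero tail, since by Definition~\ref{Def031002} the tail is defined via the existence of a \emph{standard} candidate on $\Rt([b,\tau])$, which by Definition~\ref{211101} requires $\beta_2'<0$, hence $\Fr<0$, in the interior.
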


\begin{proof}
We treat the case of the right differential only. The remaining case is symmetric. We will use 
formula~\eqref{eq101101}. By Condition~\ref{reg} for the function~$f$, the function~$\tors$ either 
increases or decreases in a right neighborhood of~$b$. If it increases, then the force $\Fr$ is 
non-negative, which contradicts the assumption~$\tr > b$. Therefore,~$\tors$ decreases on the right of~$b$.
\end{proof}

Combining these two lemmas we obtain the following corollary: during the evolution, 
the differentials can vanish only in some very special situations (we use notation from Definition~\ref{roots} in the corollary below). 

\begin{Cor}
\label{RootCorollary}
Suppose that the chordal domain~$\Ch([a,b],*)$ has nonzero tails. If~$\DR(a,b)=0,$ then~$b = c_i$ 
for some~$i;$ if~$\DL(a,b)=0,$ then~$a = c_i$ for some~$i$.
\end{Cor}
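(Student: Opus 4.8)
The statement is an immediate combination of Lemma~\ref{zerodifrootsInside} and Lemma~\ref{zerodifrootsOutside}, together with the definition of the essential roots $c_i$ (Definition~\ref{roots}). The plan is to treat the right differential, say $\DR(a,b)=0$, the case $\DL(a,b)=0$ being symmetric. The chordal domain $\Ch([a,b],*)$ sits above the chord $[g(a),g(b)]$, so for its function $s(t)$ we have $s(t)\le t$ near the top, and the domain provides a one-sided situation at $b$: the chordal domain lies to the left of $b$ (in the $t$-parameter) while the right tail extends to the right of $b$.

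\textbf{Key steps.} First, apply Lemma~\ref{zerodifrootsInside} with the hypothesis $\DR(a,b)=0$: this yields that $\tors$ is increasing on the left of $b$, i.e. $\torsion>0$ (equivalently $\tors'>0$) on some left neighborhood of $b$, using that the sign of $\torsion$ coincides with the sign of $\tors'$ by~\eqref{eq211003}. Second, since the chordal domain has nonzero tails by hypothesis (in particular $b<\tr$), apply Lemma~\ref{zerodifrootsOutside} with $\DR(a,b)=0$: this yields that $\tors$ is decreasing on the right of $b$, i.e. $\tors'<0$ (equivalently $\torsion<0$) on some right neighborhood of $b$. Third, combine: $\torsion$ is positive on a left neighborhood of $b$ and negative on a right neighborhood of $b$, so $b$ is a point (or interval, if $\tors$ is constant equal to zero in a neighborhood) where $\torsion$ ``changes sign'' from $+$ to $-$. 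By Definition~\ref{roots} this is exactly the defining property of the points $c_i$: the measure $\torsion$ changes sign from $+$ to $-$ at $c_i$. Hence $b=c_i$ for some $i$. The argument for $\DL(a,b)=0$ is the mirror image: Lemma~\ref{zerodifrootsInside} gives $\tors$ decreasing on the right of $a$ (so $\torsion<0$ to the right of $a$), and Lemma~\ref{zerodifrootsOutside} (using $\tl<a$) gives $\tors$ increasing on the left of $a$ (so $\torsion>0$ to the left of $a$); thus $\torsion$ changes sign from $+$ to $-$ at $a$ as well... wait, that would again be a $c_i$; let me recheck the orientations. For $\DL(a,b)=0$: Lemma~\ref{zerodifrootsInside} says $\tors$ increases on the left of $b$ — no, it says (for $\DL$) $\tors$ \emph{decreases} on the right of $a$, meaning $\torsion<0$ to the right of $a$; Lemma~\ref{zerodifrootsOutside} says $\tors$ \emph{increases} on the left of $a$, meaning $\torsion>0$ to the left of $a$. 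So indeed $\torsion$ changes from $+$ to $-$ at $a$, which by Definition~\ref{roots} means $a=c_i$ for some $i$. (The $v_i$ are where $\torsion$ changes from $-$ to $+$, which does not occur here.)

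\textbf{Main obstacle.} There is essentially no obstacle: the two preceding lemmas do all the work, and the only point requiring a little care is the bookkeeping of which one-sided neighborhood of which endpoint each lemma controls, and matching the resulting sign pattern ($+$ to the left, $-$ to the right) against the precise wording of Definition~\ref{roots} so as to conclude $c_i$ rather than $v_i$. One should also note that the ``nonzero tails'' hypothesis is what licenses the application of Lemma~\ref{zerodifrootsOutside} (that lemma explicitly assumes $\tl<a$ and $b<\tr$), so the corollary's hypothesis is used in an essential way. The proof is therefore just: invoke Lemma~\ref{zerodifrootsInside} and Lemma~\ref{zerodifrootsOutside}, observe the sign change of $\torsion$, and cite Definition~\ref{roots}.
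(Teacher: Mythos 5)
Your proof is correct and is exactly the paper's argument: the corollary is stated there as an immediate combination of Lemma~\ref{zerodifrootsInside} and Lemma~\ref{zerodifrootsOutside}, with the sign pattern of $\torsion$ ($+$ on the left, $-$ on the right of the endpoint) matched against Definition~\ref{roots} to identify the endpoint as a $c_i$. Your bookkeeping of the one-sided neighborhoods, including the self-corrected case $\DL(a,b)=0$, is right, and you correctly observe that the nonzero-tails hypothesis is what licenses the use of Lemma~\ref{zerodifrootsOutside}.
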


\subsection{Tails growth lemmas}
\label{s422}\index{force! function}

\begin{Le}\label{biggercupsmallerforce}
Suppose~$\Ch([a_1,b_1],[a_0,b_0])$ is embedded into~$\Ch([a_2,b_2],[a_0,b_0]),$ i.\,e.\textup, the foliation 
of the former chordal domain coincides with some part of the foliation of the latter. Then the forces 
of~$\Ch([a_1,b_1],*)$ do not exceed the corresponding forces of~$\Ch([a_2,b_2],*)$. More precisely\textup, 
they are equal on~$[a_1,a_0]\cup[b_0,b_1],$ while outside~$[a_1,b_1]$ the inequalities are strict.
\end{Le}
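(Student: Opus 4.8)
\textbf{Plan of proof for Lemma~\ref{biggercupsmallerforce}.}

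The plan is to compare the two right force functions $\Fr(\,\cdot\,;a_1,b_1)$ and $\Fr(\,\cdot\,;a_2,b_2)$ on their common right tail and similarly on the left, using the first-order linear ODE~\eqref{leftder} that both satisfy: $F' = \tors' - \frac{\kappa_2'}{\kappa_2-\kappa}F$ on the tangent part of the tail. The crucial observation is that on $[a_1,a_0]\cup[b_0,b_1]$ the foliation of $\Ch([a_1,b_1],[a_0,b_0])$ literally sits inside that of $\Ch([a_2,b_2],[a_0,b_0])$ — the chords through these parameters are the same — so by the Remark after Definition~\ref{Def031001} (the force in a chordal domain equals $D/(g_1'\kappa_2')$ with $D$ the appropriate differential $\DR$ or $\DL$ of the chord through that parameter), the two force functions coincide pointwise on $[a_1,a_0]\cup[b_0,b_1]$. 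This gives the equality claim for free.

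Next I would establish the strict inequality outside $[a_1,b_1]$. The key comparison input is the difference of the \emph{right endpoints} of the upper chords: since $\Ch([a_2,b_2],[a_0,b_0])$ is the larger chordal domain, we have $b_1 < b_2$ (and $a_2 < a_1$). At the parameter $b_1$ the force of the smaller chordal domain is $\Fr(b_1;a_1,b_1) = \DR(a_1,b_1)/(g_1'(b_1)\kappa_2'(b_1))$. For the larger domain, $b_1$ is an \emph{interior} parameter (since $b_1<b_2$), and there the force equals $\DR(a(b_1),b_1)/(g_1'(b_1)\kappa_2'(b_1))$ where $(a(b_1),b_1)$ is the corresponding chord in the larger foliation; by the embedding this latter chord has $a(b_1) = a_1$, hence the two forces \emph{also} agree at $b_1$. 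So both forces share the same value $F_0 < 0$ at $t=b_1$. For $t$ slightly to the right of $b_1$: the smaller domain's force evolves by~\eqref{leftder} with $\kappa = \kappar$ the slope of the \emph{right tangent} emanating from $g(b_1)$; the larger domain's force at the same $t>b_1$ is still a \emph{chordal} force (because $t$ is still interior to the larger chordal domain, $t<b_2$), so it evolves by the chordal version of the derivative formula~\eqref{eq150101}, i.e. with $\kappa = \kappachord$ the slope of the moving upper chord of the larger domain. Lemma~\ref{DifForcePoL} (formula~\eqref{eq170101}) computes exactly the derivative of the chordal force with respect to the right endpoint $b$ and shows its sign is governed by $\DR(a,b)\bigl[\frac{1}{\kappa_2-\kappar}-\frac{1}{\kappa_2-\kappachord}\bigr]$; since $\DR<0$ inside a chordal domain and $\kappachord < \kappar < \kappa_2$ (the tangent is steeper than the chord, both below $\kappa_2$), this bracket is negative, so increasing $b$ \emph{decreases} the chordal force. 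Thus continuing along the larger chordal domain from $b_1$ to $b_2$ strictly decreases the force below the value it would have had, so by the time we reach $b_2$ and onward onto the common tangent tail, the larger domain's force is strictly smaller.

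To turn this into a clean statement on the whole tail I would argue as follows. Both forces are negative everywhere on the relevant interval. On $(b_1,b_2)$ the larger force is chordal and, by the monotonicity just described (integrating~\eqref{eq170101} in $b$ from $b_1$ to the current parameter), it is strictly less than $F_0$, whereas the smaller force on $(b_1,\tr^{(1)})$ is a tangent force starting from $F_0$. A direct comparison on $(b_1,b_2)$ requires knowing the smaller force does not itself drop faster; here I would invoke that on the overlap the ODE~\eqref{leftder} is \emph{linear} in $F$ with the \emph{same} inhomogeneous term $\tors'$, so the difference $F^{(2)}-F^{(1)}$ satisfies a homogeneous linear ODE $(F^{(2)}-F^{(1)})' = -\frac{\kappa_2'}{\kappa_2-\kappa}(F^{(2)}-F^{(1)})$ only when the two $\kappa$'s coincide — which they do \emph{beyond} $b_2$ but not on $(b_1,b_2)$. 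So the honest route is: show $F^{(2)}(b_2) < F^{(1)}(b_2)$ by the Lemma~\ref{DifForcePoL} integration argument above, and then past $b_2$ both forces obey the \emph{identical} linear ODE with identical right-hand side (same $\tors'$, same $\kappa$, same $\kappa_2$), so their difference $F^{(1)}-F^{(2)}$ is multiplied by a positive integrating factor and preserves its (strictly positive) sign throughout the common tangent tail. On $(b_1,b_2)$ the strict inequality follows because $F^{(1)}$ is continuous with $F^{(1)}(b_1)=F_0$ and on a right neighborhood of $b_1$ it evolves by the tangent ODE while $F^{(2)}$ evolves by the strictly-faster-decreasing chordal evolution; a short Gronwall-type comparison on $[b_1,b_2]$, or simply comparing derivatives at points where the two first touch, rules out equality. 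The left tail is symmetric, using~\eqref{eq170102} and the left analogues.

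\textbf{Main obstacle.} The delicate point is the behaviour on the ``transition zone'' $(b_1,b_2)$, where the larger force is still chordal while the smaller one has already become a tangent force: here the two forces obey \emph{different} ODEs, so one cannot simply quote uniqueness of linear ODEs. The right tool is Lemma~\ref{DifForcePoL}: it says precisely that making the chord longer (increasing $b$) strictly decreases the chordal force, with the quantitative sign coming from $\kappachord < \kappar < \kappa_2$ and $\DR<0$. Getting the bookkeeping of which $\kappa$ plays which role, and checking that the ordering $\kappachord(b) < \kappar(b)$ genuinely holds for all parameters in the transition zone (it follows from convexity of $\dfi\Omega$ and the fact that a long chord is steeper than any sub-chord but less steep than the right tangent at its endpoint), is the part that needs the most care; everything afterward is the routine propagation of a strict sign through a linear first-order ODE with a positive integrating factor.
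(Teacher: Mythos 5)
You have identified the right tool (Lemma~\ref{DifForcePoL}, formula~\eqref{eq170101}), which is exactly what the paper's proof uses, but your sign bookkeeping goes wrong and your final conclusion is the \emph{negation} of the lemma. The geometric ordering is $\kappar(b)<\kappachord(b)<\kappa_2(b)$, not $\kappachord<\kappar$ as you assert: the right tangent from $g(b)$ dips below any chord $[g(a),g(b)]$ that stays inside $\Omega$ (check the parabola: the tangent slope is $2b-2\eps$ while the chord slope is $a+b>2b-2\eps$). With the correct ordering the bracket $\frac{1}{\kappa_2-\kappar}-\frac{1}{\kappa_2-\kappachord}$ is negative, and multiplied by $\DR(a,b)<0$ and the positive exponential this makes $\partial\Fr(t;a(b),b)/\partial b>0$: the force at a fixed point $t$ of the tail \emph{increases} as the chordal domain enlarges. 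That is what yields the lemma, since then $\Fr(t;a_1,b_1)<\Fr(t;a_2,b_2)$ for $t$ outside $[a_1,b_1]$. Your chain instead concludes that "increasing $b$ decreases the chordal force" and that "the larger domain's force is strictly smaller" --- note that even internally this does not follow from your own premises (a negative bracket times $\DR<0$ gives a positive derivative), and it contradicts the statement you are proving, which says the forces of the \emph{smaller} domain do not exceed those of the larger one.

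A secondary remark: once you read \eqref{eq170101} correctly as the derivative in $b$ of $\Fr(t;a(b),b)$ at a \emph{fixed} tail point $t$, the whole "transition zone" $(b_1,b_2)$ and the Gronwall/ODE comparison become unnecessary. For $t\in(b_1,b_2)$ you integrate in $b$ from $b_1$ to $t$ (reaching the chordal force of the larger domain at $t$), and for $t\ge b_2$ you integrate from $b_1$ to $b_2$; monotonicity in $b$ gives the strict inequality in one stroke, and the equality on $[a_1,a_0]\cup[b_0,b_1]$ is, as you say, immediate from the coincidence of the foliations there. The left tail is symmetric via \eqref{eq170102} with the reversed ordering $\kappal(a)>\kappachord(a)>\kappa_2(a)$.
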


\begin{proof}
For the right forces the statement follows from~\eqref{eq170101} and the fact that 
$\kappar(b)<\kappachord(b)<\kappa_2(b)$. Note that $\DR(a,b)<0$ therefore, $\Fr$ increases as 
the chordal domain enlarges, i.\,e., $b$ increases. Similar arguments work for the case of the left forces: 
we use~\eqref{eq170102} and the opposite relation between the coefficients: 
$\kappal(a)>\kappachord(a)>\kappa_2(a)$. Thus~$\Fl$ increases as the chordal domain enlarges, 
i.\,e., $a$ decreases. 
\end{proof}

\begin{Cor}
\label{TailsGrowthNonEv}
Suppose~$\Ch([a_1,b_1],*)$ is embedded into~$\Ch([a_2,b_2],*)$ in the sense of 
Lemma\textup{~\ref{biggercupsmallerforce}}. Then the tails of the former chordal domain strictly 
contain the tails of the latter. 
\end{Cor}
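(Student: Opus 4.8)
The plan is to deduce this corollary directly from Lemma~\ref{biggercupsmallerforce} together with the definition of the tails and the monotonicity properties of forces (Remark~\ref{Rem150401}). First I would treat the right tails; the left tails are handled symmetrically. Recall that the right tail of $\Ch([a,b],*)$ is the interval $(a,\tr)$, where $\tr$ is the supremum of those $\tau\ge b$ for which there is a standard Bellman candidate on $\Rt([b,\tau])$ glued continuously to the chordal domain, i.e.\ satisfying the balance of forces at $b$. Equivalently, using formula~\eqref{eq101101} for the force extended via tangents, $\tr$ is the first point to the right of $b$ where the extended right force function $\Fr(\,\cdot\,;a,b)$ vanishes (or $+\infty$ if it never does), since the force is non-positive, continuous, and by Remark~\ref{rem220202} equals zero at a finite $\tr$.

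Now suppose $\Ch([a_1,b_1],*)$ is embedded into $\Ch([a_2,b_2],*)$, so $a_2\le a_1$ and $b_1\le b_2$. By Lemma~\ref{biggercupsmallerforce} the forces satisfy $\Fr(\,\cdot\,;a_1,b_1)\le \Fr(\,\cdot\,;a_2,b_2)$ everywhere they are both defined, with strict inequality outside $[a_1,b_1]$; in particular $\Fr(t;a_1,b_1)<\Fr(t;a_2,b_2)\le 0$ for $t>b_1$ as long as both are defined. Let $\tr^{(2)}$ be the right endpoint of the tail of $\Ch([a_2,b_2],*)$. I claim $\tr^{(1)}>\tr^{(2)}$, where $\tr^{(1)}$ is the corresponding endpoint for $\Ch([a_1,b_1],*)$. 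Indeed, if $\tr^{(2)}=+\infty$ there is nothing to prove. If $\tr^{(2)}<+\infty$, then $\Fr(\tr^{(2)};a_2,b_2)=0$, hence $\Fr(\tr^{(2)};a_1,b_1)<0$ by the strict inequality of Lemma~\ref{biggercupsmallerforce} (note $\tr^{(2)}>b_2\ge b_1$, so this point lies strictly outside $[a_1,b_1]$). Since the extended force $\Fr(\,\cdot\,;a_1,b_1)$ is continuous and strictly negative at $\tr^{(2)}$, it remains strictly negative on a neighborhood; as it can only vanish (if at all) beyond that, its first zero $\tr^{(1)}$ to the right of $b_1$ satisfies $\tr^{(1)}>\tr^{(2)}$. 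The same argument applied to the left forces, using $\Fl$ and formula~\eqref{eq101102} together with the strict inequality part of Lemma~\ref{biggercupsmallerforce} outside $[a_1,b_1]$, gives $\tl^{(1)}<\tl^{(2)}$. Combining the two inclusions, $(\tl^{(1)},\tr^{(1)})\supsetneq(\tl^{(2)},\tr^{(2)})$, which is exactly the claimed strict containment of tails.

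The only mild subtlety — and the step I would be most careful about — is making sure the strict inequality of forces is genuinely available at the relevant point: one must check that $\tr^{(2)}$ lies in the region where Lemma~\ref{biggercupsmallerforce} gives a \emph{strict} rather than merely non-strict inequality, i.e.\ outside the closed interval $[a_1,b_1]$. This is immediate since $\tr^{(2)}>b_2\ge b_1$ forces $\tr^{(2)}\notin[a_1,b_1]$; symmetrically $\tl^{(2)}<a_2\le a_1$. One should also note that both extended force functions are defined on a common interval reaching past $\tr^{(2)}$: this follows because wherever $\Fr(\,\cdot\,;a_2,b_2)$ is defined and non-positive, so is $\Fr(\,\cdot\,;a_1,b_1)$, being even more negative, so the standard tangent candidate persists there by Proposition~\ref{NewRightTangentsCandidate}. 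No genuinely new computation is needed beyond what is already packaged in Lemma~\ref{biggercupsmallerforce} and Remark~\ref{rem220202}.
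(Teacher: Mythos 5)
Your argument is correct and is exactly the paper's (one-line) proof, merely written out in full: the paper also deduces the strict growth of the tails from the strict force inequality of Lemma~\ref{biggercupsmallerforce} outside $[a_1,b_1]$ combined with the fact that a finite tail endpoint is a zero of the (continuous, non-positive) force. The only cosmetic slip is that the right tail of $\Ch([a,b],*)$ is $(t_-,\tr)$ with $t_-$ the parameter of the bottom chord, not $(a,\tr)$; this is immaterial here since both domains share the same bottom chord, so the containment reduces to $\tr^{(1)}>\tr^{(2)}$ and $\tl^{(1)}<\tl^{(2)}$, which you prove.
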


\begin{proof}
This follows from Lemma~\ref{biggercupsmallerforce} and the definition of tails. 
\end{proof}

The previous statements can be reformulated informally: the less the chordal domain is, the larger the tails 
are and the less (the larger in absolute value) the forces are inside the tails. The following lemma describes 
monotonicity of forces of a fixed chordal domain for a family of enlarging domains $\Omega_\eps$.

\begin{Le}\label
{EvolutionalTailGrowthLemma}
Let~$\Ch([a,b],*)$ be a chordal domain\textup, let $F$ be its left or right force. 
If~$t$ belongs to the closure of the corresponding tail of~$\Ch([a,b],*),$ then 
\begin{equation*}
\frac{\partial F(t;\eps)}{\partial \eps} \leq 0,
\end{equation*}
and the inequality is strict outside~$[a,b]$.
\end{Le}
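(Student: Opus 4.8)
\textbf{Proof plan for Lemma~\ref{EvolutionalTailGrowthLemma}.}

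The plan is to treat the right force $F = \Fr(t;a,b;\eps)$ (the left case being symmetric) and to differentiate the explicit representation~\eqref{eq101101} with respect to $\eps$. In that formula the $\eps$-dependence enters in three places: through the slope $\kappa = \kappar$ of the right tangent appearing in the exponential kernels, through the endpoints $a = a(\eps)$, $b = b(\eps)$ of the upper chord, and through the boundary term $\DR(a,b)/(g_1'(b)\kappa_2'(b))$. However, I would first reduce the problem to differentiating at a fixed chord. Indeed, the quantity whose sign we want is the $\eps$-derivative with $t$ held fixed; by the chain rule this splits into the partial $\eps$-derivative with $(a,b)$ frozen plus the contributions $\frac{\partial F}{\partial a}a'(\eps) + \frac{\partial F}{\partial b}b'(\eps)$. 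The latter two are already controlled: by Lemma~\ref{DifForcePoL}, formula~\eqref{eq170101}, together with the monotonicity $a'(\eps) < 0 < b'(\eps)$ of the chord endpoints under evolution (the chord grows, a fact recorded in the chapter's opening paragraph and to be proved rigorously in Proposition~\ref{InductionStepForChordalDomain}; at this point one may cite Lemma~\ref{biggercupsmallerforce}/Corollary~\ref{TailsGrowthNonEv} to the effect that enlarging the chord enlarges the tails and decreases the forces), those terms are $\le 0$ and strictly negative for $t$ outside $[a,b]$.

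The main work is therefore the genuinely ``evolutional'' term: the partial $\eps$-derivative of~\eqref{eq101101} with the chord $[g(a),g(b)]$ held fixed, coming solely from the dependence $\kappa = \kappar(\tau;\eps)$ in the kernels $\exp(\mp\int \kappa_2'/(\kappa_2-\kappa))$. Here I would use the hypothesis $\frac{\partial}{\partial\eps}\kappa(t,\eps)\ne 0$ imposed in Subsection~\ref{cond}, more precisely the fact that $\Xi_\eps$ decreases by inclusion so that $\kappar$ \emph{decreases} in $\eps$: $\frac{\partial \kappar}{\partial\eps} < 0$. Since on the right tail $\kappa_2 - \kappar > 0$ and $\kappa_2' > 0$, differentiating $\int_b^\tau \kappa_2'/(\kappa_2-\kappar)$ in $\eps$ gives a quantity of a definite sign, and hence each exponential kernel $\exp(-\int_\tau^t \kappa_2'/(\kappa_2-\kappar))$ is monotone in $\eps$. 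Plugging this into the differentiated version of~\eqref{eq101101} and using $\tors'$-integration against these kernels, one wants to reorganize the resulting expression — exactly as in the proof of Lemma~\ref{DifForcePoL}, where~\eqref{eq150101} was used to collapse a bracket of four terms into the single clean factor $\frac{\DR}{g_1'}(\frac{1}{\kappa_2-\kappar}-\frac{1}{\kappa_2-\kappachord})$ — into a product of $\DR(a,b) < 0$ with a positive kernel and a difference of reciprocals of the form $\frac1{\kappa_2-\kappar}-\frac1{\kappa_2-\tilde\kappa}$ that has the right sign because of the ordering $\kappar(b;\eps) < \kappachord(b) < \kappa_2(b)$ and the monotonicity of $\kappar$ in $\eps$. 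This yields $\le 0$, with strictness wherever $\DR < 0$ and the kernel is nondegenerate, i.e.\ for $t$ strictly outside $[a,b]$ (inside $[a,b]$ the force equals $\DR/(g_1'\kappa_2')$ times a fixed geometric factor and the chord is frozen, so the derivative is controlled by the chord-endpoint terms which vanish there).

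The step I expect to be the genuine obstacle is the bookkeeping in that last reorganization: matching the three sources of $\eps$-dependence so that, after invoking~\eqref{eq150101} and~\eqref{leftder}, everything telescopes to a single signed product rather than a sum of terms of mixed sign. A secondary subtlety is the endpoint behaviour at $t = \tr$ (the tip of the tail): there $F(\tr;\eps) = 0$ by Remark~\ref{rem220202}, and one should check the differentiation is legitimate up to the closure of the tail, but this follows from continuity of all the data in $(t,\eps)$ (the smoothness assumptions collected in Subsection~\ref{Sec091201}) and from the fact, noted in Remark~\ref{rem220203}, that $\tors' > 0$ just to the left of $\tr$, so the sign of $F'$ there is unambiguous. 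I would also remark that the strict inequality outside $[a,b]$ is what drives the monotone growth of tails under evolution and hence underpins the induction steps of Chapter~\ref{C4}; the non-strict inequality on $[a,b]$ is harmless because on the chord the force is carried by the frozen-chord quantity and inherits its sign from the boundary terms already handled.
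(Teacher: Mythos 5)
There are two issues, one of framing and one of substance. First, in this lemma the chordal domain $\Ch([a,b],*)$ is \emph{fixed}: $a$ and $b$ do not depend on $\eps$ (the sentence preceding the lemma says exactly this — it is about the forces of a fixed chordal domain as $\Omega_\eps$ enlarges). Your entire first paragraph, decomposing $\partial_\eps F$ into a frozen-chord term plus $\frac{\partial F}{\partial a}a'(\eps)+\frac{\partial F}{\partial b}b'(\eps)$ and controlling the latter via Lemma~\ref{DifForcePoL}, is therefore not part of this statement; worse, justifying $a'<0<b'$ by appeal to Proposition~\ref{InductionStepForChordalDomain} would be circular, since that proposition ultimately rests on the present lemma (through Lemma~\ref{FullCupEvolutionTails} and Corollary~\ref{FlowMonotonicityCorollary}, which is where the chord motion and the $\eps$-dependence are actually combined). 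Inside $[a,b]$ the force simply does not depend on $\eps$, so the derivative is $0$ there — no chord-endpoint bookkeeping is needed.

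Second, for what you correctly identify as the main term, your plan is to differentiate the integral representation~\eqref{eq101101} in $\eps$ and hope the result telescopes as in the proof of Lemma~\ref{DifForcePoL}. It does not, at least not directly: the term $\int_b^t \partial_\eps\bigl[\exp(-\int_\tau^t\frac{\kappa_2'}{\kappa_2-\kappa})\bigr]\tors'(\tau)\,d\tau$ has no definite sign because $\tors'$ is a signed measure, and formula~\eqref{eq150101} concerns $\partial/\partial b$ along the cup equation, not $\partial/\partial\eps$, so the analogy does not transfer. You flag this reorganization as "the genuine obstacle" but do not resolve it, and that is precisely the gap. The paper's device is different and cleaner: differentiate the ODE~\eqref{leftder} in $\eps$ rather than its solution. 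Setting $y(t)=\partial_\eps F(t;\eps)$ one finds that $y$ itself satisfies the first-order linear equation
\begin{equation*}
y' + \frac{\kappa_2'}{\kappa_2-\kappa}\,y \;=\; -\frac{\kappa_2'\,F}{(\kappa_2-\kappa)^2}\,\frac{\partial\kappa}{\partial\eps},
\end{equation*}
with $y(b+)=0$; the variation-of-constants formula~\eqref{firstordersolution} then exhibits $y$ as the integral of a source that is strictly negative on the tail (since $F<0$, $\kappa_2'>0$, and $\partial_\eps\kappar<0$), giving the claim at once. I recommend you replace the direct differentiation of~\eqref{eq101101} by this ODE argument.
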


\begin{proof}
We treat the case of the right force only, the other one is symmetric. First, we note that inside~$[a,b]$ 
the force does not depend on~$\eps$, therefore,~$\frac{\partial \Fr(t;\eps)}{\partial \eps} = 0$. 
Second, we use formula~\eqref{leftder} and see that  
\begin{equation*}
\Fr'(t;\eps) + \frac{\kappa_2'(t)}{\kappa_2(t)-\kappa(t;\eps)}\Fr(t;\eps) = \tors'(t).
\end{equation*}
outside this interval. We cautiously differentiate this equation with respect to~$\eps$ and see that 
\begin{equation*}
\frac{\partial }{\partial \eps}\Fr'(t;\eps) + 
\frac{\kappa_2'(t)}{\kappa_2(t)-\kappa(t;\eps)}\frac{\partial }{\partial \eps}\Fr(t;\eps)=
-\frac{\kappa_2'(t)  \Fr(t;\eps) }{(\kappa_2(t)-\kappa(t;\eps))^2}\frac{\partial }{\partial \eps}\kappa(t;\eps).
\end{equation*}
After interchanging the differentiations with respect to~$t$ and~$\eps$ in the first summand we see 
that~$y(t)=\frac{\partial }{\partial \eps}\Fr(t;\eps)$ is a solution of the first-order differential 
equation with respect to~$t$: 
\begin{equation*}
y'(t) + \frac{\kappa_2'(t)}{\kappa_2(t)-\kappa(t;\eps)} y(t) = 
-\frac{\kappa_2'(t)  \Fr(t;\eps) }{(\kappa_2(t)-\kappa(t;\eps))^2}\frac{\partial }{\partial \eps}\kappa(t;\eps).
\end{equation*}
We use~\eqref{firstordersolution} to express the solution: 
\begin{equation*}
\frac{\partial }{\partial \eps}\Fr(t;\eps) = \exp\Big(\!-\!\int_{b}^t\frac{\kappa_2'}{\kappa_2-\kappa}\Big)
\left(\int_{b}^t \exp\Big(\int_{b}^\tau\frac{\kappa_2'}{\kappa_2-\kappa}\Big)
\Big[\!-\!\frac{\kappa_2'  \Fr }{(\kappa_2-\kappa)^2}\frac{\partial \kappa}{\partial \eps}\,\Big]
(\tau)\,d\tau+\const(\eps) \right),
\end{equation*}
were the constant is zero, because~$\lim_{t \to b+}\frac{\partial }{\partial \eps}\Fr(t;\eps) = 0$ 
(this limit relation can be verified by a straightforward calculation using formula~\eqref{eq101101}).
Now, the result follows immediately, because~$\Fr < 0$ inside the tail, $\kappa_2'>0$ and 
$\frac{\partial \kappa(t;\eps)}{\partial \eps}<0$ by geometrical reasons.
\end{proof}

\begin{Cor}
\label{TailsGrowthCorollary}
Let~$\Ch([a,b],*)$ be a chordal domain with nonzero tails. If we increase~$\eps$ a little\textup, 
the tails of the chordal domain strictly enlarge. 
\end{Cor}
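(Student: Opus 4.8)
The plan is to deduce the statement directly from Lemma~\ref{EvolutionalTailGrowthLemma} together with the fact that the far endpoint of a finite tail is exactly the point at which the extended force function vanishes (Remark~\ref{rem220202}). I will argue for the right tail; the left tail is symmetric, with~\eqref{eq101102} and Remark~\ref{Rem200401} replacing~\eqref{eq101101} and Remark~\ref{rem220202}. Throughout, $a$ and $b$ denote the parameters of the upper chord of $\Ch([a,b],*)$ and are kept fixed while $\eps$ varies; recall that $(a,b)$ satisfies the cup equation~\eqref{urlun1}, which does not involve $\eps$, and that $\DR(a,b)<0$ by the convention on chordal domains.

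First I would write $\Fr(t;\eps)=\Fr(t;a,b;\eps)$ for the right force, given for $t\ge b$ by~\eqref{eq101101}, and fix $\eps$ with $\tau_0\df\tr(\eps)<+\infty$. Then $\Fr(\tau_0;\eps)=0$ by Remark~\ref{rem220202}, while $\Fr(\,\cdot\,;\eps)<0$ on $[b,\tau_0)$: indeed $\Fr(b;\eps)=\DR(a,b)/(g_1'(b)\kappa_2'(b))<0$ and this value is independent of $\eps$, and negativity on $(b,\tau_0)$ is part of the standard-candidate requirement built into the definition of the tail. For small $h>0$, since $\tau_0>b$ lies in the closure of the right tail but outside $[a,b]$, Lemma~\ref{EvolutionalTailGrowthLemma} gives $\partial_\eps\Fr(\tau_0;\eps)<0$, hence $\Fr(\tau_0;\eps+h)<0$; the same lemma gives $\Fr(t;\eps+h)\le\Fr(t;\eps)<0$ for $t\in(b,\tau_0]$, and $\Fr(b;\eps+h)=\Fr(b;\eps)<0$. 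By continuity of $t\mapsto\Fr(t;\eps+h)$, there will be $\delta>0$ with $\Fr(\,\cdot\,;\eps+h)<0$ on all of $[b,\tau_0+\delta]$. Since $\sign\beta_2'=\sign\Fr$ for right tangent domains, this is precisely the condition (Proposition~\ref{NewRightTangentsCandidate}, Definition~\ref{211101}) guaranteeing a standard Bellman candidate on $\Rt([b,\tau_0+\delta])$ whose force continues that of $\Ch([a,b],*)$; the geometric prerequisites for $\Rt([b,\tau_0+\delta])$ hold at the parameter $\eps+h$ by the standing assumptions of Subsection~\ref{cond}. It will follow that $\tr(\eps+h)\ge\tau_0+\delta>\tr(\eps)$, so the right tail strictly enlarges. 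If instead $\tr(\eps)=+\infty$, then Lemma~\ref{EvolutionalTailGrowthLemma} still gives $\Fr(t;\eps+h)<\Fr(t;\eps)<0$ for all $t>b$, so $\tr(\eps+h)=+\infty$ as well and there is nothing to enlarge. The symmetric argument for $\Fl$ completes the proof.

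I do not expect a real obstacle here beyond Lemma~\ref{EvolutionalTailGrowthLemma}. The two points that need a little care are: ensuring the force at the inner endpoint $g(b)$ is $\eps$-independent and strictly negative, so that strict negativity of $\Fr(\,\cdot\,;\eps+h)$ is retained on the \emph{whole} interval $[b,\tau_0+\delta]$ and not merely near $\tau_0$; and checking that ``$\Fr<0$ in the interior'' does imply the existence of a standard candidate on the slightly enlarged tangent domain, which is immediate once one recalls that $\beta_2'$ and $\Fr$ have the same sign for right tangents.
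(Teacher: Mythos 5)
Your proof is correct and follows essentially the same route as the paper's: Lemma~\ref{EvolutionalTailGrowthLemma} makes the force strictly more negative outside $[a,b]$, so in particular the force at the former tail endpoint $\tr(\eps)$ becomes strictly negative, and the tail must extend further. The paper states this in two sentences; your additional verifications (the $\eps$-independence and strict negativity of the force at $g(b)$, and the continuity argument producing the $\delta$-extension) are just the details the paper leaves implicit.
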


\begin{proof}
By Lemma~\ref{EvolutionalTailGrowthLemma}, the forces decrease (increase in absolute value) on the corresponding 
tails, therefore, the tails cannot decrease. What is more, if we increase~$\eps$ a little, the force at the 
end of the tail becomes negative, and the tail enlarges.
\end{proof}

\begin{Rem}\label{TailGrowthInfinity}
The results of Lemma~\ref{EvolutionalTailGrowthLemma} and Corollary~\ref{TailsGrowthCorollary} 
hold for the forces coming from the infinities and multicups as well.
\end{Rem}

For a moment, we let chordal domains fall out of $\Omega_\eps$, i.\,e., their chords may intersect the free 
boundary. Surely, when we were working with chordal domains, we did not need the upper boundary, therefore, 
such an assumption does not break all the results concerning chordal domains. 

\begin{Le}
\label{difequal}
Let $\Ch([a_2,b_2],[a_1,b_1])$ be a chordal domain with the standard Bellman candidate there. 
Suppose that $\eps_1<\eps_2$ and there are differentiable functions $a\colon[\eps_1,\eps_2]\to [a_2,a_1]$ 
and $b\colon[\eps_1,\eps_2]\to [b_1,b_2],$ such that the chord $[g(a(\eps)),g(b(\eps))]$ is tangent to 
the free boundary of $\Omega_\eps$ for any $\eps \in [\eps_1,\eps_2]$. 
Then, for any $\tilde\eps\in [\eps_1,\eps_2]$ we have 
\begin{equation*}
\frac{\partial}{\partial \eps}\bigg[F\big(t; a(\tilde\eps), b(\tilde\eps); \eps\big)\bigg]\Bigg|_{\eps = \tilde\eps}
= \quad\frac{\partial}{\partial \eps}\bigg[F\big(t; a(\eps), b(\eps); \eps\big)\bigg]\Bigg|_{\eps = \tilde\eps},
\end{equation*}
where $F$ stands for the right \textup(see~\eqref{eq101101}\textup) or the left 
\textup(see~\eqref{eq101102}\textup) force function. 
\end{Le}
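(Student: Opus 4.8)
The plan is to differentiate the explicit formula for the force, say \eqref{eq101101} for $\Fr$, with respect to the upper endpoint parameter $b$ (and implicitly $a = a(b)$) and with respect to $\eps$, and to observe that the $b$-derivative contributes nothing extra at $\eps = \tilde\eps$. Concretely, write
\begin{equation*}
\frac{d}{d\eps}\Big[\Fr\big(t;a(\eps),b(\eps);\eps\big)\Big]\Big|_{\eps=\tilde\eps}
= \frac{\partial \Fr}{\partial \eps}\Big|_{\eps=\tilde\eps}
+ \frac{\partial \Fr}{\partial b}\cdot b'(\tilde\eps)
+ \frac{\partial \Fr}{\partial a}\cdot a'(\tilde\eps),
\end{equation*}
where the first term on the right is exactly $\frac{\partial}{\partial\eps}[\Fr(t;a(\tilde\eps),b(\tilde\eps);\eps)]|_{\eps=\tilde\eps}$, the quantity appearing on the left of the claimed identity. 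So the statement is equivalent to showing that the last two terms vanish, i.e.
\begin{equation*}
\frac{\partial \Fr}{\partial b}\,b'(\tilde\eps) + \frac{\partial \Fr}{\partial a}\,a'(\tilde\eps) = 0.
\end{equation*}

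The key point is that, since the chord $[g(a(\eps)),g(b(\eps))]$ stays tangent to $\dfree\Omega_\eps$, the parameters $a(\eps)$ and $b(\eps)$ are not independent: as $b$ moves, $a=a(b)$ is forced by the tangency (cup) equation, and this is precisely the constrained variation computed in Lemma~\ref{DifForcePoL}, whose formula \eqref{eq170101} gives $\frac{\partial \Fr(t,a(b),b)}{\partial b}$ \emph{along the tangent family}. So I would first reduce the problem to a single-parameter variation: parametrize the curve $\eps \mapsto (a(\eps),b(\eps))$ by $b$ instead of $\eps$ (legitimate since $b$ is strictly monotone in $\eps$ by Corollary~\ref{TailsGrowthCorollary}, or at worst locally), and note that both sides of the desired identity are then $\eps$-derivatives of quantities depending on $(a,b,\eps)$ through the curve. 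The cleanest route: both $\Fr(t;a(\tilde\eps),b(\tilde\eps);\eps)$ (with the chord frozen) and $\Fr(t;a(\eps),b(\eps);\eps)$ (with the chord moving tangentially) solve, as functions of $t$ on the tail, the same linear ODE \eqref{leftder}, $\Fr' = \tors' - \frac{\kappa_2'}{\kappa_2-\kappa(\cdot\,;\eps)}\Fr$, and differ only in the value imposed at the left endpoint of the tail, namely at $t=b$: the frozen-chord force has boundary value $\frac{\DR(a(\tilde\eps),b(\tilde\eps))}{g_1'\kappa_2'}$ independent of $\eps$, while the moving-chord force has a $b$-dependent boundary value. Differentiating the ODE in $\eps$ (as in the proof of Lemma~\ref{EvolutionalTailGrowthLemma}), one sees that $\frac{\partial \Fr}{\partial \eps}$ solves a first-order linear ODE whose inhomogeneous term $-\frac{\kappa_2'\Fr}{(\kappa_2-\kappa)^2}\frac{\partial\kappa}{\partial\eps}$ depends only on $\Fr$ itself and on the geometry, \emph{not} on how the chord moves; hence the $\eps$-derivatives of the two forces solve the same inhomogeneous linear ODE and are determined by their value at $t=b$.

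So the proof boils down to checking that the two $\eps$-derivatives agree at the single point $t=b$, i.e. that the boundary contribution coming from moving the chord tangentially is a \emph{lower-order} effect — formally, that $\lim_{t\to b+}\big(\frac{\partial}{\partial\eps}[\text{moving}] - \frac{\partial}{\partial\eps}[\text{frozen}]\big) = 0$. This is exactly analogous to the ``$\const(\eps)=0$'' verification in the proof of Lemma~\ref{EvolutionalTailGrowthLemma}, where one uses that $\lim_{t\to b+}\frac{\partial}{\partial\eps}\Fr(t;\eps)=0$ by a direct computation from \eqref{eq101101}; here the point is that at $t=b$ the extra term $\frac{\partial\Fr}{\partial b}b' + \frac{\partial\Fr}{\partial a}a'$ multiplies the vanishing exponential prefactor $\exp(-\int_b^t \frac{\kappa_2'}{\kappa_2-\kappa})$, which tends to $1$ at $t=b$, but whose $b$-derivative... — more carefully, one plugs $t=b$ into \eqref{eq170101} and into the $a$-analogue and checks the algebraic cancellation against the $\eps$-variation of the tangency constraint. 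The main obstacle I anticipate is precisely this boundary bookkeeping: carefully tracking, at $t=b$, the interplay between the $\eps$-derivative of $\DR(a(\eps),b(\eps))$ along the tangent family and the exponential integrating factor, and confirming that the ``moving'' correction is absorbed entirely into the endpoint data so that the interior ODE forces the two $\eps$-derivatives to coincide on the whole tail. Everything else is a routine application of \eqref{leftder}, \eqref{eq101101}, \eqref{eq170101}, and the variation-of-constants formula \eqref{firstordersolution}, following the template already set in Lemmas~\ref{DifForcePoL} and~\ref{EvolutionalTailGrowthLemma}.
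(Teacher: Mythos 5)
Your reduction is the right one: by the chain rule the claim is equivalent to showing that the variation of the force coming from the motion of the chord alone (at frozen $\eps=\tilde\eps$) vanishes, and that constrained variation --- with $a$ slaved to $b$ through the cup equation --- is exactly what Lemma~\ref{DifForcePoL} computes. But at that point you stop short of the decisive observation and instead launch into an ODE-comparison argument whose crucial step, the ``boundary bookkeeping'' at $t=b$, you explicitly leave as an anticipated obstacle. That unresolved step is a genuine gap, and the detour through differentiating \eqref{leftder} in $\eps$ is unnecessary.

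The missing observation is a one-liner: since the chord $[g(a(\eps)),g(b(\eps))]$ is tangent to the free boundary of $\Omega_\eps$, the right tangent segment issued from $g(b(\eps))$ \emph{is} that chord, so $\kappar(b)=\kappachord(b)$ (and symmetrically $\kappal(a)=\kappachord(a)$). Hence the bracketed factor $\bigl[\tfrac{1}{\kappa_2(b)-\kappar(b)}-\tfrac{1}{\kappa_2(b)-\kappachord(b)}\bigr]$ in~\eqref{eq170101} vanishes identically, and therefore $\frac{\partial}{\partial b}\Fr(t,a(b),b)=0$ for \emph{every} $t$ in the tail, not merely at $t=b$; likewise for~\eqref{eq170102}. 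This annihilates the extra chain-rule term outright and finishes the proof --- no comparison of solutions of the ODE, no variation of constants, and no limit as $t\to b+$ are needed. This is precisely the paper's argument.
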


\begin{proof}
This identity follows from Lemma~\ref{DifForcePoL} because $\kappar=\kappal=\kappachord$ when the chord is 
tangent to the free boundary.
\end{proof}

\begin{Le}
\label{FullCupEvolutionTails}
Under the hypothesis of Lemma~\textup{\ref{difequal}} the tails of~$\Ch([a(\eps),b(\eps)],*)$ 
strictly enlarge in~$\eps$. 
\end{Le}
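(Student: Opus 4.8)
\textbf{Proof proposal for Lemma~\ref{FullCupEvolutionTails}.}

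The plan is to combine the two monotonicity statements already available: Lemma~\ref{EvolutionalTailGrowthLemma}, which says that for a \emph{fixed} chordal domain~$\Ch([a,b],*)$ the force $F(t;\eps)$ decreases pointwise in~$\eps$ on the closure of its tail (strictly outside $[a,b]$), and Lemma~\ref{difequal}, which says that along the curve $\eps\mapsto(a(\eps),b(\eps))$ of chords tangent to $\dfree\Omega_\eps$ the $\eps$-derivative of the force does not feel the variation of $(a,b)$. First I would fix $t$ in the (open) right tail of $\Ch([a(\eps_1),b(\eps_1)],*)$ and set $\Phi(\eps) = \Fr(t;a(\eps),b(\eps);\eps)$, defined for $\eps$ in a neighbourhood of the initial parameter where $t$ still lies in the tail. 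Differentiating and invoking Lemma~\ref{difequal},
\eqstar{
\Phi'(\tilde\eps) = \frac{\partial}{\partial\eps}\Big[\Fr\big(t;a(\tilde\eps),b(\tilde\eps);\eps\big)\Big]\Big|_{\eps=\tilde\eps},
}
and the right-hand side is $\leq 0$ by Lemma~\ref{EvolutionalTailGrowthLemma}, strictly negative since $t$ is outside $[a(\tilde\eps),b(\tilde\eps)]$ (note $a(\eps)$ decreases and $b(\eps)$ increases, so if $t>b(\eps_1)$ then $t>b(\tilde\eps)$ for $\tilde\eps$ near $\eps_1$, and similarly on the left). Hence $\Phi$ is strictly decreasing in~$\eps$.

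Now I would argue exactly as in Corollary~\ref{TailsGrowthCorollary}. Let $\tr(\eps)$ denote the right endpoint of the right tail of $\Ch([a(\eps),b(\eps)],*)$. By Remark~\ref{rem220202}, either $\tr(\eps_1)=+\infty$, in which case the tail is already the whole ray and there is nothing to prove, or $\Fr(\tr(\eps_1);a(\eps_1),b(\eps_1);\eps_1)=0$. Pick any $t_1$ strictly between $b(\eps_1)$ and $\tr(\eps_1)$; then $\Fr(t_1;a(\eps_1),b(\eps_1);\eps_1)<0$ by nonpositivity and the maximality of the tail (Remark~\ref{rem220202} forces strict negativity in the interior), so by the strict monotonicity just proved $\Fr(t_1;a(\eps),b(\eps);\eps)<0$ for all $\eps\in[\eps_1,\eps_2]$, whence $t_1$ remains in the right tail for the whole range: $\tr(\eps)>t_1$. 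Letting $t_1\uparrow\tr(\eps_1)$ gives $\tr(\eps)\geq\tr(\eps_1)$; to get strict enlargement, evaluate at $t=\tr(\eps_1)$ itself (when finite): $\Phi(\eps_1)=0$ and $\Phi$ strictly decreasing give $\Fr(\tr(\eps_1);a(\eps),b(\eps);\eps)<0$ for $\eps>\eps_1$, so the tail has strictly passed $\tr(\eps_1)$. The left tail is handled symmetrically using Lemma~\ref{EvolutionalTailGrowthLemma} for $\Fl$, Lemma~\ref{difequal}, and Remark~\ref{Rem200401}; Remark~\ref{TailGrowthInfinity} covers the case where the relevant $c_i$ sits at infinity.

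The only genuine subtlety, and where I would be most careful, is the bookkeeping of domains of definition: $\Phi(\eps)$ is defined only while $t$ stays in the tail, so the differentiation argument is local, and one must make sure the monotonicity propagates across the whole parameter interval $[\eps_1,\eps_2]$ rather than only infinitesimally. This is resolved exactly as above — a point once strictly inside the tail at $\eps_1$ stays strictly inside for all larger $\eps$ in the range, because the force at that point is $<0$ at $\eps_1$ and can only decrease — so the function $\eps\mapsto\tr(\eps)$ is nondecreasing and the pointwise strict decrease of the force at the (finite) endpoint upgrades this to strict enlargement. A minor check is that the hypotheses of Lemma~\ref{EvolutionalTailGrowthLemma} (namely $\tfrac{\partial\kappa}{\partial\eps}<0$, $\kappa_2'>0$, $\Fr<0$ in the tail) hold here; these are all part of the standing assumptions on $\Omega_\eps$ and of the definition of the tail, so no new work is needed.
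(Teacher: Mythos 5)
Your proof is correct and follows essentially the same route as the paper: transfer the $\eps$-derivative of the force along the full flow to the fixed-chord case via Lemma~\ref{difequal}, apply Lemma~\ref{EvolutionalTailGrowthLemma} to get (strict) negativity outside the chord, and conclude that a point in the tail stays in the tail while strict negativity at the finite endpoint forces strict enlargement. Your extra bookkeeping about the local domain of definition of $\Phi$ and the propagation across $[\eps_1,\eps_2]$ only makes explicit what the paper leaves implicit.
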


\begin{proof}
Indeed, consider some point~$t$ that belongs to one of the tails of~$\Ch([a(\eps),b(\eps)],*)$ for some~$\eps$. 
First, we need to prove that~$F(t; a(\eps), b(\eps);\eps)$ decreases in~$\eps$. The derivative of this function 
with respect to~$\eps$ is non-positive, because, by Lemma~\ref{difequal}, for each~$\eps$ it equals the 
corresponding derivative taken as if the chordal domain had fixed upper chord, which is nonpositive by 
Lemma~\ref{EvolutionalTailGrowthLemma}. Thus, the tails do not decrease. Moreover, the derivative of 
the corresponding force is nonzero at the end of each tail, again by Lemma~\ref{EvolutionalTailGrowthLemma}, 
therefore, the tail grows.
\end{proof}

\begin{Def}\label{FlowOfChordalDomains}\index{domain! chordal domain! flow of chordal domains}
Let~$0 \leq \eps_1 < \eps_2 < \epsmax$. Let~$\Ch([a_1,b_1],[a_0,b_0])$ be a chordal domain. Assume that 
there are two continuous functions $a\colon[\eps_1,\eps_2] \to [a_1,a_0]$, $b\colon[\eps_1,\eps_2]\to[b_0,b_1]$ 
such that $[g(a(\eps),g(b(\eps))]$ is a chord of this chordal domain for any $\eps \in [\eps_1,\eps_2]$. 
We call the family 
$$
\{\Ch([a(\eps),b(\eps)],[a_0,b_0])\}_\eps
$$
a \emph{flow of chordal domains}. 
This flow generates the corresponding forces:
\begin{equation}\label{eq171101}
\begin{aligned}
\Fr\Big(u;\Ch\big(\big[a(\eps),b(\eps)\big],[a_0,b_0]\big);\eps\Big), \quad u \in [b_0,\tr(\eps)),\\
\Fl\Big(u;\Ch\big(\big[a(\eps),b(\eps)\big],[a_0,b_0]\big);\eps\Big), \quad u \in (\tl(\eps), a_0],
\end{aligned} 
\end{equation}
where~$\tr(\eps)$ and~$\tl(\eps)$ are the right and the left endpoints of the tails 
of~$\Ch\big(\big[a(\eps),b(\eps)\big],[a_0,b_0]\big)$. We say that a flow is \emph{decreasing} 
if~$b$ is decreasing ($a$ is increasing). We say that a flow is \emph{full} if the 
chord~$[g(a(\eps),g(b(\eps))]$ is tangent to the free boundary of 
$\Omega_\eps$ for all~$\eps \in [\eps_1,\eps_2]$.
\end{Def}

We gather the statements of the previous lemmas in the following corollary. 
\begin{Cor}\label{FlowMonotonicityCorollary}
Consider a decreasing or a full flow of chordal domains. The corresponding forces~\eqref{eq171101} are strictly decreasing functions of~$\eps$ outside the chordal domain and are constant inside the chordal domain. As a consequence\textup, 
the tails enlarge.
\end{Cor}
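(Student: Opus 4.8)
The plan is to obtain Corollary~\ref{FlowMonotonicityCorollary} by assembling the lemmas already proved, organizing the argument around the total $\eps$-derivative of the force $F\big(u;\Ch([a(\eps),b(\eps)],[a_0,b_0]);\eps\big)$ at a parameter $u$ in the closure of the corresponding tail. First I would dispose of the ``inside'' assertion: for $u$ between $b_0$ and $b(\eps)$ (for the right force) or between $a(\eps)$ and $a_0$ (for the left force) the force equals $\DR(s(u),u)/(g_1'(u)\kappa_2'(u))$, respectively $\DL(u,s(u))/(g_1'(u)\kappa_2'(u))$, where $s(u)$ is the partner of $u$ under the cup equation~\eqref{urlun1}. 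Since~\eqref{urlun1} involves only $\gamma$ and not $\eps$, the function $s$ and hence the force are $\eps$-independent there; this is exactly the opening observation of the proof of Lemma~\ref{EvolutionalTailGrowthLemma}, so the constancy statement costs nothing.

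Next I would fix $u$ strictly outside the chordal domain, say $u>b(\eps)$ (the left case being symmetric), and compute $\tfrac{\partial}{\partial\eps}F(u;\eps)$ by the chain rule, splitting it into the partial derivative taken with the chord $[g(a(\eps)),g(b(\eps))]$ held frozen and the contribution of the chord's motion. Because $(a(\eps),b(\eps))$ satisfies~\eqref{urlun1}, the chord is parametrized by $b(\eps)$ alone with $a=a(b)$, so the second piece is $\partial_b\Fr(u;a(b),b;\eps)\,b'(\eps)$ for the right force and $\partial_a\Fl(u;a,b(a);\eps)\,a'(\eps)$ for the left one. The frozen-chord partial is $\le 0$, and strictly so since $u\notin[a(\eps),b(\eps)]$, by Lemma~\ref{EvolutionalTailGrowthLemma}. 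For the chord-motion piece I would invoke Lemma~\ref{DifForcePoL}: formula~\eqref{eq170101} gives $\partial_b\Fr>0$ because $\DR<0$ while $\kappar(b)<\kappachord(b)<\kappa_2(b)$ (the geometric fact used in the proof of Lemma~\ref{biggercupsmallerforce}), and symmetrically~\eqref{eq170102} gives $\partial_a\Fl<0$ because $\DL<0$ while $\kappal(a)>\kappachord(a)>\kappa_2(a)$. For a decreasing flow $b'\le0$ and $a'\ge0$, so this piece is $\le0$; for a full flow the chord is tangent to the free boundary, hence $\kappachord$ coincides with $\kappar$, respectively $\kappal$, the bracket in~\eqref{eq170101}--\eqref{eq170102} vanishes, and the piece is identically $0$ — this is precisely the content of Lemma~\ref{difequal}. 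In either admissible case the total derivative is $<0$ for $u$ strictly outside and $=0$ inside, which is the monotonicity claim.

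Finally I would deduce the tail growth exactly as in Corollary~\ref{TailsGrowthCorollary} and Lemma~\ref{FullCupEvolutionTails}: at a finite endpoint $\tr(\eps)$ of a tail the force vanishes by continuity and maximality, so if the force at each fixed point of the tail strictly decreases in $\eps$ and stays continuous and nonpositive, then after a small increase of $\eps$ it becomes strictly negative at the former endpoint, forcing the tail to enlarge; the same works at $\tl(\eps)$ and, by the corresponding remarks, for forces coming from infinities and from multicups. I expect the only genuine subtlety to be this case distinction in the chord-motion term: for a full flow one must see that it drops out \emph{identically} rather than merely being of a favorable sign, since otherwise the two admissible flow types could not be treated on the same footing; this is why Lemma~\ref{difequal} is isolated, and it is the step I would double-check most carefully before declaring the corollary proved.
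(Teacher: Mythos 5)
Your argument is essentially the paper's own proof, which consists of citing exactly the three lemmas you assemble: Lemma~\ref{biggercupsmallerforce} together with Lemma~\ref{EvolutionalTailGrowthLemma} for a decreasing flow, and Lemma~\ref{FullCupEvolutionTails} (i.e.\ Lemma~\ref{difequal} plus Lemma~\ref{EvolutionalTailGrowthLemma}) for a full flow, with the tail-growth conclusion drawn as in Corollary~\ref{TailsGrowthCorollary}. The one caveat is that your chain-rule formulation of the decreasing case presupposes differentiability of $b(\eps)$, which Definition~\ref{FlowOfChordalDomains} does not guarantee; the paper sidesteps this by using the discrete nested-domain comparison of Lemma~\ref{biggercupsmallerforce} for $\eps_1<\eps_2$, which needs only monotonicity of $b$, so you should phrase that step as a two-point comparison rather than a derivative.
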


\begin{proof}
For the case of a decreasing flow, we use Lemmas~\ref{EvolutionalTailGrowthLemma} and~\ref{biggercupsmallerforce}. 
For the case of a full flow, we use Lemma~\ref{FullCupEvolutionTails}.
\end{proof}

\begin{Def}\label{MFF}\index{force! monotone force flow}
Let~$0 \leq \eps_1 < \eps_2 <\epsmax$\textup, let~$u_-\colon [\eps_1,\eps_2] \to  \mathbb{R}\cup\{-\infty\}$ 
be a non-increasing function and $u_+\colon [\eps_1,\eps_2]\to\mathbb{R}\cup\{+\infty\}$ a non-decreasing 
function such that $u_-\leq u_+$. Suppose that for any $\eps\in [\eps_1,\eps_2]$ there is a fence $\Omega(I_\eps)$, 
such that $(u_-(\eps),u_+(\eps))\subset I_\eps$.

We say that a continuous function~$\FFr\colon\{(u,\eps)\colon u\in[u_-(\eps),u_+(\eps)]\cap\mathbb{R},\ 
\eps\in[\eps_1,\eps_2]\}\to\mathbb{R}$ is a \emph{right monotone force flow\emph} if:
\begin{enumerate}
\item for any~$\eps \in [\eps_1,\eps_2]$ the fence $\Omega(I_\eps)$ is a right tangent domain, 
the function~$\FFr(\cdot\,;\eps)$ is the force of a standard candidate on $\Rt(I_\eps)$ introduced in Definition~\ref{211101};
\item $\FFr(u_+(\eps);\eps)=0$ provided that $u_+(\eps)<+\infty$;
\item for any~$\eta_1$ and~$\eta_2$ such that~$\eps_1<\eta_1<\eta_2<\eps_2$\textup, 
we have~$\FFr(v;\eta_1) > \FFr(v;\eta_2)$ whenever~$v \in (u_-(\eta_1),u_+(\eta_1)] \cap \mathbb{R}$.
\end{enumerate}

We say that a continuous function~$\FFl\colon \{(u,\eps)\colon u \in [u_-(\eps),u_+(\eps)]\cap \mathbb{R},\  
\eps \in [\eps_1,\eps_2]\}\to \mathbb{R}$ is a \emph{left monotone force flow\emph} if:
\begin{enumerate}
\item for any~$\eps \in [\eps_1,\eps_2]$ the fence $\Omega(I_\eps)$ is a left tangent domain, the 
function~$\FFl(\cdot\,;\eps)$ is a force of a standard candidate on $\Lt(I_\eps)$;
\item $\FFl(u_-(\eps);\eps)=0$ provided that $u_-(\eps)>-\infty$;
\item for any~$\eta_1$ and~$\eta_2$ such that~$\eps_1 < \eta_1 < \eta_2 < \eps_2$\textup, 
we have~$\FFl(v;\eta_1) > \FFl(v;\eta_2)$ whenever~$v \in [u_-(\eta_1),u_+(\eta_1))\cap \mathbb{R}$.
\end{enumerate}
\end{Def}

Monotone force flows may be generated by flows of chordal domains as in~\eqref{eq171101}. 
Then, for the right case we have $u_+=\tr$, and $u_-=\tl$ for the left case, see Subsection~\ref{Subsec362} for the definition of $\tr$ and $\tl$.

\begin{Rem}\label{rem241101}
We underline that for any fixed $\eps \in [\eps_1,\eps_2]$ if $u_-(\eps)<u_+(\eps)$, then the monotone force flows $\FFr(\,\cdot\,;\eps)$ and $\FFl(\,\cdot\,;\eps)$ satisfy the differential equations
\begin{equation}\label{eq241101}
\FFr' = \tors'- \frac{\kappa_2'}{\kappa_2-\kappar}\FFr, \qquad
\FFl' = \tors'- \frac{\kappa_2'}{\kappa_2-\kappal}\FFl.
\end{equation}
Therefore, statements of Remarks~\ref{rem220203} and~\ref{Rem200401} hold true for the forces:
\begin{itemize}
\item if $u_-(\eps)<u_+(\eps)<\infty$, then $\tors'>0$ in a left neighborhood of $u_+(\eps)$;
\item if $u_+(\eps)>u_-(\eps)>-\infty$, then $\tors'<0$ in a right neighborhood of $u_-(\eps)$.
\end{itemize} 
\end{Rem}

\begin{Rem}\label{rem220201} Note that the function $u_+$ in the definition of the right monotone force 
flow is strictly increasing while it is finite. Similarly, the function $u_-$ in the definition of the left 
monotone force flow is strictly decreasing while it is finite.
\end{Rem}

A simple example of a monotone force flow is given by a force of a chord that does not depend on~$\eps$ 
(the monotonicity follows from Lemma~\ref{EvolutionalTailGrowthLemma}). 
\begin{Rem}\label{FCDMFF}
Let~$\{\Ch([a(\eps),b(\eps)],*)\},$ $\eps\in[\eta_1,\eta_2],$ be either decreasing or full flow of chordal domains. 
Then
\begin{equation*}
\FFr(u\,;\eps) = \Fr(u\,;\Ch([a(\eps),b(\eps)],*);\eps),\quad u \in [b(\eps),\tr(\eps)]\cap \mathbb{R}, 
\quad \eps \in [\eta_1,\eta_2],
\end{equation*}
is a right monotone force flow, and
\begin{equation*}
\FFl(u\,;\eps) = \Fl(u\,;\Ch([a(\eps),b(\eps)],*);\eps),\quad u \in [\tl(\eps),a(\eps)]\cap \mathbb{R}, 
\quad \eps \in [\eta_1,\eta_2],
\end{equation*}
is a left monotone force flow.
\end{Rem}

\subsection{Balance equation lemma}\label{s423}
\begin{Def}\label{baleq}\index{equation! balance equation}
Let~$\Fr$ and~$\Fl$ be forces of chordal domains, infinities, or multicups \textup(for the definitions 
see Subsection~\ref{Subsec362} and Remark~\ref{Rem211101}\textup) such that their domains intersect. 
Then the \emph{balance equation} is 
\begin{equation}\label{baleqformula}
\Fr(u) = \Fl(u),
\end{equation}
where~$u$ belongs to the intersection of the domains of the forces.
\end{Def} 

We are looking for solutions of balance equations. Lemma~\ref{moninttails} helped us to establish 
the existence of the solution in Corollary~\ref{BalanceEquationForSmallEps}. 
\begin{Le}\label{monbaleq}
Let~$\Fr$ and~$\Fl$ be two forces of chordal domains\textup, infinities\textup, multicups\textup, or simply chords 
such that their tails intersect. Then the function~$\Fr(u) - \Fl(u)$ strictly increases on this intersection.
\end{Le}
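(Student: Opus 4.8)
The plan is to reduce the statement to the first–order linear ODE satisfied by a force along its tail, namely~\eqref{leftder}, together with the sign information already available. First I would recall that on the closure of the right tail of any admissible configuration (a chordal domain, a multicup, or an infinity), and in particular on the chordal part $[t_-,t_+]$ of such a tail, the right force obeys
$$\Fr' = \tors' - \frac{\kappa_2'}{\kappa_2 - \kappa}\Fr,$$
where $\kappa$ is $\kappar$ on the tangent part and $\kappachord$ on the chordal part. For chordal domains this is Proposition~\ref{difforcepou}; for multicups and for the forces coming from infinities it follows from the same computation (divide~\eqref{diffbeta2} by $\kappa_2'$ to get $\Fr = \tors - \beta_2$ and differentiate, using $\beta_2' = \frac{\kappa_2'}{\kappa_2-\kappa}\Fr$). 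Hence $(\Fr - \tors)' = -\frac{\kappa_2'}{\kappa_2 - \kappa}\Fr$, and since $\kappa_2'>0$, $\kappa_2 - \kappa > 0$ for a right force (geometry of $\Omega_\eps$, cf.\ Remark~\ref{Rem201001} and~\eqref{260701}), and $\Fr<0$ strictly in the interior of the tail, we conclude that $\Fr - \tors$ is strictly increasing on the whole right tail. This is exactly Remark~\ref{Rem150401}, extended from chordal domains to the remaining cases; symmetrically $\Fl - \tors$ is strictly decreasing on the left tail, because there $\kappa_2 - \kappa < 0$.

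The conclusion is then immediate. On the intersection of the two tails one writes
$$\Fr(u) - \Fl(u) = \big(\Fr(u) - \tors(u)\big) + \big(\tors(u) - \Fl(u)\big),$$
a sum of two strictly increasing functions — the first by the previous paragraph, the second being $-(\Fl - \tors)$ with $\Fl - \tors$ strictly decreasing. Therefore $\Fr - \Fl$ is strictly increasing on the intersection of the tails, as claimed. As a byproduct this recovers Lemma~\ref{moninttails} as the special case where $\Fr$ and $\Fl$ are the forces of cups or multicups sitting over two consecutive roots $c_k$ and $c_{k+1}$.

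The only genuinely delicate point I expect is the \emph{uniform} strict negativity of $\Fr$ (and $\Fl$) on the interior of the corresponding tail: if a force vanished at an interior point, the ODE above would only give that $\Fr - \tors$ is non-decreasing near that point, not strictly increasing. I would handle this by appealing to the way the tail endpoints $\tr$, $\tl$ are defined through standard candidates: strictness of $\beta_2' < 0$ in the interior of a standard tangent candidate is part of Definition~\ref{211101}, so $\Fr = \frac{\kappa_2-\kappa}{\kappa_2'}\beta_2' < 0$ there; and on the chordal part the standing convention $\DR<0$, $\DL<0$ for chordal domains already forces $\Fr<0$. Hence none of the forces appearing in the statement reaches $0$ before the end of its tail, which is all that the monotonicity argument needs.
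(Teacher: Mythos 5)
Your proposal is correct and follows essentially the same route as the paper: the paper's proof (via Lemma~\ref{moninttails}) differentiates $\Fr-\Fl$ and uses~\eqref{leftder} to get $(\Fr-\Fl)'=-\beta_{2,\scriptscriptstyle\mathrm{R}}'+\beta_{2,\scriptscriptstyle\mathrm{L}}'>0$, which is exactly your decomposition through $\tors$ (i.e.\ Remark~\ref{Rem150401}) written out termwise, and the strict negativity of the forces that you flag as the delicate point is handled there the same way, by the strictness built into Definition~\ref{211101} and the standing convention $\DR,\DL<0$ inside chordal domains. No gap.
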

\begin{proof}
The proof is identical to that of Lemma~\ref{moninttails}.
\end{proof}

\section{Local evolutional theorems}\label{s43}
The form of all theorems in this section is: if for some~$\eps$ we can build a Bellman candidate on 
a specific domain using specific formulas, then, for a slightly larger~$\eps$, we can also build a Bellman 
candidate on a perturbed domain using similar formulas with perturbed parameters. The statements are rather 
formal and somewhat bulky, so, before each statement we give a short heuristic explanation. We also recall 
our convention that the forces are strictly negative inside chordal domains and any tale.

The proposition below says that any full chordal domain (with nonzero differentials of the upper chord) 
surrounded by tangent domains enlarges as~$\eps$ increases (in other words, in view of Definition~\ref{FlowOfChordalDomains}, 
a full chordal domain generates a full flow of chordal domains starting from it). 
 
\begin{St}[{\bf Induction step for a chordal domain}]\label{InductionStepForChordalDomain}\index{domain! chordal domain}
Let $\Ch([a_1,b_1],[a_0,b_0])$ be a full chordal domain\textup, 
$u_1 \!<\! a_1\!\leq\! a_0\!\leq\! b_0\!\leq\! b_1 \!<\! u_2,$ and $0\!<\!\eta_1\! <\!\epsmax$. 
Let a continuous function~$B$ coincide with the standard 
candidates on~$\Ch([a_1,b_1],[a_0,b_0])$\textup, $\Lt(u_1,a_1;\eta_1)$\textup, and~$\Rt(b_1,u_2;\eta_1)$. 
If~$\DL(a_1,b_1) < 0$ and~$\DR(a_1,b_1) < 0$\textup, then there exists~$\eta_2,$ $\eta_2 > \eta_1$, 
and a full flow $\{\Ch([a(\eps),b(\eps)],[a_0,b_0])\}_{\eps \in [\eta_1,\eta_2]}$ of chordal domains 
such that $a(\eta_1)=a_1,$ $b(\eta_1)=b_1$ and for each~$\eps \in [\eta_1,\eta_2]$ there exists a continuous 
function~$B_{\eps}$ that coincides with the standard candidates on 
$\Lt(u_1,a(\eps);\eps)$\textup,~$\Ch([a(\eps),b(\eps)],[a_0,b_0])$\textup, and~$\Rt(b(\eps),u_2;\eps)$. 
\end{St}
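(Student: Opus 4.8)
The plan is to realize the evolution as an application of the implicit function theorem to the tangency/cup system, and then to verify the two remaining things: that the differentials $\DL,\DR$ stay negative along the resulting flow, and that the function built from the standard candidates on the three pieces is $C^1$-smooth (hence locally concave by Proposition~\ref{ConcatenationOfConcaveFunctions}). First I would set up the unknowns $a=a(\eps)$, $b=b(\eps)$ as functions of $\eps$ near $\eta_1$ determined by two equations: the cup equation~\eqref{urlun1} for the pair $(a,b)$ (this keeps $\Ch([a(\eps),b(\eps)],[a_0,b_0])$ a legitimate chordal domain, via Lemma~\ref{ThreePointsAndTheArea} applied with the fixed lower chord $[a_0,b_0]$ and the moving upper chord), together with the tangency equation expressing that $[g(a),g(b)]$ is tangent to $\dfree\Omega_\eps$ (this is what ``full flow'' requires, cf.\ Definition~\ref{FlowOfChordalDomains}). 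So I have a map $\Psi(a,b,\eps)=0$, $\Psi\colon\mathbb{R}^3\to\mathbb{R}^2$, vanishing at $(a_1,b_1,\eta_1)$, and I must check the Jacobian $\partial\Psi/\partial(a,b)$ is invertible there. This is the analogue of the computation in the proof of Proposition~\ref{St010901}: differentiating the cup equation gives the relation~\eqref{clcr1}, i.e.\ $\CL\DL\,da = \CR\DR\,db$ along the cup locus, and since $\CR>0>\CL$, $\DL<0$, $\DR<0$ (here using the hypotheses $\DL(a_1,b_1)<0$, $\DR(a_1,b_1)<0$), this forces $a'$ and $b'$ to have strictly opposite signs; combined with the tangency constraint, which couples $a,b$ to $\eps$ through $\partial_\eps\kappa\ne0$, one gets a nonzero determinant. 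The implicit function theorem then yields differentiable $a(\eps),b(\eps)$ on some $[\eta_1,\eta_2]$ with $a(\eta_1)=a_1$, $b(\eta_1)=b_1$, and one reads off from $\CL\DL\,da=\CR\DR\,db$ (as in~\eqref{eq280705}) that the flow is decreasing: $a$ decreases, $b$ increases.

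Next I would confirm that $\DL(a(\eps),b(\eps))<0$ and $\DR(a(\eps),b(\eps))<0$ persist for $\eps$ slightly above $\eta_1$. Since both are strictly negative at $\eta_1$ and depend continuously on $\eps$ (through $a(\eps),b(\eps)$), shrinking $\eta_2$ if necessary gives this for free --- this is exactly the easy part of Lemma~\ref{Lem180901}. Consequently the chordal domain $\Ch([a(\eps),b(\eps)],[a_0,b_0])$ satisfies the hypotheses of Proposition~\ref{NewLightChordalDomainCandidate} (strict inequality in~\eqref{eq260901} in the interior, by Corollary~\ref{Cor310801}-type sign analysis, or simply because $\DR<0$, $\DL<0$ throughout), so the standard candidate on it exists for every $\eps\in[\eta_1,\eta_2]$.

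Then comes the gluing. For each fixed $\eps$ I want a continuous $B_\eps$ agreeing with the standard candidates on $\Lt(u_1,a(\eps);\eps)$, on $\Ch([a(\eps),b(\eps)],[a_0,b_0])$, and on $\Rt(b(\eps),u_2;\eps)$. This is precisely the ``hidden subcase'' of Proposition~\ref{St131001} with the long chord $[g(a(\eps)),g(b(\eps))]$ playing the role of the degenerate linearity domain $\mathfrak{L}$ (the chord is tangent to $\dfree\Omega$, $\Omega(I_-)=\Lt$, $\Omega(I_+)=\Rt$, see Figure~\ref{fig:cupmeets}). By that proposition, the concatenation is a $C^1$-smooth Bellman candidate once the gluing (force-equality) conditions~\eqref{eq131001} hold at $a(\eps)$ and $b(\eps)$; these are two scalar equations, one for each side, and they define the left tangent domain $\Lt(u_1,a(\eps);\eps)$ and the right tangent domain $\Rt(b(\eps),u_2;\eps)$ --- more precisely they fix the constant $\beta_2(t_0)$ in~\eqref{beta2new} for the tangent fences so that the forces match. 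So I must check (i) that for $\eps=\eta_1$ these matching conditions already hold, which is given since $B$ is assumed to coincide with all three standard candidates there; (ii) that for $\eps\in(\eta_1,\eta_2]$ one can still solve them with the endpoints $u_1,u_2$ fixed and with $\beta_2'<0$ (resp.\ $\beta_2'>0$) strictly in the interiors of the tangent intervals, so that Propositions~\ref{NewRightTangentsCandidate} and~\ref{NewLeftTangentsCandidate} apply and the tangent pieces are genuine standard candidates. For (ii) the key input is Corollary~\ref{FlowMonotonicityCorollary} (via Remark~\ref{FCDMFF}): a full flow of chordal domains generates left and right monotone force flows, so as $\eps$ increases the forces $\Fr(\,\cdot\,;a(\eps),b(\eps);\eps)$ and $\Fl(\,\cdot\,;a(\eps),b(\eps);\eps)$ strictly decrease outside the chordal domain and their tails $[\tl(\eps),\tr(\eps)]$ strictly enlarge; hence for $\eps$ close to $\eta_1$ the tails still contain $[u_1,u_2]$, so the standard tangent candidates on $\Lt(u_1,a(\eps);\eps)$ and $\Rt(b(\eps),u_2;\eps)$ exist with the required strict sign of $\beta_2'$. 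Assembling: continuity of all parameters in $\eps$ lets me shrink $\eta_2$ once more so that everything above holds on the whole $[\eta_1,\eta_2]$, and Proposition~\ref{St131001} delivers the $C^1$-smooth Bellman candidate $B_\eps$.

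I expect the main obstacle to be item (ii): keeping the endpoints $u_1$ and $u_2$ \emph{fixed} while the inner chord moves. A priori, enlarging the chordal domain could force the tangent fences to shrink past $u_1$ or $u_2$, or could make $\beta_2'$ vanish somewhere in their interiors (which would signal the birth of a fictitious vertex of the fifth type, or of an angle). The monotone-force-flow machinery of Subsection~\ref{s422} is exactly designed to rule this out for $\eps-\eta_1$ small: the tails only grow and the interior strict inequalities on $\beta_2'$ are open conditions preserved under small perturbation. So the real content is bookkeeping --- checking that a single $\eta_2>\eta_1$ works simultaneously for the solvability of the cup/tangency system, the negativity of $\DL,\DR$, the containment of $[u_1,u_2]$ in the tails, and the strict concavity signs on the two tangent domains --- rather than any new estimate. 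Everything reduces to results already proved: the implicit function theorem as in Proposition~\ref{St010901}, Corollary~\ref{FlowMonotonicityCorollary} with Remark~\ref{FCDMFF}, Propositions~\ref{NewLightChordalDomainCandidate}, \ref{NewRightTangentsCandidate}, \ref{NewLeftTangentsCandidate}, and the gluing Proposition~\ref{St131001}.
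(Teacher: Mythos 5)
Your proposal is correct and follows essentially the same route as the paper: the paper obtains the enlarging family $(\tilde a(\tau),\tilde b(\tau))$ satisfying the cup equation from Remark~\ref{Rem230301} and Proposition~\ref{St010901} and then reparametrizes by $\eps$ through the tangency condition (rather than running a two-equation implicit function theorem directly), and it keeps $u_1,u_2$ inside the tails via Lemma~\ref{FullCupEvolutionTails}, which is exactly the full-flow monotonicity you invoke. One terminological slip: a flow with $a$ decreasing and $b$ increasing is a \emph{full} (enlarging) flow, not a \emph{decreasing} one in the sense of Definition~\ref{FlowOfChordalDomains}; this does not affect your argument, since Corollary~\ref{FlowMonotonicityCorollary} covers full flows.
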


\begin{proof}
We use Remark~\ref{Rem230301} and Proposition~\ref{St010901} for the pair $(a_1,b_1)$ to find~$\delta>0$ 
and functions~$\tilde a$ and~$\tilde b$ acting from~$[0,\delta]$ to~$\mathbb{R}$ such that $\tilde a$ 
is decreasing,~$\tilde a(0) = a_1$, $\tilde b$ is increasing,
$\tilde b(0)=b_1$, $\tilde b(\tau)-\tilde a(\tau)=b_1-a_1+\tau$, and the pair~$(\tilde a(\tau),\tilde b(\tau))$ 
satisfies~\eqref{urlun1} as well as the inequalities $\DL(\tilde a(\tau),\tilde b(\tau)) < 0$ 
and~$\DR(\tilde a(\tau),\tilde b(\tau)) < 0$ for $\tau\in[0,\delta]$. We take $\eta_2>\eta_1$ in such a way that 
the chord $\big[g(\tilde a(\delta),g(\tilde b(\delta))\big]$ intersects the free boundary of~$\Omega_{\eta_2}$. 
Then, for any $\eps \in [\eta_1,\eta_2]$ there exists unique $\tau=\tau(\eps) \in [0,\delta]$ such that the 
chord $\big[g(\tilde a(\tau),g(\tilde b(\tau))\big]$ is tangent to the  free boundary of~$\Omega_{\eps}$.
We put $a(\eps)=\tilde a(\tau(\eps))$, $b(\eps)=\tilde b(\tau(\eps))$.

By our assumptions,~$u_2$ belongs to the right tail of~$\Ch([a_1,b_1],[a_0,b_0])$ and~$u_1$ belongs to 
its left tail. Due to Lemma~\ref{FullCupEvolutionTails},~$u_1$ and~$u_2$ belong to the left and the right 
tail of~$\Ch([a(\eps), b(\eps)],[a_0,b_0])$,~$\eps \in [\eta_1,\eta_2]$, correspondingly. This allows us to define the required~$B_\eps$ 
on~$\Ch([a(\eps),b(\eps)],[a_0,b_0])\cup \Lt(u_1,a(\eps);\eps) \cup \Rt(b(\eps),u_2;\eps)$ 
for~$\eps \in [\eta_1,\eta_2]$.  
\end{proof}

By Remark~\ref{FCDMFF}, the functions
\begin{equation*}
\FFr(u;\eps) = \Fr(u;[a(\eps),b(\eps)];\eps), \qquad \FFl(u;\eps) = \Fl(u;[a(\eps),b(\eps)];\eps)
\end{equation*}
are the right and left monotone force flows on the corresponding domains.

The following proposition describes how a non-full multicup (i.\,e., a multicup $\MTC(\{\mathfrak{a}_i\}_{i=1}^k;\eps)$ 
such that the chord~$\big[g(\mathfrak{a}_1^{\mathrm{l}}), g(\mathfrak{a}_k^{\mathrm{r}})\big]$ does not lie 
in $\Omega_\eps$) evolves in~$\eps$.

\begin{St}[{\bf Induction step for a multicup}]\label{InductionStepForMulticup}\index{multicup}
Let $\MTC(\{\mathfrak{a}_i\}_{i=1}^k;\eta_1),$ $\eta_1\in(0, \epsmax)$\textup, be a multicup such that
the chord~$\big[g(\mathfrak{a}_1^{\mathrm{l}}), g(\mathfrak{a}_k^{\mathrm{r}})\big]$ does not lie in 
$\Omega_{\eta_1}$\textup, $u_1 < \mathfrak{a}_1^{\mathrm{l}}$ and~$\mathfrak{a}_k^{\mathrm{r}} < u_2$. 
Let a continuous function~$B$ coincide with the standard candidates on~$\MTC(\{\mathfrak{a}_i\}_{i=1}^k;\eta_1),$
$\Lt(u_1,\mathfrak{a}_1^{\mathrm{l}};\eta_1),$ and~$\Rt({a}_k^{\mathrm{r}},u_2;\eta_1)$. Then there 
exists~$\eta_2$\textup, $\eta_1 < \eta_2,$ such that for each~$\eps \in [\eta_1,\eta_2]$ there exists 
a continuous function~$B_{\eps}$ that coincides with the standard candidates on 
$\Lt(u_1,\mathfrak{a}_1^{\mathrm{l}};\eps),$ $\MTC(\{\mathfrak{a}_i\}_{i=1}^k;\eps),$ 
and~$\Rt(\mathfrak{a}_k^{\mathrm{r}},u_2;\eps)$.
\end{St}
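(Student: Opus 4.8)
The plan is to exploit that the boundary curve $\gamma=(g,f)$, the cup equation, the differentials $\DL,\DR$ and the torsion $\torsion$ do not depend on $\eps$; consequently the only $\eps$-dependent ingredients of the configuration are the two surrounding tangent domains and the position of the free boundary. The standard candidate on the multicup is the affine function~\eqref{CandidateInMultifigure} determined by $f=P_{\mathfrak L}$ on $\cup_i g(\mathfrak a_i)$, and since the intervals $\mathfrak a_i$ and the functions $g,f$ are all fixed, this affine function — and in particular the left and right forces of the multicup at $\mathfrak a_1^{\mathrm l}$ and $\mathfrak a_k^{\mathrm r}$, which by Definition~\ref{Rem211101} together with Definition~\ref{Def190401} equal $\DL(\mathfrak a_1^{\mathrm l},\mathfrak a_k^{\mathrm r})/(g_1'\kappa_2')$ and $\DR(\mathfrak a_1^{\mathrm l},\mathfrak a_k^{\mathrm r})/(g_1'\kappa_2')$ — are independent of $\eps$ (and negative, the multicup at $\eta_1$ carrying a standard candidate). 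So morally nothing moves except the two flanking tangent fans, and those are governed entirely by the multicup's (fixed) endpoint forces through the tail mechanism.

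First I would fix $\eta_2\in(\eta_1,\epsmax)$ so small that the chord $[g(\mathfrak a_1^{\mathrm l}),g(\mathfrak a_k^{\mathrm r})]$ still fails to lie in $\Omega_\eps$ for every $\eps\in[\eta_1,\eta_2]$; then $\MTC(\{\mathfrak a_i\}_{i=1}^k;\eps)$ is still a non-full multicup carrying the same affine standard candidate, because the interpolation condition~\eqref{PolynomialForLinearityDomain} is $\eps$-independent and the border tangents $\Sl(\mathfrak a_1^{\mathrm l};\eps),\Sr(\mathfrak a_k^{\mathrm r};\eps)$ exist by the standing assumptions on $\Omega$. The existence of such $\eta_2$ uses only the continuous dependence of $\geps(\,\cdot\,;\eps)$, hence of $\Xi_\eps$, on $\eps$: since the fixed chord meets the open set $\Xi_{\eta_1}$ it meets it in an open subsegment, and because $\Xi_\eps\subset\Xi_{\eta_1}$ while the free boundary varies continuously, an interior point of that subsegment still lies in $\Xi_\eps$ for $\eps$ near $\eta_1$.

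Next I would read off the tails. At $\eps=\eta_1$, continuity of $B$ together with the facts that $B$ is a standard candidate on the multicup and on $\Rt(\mathfrak a_k^{\mathrm r},u_2;\eta_1)$, and that $\Sr(\mathfrak a_k^{\mathrm r})$ is transversal to the fixed boundary, forces $\nabla B$ to agree on the two sides of $\Sr(\mathfrak a_k^{\mathrm r})$ (the two affine pieces agree on a segment transversal to $g'$, and both satisfy the boundary relation $\langle\nabla B,g'\rangle=f'$); since the force equals $\tors-\beta_2$, the right force of the tangent candidate at $\mathfrak a_k^{\mathrm r}$ matches that of the multicup, whence $u_2\le\tr(\eta_1)$, and symmetrically $u_1\ge\tl(\eta_1)$. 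As the tails of a multicup are by Definition~\ref{Rem211101} the tails of the chord $[g(\mathfrak a_1^{\mathrm l}),g(\mathfrak a_k^{\mathrm r})]$, Corollary~\ref{TailsGrowthCorollary} and Remark~\ref{TailGrowthInfinity} give $\tl(\eps)\le\tl(\eta_1)\le u_1$ and $\tr(\eps)\ge\tr(\eta_1)\ge u_2$ for all $\eps\in[\eta_1,\eta_2]$, strictly for $\eps>\eta_1$. Hence for each such $\eps$ the point $u_2$ lies in the closed right tail of the multicup, so by the definition of $\tr$ (Definition~\ref{Def031002}) there is a standard candidate on $\Rt(\mathfrak a_k^{\mathrm r},u_2;\eps)$ whose right force at $\mathfrak a_k^{\mathrm r}$ matches that of the multicup; symmetrically on $\Lt(u_1,\mathfrak a_1^{\mathrm l};\eps)$. (For $\eps=\eta_1$ one simply keeps $B$ itself, the only possible boundary case $u_2=\tr$ or $u_1=\tl$; for $\eps>\eta_1$ the inequalities are strict.)

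Finally, the three standard candidates — on $\Lt(u_1,\mathfrak a_1^{\mathrm l};\eps)$, on $\MTC(\{\mathfrak a_i\}_{i=1}^k;\eps)$, and on $\Rt(\mathfrak a_k^{\mathrm r},u_2;\eps)$ — now have matching forces at $\mathfrak a_1^{\mathrm l}=\inf\{t:g(t)\in\mathfrak L\}$ and at $\mathfrak a_k^{\mathrm r}=\sup\{t:g(t)\in\mathfrak L\}$, so Proposition~\ref{St211101} yields a $C^1$-smooth Bellman candidate $B_\eps$ on the union of the three domains restricting to each of them, which is precisely the assertion. The one step I expect to need genuine care is the first one: ruling out that the multicup turns full, i.e. that its virtual chord re-enters $\Omega_\eps$ as $\eps$ grows — this is exactly where one must combine the inclusion $\Xi_\eps\subset\Xi_{\eta_1}$ with the continuity of the free boundary. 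Everything after that is bookkeeping on top of the tail-monotonicity lemmas of Section~\ref{s42}.
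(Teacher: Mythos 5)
Your argument is correct and matches the paper's (much terser) proof: one chooses $\eta_2$ so that the chord $[g(\mathfrak{a}_1^{\mathrm l}),g(\mathfrak{a}_k^{\mathrm r})]$ still meets $\Xi_\eps$ for $\eps\in[\eta_1,\eta_2]$, observes via Remark~\ref{TailGrowthInfinity} that the multicup's tails enlarge with~$\eps$ so that $u_1$ and $u_2$ remain inside them, and glues the standard candidates. One cosmetic slip: the endpoint forces of a multicup are zero rather than negative (cf.\ the computation in the proof of Lemma~\ref{TailForSmallEps}), but nothing in your argument actually uses that sign.
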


\begin{proof}
We may take any~$\eta_2>\eta_1$ such that the chord 
$\big[g(\mathfrak{a}_1^{\mathrm{l}}), g(\mathfrak{a}_k^{\mathrm{r}})\big]$ intersects the free boundary 
of $\Omega_{\eta_2}$. Take any~$\eps$ from the interval prescribed. By Remark~\ref{TailGrowthInfinity} 
the tails of the multicup enlarge with~$\eps$, therefore the points $u_1$ and $u_2$ belong to them. 
Thus one can build the required function~$B_{\eps}$.
\end{proof}

Clearly, the functions
\begin{equation*}
\Fr(u;[\mathfrak{a}_1^{\mathrm l},\mathfrak{a}_k^{\mathrm r}];\eps)\quad \text{and}  \quad
\Fl(u;[\mathfrak{a}_1^{\mathrm l},\mathfrak{a}_k^{\mathrm r}];\eps)
\end{equation*}
are the right and left monotone force flows  on the corresponding domains.

The following proposition says that a long chord with nonzero tails gives rise to a chordal domain. 
We note that this generalizes Proposition~\ref{InductionStepForChordalDomain} (in the latter case the 
differentials are nonzero, and thus the tails of the upper chord are nonzero). However, for didactic reasons, 
we prefer to separate these two propositions. In a sense, the cases where one or both differentials are 
zero differ from what is described in Proposition~\ref{InductionStepForChordalDomain}. Indeed, suppose 
that one of the differentials is zero and we have a chordal domain below a chord~$[g(a_0),g(b_0)]$ with 
the standard candidate on it. By Proposition~\ref{InductionStepForLongChords}, after we increase~$\eps$, 
we can build a chordal domain above~$[g(a_0),g(b_0)]$ and the standard candidate there. However, we cannot 
glue these standard candidates into a single one, because the differentials should be strictly negative 
for a standard candidate. We also note that the proposition below may be applied to the upper chord of 
a full multicup (i.\,e., a multicup such that the chord 
$\big[g(\mathfrak{a}_1^{\mathrm{l}}), g(\mathfrak{a}_k^{\mathrm{r}})\big]$ is tangent to the free boundary of $\Omega_\eps$). 

\begin{St}[{\bf Induction step for a long chord}]\label{InductionStepForLongChords}
Suppose that $a_0,b_0,u_1,u_2\!\in\!\mathbb{R},$ $a_0\!<\!b_0,$ the pair $(a_0,b_0)$ 
satisfies the cup equation~\eqref{urlun1}\textup, and the chord~$[g(a_0),g(b_0)]$ is tangent to the free boundary 
of $\Omega_{\eta_1},$ $\eta_1 \!\in\! (0, \epsmax)$. Let also~$u_1 \!<\! a_0$ and~$u_2 \!>\! b_0,$ 
and let~$u_1$ belong to the left tail of the 
chord $[g(a_0),g(b_0)],$ and~$u_2$ to its right tail. Then there exists~$\eta_2,$ $\eta_2>\eta_1,$ 
and a full flow $\{\Ch([a(\eps),b(\eps)],[a_0,b_0])\}_{\eps \in [\eta_1,\eta_2]}$ of chordal domains such that 
for each~$\eps \in [\eta_1,\eta_2]$ there exists a continuous function~$B_{\eps}$ that coincides with the 
standard candidates on $\Lt(u_1,a(\eps);\eps),$ $\Ch([a(\eps),b(\eps)],[a_0,b_0]),$ and~$\Rt(b(\eps),u_2;\eps)$. 
\end{St}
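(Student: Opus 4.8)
The statement is morally the ``degenerate differential'' version of Proposition~\ref{InductionStepForChordalDomain}, so the plan is to reduce it to the machinery already developed: grow a chordal domain from the single chord $[g(a_0),g(b_0)]$ via Condition~\ref{StartChord} and Proposition~\ref{St010901}, then check that the tails survive the enlargement using the evolutional tail-growth lemmas, and finally reparametrize by $\eps$ so that the growing chord stays tangent to $\dfree\Omega_\eps$. First I would produce the family of chords. The pair $(a_0,b_0)$ satisfies the cup equation~\eqref{urlun1}; since the chord $[g(a_0),g(b_0)]$ has nonzero tails, Lemma~\ref{zerodifrootsOutside} tells us that at each endpoint where the corresponding differential vanishes the function $\tors$ is monotone of the right sign in the appropriate one-sided neighborhood, and where it does not vanish it is negative by our convention. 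In either case, $\DL(a_0,b_0)\le 0$ and $\DR(a_0,b_0)\le 0$. If both are strictly negative, Remark~\ref{Rem230301} gives Condition~\ref{StartChord} directly and we are in the situation of Proposition~\ref{St010901}. If one or both vanish, I would verify Condition~\ref{StartChord} by hand exactly as in the proof of Proposition~\ref{St010901}: after subtracting an affine combination of $g_1,g_2$ we may assume $f'(a_0)=f'(b_0)=0$, so $L(t)=\dettt{\gamma'(t)}{\gamma'(a_0)}{\gamma'(b_0)}$ has the sign of $f'(t)$, and the torsion-sign information from Lemma~\ref{zerodifrootsOutside} forces $f'$ to have the required signs in one-sided neighborhoods of $a_0$ and $b_0$. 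Thus Proposition~\ref{St010901} applies and yields $\delta>0$ and differentiable functions $\tilde a,\tilde b\colon[0,\delta]\to\mathbb{R}$ with $\tilde a$ decreasing, $\tilde b$ increasing, $\tilde a(0)=a_0$, $\tilde b(0)=b_0$, the pair $(\tilde a(\tau),\tilde b(\tau))$ solving~\eqref{urlun1}, and $\DL(\tilde a(\tau),\tilde b(\tau))<0$, $\DR(\tilde a(\tau),\tilde b(\tau))<0$ for $\tau\in(0,\delta]$.

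\textbf{Reparametrization and tails.} Next I would choose $\eta_2>\eta_1$ so small that the chord $[g(\tilde a(\delta)),g(\tilde b(\delta))]$ intersects (crosses) the free boundary of $\Omega_{\eta_2}$; since $[g(a_0),g(b_0)]$ is tangent to $\dfree\Omega_{\eta_1}$ and the domains $\Omega_\eps$ increase monotonically with $\eps$, for each $\eps\in[\eta_1,\eta_2]$ there is a unique $\tau=\tau(\eps)\in[0,\delta]$ for which $[g(\tilde a(\tau)),g(\tilde b(\tau))]$ is tangent to $\dfree\Omega_\eps$ (monotonicity of $\kappa$ in $\eps$, $\frac{\partial}{\partial\eps}\kappa\ne0$, guarantees uniqueness; continuity of everything involved and possibly shrinking $\eta_2$ gives $\tau$ continuous with $\tau(\eta_1)=0$). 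Set $a(\eps)=\tilde a(\tau(\eps))$, $b(\eps)=\tilde b(\tau(\eps))$. Then $\{\Ch([a(\eps),b(\eps)],[a_0,b_0])\}_{\eps\in[\eta_1,\eta_2]}$ is a full flow of chordal domains in the sense of Definition~\ref{FlowOfChordalDomains}. Now the key point: $u_1$ lies in the left tail and $u_2$ in the right tail of the starting chord $[g(a_0),g(b_0)]$ by hypothesis, and by Lemma~\ref{FullCupEvolutionTails} (applied to the full flow just constructed) the tails of $\Ch([a(\eps),b(\eps)],[a_0,b_0])$ strictly enlarge with $\eps$; hence $u_1$ and $u_2$ remain in the left and right tails of $\Ch([a(\eps),b(\eps)],[a_0,b_0])$ for all $\eps\in[\eta_1,\eta_2]$. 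This is exactly what is needed to glue.

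\textbf{Assembling the candidate.} For each $\eps\in[\eta_1,\eta_2]$ the inequalities $\DL(a(\eps),b(\eps))<0$, $\DR(a(\eps),b(\eps))<0$ hold for $\eps>\eta_1$ (and weakly at $\eta_1$, which is harmless — we are gluing $B_{\eta_1}=B$ as given), so Proposition~\ref{NewLightChordalDomainCandidate} furnishes the standard candidate on $\Ch([a(\eps),b(\eps)],[a_0,b_0])$. Since $u_1$ is in the left tail and $u_2$ in the right tail, Propositions~\ref{NewRightTangentsCandidate} and~\ref{NewLeftTangentsCandidate} (via the definition of the tail and the force-matching built into Definitions~\ref{Def031002},~\ref{Def031003}) give standard candidates on $\Lt(u_1,a(\eps);\eps)$ and $\Rt(b(\eps),u_2;\eps)$ whose forces match those of the chordal domain at $a(\eps)$ and $b(\eps)$; by Corollary~\ref{cor250901} the gradients match, so Proposition~\ref{St260901} (applied twice, once on each side) produces a $C^1$-smooth concatenation $B_\eps$ that is a Bellman candidate on $\Lt(u_1,a(\eps);\eps)\cup\Ch([a(\eps),b(\eps)],[a_0,b_0])\cup\Rt(b(\eps),u_2;\eps)$, as claimed. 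The main obstacle is the degenerate case $\DL(a_0,b_0)=\DR(a_0,b_0)=0$: there Proposition~\ref{St010901} must be invoked with some care (as in the proof of Lemma~\ref{Lem180901}, working with the modified function $\tilde f=(1-\chi_{[a_0,b_0]})f+\chi_{[a_0,b_0]}f(a_0)$ and Corollary~\ref{Cor310801} to secure the strict negativity of both differentials immediately after leaving $\tau=0$), but this is already packaged inside Proposition~\ref{St010901} and its proof, so no new difficulty arises — the only genuinely new bookkeeping is checking that $\tau(\eps)$ is well-defined, continuous, and that the flow so obtained is full, which is a routine application of the implicit function theorem together with the monotonicity of $\Omega_\eps$ in $\eps$.
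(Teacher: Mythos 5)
Your proposal is correct and follows essentially the same route as the paper: reduce to Proposition~\ref{St010901} by verifying Condition~\ref{StartChord} (trivially via Remark~\ref{Rem230301} when a differential is strictly negative, and via Lemma~\ref{zerodifrootsOutside} plus $\tors(b_0)=0$ when it vanishes), then repeat the reparametrization-by-tangency and tail-growth argument of Proposition~\ref{InductionStepForChordalDomain}. Your sign check for $L$ in the degenerate case (normalizing $f'(a_0)=f'(b_0)=0$ and tracing $\tors<0\Rightarrow\kappa_3<0\Rightarrow f'<0$) is just the normalized form of the paper's computation of $(L/g_1')'/\kappa_2'=\tors\cdot\dett{g'(a_0)}{g'(b_0)}+\const$, so no substantive difference.
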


\begin{proof}
The proof is a repetition of the proof of Proposition~\ref{InductionStepForChordalDomain}, the only difference 
is that here we need to verify Condition~\ref{StartChord} directly in order to use Proposition~\ref{St010901}. 

We need to check that the quantity 
$$
L(t)=\dettt{\gamma'(t)}{\gamma'(a_0)}{\gamma'(b_0)} = g_1'(t)\det\begin{pmatrix}
1& \kappa_2(t)& \kappa_3(t)\\
g_1'(a_0)& g_2'(a_0)& f'(a_0)\\
g_1'(b_0)& g_2'(b_0)& f'(b_0)
\end{pmatrix}
$$
is positive for $t$ on the left of $a_0$ and is negative on the right of $b_0$. 
Let us consider the right case, the left one is analogous.  
Since the right tail of the chord~$[g(a_0),g(b_0)]$ is nonempty,~$\DR(a_0,b_0)\leq 0$. 
If $\DR(a_0,b_0)<0$, then the claim is trivial (see Remark~\ref{Rem230301}). 
If $\DR(a_0,b_0)=0$, then we use Lemma~\ref{zerodifrootsOutside} and conclude that $\tors$ decreases 
on $(b_0,b_0+\delta)$ for some $\delta>0$. Note that $L(b_0)=L'(b_0)=0$ and for $t\in (b_0,b_0+\delta)$ we have
$$
L_1(t) \df \frac{\Big(\frac{L(t)}{g_1'(t)}\Big)'}{\kappa_2'(t)} = \tors(t) \dett{g'(a_0)}{g'(b_0)}+\const < 
\tors(b_0)\dett{g'(a_0)}{g'(b_0)}+\const =L_1(b_0)=0.
$$ 
Since $\kappa_2'>0$, the function $\frac{L}{g_1'}$ strictly decreases on $(b_0,b_0+\delta)$, therefore 
$\frac{L}{g_1'}(t)<\frac{L}{g_1'}(b_0)=0$ and $L(t)<0$ for $t \in (b_0,b_0+\delta)$ because $g_1'>0$.
\end{proof}

Now we turn to trolleybuses. The next two propositions claim that the base of a trolleybus shrinks 
when~$\eps$ increases. On a more formal way, there exists a decreasing flow of chordal domains such that 
for each $\eps$ we can build a trolleybus on the corresponding chordal domain. In what follows, we will use the notation~$\bar{\mathbb{R}}$ for the extended real line~$\R\cup \{\pm\infty\}$.

\begin{St}[{\bf Induction step for a right trolleybus}]\label{InductionStepForRightTrolleybus}\index{trolleybus}
Let~$\eta_1,\eta_3 \!\in\!\mathbb{R},$ $0\!<\!\eta_1\!<\!\eta_3\!<\!\epsmax$. 
Suppose that~$a_1,b_1,a_2,b_2,u \!\in\! \mathbb{R},$ $a_2\!<\!a_1\!<\!b_1\!<\!b_2\!\leq\! u$. 
Let $\Omega([a_2,a_1])$ be the fence with the corresponding function 
$s\colon [a_2,a_1] \to [b_1,b_2],$ $s(a_i)=b_i,$ $i=1,2$. This fence coincides with~$\Ch([a_2,b_2],[a_1,b_1])$. Suppose that~$\FFr$ is a right monotone force flow 
with the corresponding functions $u_\pm\colon[\eta_1,\eta_3]\to\bar{\mathbb{R}}$ such 
that~$a_2\in[u_-(\eta_1),u_+(\eta_1)]$. Suppose that there exists a continuous function~$B$ that coincides 
with the standard candidates on~$\Rt(u_-(\eta_1),a_2;\eta_1),$ $\RTroll(a_2,b_2;\eta_1),$ $\Rt(b_2,u;\eta_1),$ 
and $\Ch([a_2,a_1])$. Moreover\textup, the force of $B$ on $\Rt(u_-(\eta_1),a_2;\eta_1)$ coincides 
with~$\FFr(\,\cdot\;;\eta_1)$. Then there exists~$\eta_2$\textup, $\eta_1<\eta_2<\eta_3$\textup, and 
a strictly increasing function~$a\colon [\eta_1,\eta_2]\to[a_2,a_1]$ such that~$a(\eta_1) = a_2$\textup, 
and for any $\eps \in [\eta_1,\eta_2]$ there exists a continuous function $B_\eps$ that coincides 
with the standard candidates on~$\Rt(u_-(\eps),a(\eps);\eps)$\textup, $\RTroll(a(\eps),b(\eps);\eps)$\textup, 
$\Ch([a(\eps),a_1])$\textup, and $\Rt(b(\eps),u;\eps)$\textup, where $b(\eps)=s(a(\eps))$. Moreover\textup, 
the force of $B_\eps$ on $\Rt(u_-(\eps),a(\eps);\eps)$ is~$\FFr(\,\cdot\;;\eps)$. The functions
\begin{equation}\label{eq060401} 
\Fr\Big(t;a(\eps),b(\eps);\eps\Big)\quad \text{and}\quad  
\Fl\Big(t;a(\eps),b(\eps);\eps\Big)
\end{equation}
are the right and the left monotone force flows on the corresponding domains.
\end{St}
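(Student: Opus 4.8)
The plan is to reduce Proposition~\ref{InductionStepForRightTrolleybus} to a combination of the already-established induction step for a right trolleybus's neighbours together with the implicit function theorem applied to a balance equation. First I would set up the geometry: the chordal domain $\Ch([a_2,b_2],[a_1,b_1])$ is fixed (its foliation does not change with $\eps$), and the trolleybus $\RTroll(a_2,b_2)$ sits on its top chord $[g(a_2),g(b_2)]$. The key quantity is the force balance at the left tangent point $a_2$: by Corollary~\ref{cor250901} and Proposition~\ref{St131001}, the continuous gluing of the standard candidates on $\Rt(u_-(\eta_1),a_2;\eta_1)$, $\RTroll(a_2,b_2;\eta_1)$, $\Ch([a_2,a_1])$, and $\Rt(b_2,u;\eta_1)$ is equivalent to two force equalities: $\FFr(a_2;\eta_1) = F(a_2)$ where $F(a_2) = \DL(a_2,b_2)/(g_1'(a_2)\kappa_2'(a_2))$ is the trolleybus force from Definition~\ref{Def190401}, and $\Fr(b_2;a_2,b_2) = \FF_{[b_2,u]}(b_2)$ for the right side.

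Next I would define, for $\eps$ near $\eta_1$ and $a$ near $a_2$ with $b = s(a)$, the function
\eq{plan1}{
\Phi(a,\eps) = \FFr(a;\eps) - \frac{\DL(a,b)}{g_1'(a)\kappa_2'(a)},
}
so that the left gluing condition reads $\Phi(a(\eps),\eps)=0$. At $(\eps,a)=(\eta_1,a_2)$ we have $\Phi=0$ by hypothesis. The main computation is $\partial\Phi/\partial a$ at this point. By the third property in Definition~\ref{MFF}, $\FFr(\,\cdot\,;\eps)$ is strictly decreasing in $\eps$, and I would use the differential equation~\eqref{eq241101} for $\FFr$ together with the formula~\eqref{eq170102}-type derivative for $\DL(a,s(a))/(g_1'\kappa_2')$ to show $\partial\Phi/\partial a \ne 0$: geometrically, as $a$ increases the chordal domain $\Ch([a,b(a)],[a_1,b_1])$ shrinks, so by Lemma~\ref{biggercupsmallerforce} its left differential $\DL$ increases (becomes less negative), hence the subtracted term increases, while $\FFr(a;\eps)$ changes in a controlled way; the sign works out so that $\Phi$ is strictly monotone in $a$ near $a_2$. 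The implicit function theorem then gives a differentiable $a\colon[\eta_1,\eta_2]\to[a_2,a_1]$ with $a(\eta_1)=a_2$; monotonicity of $\FFr$ in $\eps$ forces $a$ to be strictly increasing (shrinking the trolleybus). Shrinking $\eta_2$ if necessary I ensure $a(\eps)\le a_1$, so that $\Ch([a(\eps),a_1])$ remains a genuine chordal domain, and that the right tangent point $b(\eps)=s(a(\eps))$ stays below $u$ with $u$ in its right tail (using Corollary~\ref{TailsGrowthCorollary} and Lemma~\ref{biggercupsmallerforce}: a smaller chordal domain has larger tails, and as $\eps$ grows the tails grow further, so $u$ stays in the tail).

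With $a(\eps)$ in hand I would assemble $B_\eps$ by the gluing propositions: on $\Rt(u_-(\eps),a(\eps);\eps)$ take the standard candidate whose force is $\FFr(\,\cdot\,;\eps)$ (which matches the trolleybus force at $a(\eps)$ by construction of $\Phi$), on $\RTroll(a(\eps),b(\eps);\eps)$ the standard candidate from~\eqref{eq201001}, on $\Ch([a(\eps),a_1])$ the linear standard candidate, and on $\Rt(b(\eps),u;\eps)$ the standard candidate glued continuously at $b(\eps)$ — possible precisely because $u$ lies in the right tail. Proposition~\ref{St131001} (applied with $\mathfrak{L}=\RTroll(a(\eps),b(\eps))$ and the surrounding fences) guarantees the concatenation is $C^1$-smooth, hence a Bellman candidate by Proposition~\ref{ConcatenationOfConcaveFunctions}. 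Finally the claim that $\Fr(t;a(\eps),b(\eps);\eps)$ and $\Fl(t;a(\eps),b(\eps);\eps)$ are monotone force flows follows from Corollary~\ref{FlowMonotonicityCorollary} once I observe that $\{\Ch([a(\eps),b(\eps)],*)\}$ is a \emph{decreasing} flow of chordal domains (since $a$ increases, $b=s(a)$ decreases, as $s$ is decreasing), plus Remark~\ref{FCDMFF}. The main obstacle I anticipate is the sign verification for $\partial\Phi/\partial a$: one must combine the $\eps$-monotonicity of $\FFr$ with the $a$-monotonicity of the trolleybus force $\DL(a,s(a))/(g_1'\kappa_2')$ and check they do not cancel — this is where the relation $\kappal > \kappachord > \kappa_2$ from Lemma~\ref{biggercupsmallerforce} and the differential identity~\eqref{eq170101}/\eqref{eq170102} must be used carefully, and it is essential that $s$ is the fixed function of the given chordal domain so that $b'(a)=s'(a)<0$ is known.
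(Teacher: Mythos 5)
Your overall architecture matches the paper's: the same balance equation $\FFr(a;\eps)=\Fl(a;\Ch([a_2,a_1]))$ (your $\DL(a,s(a))/(g_1'(a)\kappa_2'(a))$ is exactly the left force of the chordal domain at $a$), the same use of Lemma~\ref{monbaleq}-type monotonicity, and the same assembly of $B_\eps$ via the gluing propositions and Corollary~\ref{FlowMonotonicityCorollary} for the decreasing flow $\{\Ch([a(\eps),a_1])\}$. In the non-degenerate case $u_+(\eta_1)>a_2$ you are in fact overcomplicating matters: there is no need for a sign computation of $\partial\Phi/\partial a$ via~\eqref{eq170102} and no cancellation to worry about, because Lemma~\ref{monbaleq} already gives strict monotonicity in $a$ of the difference of the two forces, so the intermediate value theorem plus the strict decrease of $\FFr(a_2;\cdot)$ in $\eps$ (property 3 of Definition~\ref{MFF}, while the chordal force at $a_2$ is $\eps$-independent) produces a unique, increasing root $a(\eps)$. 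Note also that the forces need not be classically differentiable ($\tors'$ is only a signed measure), so an implicit-function-theorem formulation is the wrong tool here.

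The genuine gap is the degenerate case $a_2=u_+(\eta_1)$, i.e.\ $\FFr(a_2;\eta_1)=0$ and $\DL(a_2,b_2)=0$, which the hypotheses explicitly permit ($a_2\in[u_-(\eta_1),u_+(\eta_1)]$) and which is essential later (Proposition~\ref{Pr060401} relies on precisely this subcase). Here your function $\Phi(\cdot,\eta_1)$ is not even defined to the right of $a_2$, since the force flow $\FFr(\cdot;\eta_1)$ lives only on $[u_-(\eta_1),u_+(\eta_1)]=[u_-(\eta_1),a_2]$; so there is no neighborhood of $(a_2,\eta_1)$ on which to apply the implicit function theorem, and both sides of the balance equation vanish at the base point. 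The paper's argument in this case is genuinely different: one first uses Lemma~\ref{zerodifrootsInside} to find $a^-$ with $\tors$ decreasing on $(a_2,a^-)$, then Remarks~\ref{rem220201} and~\ref{rem241101} to guarantee $u_+(\eps)>a^-$ for $\eps>\eta_1$, and finally a contradiction argument: if the balance equation had no root in $(a_2,a^-)$ for $\eps$ close to $\eta_1$, then $\FFr(\cdot;\eps)-\Fl(\cdot;\Ch([a_2,a_1]))$ would converge uniformly to zero there, which is impossible because $\FFr(\cdot;\eps)-\tors$ is strictly increasing while $\Fl(\cdot;\Ch)-\tors$ is strictly decreasing (Remark~\ref{Rem150401}). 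Without this case your proof does not establish the proposition in the generality in which it is stated and used.
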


We note that~$\{\Ch([a(\eps),a_1])\}, \eps \in [\eta_1,\eta_2]$, is a decreasing flow of chordal domains.
\begin{proof}
Our first step is to find $\eta_2\in(\eta_1,\eta_3)$ and a point $a^-$ in a right neighborhood of $a_2$ 
such that $a^-<u_+(\eps)$ and 
\begin{equation}\label{eq220201}
\FFr(a^-;\eps) > \Fl(a^-;\Ch([a_2,a_1])), \qquad \eps\in(\eta_1,\eta_2).
\end{equation}
We consider two cases: $u_+(\eta_1)>a_2$ and $u_+(\eta_1)=a_2$. 

First, if $u_+(\eta_1)>a_2$, then by Lemma~\ref{monbaleq} the function 
$\FFr(\,\cdot\;;\eta_1)-\Fl(\,\cdot\;;\Ch([a_2,a_1]))$ strictly increases in some right neighborhood of $a_2$. 
Since the point $a_2$ is the root of the balance equation for these forces, i.\,e., 
$\FFr(a_2;\eta_1) = \Fl(a_2;\Ch([a_2,a_1]))$, we may simply take a point $a^- \in (a_2,u_+(\eta_1))\cap (a_2,a_1)$ 
and obtain $\FFr(a^-;\eta_1) > \Fl(a^-;\Ch([a_2,a_1]))$. It follows from continuity of forces that 
for some $\eta_2 \in (\eta_1,\eta_3)$ inequality~\eqref{eq220201} holds.

Now we assume that $u_+(\eta_1)=a_2$. Then $\FFr(a_2;\eta_1) = \Fl(a_2;\Ch([a_2,a_1]))=0$, and according 
to Lemma~\ref{zerodifrootsInside} the function~$\tors$  decreases on an interval~$(a_2,a^-)$, $a^-<a_1$.  It follows from Remark~\ref{rem220201} that $u_+(\eps)>a_2$ for any 
$\eps\in(\eta_1,\eta_3]$, therefore  $u_+(\eps)>a^-$ by Remark~\ref{rem241101}. Lemma~\ref{monbaleq} implies 
that the function $\FFr(\,\cdot\;;\eps) - \Fl(\,\cdot\;;\Ch([a_2,a_1]))$ strictly increases on $(a_2,a^-)$. 
If there were no required $\eta_2$, then $\FFr(a^-;\eps)-\Fl(a^-;\Ch([a_2,a_1]))\leq 0$ for all $\eps$, 
$\eps\in(\eta_1,\eta_3)$. However,
$$
\FFr(a_2;\eps) \quad\longrightarrow \quad \FFr(a_2;\eta_1)=\Fl(a_2;\Ch([a_2,a_1])), \qquad \eps\to \eta_1+,
$$
therefore $\FFr(\,\cdot\;;\eps) - \Fl(\,\cdot\;;\Ch([a_2,a_1]))$ converges uniformly to zero on $[a_2,a^-]$ 
when $\eps \to \eta_1+$, or, equivalentely,
\eq{eq220202}{
\FFr(\,\cdot\;;\eps)-\tors \quad \longrightarrow \quad \Fl(\,\cdot\;;\Ch([a_2,a_1]))-\tors,\qquad \eps\to\eta_1+,
} 
uniformly on $[a_2,a^-]$. Due to Remark~\ref{Rem150401} the function on the right hand side of~\eqref{eq220202} 
is strictly decreasing while the function on the left hand side is strictly increasing. This contradicts the uniform convergence~\eqref{eq220202}. Thus, there exists $\eta_2 \in (\eta_1,\eta_3)$ such 
that~\eqref{eq220201} holds.

By Definition~\ref{MFF} of the monotone force flow,
\begin{equation*}
\FFr(a_2;\eps) - \Fl(a_2;\Ch([a_2,a_1])) < 0, \quad \eps \in (\eta_1,\eta_2),
\end{equation*}
since~$\Fl(a_2;\Ch([a_2,a_1]))$ does not depend on~$\eps$.
Therefore, there exists a point~$a = a(\eps) \in [a_2,a^-]$ that solves the balance equation
$\FFr(a;\eps)=\Fl(a;\Ch([a_2,a_1]))$. We note that the function~$a$ is increasing. For the existence 
of the desired function~$B_\eps$ we only need to verify that 
\begin{equation*}
\Fr(t;a(\eps),b(\eps);\eps) < 0, \quad \eps \in [\eta_1,\eta_2],\  t \in [b(\eps); u],
\end{equation*}
which follows from Corollary~\ref{FlowMonotonicityCorollary}, because~$\big\{\Ch([a(\eps),a_1])\big\}$ 
is a decreasing flow of chordal domains.

By Remark~\ref{FCDMFF}, the force functions \eqref{eq060401} form monotone force flows.
\end{proof}

\begin{St}[{\bf Induction step for a left trolleybus}]\label{InductionStepForLeftTrolleybus}
Let~$\eta_1,\eta_3 \in \mathbb{R},$ $0\!<\!\eta_1\!<\!\eta_3\!<\!\epsmax$. 
Suppose that $a_1,b_1,a_2,b_2,u \!\in\! \mathbb{R},$ $u \!\leq a_2\!<\!a_1\!<\!b_1\!<\!b_2$. 
Let $\Ch([b_1,b_2])$ be the fence with the corresponding function 
$s\colon[b_1,b_2]\to[a_2,a_1],$ $s(b_i)=a_i,$ $i=1,2$. This fence coincides with~$\Ch([a_2,b_2],[a_1,b_1])$. Suppose that~$\FFl$ is a left monotone force flow with 
the corresponding functions $u_\pm \colon [\eta_1,\eta_3] \to \bar{\mathbb{R}}$ such that
$b_2 \in [u_-(\eta_1),u_+(\eta_1)]$. Suppose that there exists a continuous function~$B$ that coincides 
with the standard candidates on~$\Lt(b_2,u_+(\eta_1);\eta_1),$ $\LTroll(a_2,b_2;\eta_1),$ 
$\Lt(u,a_2;\eta_1),$ and $\Ch([b_1,b_2])$. Moreover\textup, the force of $B$ on $\Lt(b_2,u_+(\eta_1);\eta_1)$ 
is~$\FFl(\,\cdot\;;\eta_1)$. Then there exists~$\eta_2$\textup, $\eta_1\!<\!\eta_2\!<\!\eta_3$\textup, 
and a strictly decreasing function~$b\colon [\eta_1,\eta_2] \to [b_1,b_2]$ such that $b(\eta_1) = b_2$\textup, 
and for any $\eps \in [\eta_1,\eta_2]$ there exists a continuous function $B_\eps$ that coincides with 
the standard candidates on~$\Lt(b(\eps),u_+(\eps);\eps)$\textup, $\LTroll(a(\eps),b(\eps);\eps)$\textup, 
$\Lt(u,a(\eps);\eps),$ and $\Ch([b_1,b(\eps)]),$ where $a(\eps)=s(b(\eps))$. Moreover\textup, the force 
of $B_\eps$ on $\Lt(b(\eps),u_+(\eps);\eps)$ is~$\FFl(\,\cdot\;;\eps)$. The forces~\eqref{eq060401} 
form monotone force flows.
\end{St}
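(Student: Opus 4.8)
The statement is the left-handed mirror image of Proposition~\ref{InductionStepForRightTrolleybus} (Induction step for a right trolleybus), so the plan is to follow that proof line by line with the obvious reflections: ``right'' becomes ``left'', $\FFr$ becomes $\FFl$, the varying endpoint is now the right endpoint $b$ of the base chord rather than the left endpoint $a$, the base chord shrinks because $b$ decreases toward $b_1$, and the monotone force flow $\FFl$ sits on the left of the whole picture (on $\Lt(b_2,u_+(\eta_1);\eta_1)$) instead of on the right. The chordal domain $\Ch([b_1,b_2])$, reparametrized by its right endpoint via $s\colon[b_1,b_2]\to[a_2,a_1]$, plays the role that $\Ch([a_2,a_1])$ played before; since $b$ will decrease, $\{\Ch([b_1,b(\eps)])\}$ is a decreasing flow of chordal domains in the sense of Definition~\ref{FlowOfChordalDomains}, which is what ultimately drives everything.

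First I would locate the new position of the varying endpoint $b(\eps)$ by solving a balance equation. Concretely, I want to produce $\eta_2\in(\eta_1,\eta_3)$ and a point $b^{+}$ in a \emph{left} neighborhood of $b_2$ with $b^{+}>u_-(\eps)$ such that
\eq{}{
\FFl(b^{+};\eps) > \Fr(b^{+};\Ch([b_1,b_2])),\qquad \eps\in(\eta_1,\eta_2),
}
which is the exact analogue of~\eqref{eq220201}. As in the right case there are two subcases. If $u_-(\eta_1)<b_2$, then by Lemma~\ref{monbaleq} the function $\FFl(\cdot\,;\eta_1)-\Fr(\cdot\,;\Ch([b_1,b_2]))$ is strictly \emph{decreasing} near $b_2$ (the left analogue: $\Fr-\Fl$ increases, hence $\Fl-\Fr$ decreases), and since $b_2$ solves the balance equation at $\eps=\eta_1$, any $b^{+}\in(u_-(\eta_1),b_2)\cap(b_1,b_2)$ gives the strict inequality at $\eta_1$, which then persists for $\eps$ slightly larger by continuity of forces. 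If $u_-(\eta_1)=b_2$, both forces vanish at $b_2$; Lemma~\ref{zerodifrootsInside} (applied on the left of $b_2$) gives that $\tors$ increases on some interval $(b^{+},b_2)$, Remark~\ref{rem220201} gives $u_-(\eps)<b_2$ for $\eps>\eta_1$ and Remark~\ref{rem241101} gives $u_-(\eps)<b^{+}$, Lemma~\ref{monbaleq} gives strict monotonicity of the difference on $(b^{+},b_2)$, and then the contradiction argument using Remark~\ref{Rem150401}: if no such $\eta_2$ existed, $\FFl(\cdot\,;\eps)-\tors$ would converge uniformly on $[b^{+},b_2]$ as $\eps\to\eta_1+$ to $\Fr(\cdot\,;\Ch([b_1,b_2]))-\tors$, but the former is strictly decreasing and the latter strictly increasing (left-hand versions of Remark~\ref{Rem150401}), a contradiction. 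Hence $\eta_2$ exists.

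With $\eta_2$ in hand, for each $\eps\in(\eta_1,\eta_2)$ Definition~\ref{MFF} gives $\FFl(b_2;\eps)-\Fr(b_2;\Ch([b_1,b_2]))<0$ (the right side is $\eps$-independent), while the displayed inequality gives the opposite sign at $b^{+}$, so by continuity there is a root $b=b(\eps)\in[b^{+},b_2]$ of the balance equation $\FFl(b;\eps)=\Fr(b;\Ch([b_1,b_2]))$; monotonicity from Lemma~\ref{monbaleq} makes this root unique and the function $b(\cdot)$ decreasing, with $b(\eta_1)=b_2$. Setting $a(\eps)=s(b(\eps))$, Corollary~\ref{cor250901} (forces-equality gluing) lets us concatenate the standard candidates on $\Lt(b(\eps),u_+(\eps);\eps)$, $\LTroll(a(\eps),b(\eps);\eps)$, $\Ch([b_1,b(\eps)])$, and $\Lt(u,a(\eps);\eps)$ into a $C^1$-smooth Bellman candidate $B_\eps$ — here one also checks, exactly as at the end of the right-trolleybus proof, that $\Fl(t;a(\eps),b(\eps);\eps)<0$ for $t\in[u,a(\eps)]$, which follows from Corollary~\ref{FlowMonotonicityCorollary} because $\{\Ch([b_1,b(\eps)])\}$ is a decreasing flow. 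Finally, that the two force functions in~\eqref{eq060401} are right and left monotone force flows is Remark~\ref{FCDMFF} applied to this decreasing flow. The only place demanding genuine care — rather than mechanical mirroring — is the $u_-(\eta_1)=b_2$ degenerate subcase, where one must correctly transport Lemmas~\ref{zerodifrootsInside}, \ref{monbaleq} and Remarks~\ref{rem220201},~\ref{rem241101},~\ref{Rem150401} to their left-handed forms and keep track of all the reversed inequalities; everything else is a routine reflection of the proof of Proposition~\ref{InductionStepForRightTrolleybus}.
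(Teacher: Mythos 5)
Your proposal is correct and is exactly what the paper intends: the paper states this proposition without proof as the mirror image of Proposition~\ref{InductionStepForRightTrolleybus}, and your reflection (balance equation $\FFl(b;\eps)=\Fr(b;\Ch([b_1,b_2]))$ solved in a left neighborhood of $b_2$, with the degenerate subcase $u_-(\eta_1)=b_2$ handled via Lemma~\ref{zerodifrootsInside}, Remarks~\ref{rem220201},~\ref{rem241101},~\ref{Rem150401}, and the gluing/negativity checks via Corollaries~\ref{cor250901} and~\ref{FlowMonotonicityCorollary} for the decreasing flow $\{\Ch([b_1,b(\eps)])\}$) transports all the inequalities correctly.
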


\begin{Rem}\label{UniqueTrolley}
It follows from Lemma~\textup{\ref{monbaleq}} that the functions~$a$ and $b$ constructed in 
Propositions~\textup{\ref{InductionStepForRightTrolleybus}} and~\textup{\ref{InductionStepForLeftTrolleybus}} 
are unique \textup(at least when~$\eta_2 - \eta_1$ is sufficiently small\textup).
\end{Rem}

The following four propositions describe the evolutional behavior of multitrolleybuses. It appears that each 
multitrolleybus immediately splits into a trolleybus parade (by formulas~\eqref{RMultitrolleybusDesintegration} 
and~\eqref{LMultitrolleybusDesintegration}), and each of the trolleybuses decreases. We consider two simpler cases separately to make the presentation smoother.

\begin{St}[{\bf Induction step for a right multitrolleybus on a  solid root}]\label{Pr300301} 
Let $\eta_1,\eta_3\!\in\!\mathbb{R},$ $0\!<\!\eta_1\!<\!\eta_3\!<\!\epsmax$. Consider a right 
multitrolleybus~$\MTTR(\{\mathfrak{a}\};\eta_1)$ on a solid root 
$\mathfrak{a}=[\mathfrak{a}^{\mathrm{l}},\mathfrak{a}^{\mathrm{r}}]$ 
\textup(the case~$\mathfrak{a}^{\mathrm{l}}\!=\!\mathfrak{a}^{\mathrm{r}}$ is not excluded\textup). 
Let $u\!\in\!\mathbb{R}$, $\mathfrak{a}^{\mathrm{r}}\!\leq\! u$. Suppose that~$\FFr$ is a right monotone force 
flow with the corresponding functions $u_\pm \colon [\eta_1,\eta_3] \to \bar{\mathbb{R}}$
such that~$\mathfrak{a}^{\mathrm{l}} \in [u_-(\eta_1),u_+(\eta_1)]$. Suppose that there exists a continuous 
function~$B$ that coincides with the standard candidates on~$\Rt(u_-(\eta_1),\mathfrak{a}^{\mathrm{l}};\eta_1),$ 
$\MTTR(\{\mathfrak{a}\};\eta_1),$ and $\Rt(\mathfrak{a}^{\mathrm{r}},u;\eta_1)$. 
Moreover\textup, the force of $B$ on $\Rt(u_-(\eta_1),\mathfrak{a}^{\mathrm{l}};\eta_1)$ 
is~$\FFr(\,\cdot\;;\eta_1)$. Then for any $\eps \in (\eta_1,\eta_3]$ there exists a continuous 
function $B_\eps$ that coincides with the standard candidates on~$\Rt(u_-(\eps),u;\eps),$ 
and the force of $B_\eps$ on $\Rt(u_-(\eps), u;\eps)$ is~$\FFr(\,\cdot\;;\eps)$.
\end{St}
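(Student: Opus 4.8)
\textbf{Proof plan for Proposition~\ref{Pr300301}.}

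The plan is to reduce the multitrolleybus on a solid root to a configuration of tangent domains, an angle, and the chords carried by the arc~$g(\mathfrak{a})$, using the disintegration formula~\eqref{RMultitrolleybusDesintegration} together with ``long chord + multitrolleybus = multicup''~\eqref{ChordalDomainPlusMultitrolleybusR} and ``angle + multicup = multitrolleybus''~\eqref{AnglePlusMulticupLeft}, and then to apply the already-proven induction steps. First I would note that a right multitrolleybus on a single solid root~$\mathfrak{a}$ is surrounded on the left by~$\Rt(u_-(\eta_1),\mathfrak{a}^{\mathrm l};\eta_1)$, on the right by~$\Rt(\mathfrak{a}^{\mathrm r},u;\eta_1)$, and is itself built from the chords~$[g(\mathfrak a_i^{\mathrm r}),g(\mathfrak a_{i+1}^{\mathrm l})]$ inside the arc; since~$\mathfrak a$ is a single interval here the multitrolleybus actually collapses to a (possibly degenerate) right tangent domain over the arc~$g(\mathfrak a)$, so the whole configuration at~$\eta_1$ is nothing but a single right tangent domain~$\Rt(u_-(\eta_1),u;\eta_1)$ whose force coincides with~$\FFr(\,\cdot\,;\eta_1)$ on its left part (this is the content of treating~$\MTTR(\{\mathfrak a\})$ as a tangent domain together with Definition~\ref{211101}).

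The key steps, in order, are: (i) identify the function~$B$ at~$\eta_1$ as the standard candidate on the single right tangent domain~$\Rt(u_-(\eta_1),u;\eta_1)$, using that along the arc~$g(\mathfrak a)$ the torsion~$\torsion$ vanishes (because~$\mathfrak a$ is a root, $\tors'=0$ there by Definition~\ref{roots}), so by~\eqref{betaprime} and~\eqref{eq211003} the force~$\FF$ is constant across the arc and the standard-candidate inequality~$\beta_2'\le 0$ is not violated; (ii) invoke that~$\FFr$ is a right monotone force flow on~$[u_-(\eps),u_+(\eps)]$ with~$u_+(\eta_1)\ge\mathfrak a^{\mathrm l}$, and that by Remark~\ref{rem220201} together with Remark~\ref{rem241101} the endpoint~$u_+(\eps)$ moves strictly to the right as~$\eps$ increases past~$\eta_1$, so that~$u_+(\eps)>\mathfrak a^{\mathrm r}$ for all~$\eps\in(\eta_1,\eta_3]$; (iii) conclude that for each such~$\eps$ the point~$u$ itself lies in the interval~$(u_-(\eps),u_+(\eps))$ — here one uses monotonicity of~$u_\pm$ and the starting inclusion~$\mathfrak a^{\mathrm r}\le u$ together with the fact that at~$\eta_1$ the right tangent domain reaches up to~$u$, i.e.~$u\le u_+(\eta_1+)$ in the limiting sense, combined with Remark~\ref{rem220203}-type reasoning that the tail cannot stop while~$\torsion\le 0$; (iv) therefore the standard candidate on~$\Rt(u_-(\eps),u_+(\eps);\eps)$ restricted to~$\Rt(u_-(\eps),u;\eps)$ is a well-defined continuous Bellman candidate~$B_\eps$ with force~$\FFr(\,\cdot\,;\eps)$, which is exactly the assertion; continuity of~$B_\eps$ across the pieces follows from Proposition~\ref{ConcatenationOfConcaveFunctions} and the gluing Corollary~\ref{cor250901}.

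The main obstacle I expect is step (iii): carefully checking that the right endpoint~$u_+(\eps)$ of the monotone force flow does not overshoot~$u$ in the wrong direction, i.e. that~$u$ remains \emph{inside} the tail throughout~$(\eta_1,\eta_2]$ so that the tangent domain~$\Rt(u_-(\eps),u;\eps)$ genuinely exists with a standard candidate on it. This is where one must combine the monotone-force-flow definition (Definition~\ref{MFF}, part 3, giving~$\FFr(v;\eta_1)>\FFr(v;\eta_2)$ hence tails grow), Remark~\ref{rem241101} (the sign of~$\tors'$ near~$u_+$), and the hypothesis that at~$\eta_1$ the candidate~$B$ already reaches~$u$ — so~$u\le\tr$ at~$\eta_1$ — to deduce that after the perturbation~$u$ stays strictly inside. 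If~$u=u_+(\eta_1)$ (the degenerate case where the force vanishes exactly at~$u$), then by Lemma~\ref{zerodifrootsInside} (applied in its tangent-domain form, or rather its force analog via Remark~\ref{rem241101}) one gets that~$\tors$ decreases just to the right of~$u$, so~$u_+(\eps)>u$ for~$\eps>\eta_1$, which rescues the construction; this degenerate sub-case should be handled explicitly just as in the proof of Proposition~\ref{InductionStepForRightTrolleybus}. Once~$u$ is confirmed interior, the rest is a routine application of the concatenation propositions and requires no new estimates.
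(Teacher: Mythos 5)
Your reduction of the proposition to the single claim ``$u$ stays in the tail of $\FFr(\,\cdot\,;\eps)$ for all $\eps\in(\eta_1,\eta_3]$'' is the right one, and your steps (i)--(ii) match the paper: one first observes $\FFr(\mathfrak{a}^{\mathrm{l}};\eta_1)=0$, hence $u_+(\eta_1)=\mathfrak{a}^{\mathrm{l}}$, and then gets $u_+(\eps)>\mathfrak{a}^{\mathrm{r}}$ for $\eps>\eta_1$ from Remarks~\ref{rem220201} and~\ref{rem241101}. (The opening appeal to the decomposition formulas~\eqref{RMultitrolleybusDesintegration}, \eqref{ChordalDomainPlusMultitrolleybusR}, \eqref{AnglePlusMulticupLeft} is unnecessary here: for $k=1$ there are no interior chords, no angle and no multicup in the configuration.)

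The genuine gap is your step (iii), i.e.\ the passage from $u_+(\eps)>\mathfrak{a}^{\mathrm{r}}$ to $u_+(\eps)>u$. None of the tools you name closes it. ``Monotonicity of $u_\pm$'' only yields $u_+(\eps)>u_+(\eta_1)=\mathfrak{a}^{\mathrm{l}}$; the statement ``$u\le u_+(\eta_1+)$ in the limiting sense'' is precisely what has to be proved (the proposition asserts that $u_+$ jumps from $\mathfrak{a}^{\mathrm{l}}$ to beyond $u$ at $\eta_1$, so you cannot get it from continuity); and ``the tail cannot stop while $\torsion\le 0$'' does not apply because $\torsion$ may well change sign inside $(\mathfrak{a}^{\mathrm{r}},u)$ --- the hypothesis is only that the force of the standard candidate on $\Rt(\mathfrak{a}^{\mathrm{r}},u;\eta_1)$, namely $\Fr(\,\cdot\,;\mathfrak{a}^{\mathrm{l}},\mathfrak{a}^{\mathrm{r}};\eta_1)$, is non-positive there. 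Your fallback via Lemma~\ref{zerodifrootsInside} addresses the wrong degenerate case: the force of $\FFr(\,\cdot\,;\eta_1)$ vanishes at $\mathfrak{a}^{\mathrm{l}}$, not at $u$. What is actually needed (and what the paper does) is a quantitative comparison: for $t\in(\mathfrak{a}^{\mathrm{r}},\min(u_+(\eps),u)]$ solve the force ODE~\eqref{eq241101} starting from $\mathfrak{a}^{\mathrm{r}}$, so that $\FFr(t;\eps)$ splits into a homogeneous part carrying the strictly negative value $\FFr(\mathfrak{a}^{\mathrm{r}};\eps)$ and an inhomogeneous part equal to $\Fr(t;\mathfrak{a}^{\mathrm{l}},\mathfrak{a}^{\mathrm{r}};\eps)$ (here one uses $\DR(\mathfrak{a}^{\mathrm{l}},\mathfrak{a}^{\mathrm{r}})=0$). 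By the monotonicity of forces in $\eps$ (Lemma~\ref{EvolutionalTailGrowthLemma}) the latter is strictly less than $\Fr(t;\mathfrak{a}^{\mathrm{l}},\mathfrak{a}^{\mathrm{r}};\eta_1)\le 0$, the non-positivity being exactly the hypothesis that $B$ is a standard candidate on $\Rt(\mathfrak{a}^{\mathrm{r}},u;\eta_1)$. Hence $\FFr(\min(u_+(\eps),u);\eps)<0$, which forces $\min(u_+(\eps),u)\ne u_+(\eps)$ and so $u_+(\eps)>u$. Without this comparison against the $\eta_1$-force of the chord $[g(\mathfrak{a}^{\mathrm{l}}),g(\mathfrak{a}^{\mathrm{r}})]$, the argument does not go through.
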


\begin{proof}
First we note that $\FFr(\mathfrak{a}^{\mathrm{l}}; \eta_1)=0$, therefore $\mathfrak{a}^{\mathrm{l}}=u_+(\eta_1)$. 
The only thing we need to prove is that $u_+(\eps)\geq u$ for any $\eps \in (\eta_1,\eta_3]$. 
By Remark~\ref{rem220201} we know that $u_+(\eps)>u_+(\eta_1)=\mathfrak{a}^{\mathrm{l}}$. 
By Remark~\ref{rem241101} $u_+(\eps) \notin \mathfrak{a}$, therefore $u_+(\eps)> \mathfrak{a}^{\mathrm{r}}$. 
From~\eqref{eq241101}, for any $t \in (\mathfrak{a}^{\mathrm{r}},\min(u_+(\eps), u)]$ we have 
\begin{equation}\label{eq241102}
\FFr(t;\eps) = 
\int_{\mathfrak{a}^{\mathrm{r}}}^t\exp\Big(-\int_\tau^t\frac{\kappa_2'}{\kappa_2-\kappa}\Big)\tors'(\tau)\,d\tau+
\exp\Big(-\int_{\mathfrak{a}^{\mathrm{r}}}^t\frac{\kappa_2'}{\kappa_2-\kappa}\Big)\FFr(\mathfrak{a}^{\mathrm{r}};\eps).
\end{equation}

The second summand in~\eqref{eq241102} is negative because $\mathfrak{a}^{\mathrm{r}} \in [u_-(\eps),u_+(\eps))$. The first summand in~\eqref{eq241102} is equal to $\Fr(t;\mathfrak{a}^{\mathrm{l}},\mathfrak{a}^{\mathrm{r}};\eps)$ (see~\eqref{eq101101}, $\DR(\mathfrak{a}^{\mathrm{l}},\mathfrak{a}^{\mathrm{r}})=0$). Furthermore, the inequality 
\eq{eq251101}{
\Fr(t;\mathfrak{a}^{\mathrm{l}},\mathfrak{a}^{\mathrm{r}};\eps)< 
\Fr(t;\mathfrak{a}^{\mathrm{l}},\mathfrak{a}^{\mathrm{r}};\eta_1)
}
follows from monotonicity of forces with respect to $\eps$. The right hand side of~\eqref{eq251101} is non-positive because $t \in (\mathfrak{a}^{\mathrm{r}},u]$ and 
$\Fr(\,\cdot\;;\mathfrak{a}^{\mathrm{l}},\mathfrak{a}^{\mathrm{r}};\eta_1)$ is the force of the standard 
candidate on $\Rt(\mathfrak{a}^{\mathrm{r}},u;\eta_1)$. Thus, in particular, $\FFr( \min(u_+(\eps), u);\eps)< 0$. 
Since $\FFr(u_+(\eps);\eps)=0$, we have $\min(u_+(\eps), u) \ne u_+(\eps)$, therefore $u_+(\eps)>u$.
\end{proof}

\begin{Rem}
Note that the case $\mathfrak{a}^{\mathrm{l}} =\mathfrak{a}^{\mathrm{r}}$ in Proposition~\ref{Pr300301} means 
that the multitrolleybus $\MTTR(\{\mathfrak{a}\};\eta_1)$ is a fictitious vertex~$\Rt(\mathfrak{a},\mathfrak{a})$ 
of the fifth type. Similar for the left case.
\end{Rem}

\begin{St}\label{Pr060401} {\bf (Induction step for a right multitrolleybus with one underlying chord\-al domain).} 
Let~$\eta_1,\eta_3\!\in\!\mathbb{R},$ $0\!<\!\eta_1\!<\!\eta_3\!<\!\epsmax$. Suppose that 
$a_1,b_1,a_2,b_2,u\!\in\!\mathbb{R},$ $a_2\!<\!a_1\!<\!b_1\!<\!b_2\!\leq\! u$. 
Let $\mathfrak{a}=[\mathfrak{a}^{\mathrm{l}},\mathfrak{a}^{\mathrm{r}}]$ be a solid root\textup, 
$\mathfrak{a}^{\mathrm{l}}\!<\!\mathfrak{a}^{\mathrm{r}}\!=\!a_2$. Consider a right 
multitrolleybus~$\MTTR(\{\mathfrak{a},b_2\};\eta_1)$. Let $\Ch([a_2,a_1])$ be the chordal domain with 
the corresponding function $s\colon[a_2,a_1] \to [b_1,b_2],$ $s(a_i)=b_i,$ $i=1,2$. Suppose 
that~$\FFr$ is a right monotone force flow with the corresponding functions 
$u_\pm \colon [\eta_1,\eta_3] \to \bar{\mathbb{R}}$ such that
$\mathfrak{a}^{\mathrm{l}} \in [u_-(\eta_1),u_+(\eta_1)]$. Suppose that there exists a continuous 
function~$B$ that coincides with the standard candidates on~$\Rt(u_-(\eta_1),\mathfrak{a}^{\mathrm{l}};\eta_1),$ 
$\MTTR(\{\mathfrak{a},b_2\};\eta_1),$ $\Rt(b_2,u;\eta_1),$ and $\Ch([a_2,a_1])$. Moreover\textup, the force 
of $B$ on $\Rt(u_-(\eta_1),\mathfrak{a}^{\mathrm{l}};\eta_1)$ is~$\FFr(\,\cdot\;;\eta_1)$. Then there exists
$\eta_2$\textup, $\eta_1\!<\!\eta_2\!<\!\eta_3$\textup, and a strictly increasing function
$a\colon [\eta_1,\eta_2] \to [a_2,a_1]$ such that~$a(\eta_1) = a_2$\textup, and for any $\eps \in (\eta_1,\eta_2]$ 
there exists a continuous function $B_\eps$ that coincides with the standard candidates 
on~$\Rt(u_-(\eps),a(\eps);\eps)$\textup, $\RTroll(a(\eps),b(\eps);\eps),$ $\Ch([a(\eps),a_1])$\textup, 
and $\Rt(b(\eps),u;\eps),$ where $b(\eps)=s(a(\eps))$. Moreover\textup, the force of $B_\eps$ on 
$\Rt(u_-(\eps),a(\eps);\eps)$ is~$\FFr(\,\cdot\;;\eps)$. The forces~\eqref{eq060401} form monotone force flows.
\end{St}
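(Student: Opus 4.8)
\textbf{Proof plan for Proposition~\ref{Pr060401}.}

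The plan is to reduce this case to two already established ingredients: the disintegration formula for a right multitrolleybus with one underlying chordal domain, and the induction step for a single right trolleybus (Proposition~\ref{InductionStepForRightTrolleybus}). First I would use the decomposition
$$
\MTTR(\{\mathfrak{a},b_2\}) = \MTTR(\{\mathfrak{a}\}) \biguplus \Rt(\mathfrak{a}^{\mathrm{r}},\mathfrak{a}^{\mathrm{r}}) \biguplus \RTroll(\mathfrak{a}^{\mathrm{r}},b_2) \biguplus \Rt(b_2,b_2)
$$
coming from~\eqref{RMultitrolleybusDesintegration} (with $k=2$, the first interval being the solid root $\mathfrak{a}$ and the second being the single point $b_2$; the degenerate tangent $\Rt(b_2,b_2)$ is absorbed into the neighboring tangent domain). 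Since $\mathfrak{a}^{\mathrm{r}}=a_2$, the standard candidate $B$ on $\MTTR(\{\mathfrak{a},b_2\};\eta_1)$ together with the surrounding domains restricts to a standard candidate on $\MTTR(\{\mathfrak{a}\};\eta_1)$ glued to $\Rt(u_-(\eta_1),\mathfrak{a}^{\mathrm{l}};\eta_1)$ on one side, and to a right trolleybus $\RTroll(a_2,b_2;\eta_1)$ over the chordal domain $\Ch([a_2,b_2],[a_1,b_1])$ glued to $\Rt(b_2,u;\eta_1)$ on the other; the gluing happens along the fictitious vertex $\Rt(\mathfrak{a}^{\mathrm{r}},\mathfrak{a}^{\mathrm{r}})$ of the fifth type, where the force vanishes (because $\DR(\mathfrak{a}^{\mathrm{l}},\mathfrak{a}^{\mathrm{r}})=0$ on the solid root).

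Next I would handle the left part. Applying Proposition~\ref{Pr300301} to the sub-configuration $\Rt(u_-(\eta_1),\mathfrak{a}^{\mathrm{l}};\eta_1) \biguplus \MTTR(\{\mathfrak{a}\};\eta_1) \biguplus \Rt(\mathfrak{a}^{\mathrm{r}},u;\eta_1)$ — with $\mathfrak{a}^{\mathrm{r}}=a_2$ playing the role of the point ``$u$'' there, and using that $u_+(\eta_1)=\mathfrak{a}^{\mathrm{l}}$ — shows that the multitrolleybus $\MTTR(\{\mathfrak{a}\})$ on the solid root immediately crushes: for every $\eps\in(\eta_1,\eta_3]$ the right monotone force flow $\FFr(\,\cdot\,;\eps)$ extends past $a_2$, so $u_+(\eps)>a_2$, and $\FFr(\,\cdot\,;\eps)$ is the force of a standard candidate on some right tangent domain $\Rt(u_-(\eps),a_2^+;\eps)$ reaching a bit to the right of $a_2$. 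This produces a new right monotone force flow, call it $\widetilde\FFr$, defined on a neighborhood of $a_2$ (with right endpoint $\widetilde u_+(\eps)>a_2$ for $\eps>\eta_1$), whose value at $\eta_1$ agrees with the force of $B$ on the corresponding tangent domain. The point $a_2$ then lies strictly inside the domain of $\widetilde\FFr(\,\cdot\,;\eps)$ for $\eps>\eta_1$, and at $\eta_1$ it lies at the right endpoint.

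Finally I would feed $\widetilde\FFr$ into Proposition~\ref{InductionStepForRightTrolleybus} applied to the trolleybus $\RTroll(a_2,b_2;\eta_1)$ over $\Ch([a_2,b_2],[a_1,b_1])$ with the function $s\colon[a_2,a_1]\to[b_1,b_2]$, and with the outer right tangent domain $\Rt(b_2,u;\eta_1)$. All hypotheses of that proposition are met: $\widetilde\FFr$ is a right monotone force flow with $a_2\in[\widetilde u_-(\eta_1),\widetilde u_+(\eta_1)]$ (in fact $a_2=\widetilde u_+(\eta_1)$, which is the ``$u_+(\eta_1)=a_2$'' sub-case treated in the proof of Proposition~\ref{InductionStepForRightTrolleybus}), the continuous function $B$ restricts to the standard candidates on $\Rt(\widetilde u_-(\eta_1),a_2;\eta_1)$, $\RTroll(a_2,b_2;\eta_1)$, $\Rt(b_2,u;\eta_1)$, $\Ch([a_2,a_1])$, and the force of $B$ on the left tangent domain equals $\widetilde\FFr(\,\cdot\,;\eta_1)$. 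Proposition~\ref{InductionStepForRightTrolleybus} then yields $\eta_2\in(\eta_1,\eta_3)$, a strictly increasing $a\colon[\eta_1,\eta_2]\to[a_2,a_1]$ with $a(\eta_1)=a_2$, and the functions $B_\eps$ coinciding with the standard candidates on $\Rt(u_-(\eps),a(\eps);\eps)$, $\RTroll(a(\eps),b(\eps);\eps)$, $\Ch([a(\eps),a_1])$, $\Rt(b(\eps),u;\eps)$, with the force on the left tangent domain being $\FFr(\,\cdot\,;\eps)$ (matching the original flow, since $\widetilde\FFr$ was built by extending $\FFr$), and the forces in~\eqref{eq060401} forming monotone force flows. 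The main obstacle is the bookkeeping at the fictitious vertex of the fifth type at $a_2$: one must check carefully that the force computed along the crushed solid-root multitrolleybus (where $\DR(\mathfrak{a}^{\mathrm{l}},\mathfrak{a}^{\mathrm{r}})=0$) continues $C^1$-smoothly into the trolleybus $\RTroll(a_2,b_2)$, i.e. that the two standard candidates match to first order at $a_2$; this is exactly the content of the gluing via $\biguplus$ combined with continuity of forces, and it is where the decomposition~\eqref{RMultitrolleybusDesintegration} and Propositions~\ref{St260901} and~\ref{St131001} do the work.
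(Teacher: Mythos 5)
Your strategy is the same as the paper's: crush the solid root via Proposition~\ref{Pr300301} so that the force flow propagates past $a_2=\mathfrak{a}^{\mathrm{r}}$ for every $\eps>\eta_1$, and then run the trolleybus induction step on $\RTroll(a_2,b_2)$ over $\Ch([a_2,a_1])$. However, your assertion that ``all hypotheses of Proposition~\ref{InductionStepForRightTrolleybus} are met'' is not correct, and this is precisely the point the statement is designed to address. At $\eps=\eta_1$ the force of $B$ vanishes on the whole solid root $\mathfrak{a}$, so $u_+(\eta_1)=\mathfrak{a}^{\mathrm{l}}<\mathfrak{a}^{\mathrm{r}}=a_2$ and hence $a_2\notin[u_-(\eta_1),u_+(\eta_1)]$. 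Your proposed repair --- a new flow $\widetilde\FFr$ with $\widetilde u_+(\eta_1)=a_2$ --- does not satisfy Definition~\ref{MFF}: condition (1) would require $\FFr(\,\cdot\,;\eta_1)$ to be the force of a \emph{standard} candidate on a right tangent domain $\Rt(I_{\eta_1})$ with $(u_-(\eta_1),a_2)\subset I_{\eta_1}$, which forces $\beta_2'<0$ strictly in the interior of $I_{\eta_1}$ (Definition~\ref{211101}); this is impossible because the force, and hence $\beta_2'$, is identically zero on $(\mathfrak{a}^{\mathrm{l}},\mathfrak{a}^{\mathrm{r}})$. So the proposition cannot be invoked as a black box with the shifted endpoint.

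The fix is the one the paper makes: do not apply Proposition~\ref{InductionStepForRightTrolleybus} itself, but repeat the argument of its second case (the sub-case $u_+(\eta_1)=a_2$), which you correctly identified as the relevant one. The two inputs that case actually uses are available here: by Proposition~\ref{Pr300301}, $u_+(\eps)>\mathfrak{a}^{\mathrm{r}}=a_2$ for every $\eps\in(\eta_1,\eta_3)$, and by continuity of the flow $\FFr(a_2;\eps)\to 0=\Fl(a_2;\Ch([a_2,a_1]))$ as $\eps\to\eta_1+$; together with Lemma~\ref{zerodifrootsInside} (giving that $\tors$ decreases on a right neighborhood of $a_2$, since $\DL(a_2,b_2)=0$ there) and the monotonicity argument via Remark~\ref{Rem150401}, this yields the root $a(\eps)$ of the balance equation exactly as in that proof. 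With this adjustment your argument becomes the paper's proof.
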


\begin{proof}
The force of $B$ on $\mathfrak{a}$ is equal zero, in particular $\FFr(\mathfrak{a}^{\mathrm{l}};\eta_1)=0$, 
therefore $u_+(\eta_1)=\mathfrak{a}^{\mathrm{l}}$. We cannot directly apply 
Proposition~\ref{InductionStepForRightTrolleybus} in this case, but its proof works, because for 
any $\eps \in(\eta_1,\eta_3)$ by Proposition~\ref{Pr300301} we have $u_+(\eps)>\mathfrak{a}^{\mathrm{r}}$, 
and the arguments of the second case in the proof of Proposition~\ref{InductionStepForRightTrolleybus} 
(where $u_+(\eta_1)=a_2$) can be applied literally. 
\end{proof}

\begin{St}[{\bf Induction step for a general right multitrolleybus}]\label{InductionStepForMultitrolleybus}\index{multitrolleybus}
Let~$\eta_1,\eta_3 \in \mathbb{R}$ and $\eta_1\!\!<\!\!\eta_3\!\!<\!\!\epsmax$. Consider a right multitrolleybus
$\MTTR(\{\mathfrak{a}_i\}_{i=1}^k;\eta_1)$. Let~$u\!\in\!\mathbb{R},$ $\mathfrak{a}_{k}^{\mathrm{r}}\!\leq\! u$. 
Suppose that~$\FFr$ is a right monotone force flow with the corresponding functions 
$u_\pm \colon [\eta_1,\eta_3] \to \bar{\mathbb{R}}$
such that $\mathfrak{a}_1^{\mathrm{l}} \in [u_-(\eta_1),u_+(\eta_1)]$. We also suppose that 
for each~$i = 1,2,\ldots,k-1$ there are chordal domains
$\Ch([\mathfrak{a}_i^{\mathrm{r}},\mathfrak{a}_{i+1}^{\mathrm{l}}],*)$ with the corresponding 
functions~$s_i,$ $s_i(\mathfrak{a}_i^{\mathrm{r}})=\mathfrak{a}_{i+1}^{\mathrm{l}}$. Suppose that 
there exists a continuous function~$B$ that coincides with the standard candidates 
on~$\Rt(u_-(\eta_1),\mathfrak{a}_1^{\mathrm{l}};\eta_1)$\textup, $\MTTR(\{\mathfrak{a}_i\}_{i=1}^k;\eta_1),$ 
$\Rt(\mathfrak{a}_k^{\mathrm{r}},u;\eta_1)$\textup, and 
every~$\Ch([\mathfrak{a}_i^{\mathrm{r}},\mathfrak{a}_{i+1}^{\mathrm{l}}],*)$. Moreover\textup, the force 
of $B$ on $\Rt(u_-(\eta_1),\mathfrak{a}_1^{\mathrm{l}};\eta_1)$ is~$\FFr(\,\cdot\;;\eta_1)$. 
Then\textup, there exists a number~$\eta_2,$ $\eta_2\!>\!\eta_1,$ and a collection of strictly increasing 
functions~$a_i\colon[\eta_1,\eta_2] \to \mathbb{R}$\textup, $a_i(\eta_1)=\mathfrak{a}_i^{\mathrm{r}}$\textup,
$i = 1,2,\ldots,k-1$\textup, such that for every~$\eps \in (\eta_1,\eta_2]$ there exists a continuous 
function~$B_{\eps}$ defined on the domain
\begin{equation*}
\begin{aligned}
\Rt\big(u_-(\eps),a_1(\eps);\eps\big) \cup \Big(\cup_{i=1}^{k-2} \Rt\big(b_i(\eps),a_{i+1}(\eps);\eps\big)\Big) 
\cup \Big(\cup_{i=1}^{k-1}\RTroll(a_i(\eps),b_{i}(\eps);\eps)\Big) \cup 
\\ 
\Big(\cup_{i=1}^{k-1}\Ch([a_i(\eps),b_{i}(\eps)],*)\Big)\cup \Rt\big(b_{k-1}(\eps),u;\eps\big); \qquad 
b_i(\eps)=s_i(a_i(\eps)),
\end{aligned}
\end{equation*}
that coincides with the standard candidate inside each subdomain of the partition. Moreover\textup, the force 
of $B_\eps$ on $\Rt(u_-(\eps),a_1(\eps);\eps)$ is~$\FFr(\,\cdot\;;\eps)$. The functions
\begin{equation*}
\Fr\Big(\,\cdot\,;a_i(\eps),b_i(\eps);\eps\Big) \qquad \text{and} \qquad 
\Fl\Big(\,\cdot\,;a_i(\eps),b_i(\eps);\eps\Big), \quad 1\leq i \leq k-1,
\end{equation*}
are the right and the left monotone force flows on the corresponding domains.
\end{St}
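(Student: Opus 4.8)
The strategy is to reduce the general right multitrolleybus to a chain of the special cases already settled in Propositions~\ref{Pr300301}, \ref{Pr060401}, and~\ref{InductionStepForRightTrolleybus}, by walking along the trolleybus parade decomposition~\eqref{RMultitrolleybusDesintegration} from left to right. First I would invoke~\eqref{RMultitrolleybusDesintegration} to write the given candidate $B$ on $\MTTR(\{\mathfrak{a}_i\}_{i=1}^k;\eta_1)$ as a $\biguplus$-concatenation of the pieces $\MTTR(\{\mathfrak{a}_i\})$ and the ``bridges'' $\Rt \biguplus \RTroll \biguplus \Rt$, together with the underlying chordal domains $\Ch([\mathfrak{a}_i^{\mathrm r},\mathfrak{a}_{i+1}^{\mathrm l}],*)$. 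Because $B$ coincides with standard candidates on each piece and is continuous, the force function of $B$ is continuous across the whole configuration; in particular its force on the leftmost tangent part $\Rt(u_-(\eta_1),\mathfrak{a}_1^{\mathrm l};\eta_1)$ is $\FFr(\,\cdot\,;\eta_1)$, which by hypothesis is a right monotone force flow.

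The core of the argument is an induction on $i$ that processes the blocks one at a time, each time producing a new right monotone force flow whose tail has moved to the right past the block just processed, and shrinking the time interval $[\eta_1,\eta_2]$ finitely many times. At step $i$ the ``input'' is a right monotone force flow $\FFr^{(i)}$ whose right endpoint $u_+^{(i)}$ satisfies $\mathfrak{a}_i^{\mathrm l}\in[u_-^{(i)}(\eta_1),u_+^{(i)}(\eta_1)]$; since the force of $B$ vanishes on the solid root $\mathfrak{a}_i$ (or is the $\MTTR(\{\mathfrak{a}_i\})$-piece of zero differential), Remark~\ref{rem241101} and Remark~\ref{rem220201} force $u_+^{(i)}(\eps)>\mathfrak{a}_i^{\mathrm r}$ for $\eps>\eta_1$, exactly as in Proposition~\ref{Pr300301}. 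Then I apply the trolleybus induction step Proposition~\ref{InductionStepForRightTrolleybus} (in the refined form of Proposition~\ref{Pr060401}, to accommodate the single underlying chordal domain $\Ch([\mathfrak{a}_i^{\mathrm r},\mathfrak{a}_{i+1}^{\mathrm l}],*)$) to the bridge between $\mathfrak{a}_i$ and $\mathfrak{a}_{i+1}$: this yields $\eta_2^{(i)}>\eta_1$, a strictly increasing function $a_i\colon[\eta_1,\eta_2^{(i)}]\to\mathbb R$ with $a_i(\eta_1)=\mathfrak{a}_i^{\mathrm r}$, $b_i=s_i(a_i)$, and a continuous function coinciding with standard candidates on $\Rt(u_-^{(i)}(\eps),a_i(\eps);\eps)$, $\RTroll(a_i(\eps),b_i(\eps);\eps)$, $\Ch([a_i(\eps),*])$; and, crucially, by the last assertions of those propositions the functions $\Fr(\,\cdot\,;a_i(\eps),b_i(\eps);\eps)$ and $\Fl(\,\cdot\,;a_i(\eps),b_i(\eps);\eps)$ are again right and left monotone force flows. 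The right flow $\Fr(\,\cdot\,;a_i(\eps),b_i(\eps);\eps)$ becomes the input $\FFr^{(i+1)}$ for the next step, and one checks that its right endpoint $\tr_i(\eps)$ contains $\mathfrak{a}_{i+1}^{\mathrm l}$ for $\eps$ close to $\eta_1$, because at $\eps=\eta_1$ the forces glue continuously ($\mathfrak{a}_{i+1}^{\mathrm l}$ is a root of the balance equation) and by Corollary~\ref{FlowMonotonicityCorollary} the tails only grow. After $k-1$ such steps, set $\eta_2=\min_i\eta_2^{(i)}$; the last output flow $\FFr^{(k)}$ has right endpoint $u_+^{(k)}$, and applying Proposition~\ref{Pr300301} to the final block $\MTTR(\{\mathfrak{a}_k\})$ and the trailing tangent domain $\Rt(\mathfrak{a}_k^{\mathrm r},u;\eps)$ shows $u_+^{(k)}(\eps)\ge u$, so the right tangent fence closes up correctly. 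Finally, gluing all the pieces by repeated use of Proposition~\ref{St211101} (continuity of forces at the seams $a_i(\eps), b_i(\eps)$ holds by construction) produces the desired $C^1$-smooth Bellman candidate $B_\eps$ on the stated domain; that its force on $\Rt(u_-(\eps),a_1(\eps);\eps)$ equals $\FFr(\,\cdot\,;\eps)$ is immediate since the leftmost flow was never altered.

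The main obstacle I expect is the bookkeeping at the seams: one must verify at each step $i$ that the new right endpoint $\tr_i(\eps)$ of the flow emerging from the $i$-th trolleybus actually contains the next arc's left endpoint $\mathfrak{a}_{i+1}^{\mathrm l}$ on a time interval independent of the later steps, and that these shrinking intervals can be chosen with a common lower bound $\eta_2>\eta_1$. This is handled by the monotonicity of tails in $\eps$ (Corollary~\ref{FlowMonotonicityCorollary}, Remark~\ref{TailGrowthInfinity}) together with the observation that at $\eps=\eta_1$ every seam is a root of the corresponding balance equation, so everything is fine at $\eta_1$ and by continuity persists for a short time; since there are only $k-1$ steps, taking the minimum of finitely many $\eta_2^{(i)}$ is harmless. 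A secondary technical point is justifying that the monotone-force-flow structure is genuinely preserved from one block to the next, i.e. that $u_-$ stays non-increasing and $u_+$ non-decreasing under the substitution $\FFr^{(i+1)}=\Fr(\,\cdot\,;a_i(\eps),b_i(\eps);\eps)$ with $a_i$ increasing and $b_i$ decreasing — but this is exactly Remark~\ref{FCDMFF} applied to the decreasing flow $\{\Ch([a_i(\eps),*])\}$, so no new idea is needed, only careful citation. The left-multitrolleybus version is entirely symmetric and I would state it as a remark rather than re-prove it.
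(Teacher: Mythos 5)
Your proposal is correct and follows essentially the same route as the paper: the paper also splits the multitrolleybus into blocks (each an arc $\mathfrak{a}_i$ together with the next arc's left endpoint, i.e.\ a multitrolleybus with one underlying chordal domain), applies Proposition~\ref{Pr060401} (or Proposition~\ref{InductionStepForRightTrolleybus} when the arc degenerates to a point) to these blocks successively, and finishes with Proposition~\ref{Pr300301} on $\MTTR(\{\mathfrak{a}_k\})$. Your extra care about the seams and the common $\eta_2=\min_i\eta_2^{(i)}$ is exactly the bookkeeping the paper leaves implicit in the word ``successively''.
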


\begin{proof}
We split~$\MTTR(\{\mathfrak{a}_i\}_{i=1}^k;\eta_1)$ into the union of 
$\MTTR(\{\mathfrak{a}_i, \mathfrak{a}_{i+1}^{\mathrm{l}}\};\eta_1)$, $i=1,\dots, k-1$, and possibly 
$\MTTR(\{\mathfrak{a}_k\};\eta_1)$ if $\mathfrak{a}_{k}^{\mathrm{l}}<\mathfrak{a}_{k}^{\mathrm{r}}$. 
We apply Proposition~\ref{Pr060401} if $\mathfrak{a}_{i}^{\mathrm{l}}<\mathfrak{a}_{i}^{\mathrm{r}}$, 
and Proposition~\ref{InductionStepForRightTrolleybus} if 
$\mathfrak{a}_{i}^{\mathrm{l}}=\mathfrak{a}_{i}^{\mathrm{r}}$ to the multitrolleybuses 
$\MTTR(\{\mathfrak{a}_i, \mathfrak{a}_{i+1}^{\mathrm{l}}\};\eta_1)$ successively. Finally, we conclude 
our induction applying Proposition~\ref{Pr300301} to $\MTTR(\{\mathfrak{a}_k\};\eta_1)$ if 
$\mathfrak{a}_{k}^{\mathrm{l}}<\mathfrak{a}_{k}^{\mathrm{r}}$.
\end{proof}

\begin{St}[{\bf Induction step for a general left multitrolleybus}]\label{InductionStepForLeftMultitrolleybus}\index{multitrolleybus}
Let~$\eta_1,\eta_3 \in \mathbb{R}$\textup, $\eta_1\!<\!\eta_3\!<\!\epsmax$. Consider a left 
multitrolleybus~$\MTTL(\{\mathfrak{a}_i\}_{i=1}^k;\eta_1)$. Let~$u \in \mathbb{R},$ 
$u \leq \mathfrak{a}_{1}^{\mathrm{l}}$. Suppose that~$\FFl$ is a left monotone force flow with 
the corresponding functions $u_\pm \colon [\eta_1,\eta_3] \to \bar{\mathbb{R}}$ such 
that~$\mathfrak{a}_k^{\mathrm{r}} \in [u_-(\eta_1),u_+(\eta_1)]$. We also suppose that for each 
$i = 1,2,\ldots,k-1$ there are chordal domains~$\Ch([\mathfrak{a}_i^{\mathrm{r}},\mathfrak{a}_{i+1}^{\mathrm{l}}],*)$ 
with the corresponding functions~$s_i,$ $s_i(\mathfrak{a}_{i+1}^{\mathrm{l}})=\mathfrak{a}_i^{\mathrm{r}}$. 
Suppose that there exists a continuous function~$B$ that coincides with the standard candidates 
on~$\Lt(u,\mathfrak{a}_1^{\mathrm{l}};\eta_1)$\textup, $\MTTL(\{\mathfrak{a}_i\}_{i=1}^k;\eta_1)$\textup, 
$\Lt(\mathfrak{a}_k^{\mathrm{r}},u_+(\eta_1);\eta_1)$\textup, and every
$\Ch([\mathfrak{a}_i^{\mathrm{r}},\mathfrak{a}_{i+1}^{\mathrm{l}}],*)$. Moreover\textup, the force of $B$ on 
$\Lt(\mathfrak{a}_k^{\mathrm{r}},u_+(\eta_1);\eta_1)$ is~$\FFl(\,\cdot\;;\eta_1)$. Then\textup, there exists 
a number~$\eta_2$\textup, $\eta_2\!>\!\eta_1$\textup, and a collection of strictly decreasing functions 
$b_i\colon[\eta_1,\eta_2] \to \mathbb{R}$\textup, $b_i(\eta_1)=\mathfrak{a}_{i+1}^{\mathrm{l}}$\textup,
$i = 1,2,\ldots,k-1$\textup, such that for every~$\eps \in (\eta_1,\eta_2]$ there exists a continuous 
function~$B_{\eps}$ defined on the domain
\begin{equation*}
\begin{aligned}	
\Lt\big(u,a_1(\eps);\eps\big) \cup \Big(\cup_{i=1}^{k-2} \Lt\big(b_i(\eps),a_{i+1}(\eps);\eps\big)\Big) 
\cup \Big(\cup_{i=1}^{k-1}\LTroll(a_i(\eps),b_{i}(\eps);\eps)\Big) \cup 
\\ 
\Big(\cup_{i=1}^{k-1}\Ch([a_i(\eps),b_{i}(\eps)],*)\Big)\cup \Lt\big(b_{k-1}(\eps),u_+(\eps);\eps\big); 
\qquad a_i(\eps)=s_i(b_i(\eps)),
\end{aligned}
\end{equation*}
that coincides with the standard candidate inside each subdomain of the partition. Moreover\textup, 
the force of $B_\eps$ on $\Lt(b_{k-1}(\eps),u_+(\eps);\eps)$ is~$\FFl(\,\cdot\;;\eps)$. The functions
\begin{equation*} 
\Fr\Big(t;a_i(\eps),b_i(\eps);\eps\Big) \qquad \text{and} \qquad 
\Fl\Big(t;a_i(\eps),b_i(\eps);\eps\Big), \quad 1\leq i \leq k-1,
\end{equation*}
are the right and the left monotone force flows on the corresponding domains.
\end{St}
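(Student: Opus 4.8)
The plan is to mirror the proof of Proposition~\ref{InductionStepForMultitrolleybus}: disintegrate the left multitrolleybus into elementary pieces via formula~\eqref{LMultitrolleybusDesintegration} and evolve them one at a time, sweeping the figure from its right end (where the monotone force flow~$\FFl$ enters through~$\Lt(\mathfrak{a}_k^{\mathrm r},u_+(\eta_1);\eta_1)$) towards its left end (which is pinned by the fixed tangent~$\Lt(u,\mathfrak{a}_1^{\mathrm l};\eta_1)$).

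As a preliminary I would record the left counterparts of Propositions~\ref{Pr300301} and~\ref{Pr060401} --- an induction step for a left multitrolleybus on a solid root, and for a left multitrolleybus with one underlying chordal domain. These follow from~\ref{Pr300301} and~\ref{Pr060401} by the same reflection of the plane that turns Proposition~\ref{InductionStepForRightTrolleybus} into Proposition~\ref{InductionStepForLeftTrolleybus}; their proofs are verbatim translations, relying on the left differential equation in~\eqref{eq241101}, Remarks~\ref{rem220201} and~\ref{rem241101}, and the left monotonicity statement of Remark~\ref{Rem200401} in place of the right one.

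Then I would split~$\MTTL(\{\mathfrak{a}_i\}_{i=1}^k;\eta_1)$ into the pieces~$\MTTL(\{\mathfrak{a}_i^{\mathrm r},\mathfrak{a}_{i+1}\};\eta_1)$ for $i=k-1,k-2,\ldots,1$, together with~$\MTTL(\{\mathfrak{a}_1\};\eta_1)$ when~$\mathfrak{a}_1^{\mathrm l}<\mathfrak{a}_1^{\mathrm r}$, each carrying its underlying chordal domain~$\Ch([\mathfrak{a}_i^{\mathrm r},\mathfrak{a}_{i+1}^{\mathrm l}],*)$ with its function~$s_i$. Starting from~$i=k-1$, where the incoming flow is~$\FFl(\,\cdot\,;\eta_1)$, I would apply to each piece~$\MTTL(\{\mathfrak{a}_i^{\mathrm r},\mathfrak{a}_{i+1}\};\eta_1)$ the left analogue of Proposition~\ref{Pr060401} if~$\mathfrak{a}_{i+1}$ is a solid root and Proposition~\ref{InductionStepForLeftTrolleybus} if~$\mathfrak{a}_{i+1}$ is a single point. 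Each application returns some~$\eta_2^{(i)}>\eta_1$, the strictly decreasing function~$b_i$ with~$b_i(\eta_1)=\mathfrak{a}_{i+1}^{\mathrm l}$ (and~$a_i=s_i(b_i)$), the standard candidate on the corresponding trolleybus and chordal domain, and --- by Remark~\ref{FCDMFF} --- the new left monotone force flow~$\Fl(\,\cdot\,;a_i(\eps),b_i(\eps);\eps)$ that is handed to step~$i-1$, while~$\Fr(\,\cdot\,;a_i(\eps),b_i(\eps);\eps)$ is likewise a monotone force flow. When~$\mathfrak{a}_1$ is a solid root, the last step is the left analogue of Proposition~\ref{Pr300301} applied to~$\MTTL(\{\mathfrak{a}_1\};\eta_1)$ glued to~$\Lt(u,\mathfrak{a}_1^{\mathrm l};\eta_1)$. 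Setting~$\eta_2=\min_i\eta_2^{(i)}$ (legitimate since there are finitely many steps) and concatenating the finitely many standard candidates produced --- the forces agree at every gluing point by construction, so Proposition~\ref{St211101} applies --- yields the required~$C^1$-smooth Bellman candidate~$B_\eps$ on the stated domain for all~$\eps\in(\eta_1,\eta_2]$.

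There is no genuinely new analytic ingredient beyond the mechanical left--right symmetrization; the only point needing attention is exactly the one already handled in the proof of Proposition~\ref{InductionStepForMultitrolleybus}, namely that the~$k$ separate lifetimes~$\eta_2^{(i)}$ (each produced in terms of the flow fed to its step) can be shrunk to a single common interval over which all pieces evolve simultaneously. I expect this bookkeeping to be the main --- and only --- obstacle.
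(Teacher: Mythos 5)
Your proposal is correct and is precisely the argument the paper intends: the paper proves the right multitrolleybus case (Proposition~\ref{InductionStepForMultitrolleybus}) by splitting into pieces $\MTTR(\{\mathfrak{a}_i,\mathfrak{a}_{i+1}^{\mathrm l}\};\eta_1)$ and applying Propositions~\ref{Pr060401}, \ref{InductionStepForRightTrolleybus}, and~\ref{Pr300301} successively, and leaves the left case to the reflection you carry out. Your sweep from the right end (where $\FFl$ enters) to the left, the use of the left analogues of~\ref{Pr300301} and~\ref{Pr060401}, and the final $\eta_2=\min_i\eta_2^{(i)}$ match the paper's route exactly.
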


In the following proposition, we show that angles move continuously.

\begin{St}[{\bf Induction step for an angle}]\label{InductionStepAngle}\index{angle}
Let~$\eta_1,\eta_3 \in \mathbb{R},$ $0<\eta_1<\eta_3<\epsmax,$ let $u_0 \in \mathbb{R}$. Suppose that~$\FFl$ is a 
left monotone force flow with the corresponding functions $u_\pm^l\colon[\eta_1,\eta_3]\to\bar{\mathbb{R}},$ and
$\FFr$ is a right monotone force flow with the corresponding functions $u_\pm^r\colon[\eta_1,\eta_3]\to\bar{\mathbb{R}}$ 
such that $u_0 \in (u_-^r(\eta_1), u_+^r(\eta_1)] \cap [u_-^l(\eta_1), u_+^l(\eta_1))$. Suppose that there exists 
a continuous function~$B$ that coincides with the standard candidates on~$\Rt(u_-^r(\eta_1),u_0;\eta_1),$ 
$\Ang(u_0;\eta_1),$ and~$\Lt(u_0,u_+^l(\eta_1);\eta_1)$. Moreover\textup, the forces of $B$  
on~$\Rt(u_-^r(\eta_1),u_0;\eta_1)$ and~$\Lt(u_0,u_+^l(\eta_1);\eta_1)$ are~$\FFr(\,\cdot\;;\eta_1)$ and 
$\FFl(\,\cdot\;;\eta_1)$ correspondingly. Then there exists $\eta_2,$ $\eta_1<\eta_2<\eta_3,$ and a continuous 
function $u\colon[\eta_1,\eta_2]\to\mathbb{R}$ such that $u(\eta_1)=u_0$ and for any $\eps \in [\eta_1,\eta_2]$ 
we have $u(\eps) \in [u_-^l(\eps), u_+^l(\eps))\cap (u_-^r(\eps), u_+^r(\eps)]$ and there exists a continuous 
function $B_\eps$ that coincides with the standard candidates on~$\Rt(u_-^r(\eps),u(\eps);\eps),$ $\Ang(u(\eps);\eps),$ 
and~$\Lt(u(\eps),u_+^l(\eps);\eps)$. Moreover\textup, the forces of $B_\eps$ on~$\Rt(u_-^r(\eps),u(\eps);\eps)$ 
and~$\Lt(u(\eps),u_+^l(\eps);\eps)$ are~$\FFr(\,\cdot\;;\eps)$ and $\FFl(\,\cdot\;;\eps)$ correspondingly.
\end{St}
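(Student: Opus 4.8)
The statement concerns an angle $\Ang(u_0)$ glued between a right tangent domain and a left tangent domain; the two tangent domains are governed by monotone force flows $\FFr$ and $\FFl$, and the vertex $u_0$ is characterized as a root of the balance equation $\FFr(\cdot\,;\eta_1) = \FFl(\cdot\,;\eta_1)$ at $\eps = \eta_1$ (this is precisely the gluing condition from Proposition~\ref{St260901} and Corollary~\ref{cor250901}, since an angle is the linearity domain from that proposition when $\Omega(I_-)$ is a right tangent domain and $\Omega(I_+)$ a left one). So the plan is: first observe that $u_0$ solves $\FFr(u_0;\eta_1) = \FFl(u_0;\eta_1)$ — indeed, by the definition of the force function on an angle (which has only one point on the fixed boundary, so no ``internal'' force value) the continuity of the force across $g(u_0)$ forces equality of the adjacent tangent-domain forces at $u_0$, and by Corollary~\ref{cor250901} this is equivalent to $\beta_+(u_0)=\beta_-(u_0)$, which is what makes $B$ a $C^1$-smooth Bellman candidate.

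Next I would run a continuity/implicit-function argument on the balance equation $G(u,\eps) \df \FFr(u;\eps) - \FFl(u;\eps) = 0$. The key monotonicity input is Lemma~\ref{monbaleq} (equivalently Lemma~\ref{moninttails}): for each fixed $\eps$ the function $u \mapsto \FFr(u;\eps) - \FFl(u;\eps)$ is \emph{strictly increasing} on the intersection of the domains $(u_-^r(\eps),u_+^r(\eps)] \cap [u_-^l(\eps),u_+^l(\eps))$. Combined with joint continuity of $\FFr$ and $\FFl$ in $(u,\eps)$ (part of the definition of a monotone force flow, Definition~\ref{MFF}), strict monotonicity in $u$ gives a unique root $u(\eps)$ for each $\eps$ near $\eta_1$, and this root depends continuously on $\eps$. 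Concretely: pick $\eta_2 > \eta_1$ close enough that the domains $(u_-^r(\eps),u_+^r(\eps)]$ and $[u_-^l(\eps),u_+^l(\eps))$ still contain points on both sides of $u_0$ for all $\eps \in [\eta_1,\eta_2]$ — this is possible because $u_-^r, u_-^l$ are non-increasing, $u_+^r, u_+^l$ non-decreasing, and $u_0$ lies strictly inside (for the half-open ends one uses Remark~\ref{rem220201}: $u_+^r$ is strictly increasing while finite, so if $u_0 = u_+^r(\eta_1)$ then $u_+^r(\eps) > u_0$ for $\eps > \eta_1$, and symmetrically for $u_-^l$). Then $G(\cdot,\eps)$ changes sign on this interval, so by the intermediate value theorem it has a root; by strict monotonicity the root is unique; call it $u(\eps)$, and $u(\eta_1)=u_0$.

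For the continuity of $u(\eps)$: fix $\eps_* \in [\eta_1,\eta_2]$ and $\delta>0$; since $G(\cdot,\eps_*)$ is strictly increasing through zero at $u(\eps_*)$, we have $G(u(\eps_*)-\delta,\eps_*) < 0 < G(u(\eps_*)+\delta,\eps_*)$, and by joint continuity these strict inequalities persist for $\eps$ near $\eps_*$, forcing $u(\eps) \in (u(\eps_*)-\delta, u(\eps_*)+\delta)$. Hence $u$ is continuous. Finally, having produced $u(\eps)$ with $u(\eps) \in [u_-^l(\eps),u_+^l(\eps)) \cap (u_-^r(\eps),u_+^r(\eps)]$ solving the balance equation, I construct $B_\eps$ by taking the standard candidate on $\Rt(u_-^r(\eps),u(\eps);\eps)$ with force $\FFr(\cdot\,;\eps)$, the standard candidate on $\Lt(u(\eps),u_+^l(\eps);\eps)$ with force $\FFl(\cdot\,;\eps)$, and the affine standard candidate on the angle $\Ang(u(\eps);\eps)$ determined by $\gamma'(u(\eps))$; by Corollary~\ref{cor250901} the balance-equation equality $\FFr(u(\eps);\eps) = \FFl(u(\eps);\eps)$ translates to $\beta_+(u(\eps)) = \beta_-(u(\eps))$, so Proposition~\ref{St260901} applies and $B_\eps$ is a $C^1$-smooth Bellman candidate on the stated domain.

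\textbf{Main obstacle.} The routine part is the implicit-function/IVT argument; the genuinely delicate point is bookkeeping of the \emph{half-open} endpoints $u_+^r, u_-^l$ — ensuring $u(\eps)$ never escapes the common domain of $\FFr(\cdot\,;\eps)$ and $\FFl(\cdot\,;\eps)$, and in particular never collides with a moving endpoint $u_+^r(\eps)$ or $u_-^l(\eps)$. Here the argument must invoke that a force vanishes exactly at a finite endpoint of its tail (the defining property (2) in Definition~\ref{MFF}), so if $u(\eps)$ reached $u_+^r(\eps)$ we would get $\FFr(u(\eps);\eps)=0$, while $\FFl$ at that point is still strictly negative (the point lies strictly inside the left flow's domain by the monotonicity of $u_\pm$), contradicting the balance equation; this rules out the collision and shows $u(\eps)$ stays in the half-open set, so that shrinking $\eta_2$ if necessary makes everything consistent.
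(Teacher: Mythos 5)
Your argument is complete only in the non-degenerate case $u_-^l(\eta_1)<u_0<u_+^r(\eta_1)$, where it coincides with the first half of the paper's proof: there $G(u_0-\delta,\eta_1)<0<G(u_0+\delta,\eta_1)$ by Lemma~\ref{monbaleq}, these strict inequalities persist for $\eps$ near $\eta_1$ by continuity, and uniqueness plus continuity of the root follow as you say. The gap is the degenerate case $u_0=u_+^r(\eta_1)$ (or $u_0=u_-^l(\eta_1)$), which the hypothesis explicitly permits. In that case $\FFr(u_0;\eta_1)=0$, so the balance equation forces $\FFl(u_0;\eta_1)=0$ and hence $u_0=u_+^r(\eta_1)=u_-^l(\eta_1)$: at time $\eta_1$ the domain of $\FFr(\cdot\,;\eta_1)$ contains no point to the right of $u_0$ and that of $\FFl(\cdot\,;\eta_1)$ no point to the left. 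Consequently there are no fixed points $u_0\pm\delta$ at which the sign of $G(\cdot\,;\eta_1)$ is known, and for $\eps>\eta_1$ one only knows $\FFr(u_0;\eps)<0$ and $\FFl(u_0;\eps)<0$, whose difference has indeterminate sign. Your sentence ``Then $G(\cdot,\eps)$ changes sign on this interval'' is therefore unjustified exactly where the proposition is delicate: $G(\cdot\,;\eps)$, though strictly increasing, could a priori be single-signed on the whole neighborhood of $u_0$. The paper closes this by contradiction: if $\Phi_n=\FFr(\cdot\,;\eps_n)-\FFl(\cdot\,;\eps_n)$ had no root on $(u_0-\delta,u_0+\delta)$ for some $\eps_n\to\eta_1+$, then (say) $\Phi_n<0$ and increasing on $[u_0,u_0+\delta]$ with $\Phi_n(u_0)\to 0$ forces $\Phi_n\to 0$ uniformly there, so $\FFr(\cdot\,;\eps_n)-\tors$ (strictly increasing by Remark~\ref{Rem150401}) would converge to $\FFl(\cdot\,;\eta_1)-\tors$ (strictly decreasing), which is impossible. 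Some use of Remark~\ref{Rem150401}, or an equivalent structural input beyond continuity and Lemma~\ref{monbaleq}, is indispensable here; your proposal contains none.

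A secondary point: your ``main obstacle'' paragraph aims at the wrong difficulty. The collision $u(\eps)=u_+^r(\eps)$ that you labor to exclude is in fact allowed by the conclusion, since the stated interval is $(u_-^r(\eps),u_+^r(\eps)]$, closed at $u_+^r(\eps)$; the endpoints that must be avoided are $u_-^r(\eps)$ and $u_+^l(\eps)$, and avoiding them is immediate from the monotonicity of $u_\pm$ and $u(\eps)\to u_0$ once existence and continuity of the root are established. So that paragraph neither addresses the true gap nor is needed for the statement as formulated.
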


\begin{proof}
We first note that in order to construct a desired function $B_\eps$  it suffices to find a root $u(\eps)$ 
of the balance equation~\eqref{baleqformula} for $\FFr(\,\cdot\;;\eps)$ and $\FFl(\,\cdot\;;\eps)$ (see Corollary~\ref{cor250901}).
 
If $u_+^r(\eta_1)>u_0>u_-^l(\eta_1)$, then the proof is simple. The function 
$\FFr(\,\cdot\;;\eta_1)-\FFl(\,\cdot\;;\eta_1)$ is strictly increasing on 
$(u_-^l(\eta_1), u_+^l(\eta_1))\cap (u_-^r(\eta_1), u_+^r(\eta_1))$ by Lemma~\ref{monbaleq}, therefore 
it is positive on some right neihborhood of $u_0$ and is negative on a left one. By continuity with 
respect to $\eps$ the function $\FFr(\,\cdot\;;\eps)-\FFl(\,\cdot\;;\eps)$ has a root $u(\eps)$ in a fixed 
neighborhood of $u_0$ for $\eps$ sufficiently close to $\eta_1$, $\eps> \eta_1$. Again, by Lemma~\ref{monbaleq} 
this root is unique in the intersection of the tails.

If $u_+^r(\eta_1)=u_0$ or $u_0=u_-^l(\eta_1)$, then $\FFr(u_0;\eta_1)=\FFl(u_0;\eta_1)=0$, because $u_0$ is 
the root of the balance equation~\eqref{baleqformula} for $\FFr(\,\cdot\,;\eta_1)$ and $\FFl(\,\cdot\,;\eta_1)$. 
Therefore $u_+^r(\eta_1)=u_-^l(\eta_1)=u_0$. It follows from Remark~\ref{rem241101} 
that there exist $u_-,u_+ \in \mathbb{R}$ such that $u_-<u_0<u_+$, $\tors'>0$ on $(u_-,u_0)$, and $\tors'<0$ 
on $(u_0,u_+)$. Thus, $u_0=c_i$ for some $i$ (see Definition~\ref{roots}). For any $\eps \in (\eta_1,\eta_3)$ 
we have $u_-^l(\eps)<u_-$, and $u_+<u_+^r(\eps)$. The only thing we need to check is that for any $\delta>0$ 
the function $\FFr(\,\cdot\;;\eps)-\FFl(\,\cdot\;;\eps)$ has a root $u(\eps) \in (u_0-\delta,u_0+\delta)$ 
provided $\eps$ sufficiently close to $\eta_1$. 

If it is not the case, then there exists a positive $\delta$ and a sequence $\eps_n \to \eta_1$, $\eps_n > \eta_1$ 
such that the functions $\Phi_n(\,\cdot\,)=\FFr(\,\cdot\;;\eps_n)-\FFl(\,\cdot\;;\eps_n)$ have no roots on 
$(u_0-\delta,u_0+\delta)$. Without loss of generality, we may assume that $\Phi_n$ is negative on this intersection. 
The function $\Phi_n$ is strictly increasing on $[u_0,u_0+\delta]$, is negative there, and $\Phi_n(u_0)\to 0$, 
$n \to +\infty$. Therefore, $\Phi_n$ converges to zero uniformly on $[u_0,u_0+\delta]$. It follows that 
$$
\lim_{n \to \infty} \FFr(t;\eps_n)-\tors(t)= \lim_{n \to \infty} \FFl(t;\eps_n)-\tors(t) = 
\FFl(t;\eta_1)-\tors(t), \quad t \in [u_0,u_0+\delta],
$$
where the function on the right hand side is strictly decreasing on $(u_0,u_0+\delta)$ and the functions 
on the left hand side are strictly increasing on $(u_0,u_0+\delta)$ due to Remark~\ref{Rem150401}. 
This leads to the contradiction and proves the claim.
\end{proof}

\begin{St}[{\bf Induction step for a multibirdie}]\label{MultibirdieDesintegrationSt}\index{birdie}\index{multibirdie}
Let~$\eta_1,\eta_3 \in \mathbb{R},$ $\eta_1<\eta_3<\epsmax$. Consider a multibirdie
$\MTB(\{\mathfrak{a}_i\}_{i=1}^k;\eta_1)$. Suppose that~$\FFl$ and $\FFr$ are left and right monotone force 
flows with the corresponding functions $u_\pm^{\mathrm L}$ and $u_\pm^{\mathrm R}$ acting from $[\eta_1,\eta_3]$ 
to $\bar{\mathbb{R}}$ such that~$\mathfrak{a}_1^{\mathrm{l}} \in (u_-^{\mathrm R}(\eta_1),u_+^{\mathrm R}(\eta_1)]$ 
and~$\mathfrak{a}_k^{\mathrm{r}} \in [u_-^{\mathrm L}(\eta_1),u_+^{\mathrm L}(\eta_1))$. We also suppose 
that for each~$i,$ $i = 1,2,\ldots,k-1,$ there are chordal domains
$\Ch([\mathfrak{a}_i^{\mathrm{r}},\mathfrak{a}_{i+1}^{\mathrm{l}}],*)$. Suppose that there exists 
a continuous function~$B$ that coincides with the standard candidates 
on~$\Rt(u_-^{\mathrm R}(\eta_1), \mathfrak{a}_1^{\mathrm{l}};\eta_1),$ $\MTB(\{\mathfrak{a}_i\}_{i=1}^k;\eta_1),$ 
$\Lt(\mathfrak{a}_k^{\mathrm{r}},u_+^{\mathrm L}(\eta_1);\eta_1),$ and 
every~$\Ch([\mathfrak{a}_i^{\mathrm{r}},\mathfrak{a}_{i+1}^{\mathrm{l}}],*)$. Moreover\textup, the force of 
$B$ on~$\Rt(u_-^{\mathrm R}(\eta_1), \mathfrak{a}_1^{\mathrm{l}};\eta_1)$ is~$\FFr(\,\cdot\;;\eta_1),$ and 
the force of $B$ on~$\Lt(\mathfrak{a}_k^{\mathrm{r}},u_+^{\mathrm L}(\eta_1);\eta_1)$ is~$\FFl(\,\cdot\;;\eta_1)$. 
Then\textup, there exists a number~$\eta_2,$ $\eta_1\!<\!\eta_2\!<\!\eta_3,$ and a collection of strictly monotone 
functions~$a_i$ and $b_i$ acting from $[\eta_1,\eta_2]$ to $\mathbb{R}$ such that the $a_i$ are increasing 
and $a_i(\eta_1)=\mathfrak{a}_{i}^{\mathrm{r}},$ the $b_i$ are decreasing and 
$b_i(\eta_1)=\mathfrak{a}_{i+1}^{\mathrm{l}},$ and $[g(a_i(\eps)),g(b_i(\eps))]$ is a chord 
of~$\Ch([\mathfrak{a}_i^{\mathrm{r}},\mathfrak{a}_{i+1}^{\mathrm{l}}],*)$. Furthermore\textup, 
for every~$\eps \in (\eta_1,\eta_2]$ there exists an integer~$j = j(\eps)$\textup, $1 \leq j \leq k$\textup, 
and~$u(\eps) \in (b_{j-1}(\eps), a_{j}(\eps)]$ \textup(here we put~$b_0\df u_-^{\mathrm{R}}(\eps)$ and 
$a_k\df u_+^{\mathrm{L}}(\eps)$\textup) such that there exists a continuous function~$B_{\eps}$ on the domain
\begin{gather*}
\Big(\cup_{i=0}^{j-2}\Rt\big(b_i(\eps), a_{i+1}(\eps);\eps\big)\Big) \cup
\Big(\cup_{i=1}^{j-1}\RTroll\big(a_i(\eps),b_i(\eps);\eps\big)\Big) \cup
\\ 
\Rt\big(b_{j-1}(\eps),u(\eps);\eps\big) \cup
\Ang\big(u(\eps);\eps\big) \cup \Lt\big(u(\eps),a_{j}(\eps);\eps\big)\cup
\\
\Big(\cup_{i=j}^{k-1}\LTroll\big(a_i(\eps),b_i(\eps);\eps\big)\Big) \cup
\Big(\cup_{i=j}^{k-1}\Lt\big(b_i(\eps), a_{i+1}(\eps);\eps\big)\Big) \cup
\\
\Big(\cup_{i=1}^{k-1} \Ch\big(\big[a_i(\eps),b_i(\eps)\big],*\big)\Big)
\end{gather*}
that coincides with the standard candidate inside each subdomain of the partition. Moreover\textup, the force of 
$B_\eps$ coincides with~$\FFr(\,\cdot\;;\eps)$ in the right neighborhood of $u_-^{\mathrm{R}}(\eps)$ and 
with~$\FFl(\,\cdot\;;\eps)$ in the left neighborhood of $u_+^{\mathrm{L}}(\eps)$. The functions
\begin{equation}\label{eq280401}
{\FFr}_{,i}(\,\cdot\,;\eps) = \Fr\Big(\,\cdot\,;a_i(\eps),b_i(\eps);\eps\Big),\qquad 
{\FFl}_{,i}(\,\cdot\,;\eps) = \Fl\Big(\,\cdot\,;a_i(\eps),b_i(\eps);\eps\Big), \quad 1\leq i \leq k-1,
\end{equation}
are the right and the left monotone force flows on the corresponding domains.
\end{St}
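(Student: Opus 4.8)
The plan is to reduce Proposition~\ref{MultibirdieDesintegrationSt} to the already established single-figure evolution statements together with the disintegration formulas of Subsection~\ref{s344}. Recall from~\eqref{MultibirdieDesintegration} that a multibirdie can be written as $\MTTR$ glued with an angle (or $\MTB$ on a single root) and $\MTTL$; the extra freedom in choosing where to split will be exactly the choice of $j(\eps)$. So first I would, at the level $\eta_1$, write the given multibirdie as a right multitrolleybus on $\{\mathfrak{a}_i\}_{i=1}^{j_0-1}\cup\{\mathfrak{a}_{j_0}^{\mathrm l}\}$, a birdie or angle on $\mathfrak{a}_{j_0}$, and a left multitrolleybus on $\{\mathfrak{a}_{j_0}^{\mathrm r}\}\cup\{\mathfrak{a}_i\}_{i=j_0+1}^{k}$, for some index $j_0$ (any choice works at the starting moment because all the interior forces vanish on the arcs $\mathfrak{a}_i$). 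This decomposition is legitimate by the $\biguplus$ convention and Propositions~\ref{St260901},~\ref{St131001},~\ref{St211101}.

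Next I would run the evolution of each piece separately. Apply Proposition~\ref{InductionStepForMultitrolleybus} to the right part: this produces $\eta_2^{\mathrm R}>\eta_1$, strictly increasing functions $a_i,b_i$ for $i=1,\dots,j_0-1$ with the advertised continuous candidate on the corresponding right tangent domains, right trolleybuses, and chordal domains, and — crucially — the output right monotone force flow ${\FFr}_{,j_0-1}(\,\cdot\,;\eps)$ on the right side of $\mathfrak{a}_{j_0}^{\mathrm l}$. Symmetrically, Proposition~\ref{InductionStepForLeftMultitrolleybus} handles the left part, producing $\eta_2^{\mathrm L}>\eta_1$, the functions $a_i,b_i$ for $i=j_0+1,\dots,k-1$, and the output left monotone force flow on the left side of $\mathfrak{a}_{j_0}^{\mathrm r}$. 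Shrinking to $\eta_2'=\min(\eta_2^{\mathrm R},\eta_2^{\mathrm L},\eta_3)$, both force flows are available simultaneously on a common $\eps$-interval, and they feed as the ``incoming'' flows for the middle piece. Then I would apply Proposition~\ref{InductionStepAngle} (if $\mathfrak{a}_{j_0}$ is a single point) or, if $\mathfrak{a}_{j_0}$ is a solid root, first disintegrate $\MTB(\{\mathfrak{a}_{j_0}\})$ via~\eqref{AnglePlusMultiTrollebusL} or~\eqref{AnglePlusMultiTrollebusR} into an angle plus a one-arc multitrolleybus, handle the multitrolleybus by Proposition~\ref{Pr300301}, and apply Proposition~\ref{InductionStepAngle} to the remaining angle; this yields $\eta_2\le\eta_2'$ and the continuous function $u(\eps)$ with $u(\eps)\in(b_{j_0-1}(\eps),a_{j_0}(\eps)]$ solving the balance equation $\FFr_{,j_0-1}(u(\eps);\eps)=\FFl_{,j_0+1}(u(\eps);\eps)$ (or the corresponding endpoint relations). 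Concatenating all the standard candidates and using the $C^1$-gluing via Corollary~\ref{cor250901} and Propositions~\ref{St211101},~\ref{ConcatenationOfConcaveFunctions} gives $B_\eps$ on the stated domain, and the monotone force flow property of ${\FFr}_{,i},{\FFl}_{,i}$ from~\eqref{eq280401} follows from Remark~\ref{FCDMFF} because each $\{\Ch([a_i(\eps),b_i(\eps)],*)\}$ is a decreasing flow of chordal domains.

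The one subtlety — and the main obstacle — is that the index $j(\eps)$ need not stay equal to $j_0$: as $\eps$ grows, the angle $u(\eps)$ produced by Proposition~\ref{InductionStepAngle} may run to the endpoint of its interval (when $u_+^r(\eta_1)=u_0$ or $u_0=u_-^l(\eta_1)$, i.e.\ $u_0=c_i$), which signals that a neighboring trolleybus has shrunk to zero width and the angle should ``hop'' across it into the next slot. Concretely, when $u(\eps)$ reaches $a_{j_0}(\eps)$ the left trolleybus $\LTroll(a_{j_0},b_{j_0})$ degenerates and, using formula~\eqref{LTrolleybusPlusAngle} (angle + trolleybus = birdie, read backwards) together with~\eqref{CupPlusAngleL}, the configuration rearranges so that $j$ increases by one and the incoming right flow for the new angle becomes $\FFr_{,j_0}(\,\cdot\,;\eps)$; symmetrically on the other side. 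I would handle this by arguing that on a sufficiently small interval $[\eta_1,\eta_2]$ only finitely many such hops can occur (the figures have positive width at $\eta_1$ except possibly the one containing $c_i$, and Remark~\ref{UniqueTrolley} controls the others), so after possibly relabeling at finitely many values of $\eps$ and re-invoking Proposition~\ref{InductionStepAngle} with the updated flows, we obtain a well-defined $j(\eps)$ and $u(\eps)$ on $[\eta_1,\eta_2]$. Everything else is bookkeeping: matching the $a_i,b_i$ from the two multitrolleybus steps, verifying the domains tile the required region without overlaps (which follows because each step produces the standard candidate on a fence-partition), and checking the endpoint conventions $b_0\df u_-^{\mathrm R}(\eps)$, $a_k\df u_+^{\mathrm L}(\eps)$ are consistent with Proposition~\ref{InductionStepAngle}'s output.
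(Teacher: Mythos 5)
Your overall strategy --- disintegrate the multibirdie, evolve the pieces by the multitrolleybus propositions, and place the angle by a balance equation --- is in the right spirit, but the way you organize it contains a genuine gap that the paper's proof is specifically designed to avoid. You fix a split index $j_0$ at $\eta_1$, evolve the two resulting multitrolleybuses separately, and then propose to handle the migration of the angle by arguing that ``on a sufficiently small interval $[\eta_1,\eta_2]$ only finitely many such hops can occur.'' This last claim is false in general: the remark immediately following Proposition~\ref{MultibirdieDesintegrationSt} states that $j(\cdot)$ can have \emph{infinitely many} jumps even on a bounded interval (the ``oscillating birdie'' example of~\cite{ISVZ2018}). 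Since your construction of $B_\eps$ for a given $\eps$ depends on re-splitting and re-invoking Proposition~\ref{InductionStepAngle} at every hop, an accumulation of infinitely many hops in $(\eta_1,\eta_2]$ breaks the induction: there is no last relabeling from which to restart, and Remark~\ref{UniqueTrolley} controls uniqueness of the trolleybus parameters, not the number of transitions.

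The paper's proof sidesteps this entirely by never tracking $j$ dynamically. It applies Propositions~\ref{InductionStepForMultitrolleybus} and~\ref{InductionStepForLeftMultitrolleybus} to the two \emph{full} multitrolleybuses $\MTTR(\{\mathfrak{a}_i\}_{i=1}^k)$ and $\MTTL(\{\mathfrak{a}_i\}_{i=1}^k)$ built on the entire arc collection, obtaining two complete families $\{a_i^{\mathrm R},b_i^{\mathrm R}\}$ and $\{a_i^{\mathrm L},b_i^{\mathrm L}\}$ for $i=1,\dots,k-1$. For each fixed $\eps$ it then proves the comparison claim: if $b_j^{\mathrm R}\leq b_j^{\mathrm L}$ then $b_{j-1}^{\mathrm R}<b_{j-1}^{\mathrm L}$ (via the chain~\eqref{eq280402} using Lemmas~\ref{biggercupsmallerforce} and~\ref{monbaleq}). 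This monotonicity yields a well-defined crossover index $j(\eps)$ \emph{pointwise in $\eps$}, and one sets $b_i(\eps)=b_i^{\mathrm R}(\eps)$ for $i<j(\eps)$ and $b_i(\eps)=b_i^{\mathrm L}(\eps)$ for $i\geq j(\eps)$. No continuity of $j(\cdot)$ is needed. The existence of the root $u(\eps)\in(b_{j-1}(\eps),a_j(\eps)]$ of the balance equation is then established directly by sign considerations at $u_\pm$ and, in the boundary cases $j=1$ or $j=k$, by the contradiction argument with uniform convergence and the strict monotonicity of $F-\tors$ from Remark~\ref{Rem150401} --- rather than by citing Proposition~\ref{InductionStepAngle}, whose hypotheses (a single balance root at $\eta_1$) are not quite met here since the forces vanish on the whole arc $\mathfrak{a}_j$ at the initial moment. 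If you reorganize your argument around the parallel evolution of the two full multitrolleybuses and the pointwise comparison claim, the rest of your outline (gluing via Corollary~\ref{cor250901} and Proposition~\ref{St211101}, and the monotone force flow property via Remark~\ref{FCDMFF}) goes through.
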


\begin{proof}
We consider two multitrolleybuses~$\MTTR(\{\mathfrak{a}_i\}_{i=1}^k)$ and~$\MTTL(\{\mathfrak{a}_i\}_{i=1}^k)$. Application of Propositions~\ref{InductionStepForMultitrolleybus} and~\ref{InductionStepForLeftMultitrolleybus} 
to these foliations gives us numbers $\eta_2^{\mathrm{R}}, \eta_2^{\mathrm{L}}$ and collections of 
functions~$\{a_j,b_j\}_{j=1}^{k-1}$ (all of them are defined on an interval~$[\eta_1,\eta_2]$, where 
$\eta_2=\min(\eta_2^{\mathrm{R}},\eta_2^{\mathrm{L}})$), which we call~$\{a_{j}^{\mathrm{R}},b_{j}^{\mathrm{R}}\}$  
and~$\{a_{j}^{\mathrm{L}},b_{j}^{\mathrm{L}}\}$. Let~$\eps \in (\eta_1,\eta_2]$ be fixed.

We {\bf claim} that if~$b_{j}^{\mathrm{R}}\leq b_{j}^{\mathrm{L}}$ for some $j$, $1<j<k$, then~$b_{j-1}^{\mathrm{R}}<b_{j-1}^{\mathrm{L}}$. 
Indeed, we have the following chain of inequalities: 
\begin{equation}\label{eq280402}
\Fr\Big(b_{j-1}^{\mathrm{L}}; a_{j-1}^{\mathrm{L}},b_{j-1}^{\mathrm{L}};\eps\Big)=
\Fl\Big(b_{j-1}^{\mathrm{L}}; a_{j}^{\mathrm{L}},b_{j}^{\mathrm{L}};\eps\Big) \geq 
\Fl\Big(b_{j-1}^{\mathrm{L}}; a_{j}^{\mathrm{R}},b_{j}^{\mathrm{R}};\eps\Big) > 
\Fr\Big(b_{j-1}^{\mathrm{L}}; a_{j-1}^{\mathrm{R}},b_{j-1}^{\mathrm{R}};\eps\Big). 
\end{equation}
The first equality in~\eqref{eq280402} is simply the balance equation. The second inequality follows 
from Lemma~\ref{biggercupsmallerforce} and our assumption that $b_{j}^{\mathrm{R}} \leq b_{j}^{\mathrm{L}}$. 
We note that the point~$b_{j-1}^{\mathrm{L}}$ lies in the left tail of the chordal domain 
$\Ch([a_{j}^{\mathrm{R}},b_{j}^{\mathrm{R}}],*)$, because it lies in the left tail of the larger chordal 
domain~$\Ch([a_{j}^{\mathrm{L}},b_{j}^{\mathrm{L}}],*)$ (see Corollary~\ref{TailsGrowthNonEv}). 
Moreover, $b_{j-1}^{\mathrm{L}}$ lies on the left of the point $a_j^{\mathrm{R}}$, which is the root of the balance equation 
of the forces $\Fr\Big(\,\cdot\,; a_{j-1}^{\mathrm{R}},b_{j-1}^{\mathrm{R}};\eps\Big)$ and 
$\Fl\Big(\,\cdot\,; a_{j}^{\mathrm{R}},b_{j}^{\mathrm{R}};\eps\Big)$, therefore the last inequality 
in~\eqref{eq280402} follows from Lemma~\ref{monbaleq}. Inequality~\eqref{eq280402} and 
Lemma~\ref{biggercupsmallerforce} imply that~$b_{j-1}^{\mathrm{R}} < b_{j-1}^{\mathrm{L}}$. The {\bf claim} is proved.

It follows that there exists~$j = j(\eps) \in \{1,2,\ldots, k\}$ such that for any~$i$, $0<i < j$, 
the inequality~$b_i^{\mathrm{R}} < b_i^{\mathrm{L}}$ holds true and for any~$i$,~$j\leq i <k$, one 
has~$b_i^{\mathrm{R}} \geq b_i^{\mathrm{L}}$. We define~$b_i(\eps) = b_{i}^\mathrm{R}(\eps)$ for~$0<i < j$ 
and~$b_i(\eps) = b_{i}^\mathrm{L}(\eps)$ for~$j\leq i < k$. We also put $a_i(\eps)$, $1 \leq i \leq k-1,$ in 
such a way that $[g(a_i(\eps)),g(b_i(\eps))]$ is a chord 
of~$\Ch([\mathfrak{a}_i^{\mathrm{r}},\mathfrak{a}_{i+1}^{\mathrm{l}}],*)$. In what follows we use 
the notation~\eqref{eq280401} and also put ${\FFr}_{,0}=\FFr$ and ${\FFl}_{,k} = \FFl$. We only have to 
prove that for $\eps$ sufficiently close to $\eta_1$ there is~$u(\eps)\in[b_{j(\eps)-1}(\eps),a_{j(\eps)}(\eps)]$ 
solving the balance equation for~${\FFr}_{,j(\eps)-1}(\,\cdot\,;\eps)$ and ${\FFl}_{,j(\eps)}(\,\cdot\,;\eps)$.

First, consider the case $1<j(\eps)<k$. Let $u_- = \max(b_{j(\eps)-1}(\eps),t^{\mathrm{L}})$, where 
$t^{\mathrm{L}}$ is the left end of the tail of ${\FFl}_{,j(\eps)}(\,\cdot\,;\eps)$. We claim that 
${\FFr}_{,j(\eps)-1}(u_-;\eps)\leq {\FFl}_{,j(\eps)}(u_-;\eps)$. Indeed, if $u_-=t^{\mathrm{L}}$ then 
the claim is obvious:
$$
{\FFl}_{,j(\eps)}(t^{\mathrm{L}};\eps)=0\geq {\FFr}_{,j(\eps)-1}(t^{\mathrm{L}};\eps),
$$
because $t^{\mathrm{L}}$ lies in the tail of ${\FFr}_{,j(\eps)-1}(\,\cdot\,;\eps)$.

If $u_-=b_{j(\eps)-1}(\eps)$, then $u_-<b_{j(\eps)-1}^\mathrm{L}(\eps)$, therefore due to Lemma~\ref{monbaleq} we have
\eq{eq091001}{
\begin{aligned}
{\FFr}_{,j(\eps)-1}(u_-;\eps)&-{\FFl}_{,j(\eps)}(u_-;\eps)\leq 
{\FFr}_{,j(\eps)-1}(b_{j(\eps)-1}^\mathrm{L}(\eps);\eps)-{\FFl}_{,j(\eps)}(b_{j(\eps)-1}^\mathrm{L}(\eps);\eps)
\\
&<\Fr\Big(b_{j(\eps)-1}^{\mathrm{L}}; a_{j(\eps)-1}^{\mathrm{L}},b_{j(\eps)-1}^{\mathrm{L}};\eps\Big) - 
{\FFl}_{,j(\eps)}(b_{j(\eps)-1}^\mathrm{L}(\eps);\eps)=0,
\end{aligned}
}
where the second inequality follows from Lemma~\ref{biggercupsmallerforce} and inequality 
$b_{j(\eps)-1}(\eps)<b_{j(\eps)-1}^\mathrm{L}(\eps)$. Similarly, for $u_+=\min(a_{j(\eps)}(\eps),t^{\mathrm{R}})$, 
where $t^{\mathrm{R}}$ is the right end of the tail of ${\FFr}_{,j(\eps)-1}(\,\cdot\,;\eps)$, we have 
${\FFr}_{,j(\eps)-1}(u_+;\eps)\geq {\FFl}_{,j(\eps)}(u_+;\eps).$ Therefore, there exists a root $u(\eps) \in [u_-,u_+]$ of the balance equation for the forces ${\FFr}_{,j(\eps)-1}(\,\cdot\,;\eps)$ and ${\FFl}_{,j(\eps)}(\,\cdot\,;\eps)$, which is unique due to Lemma~\ref{monbaleq}.

Inequality~\eqref{eq091001} implies that $u(\eps)>b_{j(\eps)-1}(\eps)$, therefore 
$u(\eps) \in (b_{j(\eps)-1}(\eps),a_{j(\eps)}(\eps)]$.

Now we consider the case $j(\eps)=1$ (the case $j(\eps)=k$ is symmetric). 
Let $u_+=\min(a_1(\eps),t^{\mathrm{R}}(\eps))$, where $t^{\mathrm{R}}$ is the right end of the tail 
of ${\FFr}_{,0}(\,\cdot\,;\eps)$. We {\bf claim} that ${\FFr}_{,0}(u_+;\eps)\geq {\FFl}_{,1}(u_+;\eps)$. Indeed, 
if $u_+=t^{\mathrm{R}}$, then the argument is the same as before: 
${\FFr}_{,0}(t^{\mathrm{R}};\eps)=0\geq {\FFl}_{,1}(t^{\mathrm{R}};\eps)$. If $u_+=a_1(\eps)$, then 
$u_+\geq a_1^{\mathrm{R}}(\eps)$ and, applying Lemma~\ref{monbaleq} and Lemma~\ref{biggercupsmallerforce}, we have
\begin{multline*}
{\FFr}_{,0}(u_+;\eps)-{\FFl}_{,1}(u_+;\eps)\geq{\FFr}_{,0}(a_1^{\mathrm{R}};\eps)-{\FFl}_{,1}(a_1^{\mathrm{R}};\eps)
\\
= {\FFr}_{,0}(a_1^{\mathrm{R}};\eps)-\Fl\Big(a_1^{\mathrm{R}}; a_{1}^{\mathrm{L}},b_{1}^{\mathrm{L}};\eps\Big) \geq
{\FFr}_{,0}(a_1^{\mathrm{R}};\eps)-\Fl\Big(a_1^{\mathrm{R}}; a_{1}^{\mathrm{R}},b_{1}^{\mathrm{R}};\eps\Big)=0.
\end{multline*}
The {\bf claim} is proved.

Recall that we want to prove: for $\eps$ sufficiently close to $\eta_1$ there is~$u(\eps)\in[b_{0}(\eps),a_{1}(\eps)]$ 
solving the balance equation for~${\FFr}_{,0}(\,\cdot\,;\eps)$ and ${\FFl}_{,1}(\,\cdot\,;\eps)$. Assume the contrary: for some sequence $\eps_n \to \eta_1+$ the function 
$\Phi_n(\,\cdot\,) \!=\! {\FFr}_{,0}(\,\cdot\,;\eps_n)\!-\!{\FFl}_{,1}(\,\cdot\,;\eps_n)$ has no balance points 
on $(b_0(\eps_n),a_1(\eps_n)]$. Then $\Phi_n(u_+(\eps_n))\!>\!0$ and therefore $\Phi_n$ is positive on the 
intersection of the tails, therefore $t^{\mathrm{L}}$, the left end of the tail of ${\FFl}_{,1}(\,\cdot\,;\eps_n)$, 
is not greater than $b_0(\eps_n)$. $\Phi_n$ is a strictly increasing function by Lemma~\ref{monbaleq}, and 
$\Phi_n(a_1(\eta_1))$ tends to $0$ when $n \to +\infty$. Therefore, $\Phi_n$ tends to zero uniformly on 
$(b_0(\eta_1)),a_1(\eta_1))$ (we recall that $b_0(\eta_1)\geq b_0(\eps)$ for any $\eps>\eta_1$ by definition 
of a monotone force flow). Thus, we have
$$
\lim_{n \to +\infty} {\FFl}_{,1}(t;\eps_n)-\tors(t)  = \lim_{n \to +\infty} {\FFr}_{,0}(t;\eps_n)-\tors(t)= 
\FFr(t;\eta_1)-\tors(t), \quad t \in (b_0(\eta_1),a_1(\eta_1)),
$$
where the function on the right hand side is strictly decreasing on $(b_0(\eta_1),a_1(\eta_1))$ and 
the functions on the left hand side are strictly increasing on $(b_0(\eta_1),a_1(\eta_1))$ due to 
Remark~\ref{Rem150401}. This leads to the contradiction and proves the statement.
\end{proof}

\begin{Rem}
We note that $j(\eps)$ in Proposition~\ref{MultibirdieDesintegrationSt} indeed could depend on $\eps$\textup, 
i.\,e.\textup, during the evolution the angle could change its place between the trolleybuses. Moreover\textup, 
the function $j(\cdot)$ could have an infinite number of jumps even on a bounded interval\textup, 
see the example ``Oscillating birdie'' on page~123 of~\cite{ISVZ2018}.
\end{Rem}

\begin{Rem}\label{rem231001}
We have seen in the proof that the root $u(\eps)$ of the balance equation is in the semiclosed interval 
$(b_{j(\eps)-1}(\eps),a_{j(\eps)}(\eps)]$. It may occur that $u(\eps)=a_{j(\eps)}(\eps)$\textup, and in 
this case the angle $\Ang\big(u(\eps);\eps\big)$\textup, the tangent domain $\Lt\big(u(\eps),u(\eps);\eps\big)$, 
and the trolleybus $\LTroll\big(u(\eps),b_{j(\eps)}(\eps);\eps\big)$ glue together forming a birdie\textup, 
see formula~\eqref{LTrolleybusPlusAngle}. Moreover\textup, this equation could be valid for $\eps$ in some 
interval\textup; the birdie can shrink without disintegrating.
\end{Rem}

\section{Global evolution}\label{s44}
Before passing to formal statements, we describe the rules of the evolution. 

Recall that in Subsection~\ref{s345} we constructed the graph $\Gamma$ corresponding to the foliation of a Bellman candidate, and its subgraph $\GammaFree$ corresponding to subdomains of the foliation that are not separated from the free boundary~$\dfree\Omega$.
The vertices of the graph $\Gamma$ are of two types: the vertices corresponding to linearity domains and fictitious vertices. The edges of $\Gamma$ always correspond to fences: either chordal or tangent domains.  The vertices of~$\GammaFree$ correspond to linearity domains: multicups, 
angles, trolleybuses, multitrolleybuses, birdies, multibirdies, fictitious vertices of the first, third 
(corresponding to long chords), fourth, and fifth type. 

To each edge~$\mathfrak{E}$ of~$\GammaFree$, we assign 
a force by the formal rule described in the table below. In the 
first column there is the type of the vertex from where the edge starts. The numerical parameters of 
this vertex are placed in the second column. The force that is assigned to the tangent domain if it 
lies on the left of the figure is in the third column, and the force that is assigned to the tangent 
domain lying on the right of the figure is in the last one. 

\medskip
\label{pg231001}
\centerline{
\begin{tabular}{|l|c|c|c|}
\hline 
{\bf Vertex type}&{\bf Parameters}&{\bf Left Force}&{\bf Right Force}
\\
\hline
Right trolleybus&$\{a,b\}$&&$\Fr(\cdot\,;a,b;\eps)$
\\
\hline
Left trolleybus&$\{a,b\}$&$\Fl(\cdot\,;a,b;\eps)$&
\\
\hline
Multicup&$\{\mathfrak{a}_i\}_{i=1}^k$&$\Fl(\cdot\,;\mathfrak{a}_1^{\mathrm l},\mathfrak{a}_k^{\mathrm r};\eps)$
&$\Fr(\cdot\,;\mathfrak{a}_1^{\mathrm l},\mathfrak{a}_k^{\mathrm r};\eps)$
\\
\hline
Right multitrolleybus&$\{\mathfrak{a}_i\}_{i=1}^k$&
&$\Fr(\cdot\,;\mathfrak{a}_1^{\mathrm l},\mathfrak{a}_k^{\mathrm r};\eps)$
\\
\hline
Left multitrolleybus&$\{\mathfrak{a}_i\}_{i=1}^k$
&$\Fl(\cdot\,;\mathfrak{a}_1^{\mathrm l},\mathfrak{a}_k^{\mathrm r};\eps)$&
\\
\hline
Fictitious vertex of the first type&$\{a,b\}$&$\Fl(\cdot\,;a,b;\eps)$&$\Fr(\cdot\,;a,b;\eps)$
\\
\hline
Fictitious vertex of the third type&$\{a,b\}$&$\Fl(\cdot\,;a,b;\eps)$&$\Fr(\cdot\,;a,b;\eps)$
\\
\hline
Fictitious vertex of the fourth type&$-\infty$&&$\Fr(\cdot\,;-\infty;\eps)$
\\
\hline
Fictitious vertex of the fourth type&$+\infty$&$\Fl(\cdot\,;+\infty;\eps)$&
\\
\hline
Right fictitious vertex of the fifth type&$c_i$&&$\Fr(\cdot\,;c_i,c_i;\eps)$
\\
\hline
Left fictitious vertex of the fifth type&$c_i$&$\Fl(\cdot\,;c_i,c_i;\eps)$&\\
\hline
\end{tabular}
$\quad (\star)$
}

\medskip

All the foliations generated during the evolution satisfy the following rule: if~$\Rt(u_1,u_2)$ or~$\Lt(u_1,u_2)$ 
is represented by the edge~$\mathfrak{E}$ in~$\GammaFree$, then~$(u_1,u_2)$ belongs to the tail of the force 
corresponding to~$\mathfrak{E}$. This requirement for the foliation will be called the \emph{non-degeneracy 
force condition}\index{condition! non-degeneracy force condition}.

\begin{Cond}\label{NonDegeneracyForceCondition}
For any edge~$\mathfrak{E}$ in~$\GammaFree$ corresponding to a tangent domain~$\Rt(u_1,u_2)$ 
or~$\Lt(u_1,u_2)$\textup, the interval~$(u_1,u_2)$ belongs to the tail of the force assigned to~$\mathfrak{E}$.
\end{Cond}

A short inspection of definitions shows that Condition~\ref{NonDegeneracyForceCondition} holds true for 
all the graphs corresponding to the standard candidates constructed. In other words, all the forces in 
tangent domains are strictly negative. In particular, the following remark is important.
 
\begin{Rem}\label{nondegforcecondSimplePictureRem}
The Bellman candidate constructed for a simple picture in Section~\textup{\ref{s41}} fulfills the 
non-degeneracy force Condition~\textup{\ref{NonDegeneracyForceCondition}}.
\end{Rem}

As has already been said, the main rule of the evolution is that the forces decrease (grow in absolute value), 
see Subsection~\ref{s422}. As a consequence, the tails strictly grow (by this we mean that the~$\tr$ increase 
and the~$\tl$ decrease). Thus, full chordal domains grow (Proposition~\ref{InductionStepForChordalDomain}), 
the multicups are stable\footnote{In a sense, they also grow: the border tangents rise; however, the numerical 
parameters do not change.} (Proposition~\ref{InductionStepForMulticup}), the trolleybuses shrink 
(Propositions~\ref{InductionStepForRightTrolleybus} and~\ref{InductionStepForLeftTrolleybus}), the angles 
continuously wander from side to side (Proposition~\ref{InductionStepAngle}). These figures can be described 
as stable. If there are multitrolleybuses or multibirdies in the foliation for  a fixed~$\eps$, they immediately 
disintegrate (Propositions~\ref{InductionStepForMultitrolleybus},~\ref{InductionStepForLeftMultitrolleybus}, 
and~\ref{MultibirdieDesintegrationSt}). These figures are unstable. As for the birdie, it can shrink 
(see Remark~\ref{rem231001}), but in general it disintegrates. Thus, it is half-stable. 

There is also one useful condition all our graphs will satisfy. It is of structural character (and thus 
relies on Definition~\ref{roots}) and concerns mostly fictitious vertices. It is called the \emph{leaf-root 
condition}\index{condition! leaf-root condition}.

\begin{Cond}\label{Leaf-root condition}
Any arc of any multifigure that is not a single point coincides with one of the solid roots~$c_i$.
Numeric parameters of the fictitious vertices of the second type are some roots~$c_i$ that are single points. 
Each fictitious vertex of the third type corresponds to a chord~$[g(a_0),g(b_0)]$ with at least one vanishing differential. If $\DR(a_0,b_0)=0,$ then~$b_0 = c_i$ for some single point root $c_i$\textup; if~$\DL(a_0,b_0) = 0,$ then~$a_0 = c_j$\textup, where $c_j$ must be a single point root as well.
The numeric parameter of each vertex of the fifth type is also a single point root~$c_i$.
\end{Cond}

\begin{Rem}
All simple graphs constructed in Section~\textup{\ref{s41}} fulfill the leaf-root 
Condition~\textup{\ref{Leaf-root condition}}.
\end{Rem}

\begin{Def}\label{Admissible graph}\index{graph! admissible graph}
Let~$\eps < \epsmax$. We say that a graph~$\Gamma$ is \emph{admissible} for~$f$ and~$\eps$ if all figures 
corresponding to the vertices and edges of~$\Gamma$ satisfy their local propositions.  
\end{Def}
By ``all figures corresponding to the vertices and edges of~$\Gamma$ satisfy their local propositions''  
we mean the following: for each vertex or edge in~$\Gamma$ the parameters satisfy the assumptions of the
proposition indicated for this vertex or edge in the table below (in the third column).

\medskip
\centerline{
\label{tab201101}
\begin{tabular}{|l|c|c|c|}
\hline 
{\bf Vertex or edge type}&{\bf Formulas}&{\bf Verification}&{\bf Evolutional rule}
\\
\hline
Right tangent domain&\eqref{beta2},~\eqref{fenceB2}&\ref{NewRightTangentsCandidate}, 
\ref{NewRightTangentsCandidateInfty}&
\\
\hline
Left tangent domain&\eqref{beta2},~\eqref{fenceB2}&\ref{NewLeftTangentsCandidate}, \ref{NewLeftTangentsCandidateInfty}&
\\
\hline
Chordal domain&\eqref{eq250901}&\ref{NewLightChordalDomainCandidate}&\ref{InductionStepForChordalDomain}
\\
\hline
Angle&
\eqref{eq250902}&\ref{St260901}&\ref{InductionStepAngle}
\\
\hline
Right trolleybus&\eqref{eq201001}&\ref{St131001}&\ref{InductionStepForRightTrolleybus}
\\
\hline
Left trolleybus&\eqref{eq201001}&\ref{St131001}&\ref{InductionStepForLeftTrolleybus}
\\
\hline
Birdie&\eqref{eq201001}&\ref{St131001}&\ref{MultibirdieDesintegrationSt}
\\
\hline
Multicup&\eqref{CandidateInMultifigure},~\eqref{eq201001}&\ref{St211101}&\ref{InductionStepForMulticup}
\\
\hline
Full multicup&\eqref{CandidateInMultifigure},~\eqref{eq201001}&\ref{St211101}&\ref{InductionStepForLongChords}
\\
\hline
Right multitrolleybus&\eqref{CandidateInMultifigure},~\eqref{eq201001}&\ref{St211101}
&\ref{InductionStepForMultitrolleybus}
\\
\hline
Left multitrolleybus&\eqref{CandidateInMultifigure},~\eqref{eq201001}&\ref{St211101}
&\ref{InductionStepForLeftMultitrolleybus}
\\
\hline
Multibirdie&\eqref{CandidateInMultifigure},~\eqref{eq201001}&\ref{St211101}&\ref{MultibirdieDesintegrationSt}
\\
\hline
Closed multicup&\eqref{CandidateInMultifigure},~\eqref{eq201001}&\ref{St211101}& Stable
\\
\hline
Fictitious vertex of the first type&\eqref{eq250901}&\ref{St260901}&\ref{InductionStepForChordalDomain}
\\
\hline
Fictitious vertex of the third type, long chord&\eqref{eq250901}&\ref{St260901}&\ref{InductionStepForLongChords}
\\
\hline
Fictitious vertex of the fifth type& \eqref{Beta2InTheFifthType}& \ref{St260901}&\ref{Pr300301}
\\
\hline
\end{tabular}
}
\medskip

In the first column, there is the type of the vertex or edge, in the second there is a reference to formulas
that are used to construct the canonical function~$B$ in the corresponding figure, in the third column the number of the 
proposition that guarantees that this~$B$ is a Bellman candidate are stored. Finally, the last column contains 
the number of the proposition that describes the local evolution of the parameters for the figure. We have 
omitted fictitious vertices of the second and fourth types (as well as the vertices of the third type that 
correspond to short chords), because the value of the function~$B$ in the domains corresponding to them is 
defined trivially, and these figures are stable and have no evolutional scenarios.

Now we describe how to construct the function~$B$ from a graph. First, one constructs this function to be the 
standard candidate on all the domains corresponding to vertices and edges that participate 
in~$\Gamma\setminus\GammaFree$, because for their figures there is no additional information needed to 
construct~$B$ (no information from other figures). Second, we construct the function~$B$ to be a standard 
candidate on all the domains corresponding  to vertices of $\GammaFree$ not being leaves (i.\,e., except angles). 
Third, we construct the standard candidates for the edges of $\GammaFree$. For each such edge~$\mathfrak{E}$, 
the values of~$B$ in the figure corresponding to its beginning define the force function on the domain 
corresponding to the edge (see table~$(\star)$ on page~\pageref{pg231001}), thus one may construct~$B$ in 
the tangent domain corresponding to~$\mathfrak{E}$ if he knows the values of~$B$ on the domain of its 
source\footnote{There is one exception: for tangent domains coming from infinity, one does not need any 
boundary data.}. Finally, we construct $B$ on the domains corresponding to leaves of $\GammaFree$ (i.\,e., 
on angles), because we know the values of $B$ on the linear boundary of each such angle. Note that if~$\Gamma$ 
fulfills Condition~\ref{NonDegeneracyForceCondition}, then the restriction of~$B$ to each figure is a standard 
Bellman candidate there. Admissibility of the graph guarantees that the force function (defined locally on each element of the foliation by the rules from the table~$(\star)$ on page~\pageref{pg231001}) is a non-negative continuous function on~$\mathbb{R}$. 

\begin{Rem}\label{AdmissibleCondidate}
The function~$B$ constructed from an admissible graph is a Bellman candidate. 
\end{Rem} 

\begin{proof}
We need to verify conditions of Definition~\ref{candidate} for the function $B$. Looking at the table above 
we use the corresponding verification proposition for each vertex or edge and see that the function $B$ 
possesses the foliation on the entire $\Omega_\eps$. We also note that the function~$B$ is locally concave 
and~$C^1$-smooth not only on subdomains, but globally.
\end{proof}

Since during the evolution some figures grow and angles move, several figures might crash. For example, 
the vertex of an angle may coincide with the right endpoint of a long chord. In such a case, we look at 
formula~\eqref{CupPlusAngleR}, and see that now they form a trolleybus. Therefore, the graph of the foliation 
changes at this moment~$\eps$. We call such moments \emph{the critical points of the evolution}. The idea is 
that if a crash occurs, then the crashed figures compose a new one (with the help of formulas from 
Subsection~\ref{s344}), and we can proceed the evolution. Unfortunately, there might be infinitely many 
critical points (see the example on the page~123 of \cite{ISVZ2018}, where an angle flips the direction 
of a trolleybus infinitely many times). However, if one restricts his attention only to those critical points, 
at which the structure of the graph~$\GammaFixed$ essentially changes, he finds only a finite number of 
critical points. Such points are called \emph{essentially critical}\index{essentially critical points}. 
The following definition is also useful.
\begin{Def}\label{Smoothgraph}\index{graph! smooth graph}
We say that a graph~$\Gamma$ is \emph{smooth} if there are no vertices representing full multicups\textup, 
multitrolleybuses\textup, multibirdies\textup, fictitious vertices of the third type that represent long 
chords\textup, and fictitious vertices of the fifth type in~$\Gamma$. 
\end{Def}

\begin{Th}\label{BC}
For any~$\eps < \epsmax$\textup, there exists a graph~$\Gamma(\eps)$ admissible for~$f$ and~$\eps$.
\end{Th}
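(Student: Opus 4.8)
\textbf{Proof plan for Theorem~\ref{BC}.}
The plan is to prove the statement by a continuity/compactness argument in the parameter~$\eps$, running the evolution described in Section~\ref{s43}. Fix an admissible~$f$. By Theorem~\ref{SimplePicture} there exists~$\eps_1 > 0$ and a simple (hence, by Remark~\ref{nondegforcecondSimplePictureRem}, admissible) graph~$\Gamma(\eps)$ for every~$\eps < \eps_1$. Let~$E$ be the set of those~$\eps \in (0,\epsmax)$ for which an admissible graph exists; we must show~$E = (0,\epsmax)$. Since~$E \supset (0,\eps_1)$, it suffices to prove that~$E$ is both relatively open and relatively closed in~$(0,\epsmax)$. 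The openness (``induction step of the first kind'') is essentially the collection of local evolutional theorems of Section~\ref{s43}: given an admissible~$\Gamma(\eta_1)$, we apply the proposition indicated in the last column of the table on page~\pageref{tab201101} to each vertex and edge. First I would treat~$\GammaFree$: running along it from the roots (long chords, multicups at~$\pm\infty$, fictitious vertices of the first type) through the tangent domains and down to the leaves (angles, birdies), each local proposition furnishes a~$\delta>0$ and perturbed numerical parameters, and the monotone force flows produced by one proposition feed as hypotheses into the next (this is exactly why the propositions are stated in terms of monotone force flows — see Definition~\ref{MFF}). Then I would handle~$\GammaFixed$ by Proposition~\ref{InductionStepForChordalDomain} (and~\ref{InductionStepForLongChords} where a differential vanishes) and the stability of closed multicups. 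Taking~$\eta_2$ to be the minimum of the finitely many~$\delta$'s produced, and using Proposition~\ref{St211101} together with Remark~\ref{AdmissibleCondidate} to glue, we obtain an admissible graph on~$[\eta_1,\eta_2]$; the non-degeneracy force Condition~\ref{NonDegeneracyForceCondition} is preserved because the forces stay strictly negative on tangent domains throughout (the tails strictly grow by Corollary~\ref{FlowMonotonicityCorollary} and its relatives).

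For closedness (``induction step of the second kind''), suppose~$\eps_n \nearrow \eps^\ast$ with admissible graphs~$\Gamma(\eps_n)$. Here one first reduces to the finitely many \emph{essentially critical} points: between consecutive essentially critical points the subgraph~$\GammaFixed$ is combinatorially constant, so after passing to a subsequence we may assume all~$\Gamma(\eps_n)$ share one fixed graph~$\Gamma$. The numerical parameters (the functions~$a_i,b_i$, the angle positions~$u_k$, the cup origins~$c_j$) are monotone in~$\eps$ by the evolutional theorems — chordal domains' parameters move monotonically, trolleybus bases shrink monotonically, cup origins are fixed at the~$c_i$ — hence they have limits as~$\eps_n \to \eps^\ast$. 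I would then argue that the limiting parameters, plugged into~$\Gamma$, still satisfy every local proposition: the cup equation~\eqref{urlun1}, the inequalities~$\DL<0$, $\DR<0$, and the balance equations are all closed conditions, and they pass to the limit by continuity of all the determinants in~$(t,\eps)$ (the smoothness assumptions of Subsection~\ref{Sec091201}). The only subtlety is that some edge may acquire ``zero length'' at the limit: a trolleybus base may shrink to a point, an angle may merge with a long chord, a birdie may collapse. In each such degeneration the collapsed configuration is, by the algebraic identities of Subsection~\ref{s344} (formulas~\eqref{CupPlusAngleR}--\eqref{ClosedMulticupBirdie}), exactly a different, still-admissible figure — a fictitious vertex of the fifth type, a trolleybus, an angle — so~$\Gamma$ is replaced by the appropriately contracted graph~$\Gamma^\ast$, which is admissible for~$f$ and~$\eps^\ast$. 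Thus~$\eps^\ast \in E$.

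The main obstacle will be the bookkeeping in the closedness step: controlling \emph{which} degeneration occurs at~$\eps^\ast$ and verifying that the contracted graph~$\Gamma^\ast$ is again admissible, particularly when several degenerations happen simultaneously or when the angle-position function~$j(\eps)$ inside a (multi)birdie has infinitely many jumps on~$[\eps_n,\eps^\ast]$ — the ``oscillating birdie'' phenomenon cited in the Remark after Proposition~\ref{MultibirdieDesintegrationSt}. The resolution is precisely the restriction to essentially critical points: the oscillation of~$j(\eps)$ does not change~$\GammaFixed$, so it is invisible at the level we track, and between essentially critical points the evolution is governed by a fixed finite collection of monotone functions whose limits exist. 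The combinatorial gluing that organizes all of this is word-for-word the argument of Section~3.5 of~\cite{ISVZ2018}; following the convention announced at the start of Section~\ref{s44} and in the description of Chapter~\ref{C4}, I would cite that reference for the purely combinatorial part rather than reproduce it, and concentrate the written proof on the new analytic inputs — the monotonicity of forces (Lemmas~\ref{EvolutionalTailGrowthLemma}, \ref{biggercupsmallerforce}, Corollary~\ref{FlowMonotonicityCorollary}) and the limiting behaviour of the cup equation and differentials.
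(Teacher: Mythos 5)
Your proposal is correct and follows essentially the same route as the paper: start from the simple picture of Theorem~\ref{SimplePicture}, use the local evolutional propositions of Section~\ref{s43} for the open step, pass to limits and contract degenerate edges via the formulas of Subsection~\ref{s344} for the closed step, control termination through the finitely many essentially critical points, and defer the purely combinatorial gluing to Theorem~4.4.15 of~\cite{ISVZ2018}. The paper itself gives only this sketch and the same citation, so your outline is, if anything, slightly more detailed than the printed argument.
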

We will not give a careful proof of the theorem because it repeats literally the proof of the same theorem 
for the BMO case (see Theorem~4.4.15 of~\cite{ISVZ2018}). Here we will describe the main steps of the proof.

First, we use Theorem~\ref{SimplePicture} to build a smooth admissible graph $\Gamma(\eps)$ for small~$\eps$. 
Then, we use local evolution theorems from Subsection~\ref{s43} collected in the table on page~\pageref{tab201101} 
to show that if there exists an admissible graph $\Gamma(\eta_1)$ for some $\eta_1$, then we can construct a 
smooth admissible graph $\Gamma(\eps)$ for $\eps$, $\eps>\eta_1$, sufficiently close to~$\eta_1$. If for 
some $\eta_2$ we have smooth graphs $\Gamma(\eps)$, $\eps \in (\eta_1,\eta_2)$, we can pass to the limit and 
construct a limit graph $\Gamma(\eta_2)$. It can happen that in the limit graph $\Gamma(\eta_2)$ some edges 
``have zero length''. In this case we modify the graph using formulas from Subsection~\ref{s344}. This modified 
graph can be non-smooth but it is admissible and we can continue evolution starting from it. It appears that 
under our assumptions there are only finite number of essentially critical points of the evolution, when the 
graph $\Gamma(\eps)$ is not smooth. In such a way we obtain the graph $\Gamma(\eps)$ for any $\eps<\epsmax$.}

\chapter{Optimizers}\label{C5}
{
In the previous chapter, we constructed a Bellman candidate~$B$ of a special form (Theorem~\ref{BC} 
and Remark~\ref{AdmissibleCondidate}). We claim that it coincides with the Bellman function~$\Bell$. 
Subsection~\ref{s224} suggests a method to prove the claim.  We have to construct an 
optimizer\index{optimizer}~$\vf_x$ for each~$x \in \Omega_{\eps}$ (see Definition~\ref{Opt}). 
Here we will follow the same strategy as when we were constructing Bellman candidates: we will first 
study the local behavior of the optimizers (i.\,e., how do optimizers vary when~$x$ runs through one figure), 
this is done in Section~\ref{s52}, and then ``glue'' these local scenarios together in Section~\ref{s53}. 
The optimizers for the BMO case were built in~\cite{ISVZ2018}, and here we will follow a similar strategy for the general 
case. In Sections~\ref{s51} and~\ref{s52} we will not use evolution, so $\eps$ is fixed and we omit the subscript $\eps$ till Section~\ref{s53}. In particular, we will write $\Xi$ instead of $\Xi_\eps$ and $\Omega = \Xi_0\setminus \Xi$ instead of $\Omega_\eps$, see Section~\ref{s21} for the definitions of these objects. In Section~\ref{s54} we will consider the cases when Conditions~\eqref{eq111001} and~\eqref{eq111002} are violated.

\section{Abstract theory}\label{s51}
We begin with an abstract description of how do optimizers look like. First, as it was mentioned in 
Subsection~\ref{s224}, it is natural to construct monotone optimizers. It is not difficult to build a monotone 
function~$\vf_x$ such that~$\av{\vf_x}{} = x$ and~$B(x) = \av{\vf_x}{}$. The main difficulty is to verify 
that~$\vf_x \in \Class_\Omega$. It was noticed in~\cite{IOSVZ2015} that it is more natural to argue geometrically. 
The notion of a \emph{delivery curve}\index{delivery curve} is useful in this context.

\begin{Def}\label{DelCurve}
Let~$B$ be a Bellman candidate on the domain $\Omega$. Suppose~$\vf\colon [l,r] \to \dfi\Omega$ is 
an integrable function. The curve~$\gamma\colon (l,r] \to \Omega$ given by the formula
\begin{equation}\label{CurveGenerator}
\gamma(\tau) \df 
\av{\vf}{[l,\tau]},\quad \tau \in (l,r],
\end{equation}
is called a \emph{delivery curve} if~$B(\gamma(\tau)) = \av{\ff(\vf)}{[l,\tau]}$ for any~$\tau \in (l,r]$ 
\textup(in particular\textup,~$\gamma(\tau) \in \Omega$\textup). The function~$\vf$ is called the 
\emph{generating function} for~$\gamma$.
\end{Def} 
In other words,~$\gamma$ is a curve that ``delivers'' optimizers to the point. The word ``curve'' here 
means a parametrized curve, because the definition depends on the parametrization. The advantage of 
considering such a curve is that it allows to verify the condition that~$\vf$ is a test function 
(i.\,e.,~$\vf \in \Class_{\Omega}$). 

The main feature we will use is the formula
\begin{equation}\label{FirstDerivative}
\gamma(\tau) + (\tau-l)\gamma'(\tau) = \vf(\tau),
\end{equation}
which can be obtained by differentiation of~\eqref{CurveGenerator}. In particular, this formula shows that 
the tangent to~$\gamma$ at~$\tau$ points in the direction of~$\vf(\tau)$. Thus, one can reconstruct the values 
of~$\vf$ by looking at the points on the fixed boundary that ``are indicated'' by the tangents of the 
corresponding delivery curve. We will use this principle very often. Moreover, equation~\eqref{FirstDerivative} 
allows to reconstruct~$\vf$ from~$\gamma$. 

\begin{Le}\label{Monotone_Convex}
A curve given by formula~\textup{\eqref{CurveGenerator}} is convex if its generating function is monotone. 
\end{Le}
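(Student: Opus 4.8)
\textbf{Proof plan for Lemma~\ref{Monotone_Convex}.}
The plan is to translate the convexity of the curve $\gamma$ into a sign condition on a certain $2\times 2$ determinant and then read that sign off from the monotonicity of $\vf$ via formula~\eqref{FirstDerivative}. Recall that a parametrized planar curve $\gamma\colon (l,r]\to\R^2$ is convex precisely when the determinant $\dett{\gamma'(\tau)}{\gamma''(\tau)}$ does not change sign on its domain (equivalently, the curve turns always in one direction). So the first step is to compute $\gamma'$ and $\gamma''$ from the defining relation~\eqref{CurveGenerator}. Differentiating~\eqref{FirstDerivative} once more gives
\eq{planMC1}{
2\gamma'(\tau)+(\tau-l)\gamma''(\tau)=\vf'(\tau),
}
so that $(\tau-l)\gamma''(\tau)=\vf'(\tau)-2\gamma'(\tau)$, and combining this with $(\tau-l)\gamma'(\tau)=\vf(\tau)-\gamma(\tau)$ from~\eqref{FirstDerivative} we can express the relevant wedge products purely in terms of $\vf$, $\vf'$, and $\gamma$.

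The key computation is then
\eq{planMC2}{
(\tau-l)^2\dett{\gamma'(\tau)}{\gamma''(\tau)}
=\dett{\vf(\tau)-\gamma(\tau)}{\vf'(\tau)-2\gamma'(\tau)}
=\dett{\vf(\tau)-\gamma(\tau)}{\vf'(\tau)},
}
where the last equality uses that $\dett{\gamma'(\tau)}{\gamma'(\tau)}=0$ and that $(\tau-l)\gamma'(\tau)=\vf(\tau)-\gamma(\tau)$ is parallel to $\vf(\tau)-\gamma(\tau)$, so its wedge with $\gamma'(\tau)$ vanishes as well (i.e.\ $\dett{\vf(\tau)-\gamma(\tau)}{\gamma'(\tau)}=(\tau-l)^{-1}\dett{\vf(\tau)-\gamma(\tau)}{\vf(\tau)-\gamma(\tau)}=0$). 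Next, again using $\gamma(\tau)=\av{\vf}{[l,\tau]}=\frac1{\tau-l}\int_l^\tau\vf(\sigma)\,d\sigma$, we write
\eq{planMC3}{
\vf(\tau)-\gamma(\tau)=\frac1{\tau-l}\int_l^\tau\big(\vf(\tau)-\vf(\sigma)\big)\,d\sigma,
}
so that
\eq{planMC4}{
\dett{\vf(\tau)-\gamma(\tau)}{\vf'(\tau)}
=\frac1{\tau-l}\int_l^\tau\dett{\vf(\tau)-\vf(\sigma)}{\vf'(\tau)}\,d\sigma.
}
Since $\vf$ maps into the fixed boundary $\dfi\Omega$, which is the graph of a strictly convex function parametrized by $g$ with $g_1'>0$, monotonicity of $\vf$ (in the sense defined in the paper: $\vf_1$ monotone) means that $\vf=g\circ\xi$ for a monotone scalar function $\xi$; then $\vf(\tau)-\vf(\sigma)$ and $\vf'(\tau)=g'(\xi(\tau))\xi'(\tau)$ are, up to scalar factors of one sign, of the form $g(\xi(\tau))-g(\xi(\sigma))$ and $g'(\xi(\tau))$, and the wedge $\dett{g(t)-g(s)}{g'(t)}$ has a fixed sign for all $s,t$ by convexity of $g$ (compare~\eqref{260701} and Lemma~\ref{Lem310803}). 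The scalar factors $\xi'(\tau)$ and the sign of $\xi(\tau)-\xi(\sigma)$ for $\sigma\le\tau$ all have the sign of $\xi'$, which is constant; hence the integrand in~\eqref{planMC4} has constant sign, the integral has constant sign, and therefore $\dett{\gamma'(\tau)}{\gamma''(\tau)}$ has constant sign. This is exactly convexity of $\gamma$.

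I expect the main obstacle to be the regularity bookkeeping rather than the algebra: the definition of a delivery curve only assumes $\vf$ integrable, so $\gamma'$ and $\gamma''$ need not exist in the classical sense everywhere, and formula~\eqref{FirstDerivative}, while stated in the excerpt, should be used in an appropriately weak form (e.g.\ $\gamma$ is locally absolutely continuous and~\eqref{FirstDerivative} holds a.e., while~\eqref{planMC1} must be interpreted distributionally since $\vf$ need not be differentiable). The clean way around this is to avoid second derivatives altogether: instead of the determinant criterion, one shows directly that for any $l<\sigma_1<\sigma_2\le r$ the point $\gamma(\sigma_2)$ lies on a prescribed side of the tangent line to $\gamma$ at $\sigma_1$ — that is, $\dett{\gamma(\sigma_2)-\gamma(\sigma_1)}{\vf(\sigma_1)}$ has constant sign — which follows from~\eqref{planMC3}-type identities and the convexity of $g$ together with monotonicity of $\xi$, using only first-order (a.e.) differentiability of $\gamma$. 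I would carry out the smooth computation above as the motivating heuristic and then present the $\sigma_1,\sigma_2$ secant version as the actual proof, so that no assumption beyond integrability of $\vf$ is needed.
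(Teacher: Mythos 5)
Your smooth-case computation is essentially the paper's proof: the paper also differentiates \eqref{FirstDerivative} to get $(\tau-l)\gamma''=-2\gamma'+\vf'$, reduces the sign of the curvature to the sign of $\dett{\gamma'}{\vf'}=\frac{1}{\tau-l}\dett{\vf-\gamma}{\vf'}$, and concludes. The one place you diverge in substance is the sign argument and the regularity step. For the sign, the paper simply observes that $\vf'$ is tangent to $\dfi\Omega$ at $\vf(\tau)$ with $\vf_1'>0$ while $\gamma(\tau)\in\Omega$, so $\dett{\vf-\gamma}{\vf'}>0$; your version, expanding $\vf(\tau)-\gamma(\tau)$ as $\frac{1}{\tau-l}\int_l^\tau(\vf(\tau)-\vf(\sigma))\,d\sigma$ and using $\dett{g(t)-g(s)}{g'(t)}>0$, is a correct equivalent (just be careful that ``$\dett{\gamma'}{\gamma''}$ of constant sign'' is not by itself the paper's notion of convexity --- you also need $\gamma_1$ monotone and the sign to come out right, which your computation does deliver). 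For the regularity issue, which you correctly identify, the paper does not go the secant route: it approximates $\vf$ by smooth monotone functions $\vf_n$ whose curves $\gamma_n$ converge pointwise to $\gamma$, and uses that a pointwise limit of graphs of convex functions is again such a graph. That is a two-line fix, whereas your proposed first-order secant argument is only asserted, not carried out, and is where the actual work would lie: to show $\dett{\gamma(\sigma_2)-\gamma(\sigma_1)}{\vf(\sigma_1)-\gamma(\sigma_1)}$ has a fixed sign (note the tangent direction is $\vf(\sigma_1)-\gamma(\sigma_1)$, not $\vf(\sigma_1)$ as written) you would write $\gamma(\sigma_2)-\gamma(\sigma_1)=\frac{\sigma_2-\sigma_1}{\sigma_2-l}\big(\av{\vf}{[\sigma_1,\sigma_2]}-\gamma(\sigma_1)\big)$ and then need a three/four-point convexity lemma for ordered points on $\dfi\Omega$ after averaging out $\gamma(\sigma_1)=\av{\vf}{[l,\sigma_1]}$. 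That is doable and would give a genuinely approximation-free proof, but if you want the shortest complete argument, adopt the paper's approximation step instead.
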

\begin{proof}
We will give a proof for the case of an increasing generating function~$\vf$. The case of decreasing 
function is symmetric. 

Let us assume for a while that the function~$\vf$ is $C^2$-smooth. In such a case, we may differentiate~\eqref{FirstDerivative} and get
\begin{equation*}
(\tau-l)\gamma''(\tau) = -2\gamma'(\tau) + \vf'(\tau). 
\end{equation*}
Thus, the curvature of~$\gamma$, which is~$(\gamma'_1)^{-3} \dett{\gamma'}{\gamma''}$, has the same 
sign as~$\dett{\gamma'}{\vf'}$, because $\gamma_1'>0$. We use~\eqref{FirstDerivative} once again to express~$\gamma'$ 
and rewrite the determinant in the following form:
\begin{equation*}
\dett{\gamma'}{\vf'}=\frac{1}{\tau-l}\dett{\vf-\gamma}{\vf'}.
\end{equation*}
This expression is positive,~because~$\vf'$ is a tangent vector to~$\dfi\Omega$ at~$\vf(\tau)$ with $\vf'_1>0$,
and~$\gamma(\tau)$ belongs to~$\Omega$, see Figure~\ref{fig:DCLemma_monotone}.

\begin{figure}[h!]
\begin{center}
\includegraphics[width = 0.8 \linewidth]{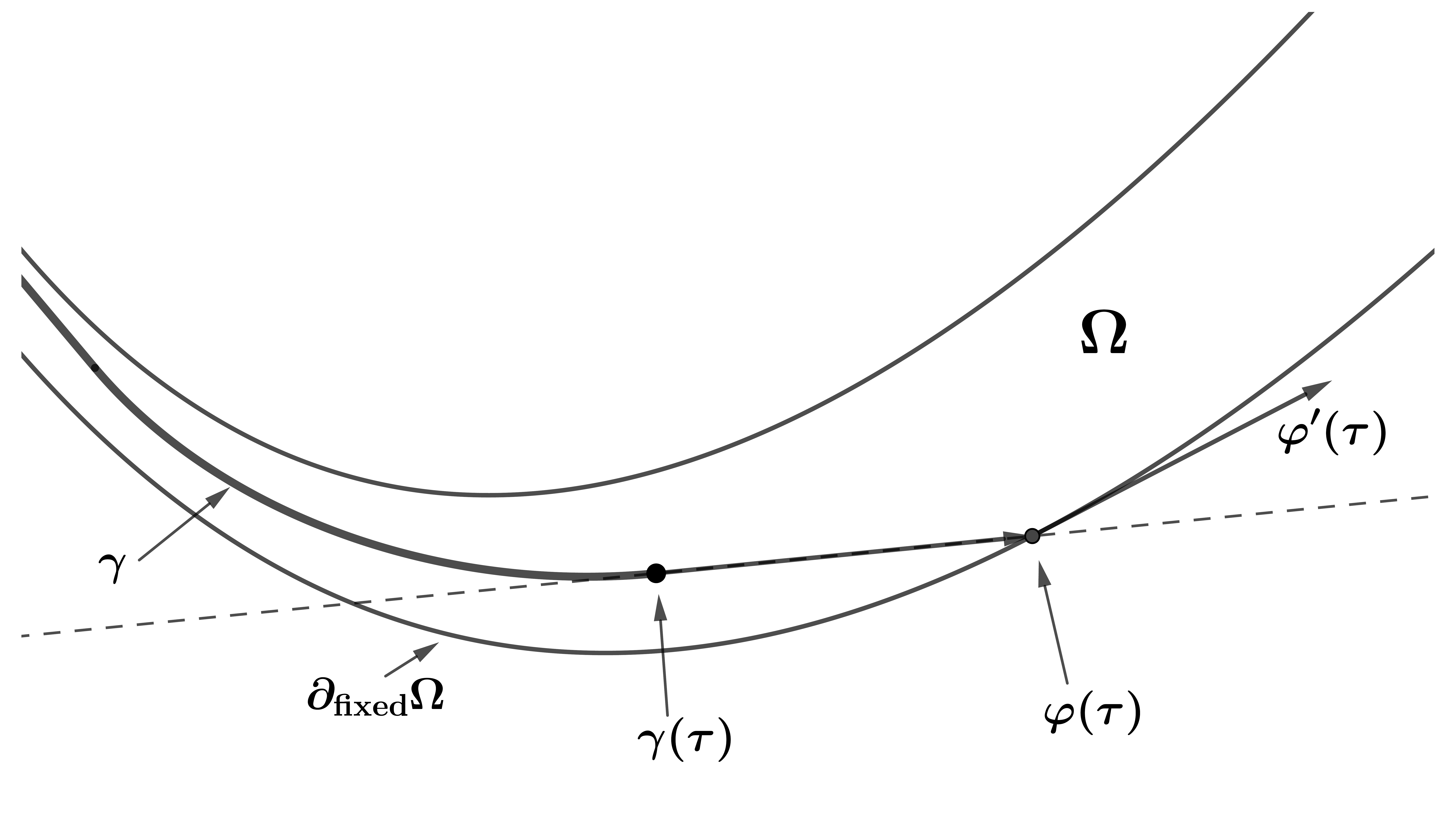}
\caption{Illustration to the proof of Lemma~\ref{Monotone_Convex}.}
\label{fig:DCLemma_monotone}
\end{center}
\end{figure}

We only have to get rid of the smoothness assumption. One can approximate~$\vf$ by smooth increasing functions
$\vf_n\colon (l,r] \to \dfi\Omega$ in such a way that the curves~$\gamma_n$ generated by~$\vf_n$ converge 
to~$\gamma$ pointwise. Each $\gamma_n$ is a convex curve (in the sense that these curves are the graphs of 
convex functions in the standard coordinates) and~$\gamma_n \to \gamma$ pointwise. Therefore~$\gamma$ is 
a convex curve itself.
\end{proof}

\begin{figure}[h!]
\begin{center}
\includegraphics[width = 0.8 \linewidth]{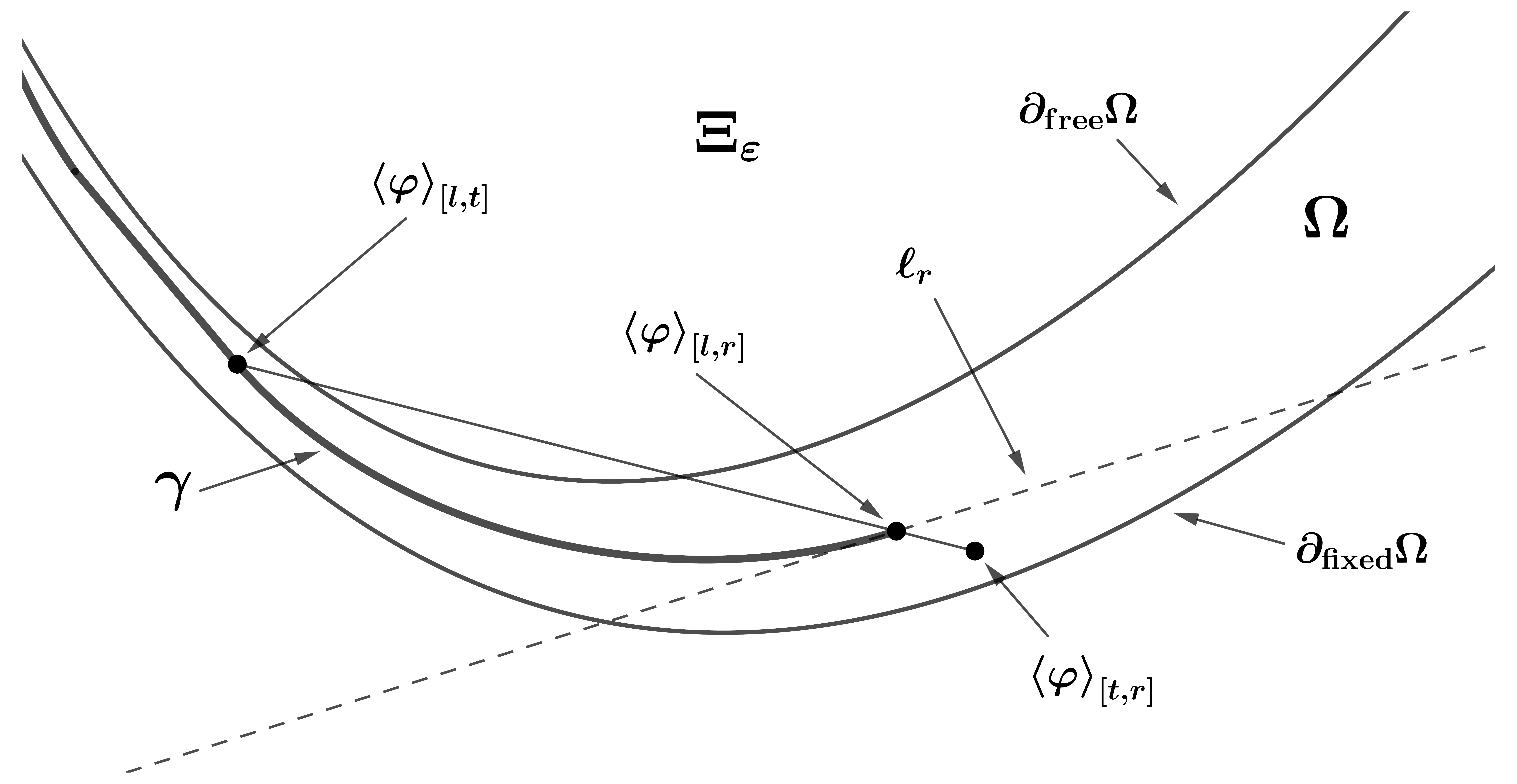}
\caption{Illustration to the proof of Lemma~\ref{DeliveryCurveLemma}.}
\label{fig:DCLemma}
\end{center}
\end{figure}

The following lemma links the condition that~$\gamma$ is convex with the condition~$\vf \in \Class_\Omega$ 
(see Figure~\ref{fig:DCLemma} for visualization of the proof). The symbol $\gamma'(r)$ in the lemma below means the left derivative, which always exists 
due to convexity. 

\begin{Le}\label{DeliveryCurveLemma}
Suppose~$\gamma$ to be a curve parametrized by the interval~$(l,r]$ and given by~\textup{\eqref{CurveGenerator}}. Let it be convex 
in the sense that it is the graph of a convex function in the standard coordinates. Suppose also that the tangent 
line 
\begin{equation*}
\ell_r = \{\gamma(r) + s\gamma'(r)\colon s\in\mathbb{R}\}
\end{equation*}
does not cross the domain $\Xi$ \textup(see Definition~\textup{\ref{cond}}\textup). Then\textup, 
for any~$t \in [l,r)$\textup, we have~$\av{\vf}{[t,r]}\in\Omega$.
\end{Le}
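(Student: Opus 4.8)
The goal is to show that for every $t\in[l,r)$ the average $\av{\vf}{[t,r]}$ lies in $\Omega$, using convexity of $\gamma$ and the hypothesis that the tangent line $\ell_r$ at the endpoint $\gamma(r)$ misses $\Xi$. First I would observe that the point $\av{\vf}{[t,r]}$ has an explicit geometric description in terms of $\gamma$: writing $A=\av{\vf}{[l,t]}=\gamma(t)$ and using that $(r-l)\gamma(r)=(t-l)\gamma(t)+\int_t^r\vf$, we get
\eqstar{
\av{\vf}{[t,r]} = \frac{(r-l)\gamma(r)-(t-l)\gamma(t)}{r-t},
}
so $\av{\vf}{[t,r]}$ is the point on the line through $\gamma(t)$ and $\gamma(r)$ obtained by extending the chord $[\gamma(t),\gamma(r)]$ beyond $\gamma(r)$ (with the weights $(r-l)$ and $-(t-l)$). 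Thus the whole problem reduces to locating this particular point on the secant line of the convex curve $\gamma$.

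Next I would use convexity of $\gamma$ together with the fact that $\gamma_1'>0$ (so $\gamma$ is the graph of a convex function parametrized by increasing first coordinate). Convexity implies the curve lies below the secant line through $\gamma(t)$ and $\gamma(r)$ on the parameter interval $[t,r]$, and lies above the extension of that secant beyond $\gamma(r)$; equivalently, the tangent line $\ell_r$ at $\gamma(r)$ lies below the curve everywhere, and the secant extension beyond $\gamma(r)$ lies below $\ell_r$ is false — rather the extension beyond $\gamma(r)$ lies \emph{above} the tangent $\ell_r$ near $\gamma(r)$ only if the slope of the secant exceeds the slope of $\ell_r$, which holds by convexity since $t<r$. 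So the point $\av{\vf}{[t,r]}$ sits in the region bounded below by $\ell_r$ (to the right of $\gamma(r)$) and, since $\gamma$ is convex and passes through this region, it is "above" the free boundary in the appropriate sense. The key geometric claim is: $\av{\vf}{[t,r]}$ lies in the closed half-plane determined by $\ell_r$ that does not contain $\Xi$, on the far side of $\gamma(r)$. Since by hypothesis $\ell_r$ does not cross $\Xi$ and $\Xi$ is a convex domain whose closure avoids $\ell_r$, that half-plane is disjoint from $\Xi$, hence $\av{\vf}{[t,r]}\notin\Xi$. Combined with $\av{\vf}{[t,r]}\in\cl\Xi_0$ (which follows because the chord endpoints and their affine combinations stay in $\cl\Xi_0$ as $\vf$ maps into $\dfi\Omega\subset\partial\Xi_0=\partial\cl\Xi_0$ and $\Xi_0$ is convex — one needs $\av{\vf}{[t,r]}\in\Xi_0$, which comes from $\gamma(\tau)\in\Omega\subset\cl\Xi_0$ and the extension staying on the correct side), we conclude $\av{\vf}{[t,r]}\in\cl\Xi_0\setminus\Xi=\Omega$.

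I would organize the argument by first establishing membership in $\cl\Xi_0$ and then the harder exclusion from $\Xi$. For the $\cl\Xi_0$ part: the convex curve $\gamma$ stays in $\Omega\subset\cl\Xi_0$, and the point $\av{\vf}{[t,r]}$ lies on the tangent-type extension; since $\gamma'(r)$ (the left derivative) is a direction in which $\gamma$ would continue to stay in $\cl\Xi_0$ for a short time (as $\gamma(\tau)\in\Omega$ for $\tau$ slightly less than $r$ forces the tangent direction to point "into or along" $\cl\Xi_0$ by the supporting-line property at $\gamma(r)$ if $\gamma(r)\in\partial\Xi_0$, or trivially if $\gamma(r)$ is interior), one checks $\av{\vf}{[t,r]}=\gamma(r)+\frac{t-l}{r-t}(\gamma(r)-\gamma(t))$ stays in $\cl\Xi_0$ using convexity of $\Xi_0$ and that $\gamma(r)-\gamma(t)$ is "close to" $\gamma'(r)$ direction scaled positively. \textbf{The main obstacle} I anticipate is precisely this interplay at the endpoint $\gamma(r)$ when $\gamma(r)$ lies on $\partial\Xi_0$: one must carefully argue that extending the secant beyond $\gamma(r)$ does not exit $\cl\Xi_0$, which requires that the secant direction, by convexity of $\gamma$, makes an angle with the boundary $\partial\Xi_0$ that keeps us inside — this is where the convexity of $\gamma$ (Lemma~\ref{Monotone_Convex}, via monotonicity of $\vf$) and the cone/tangency conditions on $\Omega$ from Subsection~\ref{cond} must be combined. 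Once the endpoint behavior is pinned down, the rest is a routine separation argument using convexity of $\Xi$ and the hypothesis on $\ell_r$.
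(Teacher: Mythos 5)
Your separation argument — that $\gamma(r)=\av{\vf}{[l,r]}$ is a convex combination of $\gamma(t)$ and $\av{\vf}{[t,r]}$, that $\gamma(t)$ and $\Xi$ both lie on the closed upper side of $\ell_r$, and hence that $\av{\vf}{[t,r]}$ lies on the other side of $\ell_r$ and so misses $\Xi$ — is exactly the paper's argument, and it is correct (the mid-sentence confusion about secant versus tangent slopes is harmless, since the convex-combination identity already does the work without any slope comparison).

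The genuine gap is in the second half. What you call ``the main obstacle'' — showing $\av{\vf}{[t,r]}\in\cl\Xi_0$ — is not an obstacle at all, and the route you sketch for it is both unnecessary and would not close as stated. You propose to deduce it from the behaviour of $\gamma$ near the endpoint $\gamma(r)$: that the secant extension beyond $\gamma(r)$ ``does not exit $\cl\Xi_0$'' because the tangent direction of $\gamma$ at $r$ points into $\cl\Xi_0$, combined with cone conditions on $\Omega$. This is the wrong object to analyze: $\av{\vf}{[t,r]}$ is not merely a point on an extended secant of $\gamma$; it is, by definition, the average $\frac{1}{r-t}\int_t^r\vf$ of a function taking values in $\dfi\Omega=\partial\Xi_0$, the graph of a convex function whose closed epigraph is $\cl\Xi_0$. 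Since $\cl\Xi_0$ is convex and contains $\partial\Xi_0$, every such average lies in $\cl\Xi_0$ — one line, no reference to $\gamma$, its convexity, its endpoint tangent, or the cone condition. (Note also that you only need membership in $\cl\Xi_0$, not in the open set $\Xi_0$, since $\Omega=\cl\Xi_0\setminus\Xi$.) As written, your proposal leaves this step open and points at machinery that does not resolve it; replacing that paragraph with the one-line convexity observation completes the proof.
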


\begin{proof}
Since the curve $\gamma$ is convex, it lies above the line $\ell_r$. The domain~$\Xi$ also lies above the line $\ell_r$. We note that~$\av{\vf}{[l,r]}$ is 
a convex combination of~$\av{\vf}{[l,t]}$ and~$\av{\vf}{[t,r]}$. Thus,~$\av{\vf}{[t,r]}$ is separated 
from~$\Xi$ by~$\ell_r$. On the other hand,~$\av{\vf}{[t,r]}$ surely belongs 
to~$\Xi_0$, so it lies inside~$\Omega$. 
\end{proof}

In the following corollary we write $\gamma'(\tau)$, $\tau \in (l,r]$, meaning any of 
the one-sided derivatives.  

\begin{Cor}\label{DeliveryCurveCorollary}
Suppose~$\gamma$ to be a delivery curve on~$(l,r]$. Let it be convex in the sense that it is the graph of 
a convex function in the standard coordinates. Suppose also that the tangent line 
\begin{equation*}
\ell_\tau = \{\gamma(\tau) + s\gamma'(\tau)\colon s\in\mathbb{R}\}
\end{equation*}
does not cross the domain $\Xi$ for any~$\tau \in (l,r]$. Then\textup, the function~$\vf$ that 
generates~$\gamma$ belongs to~$\Class_\Omega$.
\end{Cor}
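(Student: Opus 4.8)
The plan is to deduce Corollary~\ref{DeliveryCurveCorollary} directly from Lemma~\ref{DeliveryCurveLemma} by applying that lemma on each subinterval of the parametrization. Since~$\vf \in \Class_\Omega$ means that~$\av{\vf}{J} \in \Omega$ for every subinterval~$J \subset (l,r]$, I would fix an arbitrary subinterval~$[t_1,t_2] \subset (l,r]$ and show that~$\av{\vf}{[t_1,t_2]}$ lies in~$\Omega$. The key observation is that for a delivery curve the restriction of~$\vf$ to a smaller parameter interval~$(l',r']$ is itself the generating function of the shifted delivery curve, because formula~\eqref{CurveGenerator} is a running average, so all hypotheses pass to subintervals.

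First I would note that for any~$t_2 \in (l,r]$ the curve~$\gamma|_{(l,t_2]}$ is again a delivery curve generated by~$\vf|_{(l,t_2]}$: indeed~$\gamma(\tau) = \av{\vf}{[l,\tau]}$ and~$B(\gamma(\tau)) = \av{\ff(\vf)}{[l,\tau]}$ for~$\tau \in (l,t_2]$ by the very definition of a delivery curve on~$(l,r]$. This truncated curve is still convex as the graph of a convex function, being a sub-arc of~$\gamma$, and the tangent line at its right endpoint~$\tau = t_2$ is~$\ell_{t_2}$, which by hypothesis does not cross~$\Xi$. Therefore Lemma~\ref{DeliveryCurveLemma}, applied with~$r$ replaced by~$t_2$ and~$l$ unchanged, yields~$\av{\vf}{[t_1,t_2]} \in \Omega$ for every~$t_1 \in [l,t_2)$. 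Since~$t_1$ and~$t_2$ were arbitrary, this covers every subinterval~$[t_1,t_2] \subset (l,r]$ (the endpoint cases~$t_1 = l$ and degenerate intervals being trivial or handled by~$\gamma(\tau) \in \Omega$ directly), hence~$\vf \in \Class_\Omega$.

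There is one small technical point to check in order to invoke Lemma~\ref{DeliveryCurveLemma} on the truncated curve: the lemma requires the derivative~$\gamma'(t_2)$ (the left derivative, which exists by convexity) to be the one used in defining~$\ell_{t_2}$, and one must make sure that the one-sided derivative of the sub-arc~$\gamma|_{(l,t_2]}$ at its right endpoint coincides with the left derivative of the full curve~$\gamma$ at~$t_2$ — this is immediate since the two curves agree on~$(l,t_2]$. I do not anticipate a genuine obstacle here; the content of the corollary is a routine localization of Lemma~\ref{DeliveryCurveLemma}, and the only mild subtlety is bookkeeping about which one-sided derivative is meant, which the remark preceding the corollary already addresses by allowing any of the one-sided derivatives.

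Thus the proof is essentially two lines: restrict the delivery curve to~$(l,t_2]$, observe all hypotheses of Lemma~\ref{DeliveryCurveLemma} are inherited, and conclude~$\av{\vf}{[t_1,t_2]} \in \Omega$ for all~$t_1 < t_2$ in the parameter interval, which is exactly the definition of~$\vf \in \Class_\Omega$.
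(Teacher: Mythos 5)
Your proposal is correct and is precisely the intended derivation: the paper states the corollary without proof because it follows from Lemma~\ref{DeliveryCurveLemma} exactly by the truncation argument you give, applying the lemma on $(l,t_2]$ for each $t_2$ and using that convexity, the generating formula~\eqref{CurveGenerator}, and the non-crossing hypothesis at the right endpoint all restrict to sub-arcs. Your side remark about the one-sided derivative at $t_2$ is the only point needing care, and you handle it correctly.
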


Before we pass to constructing specific delivery curves, we should postulate a heuristic principle that 
will help us to guess them. Since a delivery curve ``consists of optimizers'', it has to avoid the directions, 
in which the Bellman candidate is non-linear. Thus, we guess that delivery curves should go either along the 
extremal segments or along the free boundary. 

In Subsection \ref{s52} we will construct delivery curves for each elementary figure of the foliation. 
In the general case a figure will have special points on the free boundary, we call them \emph{incoming} 
and \emph{outgoing nodes}. The idea is as follows: if we have special delivery curves for the incoming nodes, 
then we can construct a delivery curve for any point in the domain. Moreover, if the domain has outgoing nodes, 
then we construct special delivery curves for them. Every outgoing node for some domain is at the same time 
an incoming node for its neighbor domain. Continuing delivery curves along these special nodes allows us 
to construct optimizers for all the points in $\Omega$.

\section{Local behavior of optimizers}\label{s52}

\subsection{Optimizers for tangent domains}\label{s521}\index{domain! tangent domain}
Consider a tangent domain~$\Rt(u_1,u_2)$ foliated by the segments $S(u)=[g(u),w(u)]$, $u \in [u_1,u_2]$, 
tangent to the free boundary of $\Omega$. Let $B$ be a standard candidate on~$\Rt(u_1,u_2)$. This domain 
has two linear parts of the boundary, namely $S(u_1)$ and $S(u_2)$. The tangency points $w(u_1)$ and $w(u_2)$ 
on the free boundary are the \emph{incoming} and \emph{outgoing nodes} of $\Rt(u_1,u_2)$ correspondingly. 
Recall the positive valued function~$\lambda$\index[symbol]{$\lambda$},
\eq{eq011801}{
\lambda(u)\big(g(u)-w(u)\big) = w'(u), \quad u\in [u_1,u_2],
}
introduced in~\eqref{eq150701}.

Suppose~$\psi$ is an optimizer for the incoming node $w(u_1)$ (see Figure~\ref{fig:RtOpt} below) defined 
on the interval~$(l,l_1]$. Our aim is to build the optimizers for all the points inside~$\Rt(u_1,u_2)$. 
We start with the points $w(u),$ $u \in [u_1,u_2]$, lying on the free boundary. We look 
for a function~$\vf$ on~$(l,r]$ for some $r>l_1$ such that~$\vf = \psi$ on~$(l,l_1]$ and its delivery 
curve~$\gamma$ goes along the free boundary from~$w(u_1)$ to~$w(u_2)$ on~$[l_1,r]$. 
We will find a monotone function $u\colon [l_1,r] \to [u_1,u_2]$ such that the function 
\eq{eq031801}{
\vf(t) = g(u(t))}
generates the required delivery curve $\gamma(t) = \av{\vf}{[l,t]}$ that will coincide with $w(u(t)),$ 
$t \in [l_1,r]$. This is equivalent to the equation
$$
(t-l) w(u(t))=\int_l^t \vf(\tau)\, d \tau.
$$
We differentiate this identity with respect to $t$ and obtain
$$
w(u(t))+(t-l)w'(u(t))u'(t) = \vf(t) = g(u(t)).
$$
Using~\eqref{eq011801} we obtain
$
(t-l)\lambda(u(t))u'(t)=1.
$
We solve this differential equation with the boundary condition~$u(l_1)=u_1$ and get
\eq{eq021801}{
\log \frac{t-l}{l_1-l} = \int_{u_1}^{u(t)}\lambda(v)\,dv.
}
Since the function $\lambda$ is positive and $C^1$-smooth (see~Subsection~\ref{Sec091201}), equality~\eqref{eq021801} defines the required 
$C^2$-smooth increasing function $u$ on the interval $[l_1,r]$ (and the function $\vf$ defined 
by~\eqref{eq031801}), where 
$$
r = l+ (l_1-l) \exp\Big(\int_{u_1}^{u_2}\lambda(v) \,dv\Big).
$$
We want to use Lemma~\ref{DeliveryCurveLemma} to verify that the function $\vf$ belongs to $\Class_\Omega$. 
By construction, the curve $\gamma(t)$, $t\in (l_1,r]$, coincides with the part of the free boundary $\dfree\Omega$.
Therefore, this part of the curve is convex and its tangents do not cross the domain~$\Xi$. In order 
to use Lemma~\ref{DeliveryCurveLemma}, we need the convexity of the curve $\gamma$ on the whole 
interval $(l,r).$ This consideration leads to the proposition below.

\begin{St}\label{OptimizersRightTangentDomain}
Let~$B$ be a candidate on~$\Rt(u_1,u_2)$. Suppose that there exists a non-decreasing optimizer~$\psi$ for~$B$ 
at the point~$w(u_1)$. Let also~$\psi \preceq g(u_1)$. Then\textup, there 
exists a non-decreasing optimizer~$\vf_x$ for~$B$ at every point~$x \in \Rt(u_1,u_2)$\textup, 
moreover\textup,~$\vf_x \preceq g(u_2)$.
\end{St}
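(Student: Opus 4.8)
\textbf{Proof plan for Proposition~\ref{OptimizersRightTangentDomain}.}
The plan is to split the argument into two parts according to where the point $x$ lies: first the points $x = w(u)$ on the free boundary, and then the general points inside $\Rt(u_1,u_2)$. For the free-boundary points, the computation preceding the statement already produces the candidate generating function: extend $\psi$ (defined on $(l,l_1]$) by $\vf(t) = g(u(t))$ on $[l_1,r]$, where $u$ is the increasing $C^2$-solution of~\eqref{eq021801}. By construction $\vf$ is non-decreasing (it is $\psi$ on $(l,l_1]$, which is non-decreasing by hypothesis, it is $g(u(t))$ on $[l_1,r]$ with $u$ increasing and $g_1' > 0$, and the hypothesis $\psi \preceq g(u_1)$ guarantees the two pieces match up monotonically at $l_1$), and the associated curve $\gamma(t) = \av{\vf}{[l,t]}$ coincides with $w(u(t))$ on $[l_1,r]$, hence with an arc of $\dfree\Omega$ there. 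The key verification is that $\vf$ is a test function, i.e.\ $\vf \in \Class_\Omega$; for this I would invoke Corollary~\ref{DeliveryCurveCorollary}, which requires that $\gamma$ be convex on all of $(l,r]$ and that its tangent lines never cross $\Xi$. On $(l,l_1]$ this holds because $\psi$ is an optimizer with monotone generating function (apply Lemma~\ref{Monotone_Convex} and the defining property of an optimizer/delivery curve on the source domain); on $[l_1,r]$ the curve is literally the free boundary arc, which is convex with tangents not crossing $\Xi$. The main obstacle is gluing convexity across $t = l_1$: one must check that the one-sided derivatives of $\gamma$ at $l_1$ fit together convexly. This follows from~\eqref{FirstDerivative}, which says $(\tau - l)\gamma'(\tau) = \vf(\tau) - \gamma(\tau)$, so the direction of $\gamma'$ is determined by $\vf(\tau)$, and since $\vf$ is continuous at $l_1$ (both pieces equal $g(u_1)$ there) the tangent direction is continuous; continuity of tangent direction plus monotonicity of the generating function on each side gives global convexity.

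Once the optimizer for every free-boundary point $w(u)$, $u \in [u_1,u_2]$, is in hand (together with the fact that its terminal generating value is $g(u) \preceq g(u_2)$, so the ``$\preceq g(u_2)$'' conclusion holds for these points), I would handle an arbitrary $x \in \Rt(u_1,u_2)$. Such an $x$ lies on a unique extremal segment $S(u) = [g(u), w(u)]$ for some $u \in [u_1,u_2]$, and $B$ is affine along $S(u)$ with $B(w(u))$ equal to the optimal value delivered by $\vf_{w(u)}$. Write $x = \alpha g(u) + (1-\alpha) w(u)$ for a suitable $\alpha \in [0,1]$. The optimizer for $x$ is obtained by prepending a constant piece: on a short initial interval take $\vf$ to be the constant $g(u)$, then continue with (a rescaled copy of) the optimizer $\vf_{w(u)}$. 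Concretely, if $\vf_{w(u)}$ is defined on $(l, r_u]$, put $\tilde\vf = g(u)$ on $(l - c, l]$ for the unique $c > 0$ making the total average equal $x$, and $\tilde\vf = \vf_{w(u)}$ on $(l, r_u]$. Because $B$ is affine on $S(u)$ and its gradient is constant there (Proposition~\ref{St280701}), one checks $B(x) = \av{\ff(\tilde\vf)}{}$ by the affine interpolation identity. Monotonicity of $\tilde\vf$ is immediate: the prepended value $g(u)$ precedes, in the $\preceq$ ordering, every value taken by $\vf_{w(u)}$ — indeed $\vf_{w(u)}$ starts with $\psi \preceq g(u_1) \preceq g(u)$ and later takes values $g(u(t))$ with $u(t) \le u$... wait, that is backwards; rather one arranges the gluing so the constant piece $g(u)$ comes after $\psi$ and before the free-boundary piece, i.e.\ one inserts it at the parameter value $l_1$. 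I would instead define $\tilde\vf$ to equal $\psi$ on $(l,l_1]$, then the constant $g(u)$ on an inserted interval $(l_1, l_1 + c]$, then the free-boundary piece shifted accordingly; this keeps the sequence of values non-decreasing since $\psi \preceq g(u_1) \preceq g(u)$ and $g(u) \preceq g(u(t))$ is false — so the cleanest route is: the constant piece at value $g(u)$ is inserted so that all subsequent values $g(u(t))$ with $u(t) \ge u$ follow it, which forces the free-boundary parametrization to run over $[u, u_2]$ only, and the segment $S(u)$ is reached exactly at the insertion point.

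To keep the bookkeeping honest, the actual construction I would write down is: for $x \in S(u)$, let $\vf_x$ be the generating function whose delivery curve first follows $\psi$'s curve up to $w(u_1)$, then the free boundary from $w(u_1)$ to $w(u)$, then stays put — no, a delivery curve cannot stay put. The correct picture, and the one I would commit to, is that the delivery curve for $x$ travels along $\dfree\Omega$ from $w(u_1)$ until it reaches the point where the extremal segment through $x$ is tangent, namely $w(u)$, and then turns and runs along the segment $S(u)$ from $w(u)$ down to $x$; the generating function is $\psi$ on the initial interval, $g(u(\cdot))$ on the free-boundary interval (with $u$ running from $u_1$ to $u$), and the constant $g(u)$ on the final interval. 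Convexity of this curve at the turning point $w(u)$: the incoming tangent direction at $w(u)$ is $w'(u) = \lambda(u)(g(u) - w(u))$ by~\eqref{eq011801}, which points along $S(u)$ toward $g(u)$ since $\lambda(u) > 0$; the outgoing piece is the straight segment $S(u)$ traversed toward $g(u)$ — so the tangent direction is actually continuous there, and the curve is convex (a straight continuation of a convex arc). Then Lemma~\ref{DeliveryCurveLemma}, applied with the terminal tangent line being the line containing $S(u)$ (which is tangent to $\dfree\Omega$, hence does not cross $\Xi$ by the domain conditions of Subsection~\ref{cond}), yields $\vf_x \in \Class_\Omega$. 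The optimizing identity $B(x) = \av{\ff(\vf_x)}{}$ holds because $B$ is affine with constant gradient along $S(u)$ and $\vf_x$ is built precisely to track the graph of $B$. Finally, $\vf_x$ is non-decreasing (concatenation of non-decreasing $\psi$, the non-decreasing $g(u(\cdot))$, and the constant $g(u)$, with matching endpoints since $\psi \preceq g(u_1)$), and every value of $\vf_x$ is $\preceq g(u_2)$ because $u(\cdot)$ never exceeds $u_2$ and $g(u) \preceq g(u_2)$; this gives the final assertion $\vf_x \preceq g(u_2)$. The hard part, as flagged, is the convexity-at-the-junction verification — but~\eqref{FirstDerivative} and~\eqref{eq011801} reduce it to the statement that $w'(u)$ is a positive multiple of $g(u) - w(u)$, which is exactly the tangency relation defining $\lambda$.
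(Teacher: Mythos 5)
Your construction and verification coincide with the paper's: the same three-piece generating function ($\psi$ on $(l,l_1]$, then $g(u(\cdot))$ along the free boundary until $u(t)$ reaches the parameter $u$ with $x\in S(u)$, then the constant $g(u)$), the same convexity-at-the-junction argument via \eqref{FirstDerivative} and \eqref{eq011801}, the same appeal to Lemma~\ref{DeliveryCurveLemma} with the terminal tangent being the line of $S(u)$, and the same reduction of $B(x)=\av{\ff(\vf_x)}{}$ to the constancy of $\nabla B$ along extremal segments. The one point where the paper is slightly more careful is the subintervals $[t,\tau]$ with $\tau\le l_1$: there it uses $\av{\psi}{[t,\tau]}\in\Omega$ directly from $\psi\in\Class_\Omega$ rather than a tangent-line condition on the curve generated by $\psi$, but this does not affect the correctness of your argument.
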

Recall that the ordering $\preceq$ on $\dfi\Omega$ was introduced on page~\pageref{ordering}.  

\begin{proof}
We have constructed the desired function $\vf_x$ for the points $x \in \Rt(u_1,u_2)\cap \FreeBoundary\Omega$. 
For any point $x \in \Rt(u_1,u_2)$ there exists unique $u \in [u_1,u_2]$, such that $x \in S(u)$. We already 
know the optimizers at the endpoints~$w(u)$ and~$g(u)$ of this extremal segment. Namely, 
they are $\vf_{w(u)}$ on an interval $(l,r]$ and the constant function $\vf_{g(u)} \equiv g(u)$. We will obtain 
the desired optimizer $\vf_{x}$ extending the function $\vf_{w(u)}$ by the constant $g(u)$ to some interval 
$(r,r_1]$ of appropriate length. 

To prove the proposition, we need to show two things. First, we need to verify that the function $\vf_x$ lies in 
$\Class_{\Omega}$. Second, we need to prove the equality $B(x) = \av{\ff(\vf_x)}{}$ for the candidate $B$. 

We start with the verification of convexity of the curve $\gamma=\gamma_x$ generated by $\vf=\vf_x$. By Lemma~\ref{Monotone_Convex}, 
the curve generated by $\psi$, i.\,e, $\gamma|_{(l,l_1]}$, is convex and its tangent line at the point $w(u_1)$ 
coincides with the tangent line to the free boundary (this is a consequence of the condition $\psi \preceq g(u_1)$ and~\eqref{FirstDerivative}). 
Then the curve $\gamma$ goes along the free boundary, which is convex, and then it goes along the tangent line $S(u)$. 
Therefore, this curve $\gamma$ is convex. Let $l<t<\tau\leq r_1$. If $\tau\leq l_1$, then the point 
$\av{\vf}{[t,\tau]}$ lies on the curve generated by $\psi$, therefore, lies in $\Omega$ by the assumption. 
If $\tau> l_1$ then the tangent line to $\gamma$ at $\gamma(\tau)$ is the tangent line to the free boundary by construction. Therefore, this tangent line do not cross the domain~$\Xi$, and we use Lemma~\ref{DeliveryCurveLemma} to conclude that $\av{\vf}{[t,\tau]}$ lies in $\Omega$. This proves that $\vf \in \Class_\Omega$.

It suffices to show that~$B(\gamma(s)) = \av{\ff(\vf)}{[l,s]}$ for~$s \in [l_1,r_1]$, because for $s \in (l,l_1]$ 
this follows from the fact that~$\psi$ is an optimizer. Thus, we need to check the following identity:
\begin{equation}\label{NewtonLeibniz}
(s-l)B(\gamma(s)) - (l_1-l)B(\gamma(l_1)) = \int\limits_{l_1}^s \ff(\vf(\tau))\,d\tau.
\end{equation}
After differentiating with respect to $s$ we obtain the equivalent identity (because \eqref{NewtonLeibniz} 
holds for $s=l_1$):
\eq{eq012801}{
B(\gamma(s))+ (s-l) \scalp{\nabla B(\gamma(s))}{\gamma'(s)} = \ff(\vf(s))=B(\vf(s)).
}
Since $B$ is a Bellman candidate (see Definition~\ref{candidate}), the gradient $\nabla B$ is constant 
on the extremal segment containing $\gamma(s)$ and $\vf(s)$, therefore,
\begin{equation*}
B(\vf(s))-B(\gamma(s)) = \scalp{\nabla B}{\big(\vf(s)-\gamma(s) \big)}
\stackrel{\eqref{FirstDerivative}}{=} (s-l) \scalp{\nabla B}{\gamma'(s)}.
\hfill\qedhere
\end{equation*}
\end{proof}

\begin{figure}[h!]
\begin{center}
\includegraphics[width = 0.8 \linewidth]{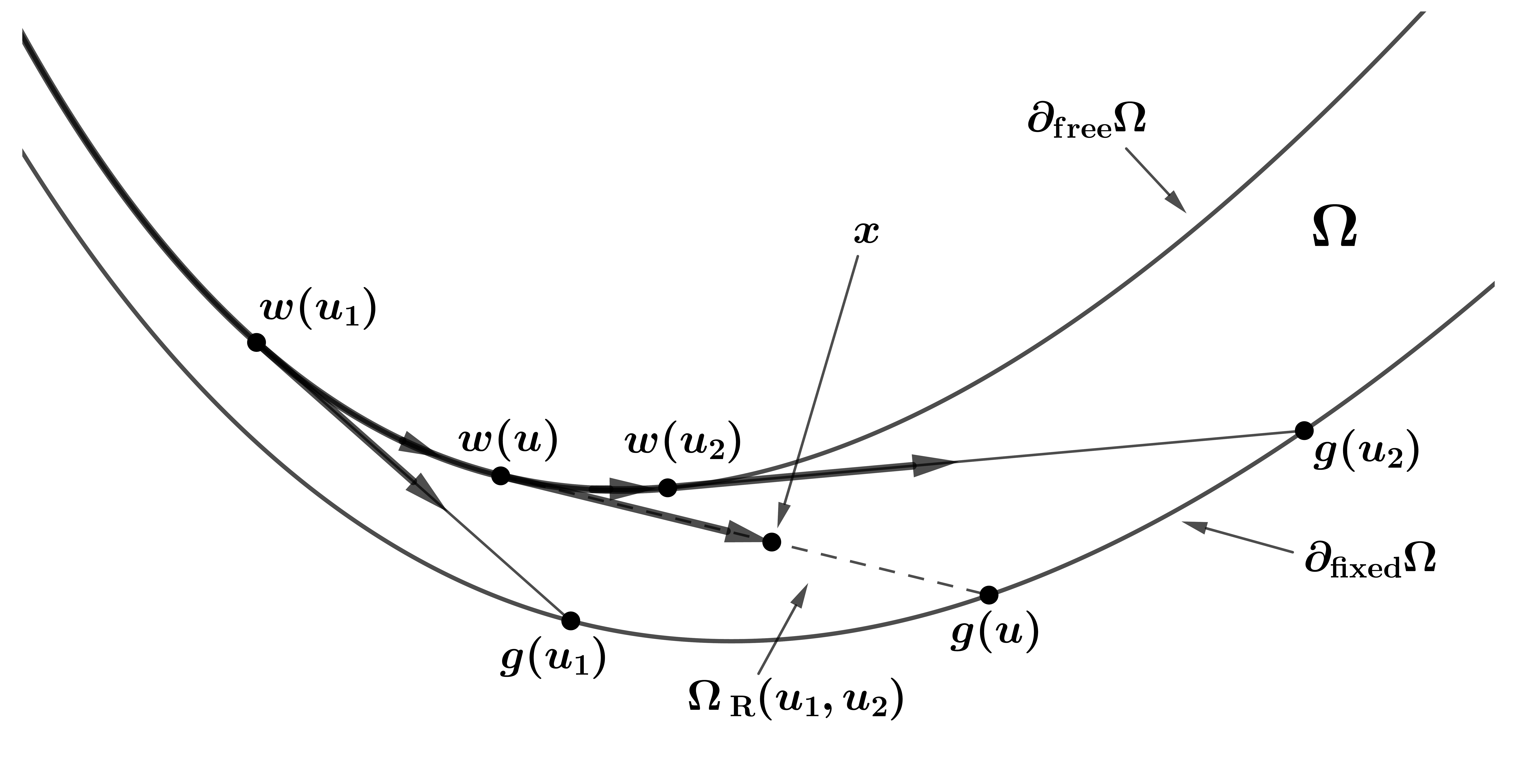}
\caption{Delivery curves inside a tangent domain.}
\label{fig:RtOpt}
\end{center}
\end{figure}

We briefly state a symmetric proposition.
\begin{St}\label{OptimizersLeftTangentDomain}
Let~$B$ be a candidate on~$\Lt(u_1,u_2)$. Suppose that there exists a non-increasing optimizer~$\psi$ for~$B$ 
at the point~$w(u_2)$. Let also~$g(u_2) \preceq \psi$. Then\textup, 
there exists a non-increasing optimizer~$\vf_x$ for~$B$ at every point~$x \in \Lt(u_1,u_2)$\textup, 
moreover\textup,~$g(u_1) \preceq  \vf_x$.
\end{St}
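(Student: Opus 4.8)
The plan is to mirror the proof of Proposition~\ref{OptimizersRightTangentDomain} under the reflection that sends right tangents to left tangents and reverses the orientation of the parametrizing interval. More concretely, given the non-increasing optimizer~$\psi$ at~$w(u_2)$ defined on some interval~$(l,l_1]$, I first want to extend~$\psi$ to an interval~$(l,r]$ by a function~$\vf(t) = g(u(t))$ whose delivery curve~$\gamma(t) = \av{\vf}{[l,t]}$ runs along the free boundary~$\dfree\Omega$ from~$w(u_2)$ to~$w(u_1)$. Repeating the computation leading to~\eqref{eq021801} (now with~$u(l_1) = u_2$ and~$u$ decreasing, using~\eqref{eq011801} for~$\lambda = \laml$, which is negative), one obtains
\eqstar{
\log\frac{t-l}{l_1-l} = \int_{u_2}^{u(t)}\laml(v)\,dv,
}
which, since~$\laml < 0$ and~$u$ decreases, again defines a~$C^2$-smooth monotone function~$u$ on~$[l_1,r]$ with~$r = l + (l_1 - l)\exp\big(\int_{u_2}^{u_1}\laml(v)\,dv\big)$. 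This part is routine.

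Second, for an arbitrary point~$x \in \Lt(u_1,u_2)$, I would pick the unique~$u \in [u_1,u_2]$ with~$x \in S(u) = [g(u),w(u)]$, take the already-constructed optimizer~$\vf_{w(u)}$ on~$(l,r]$, and extend it by the constant~$g(u)$ on an interval~$(r,r_1]$ of the length that makes the average land exactly at~$x$; this defines~$\vf_x$, which is non-increasing because~$\vf_{w(u)}$ is non-increasing (its values~$g(u(t))$ decrease as~$t$ grows since~$u$ decreases) and because~$g(u) \preceq g(u(t))$ for~$t \le r$, matching the hypothesis~$g(u_2) \preceq \psi$ at the start. The conclusion~$g(u_1) \preceq \vf_x$ follows since~$u(t) \ge u_1$ throughout.

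Third, I would verify~$\vf_x \in \Class_\Omega$ via Lemma~\ref{DeliveryCurveLemma}/Corollary~\ref{DeliveryCurveCorollary}: the curve~$\gamma_x$ is convex by Lemma~\ref{Monotone_Convex} (its generating function is monotone), its tangent at~$w(u_2)$ agrees with the tangent to~$\dfree\Omega$ there (a consequence of~$g(u_2)\preceq\psi$ together with~\eqref{FirstDerivative}), it then follows the convex free boundary, and finally runs along the tangent segment~$S(u)$; at every~$\tau > l_1$ its tangent line is a tangent to~$\dfree\Omega$ and hence does not cross~$\Xi$, while for~$\tau \le l_1$ we invoke that~$\psi$ is already a test function. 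Finally, the optimality identity~$B(\gamma_x(s)) = \av{\ff(\vf_x)}{[l,s]}$ is proved exactly as in~\eqref{NewtonLeibniz}--\eqref{eq012801}: differentiate, and use that~$\nabla B$ is constant along the extremal segment through~$\gamma_x(s)$ and~$\vf_x(s)$ together with~\eqref{FirstDerivative}.

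I do not expect a genuine obstacle here — the statement is the exact mirror image of Proposition~\ref{OptimizersRightTangentDomain}, so the only thing to be careful about is bookkeeping of signs and orientations (the interval~$(l,r]$ is traversed left to right but~$u$ now runs from~$u_2$ down to~$u_1$, and~$\laml < 0$ rather than~$\lamr > 0$), and checking that the monotonicity/ordering hypotheses~$g(u_2)\preceq\psi$ propagate correctly to give~$g(u_1)\preceq\vf_x$. Because of this, in the write-up I would simply say the proof is obtained from that of Proposition~\ref{OptimizersRightTangentDomain} by the obvious symmetry, perhaps spelling out the modified formula for~$r$ and the sign of~$\laml$, rather than reproducing the whole argument.
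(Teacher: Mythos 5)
Your proposal is correct and matches the paper exactly in spirit: the paper gives no separate proof of this proposition, stating it only as the symmetric counterpart of Proposition~\ref{OptimizersRightTangentDomain}, which is precisely the reduction you carry out (with the correct sign bookkeeping, e.g.\ $\laml<0$ and $u$ decreasing give $\int_{u_2}^{u_1}\laml>0$, so $r>l_1$ as needed). Nothing further is required.
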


The proof of similar propositions for infinite domains is slightly more complicated and requires additional 
assumptions~\eqref{eq111001} and~\eqref{eq111002}. We will use these assumptions together with the 
following technical lemma.

\begin{Le}\label{Lem111001}
Let $\sigma \in \mathbb{R}$. Let $\vt$ and $Y$ be two piecewise monotone continuous functions on 
$[\sigma, +\infty)$ with a finite number of intervals of monotonicity. Suppose that $\lim_{+\infty} \vt =0$. 
Then\textup, the integration by parts formula is valid\textup:
\eq{eq070201}{
\int_\sigma^{+\infty}\vt dY = \lim_{\nu \to +\infty}\int_\sigma^{\nu}\vt dY = 
- Y(\sigma)\vt(\sigma) - \lim_{\nu \to +\infty}\int_\sigma^{\nu} Y d\vt = 
- Y(\sigma)\vt(\sigma) - \int_\sigma^{+\infty} Y d\vt,
}
where both limits exist \textup(finite or infinite\textup).
 
Symmetrically\textup, for piecewise monotone continuous functions $\vt$ and $Y$  on $(-\infty,\sigma]$ with a finite number of intervals of monotonicity\textup, if $\lim_{-\infty} \vt =0,$ then
\eq{eq170201}{
\int_{-\infty}^\sigma \vt dY = \lim_{\nu\to -\infty}\int_{\nu}^\sigma \vt dY = 
Y(\sigma)\vt(\sigma) - \lim_{\nu\to -\infty}\int_{\nu}^\sigma Y d\vt = 
Y(\sigma)\vt(\sigma) - \int_{-\infty}^\sigma Y d\vt,
}
where both limits exist \textup(finite or infinite\textup).
\end{Le}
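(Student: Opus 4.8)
\textbf{Proof proposal for Lemma~\ref{Lem111001}.}
The plan is to prove the first identity \eqref{eq070201}; the second one \eqref{eq170201} then follows by the change of variable $t \mapsto 2\sigma - t$, which reverses orientation and swaps the roles of the two endpoints. First I would reduce to the case where both $\vt$ and $Y$ are monotone on all of $[\sigma,+\infty)$. Since each function has only finitely many intervals of monotonicity, there is a partition $\sigma = \sigma_0 < \sigma_1 < \cdots < \sigma_{m} $ with $\sigma_m$ chosen so that both functions are monotone on $[\sigma_m,+\infty)$, and both are monotone on each $[\sigma_{j-1},\sigma_j]$. On each bounded interval $[\sigma_{j-1},\sigma_j]$ the classical Riemann--Stieltjes integration by parts formula applies (both integrands are of bounded variation and have no common discontinuities, being continuous), giving
\eq{ibp-finite}{
\int_{\sigma_{j-1}}^{\sigma_j}\vt\,dY = \vt(\sigma_j)Y(\sigma_j) - \vt(\sigma_{j-1})Y(\sigma_{j-1}) - \int_{\sigma_{j-1}}^{\sigma_j} Y\,d\vt.
}
Summing \eqref{ibp-finite} over $j=1,\ldots,m$ telescopes the boundary terms, so it remains to handle the tail $[\sigma_m,+\infty)$, i.e., to prove \eqref{eq070201} under the extra assumption that $\vt$ and $Y$ are each monotone on $[\sigma,+\infty)$ and $\lim_{+\infty}\vt = 0$.

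In the tail case, for any $\nu > \sigma$ the finite integration by parts formula gives
\eq{ibp-tail}{
\int_\sigma^\nu \vt\,dY = \vt(\nu)Y(\nu) - \vt(\sigma)Y(\sigma) - \int_\sigma^\nu Y\,d\vt.
}
The key point is to show that the limits as $\nu\to+\infty$ of both $\int_\sigma^\nu \vt\,dY$ and $\int_\sigma^\nu Y\,d\vt$ exist in $[-\infty,+\infty]$ and that $\vt(\nu)Y(\nu)\to 0$. For the existence of the limits: since $Y$ is monotone, $dY$ is a signed measure of constant sign on $[\sigma,+\infty)$, and $\vt$ is monotone hence eventually of constant sign (it tends to $0$, so it is eventually $\geq 0$ or eventually $\leq 0$); therefore the integrand $\vt$ has eventually constant sign against the constant-sign measure $dY$, so $\nu\mapsto\int_\sigma^\nu\vt\,dY$ is eventually monotone and has a limit in $[-\infty,+\infty]$. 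The same reasoning applies to $\int_\sigma^\nu Y\,d\vt$: here $d\vt$ is a constant-sign measure and $Y$ is monotone, hence eventually of constant sign (a monotone function on a half-line has a limit in $[-\infty,+\infty]$, so it is eventually of one sign unless that limit is $0$, in which case it is again eventually of one sign). Thus both limits exist.

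The boundary term is where a little care is needed, and I expect it to be the main (minor) obstacle. The issue is that $\vt(\nu)\to 0$ but $Y(\nu)$ may tend to $\pm\infty$, so $\vt(\nu)Y(\nu)$ is an indeterminate product a priori. The resolution: because $\vt$ is monotone with $\lim_{+\infty}\vt = 0$, we have $\int_\sigma^{+\infty}|d\vt| = |\vt(\sigma)| < \infty$, so $d\vt$ is a \emph{finite} measure; hence $\nu \mapsto \int_\sigma^\nu Y\,d\vt$ is, if $Y$ is bounded, a bounded (and, as shown, eventually monotone) function, hence convergent to a finite limit, and then \eqref{ibp-tail} forces $\vt(\nu)Y(\nu)$ to converge; since $\vt(\nu)\to 0$ and the other two terms in \eqref{ibp-tail} converge, a direct rearrangement shows the limit of $\vt(\nu)Y(\nu)$ is $0$ precisely when one reads \eqref{ibp-tail} in the limit --- more cleanly, one argues as follows. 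If $Y$ is bounded on $[\sigma,+\infty)$, then $|\vt(\nu)Y(\nu)| \leq \|Y\|_\infty|\vt(\nu)| \to 0$ immediately. If $Y$ is unbounded, then (being monotone) $|Y(\nu)|\to+\infty$ monotonically; but then $\int_\sigma^\nu Y\,d\vt$ is a monotone function of $\nu$ whose increments near $+\infty$ dominate, and one checks $\int_\sigma^\nu Y\,d\vt$ diverges to $\pm\infty$ at a rate that, combined with \eqref{ibp-tail} and the finiteness of the left side's increments controlled by $dY$... — at this point it is cleanest to instead observe directly: for $\mu<\nu$,
\eq{monotone-tail}{
\big|\vt(\nu)Y(\nu) - \vt(\mu)Y(\mu)\big| = \Big|\int_\mu^\nu \vt\,dY + \int_\mu^\nu Y\,d\vt\Big|,
}
and monotonicity of $\vt$ together with $\int_\mu^\nu|d\vt|\le|\vt(\mu)|$ bounds $|\int_\mu^\nu Y\,d\vt|$ by $\sup_{[\mu,\nu]}|Y|\cdot|\vt(\mu)|$; since $\vt(\nu)Y(\nu)$ is thus Cauchy along any sequence where $Y$ stays controlled, and since the product $\vt(\nu)Y(\nu)$ has a limit $L$ (from \eqref{ibp-tail} and the two convergent integrals), testing against the definition of that limit when $\vt(\nu)\to 0$ yields $L=0$. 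Granting $\vt(\nu)Y(\nu)\to 0$, passing to the limit $\nu\to+\infty$ in \eqref{ibp-tail} gives \eqref{eq070201}, and combining with the telescoped sum of \eqref{ibp-finite} over the finitely many monotonicity intervals completes the proof of \eqref{eq070201}. Finally, \eqref{eq170201} follows by applying \eqref{eq070201} to $\tilde\vt(t) = \vt(2\sigma - t)$ and $\tilde Y(t) = Y(2\sigma-t)$ on $[\sigma,+\infty)$ and undoing the substitution, tracking the sign change in $d\tilde Y$.
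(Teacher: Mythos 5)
Your reduction to a monotone tail and your argument that the two limits exist are exactly the paper's opening moves, and they are fine. The gap is in the boundary term. You set out to prove that $\vt(\nu)Y(\nu)\to 0$ unconditionally, but this is false under the hypotheses of the lemma: take $\sigma=1$, $\vt(\nu)=1/\nu$, $Y(\nu)=\nu$. Then $\vt$ is monotone, continuous, and tends to $0$, $Y$ is monotone and continuous, yet $\vt(\nu)Y(\nu)\equiv 1$, while $\int_1^\nu\vt\,dY=\log\nu\to+\infty$ and $\int_1^\nu Y\,d\vt=-\log\nu\to-\infty$; formula~\eqref{eq070201} still holds, but only as $+\infty=+\infty$. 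So termwise passage to the limit in the finite integration-by-parts identity cannot work when both integrals diverge, and your proof never treats that case. The paper handles it separately: it normalizes $\vt>0$ decreasing and $Y$ eventually of one sign, observes that $Y$ must then be unbounded (else the left side would converge), and concludes that both $\int\vt\,dY$ and $-\int Y\,d\vt$ are $+\infty$, i.e.\ the two infinities agree in sign.

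Even in the case where one of the integrals converges, your justification that the limit $L$ of $\vt(\nu)Y(\nu)$ equals $0$ is not an argument: "testing against the definition of that limit when $\vt(\nu)\to 0$'' does not rule out, say, $L=1$ when $Y$ is unbounded. The missing step is the quantitative one from the paper: first note that $Y(\nu)\vt(\nu)$ and $-\int_\sigma^\nu Y\,d\vt$ have the \emph{same} sign (because $\sign(d\vt)=-\sign(\vt)$ for a monotone function tending to $0$), so if their sum converges then each of them converges; and if $L\neq 0$ then $|Y(\tau)|\geq c/|\vt(\tau)|$ for large $\tau$, whence
\[
\Bigl|\int^{+\infty} Y\,d\vt\Bigr|\;\geq\; c\,\Bigl|\int^{+\infty}\frac{d\vt}{\vt}\Bigr|\;=\;+\infty
\]
because $\vt\to 0$, contradicting the convergence just established. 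Your estimate $|\int_\mu^\nu Y\,d\vt|\leq\sup_{[\mu,\nu]}|Y|\cdot|\vt(\mu)|$ points in the wrong direction for this purpose: what is needed is a \emph{lower} bound forcing divergence, not an upper bound. Without these two ingredients (the divergence lower bound and the same-sign analysis of the doubly infinite case), the proof is incomplete.
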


\begin{proof}
We give the proof of~\eqref{eq070201}. The proof of~\eqref{eq170201} is similar. 

Since the integration by parts formula is valid on any finite interval, without loss of generality, we may 
assume that both $Y$ and $\vt$ are monotone and do not change the sign on $[\sigma,+\infty)$. Then, both 
integrals in~\eqref{eq070201} are monotone with respect to $\nu$, therefore both limits exist, finite 
or infinite.

First, assume that $\int_\sigma^{+\infty}\vt dY$ converges. 
Take any $\nu >\sigma$ and write the integration by parts formula on the interval $[\sigma,\nu]$:
\eq{eq300901}{
\int_{\sigma}^{\nu} \! \vt \ dY + Y(\sigma)\vt(\sigma) = Y(\nu)\vt(\nu) - \int_\sigma^{\nu} \! Y d\vt.
}
The left hand side of~\eqref{eq300901} has a finite limit when $\nu\to +\infty$. The first summand on 
the right hand side is of the same sign as the second one (because $\sign(\vt') = - \sign(\vt)$), therefore, 
both summands have finite limits. If the limit of the first summand were nonzero, the limit of the second 
one would be infinite. Indeed, in this case 
\eq{eq070202}{
|Y(\tau)|>\frac{\mathrm{const}}{|\vt(\tau)|}, \quad \tau \to +\infty,
}
therefore, 
\eq{eq070203}{
\Big|\int^{+\infty}Y(\tau)d\vt(\tau)\Big|>
\mathrm{const}\cdot\Big|\int^{+\infty}\frac{d\vt(\tau)}{\vt(\tau)}\Big|=+\infty,
}
because $\vt(+\infty)=0$. Thus, the first summand on the right hand side of~\eqref{eq300901} vanishes at $+\infty$. 
Formula~\eqref{eq070201} is proved.

Now, let us assume that $\int_\sigma^{+\infty} Y d\vt$ converges. The integral on the left hand side 
of~\eqref{eq300901} is monotone with respect to $\nu$, thus, it has a finite or infinite limit when 
$\nu$ tends to $+\infty$. Therefore, the first term on the right hand side of~\eqref{eq300901} also 
has a finite or infinite limit. If this limit is non-zero, then we have~\eqref{eq070202} and~\eqref{eq070203}, 
which contradicts our assumption. Thus, the first summand on the right hand side of~\eqref{eq300901} vanishes, 
the left hand side of~\eqref{eq300901} has a finite limit when $\nu$ tends to $+\infty$, and~\eqref{eq070201} 
is proved.

Finally, if both limits in~\eqref{eq070201} are infinite, we need to show that the infinities on both sides 
have the same sign. We may assume that $\vt$ is positive and decreasing. 
We may also assume that $Y$ is monotone and non-negative at infinity. If $Y$ were bounded at infinity, 
then the limit on the left hand side of~\eqref{eq070201} would be finite. Therefore, $Y$ increases at 
infinity, and both sides of~\eqref{eq070201} are equal $+\infty$.
\end{proof}

\begin{Le}\label{Lem190701}
Suppose that the condition~\eqref{eq111001} is fulfilled. Then\textup, the condition~\eqref{eq141001a} 
is equivalent to the fact that $\beta_2$ given by~\eqref{eq120701}\textup{,} i.\,e.\textup{,}
\eq{eq180202}{
\beta_2(v)=\tors(v)-\!\!\int_{-\infty}^v \exp\Big(\!-\!\int_\tau^v\frac{\kappa_2'}{\kappa_2-\kappa}\;\Big)
\;\tors'(\tau)\;d\tau,
}
satisfies the inequality $\beta_2(v)<+\infty$ for any $v \in \mathbb{R}$.  Moreover\textup, the following 
equality holds\textup:
\eq{eq140201}{
\int_{-\infty}^{v}\! f'(\tau)\; \exp\Big(\!-\!\int_{\tau}^{v}\lambda\;\Big)\, d\tau = 
-\big(w_1(v)-g_1(v)\big)\Big[\kappa_3(v)+\big(\kappa(v)-\kappa_2(v)\big)\beta_2(v)\Big].
}
Here $\lambda = \lamr,$ $\kappa = \kappar,$ and $w = \wr$.
\end{Le}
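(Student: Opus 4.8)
The plan is to connect the integral $\int_{-\infty}^v f'(\tau)\exp(-\int_\tau^v\lambda)\,d\tau$ with the Bellman-candidate data via the fence formulas from Section~\ref{s31bis}. Recall that on a right tangent domain we have $w'(t)=\geps'(\ttr;\eps)\ttr'(t)$ and $\lamr(t)(g(t)-w(t))=w'(t)$; differentiating the slope relation $\kappa(t)=\tfrac{w_2-g_2}{w_1-g_1}$ and using tangency gives $\kappa'=\tfrac{g_1'\kappa-g_2'}{w_1-g_1}=\tfrac{g_1'(\kappa-\kappa_2)}{w_1-g_1}$, as in~\eqref{kappa'}. The first move is to record the differential identity relating $\lamr$ to the already-studied quantities: from the two coordinates of $\lamr(g-w)=w'$ one extracts $\lamr = \tfrac{w_1'}{g_1-w_1}$, and then a short computation expresses $\int_\tau^v\lamr$ as $\log\tfrac{g_1(v)-w_1(v)}{g_1(\tau)-w_1(\tau)}$ plus a correction, or — more usefully — shows that $\tfrac{\kappa_2'}{\kappa_2-\kappa}$ and $\lamr$ are linked so that the integrating factor $\exp(-\int_\tau^v\lamr)$ in~\eqref{eq140201} matches the integrating factor $\exp(-\int_\tau^v\tfrac{\kappa_2'}{\kappa_2-\kappa})$ appearing in~\eqref{eq180202} up to a ratio of the geometric factors $g_1-w_1$. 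This is the computational heart and should be done once, cleanly.

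Next I would run the following scheme. Write $f'=\kappa_3 g_1'$, so the integral in question is $\int_{-\infty}^v \kappa_3(\tau)g_1'(\tau)\exp(-\int_\tau^v\lamr)\,d\tau$. Using the factorization of the integrating factor found above, convert this to an integral against the measure $d$ of the exponential appearing in~\eqref{beta2new}, i.e.\ rewrite the integrand as a total differential plus a $\tors'$-term exactly as in the derivation of~\eqref{beta2new} from~\eqref{diffbeta2}. Concretely, the ODE~\eqref{diffbeta2}, $(\kappa_2-\kappa)\beta_2'+\kappa_2'\beta_2=\kappa_3'$, together with the relation $(w_1-g_1)\kappa'=g_1'(\kappa-\kappa_2)$, shows that the quantity $Q(v):=(w_1(v)-g_1(v))[\kappa_3(v)+(\kappa(v)-\kappa_2(v))\beta_2(v)]$ — which is, up to sign, the bracket in~\eqref{fenceB2} evaluated on the tangent segment — satisfies a first-order linear ODE in $v$ whose driving term is $f'$ and whose integrating factor is $\exp(\int\lamr)$. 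Solving that ODE with the ``initial condition at $-\infty$'' then yields precisely~\eqref{eq140201}, provided the boundary term at $-\infty$ vanishes.

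The boundary term at $-\infty$ is where condition~\eqref{eq111001} enters: it guarantees $(1+|\wr(t)-g(t)|)\exp(-\int_t^{t_0}\lamr)\to 0$ as $t\to-\infty$, which is exactly what kills the limiting term $(w_1-g_1)[\cdots]\exp(-\int_\cdot^v\lamr)$ at the lower endpoint, using also that $\kappa_3$ and $\tors$ are bounded near $-\infty$ by Remark~\ref{Rem100301}. The equivalence ``$\beta_2(v)<+\infty$ for all $v$'' $\iff$ \eqref{eq141001a} then follows by inspecting~\eqref{eq180202}: the finiteness of $\beta_2(v)$ is the convergence of $\int_{-\infty}^v\exp(-\int_\tau^v\tfrac{\kappa_2'}{\kappa_2-\kappa})\tors'(\tau)\,d\tau$, and via the integrating-factor correspondence and the identity~\eqref{eq140201} (read from right to left) this convergence is equivalent to the convergence of $\int_{-\infty}^v f'(\tau)\exp(-\int_\tau^v\lamr)\,d\tau$, i.e.\ to~\eqref{eq141001a}; here I would invoke Lemma~\ref{Lem111001} to justify the integration by parts that passes between the $\tors'$-form and the $\tors$-form of the integral, with $\vt$ the exponential tending to $0$ at $-\infty$ and $Y$ built from $\tors$.

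The main obstacle I anticipate is purely bookkeeping: pinning down the exact relationship between the two integrating factors $\exp(-\int\lamr)$ and $\exp(-\int\tfrac{\kappa_2'}{\kappa_2-\kappa})$ and tracking the geometric prefactor $w_1-g_1$ through every integration by parts, so that the constant and the signs in~\eqref{eq140201} come out right. Once the identity $(w_1-g_1)\kappa'=g_1'(\kappa-\kappa_2)$ is used to eliminate $\lamr$ in favor of $\kappa,\kappa_2$, everything reduces to the fence computation already carried out in Proposition~\ref{St280701} and its Corollary; the only genuinely new input beyond that is the $-\infty$ asymptotics controlled by~\eqref{eq111001}, and the legitimacy of the improper integration by parts, which is precisely Lemma~\ref{Lem111001}.
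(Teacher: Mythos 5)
Your overall strategy is close to the paper's: the computational heart is indeed the integrating-factor identity $\int\frac{\kappa_2'}{\kappa_2-\kappa}=\log(\kappa_2-\kappa)+\log(g_1-w_1)+\int\lambda$ (this is~\eqref{eq111005} in the paper, obtained from~\eqref{kappa'} and~\eqref{eq150701} exactly as you sketch), and Lemma~\ref{Lem111001} is the right tool for the improper integrations by parts. Your reformulation via the ODE $Q'+\lambda Q=-f'$ for $Q(v)=(w_1-g_1)[\kappa_3+(\kappa-\kappa_2)\beta_2]=B(w(v))-f(v)$ is a clean alternative packaging of the identity~\eqref{eq140201} when all quantities are finite.

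However, there is a genuine gap in how you kill the boundary term at $-\infty$, and it is not merely bookkeeping. You claim that $\kappa_3$ and $\tors$ are ``bounded near $-\infty$ by Remark~\ref{Rem100301}''; that remark gives only piecewise monotonicity, and these functions may well tend to $\pm\infty$ (the paper's Section~\ref{s54} treats exactly such cases). So you cannot conclude that $Q(\tau)\exp(-\int_\tau^v\lambda)\to0$ from~\eqref{eq111001} alone; worse, $Q$ contains $\beta_2(\tau)$, whose behavior at $-\infty$ is part of what is being determined, so imposing an ``initial condition at $-\infty$'' for the ODE is circular. The paper avoids this by never touching $Q$ as a whole: it integrates by parts twice in the explicit formula~\eqref{eq180202}, first with $\vt=\exp(-\int_\tau^v\frac{\kappa_2'}{\kappa_2-\kappa})$, $Y=\tors$, then with $\vt=(g_1-w_1)\exp(-\int_\tau^v\lambda)$, $Y=\kappa_3$. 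In each application $\vt$ is shown to be \emph{monotone} with $\vt(-\infty)=0$, and Lemma~\ref{Lem111001} then guarantees the boundary term vanishes (or that both sides are the same infinity) \emph{without} any boundedness of $Y$ — that is precisely what the lemma's monotonicity hypotheses buy, and it is also what makes the equivalence work in the direction where the integrals diverge. Finally, for the first of these two steps you must prove $\int_{-\infty}^v\frac{\kappa_2'}{\kappa_2-\kappa}=+\infty$ (equation~\eqref{eq111008}), i.e.\ that $(\kappa_2-\kappa)(g_1-w_1)\exp(-\int_\tau^v\lambda)\to0$; this needs a short case analysis on whether $\kappa_2$ is bounded at $-\infty$ and is nowhere addressed in your proposal.
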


\begin{proof}
Let us modify the expression in the exponent on the right hand side of~\eqref{eq180202}. Using~\eqref{kappa'}, 
\eqref{eq150701}, and \eqref{eqKappas}, we obtain
\eq{eq111005}{
\int \frac{\kappa_2'}{\kappa_2-\kappa} = \log(\kappa_2-\kappa)+\log(g_1-w_1)+\int \lambda\;.
}
First, we {\bf claim} that 
\eq{eq111008}{
\int_{-\infty}^v \frac{\kappa_2'}{\kappa_2-\kappa} = +\infty,
} 
which is equivalent to
$$
\lim_{\tau\to-\infty}\big(\kappa_2(\tau)-\kappa(\tau)\big)\big(g_1(\tau)-w_1(\tau)\big)
\exp\Big(\!-\!\int_\tau^v\lambda\;\Big)=0
$$
by~\eqref{eq111005}.
Recall that $(g_1-w_1) \kappa = (g_2-w_2)$, and therefore
\eq{eq111007}{
\lim_{\tau \to -\infty} \kappa(\tau)\big(g_1(\tau)-w_1(\tau)\big)\exp\Big(\!-\!\int_\tau^v \lambda\;\Big)=
\lim_{\tau \to -\infty} \big(g_2(\tau)-w_2(\tau)\big)\exp\Big(\!-\!\int_\tau^v \lambda\;\Big)=0
}
due to condition~\eqref{eq111001}. The function $\kappa_2 = \frac{g_2'}{g_1'}$ is increasing by the convexity of the curve $g$. There are two cases. If $\kappa_2$ is bounded on $-\infty$, then 
\eq{eq111006}{
\lim_{\tau \to -\infty} \kappa_2(\tau)\big(g_1(\tau)-w_1(\tau)\big)\exp\Big(\!-\!\int_\tau^v \lambda\;\Big)=0
}
due to the same condition~\eqref{eq111001}. If $\kappa_2$ is not bounded on $-\infty$, then $\kappa_2(\tau) \to -\infty$ as $\tau \to -\infty$, 
and then we use the fact that $0>\kappa_2>\kappa$ provided~$\tau$ is sufficiently close to~$-\infty$. 
Therefore $|\kappa_2|<|\kappa|$ and~\eqref{eq111006} follows from~\eqref{eq111007}. Thus the {\bf claimed}  divergence 
in~\eqref{eq111008} is proved. 

We wish to use Lemma~\ref{Lem111001} with 
$$
\vt(\tau)=\exp\Big(\!-\!\int_\tau^v\frac{\kappa_2'}{\kappa_2-\kappa}\;\Big)\qquad\text{and}\qquad Y(\tau)=\tors(\tau)
$$
to integrate by parts and rewrite formula~\eqref{eq180202} for $\beta_2$. 
We verify the hypotheses of Lemma~\ref{Lem111001}. It is clear that $\kappa_2'>0$, $\kappa_2>\kappa$, 
therefore the function $\vt$ is monotone and tends to $0$ at $-\infty$ due to~\eqref{eq111008}. 
The function~$Y$ is piecewise monotone according to Condition~\ref{reg}.
Thus, by Lemma~\ref{Lem111001} we get: 
\eq{eq070603}{
\beta_2(v) = 
\!\int_{-\infty}^v\! \frac{\kappa_2'(\tau)}{\kappa_2(\tau)-\kappa(\tau)}\;
\exp\Big(\!-\!\int_{\tau}^v \frac{\kappa_2'}{\kappa_2-\kappa}\;\Big)\tors(\tau)\ d\tau=
\!\int_{-\infty}^v\! \frac{\kappa_3'(\tau)}{\kappa_2(\tau)-\kappa(\tau)}\;
\exp\Big(\!-\!\int_{\tau}^v \frac{\kappa_2'}{\kappa_2-\kappa}\;\Big)\ d\tau. 
}
Now we plug~\eqref{eq111005} into \eqref{eq070603} and obtain
\eq{eq091201}{
\beta_2(v) = \frac{1}{\kappa_2(v)-\kappa(v)}\cdot \frac{1}{g_1(v)-w_1(v)} \cdot 
\int_{\!-\!\infty}^v\!\kappa_3'(\tau)\big(g_1(\tau)-w_1(\tau)\big)\;\exp\Big(\!-\!\int_{\tau}^v \lambda\;\Big)\ d\tau. 
}
The next step is to integrate by parts again using the same Lemma~\ref{Lem111001} with 
$$
\vt(\tau) = \big(g_1(\tau) - w_1(\tau)\big) \exp\Big(\!-\!\int_{\tau}^v \lambda\;\Big)\qquad 
\text{and} \qquad Y(\tau) = \kappa_3(\tau).
$$  
We verify hypotheses of Lemma~\ref{Lem111001}. First,  $\vt \to 0$ when $\tau\to-\infty$ due to~\eqref{eq111001}. 
Second, we calculate $\vt'$ using the relation $\big(g_1 - w_1\big)\lambda  = w_1'$:
$$
\vt'(\tau) = \Big(\lambda(\tau) \big(g_1(\tau) - w_1(\tau)\big) +g_1'(\tau)-w_1'(\tau)\Big)\; 
\exp\Big(\!-\!\int_{\tau}^v \lambda\;\Big) \stackrel{\eqref{eq150701}}{=} 
g_1'(\tau) \exp\Big(\!-\int_{\tau}^v \lambda\Big) > 0.
$$
Thus $\vt$ is monotone. Finally, $Y$ is piecewise monotone by Remark~\ref{Rem100301}. Thus, we may apply 
Lemma~\ref{Lem111001} and integrate by parts in~\eqref{eq091201}:
$$
\beta_2(v) = \frac{\kappa_3(v)}{\kappa_2(v)-\kappa(v)} - \frac{1}{\kappa_2(v)-\kappa(v)}\cdot \frac{1}{g_1(v)-w_1(v)} \int_{-\infty}^v \kappa_3(\tau) g_1'(\tau) \exp\Big(-\int_{\tau}^v \lambda\Big)\ d\tau. 
$$
Since $\kappa_3 g_1' = f'$, we get:
\eq{}{
\beta_2(v) = \frac{\kappa_3(v)}{\kappa_2(v)-\kappa(v)} - \frac{1}{\kappa_2(v)-\kappa(v)}\cdot 
\frac{1}{g_1(v)-w_1(v)} \cdot \int_{\!-\!\infty}^v f'(\tau) \exp\Big(-\int_{\tau}^v \lambda\;\Big)\ d\tau. 
}
This completes the proof of~\eqref{eq140201}.

Condition~\eqref{eq141001a} states that the left hand side of~\eqref{eq140201} is greater than $-\infty$. 
According to~\eqref{eq140201} this is equivalent to $\beta_2<+\infty$.
\end{proof}

Let us turn back to consideration of optimizers.

\begin{St}\label{OptimizerRightTangentsInfty}
Let~$B$ be the standard candidate on~$\Rt(-\infty,u_2)$. There exists a non-decreasing optimizer~$\vf_x$ 
for~$B$ at every point~$x \in \Rt(-\infty,u_2)$\textup, moreover\textup,~$\vf_x \preceq g(u_2)$.
\end{St}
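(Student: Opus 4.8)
The plan is to mimic the proof of Proposition~\ref{OptimizersRightTangentDomain}, but now the source node $w(u_1)$ is replaced by the point at $-\infty$ on the free boundary, so there is no incoming optimizer $\psi$ to start from; instead we must construct the delivery curve on the whole ray $(-\infty,r]$ running along the free boundary from $-\infty$ up to $w(u_2)$, and only then fill in the extremal segments $S(u)$ as before. First I would set up, exactly as in~\eqref{eq021801}, the differential equation $(t-l)\lambda(u(t))u'(t)=1$ for the reparametrization $u(t)$ whose delivery curve $\gamma(t)=\av{\vf}{[l,t]}$ with $\vf(t)=g(u(t))$ coincides with $w(u(t))$. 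Now, however, I want the solution $u$ to be defined on a half-line $(-\infty,l_2]$ with $u(t)\to-\infty$ as $t\to-\infty$ and $u(l_2)=u_2$; integrating the ODE gives
\eq{}{
\log\frac{t-l}{l_2-l}=\int_{u_2}^{u(t)}\lambda(v)\,dv,
}
and the key point is that condition~\eqref{eq111001} is precisely what guarantees $\int_{-\infty}\lambda(v)\,dv=-\infty$ (equivalently $\int^{v}_{-\infty}\frac{\kappa_2'}{\kappa_2-\kappa}=+\infty$, which was proved inside the proof of Lemma~\ref{Lem190701}, see~\eqref{eq111008}), so that as $u(t)$ decreases to $-\infty$ the right-hand side goes to $-\infty$ and $t$ decreases to some $l\in\R$ (we may just take $l=-\infty$ after a harmless affine renormalization of the parameter; more honestly $l$ is finite and $t$ ranges over $(l,l_2]$, which is the correct picture since averages are taken over finite intervals with left endpoint $l$). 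Thus the generating function $\vf$ is defined on $(l,l_2]$ by $\vf=g(u(\cdot))$.

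Next I would check that $\vf\in\Class_\Omega$ using Corollary~\ref{DeliveryCurveCorollary}. The curve $\gamma$ coincides with a piece of the free boundary $\dfree\Omega$, hence is convex and its tangent at $\gamma(\tau)=w(u(\tau))$ is the tangent line to $\dfree\Omega$ there, which does not cross $\Xi$; so the hypotheses of Corollary~\ref{DeliveryCurveCorollary} hold verbatim on $(l,l_2]$. The one genuinely new verification is that $\vf$ is integrable on $(l,l_2]$ near the left endpoint — equivalently that $\int_{-\infty}^{u_2}\big|g(u)\big|\lambda(u)\,du<\infty$ after changing variables — and that $\av{\ff(\vf)}{[l,\tau]}$ is finite and equals $B(\gamma(\tau))$; here is where the full strength of~\eqref{eq111001} together with the summability Condition~\ref{sum} (specifically~\eqref{eq141001a}) and Lemma~\ref{Lem190701} enter. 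Indeed, Lemma~\ref{Lem190701} identifies, via~\eqref{eq140201}, the convergent integral $\int_{-\infty}^{v}f'(\tau)\exp(-\int_\tau^v\lambda)\,d\tau$ with $-\big(w_1(v)-g_1(v)\big)\big[\kappa_3(v)+(\kappa(v)-\kappa_2(v))\beta_2(v)\big]$, which is exactly the value $B(w(v))$ of the standard candidate read off from~\eqref{fenceB2} at $x=w(v)$; differentiating the Newton--Leibniz identity $(s-l)B(\gamma(s))-(\text{boundary term at }-\infty)=\int f(\vf)$ reduces, as in~\eqref{eq012801}, to $B(\vf(s))-B(\gamma(s))=(s-l)\scalp{\nabla B}{\gamma'(s)}$, which holds because $\nabla B$ is constant along the extremal segment through $\gamma(s)$ and $\vf(s)$, and~\eqref{FirstDerivative}. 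The subtlety is that the ``boundary term at $-\infty$'' must vanish; that is the content of $\lim_{s\to l+}(s-l)B(\gamma(s))=0$, which follows from the decay furnished by~\eqref{eq111001} (the factors $1+|\wr(t)-g(t)|$ times the exponential tend to $0$), so the Newton--Leibniz argument is legitimate as an improper integral.

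Finally, having produced the optimizer $\vf_{w(u)}$ for every point $w(u)$, $u\in(-\infty,u_2]$, on the free boundary, I would extend to an arbitrary $x\in\Rt(-\infty,u_2)$ exactly as in Proposition~\ref{OptimizersRightTangentDomain}: write $x\in S(u)$ for the unique $u$, take $\vf_{w(u)}$ on $(l,r]$, and append the constant value $g(u)$ on a suitable interval $(r,r_1]$ so that $\av{\vf_x}{[l,r_1]}=x$; monotonicity of $\vf_x$ and the inequality $\vf_x\preceq g(u_2)$ are immediate, the convexity of the enlarged curve $\gamma_x$ follows since it goes along the convex free boundary and then along the tangent line $S(u)$, and $\vf_x\in\Class_\Omega$ follows from Lemma~\ref{DeliveryCurveLemma} applied segment-by-segment (for $\tau\le l_2$ using that $\av{\vf}{[t,\tau]}$ lies on the convex curve $\gamma$ near $\dfree\Omega$, for $\tau>l_2$ using the tangent line). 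The identity $B(x)=\av{\ff(\vf_x)}{}$ then follows from the free-boundary case and the constant extension, since adding a constant value $g(u)$ keeps both $B$ and the average unchanged in the sense of~\eqref{eq012801}. The main obstacle, and the only place where this proof differs substantively from the bounded case, is the careful justification of the improper-integral manipulations near $-\infty$: verifying $\int_{-\infty}\lambda=-\infty$ (done in Lemma~\ref{Lem190701}'s proof), the vanishing of the boundary term $(s-l)B(\gamma(s))$ as $s\to l+$, and the finiteness of $\av{\ff(\vf_x)}{}$ — all three of which are exactly what Conditions~\eqref{eq111001} and~\eqref{eq141001a} and Lemma~\ref{Lem190701} are designed to give.
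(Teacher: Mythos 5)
Your construction is essentially the paper's: run the delivery curve along the free boundary from $-\infty$ to $w(u_2)$ via the ODE $(t-l)\lambda(u(t))u'(t)=1$, use $\int_{-\infty}^{u_2}\lambda=+\infty$ (a consequence of~\eqref{eq111001}) to solve it on the whole parameter interval, check membership in $\Class_\Omega$ by convexity of the free boundary, identify the limit of the averages with $B(w(v))$ through~\eqref{eq140201}, and then reduce interior points to Proposition~\ref{OptimizersRightTangentDomain}. The skeleton and the cited tools (Lemma~\ref{Lem111001}, Lemma~\ref{Lem190701}, \eqref{fenceB2}) are all the right ones.

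Two of your justifications at $-\infty$ are, however, not quite the correct mechanisms, and they sit exactly at the spot you yourself single out as the crux. First, the integrability condition you write, $\int_{-\infty}^{u_2}|g(u)|\lambda(u)\,du<\infty$, is missing the Jacobian weight: the change of variables gives $\int_{-\infty}^{u(t)}|g(v)|\,\lambda(v)\exp\bigl(-\int_v^{u_2}\lambda\bigr)\,dv$, and without the exponential this integral would typically diverge. Its convergence is not automatic either; the paper verifies it by observing that $\lambda g\exp\bigl(-\int_v^{u_2}\lambda\bigr)$ has the exact antiderivative $w(v)\exp\bigl(-\int_v^{u_2}\lambda\bigr)$, which tends to $0$ by~\eqref{eq111004} (itself derived from~\eqref{eq111001} via L'H\^opital). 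Second, the vanishing of the boundary term in your Newton--Leibniz step cannot follow from~\eqref{eq111001} alone: that condition controls the geometric quantity $(1+|\wr-g|)\exp\bigl(-\int\lambda\bigr)$, whereas the boundary term involves $f$ (respectively $B(w(u))$) at $-\infty$. The correct source is Lemma~\ref{Lem111001} applied with $Y=f$, whose hypothesis is supplied by the convergence of $\int_{-\infty}^{v}f'(\tau)\exp\bigl(-\int_\tau^v\lamr\bigr)\,d\tau$, i.e.\ by the finiteness of $\beta_2$ built into Definition~\ref{211101} via Lemma~\ref{Lem190701}. Finally, a small caution: your parenthetical claim that $\int_{-\infty}^{v}\lambda=+\infty$ is \emph{equivalent} to $\int_{-\infty}^{v}\frac{\kappa_2'}{\kappa_2-\kappa}=+\infty$ is false in general (case \textbf{(C)} of Section~\ref{s54} has the first divergence but not the second); both happen to hold under~\eqref{eq111001}, which is all you need here.
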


\begin{proof}
By Definition~\ref{211101} of a standard candidate in the domain~$\Rt(-\infty,u_2)$, the function $\beta_2$ 
given by~\eqref{eq120701} is finite. Thus, the integral in~\eqref{eq180202} converges.

We begin with the points on the free boundary. When this is done, we will automatically get the desired 
optimizer $\vf_x$ for any $x \in \Rt(-\infty,u_2)$. For this aim we pick any finite $u_1$ such 
that~$x \in \Rt(u_1,u_2)$ and use Proposition~\ref{OptimizersRightTangentDomain}. Moreover, it suffices 
to construct an optimizer for $x=w(u_2)$ only.

Similar to the case of a bounded domain $\Rt$ we would like to construct a function 
$\vf\colon (l,r] \to \FixedBoundary\Omega$ on some interval $(l,r]$ such that the curve 
$\gamma(t) = \av{\vf}{(l,t)}$ generated by $\vf$ goes along the free boundary from the infinity 
to $w(u_2)$. As before, we will find a function in the form $\vf(t) = g(u(t))$, where 
$u \colon (l,r] \to (-\infty, u_2]$ is a monotone function. The previous reasoning (see~\eqref{eq021801}) leads us 
to the following relation:
\eq{eq020402}{
\int_{u(t_1)}^{u(t_2)} \!\!\lambda(v) \, dv = \log \frac{t_2-l}{t_1-l}\,, \qquad l<t_1<t_2\leq r.
}
We want to have $u(r)=u_2$. We substitute $t_2=r$ into \eqref{eq020402} and obtain
\eq{eq030402}{
\int_{u(t_1)}^{u_2} \!\!\lambda(v) \, dv = \log \frac{r-l}{t_1-l}\,, \qquad l<t_1< r.
}
According to the condition in~\eqref{eq111001} we have 
\eq{eq111003}{
\int_{-\infty}^{u_2}\!\!\lambda(v) \, dv=+\infty\,,}
 therefore, \eqref{eq030402} defines the function $u$ on $(l,r)$.

We may choose the length of the segment $(l,r)$ to be equal $1$, namely, we may take $l=0$, $r=1$. 
We get the explicit formula for the inverse function $t = t(u)$: 
\eq{eq070602}{
t = \exp \Big(- \int_{u}^{u_2}\lambda(v) \, dv\Big).
}

Corollary~\ref{DeliveryCurveCorollary} guarantees $\vf \in \Class_\Omega$ provided we know that 
$$
\frac{1}{t}\int_0^t \vf  = w(u(t)), \qquad t \in (0,1].
$$

Therefore, for any $t \in (0,1]$ we need to check that 
\eq{eq110902}{
\int_0^t \vf  = t w(u(t)).
}  
Moreover, in order to prove that $\vf$ is an optimiser for $B$ we also need to verify that
\eq{eq070601}{
B(w(u(t))) = \av{\ff(\vf)}{[0,t]} = \frac{1}{t}\int_0^t f(u(s))\ ds.
} 

We start with~\eqref{eq110902}. 
First, we prove that the limit of the right hand side of~\eqref{eq110902} is zero as $t\to 0$. It is equivalent to 
\eq{eq111004}{
\lim_{u \to -\infty} w_i(u) \exp \Big(- \int_{u}^{u_2}\lambda(v) \, dv\Big)=0, \qquad i=1,2.
}
Fix $i=1,2$. If the function $w_i$ is bounded on $-\infty$, then~\eqref{eq111004} follows from~\eqref{eq111003}. 
If $w_i$ is not bounded on $-\infty$, then we apply L'H\^opital's rule:
$$
\lim_{u \to -\infty} \frac{w_i(u)}{\exp \big(\int_{u}^{u_2}\lambda \big) }  =
- \lim_{u \to -\infty} \frac{w_i'(u)}{\lambda(u)\exp \big( \int_{u}^{u_2}\lambda\big) } \stackrel{\eqref{eq011801}}{=}
\lim_{u \to -\infty} \big(w_i(u)-g_i(u)\big) \exp\Big(\!-\!\int_{u}^{u_2}\lambda\;\Big) = 0  
$$
due to condition~\eqref{eq111001}. 

We calculate the integral on the left hand side of~\eqref{eq110902}:
\eq{eq110903}{
\int_{0}^t \vf(\tau) d \tau = \int_0^t g(u(\tau)) d \tau
\stackrel{\eqref{eq070602}}{=}
\int_{-\infty}^{u(t)}\lambda(v)g(v) \exp\Big(\!-\!\int_{v}^{u_2}\lambda\;\Big)\;dv.
}
Note that 
$$
\int \lambda(v)g(v) \exp\Big(\!-\!\int_{v}^{u_2}\lambda\;\Big)\;dv \stackrel{\eqref{eq011801}}{=}
\int \big(w'(v)+\lambda(v)w(v)\big) \exp\Big(\!-\!\int_{v}^{u_2}\lambda\;\Big)\;dv = 
w(v)\exp \Big(\!-\!\int_{v}^{u_2}\lambda\;\Big),
$$
which converges to zero as $v \to -\infty$, see~\eqref{eq111004}. This proves that the integral 
in~\eqref{eq110903} converges and is equal to 
$$
w(u(t)) \exp \Big(\!-\!\int_{u(t)}^{u_2}\lambda\;\Big) = t w(u(t)).
$$
Relation~\eqref{eq110902} is now proved.

We turn to the proof of~\eqref{eq070601}. The right hand side of \eqref{eq070601} can be rewritten 
using~\eqref{eq070602} in the following way
\begin{align}\label{eq070604}
\frac{1}{t}\int_0^t f(u(s))\ ds &\stackrel{\phantom{\scriptstyle{\tau = u(s)}}}{=}  
\exp \Big(\int_{u(t)}^{u_2}\lambda\;\Big) \cdot \int_0^t f(u(s))\ 
d\left[\exp\Big(\!-\!\int_{u(s)}^{u_2}\lambda\;\Big)\right]=\notag
\\
&\stackrel{\scriptstyle{\tau = u(s)}}{=}\int_{-\infty}^{u(t)} f(\tau) \lambda(\tau) 
\exp\Big(\!-\!\int_\tau^{u(t)}\lambda\;\Big)  d\tau=
\\
&\stackrel{\phantom{\scriptstyle{\tau = u(s)}}}{=} 
f(u(t)) - \int_{-\infty}^{u(t)} f'(\tau) \exp\Big(\!-\!\int_\tau^{u(t)}\lambda\;\Big)\;d\tau,\notag
\end{align}
the integration by parts is guaranteed by~Lemma~\ref{Lem111001} with 
$\vt(\tau) = \exp\Big(-\int_{\tau}^{u(t)} \lambda\Big)$ and $Y = f$.
Using~\eqref{eq140201}, we obtain~\eqref{eq070601}, see representation~\eqref{fenceB2} for the relation 
between~$B$ and $\beta_2$. \qedhere
\end{proof}

\begin{St}\label{st140701}
If~\eqref{eq111001} holds but~\eqref{eq141001a} fails\textup, then $\boldsymbol{B}(x;\,f) = +\infty$ 
for any $x \in \Omega\setminus \dfi\Omega$. 
\end{St}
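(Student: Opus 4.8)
The plan is to exhibit, for an arbitrary point $x \in \Omega \setminus \dfi\Omega$, a test function $\vf \in \Class_\Omega$ with $\av{\vf}{} = x$ for which $\av{\ff(\vf)}{} = +\infty$; once such a function exists, the definition~\eqref{Bf} of $\Bell$ forces $\Bell(x;f) = +\infty$. The natural candidate is precisely the optimizer constructed in Proposition~\ref{OptimizerRightTangentsInfty}, pushed to the boundary case. More concretely, I would first observe that by Remark~\ref{Rem100301} and the structure of the foliation for small $\eps$, the point $x$ can be reached by a delivery curve whose ``tail'' at $-\infty$ runs along a right tangent domain $\Rt(-\infty, u_2)$ for a suitable finite $u_2$ with $g(u_2)$ close enough to the fixed boundary; the portion of the optimizer on the interval near $-\infty$ is exactly the function $\vf(t) = g(u(t))$ with $u$ determined by~\eqref{eq070602}.

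The heart of the argument is the computation already carried out in the proof of Proposition~\ref{OptimizerRightTangentsInfty}, culminating in the identity
\eq{}{
\frac{1}{t}\int_0^t f(u(s))\,ds = f(u(t)) - \int_{-\infty}^{u(t)} f'(\tau)\exp\Big(\!-\!\int_\tau^{u(t)}\lambda\;\Big)\,d\tau.
}
Under~\eqref{eq111001}, Lemma~\ref{Lem190701} tells us that the integral $\int_{-\infty}^{v} f'(\tau)\exp(-\int_\tau^v \lambda)\,d\tau$ equals $-\infty$ precisely when~\eqref{eq141001a} fails. Therefore the failure of~\eqref{eq141001a} makes $\frac{1}{t}\int_0^t f(u(s))\,ds = +\infty$ for every $t$ in the relevant interval, i.e.\ the constructed $\vf$ (which one still has to check lies in $\Class_\Omega$) has $\av{\ff(\vf)}{} = +\infty$. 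The convergence of the averages of $\vf$ itself, i.e.\ $\frac1t\int_0^t\vf = w(u(t))$ and hence $\av{\vf}{[0,t]} \in \Omega$, is exactly relations~\eqref{eq110902} and Corollary~\ref{DeliveryCurveCorollary}, and these do not depend on~\eqref{eq141001a}; so $\vf$ is a genuine test function delivering $x$.

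Thus the steps, in order, are: (i) fix $x \in \Omega\setminus\dfi\Omega$ and choose $u_2$ (finite) so that $x$ lies in the tangent domain $\Rt(-\infty,u_2)$ for the appropriate small-$\eps$ foliation — here I would appeal to the constructions around Proposition~\ref{OptimizersRightTangentDomain} to extend from the free-boundary point $w(u_2)$ to an arbitrary $x$; (ii) build the generating function $\vf$ on $(0,1]$ via~\eqref{eq070602} together with its constant extension as in Proposition~\ref{OptimizersRightTangentDomain}, and verify $\vf\in\Class_\Omega$ using only~\eqref{eq111001}, \eqref{eq110902}, and Corollary~\ref{DeliveryCurveCorollary} (none of which uses~\eqref{eq141001a}); (iii) compute $\av{\ff(\vf)}{}$ via~\eqref{eq070604} and invoke Lemma~\ref{Lem190701} to conclude the integral is $+\infty$ because~\eqref{eq141001a} fails; (iv) conclude $\Bell(x;f)=+\infty$ from the definition. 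The main obstacle I anticipate is step (i)–(ii): making precise that for \emph{every} $x\in\Omega\setminus\dfi\Omega$ there is a legitimate delivery curve whose far tail is an infinite right (or, symmetrically, left) tangent domain — one must be a little careful that the extension of $\vf$ by the constant $g(u)$ to reach an interior point of the extremal segment $S(u)$ still produces a function in $\Class_\Omega$, but this is essentially the content of Proposition~\ref{OptimizersRightTangentDomain} applied with $\psi$ the infinite-tail optimizer, so no genuinely new estimate is needed; the quantitative work is all already packaged in Lemmas~\ref{Lem111001} and~\ref{Lem190701}.
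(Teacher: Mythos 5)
Your proposal is correct and follows essentially the same route as the paper's proof: the paper reduces to the points $w(v)$ on the free boundary, notes that under~\eqref{eq111001} the function $\vf$ from Proposition~\ref{OptimizerRightTangentsInfty} is still a legitimate test function, and then reads off $\av{\ff(\vf)}{[0,1]}=+\infty$ from~\eqref{eq070604} together with the integration-by-parts Lemma~\ref{Lem111001} once~\eqref{eq141001a} fails. The only cosmetic difference is that the key lemma for the infinite-integral step is Lemma~\ref{Lem111001} rather than Lemma~\ref{Lem190701} (the failure of~\eqref{eq141001a} is by definition the divergence of that integral, so no equivalence is needed there), and the extension from $w(v)$ to a general $x$ via a constant tail is exactly what you describe.
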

\begin{proof}
It suffices to prove $\boldsymbol{B}(w(v);\,f) = +\infty$ for any $v\in \mathbb{R}$. Condition~\eqref{eq111001} 
guaranties that the function $\vf$ constructed in the proof of Proposition~\ref{OptimizerRightTangentsInfty} is a test 
function for the point $w(v)$. The failure of~\eqref{eq141001a} means 
$$
\int_{-\infty}^v f'(\tau) \exp\Big(\!-\!\int_{\tau}^v \lamr\Big)\ d\tau = - \infty,
$$
whence by Lemma~\ref{Lem111001}
$$
\av{\ff(\vf)}{[0,1]} \buildrel{\eqref{eq070604}}\over=
\int_{-\infty}^v f(\tau) \lamr(\tau)\exp\Big(\!-\!\int_{\tau}^v \lamr\Big)\ d\tau = + \infty.
$$
\end{proof}

We state similar propositions for the case of the left tangents domain on $+\infty$.  
\begin{St}\label{OptimizerLeftTangentsInfty}
Let~$B$ be the standard candidate on~$\Lt(u_1,+\infty)$. There exists a non-increasing optimizer~$\vf_x$ 
for~$B$ at every point~$x \in \Lt(u_1,+\infty)$\textup, moreover\textup,~$g(u_1) \preceq \vf_x$.
\end{St}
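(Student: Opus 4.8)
The statement is the mirror image of Proposition~\ref{OptimizerRightTangentsInfty}, so the plan is to run the same construction, reflected in the orientation. First I would note that by Definition~\ref{211101} the standard candidate on $\Lt(u_1,+\infty)$ exists precisely when the function $\beta_2$ given by~\eqref{eq190701} (with $\const=0$, $t_0=+\infty$) is finite, so the corresponding integral converges; this plays the role that the finiteness of~\eqref{eq120701} played in the right case. As there, it suffices to treat the single point $x = w(u_1)$ on the free boundary: once a non-increasing optimizer $\vf$ for $w(u_1)$ is built on some interval $[l,r)$ with $\vf \preceq$ approaching $+\infty$ at the left end and with the tangent of its delivery curve pointing along $\dfree\Omega$ at $w(u_1)$, an arbitrary point $x\in\Lt(u_1,+\infty)$ lies in some bounded $\Lt(u_1,u_2)$, and Proposition~\ref{OptimizersLeftTangentDomain} produces the optimizer there (the hypothesis $g(u_2)\preceq\psi$ of that proposition is automatic for $\psi = \vf_{w(u_1)}$ by construction).

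To build the optimizer at $w(u_1)$ I would look for $\vf(t) = g(u(t))$ on an interval, say $[0,1)$, with $u\colon[0,1)\to[u_1,+\infty)$ monotone (now \emph{decreasing} in $t$, since we sweep from $+\infty$ down to $u_1$), such that the generated curve $\gamma(t)=\av{\vf}{[0,t]}$ runs along the free boundary and equals $\wl(u(t))$. Exactly as in the derivation of~\eqref{eq021801}, differentiating $(t)\,\wl(u(t)) = \int_0^t\vf$ and using~\eqref{eq150701} with $\lambda=\laml$ gives the ODE $t\,\laml(u(t))\,u'(t) = 1$, whence, imposing $u(1^-) = u_1$, the inverse function is $t = \exp\bigl(\int_{u}^{u_1}\laml(v)\,dv\bigr)$ — here the divergence $\int^{+\infty}\laml = -\infty$, which is the content of~\eqref{eq111002}, is what guarantees $t\to 0$ as $u\to+\infty$ so that the parametrization fills $(0,1)$; this is the left analogue of~\eqref{eq111003}. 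I would then verify $\vf\in\Class_\Omega$ via Corollary~\ref{DeliveryCurveCorollary}: the curve $\gamma$ coincides with the free boundary on $(0,1]$, hence is convex with tangents not crossing $\Xi$, and the only additional thing needed is that $\frac1t\int_0^t\vf = \wl(u(t))$, i.e.\ the boundary term $\wl(u)\exp(\int_u^{u_1}\laml)$ vanishes as $u\to+\infty$, which follows from~\eqref{eq111002} (left analogue of~\eqref{eq111004}), via L'H\^opital's rule on the unbounded components of $\wl$ just as in the right case.

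Finally, to see that $\vf$ is an optimizer — $B(\wl(u(t))) = \av{\ff(\vf)}{[0,t]}$ — I would integrate by parts using the left version of Lemma~\ref{Lem111001} (the $(-\infty,\sigma]$ branch~\eqref{eq170201}, or rather its reflection to $[\sigma,+\infty)$ applied to the variable $u$ running to $+\infty$), with $\vt(\tau) = \exp(-\int_\tau^{u(t)}\laml)$ and $Y = f$, to get $\frac1t\int_0^t f(u(s))\,ds = f(u(t)) - \int^{u(t)}_{+\infty} f'(\tau)\exp(-\int_\tau^{u(t)}\laml)\,d\tau$, the left analogue of~\eqref{eq070604}; then I would invoke the left analogue of Lemma~\ref{Lem190701} relating this last integral to $\beta_2$ from~\eqref{eq190701} through the representation~\eqref{fenceB2}. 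The main obstacle is bookkeeping: every sign, every orientation of $\preceq$, and every direction of the limits ($+\infty$ versus $-\infty$, $u$ increasing versus $t$ decreasing) must be flipped consistently from the right-tangent case, and one has to make sure the needed left analogues of Lemmas~\ref{Lem111001} and~\ref{Lem190701} (the latter stated only for $\lamr$) are actually available or follow by the obvious reflection $t\mapsto -t$; modulo that, no new idea beyond the proof of Proposition~\ref{OptimizerRightTangentsInfty} is required.
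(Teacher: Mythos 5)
Your plan is correct and is precisely what the paper does: it gives no separate proof of this proposition, remarking only that the situation is symmetric to Proposition~\ref{OptimizerRightTangentsInfty}, whose proof you mirror step by step (reduction to the free-boundary points, the ODE $t\,\laml(u(t))\,u'(t)=1$, membership in $\Class_\Omega$ via Corollary~\ref{DeliveryCurveCorollary}, integration by parts via Lemma~\ref{Lem111001}, and the left analogue of Lemma~\ref{Lem190701} feeding into~\eqref{fenceB2}). One transcription slip: the inverse parametrization should be $t=\exp\bigl(\int_{u_1}^{u}\laml\bigr)$ rather than $\exp\bigl(\int_{u}^{u_1}\laml\bigr)$ (with your orientation $t\to+\infty$, not $0$, as $u\to+\infty$), but since you correctly state that the divergence $\int^{+\infty}\laml=-\infty$ from~\eqref{eq111002} is what forces $t\to 0$, this is evidently just a sign-bookkeeping error and not a gap.
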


\begin{St}\label{st140702}
If~\eqref{eq111002} holds but~\eqref{eq211001a} fails\textup, then $\boldsymbol{B}(x;\,f) = +\infty$ for 
any $x \in \Omega\setminus \dfi\Omega$. 
\end{St}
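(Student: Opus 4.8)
The statement to prove is the symmetric (left-tangent, at $+\infty$) analog of Proposition~\ref{st140701}: if~\eqref{eq111002} holds but~\eqref{eq211001a} fails, then $\boldsymbol{B}(x;\,f) = +\infty$ for any $x \in \Omega \setminus \dfi\Omega$. The plan is to follow verbatim the scheme of the proof of Proposition~\ref{st140701}, interchanging the roles of $-\infty$ and $+\infty$, of right and left tangents, and of the functions $\lamr$ and $\laml$. First I would reduce the claim to showing $\boldsymbol{B}(\wl(v);\,f) = +\infty$ for every $v \in \mathbb{R}$: since every point $x \in \Omega \setminus \dfi\Omega$ lies on some left tangent segment $\Sl(v)$, and since the Bellman function is the supremum over test functions while a convex combination with the constant function $g(v)$ can only be handled by the monotone-rearrangement machinery, it suffices to exhibit a test function at the tangency point $\wl(v)$ with infinite payoff. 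Indeed, the analog of Proposition~\ref{OptimizersLeftTangentDomain} propagates optimizers (and, mutatis mutandis, optimizing test functions with infinite payoff) from $\wl(v)$ to any point on $\Sl(u_1,v)$.

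Next I would invoke the left-sided analog of the construction inside the proof of Proposition~\ref{OptimizerLeftTangentsInfty}: under~\eqref{eq111002}, one builds a non-increasing function $\vf\colon [0,1] \to \dfi\Omega$ of the form $\vf(t) = g(u(t))$, where $u\colon (0,1] \to (-\infty, v]$ — more precisely $u$ runs from $+\infty$ down to $v$ — is the monotone function determined by the differential relation coming from~\eqref{eq011801} for the left tangency points, i.e. the symmetric counterpart of~\eqref{eq070602},
\begin{equation*}
t = \exp\Big(\!-\!\int_v^{u}\laml(w)\,dw\Big).
\end{equation*}
Here the analog of the divergence~\eqref{eq111003}, namely $\int_v^{+\infty}\laml(w)\,dw = -\infty$ (so that $-\!\int_v^u \laml \to +\infty$ as $u \to +\infty$, giving $t \to 0$), must be checked from~\eqref{eq111002} exactly as~\eqref{eq111008} and~\eqref{eq111003} were checked from~\eqref{eq111001}; this uses $\laml < 0$, the convexity of $g$, and the fact that $\kappal < \kappa_2$ in the left case. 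Condition~\eqref{eq111002} then guarantees, via the symmetric version of the limit relations~\eqref{eq111004}, that $\gamma(t) = \av{\vf}{[0,t]} \to \wl(v)$-type behaviour holds at $t \to 0$ and that $\int_0^t \vf = t\,\wl(u(t))$, so that by Corollary~\ref{DeliveryCurveCorollary} the curve stays in $\Omega$ and $\vf \in \Class_\Omega$ — i.e. $\vf$ is a genuine test function for $\wl(v)$.

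Finally I would compute the payoff. By the left analog of the integration-by-parts chain~\eqref{eq070604}, applying Lemma~\ref{Lem111001} (the $+\infty$ half, formula~\eqref{eq070201}) with $\vt(\tau) = \exp(-\int_v^{\tau}\laml)$ and $Y = f$,
\begin{equation*}
\av{\ff(\vf)}{[0,1]} = \int_v^{+\infty} f(\tau)\,\big(-\laml(\tau)\big)\exp\Big(\!-\!\int_v^{\tau}\laml\Big)\,d\tau
= f(v) + \int_v^{+\infty} f'(\tau)\exp\Big(\!-\!\int_v^{\tau}\laml\Big)\,d\tau;
\end{equation*}
hmm — note that I should be careful about the sign here: the correct normalization, matching~\eqref{eq211001a} which reads $\int_v^{+\infty} f'(\tau)\exp(\int_v^{\tau}\laml)\,d\tau$, means the exponent weight is $\exp(\int_v^{\tau}\laml\,ds)$, not its reciprocal, because $u$ decreases from $+\infty$ and $\laml<0$; I would sort out the exact exponent by differentiating the defining relation for $u(t)$ as was done around~\eqref{eq021801} and~\eqref{eq070602}. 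In any case, the failure of~\eqref{eq211001a} is precisely the statement that $\int_v^{+\infty} f'(\tau)\exp(\int_v^{\tau}\laml)\,d\tau = +\infty$, and the integration-by-parts identity of Lemma~\ref{Lem111001} then forces $\av{\ff(\vf)}{[0,1]} = +\infty$ (the boundary term at $+\infty$ vanishes by the argument in Lemma~\ref{Lem111001}, or one works with the limits directly as in the last paragraph of that lemma's proof). Hence $\boldsymbol{B}(\wl(v);\,f) \ge \av{\ff(\vf)}{} = +\infty$, completing the proof. The main obstacle — really the only place where care is needed rather than routine transcription — is getting the signs and the direction of monotonicity of $u$ consistent throughout (since in the $+\infty$ left-tangent case $\laml<0$ and $u$ is decreasing), so that the divergence conditions extracted from~\eqref{eq111002} point the right way and the payoff integral matches~\eqref{eq211001a} term for term; everything else is the mirror image of Propositions~\ref{OptimizerLeftTangentsInfty} and~\ref{st140701}, which may be cited.
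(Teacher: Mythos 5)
Your proposal is correct and is exactly the argument the paper intends (the paper omits the proof of this proposition as the mirror image of Proposition~\ref{st140701}, which reduces to the point $w(v)$, uses the test function from the infinite tangent domain, and applies Lemma~\ref{Lem111001} to identify the payoff with the divergent integral in the summability condition). The only blemish is the sign slip in your displayed parametrization $t(u)$ and in the first payoff formula — the correct weight is $\exp\bigl(\int_v^{u}\laml\bigr)$, which is small since $\laml<0$ and $u>v$ — but you flag this yourself and correctly identify that the weight must match~\eqref{eq211001a}, so nothing essential is missing.
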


\subsection{Optimizers for all other figures}\label{s522}
It is very easy to construct the optimizers for chordal domains. Indeed, the proposition below is 
a straightforward consequence of formula~\eqref{eq250901}.

\begin{St}\label{OptimizersChordalDomain}\index{domain! chordal domain}
Let~$B$ be the standard candidate on a chordal domain~$\Ch([a_0,b_0],[a_1,b_1]).$ Let $g(a)$ and $g(b)$ be the endpoints of a chord from this chordal domain\textup, $a \in [a_0,a_1],$ $b \in [b_1,b_0]$. Let 
$$
x = \alpha g(a)+(1-\alpha)g(b),
$$ 
where $\alpha \in [0,1]$. Then\textup, the optimizer~$\vf_x\colon [0,1] \to \FixedBoundary \Omega$ is given by the formula
\begin{equation*}
\vf_x = g(a)\chi_{[0,\alpha]} + g(b)\chi_{(\alpha,1]}.
\end{equation*}
\end{St}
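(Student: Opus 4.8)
The plan is to verify directly that the function $\vf_x = g(a)\chi_{[0,\alpha]} + g(b)\chi_{(\alpha,1]}$ meets the two requirements in Definition~\ref{Opt}: it is a test function for $x$, and $B(x) = \av{\ff(\vf_x)}{}$. The first part is essentially the same computation as in the proof of Proposition~\ref{parstr}. First I would compute $\av{\vf_x}{[0,1]} = \alpha g(a)+(1-\alpha)g(b) = x$, which is immediate. Then I would check that $\vf_x \in \Class_\Omega$, i.e.\ that $\av{\vf_x}{J} \in \Omega$ for every subinterval $J \subset [0,1]$. Any such average is a convex combination of $g(a)$ and $g(b)$, hence lies on the segment $[g(a),g(b)]$; since $g(a)$ and $g(b)$ are endpoints of a chord of the chordal domain $\Ch([a_0,b_0],[a_1,b_1])$, this whole segment lies in $\Omega$ (this is exactly the content of the chord being part of the foliation, cf.\ the setup around~\eqref{eq250901} and the general fact that chordal domains are induced convex subsets of $\Omega_\eps$). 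Thus $\vf_x$ is a valid test function.

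For the second requirement, I would use the explicit form~\eqref{eq250901} of the standard candidate on a chordal domain: since $(a,b)$ satisfies the cup equation and $B$ is defined by its boundary values together with linearity along chords, one has $B(\alpha g(a)+(1-\alpha)g(b)) = \alpha f(a) + (1-\alpha) f(b)$ for $\alpha \in [0,1]$. On the other hand, recalling $f = \ff\circ g$ and the definition of $\vf_x$,
\[
\av{\ff(\vf_x)}{[0,1]} = \int_0^\alpha \ff(g(a))\,dt + \int_\alpha^1 \ff(g(b))\,dt = \alpha f(a) + (1-\alpha) f(b).
\]
Comparing the two displays gives $B(x) = \av{\ff(\vf_x)}{}$, which is precisely what is needed.

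I do not expect a genuine obstacle here; the statement is, as the text says, ``a straightforward consequence of formula~\eqref{eq250901}.'' The only point requiring a word of care is the membership $[g(a),g(b)]\subset\Omega$, which must be invoked rather than re-proved — but it is built into what it means for $\Ch([a_0,b_0],[a_1,b_1])$ to be a chordal domain foliated by such chords (every chord of the foliation lies in $\Omega$ by construction). One should also note that $\vf_x$ is monotone (it is a two-valued step function, non-increasing if $a > b$ or non-decreasing if $a < b$, after possibly relabelling), so it fits the framework of monotone optimizers discussed in Subsection~\ref{s224}; and in the degenerate case $\alpha \in \{0,1\}$ the formula reduces to the constant optimizer $g(b)$ or $g(a)$, consistent with Proposition~\ref{Pr040601}. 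I would present the whole argument in a few lines, writing out the two averages and citing~\eqref{eq250901} for the value of $B$.
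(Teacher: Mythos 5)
Your proposal is correct and follows essentially the same route as the paper: check $\av{\vf_x}{}=x$, note that every average $\av{\vf_x}{J}$ lies on the segment $[g(a),g(b)]\subset\Omega$ so that $\vf_x\in\Class_\Omega$, and deduce $B(x)=\av{\ff(\vf_x)}{}$ from the linearity of $B$ on the chord via~\eqref{eq250901}. Your additional remark on monotonicity matches the remark the paper places immediately after the proposition.
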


\begin{proof}
It is obvious that~$\av{\vf_x}{} = x$, and the linearity of $B$ on the chord 
implies~$B(x) = \av{\ff(\vf_x)}{[0,1]}$. It is clear that all the averages $\av{\vf_x}{J}$ 
lie on $[g(a),g(b)] \subset \Omega$  for $J\subset [0,1]$, therefore,~$\vf_x \in \Class_\Omega$.
\end{proof}

\begin{Rem}
The optimizer~$\vf_x$ we suggest for a chordal domain is non-decreasing. One can construct a non-increasing 
optimizer simply taking the function~$\vf_x = g(b)\chi_{[0,1-\alpha]} + g(a)\chi_{(1-\alpha,1]}$.
\end{Rem}

\begin{Def}
The construction above works also for the special case of a long chord. In this situation the tangency point 
with the free boundary is called the \emph{outgoing node} of this chord.
\end{Def}

Let us now pass to the case of multicups.
\begin{St}\label{OptimizersClosedMulticup}\index{multicup! closed multicup}
Let~$B$ be the standard candidate on~$\ClMTC(\{\mathfrak{a}_i\}_{i=1}^k)$. Then\textup, there exists a 
monotone  optimizer~$\vf_x$ for~$B$ at any point~$x \in \ClMTC(\{\mathfrak{a}_i\}_{i=1}^k)$\textup, 
moreover\textup,~$g(\mathfrak{a}_1^{\mathrm l}) \preceq \vf_x \preceq g(\mathfrak{a}_k^{\mathrm r})$ pointwise. 
\end{St}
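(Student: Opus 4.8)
The plan is to construct a monotone optimizer for a point $x$ inside a closed multicup $\ClMTC(\{\mathfrak{a}_i\}_{i=1}^k)$ by reducing to the chordal-domain case already handled in Proposition~\ref{OptimizersChordalDomain}. Recall that the boundary of $\ClMTC(\{\mathfrak{a}_i\}_{i=1}^k)$ consists of the arcs $g(\mathfrak{a}_i)$, the chords $[g(\mathfrak{a}_i^{\mathrm r}),g(\mathfrak{a}_{i+1}^{\mathrm l})]$ for $i=1,\ldots,k-1$, and the outer chord $[g(\mathfrak{a}_1^{\mathrm l}),g(\mathfrak{a}_k^{\mathrm r})]$; and that the standard candidate $B$ on $\mathfrak{L}=\ClMTC(\{\mathfrak{a}_i\}_{i=1}^k)$ is the affine function given by~\eqref{CandidateInMultifigure}, i.e. $B(x_1,x_2)=\beta_0+\beta_1 x_1+\beta_2 x_2$, where $P_{\mathfrak{L}}=\beta_0+\beta_1 g_1+\beta_2 g_2$ satisfies $f(a)=P_{\mathfrak{L}}(a)$ for every $a$ with $g(a)\in\mathfrak{L}\cap\dfi\Omega$ (see~\eqref{PolynomialForLinearityDomain}). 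The key point is that all the points $\gamma(a)$, $g(a)\in\mathfrak{L}\cap\dfi\Omega$, lie in one plane in $\R^3$, so for any such $a$ we have $B(g(a))=f(a)$, and by Lemma~\ref{ThreePointsAndTheArea} every pair $(\mathfrak{a}_i^{\mathrm r},\mathfrak{a}_{i+1}^{\mathrm l})$ as well as the pair $(\mathfrak{a}_1^{\mathrm l},\mathfrak{a}_k^{\mathrm r})$ satisfies the cup equation~\eqref{urlun1}.

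First I would fix $x\in\mathfrak{L}$. Since $\mathfrak{L}$ is a two-dimensional domain with the above boundary structure, $x$ can be written as a convex combination of at most two points $g(a),g(b)$ lying on the fixed boundary $\mathfrak{L}\cap\dfi\Omega$ with $a<b$: indeed, a horizontal (or any fixed-direction) line through $x$ meets the union of arcs $\cup_i g(\mathfrak{a}_i)$ in two points, or $x$ already lies on one of the chords of $\mathfrak{L}$; in either case we get $x=\alpha g(a)+(1-\alpha)g(b)$ with $\alpha\in[0,1]$, $\mathfrak{a}_1^{\mathrm l}\le a<b\le\mathfrak{a}_k^{\mathrm r}$, and $[g(a),g(b)]\subset\mathfrak{L}\subset\Omega$. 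Now define $\vf_x=g(a)\chi_{[0,\alpha]}+g(b)\chi_{(\alpha,1]}$, which is manifestly monotone (non-decreasing) with $g(\mathfrak{a}_1^{\mathrm l})\preceq\vf_x\preceq g(\mathfrak{a}_k^{\mathrm r})$ pointwise. Clearly $\av{\vf_x}{[0,1]}=x$, and every average $\av{\vf_x}{J}$, $J\subset[0,1]$, lies on the segment $[g(a),g(b)]\subset\Omega$, so $\vf_x\in\Class_\Omega$; this is the same argument as in the proof of Proposition~\ref{OptimizersChordalDomain}.

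It remains to check the optimality identity $B(x)=\av{\ff(\vf_x)}{[0,1]}$. We have $\av{\ff(\vf_x)}{[0,1]}=\alpha f(a)+(1-\alpha)f(b)$, and since $B$ is affine on $\mathfrak{L}$ with $B(g(a))=f(a)$, $B(g(b))=f(b)$ (both following from $g(a),g(b)\in\mathfrak{L}\cap\dfi\Omega$ and~\eqref{PolynomialForLinearityDomain}), we get $B(x)=B(\alpha g(a)+(1-\alpha)g(b))=\alpha f(a)+(1-\alpha)f(b)=\av{\ff(\vf_x)}{[0,1]}$. Thus $\vf_x$ is an optimizer for $B$ at $x$. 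A non-increasing optimizer is obtained, as in the chordal-domain case, by taking $g(b)\chi_{[0,1-\alpha]}+g(a)\chi_{(1-\alpha,1]}$ instead.

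The only genuine subtlety — and the place I would spend the most care — is the claim that an arbitrary $x\in\mathfrak{L}$ admits such a two-point representation $x=\alpha g(a)+(1-\alpha)g(b)$ with $g(a),g(b)\in\mathfrak{L}\cap\dfi\Omega$: one must verify that the chord $[g(a),g(b)]$ produced this way indeed lies inside $\mathfrak{L}$ (not merely inside $\Omega$), using that $\mathfrak{L}$ is bounded by the arcs and the chords $[g(\mathfrak{a}_i^{\mathrm r}),g(\mathfrak{a}_{i+1}^{\mathrm l})]$ and $[g(\mathfrak{a}_1^{\mathrm l}),g(\mathfrak{a}_k^{\mathrm r})]$, together with the finiteness assumption on $\mathfrak{L}\cap\dfi\Omega$ from Subsection~\ref{s343} and the convexity of $g$. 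Once this elementary planar-geometry fact is in place, the rest is immediate from the affineness of the standard candidate and Proposition~\ref{OptimizersChordalDomain}.
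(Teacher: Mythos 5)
There is a genuine gap, and it sits exactly at the point you flagged as ``the only genuine subtlety'': the claim that every $x\in\ClMTC(\{\mathfrak{a}_i\}_{i=1}^k)$ can be written as a convex combination of \emph{two} points $g(a),g(b)$ with $a,b\in\cup_i\mathfrak{a}_i$ is false in general. The closed multicup is the convex hull of the union of the arcs $g(\mathfrak{a}_i)$, and the union of all segments $[g(a),g(b)]$ with $a,b\in\cup_i\mathfrak{a}_i$ need not cover that hull. The cleanest counterexample: take $k=3$ with each $\mathfrak{a}_i$ a single point (single-point arcs are explicitly allowed in Subsection~\ref{s343} and by Condition~\ref{Leaf-root condition}). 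Then $\mathfrak{L}$ is a triangle, the two-point segments are exactly its three edges, and no interior point admits your representation. Your suggested fix --- a line through $x$ in a fixed direction --- meets the \emph{boundary of the hull} in two points, but those points generically lie on the chords $[g(\mathfrak{a}_i^{\mathrm r}),g(\mathfrak{a}_{i+1}^{\mathrm l})]$ or on the outer chord, not on the arcs, so they are not of the form $g(a)$ with $g(a)\in\mathfrak{L}\cap\dfi\Omega$.

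The repair is Carath\'eodory's theorem in the plane, which is what the paper's proof does: write $x=\alpha_1 g(a_1)+\alpha_2 g(a_2)+\alpha_3 g(a_3)$ with $a_1\le a_2\le a_3$ in $\cup_i\mathfrak{a}_i$, and take the three-step monotone function $\vf_x=g(a_1)\chi_{[0,\alpha_1]}+g(a_2)\chi_{(\alpha_1,\alpha_1+\alpha_2]}+g(a_3)\chi_{(\alpha_1+\alpha_2,1]}$. Everything else in your argument then goes through verbatim and for the same reasons you give: every average $\av{\vf_x}{J}$ is a convex combination of $g(a_1),g(a_2),g(a_3)$ and hence lies in the (convex) multicup, so $\vf_x\in\Class_\Omega$; and since $B$ is affine on $\mathfrak{L}$ with $B(g(a_i))=f(a_i)$ by~\eqref{PolynomialForLinearityDomain}, one gets $B(x)=\sum_j\alpha_j f(a_j)=\av{\ff(\vf_x)}{[0,1]}$. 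Note also that two points do suffice for Proposition~\ref{OptimizersMulticup} (the non-closed multicup), but there the argument is different and uses a separating line; for the closed multicup three points are genuinely needed.
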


\begin{proof}
Fix~$x\in \ClMTC(\{\mathfrak{a}_i\}_{i=1}^k)$ and represent it as a convex combination of three points~$g(a_1)$, $g(a_2)$, $g(a_3)$ for some 
$a_1,a_2,a_3 \in \cup_{i=1}^k\mathfrak{a}_i$:
\begin{equation*}
x = \alpha_1 g(a_1) + \alpha_2 g(a_2) + \alpha_3 g(a_3);\, \quad \alpha_1+\alpha_2+\alpha_3=1, \quad \alpha_j \geq 0.
\end{equation*}
Without loss of generality, we may assume that~$a_1 \leq a_2 \leq a_3$. We put
\begin{equation*}
\vf_x(\tau)= g(a_1)\chi_{[0,\alpha_1]} + g(a_2)\chi_{(\alpha_1,\alpha_1+\alpha_2]}+g(a_3)\chi_{(\alpha_1+\alpha_2,1]}.
\end{equation*}
The equality~$\av{\vf_x}{} = x$ is evident, and the relation~$B(x) = \av{\ff(\vf_x)}{[0,1]}$ follows from 
the linearity of $B$ on the closed multicup. The averages $\av{\vf_x}{J}$ lie in the multicup for all 
$J\subset [0,1]$, thus,~$\vf_x \in \Class_\Omega$.
\end{proof}

\begin{St}\label{OptimizersMulticup}\index{multicup}
Let~$B$ be the standard candidate on~$\MTC(\{\mathfrak{a}_i\}_{i=1}^k)$. Then\textup, there exists a monotone 
optimizer~$\vf_x$ for~$B$ at any point~$x \in \MTC(\{\mathfrak{a}_i\}_{i=1}^k)$\textup, moreover\textup,
$g(\mathfrak{a}_1^{\mathrm l}) \preceq \vf_x \preceq g(\mathfrak{a}_k^{\mathrm r})$ pointwise. 
\end{St}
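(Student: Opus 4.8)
The statement concerns a (non-closed) multicup $\MTC(\{\mathfrak a_i\}_{i=1}^k)$, i.e.\ a multicup one of whose border tangents $\Sl(\mathfrak a_1^{\mathrm l})$ and $\Sr(\mathfrak a_k^{\mathrm r})$ is not long. The idea is to decompose such a domain as a multicup ``plus'' two tangent domains attached to it: using formula~\eqref{AnglePlusMulticupLeft} (and its symmetric companion~\eqref{AnglePlusMulticupRight}) one sees that the incoming/outgoing structure of $\MTC(\{\mathfrak a_i\}_{i=1}^k)$ splits into the closed multicup $\ClMTC(\{\mathfrak a_i\}_{i=1}^k)$ sitting underneath, surrounded by the left tangent domain $\Lt(\,\cdot\,,\mathfrak a_1^{\mathrm l})$ and the right tangent domain $\Rt(\mathfrak a_k^{\mathrm r},\,\cdot\,)$ that foliate the part of $\MTC$ lying between the chord $[g(\mathfrak a_1^{\mathrm l}),g(\mathfrak a_k^{\mathrm r})]$ and the free boundary. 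More precisely, the multicup is covered by three pieces: the closed multicup on $\{\mathfrak a_i\}_{i=1}^k$ (this is the part separated from $\dfree\Omega$), a left tangent fan emanating from the points $g(t)$, $t$ near $\mathfrak a_1^{\mathrm l}$, and a right tangent fan emanating from $g(t)$, $t$ near $\mathfrak a_k^{\mathrm r}$. Since the standard candidate $B$ on $\MTC$ restricts to the standard candidates on each of these three subdomains (this is exactly what the symbol $\biguplus$ in~\eqref{AnglePlusMulticupLeft}, \eqref{AnglePlusMulticupRight} encodes, and it follows from Propositions~\ref{St260901} and~\ref{St211101}), it suffices to build optimizers compatible with this decomposition.

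First I would invoke Proposition~\ref{OptimizersClosedMulticup} to obtain, for every point $x$ of the closed multicup $\ClMTC(\{\mathfrak a_i\}_{i=1}^k)$, a monotone optimizer $\vf_x$ with $g(\mathfrak a_1^{\mathrm l})\preceq\vf_x\preceq g(\mathfrak a_k^{\mathrm r})$; this handles all points of $\MTC$ that lie on or below the chord $[g(\mathfrak a_1^{\mathrm l}),g(\mathfrak a_k^{\mathrm r})]$, in particular the tangency points with $\dfree\Omega$ of the two extreme chords of the closed multicup play the role of the incoming nodes of the two surrounding tangent domains. Concretely, the optimizer at $g(\mathfrak a_1^{\mathrm l})$ is the constant function $\psi\equiv g(\mathfrak a_1^{\mathrm l})$, which is trivially non-increasing and satisfies $g(\mathfrak a_1^{\mathrm l})\preceq\psi$; similarly the optimizer at $g(\mathfrak a_k^{\mathrm r})$ is the constant $g(\mathfrak a_k^{\mathrm r})$, non-decreasing and $\preceq g(\mathfrak a_k^{\mathrm r})$. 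These constant functions serve as the incoming optimizers for the left tangent domain $\Lt(\,\cdot\,,\mathfrak a_1^{\mathrm l})$ (where we use the incoming node $w(\mathfrak a_1^{\mathrm l})$, but since the node here is the point $g(\mathfrak a_1^{\mathrm l})$ itself, seen as the ``shrunk'' endpoint of the tangent fan) and for the right tangent domain $\Rt(\mathfrak a_k^{\mathrm r},\,\cdot\,)$ respectively. Then I would apply Proposition~\ref{OptimizersRightTangentDomain} to the right tangent fan, starting from the non-decreasing optimizer $\psi\equiv g(\mathfrak a_k^{\mathrm r})$ at its incoming node — the hypothesis $\psi\preceq g(\mathfrak a_k^{\mathrm r})$ is exactly the degenerate equality — and Proposition~\ref{OptimizersLeftTangentDomain} to the left tangent fan, starting from $\psi\equiv g(\mathfrak a_1^{\mathrm l})$ with $g(\mathfrak a_1^{\mathrm l})\preceq\psi$. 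This produces monotone optimizers $\vf_x$ for every $x$ in the two tangent parts, with the claimed two-sided bound $g(\mathfrak a_1^{\mathrm l})\preceq\vf_x\preceq g(\mathfrak a_k^{\mathrm r})$ inherited from the conclusions of those propositions (the right fan gives $\vf_x\preceq g(\mathfrak a_k^{\mathrm r})$ and the left fan gives $g(\mathfrak a_1^{\mathrm l})\preceq\vf_x$, while the opposite one-sided bound for each fan comes from the fact that the constant incoming optimizer already obeys it and the constructed curve only adds values between $g(\mathfrak a_1^{\mathrm l})$ and $g(\mathfrak a_k^{\mathrm r})$).

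The routine verification that has to be carried out is that the three families of optimizers together cover all of $\MTC(\{\mathfrak a_i\}_{i=1}^k)$ and that a point lying on the common boundary of two pieces receives consistent (or at least both valid) optimizers; this is immediate since on a shared extremal segment or chord the candidate $B$ is affine with a common gradient, so the test-function and optimality conditions match up, exactly as in the proofs of Propositions~\ref{OptimizersRightTangentDomain} and~\ref{OptimizersChordalDomain}. I expect the only genuinely delicate point to be the bookkeeping at the two ``degenerate'' incoming nodes: one must check that Proposition~\ref{OptimizersRightTangentDomain} (resp.~\ref{OptimizersLeftTangentDomain}) still applies when the incoming node of the tangent fan has collapsed to the fixed-boundary point $g(\mathfrak a_k^{\mathrm r})$ (resp.\ $g(\mathfrak a_1^{\mathrm l})$), i.e.\ when the parameter interval of the fan starts at that point — but here the constant function is a legitimate non-decreasing (resp.\ non-increasing) optimizer satisfying the required disposition hypothesis, so the propositions apply verbatim, the fan being $\Rt(\mathfrak a_k^{\mathrm r},\mathfrak a_k^{\mathrm r})\biguplus\Rt(\mathfrak a_k^{\mathrm r},u)$ in the notation of~\eqref{AnglePlusMulticupRight}. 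With that observed, the proof is a short assembly of the three cited propositions.
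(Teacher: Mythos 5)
There is a genuine gap: the decomposition you build the whole argument on does not exist. For a (non-full) multicup the chord $[g(\mathfrak{a}_1^{\mathrm l}),g(\mathfrak{a}_k^{\mathrm r})]$ does \emph{not} lie in $\Omega$ --- the free boundary protrudes below it (this is exactly why the figure is a multicup rather than a closed multicup; cf.\ the description in Section~\ref{s41}: ``each multicup is sitting on an arc whose convex hull is not contained in $\Omega_\eps$''). Hence there is no closed multicup $\ClMTC(\{\mathfrak{a}_i\}_{i=1}^k)$ ``sitting underneath'': part of the region below that chord belongs to $\Xi$, and Proposition~\ref{OptimizersClosedMulticup} is inapplicable. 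Likewise, the region of $\MTC$ between the arcs and the free boundary is not foliated by tangent segments $[g(t),w(t)]$ with $t$ ranging over an interval --- the only points of the fixed boundary belonging to $\MTC$ are those of $\cup_i g(\mathfrak{a}_i)$, so there are no tangent fans $\Lt(\cdot,\mathfrak{a}_1^{\mathrm l})$, $\Rt(\mathfrak{a}_k^{\mathrm r},\cdot)$ \emph{inside} the multicup; those are adjacent domains outside it. You have also misread formulas~\eqref{AnglePlusMulticupLeft}--\eqref{AnglePlusMulticupRight}: they assemble a multicup \emph{together with} an angle and a degenerate tangent into a multitrolleybus; they do not disintegrate the multicup itself.

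Because the decomposition fails, your proposal never confronts the actual difficulty of this proposition: for $x$ in the part of $\MTC$ shadowed by the bulge of $\Xi$, one must check that \emph{all} partial averages $\av{\vf_x}{J}$ avoid $\Xi$, and Lemma~\ref{DeliveryCurveLemma} does not apply directly (the relevant tangent lines may cross the free boundary). The paper's proof is a direct construction: one takes a line $\varkappa$ through $x$ separating $x$ from the convex hull of $\Xi$ and the two extreme points $g(\mathfrak{a}_1^{\mathrm l})$, $g(\mathfrak{a}_k^{\mathrm r})$, lets $y,z$ be the extreme intersections of $\varkappa$ with $\partial\MTC$, writes $y$ and $z$ as convex combinations of at most four points $g(b_1),\dots,g(b_4)$ of $\cup_i g(\mathfrak{a}_i)$, and defines $\vf_x$ as the corresponding four-valued monotone step function; the inclusion $\vf_x\in\Class_\Omega$ is then verified by a case analysis showing each $\av{\vf_x}{J}$ either lies on one of the chords $[g(b_1),g(b_2)]$, $[g(b_3),g(b_4)]$ or is separated from the free boundary by $\varkappa$. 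That separating-line argument is the content your proof is missing.
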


\begin{proof}
If $x \in \FixedBoundary\Omega$, then $\vf_x=x\chi_{[0,1]}$ is the optimizer. So, in what follows we 
assume $x \notin\FixedBoundary\Omega$. Consider the open convex set~$\Omega'$ that is the interior of the convex 
hull of~$\Xi$ and the points~$g(\mathfrak{a}_1^{\mathrm{l}})$ and~$g(\mathfrak{a}_k^{\mathrm{r}})$. 
Since~$x \notin \Omega'$, by the separation theorem, there exists a line~$\varkappa = \varkappa(x)$ that passes 
through~$x$ and does not intersect~$\Omega'$. Let $y$ and $z$ be the leftmost and the rightmost points of the
intersection of the line $\varkappa(x)$ with the boundary of~$\MTC(\{\mathfrak{a}_i\}_{i=1}^k)$. There may 
occur one of the two variants for each crossing point ($y$ and $z$): either it lies on an 
arc~$g(\mathfrak{a}_i)$,~$i \in \{1,2,\ldots,k\}$, or on a 
segment~$[g(\mathfrak{a}_i^{\mathrm{r}}),g(\mathfrak{a}_{i+1}^{\mathrm{l}})]$,~$i \in \{1,2,\ldots,k-1\}$. 

\begin{figure}[h!]
\begin{center}
\includegraphics[width = 0.8 \linewidth]{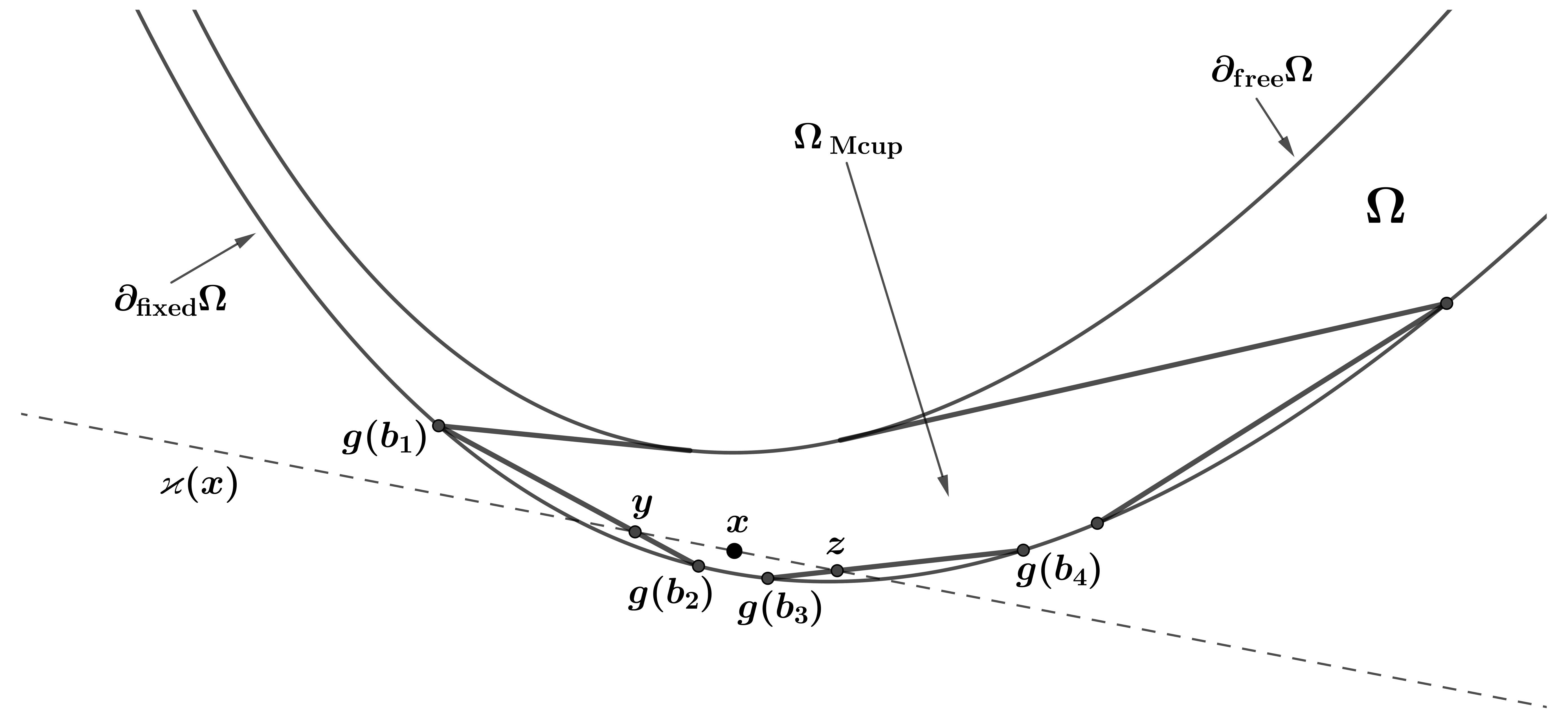}
\caption{Construction of an optimizer in a multicup.}
\label{fig:DCMC}
\end{center}
\end{figure}

If~$y$ lies on the chord~$[g(\mathfrak{a}_i^{\mathrm{r}}),g(\mathfrak{a}_{i+1}^{\mathrm{l}})]$, then we can write
\begin{equation*}
y= \alpha_y g(\mathfrak{a}_i^{\mathrm{r}}) + (1-\alpha_y) g(\mathfrak{a}_{i+1}^{\mathrm{l}});
\qquad \text{ for some } \quad \alpha_y \in [0,1].
\end{equation*}
Similarly, if~$z$ lies on the chord~$[g(\mathfrak{a}_j^{\mathrm{r}}),g(\mathfrak{a}_{j+1}^{\mathrm{l}})]$, then
\begin{equation*}
z= (1-\alpha_z) g(\mathfrak{a}_j^{\mathrm{r}}) + \alpha_z g(\mathfrak{a}_{j+1}^{\mathrm{l}});
\qquad \text{ for some } \quad \alpha_z \in [0,1].
\end{equation*}
So, in any case, $y= \alpha_y g(b_1) + (1-\alpha_y) g(b_2)$, $z = (1-\alpha_z) g(b_3) + \alpha_z g(b_4)$ 
(see Figure~\ref{fig:DCMC}), where~$b_j \in \cup_{i=1}^k\mathfrak{a}_i$ (if~$y$ is an 
intersection of~$\varkappa$ with an arc, then we may take~$b_1=b_2$; similarly with~$z$) such that
\begin{equation*}
g_1(b_1) \leq y_1 \leq g_1(b_2) \leq g_1(b_3) \leq z_1 \leq g_1(b_4).
\end{equation*} 
It is convenient to define the optimizer $\vf_x$ on the segment~$[y_1,z_1]$:
\begin{equation}\label{OptimizerFormulaMulticup}
\vf_x(t) = 
\begin{cases}
g(b_1),\quad &t \in J_1 = [y_1, y_1+ \alpha_y(z_1 - x_1)); \\ 
g(b_2),\quad &t \in  J_2 = [y_1+ \alpha_y(z_1 - x_1)), z_1-x_1+y_1); \\ 
g(b_3),\quad &t \in J_3 = [z_1-x_1+y_1, z_1-\alpha_z(x_1 - y_1));\\ 
g(b_4),\quad &t \in J_4 = [z_1-\alpha_z(x_1 - y_1), z_1]. 
\end{cases}
\end{equation}
As usual, the equalities~$\av{\vf_x}{} = x$ and~$\av{\ff(\vf_x)}{[y_1,z_1]} = B(x)$ are evident. However, 
if we draw the delivery curve for~$\vf_x$, we can see that in some cases it does not fall under the scope 
of Lemma~\ref{DeliveryCurveLemma} (the tangent may cross the free boundary), so, we prove directly that 
$\vf_x$ lies in $\Class_\Omega$.

We claim that a point~$\av{\vf_x}{J}$, where~$J \subset [y_1,z_1]$, either belongs to one of the 
segments~$[g(b_1),g(b_2)]$ and~$[g(b_3),g(b_4)]$, or is separated from the free boundary by~$\varkappa$. 
Once the claim is proved, we obtain~$\vf_x \in \Class_\Omega$. 

We will consider different cases of the position of~$J$ inside~$[y_1,z_1]$. If $J$ intersects not more than two 
of the intervals $J_1,J_2, J_3, J_4$ in~\eqref{OptimizerFormulaMulticup}, then the claim is obvious. If~$J$ 
intersects all four intervals, then we may represent~$x$ as a linear combination of~$\av{\vf_x}{J}$,~$g(b_1)$, 
and~$g(b_4)$. Since~$g(b_1)$ and~$g(b_4)$ lie above~$\varkappa$ (i.\,e., in the same half-plane with the 
free boundary), the point $\av{\vf_x}{J}$ lies below~$\varkappa$.

So, we may suppose that~$J$ intersects three intervals from~\eqref{OptimizerFormulaMulticup}. 
Without loss of generality, we may assume that~$J \cap J_4  = \varnothing$. 
Then,~$\av{\vf_x}{J}$ is a  convex combination of~$g(b_3)$ and a point from~$[y,g(b_2)]$ (since~$J_2\subset J$). 
Both these points lie below $\varkappa$. Therefore,~$\av{\vf_x}{J}$ is separated 
from the free boundary by~$\varkappa$.
\end{proof}

\begin{Rem}
The optimizer~$\vf_x$ we suggest for a multicup domain is non-decreasing. One can construct a non-increasing 
optimizer in a similar way. 
\end{Rem}

\begin{Def}
The endpoints of the arc of the free boundary that is the part of the boundary of a multicup are called 
the \emph{outgoing nodes} of the multicup. 
\end{Def}

\begin{St}\label{OptimizersAngle}\index{angle}
Let~$\Ang(u)$ be an angle with the vertex $g(u)$ and the boundary tangent lines $\Sl$ and $\Sr$ with the 
tangency points $\wl$ and $\wr$ correspondingly. Let $B$ be the standard candidate on~$\Ang(u)$. Let~$\psir$ 
be a non-decreasing optimizer for the point~$\wr$ and let~$\psil$ be a non-increasing optimizer for the 
point~$\wl$ such that~$\psir \preceq g(u)$ and~$g(u) \preceq \psil$ pointwise. Then\textup, there exists 
an optimizer for every point~$x \in \Ang(u)$.
\end{St}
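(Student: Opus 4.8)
I would imitate the scheme used for tangent domains in Proposition~\ref{OptimizersRightTangentDomain}, reducing everything to the construction of optimizers for the points of the free boundary arc of $\Ang(u)$. Recall that $\Ang(u)=T_\eps(u)$ is star-shaped with respect to its vertex $g(u)$, and the side of $T_\eps(u)$ opposite to $g(u)$ is the arc of $\dfree\Omega$ between the tangency points $\wr$ and $\wl$. Hence every $x\in\Ang(u)$ lies on a segment $[g(u),\geps(v)]$ with $\geps(v)$ on that arc, so $x=\mu g(u)+(1-\mu)\geps(v)$ for some $\mu\in[0,1]$. Once an optimizer $\vf_{\geps(v)}$ is available, the function obtained from it by adjoining the constant value $g(u)$ on an interval of the appropriate length is a test function for $x$, and, because $B$ is affine on $\Ang(u)$, it satisfies $\av{\ff(\vf_x)}{}=\mu f(u)+(1-\mu)B(\geps(v))=B(x)$. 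In particular, for $x$ on the linear sides $\Sr=[g(u),\wr]$ and $\Sl=[g(u),\wl]$ this is just the constant extension of $\psir$, respectively of $\psil$; this already disposes of those two sides.

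For the arc itself I would fix a splitting parameter and build $\vf_{\geps(v)}$ from $\psir$ on the half of the arc adjacent to $\wr$ and from $\psil$ on the half adjacent to $\wl$. On the first half: the delivery curve of $\psir$ reaches $\wr$ tangentially to $\Sr$ (this is exactly where the hypothesis $\psir\preceq g(u)$ enters, via~\eqref{FirstDerivative}), so I prolong $\psir$ past $\wr$ by a curve running along $\dfree\Omega$; writing the prolongation's generating function as $g(p(t))$ and imposing that the delivery curve coincide with $\dfree\Omega$ leads, exactly as in the derivation of~\eqref{eq021801} via the identity~\eqref{eq150701}, to the first-order equation $(t-l)\lamr(p(t))p'(t)=1$, whose solution is a $C^2$ monotone reparametrization. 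Since $\psir$ ends at the value $g(u)$ and $p$ is increasing, the resulting generating function stays non-decreasing. The construction on the second half of the arc is the symmetric one, built from $\psil$ and using $g(u)\preceq\psil$. The splitting parameter is pinned down by the requirement that the two one-sided prolongations agree at the splitting point; affineness of $B$ on $\Ang(u)$ makes this a single scalar balance, so it has a solution.

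Membership $\vf_x\in\Class_\Omega$ and optimality are then verified as in Proposition~\ref{OptimizersRightTangentDomain}. By Lemma~\ref{Monotone_Convex} each delivery curve built above is convex; along its portion running on $\dfree\Omega$ the tangent lines are tangents of the free boundary and hence do not cross $\Xi$, along the portion coming from $\psir$ (resp.\ $\psil$) the tangents miss $\Xi$ because $\psir,\psil$ are themselves test functions whose end tangents lie along $\Sr$ (resp.\ $\Sl$), and along the constant-$g(u)$ portion the curve is a segment of the extremal line through $g(u)$; Corollary~\ref{DeliveryCurveCorollary} then gives $\vf_x\in\Class_\Omega$. The equality $B(\gamma(s))=\av{\ff(\vf_x)}{[l,s]}$ follows by the same differentiation as in~\eqref{NewtonLeibniz}--\eqref{eq012801}, using that $\nabla B$ is locally constant along the extremal segments met by the curve and the affineness of $B$ on $\Ang(u)$.

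\textbf{Where the work is.} The delicate steps, and the ones I expect to be the real obstacle, are two. First, making the prolongation along $\dfree\Omega$ legitimate for the purpose of the Newton--Leibniz identity: one must check that the extremal segments carrying a constant gradient along the prolonged curve are really available (this has to be compatible with the way $B$ is defined in $\Ang(u)$ and in the adjacent tangent domains $\Rt$, $\Lt$), and in the degenerate cases the splitting point may collapse onto $\wr$ or $\wl$ and the arc be fed from a single side. Second, and most importantly, the verification that $\vf_x\in\Class_\Omega$ \emph{across the junctions} — controlling $\av{\vf_x}{J}$ for subintervals $J$ straddling the point where the constant $g(u)$ is inserted, or the point where one passes from the $\psir$-fed to the $\psil$-fed branch — and checking that the spliced generating function is genuinely monotone so that Lemma~\ref{Monotone_Convex} applies. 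This is exactly where $\psir\preceq g(u)$ and $g(u)\preceq\psil$ are needed, together with star-shapedness of $T_\eps(u)$ at $g(u)$ (a convex combination of $g(u)$ with a point of $T_\eps(u)$ stays in $T_\eps(u)\subset\Omega$); a case analysis on the position of $J$, in the spirit of the proof of Proposition~\ref{OptimizersMulticup}, should close the argument.
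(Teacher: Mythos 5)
Your overall scaffolding (reduce to points of the free-boundary arc, then extend by the constant $g(u)$ using star-shapedness and affineness of $B$ on the angle) is reasonable, but the central construction — prolonging $\psir$ past $\wr$ by a delivery curve running along $\dfree\Omega$ inside the angle — does not produce optimizers, so the proof has a genuine gap. The differentiated Newton--Leibniz identity that makes a curve a delivery curve is
$B(\gamma(s))+\scalp{\nabla B(\gamma(s))}{\vf(s)-\gamma(s)}=\ff(\vf(s))$.
For $\gamma(s)$ on the arc of $\Ang(u)$ we have $\nabla B(\gamma(s))=\beta$, the gradient of the affine candidate $B_{\mathfrak L}$ on the angle, so the left-hand side is $B_{\mathfrak L}(\vf(s))$; the identity therefore forces $B_{\mathfrak L}(g(p))=f(p)$ at the generated parameter $p$, which holds only at $p=u$ (the affine candidate is tangent to the curve $(g,f)$ at $u$ and otherwise lies strictly above $f$). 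But your ODE $(t-l)\lamr(p(t))p'(t)=1$ generates $\vf(t)=g(p(t))$ with $p(t)>u$: the tangent line to $\dfree\Omega$ at an interior point of the arc meets $\dfi\Omega$ at $g(p)$ with $p>u$ on the forward side, and the segment $[\geps(v),g(p)]$ is \emph{not} an extremal segment of the foliation (the domain adjacent to $\Ang(u)$ on that side is $\Lt(u,\cdot)$, foliated by left tangents). In the tangent-domain proof the key step $B(\vf(s))-B(\gamma(s))=\scalp{\nabla B}{\vf(s)-\gamma(s)}$ used precisely that $\gamma(s)$ and $\vf(s)$ lie on a common extremal segment; here that fails, your prolonged function is merely a test function with $\av{\ff(\vf)}{}<B(\geps(v))$, and no choice of generating function can fix this: a delivery curve cannot run along the arc of an angle at all. (A secondary symptom: after the generating function has climbed to values $g(p)\succ g(u)$, adjoining the terminal constant $g(u)$ destroys monotonicity, so Lemma~\ref{Monotone_Convex} no longer applies to the spliced curve.)

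The repair is the paper's argument, which avoids arc points entirely: since $T_\eps(u)$ lies in the straight triangle with vertices $\wr$, $g(u)$, $\wl$, write $x=\alpha_1\wr+\alpha_2 g(u)+\alpha_3\wl$ and take $\vf_x$ to be the concatenation of $\psir$ on $[0,\alpha_1)$, the constant $g(u)$ on $[\alpha_1,\alpha_1+\alpha_2)$, and the reversed $\psil$ on $[\alpha_1+\alpha_2,1]$. Then $\av{\vf_x}{}=x$ and $\av{\ff(\vf_x)}{}=B(x)$ are immediate from affineness of $B$ on $\Ang(u)$ and optimality of $\psir,\psil$; membership in $\Class_\Omega$ is checked by the case analysis on $J$ you anticipated (a separating line $\varkappa$ through $x$ handles intervals containing the middle block, and Lemma~\ref{DeliveryCurveLemma} applied to the monotone restriction handles the rest, using $\psir\preceq g(u)\preceq\psil$ to see that the end tangents of $\gamma_{\psir}$ and $\gamma_{\psil}$ lie along $\Sr$ and $\Sl$). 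Your verification strategy was thus aimed at the right target, but the object it was supposed to verify does not exist.
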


\begin{proof}
The proof of this proposition is very similar to the proof of Proposition~\ref{OptimizersMulticup}. 
First, there exist numbers~$\alpha_1,\alpha_2,\alpha_3$ such that
\begin{equation*}
x = \alpha_1 \wr + \alpha_2 g(u) + \alpha_3\wl;\quad \alpha_1 + \alpha_2+\alpha_3 = 1,\quad \alpha_j \geq 0.
\end{equation*}
Second, by Remark~\ref{Rescaling}, we may model~$\psir$ and~$\psil$ on any interval. Suppose that~$\psir$ 
is defined on~$[0,\alpha_1]$, and~$\psil$ is defined on~$[0,\alpha_3]$. Define the optimizer~$\vf_x$ 
on~$[0,1]$ by the formula
\begin{equation*}
\vf_x(\tau) = 
\begin{cases}
\psir(\tau),\quad & \tau \in [0,\alpha_1);\\
g(u),\quad & \tau \in [\alpha_1,\alpha_1+\alpha_2);\\
\psil(1 - \tau),\quad & \tau \in [\alpha_1+\alpha_2,1].
\end{cases}
\end{equation*}
As usual, the equalities~$\av{\vf_x}{} = x$ and~$\av{\ff(\vf_x)}{[0,1]} = B(x)$ are evident. We have to verify 
that~$\vf_x \in \Class_\Omega$. Let~$\varkappa = \varkappa(x)$ be a line passing through~$x$ that does not 
intersect~$\Xi$. First, we prove that~$\gamma_{\psir}$ lies above~$\varkappa$ and~$\gamma_{\psil}$ lies 
above~$\varkappa$ as well. It suffices to prove the claim concerning~$\gamma_{\psir}$. Since~$\psir$ 
is non-decreasing, Lemma~\ref{Monotone_Convex} says that the curve~$\gamma_{\psir}$ is a graph of a convex 
function. Therefore it lies above its tangent line at the point $\wr$. Due to~\eqref{FirstDerivative} this 
tangent line contains the segment~$[\wr,g(u)]$. Therefore,~$\gamma_{\psir}$ lies above the tangent line to 
the free boundary at the point~$\wr$ and on the left of this point. Thus,~$\gamma_{\psir}$ lies above~$\varkappa$.

Let~$J \subset [0,1]$ be an interval, we have to prove that~$\av{\vf_x}{J} \in \Omega$. Consider several 
cases of position of~$J$ inside~$[0,1]$. 

Suppose that~$[\alpha_1,\alpha_1+\alpha_2] \subset J$. Then we claim that~$\av{\vf_x}{J}$ is separated from 
the free boundary by~$\varkappa$. Since~$x$ is a convex combination of~$\av{\vf}{[0,\alpha_1) \setminus J}$,
$\av{\vf}{[\alpha_1+\alpha_2,1] \setminus J}$, and~$\av{\vf}{J}$, and the first two points lie above~$\varkappa$, 
the point $\av{\vf}{J}$ lies below~$\varkappa$ indeed, see Figure~\ref{fig:Ang_opt}.

\begin{figure}[h!]
\begin{center}
\includegraphics[width = 0.8 \linewidth]{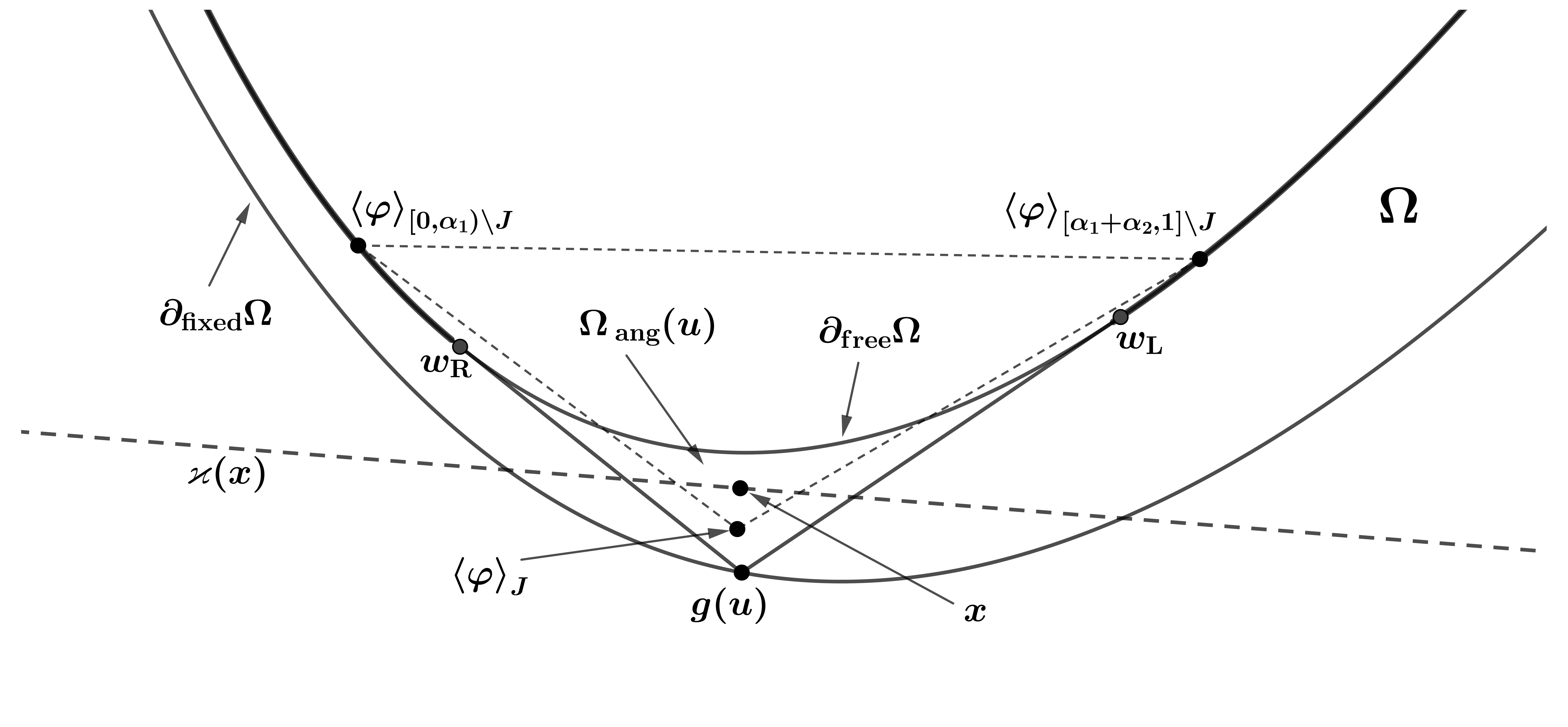}
\caption{Optimizer in $\Ang(u)$.}
\label{fig:Ang_opt}
\end{center}
\end{figure}

If $[\alpha_1,\alpha_1+\alpha_2]$ is not contained in~$J$, then 
we can apply Lemma~\ref{DeliveryCurveLemma}  to the function $\vf|_{J}$. Indeed, $\vf|_{J}$ is a monotone function, and the tangent to the corresponding curve 
at the endpoint does not intersect the free boundary.
\end{proof}

\begin{Def}
The points $\wr$ and $\wl$ introduced in Proposition~\ref{OptimizersAngle} are called the \emph{incoming nodes} of the angle.
\end{Def}

\begin{St}\label{OptimizersTrolleybusR}\index{trolleybus}
Let~$B$ be the standard candidate in the right trolleybus~$\RTroll(u_1,u_2)$. 
Suppose~$\psi$ to be a non-decreasing optimizer for the point~$\wr(u_1)$ such that~$\psi \preceq g(u_1)$ pointwise. 
Then\textup, for any~$x \in \RTroll(u_1,u_2)$ there exists a non-decreasing optimizer $\vf_x$ such that $\vf_x\preceq g(u_2)$.  
\end{St}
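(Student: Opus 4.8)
\textbf{Plan of proof for Proposition~\ref{OptimizersTrolleybusR}.}
The right trolleybus $\RTroll(u_1,u_2)$ decomposes, by formula~\eqref{CupPlusAngleR}, as the union of an angle $\Ang(u_1)$, the degenerate tangent $\Lt(u_1,u_1)$, and a long chord $[g(u_1),g(u_2)]$ (all glued along $\biguplus$). Since $B$ is a standard candidate on the trolleybus, its restrictions to $\Ang(u_1)$, to $\Rt(u_1,u_1)$, and to the chordal domain below $[g(u_1),g(u_2)]$ are standard candidates by the meaning of $\biguplus$ (page~\pageref{biguplusdef}). The plan is to peel off the optimizer in three stages following this decomposition: first handle the points on the chord $[g(u_1),g(u_2)]$ itself, then handle the angle-like part sitting on $g(u_1)$, and finally handle a general interior point of the trolleybus as a convex combination of a point on the chord and the vertex $g(u_1)$.

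First I would treat points $x \in [g(u_1),g(u_2)]$. Here $x = \alpha g(u_1) + (1-\alpha) g(u_2)$ for some $\alpha \in [0,1]$. Since $\psi$ is a non-decreasing optimizer for $\wr(u_1)$ with $\psi \preceq g(u_1)$, and since the segment $[\wr(u_1), g(u_1)]$ is tangent to the free boundary (this is what makes $[g(u_1),g(u_2)]$ a long chord), we first run the optimizer $\psi$ on an initial interval $(l,l_1]$ to reach $\wr(u_1)$, then glue a delivery curve that travels along the free boundary from $\wr(u_1)$ to the tangency point of the long chord exactly as in the proof of Proposition~\ref{OptimizersRightTangentDomain}, and finally continue by a two-valued piecewise-constant function on $[g(u_1),g(u_2)]$ with weights $\alpha$ and $1-\alpha$, as in Proposition~\ref{OptimizersChordalDomain}. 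Convexity of the resulting delivery curve $\gamma$ holds because $\gamma$ first follows $\gamma_\psi$ (convex by Lemma~\ref{Monotone_Convex}, with limiting tangent along $[\wr(u_1),g(u_1)]$ thanks to $\psi \preceq g(u_1)$ and~\eqref{FirstDerivative}), then follows the convex free boundary, then a straight segment whose direction does not decrease; and all tangent lines along $\gamma$ either coincide with a tangent to $\dfree\Omega$ or lie on $[g(u_1),g(u_2)]$, hence do not cross $\Xi$. Corollary~\ref{DeliveryCurveCorollary} then gives $\vf_x \in \Class_\Omega$, and the Newton--Leibniz computation (differentiating $(s-l)B(\gamma(s)) = \int \ff(\vf)$, exactly the identity~\eqref{eq012801}, using that $\nabla B$ is constant on extremal segments) shows $B(x) = \av{\ff(\vf_x)}{}$.

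For a general $x \in \RTroll(u_1,u_2)$, I would argue geometrically as in Proposition~\ref{OptimizersMulticup} and Proposition~\ref{OptimizersAngle}. Pick a line $\varkappa = \varkappa(x)$ through $x$ not meeting $\Xi$; it cuts the boundary of the trolleybus in a point $z$ on the chord $[g(u_1),g(u_2)]$ (or on the free-boundary arc, but then that reduces to a tangent-domain point handled by Proposition~\ref{OptimizersRightTangentDomain} directly) and has $g(u_1)$ on its interior side. Write $x$ as a convex combination of the point $z$ on the chord and the vertex $g(u_1)$; then glue the already-constructed optimizer $\vf_z$ (on a suitable interval) with the constant function $g(u_1)$ on a block of the correct length, inserting the $g(u_1)$-block between the $\psi$-part and the chord-part. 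For any subinterval $J$ of the domain one checks $\av{\vf_x}{J}\in\Omega$: if $J$ avoids the $g(u_1)$-block, monotonicity plus Lemma~\ref{DeliveryCurveLemma} applies; if $J$ contains the $g(u_1)$-block, then $x$ is a convex combination of $\av{\vf_x}{J}$ and two points that both lie on the free-boundary side of $\varkappa$, so $\av{\vf_x}{J}$ lies on the other side of $\varkappa$, hence in $\Omega$. The equalities $\av{\vf_x}{}=x$ and $B(x)=\av{\ff(\vf_x)}{}$ follow from the chord-case computation, the $\psi$-optimality, and the affinity of $B$ on the relevant segments, exactly as in the quoted propositions. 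Finally $\vf_x \preceq g(u_2)$ pointwise because all the pieces ($\psi \preceq g(u_1) \preceq g(u_2)$, the constant $g(u_1)$, the values $g(u_1),g(u_2)$ on the chord) are $\preceq g(u_2)$.

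\textbf{Main obstacle.} The delicate point is the verification that the glued delivery curve is genuinely convex at the two seams — where $\gamma_\psi$ meets the free-boundary arc, and where the free-boundary arc meets the long chord — and that its one-sided tangents there do not cross $\Xi$. The first seam is controlled by the hypothesis $\psi \preceq g(u_1)$, which forces the limiting tangent of $\gamma_\psi$ at $\wr(u_1)$ to be the tangent line to $\dfree\Omega$ (so the curvature does not jump downward); the second seam is exactly where the long chord is tangent to the free boundary, so again the tangent direction is continuous and matches the tangent to $\dfree\Omega$. Once these two seam conditions are in place, Corollary~\ref{DeliveryCurveCorollary} does the rest, and the remaining bookkeeping (interval lengths, the case analysis on $J$) is routine and parallels Proposition~\ref{OptimizersAngle}.
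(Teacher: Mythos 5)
There is a genuine gap, and it comes in two places. First, your whole plan rests on the decomposition~\eqref{CupPlusAngleR}, but that formula encodes the special configuration ``angle $+$ \emph{long} chord $=$ trolleybus'': it applies only when the base chord $[g(u_1),g(u_2)]$ is tangent to $\dfree\Omega$, in which case $\RTroll(u_1,u_2)$ degenerates to $\Ang(u_1)\cup[g(u_1),g(u_2)]$ as a planar set. For a general right trolleybus the base chord is not tangent to the free boundary, so there is no ``tangency point of the long chord'', the leg of your delivery curve that travels along $\dfree\Omega$ has nowhere to land on the chord, and $\Ang(u_1)$ is not even contained in $\RTroll(u_1,u_2)$. (Points of the chord itself need none of this machinery: the two-valued function of Proposition~\ref{OptimizersChordalDomain} already optimizes there.) Second, your reduction of a general $x$ writes $x$ as a convex combination of $g(u_1)$ and a point $z\in[g(u_1),g(u_2)]$; since $g(u_1)$ is an endpoint of that chord, such combinations sweep out only the chord, never the two-dimensional interior of the trolleybus — and if $z$ is forced to lie on your separating line $\varkappa(x)$, the segment $[z,g(u_1)]$ does not pass through $x$ at all.

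The missing ingredient is the third extreme point $\wr(u_1)$. The trolleybus lies in the triangle with vertices $\wr(u_1)$, $g(u_1)$, $g(u_2)$, so one writes $x=\alpha_0\wr(u_1)+\alpha_1 g(u_1)+\alpha_2 g(u_2)$ and takes $\vf_x$ equal to $\psi$ (rescaled to $[0,\alpha_0)$), then the constant $g(u_1)$ on $[\alpha_0,\alpha_0+\alpha_1)$, then the constant $g(u_2)$ on $[\alpha_0+\alpha_1,1]$; no travel along the free boundary occurs inside the trolleybus. The identity $B(x)=\av{\ff(\vf_x)}{}$ is immediate from affinity of $B$ on the linearity domain together with $\av{\ff(\psi)}{}=B(\wr(u_1))$. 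The membership $\vf_x\in\Class_\Omega$ is then verified not by global convexity of the delivery curve (your ``seam'' discussion is aimed at the wrong curve) but by the three-case analysis on subintervals $J$ that you correctly quote from Propositions~\ref{OptimizersMulticup} and~\ref{OptimizersAngle}: if $J$ contains the $g(u_1)$-block, a line through $x$ missing $\Xi$ and leaving $g(u_2)$ on the $\Xi$-side separates $\av{\vf_x}{J}$ from the free boundary; if $J$ misses the $g(u_2)$-block, Lemma~\ref{DeliveryCurveLemma} applies; if $J$ misses the $\psi$-block, then $\av{\vf_x}{J}\in[g(u_1),g(u_2)]$.
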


\begin{proof}
Choose any point~$x \in \RTroll(u_1,u_2)$. The trolleybus~$\RTroll(u_1,u_2)$ lies inside the triangle with 
the vertices~$g(u_1),g(u_2)$, and $\wr(u_1)$. Thus, there exist~$\alpha_0,\alpha_1,\alpha_2$ such that
\begin{equation*}
x = \alpha_0 \wr(u_1) + \alpha_1 g(u_1) + \alpha_2 g(u_2);\quad \alpha_0+\alpha_1+\alpha_2 = 1, \quad \alpha_j \geq 0.
\end{equation*}
By Remark~\ref{Rescaling}, we may assume that~$\psi$ is defined on~$[0,\alpha_0]$. Define the optimizer~$\vf_x$ 
on $[0,1]$ by the formula
\begin{equation*}
\vf_x(\tau) = 
\begin{cases}
\psi(\tau),\quad & \tau \in [0,\alpha_0);
\\
g(u_1),\quad & \tau \in [\alpha_0,\alpha_0+\alpha_1);
\\
g(u_2),\quad & \tau \in [\alpha_0+\alpha_1,1].
\end{cases}
\end{equation*}
It is clear that $\av{\vf_x}{[0,1]}=x$ and $\av{\ff(\vf_x)}{[0,1]}=B(x)$. We only have to verify~$\vf_x \in \Class_\Omega$, i.\,e., $\av{\vf_x}{J} \in \Omega$
for any interval~$J \subset[0,1]$. Take a line~$\varkappa = \varkappa(x)$ that passes 
through~$x$ and such that the point~$g(u_2)$ together with the domain $\Xi$ lie above this line, 
see Figure~\ref{fig:RTroll_DC}. As in the proof of Propositions~\ref{OptimizersMulticup} and~\ref{OptimizersAngle}, 
we will have to consider different cases of location of~$J$ inside~$[0,1]$.
\begin{figure}[h!]
\begin{center}
\includegraphics[width = 0.49 \linewidth]{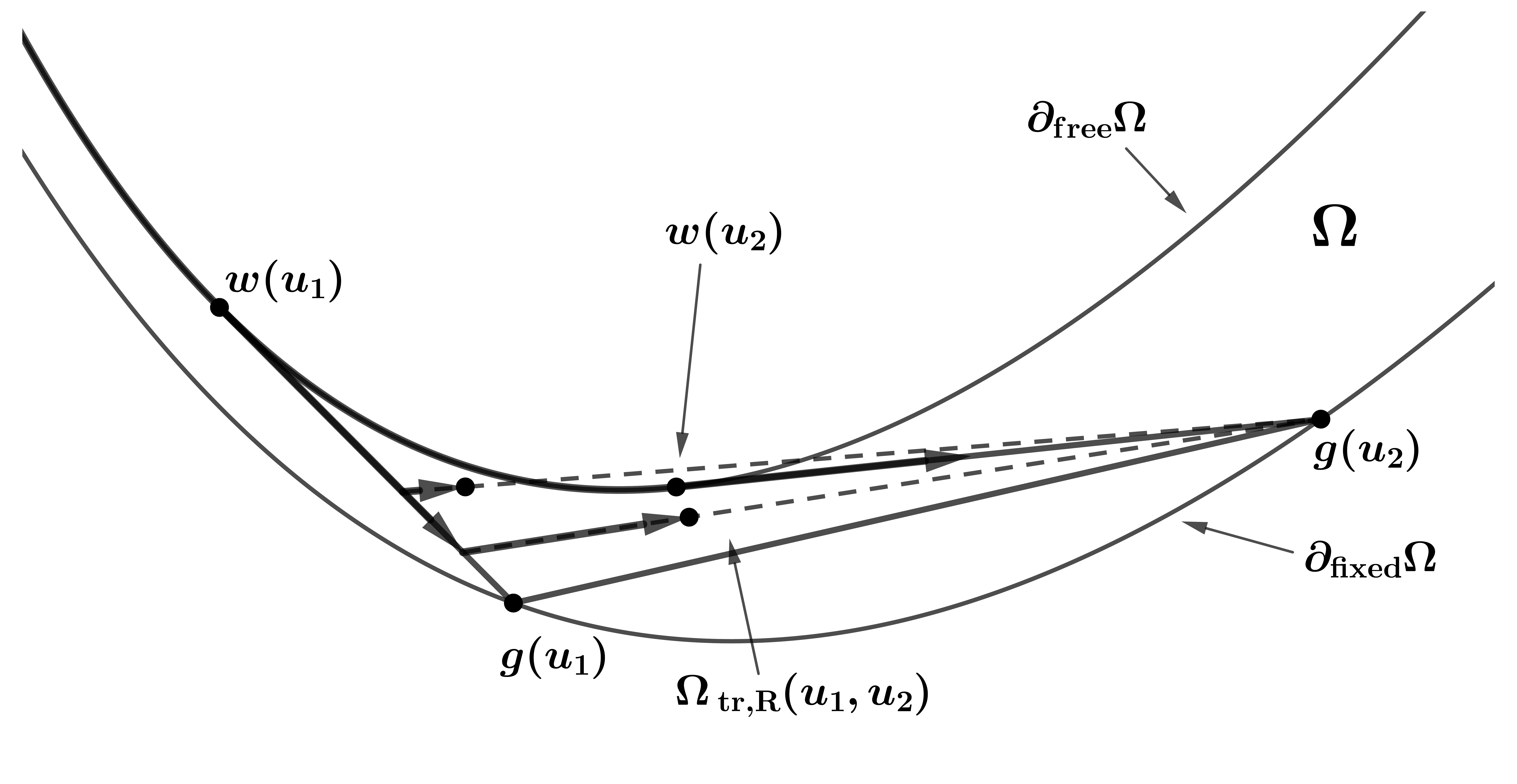}
\includegraphics[width = 0.49 \linewidth]{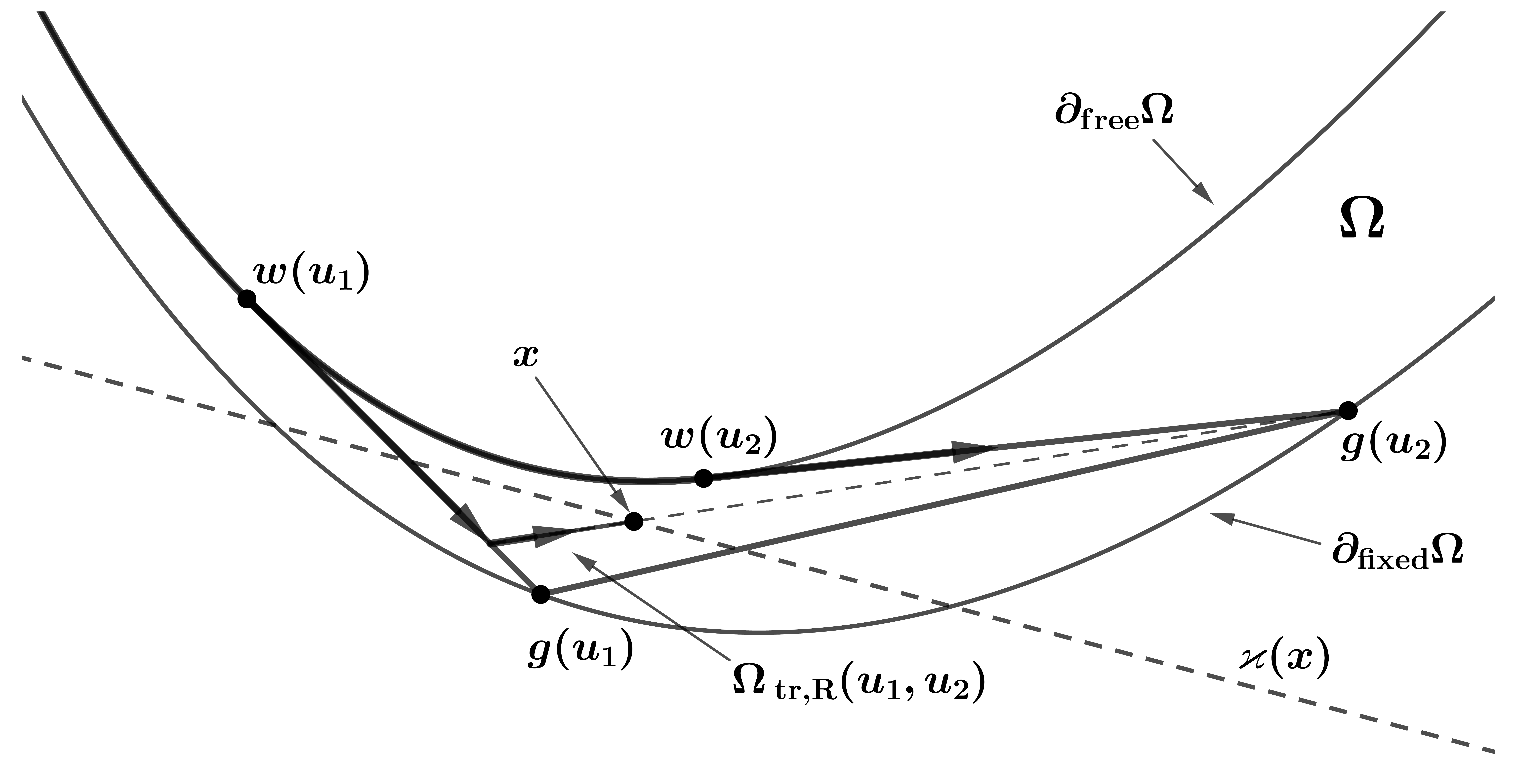}
\caption{Delivery curves inside a trolleybus.}
\label{fig:RTroll_DC}
\end{center}
\end{figure}

If~$[\alpha_0,\alpha_0+\alpha_1) \subset J$, then~$\av{\vf_x}{J}$ is separated from the free boundary 
by~$\varkappa_x$ (this reasoning is completely similar to the one we used in 
Propositions~\ref{OptimizersMulticup} and~\ref{OptimizersAngle}).

If~$J \cap [\alpha_0+\alpha_1,1] = \varnothing$, then the situation falls under the scope of 
Lemma~\ref{DeliveryCurveLemma}.

If~$J \cap [0,\alpha_0) = \varnothing$, then~$\av{\vf_x}{J} \subset [g(u_1),g(u_2)]$.

We have considered all the cases and verified that~$\vf_x \in \Class_\Omega$. Moreover, it follows 
from the construction that $\vf_x$ is non-decreasing and $\vf_x\preceq g(u_2)$.
\end{proof}

As usual, we have a symmetric proposition.

\begin{St}\label{OptimizersTrolleybusL}
Let~$B$ be the standard candidate in the left trolleybus~$\LTroll(u_1,u_2)$. 
Suppose~$\psi$ to be a non-increasing optimizer for the point~$\wl(u_2)$ such that~$\psi \succeq g(u_2)$. 
Then\textup, for any~$x \in \LTroll(u_1,u_2)$ there exists a non-increasing optimizer $\vf_x$ such that $\vf_x\succeq g(u_1)$.  
\end{St}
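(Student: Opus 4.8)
The statement is the left-hand mirror of Proposition~\ref{OptimizersTrolleybusR}, so the plan is to repeat that proof with all left/right orientations reversed; no new idea is needed. Fix $x\in\LTroll(u_1,u_2)$. The trolleybus lies inside the triangle with vertices $g(u_1)$, $g(u_2)$, $\wl(u_2)$, so there are $\alpha_0,\alpha_1,\alpha_2\geq 0$ with $\alpha_0+\alpha_1+\alpha_2=1$ and
\begin{equation*}
x=\alpha_0\wl(u_2)+\alpha_1 g(u_2)+\alpha_2 g(u_1).
\end{equation*}
By Remark~\ref{Rescaling} we may model $\psi$ on $[0,\alpha_0]$ and set
\begin{equation*}
\vf_x(\tau)=\begin{cases}\psi(\tau),&\tau\in[0,\alpha_0);\\ g(u_2),&\tau\in[\alpha_0,\alpha_0+\alpha_1);\\ g(u_1),&\tau\in[\alpha_0+\alpha_1,1].\end{cases}
\end{equation*}
Since $\psi$ is non-increasing, $\psi\succeq g(u_2)$ pointwise, and $g(u_2)\succeq g(u_1)$ (because $u_1<u_2$), the function $\vf_x$ is non-increasing and $\vf_x\succeq g(u_1)$. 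The equality $\av{\vf_x}{[0,1]}=x$ is immediate from $\av{\psi}{}=\wl(u_2)$, and $\av{\ff(\vf_x)}{[0,1]}=B(x)$ follows from the facts that $\psi$ is an optimizer for $\wl(u_2)$, that $B(g(u_i))=f(u_i)$ by Proposition~\ref{Pr040601}, and that $B$ is affine on the linearity domain $\LTroll(u_1,u_2)$, whence $B(x)=\alpha_0 B(\wl(u_2))+\alpha_1 f(u_2)+\alpha_2 f(u_1)$.

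The substantive point — exactly as in Proposition~\ref{OptimizersTrolleybusR} — is to verify $\vf_x\in\Class_\Omega$, i.e.\ $\av{\vf_x}{J}\in\Omega$ for every interval $J\subset[0,1]$. Choose a line $\varkappa=\varkappa(x)$ through $x$ with $g(u_1)$ and the domain $\Xi$ lying (weakly) above it — the mirror of putting $g(u_2)$ above in the right-trolleybus case. By Lemma~\ref{Monotone_Convex} the delivery curve $\gamma_\psi$ is convex, and since $\psi\succeq g(u_2)$, formula~\eqref{FirstDerivative} forces its tangent at $\wl(u_2)$ to be the tangent line to $\dfree\Omega$ there; hence $\gamma_\psi$ lies above that line, in particular above $\varkappa$. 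Now split on the position of $J$: if $[\alpha_0,\alpha_0+\alpha_1)\subset J$, then $x$ is a convex combination of $\av{\vf_x}{J}$ and two points above $\varkappa$ (a point of $\gamma_\psi$, namely $\av{\vf_x}{[0,\alpha_0)\setminus J}$, and $g(u_1)=\av{\vf_x}{[\alpha_0+\alpha_1,1]\setminus J}$), so $\av{\vf_x}{J}$ lies below $\varkappa$ and is cut off from $\dfree\Omega$, hence lies in $\Omega$; if $J\cap[0,\alpha_0)=\varnothing$, then $\vf_x|_J$ takes only the values $g(u_1),g(u_2)$ and $\av{\vf_x}{J}\in[g(u_1),g(u_2)]\subset\Omega$; and if $J\cap[\alpha_0+\alpha_1,1]=\varnothing$, then $\vf_x|_J$ is monotone and the tangent to the corresponding delivery curve at its right endpoint either points along $\dfree\Omega$ or towards $g(u_2)$ and so does not meet $\Xi$, so Lemma~\ref{DeliveryCurveLemma} applies. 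These three cases exhaust all intervals $J$, so $\vf_x\in\Class_\Omega$, and $\vf_x$ is the desired optimizer.

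I expect no genuine obstacle: the entire argument is obtained from the proof of Proposition~\ref{OptimizersTrolleybusR} by the substitution $\text{non-decreasing}\leftrightarrow\text{non-increasing}$, $u_1\leftrightarrow u_2$, $\wr\leftrightarrow\wl$, and the reversal of every $\preceq$-relation. The only thing that requires attention is keeping these orientation conventions consistent — in particular that the incoming node is $\wl(u_2)$ (the tangency point on the $u_2$-side) rather than $\wl(u_1)$, in accordance with left tangent domains being traversed from right to left — which is a bookkeeping matter and not a mathematical one.
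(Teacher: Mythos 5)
Your proof is correct and is exactly the argument the paper intends: the paper states Proposition~\ref{OptimizersTrolleybusL} without proof as the symmetric counterpart of Proposition~\ref{OptimizersTrolleybusR}, and your write-up is precisely that mirrored argument (same triangle decomposition, same separating line $\varkappa$ with $g(u_1)$ and $\Xi$ above it, same three-case analysis of $J$). No gaps.
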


It remains to construct the optimizers for multitrolleybuses, birdies, and multibirdies. Formulas from 
Subsection~\ref{s344} will help us in this business.

\begin{St}\label{OptimizersMultitrolleybusR}\index{multitrolleybus}
Let~$B$ be the standard candidate in a right multitrolleybus~$\MTTR(\{\mathfrak{a}_i\}_{i=1}^k)$. 
Suppose~$\psi$ to be a non-decreasing optimizer for the point~$\wr(\mathfrak{a}_1^{\mathrm{l}})$ such 
that~$\psi \preceq g(\mathfrak{a}_1^{\mathrm{l}})$. Then\textup, for any~$x$ from 
$\MTTR(\{\mathfrak{a}_i\}_{i=1}^k)$ 
there exists a non-decreasing optimizer that does not exceed~$g(\mathfrak{a}_k^{\mathrm r})$.
\end{St}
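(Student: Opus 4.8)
\textbf{Proof plan for Proposition~\ref{OptimizersMultitrolleybusR}.} The strategy is to reduce a right multitrolleybus to a chain of elementary figures for which optimizers have already been constructed, using the disintegration formula~\eqref{RMultitrolleybusDesintegration}, and then to propagate the optimizer along this chain node by node. Recall that~\eqref{RMultitrolleybusDesintegration} writes
$$
\MTTR(\{\mathfrak{a}_i\}_{i=1}^k) = \Big(\biguplus_{i=1}^{k}\MTTR(\{\mathfrak{a}_i\})\Big)
\biguplus \Big(\biguplus_{i=1}^{k-1}\big( \Rt(\mathfrak{a}_{i}^{\mathrm r},\mathfrak{a}_{i}^{\mathrm r})
\biguplus \RTroll(\mathfrak{a}_i^{\mathrm r},\mathfrak{a}_{i+1}^{\mathrm l})
\biguplus \Rt(\mathfrak{a}_{i+1}^{\mathrm l},\mathfrak{a}_{i+1}^{\mathrm l})\big)\Big),
$$
so the standard candidate~$B$ on~$\MTTR(\{\mathfrak{a}_i\}_{i=1}^k)$ restricts to the standard candidate on each of these pieces. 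Since the defining property of the multitrolleybus is that all the differentials of the chords~$[g(\mathfrak{a}_i^{\mathrm r}),g(\mathfrak{a}_{i+1}^{\mathrm l})]$ vanish (these chords are the top chords of the underlying chordal domains, which are cups originating at single points; more precisely the chord between two consecutive arcs is the upper chord of~$\Ch([\mathfrak{a}_i^{\mathrm r},\mathfrak{a}_{i+1}^{\mathrm l}],*)$, but for the \emph{multi}trolleybus each~$\mathfrak{a}_i$ is a solid root and the chordal domains below are genuine), the decomposition is valid as stated.

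First I would handle the building blocks. For each~$i$, the piece~$\MTTR(\{\mathfrak{a}_i\})$ is a multicup on the single arc~$g(\mathfrak{a}_i)$ (equivalently, if~$\mathfrak{a}_i$ is a single point, a degenerate right tangent domain~$\Rt(\mathfrak{a}_i,\mathfrak{a}_i)$), and Proposition~\ref{OptimizersMulticup} supplies, for each point in it, a non-decreasing optimizer bounded between~$g(\mathfrak{a}_i^{\mathrm l})$ and~$g(\mathfrak{a}_i^{\mathrm r})$; in the degenerate case the optimizer is simply the constant~$g(\mathfrak{a}_i)$. The interposed~$\RTroll(\mathfrak{a}_i^{\mathrm r},\mathfrak{a}_{i+1}^{\mathrm l})$ is an ordinary right trolleybus, and Proposition~\ref{OptimizersTrolleybusR} propagates a non-decreasing optimizer for its incoming node~$\wr(\mathfrak{a}_i^{\mathrm r})$ to a non-decreasing optimizer bounded by~$g(\mathfrak{a}_{i+1}^{\mathrm l})$ at every interior point, provided the feeding optimizer~$\psi$ satisfies~$\psi\preceq g(\mathfrak{a}_i^{\mathrm r})$. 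The degenerate tangent domains~$\Rt(\mathfrak{a}_i^{\mathrm r},\mathfrak{a}_i^{\mathrm r})$ and~$\Rt(\mathfrak{a}_{i+1}^{\mathrm l},\mathfrak{a}_{i+1}^{\mathrm l})$ have zero width, so their ``propagation'' is the identity — the outgoing node of the multicup (or the trolleybus) is literally the incoming node of the next figure, and no optimizer needs to be altered. The meaning of~$\biguplus$ (see page~\pageref{biguplusdef}) guarantees that concatenating these standard candidates yields the standard candidate~$B$ on the whole, so there is no consistency issue with the Bellman function.

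Second I would run the induction along~$i$. Start with the hypothesis: a non-decreasing optimizer~$\psi$ for~$B$ at~$\wr(\mathfrak{a}_1^{\mathrm l})$ with~$\psi\preceq g(\mathfrak{a}_1^{\mathrm l})$. Feed it into the first multicup~$\MTTR(\{\mathfrak{a}_1\})$: by Proposition~\ref{OptimizersMulticup}, combined with the reasoning in its proof (the delivery curve is convex because the generating function is monotone, and~$\psi$ enters along the tangent direction since~$\psi\preceq g(\mathfrak{a}_1^{\mathrm l})$), we obtain optimizers at all points of~$g(\mathfrak{a}_1)$ and in particular a non-decreasing optimizer~$\psi^{(1)}$ for the outgoing node~$\wr(\mathfrak{a}_1^{\mathrm r})$ with~$\psi^{(1)}\preceq g(\mathfrak{a}_1^{\mathrm r})$. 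This is exactly the hypothesis needed to apply Proposition~\ref{OptimizersTrolleybusR} to~$\RTroll(\mathfrak{a}_1^{\mathrm r},\mathfrak{a}_2^{\mathrm l})$, which yields optimizers at all its points and a non-decreasing optimizer~$\psi^{(2)}$ for~$\wr(\mathfrak{a}_2^{\mathrm l})$ with~$\psi^{(2)}\preceq g(\mathfrak{a}_2^{\mathrm l})$. Repeating — multicup, trolleybus, multicup, trolleybus — we cover every elementary piece of the decomposition, hence every point~$x\in\MTTR(\{\mathfrak{a}_i\}_{i=1}^k)$, and each constructed optimizer is non-decreasing and~$\preceq g(\mathfrak{a}_k^{\mathrm r})$ (monotonicity is preserved at each splice because the three pieces of the trolleybus/multicup optimizer are ordered, and the final bound follows from the bound at the last step). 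The equality~$B(x)=\av{\ff(\vf_x)}{}$ and membership~$\vf_x\in\Class_\Omega$ are inherited piecewise from the cited propositions.

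The main obstacle is bookkeeping the node-matching at the degenerate tangent domains and verifying that the monotonicity and ordering hypotheses of Propositions~\ref{OptimizersMulticup} and~\ref{OptimizersTrolleybusR} are met at \emph{every} handoff, not just the first — i.e., that the outgoing-node optimizer produced by one figure always satisfies the ``$\preceq g(\cdot)$ at the incoming node'' hypothesis of the next. This is where one must be careful: the outgoing node of a multicup~$\MTTR(\{\mathfrak{a}_i\})$ is~$\wr(\mathfrak{a}_i^{\mathrm r})$, the tangency point of the \emph{right} tangent from~$g(\mathfrak{a}_i^{\mathrm r})$, and the delivery-curve argument in the proof of Proposition~\ref{OptimizersMulticup} shows the generating optimizer there approaches along that right tangent, which is precisely the condition~$\psi^{(i)}\preceq g(\mathfrak{a}_i^{\mathrm r})$. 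A short lemma-free remark to that effect, or simply invoking the already-established ``moreover'' clauses of the building-block propositions, closes the gap; I do not expect any genuinely new estimate to be needed.
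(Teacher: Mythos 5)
Your overall architecture --- disintegrate via~\eqref{RMultitrolleybusDesintegration} and propagate the optimizer node by node, alternating between the solid-root pieces and the interposed trolleybuses --- is the paper's strategy, and your treatment of the trolleybus links via Proposition~\ref{OptimizersTrolleybusR} and of the node-matching at the zero-width tangent domains is fine. The gap is in how you handle the pieces~$\MTTR(\{\mathfrak{a}_i\})$. You identify~$\MTTR(\{\mathfrak{a}_i\})$ with the multicup~$\MTC(\{\mathfrak{a}_i\})$ and invoke Proposition~\ref{OptimizersMulticup}. These are different planar domains: a multicup on the arc~$g(\mathfrak{a}_i)$ is bounded on the left by the \emph{left} tangent~$\Sl(\mathfrak{a}_i^{\mathrm l})$, whereas the right multitrolleybus is bounded on the left by the \emph{right} tangent~$\Sr(\mathfrak{a}_i^{\mathrm l})$, whose tangency point~$\wr(\mathfrak{a}_i^{\mathrm l})$ lies to the left of~$g(\mathfrak{a}_i^{\mathrm l})$. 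Consequently~$\MTTR(\{\mathfrak{a}_i\})$ contains points (all of~$\Sr(\mathfrak{a}_i^{\mathrm l})$ except its base, and in particular the incoming node~$\wr(\mathfrak{a}_i^{\mathrm l})$) that do not lie in the convex hull of the arc~$g(\mathfrak{a}_i)$, so the three-point test functions supported on~$g(\mathfrak{a}_i)$ produced by Proposition~\ref{OptimizersMulticup} cannot even have the right average there, let alone be optimizers. This is exactly why the multitrolleybus has an incoming node and why the hypothesis supplies~$\psi$: the incoming optimizer must be threaded through each solid-root piece, not discarded. The same oversight shows up in your degenerate case, where the optimizer for a point of~$\Rt(\mathfrak{a}_i,\mathfrak{a}_i)$ is~$\psi$ extended by the constant~$g(\mathfrak{a}_i)$, not the constant alone.

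The repair is short and is what the paper does: regard each~$\MTTR(\{\mathfrak{a}_i\})$ as the right tangent domain~$\Rt(\mathfrak{a}_i^{\mathrm l},\mathfrak{a}_i^{\mathrm r})$ carrying the restriction of~$B$ (which is affine there, hence a Bellman candidate on that tangent domain, though not the \emph{standard} one), and apply Proposition~\ref{OptimizersRightTangentDomain}. That proposition only needs~$B$ to be affine with constant gradient on each segment~$\Sr(t)$, takes the incoming non-decreasing optimizer at~$\wr(\mathfrak{a}_i^{\mathrm l})$ with the ordering hypothesis~$\psi\preceq g(\mathfrak{a}_i^{\mathrm l})$, runs the delivery curve along the free boundary, and outputs at the outgoing node~$\wr(\mathfrak{a}_i^{\mathrm r})$ a non-decreasing optimizer~$\preceq g(\mathfrak{a}_i^{\mathrm r})$, which is precisely the hypothesis needed to feed the next~$\RTroll(\mathfrak{a}_i^{\mathrm r},\mathfrak{a}_{i+1}^{\mathrm l})$. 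With that substitution your induction closes as written.
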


\begin{proof}
We apply~\eqref{RMultitrolleybusDesintegration} and decompose a multitrolleybus in an alternating 
sequence of right trolleybuses and multitrolleybuses $\MTTR(\mathfrak{a}_i)$ on solid roots. We consider each such multitrolleybus as a tangent domain with the candidate $B$ on it (note that this is not a standard candidate for the tangent domain). Applying Proposition~\ref{OptimizersTrolleybusR} 
to the trolleybuses and Proposition~\ref{OptimizersRightTangentDomain} to the tangent domains consecutively, 
we build optimizers for all the points inside $\MTTR(\{\mathfrak{a}_i\}_{i=1}^k)$. 
\end{proof}

\begin{St}\label{OptimizersMultitrolleybusL}
Let~$B$ be the standard candidate in a left multitrolleybus~$\MTTL(\{\mathfrak{a}_i\}_{i=1}^k)$. 
Suppose~$\psi$ to be a non-increasing optimizer for the point~$\wl(\mathfrak{a}_k^{\mathrm{r}})$ such 
that~$\psi \succeq g(\mathfrak{a}_k^{\mathrm{r}})$. Then\textup, for any~$x$ in  
$\MTTL(\{\mathfrak{a}_i\}_{i=1}^k)$ 
there exists a non-increasing optimizer that is not less than~$g(\mathfrak{a}_1^{\mathrm l})$.
\end{St}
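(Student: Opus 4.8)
The plan is to mirror the proof of Proposition~\ref{OptimizersMultitrolleybusR} verbatim, just swapping every "right" object for its "left" counterpart. The key identity to invoke is the left-handed disintegration of a multitrolleybus,~\eqref{LMultitrolleybusDesintegration}, which expresses~$\MTTL(\{\mathfrak{a}_i\}_{i=1}^k)$ as an alternating concatenation of left trolleybuses~$\LTroll(\mathfrak{a}_i^{\mathrm r},\mathfrak{a}_{i+1}^{\mathrm l})$, single left tangent domains~$\Lt(\mathfrak{a}_i^{\mathrm r},\mathfrak{a}_i^{\mathrm r})$ and~$\Lt(\mathfrak{a}_{i+1}^{\mathrm l},\mathfrak{a}_{i+1}^{\mathrm l})$, and left multitrolleybuses~$\MTTL(\{\mathfrak{a}_i\})$ sitting on the single solid roots~$\mathfrak{a}_i$ (the latter treated as left tangent domains carrying the restriction of~$B$, which need not be the standard candidate for the tangent domain but is still a Bellman candidate there).

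The order of work: First I would fix a point~$x \in \MTTL(\{\mathfrak{a}_i\}_{i=1}^k)$ and recall that the delivery-curve machinery propagates optimizers \emph{from right to left} in the left-oriented setting — the incoming node is~$\wl(\mathfrak{a}_k^{\mathrm r})$ and we march leftward. Second, I would start with the given non-increasing optimizer~$\psi$ for~$\wl(\mathfrak{a}_k^{\mathrm r})$, satisfying~$\psi \succeq g(\mathfrak{a}_k^{\mathrm r})$, and feed it into Proposition~\ref{OptimizersLeftTangentDomain} applied to the rightmost left tangent domain~$\Lt(\mathfrak{a}_{k-1}^{\mathrm l},\mathfrak{a}_k^{\mathrm r})$ (or the appropriate piece from~\eqref{LMultitrolleybusDesintegration}), obtaining optimizers for every point of that domain together with a new non-increasing optimizer for its outgoing node~$\wl(\mathfrak{a}_{k-1}^{\mathrm l})$, which still lies~$\succeq g(\mathfrak{a}_{k-1}^{\mathrm l})$ by the conclusion of that proposition. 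Third, I would feed that into Proposition~\ref{OptimizersTrolleybusL} for the adjacent left trolleybus, obtaining optimizers inside it plus a non-increasing optimizer~$\psi'$ at~$\wl$ of its left endpoint, again~$\succeq g$ of that endpoint. Iterating this alternation (Proposition~\ref{OptimizersLeftTangentDomain} for the single-root tangent pieces, Proposition~\ref{OptimizersTrolleybusL} for the trolleybuses) walks us down to the leftmost arc; at each stage the monotonicity hypotheses of the next proposition are exactly the conclusions of the previous one, so the induction closes. Every point of~$\MTTL$ lies in one of the subdomains of the decomposition, so each receives an optimizer; monotonicity and the bound~$\vf_x \succeq g(\mathfrak{a}_1^{\mathrm l})$ are inherited from the last step.

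The main obstacle, as in the right-handed case, is purely bookkeeping: one must check that at each junction the "outgoing node / incoming node" identification is valid, i.e. that the optimizer produced at the outgoing node of one figure genuinely satisfies the side condition ($\psi \succeq g(u)$ of the appropriate parameter) required as the incoming hypothesis of the next figure, and that the degenerate pieces~$\MTTL(\{\mathfrak{a}_i\})$ on single solid roots are correctly handled as tangent domains (here one uses that the restriction of~$B$ to such a piece is a left tangent Bellman candidate, exactly as in the proof of Proposition~\ref{OptimizersMultitrolleybusR}). Since this is symmetric in every detail to Proposition~\ref{OptimizersMultitrolleybusR}, I would present it briefly, stating that the proof is obtained from that of Proposition~\ref{OptimizersMultitrolleybusR} by reflecting left and right and invoking~\eqref{LMultitrolleybusDesintegration}, Proposition~\ref{OptimizersTrolleybusL}, and Proposition~\ref{OptimizersLeftTangentDomain} in place of their right-handed analogues.
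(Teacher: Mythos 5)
Your proposal is correct and is exactly the argument the paper intends: the paper omits the proof of this left version entirely, relying on symmetry with Proposition~\ref{OptimizersMultitrolleybusR}, whose proof is precisely the disintegration via~\eqref{LMultitrolleybusDesintegration} (resp.~\eqref{RMultitrolleybusDesintegration}) into alternating trolleybuses and single-root multitrolleybuses treated as tangent domains, followed by consecutive application of the trolleybus and tangent-domain optimizer propositions. Your only slip is labeling the rightmost piece as $\Lt(\mathfrak{a}_{k-1}^{\mathrm l},\mathfrak{a}_k^{\mathrm r})$ rather than $\MTTL(\{\mathfrak{a}_k\})$ viewed as a tangent domain over $[\mathfrak{a}_k^{\mathrm l},\mathfrak{a}_k^{\mathrm r}]$, but your parenthetical hedge covers this and the argument is unaffected.
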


\begin{Def}\label{Rem020401}
Let $w_-$ and $w_+$ be the the left and the right endpoints of the arc of the free boundary that is 
the part of the boundary of a trolleybus \textup(or a multitrolleybus\textup). These points are called the 
\emph{incoming} and \emph{outgoing nodes} of the trolleybus \textup(or the multitrolleybus\textup)\textup: for the case 
of $\RTroll$ \textup(and $\MTTR$\textup) the incoming node is $w_-$ and outgoing is $w_+,$ and vice versa 
for the case of $\LTroll$ \textup(and $\MTTL$\textup).
\end{Def}

\begin{St}\label{OptimizersMultiBirdie}\index{multibirdie}
Let~$B$ be the standard candidate in a multibirdie~$\MTB(\{\mathfrak{a}_i\}_{i=1}^k)$. 
Let~$\psir$ be a non-decreasing optimizer for the point~$\wr(\mathfrak{a}_1^{\mathrm l})$ such 
that~$\psir\preceq g(\mathfrak{a}_1^{\mathrm l})$\textup, 
let~$\psil$ be a non-increasing optimizer for the point~$\wl(\mathfrak{a}_k^{\mathrm r})$ such 
that~$\psil\succeq g(\mathfrak{a}_k^{\mathrm r})$. Then\textup, there exists an optimizer for 
every point~$x$ in $\MTB(\{\mathfrak{a}_i\}_{i=1}^k)$.
\end{St}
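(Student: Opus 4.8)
\textbf{Proof plan for Proposition~\ref{OptimizersMultiBirdie}.} The plan is to mimic the proof of the corresponding statement in the BMO case, using the disintegration formula~\eqref{MultibirdieDesintegration}, which expresses a multibirdie as a right multitrolleybus, then a (possibly degenerate) right tangent domain, then a single birdie or angle on $\mathfrak{a}_j$, then a left tangent domain, then a left multitrolleybus. The key point is that Proposition~\ref{OptimizersAngle}, Proposition~\ref{OptimizersRightTangentDomain}, Proposition~\ref{OptimizersLeftTangentDomain}, Proposition~\ref{OptimizersMultitrolleybusR}, and Proposition~\ref{OptimizersMultitrolleybusL} have been proved, and each of them takes an optimizer at an incoming node and produces optimizers at all points of the figure together with an optimizer at the outgoing node. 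So the strategy is simply to chain these statements along the disintegration, moving the optimizer through the sequence of figures from both sides toward the central angle/birdie.

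First I would fix $x\in\MTB(\{\mathfrak{a}_i\}_{i=1}^k)$ and choose $j$ so that $x$ lies in the union of the first $j-1$ trolleybuses together with $\RTroll$-part, the angle $\Ang(u)$ over the arc $\mathfrak{a}_j$ (or in $\MTB(\{\mathfrak a_j\})$ if $\mathfrak a_j$ is a solid root; in that case one uses~\eqref{AnglePlusMultiTrollebusL} or~\eqref{AnglePlusMultiTrollebusR} to decompose further, so without loss of generality $\mathfrak a_j$ is a single point and the central figure is an angle), or in the left part. Starting from $\psir$, a non-decreasing optimizer at $\wr(\mathfrak a_1^{\mathrm l})$ with $\psir\preceq g(\mathfrak a_1^{\mathrm l})$, Proposition~\ref{OptimizersMultitrolleybusR} (applied to $\MTTR(\{\mathfrak a_i\}_{i=1}^{j-1}\cup\{\mathfrak a_j^{\mathrm l}\})$, interpreting the multitrolleybus-on-a-single-point pieces as tangent domains via Proposition~\ref{OptimizersRightTangentDomain}) yields non-decreasing optimizers at all points of that sub-union, in particular at its outgoing node, which by construction is $\wr(u)$ for the tangency point of the left boundary $\Sr$ of $\Ang(u)$ — more precisely, after passing through the degenerate tangent $\Rt(\mathfrak a_j^{\mathrm l},\mathfrak a_j^{\mathrm l})$, at $\wr$ of the angle; and the output optimizer satisfies $\preceq g(u)$ pointwise. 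Symmetrically, starting from $\psil$ at $\wl(\mathfrak a_k^{\mathrm r})$ with $\psil\succeq g(\mathfrak a_k^{\mathrm r})$, Proposition~\ref{OptimizersMultitrolleybusL} produces non-increasing optimizers with the correct one-sided comparison, supplying a non-increasing optimizer at $\wl$ of the angle with $\psil'\succeq g(u)$.

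With these two optimizers at the incoming nodes $\wr,\wl$ of $\Ang(u)$ in hand, Proposition~\ref{OptimizersAngle} delivers an optimizer for every point of the angle. For the points in one of the side trolleybuses, the optimizer was already produced at the corresponding step of the multitrolleybus disintegration. Thus every $x\in\MTB(\{\mathfrak a_i\}_{i=1}^k)$ gets an optimizer. I would also record, as in the side lemmas, the one-sided comparison with $g(\mathfrak a_1^{\mathrm l})$ or $g(\mathfrak a_k^{\mathrm r})$ that propagates through each piece, so that the resulting optimizers are usable as inputs for the gluing in Section~\ref{s53}.

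The main obstacle is purely bookkeeping: one must verify that the incoming node of each figure in the disintegration~\eqref{MultibirdieDesintegration} is literally the outgoing node of the preceding figure, and that the required monotonicity and the one-sided inequalities ($\psi\preceq g(\cdot)$ on the right side, $\psi\succeq g(\cdot)$ on the left side) are exactly what the next proposition demands — these hypotheses are what make the chaining legitimate. The fact that $B$ restricted to each piece of the decomposition is the standard candidate on that piece (which is what the side propositions require) follows from the meaning of $\biguplus$ explained on page~\pageref{biguplusdef}, since $B$ is by hypothesis the standard candidate on the whole multibirdie and on every surrounding chordal domain. One subtlety worth a sentence: when $\mathfrak a_j$ is a solid root, the central figure is itself a multibirdie-on-one-arc, which is either an angle (if one reinterprets via the $\biguplus$-formulas) or must be handled by a further application of~\eqref{AnglePlusMultiTrollebusL}/\eqref{AnglePlusMultiTrollebusR}; either way it reduces to the angle case already settled. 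No genuinely new estimate is needed — everything reduces to invoking the already-established local optimizer propositions in the order dictated by the disintegration.
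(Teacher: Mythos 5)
Your proposal is correct and follows essentially the same route as the paper: decompose the multibirdie via~\eqref{MultibirdieDesintegration} into two multitrolleybuses and a central angle (or birdie on a solid root $\mathfrak{a}_j$), then chain Propositions~\ref{OptimizersMultitrolleybusR}, \ref{OptimizersMultitrolleybusL}, and~\ref{OptimizersAngle} through the incoming/outgoing nodes. The only cosmetic difference is that the paper applies the disintegration with an arbitrary fixed~$j$ rather than choosing~$j$ adapted to~$x$; this does not matter since the three pieces together cover the whole multibirdie.
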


\begin{proof}
We apply~\eqref{MultibirdieDesintegration} (with any choice of~$j$) and decompose the multibirdie into 
two multitrolleybuses and an angle. Applying corresponding previous propositions for each of the domains, 
we build optimizers for all the points inside~$\MTB(\{\mathfrak{a}_i\}_{i=1}^k)$.  
\end{proof}

\begin{Def}
The points $\wr(\mathfrak{a}_1^{\mathrm l})$ and $\wl(\mathfrak{a}_k^{\mathrm r})$ 
in Proposition~\ref{OptimizersMultiBirdie} are called the \emph{incoming nodes} of 
the multibirdie~$\MTB(\{\mathfrak{a}_i\}_{i=1}^k)$.
\end{Def}

\section{Global optimizers}\label{s53}
Before passing to global optimizers, we collect the information about \index{incoming node}incoming 
and \index{outgoing node}outgoing nodes for a vertex or an edge in the foliation graph. These nodes are 
points on the free boundary that are used to transfer the delivery curve from one domain to another. 
For some figures, the optimizers inside them are constructed by using the 
optimizer coming from the right or from the left (for example, to build optimizers in 
a trolleybus, 
we need an optimizer for a special point on the free boundary, see Propositions~\ref{OptimizersTrolleybusR} 
and~\ref{OptimizersTrolleybusL}). The incoming node is the point we start the delivery curve from, 
whereas the outgoing node is the point where it leaves the figure. In the table below, we give a precise 
description. We extend our notation from Definition~\ref{Rem020401} for other domains~$\mathfrak{L}$: let $w_-$ 
and $w_+$ be the the left and the right endpoints of the arc of the free boundary $\dfree\Omega$ which is 
the part of the boundary of~$\mathfrak{L}$. 

\medskip
\centerline{
\begin{tabular}{|p{1.6cm}|p{1.7cm}|p{2.0cm}|p{2.0cm}|p{2.0cm}|p{2.0cm}|}
\hline 
&{\small \rule{32pt}{0pt} Long chord}&
\rule{32pt}{0pt} $\scriptstyle\MTC(\{\mathfrak{a}_i\}_{i=1}^k)$&
$\scriptstyle\MTTR(\{\mathfrak{a}_i\}_{i=1}^k)$ $\scriptstyle\RTroll(u_1,u_2)$ $\scriptstyle\Rt(u_1,u_2)$&
$\scriptstyle\MTTL(\{\mathfrak{a}_i\}_{i=1}^k)$ $\scriptstyle\LTroll(u_1,u_2)$ $\scriptstyle\Lt(u_1,u_2)$&
$\scriptstyle\Ang(w)$ $\scriptstyle\MTB(\{\mathfrak{a}_i\}_{i=1}^k)$
\\
\hline
\hfil Outgoing \hfil& \hfil$w_-=w_+$ \hfil& \hfil$w_-, w_+$ \hfil& \hfil$w_+$ \hfil&\hfil $ w_-$ \hfil&
\\
\hline
\hfil Incoming \hfil& & &\hfil$w_-$\hfil& \hfil$ w_+$ \hfil& \hfil$w_-, w_+$\hfil
\\
\hline
\end{tabular}
}
\medskip

Other figures (separated from the upper boundary) have no incoming or outgoing nodes at all. Now we see 
that the propositions of Subsections~\ref{s521} and~\ref{s522} are of the following form: if there is 
a monotone optimizer for the incoming node satisfying a certain restriction (if there are incoming nodes 
for the figure in question), then we can build monotone optimizers satisfying a similar restriction for 
all the points in the domain, in particular for the outgoing nodes (if there are any). These propositions are well suited for induction. 
To state the general theorem about optimizers, we need Definition~\ref{Admissible graph} 
(see Remark~\ref{AdmissibleCondidate} and the paragraph before it, where it is explained that an 
admissible graph generates the Bellman candidate).

\begin{Th}\label{GlobalOptimizer}
Let~$\eps < \epsmax$\textup, let~$\Gamma(\eps)$ be an admissible graph for~$\eps$ and~$f$. 
The Bellman candidate~$B_\eps$ generated by~$\Gamma(\eps)$ admits an optimizer at every point of~$\Omega_{\eps}$.  
\end{Th}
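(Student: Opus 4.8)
The plan is to prove Theorem~\ref{GlobalOptimizer} by induction on the tree structure of the admissible graph~$\Gamma(\eps)$, using the local optimizer constructions of Section~\ref{s52} as the induction steps. The key observation is that the propositions of Subsections~\ref{s521} and~\ref{s522} all have the same logical shape: \emph{given a monotone optimizer for each incoming node of a figure (satisfying an ordering restriction of the form $\psi\preceq g(u)$ or $g(u)\preceq\psi$), there exists a monotone optimizer for every point of the figure, and in particular for every outgoing node (again satisfying the analogous ordering restriction).} Since every outgoing node of one figure is simultaneously an incoming node of the adjacent figure, and these matchings are recorded precisely by the edges of $\Gamma(\eps)$ together with the incoming/outgoing node table on page before the theorem, one can propagate optimizers along the edges of the graph starting from the roots.

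First I would set up the induction. Recall from Subsection~\ref{s345} that the graph is a finite oriented tree whose subgraph $\GammaFree$ spanned by tangent-domain edges, ignoring orientation, is a path, with the roots being fictitious vertices of the first and third types (long chords), vertices of multicups, and possibly the vertices at infinity; and whose subgraph $\GammaFixed$ spanned by chordal-domain edges is a forest oriented away from $\GammaFree$. I would first construct optimizers for all the figures \emph{not} adjacent to the free boundary, i.e.\ closed multicups and the chordal domains: by Propositions~\ref{OptimizersChordalDomain} and~\ref{OptimizersClosedMulticup} these are handled directly by linearity, with no incoming data required, and the resulting optimizers are monotone. Simultaneously this yields optimizers at the outgoing nodes of long chords and multicups (which sit on $\GammaFree$). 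Then I would process $\GammaFree$ as a path: each root of $\GammaFree$ either needs no incoming data (multicups via Proposition~\ref{OptimizersMulticup}, vertices at infinity via Propositions~\ref{OptimizerRightTangentsInfty} and~\ref{OptimizerLeftTangentsInfty}, long chords via Proposition~\ref{OptimizersChordalDomain}) or is a fictitious vertex whose optimizer comes from an already-treated chordal domain below it. From each such root the delivery curve is pushed along the path through tangent domains (Propositions~\ref{OptimizersRightTangentDomain}, \ref{OptimizersLeftTangentDomain}), trolleybuses and multitrolleybuses (Propositions~\ref{OptimizersTrolleybusR}--\ref{OptimizersMultitrolleybusL}), until it reaches a leaf of $\GammaFree$, which is an angle, a birdie, a multibirdie, or a vertex at infinity; these leaves are exactly the places where two delivery curves (one coming from the left, one from the right) meet, and Propositions~\ref{OptimizersAngle} and~\ref{OptimizersMultiBirdie} glue them into optimizers for every point of the leaf figure. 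Because $\GammaFree$ is a path with leaves only at its two ends (and at multicup/multitrolleybus vertices lasting to infinity, which are themselves roots needing no incoming data), every edge is reached and every point of $\Omega_\eps$ lies in some figure whose optimizer has been constructed.

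The main technical point to verify — and the step I expect to be the chief obstacle — is the \textbf{compatibility of the ordering restrictions} at each gluing. The local propositions require, for instance, that the incoming optimizer $\psi$ at $\wr(u_1)$ satisfy $\psi\preceq g(u_1)$; I must check that the optimizer produced at the outgoing node of the \emph{previous} figure indeed satisfies this. For tangent domains this is built into the conclusion ($\vf_x\preceq g(u_2)$), for trolleybuses likewise, and for the root figures one checks it directly from the explicit formulas: e.g.\ for a multicup the constructed optimizer satisfies $g(\mathfrak{a}_1^{\mathrm l})\preceq\vf_x\preceq g(\mathfrak{a}_k^{\mathrm r})$ pointwise, and the tangency point $\wr(\mathfrak{a}_k^{\mathrm r})$ lies to the right of $g(\mathfrak{a}_k^{\mathrm r})$ by the geometry of $\Omega$, so the restriction $\vf_x\preceq g(\mathfrak{a}_k^{\mathrm r})$ that one needs to start the next tangent domain is exactly what was produced. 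One also needs to know that the incoming node of each figure coincides with the outgoing node of its neighbor \emph{as a point on $\dfree\Omega$}; this is forced by the non-degeneracy force Condition~\ref{NonDegeneracyForceCondition}, which guarantees the tangent domains genuinely abut the figures they are supposed to, and by the construction of the graph in Subsection~\ref{s345}. The delicate sub-cases are the infinite tangent domains, where Condition~\eqref{eq111001}/\eqref{eq111002} is used (via Lemmas~\ref{Lem111001}, \ref{Lem190701}) to ensure the delivery curve emanating from infinity is an honest test function reaching the incoming node in finite "time"; this was already fully handled in Propositions~\ref{OptimizerRightTangentsInfty} and~\ref{OptimizerLeftTangentsInfty}, so here it only has to be invoked. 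Finally, since the graph is finite and every figure has only finitely many incoming nodes, the induction terminates, and each point $x\in\Omega_\eps$ receives an optimizer $\vf_x$ with $\av{\vf_x}{}=x$ and $B_\eps(x)=\av{\ff(\vf_x)}{}$, which is exactly the assertion of Theorem~\ref{GlobalOptimizer}. Combined with $\Bell\le B_\eps$ from Proposition~\ref{Summability}, this also re-proves $\Bell=B_\eps$, but that consequence is drawn elsewhere.
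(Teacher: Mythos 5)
Your proposal is correct and follows exactly the strategy of the paper, which itself disposes of this theorem in one line ("inductive application of the propositions from Subsections~\ref{s521} and~\ref{s522}") and defers the details to Theorem~5.3.1 of~\cite{ISVZ2018}. What you have written is a faithful and accurate expansion of that induction — roots of $\GammaFixed$ and $\GammaFree$ first, propagation of monotone optimizers along tangent-domain edges with the ordering restrictions matched at each incoming/outgoing node, meeting at the leaves (angles, birdies, multibirdies) — so no further comment is needed.
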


\begin{proof}
The proof of the theorem consists in inductive application of the propositions from Subsection~\ref{s521} 
and~\ref{s522}. The details are the same as in the partial case of parabolic strip, see Theorem~5.3.1 in~\cite{ISVZ2018}.
\end{proof}

Theorem~\ref{GlobalOptimizer} we have just proved justifies the arguments given after Theorem~\ref{MT} 
on page~\pageref{MT}. Moreover, the candidate~$B_\eps$ constructed by the admissible graph~$\Gamma(\eps)$ 
coincides with the Bellman function $\Bell$ and with the minimal locally concave function $\BG_{\Omega_\eps,f}$.

\begin{Th}\label{Final}
For any~$\eps < \epsmax$ the constructed Bellman candidate~$B_{\eps}$ coincides with~$\Bell$ 
and~$\BG_{\Omega_\eps,f}$. 
\end{Th}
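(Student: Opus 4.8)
The plan is to assemble Theorem~\ref{Final} from three ingredients that are essentially already in place. First, by Theorem~\ref{BC} and Remark~\ref{AdmissibleCondidate}, for every $\eps<\epsmax$ we have an admissible graph $\Gamma(\eps)$ and the associated Bellman candidate $B_\eps$, which is a continuous, locally concave, $C^1$-smooth function on $\Omega_\eps$ satisfying the boundary condition $B_\eps|_{\dfi\Omega_\eps} = \ff$. Second, the inequality $\Bell(\,\cdot\,;f)\le B_\eps$ follows from Proposition~\ref{Summability}: since $f$ satisfies Conditions~\ref{reg} and~\ref{sum}, by Remark~\ref{Rem100301} $f$ does not change sign outside a compact set, so one finds $\tilde f$ satisfying Conditions~\ref{reg},~\ref{sum} that agrees with $\max(f,0)$ off a compact set; the candidate $B_\eps^{\tilde f}$ built for $\tilde f$ is then a finite locally concave majorant of $\tilde f$, hence $\BG_{\Omega_\eps,\tilde f}$ is finite, whence $\BG_{\Omega_\eps,\max(f,0)}$ is finite; Proposition~\ref{Summability} then gives that $\av{\ff(\vf)}{I}$ is well defined for all $\vf\in\Class_{\Omega_\eps}$ and that $\Bell(\,\cdot\,;f) = \BG_{\Omega_\eps,f}$. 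In particular $B_\eps\in\Lambda_{\Omega_\eps,f}$ (it is a locally concave function majorizing $\ff$ on the fixed boundary — here equality holds), so $\BG_{\Omega_\eps,f}\le B_\eps$, and combining, $\Bell(\,\cdot\,;f) = \BG_{\Omega_\eps,f}\le B_\eps$.

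Third, the reverse inequality $B_\eps\le\Bell(\,\cdot\,;f)$ comes from Theorem~\ref{GlobalOptimizer}: for each $x\in\Omega_\eps$ there is an optimizer $\vf_x\in\Class_{\Omega_\eps}$ with $\av{\vf_x}{I}=x$ and $B_\eps(x) = \av{\ff(\vf_x)}{I}$. By the very definition~\eqref{Bf} of the Bellman function, $\av{\ff(\vf_x)}{I}\le\Bell(x;f)$, hence $B_\eps(x)\le\Bell(x;f)$ for every $x$. Putting the two inequalities together yields $B_\eps = \Bell(\,\cdot\,;f) = \BG_{\Omega_\eps,f}$ on $\Omega_\eps$, which is exactly Theorem~\ref{Final}.

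So the proof is really just the sentence ``combine Theorem~\ref{BC}, Proposition~\ref{Summability}, and Theorem~\ref{GlobalOptimizer}''; I would write it out in this three-step form, being careful to spell out (as above) why the finiteness hypothesis of Proposition~\ref{Summability} is met, since that is the only point requiring a small argument rather than a direct citation. The genuinely hard work — construction of $B_\eps$ via the evolution (Chapter~\ref{C4}) and construction of the optimizers (Chapter~\ref{C5}) — has already been carried out in Theorems~\ref{BC} and~\ref{GlobalOptimizer}, so for this final statement there is no remaining obstacle: it is a bookkeeping assembly of results. If one wanted to be scrupulous, the one place to double-check is that the $B_\eps$ fed into Theorem~\ref{GlobalOptimizer} is literally the same object as the $B_\eps$ appearing in the inequality $\Bell\le B_\eps$ — but both are defined as ``the candidate generated by the admissible graph $\Gamma(\eps)$'', so this is immediate.

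I would also note, as a closing remark mirroring the text after Theorem~\ref{MT}, that this simultaneously proves Theorem~\ref{MT}: the equality $\Bell(\,\cdot\,;f)=\BG_{\Omega_\eps,f}$ is obtained as a byproduct, so no separate argument for Theorem~\ref{MT} is needed beyond what is assembled here.
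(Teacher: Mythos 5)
Your proposal is correct and matches the paper's own argument: the text following Theorem~\ref{MT} carries out exactly your steps one and two (building $\tilde f$, deducing finiteness of $\BG_{\Omega_\eps,\max(f,0)}$, and invoking Proposition~\ref{Summability} to get $\Bell=\BG_{\Omega_\eps,f}\le B_\eps$), and the paragraph preceding Theorem~\ref{Final} supplies step three via Theorem~\ref{GlobalOptimizer}. Nothing is missing; this is the same bookkeeping assembly the authors intend.
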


\section{Remarks concerning global conditions}\label{s54}

In this section we discuss conditions~\eqref{eq111001} and~\eqref{eq111002} introduced without any explanations. 

As the reader has seen, these technical conditions were used only during different steps of constructing 
optimizers. Here we discuss what happens when they fail. We note 
that construction of optimizers do not use any evolutional arguments and deals with the fixed $\eps$. 

\subsection{Behavior on $-\infty$}

Since we have the right-left symmetry, let us consider the behavior on $-\infty$, namely condition~\eqref{eq111001}. 
In what follows we omit the index $\mathrm{R}$,  i.\,e., we write $\lambda$, $w$, 
$\kappa$ instead of $\lamr$, 
$\wr$, $\kappar$. 

If~\eqref{eq111001} is not fulfilled, and the foliation for the function $B$ contains a domain~$\Rt(-\infty,v)$ 
of the right tangents coming from $-\infty$, the statement of Proposition~\ref{OptimizerRightTangentsInfty} is 
not valid in the following sense: there are no optimizers~$\vf_x$ for $x \in \Rt(-\infty,v) \setminus \dfi\Omega$. However, for any such $x$
there is a sequence of test functions $\vf_n \in \Class_{\Omega}$ such that 
$\av{\vf_n}{}=x$ and $B(x) = \lim_{n \to +\infty} \av{\ff(\vf_n)}{}$. We call such $\{\vf_n\}_{n}$ an \emph{optimizing sequence}\index{optimizer! optimizing sequence} for $x$. 
All the propositions from Subsection~\ref{s52} remain true if we replace the word optimizer by optimizing sequence. 

We give some details on how to construct these functions $\vf_n$ for $x = w(v)$. 
Consider sequences $v_n^-, v_n^+ \in \mathbb{R}$ such that $v_n^- < v_n^+ < v$, 
$v_n^+\to - \infty$, and the chord $[g(v_n^-),g(v_n^+)]$ is tangent to $\dfree\Omega$. The tangent point 
is $w(v_n^+)$. Consider the domain $\Rt(v_n^+,v)$ and let $\gamma_n$ be the delivery curve that starts at 
the point $g(v_n^-)$ and goes to $w(v_n^+)$ along the chord, and then proceeds along $\dfree\Omega$ till 
the point $w(v)$. Build a test function $\vf_n$ on $[0,1]$ that generates this curve $\gamma_n$:  
\eq{eq290602}{
\vf_n(t) =
\begin{cases}
g(v_n^-), & t \in [0, \alpha_1);
\\
g(v_n^+), & t \in [\alpha_1, \alpha_2);
\\
g(u(t)), & t \in [\alpha_2,1].
\end{cases}
}
The parameters $u=u(t)$ and $t=t(u)$ are related as in~\eqref{eq070602}:
\eq{eq280601}{
\log t = -\int_{u(t)}^{v}\lambda, \qquad t \in [\alpha_2,1],
}
and $u(\alpha_2) = v_n^+$, i.\,e.,
\eq{eq290603}{
\alpha_2 = \exp\Big(\!-\!\int_{v_n^+}^{v}\lambda\Big).
}
The value of $\alpha_1$ is determined by the relation that represents the point $w(v_n^+)$ as a convex 
combination of $g(v_n^-)$ and $g(v_n^+)$:
$$
\frac{\alpha_1}{\alpha_2}g(v_n^-) + \frac{\alpha_2-\alpha_1}{\alpha_2}g(v_n^+) = w(v_n^+),
$$
namely,
\eq{eq201201}{
\frac{\alpha_1}{\alpha_2} = \frac{g_1(v_n^+)-w_1(v_n^+)}{g_1(v_n^+)-g_1(v_n^-)}\,.
}

\begin{Rem}\label{rem010701}
Since $\vf_n$ defined by~\eqref{eq290602} generates the delivery curve $\gamma_n$, we have $
\av{g(\vf_n)}{[0,1]} = w(v).
$
Moreover, Corollary~\ref{DeliveryCurveCorollary} guaranties 
$\vf_n$ lies in $\Class_\Omega$, therefore it is a test function for $w(v)$.
\end{Rem}

The following theorem generalizes Proposition~\ref{OptimizerRightTangentsInfty} for the case 
when condition~\ref{eq111001} fails.

\begin{Th}\label{th120701}
Let~$B$ be the standard candidate on~$\Rt(-\infty, v)$. Then\textup, for every point~$x \in \Rt(-\infty,v)$ 
there exists a sequence of non-decreasing test functions~$\vf_{n,x}$ being an optimizing sequence for~$B$ at $x$\textup, 
moreover\textup,~$\vf_{n,x} \preceq g(v)$.
\end{Th}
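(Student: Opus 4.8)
The strategy is to mimic the proof of Proposition~\ref{OptimizerRightTangentsInfty}, replacing the single optimizer by the approximating sequence $\vf_{n,x}$ built from the truncated delivery curves $\gamma_n$ described in~\eqref{eq290602}. First I would settle the point $x = w(v)$ on the free boundary; once this is done, the general case $x \in \Rt(-\infty,v)$ follows exactly as in Proposition~\ref{OptimizersRightTangentDomain}: pick any finite $u_1$ with $x \in \Rt(u_1,v)$, extend each $\vf_{n,w(v)}$ by the constant $g(u)$ (where $x \in S(u)$) to an interval of appropriate length, and check via Lemma~\ref{DeliveryCurveLemma} and the gradient-constancy argument (the identity~\eqref{eq012801}) that the resulting functions are test functions whose averages of $\ff$ converge to $B(x)$. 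So the heart of the matter is the point $w(v)$.

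For $x = w(v)$, I would proceed in two steps. Step one: verify that each $\vf_n$ from~\eqref{eq290602} is a non-decreasing test function for $w(v)$ with $\vf_n \preceq g(v)$. Non-decreasingness is immediate from $v_n^- < v_n^+$ and the monotonicity of $u(t)$; the fact that $\vf_n \preceq g(v)$ follows because $u(t) \le v$ on $[\alpha_2,1]$ by~\eqref{eq280601}. That $\vf_n \in \Class_\Omega$ and $\av{\vf_n}{[0,1]} = w(v)$ is Remark~\ref{rem010701}: the curve $\gamma_n$ is convex (it is a chord followed by an arc of the convex free boundary) and all its tangents avoid $\Xi$, so Corollary~\ref{DeliveryCurveCorollary} applies. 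Step two: compute $\av{\ff(\vf_n)}{[0,1]}$ and show it tends to $B(w(v))$ as $n \to \infty$. Splitting the integral over $[0,\alpha_1]$, $[\alpha_1,\alpha_2]$, $[\alpha_2,1]$, the first two contributions are $\alpha_1 f(v_n^-) + (\alpha_2 - \alpha_1) f(v_n^+)$, and using~\eqref{eq201201} this equals $\alpha_2\bigl[\text{(affine combination matching the chord)}\bigr]$; since $[g(v_n^-),g(v_n^+)]$ is tangent to the free boundary, this is precisely $\alpha_2 \cdot B(w(v_n^+))$ up to the value transported along the chord, so it matches the standard-candidate value at $w(v_n^+)$. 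The third contribution, by the change of variables $\tau = u(s)$ exactly as in~\eqref{eq070604}, is $\int_{v_n^+}^{v} f(\tau)\lambda(\tau)\exp\bigl(-\int_\tau^v \lambda\bigr)\,d\tau$ times the appropriate exponential weight. Assembling these and using representation~\eqref{fenceB2} for $B$ in terms of $\beta_2$, one gets
\[
\av{\ff(\vf_n)}{[0,1]} = B(w(v)) + (\text{error term involving } w(v_n^+) \text{ and } f(v_n^+)),
\]
and I would show the error term vanishes as $v_n^+ \to -\infty$. Here is where condition~\eqref{eq111001} is \emph{not} used: unlike in Proposition~\ref{OptimizerRightTangentsInfty}, the exponential weight $\exp(-\int_{v_n^+}^v \lambda) = \alpha_2$ is a genuine number in $(0,1]$ that tends to $0$ (this uses only $\int_{-\infty}^v \lambda = +\infty$, which holds for a standard candidate on $\Rt(-\infty,v)$), and it multiplies the boundary terms $w(v_n^+)$ and $f(v_n^+)$ which may individually blow up but are killed by $\alpha_2$ precisely because the chord construction keeps everything below the tangent line.

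\textbf{The main obstacle.} The delicate point is controlling the boundary error terms at $v_n^+$ without the decay hypothesis~\eqref{eq111001}. When~\eqref{eq111001} holds, one has genuine convergence of the tail integrals and pointwise control of $w(v_n^+)\exp(-\int_{v_n^+}^v\lambda)$; without it, $\int_{-\infty}^v f'(\tau)\exp(-\int_\tau^v\lambda)\,d\tau$ may fail to converge absolutely, and $\beta_2$ as given by~\eqref{eq120701} is merely known to be finite by the standing assumption that $B$ is a standard candidate on $\Rt(-\infty,v)$. The resolution is that we never need the improper integral from $-\infty$: each $\vf_n$ only involves the \emph{finite} interval $[v_n^+, v]$, so every integral in sight is proper, and the passage to the limit happens at the very end, where finiteness of $\beta_2$ (equivalently, convergence of the integral in Lemma~\ref{Lem190701}) guarantees that $B(w(v))$ is the limit of the truncated expressions. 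I would therefore be careful to run the integration-by-parts of Lemma~\ref{Lem111001} only on $[v_n^+, v]$ (where it is elementary) and invoke finiteness of $\beta_2$ — not condition~\eqref{eq111001} — to conclude $\av{\ff(\vf_n)}{[0,1]} \to B(w(v))$. Finally, the monotonicity $\vf_{n,x}\preceq g(v)$ is preserved under the constant extension used in the general case, which completes the statement.
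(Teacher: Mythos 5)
Your scaffolding is the same as the paper's: reduce to the point $x=w(v)$, use the truncated test functions $\vf_n$ of~\eqref{eq290602} (which are test functions by Remark~\ref{rem010701}), compute $\av{\ff(\vf_n)}{[0,1]}$ on the finite interval $[v_n^+,v]$, and pass to the limit. But the convergence argument you sketch contains two genuine errors, and they are exactly where the difficulty of the theorem lives.

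First, you assert that $\alpha_2=\exp\bigl(-\int_{v_n^+}^v\lambda\bigr)\to 0$ because ``$\int_{-\infty}^v\lambda=+\infty$ holds for a standard candidate on $\Rt(-\infty,v)$.'' That is false: the divergence of $\int_{-\infty}^v\lambda$ is part of condition~\eqref{eq111001}, which is precisely what is being dropped here. The paper's case \textbf{(A)} is the case $\int_{-\infty}^v\lambda<+\infty$, where a standard candidate can perfectly well exist (Propositions~\ref{st130701} and~\ref{st130703}); there $\alpha_2$ stays bounded away from $0$, and convergence instead rests on Lemma~\ref{lem290601} ($g(-\infty)=w(-\infty)$ finite, so $\vt_1(-\infty)=0$) together with the fact, forced by the existence of the standard candidate, that $f(-\infty)$ is finite. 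Your argument simply does not cover this case.

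Second, and more importantly, the ``error term'' is \emph{not} killed in the limit in general. Writing $\av{\ff(\vf_n)}{[0,1]}=-\kappa_3(v_n)\vt_1(v_n^+)+f(v)-\int_{v_n^+}^v f'\exp\bigl(-\int_\cdot^v\lambda\bigr)$ as in~\eqref{eq280603}, in case \textbf{(B)} ($\vt_1(-\infty)>0$) the boundary term converges to the \emph{nonzero} number $-\kappa_3(-\infty)\vt_1(-\infty)$, and in case \textbf{(C)} one must rewrite it via $\vt_2$ to get the surviving term $-\tors(-\infty)\vt_2(-\infty)$. These limits do not vanish; they must be shown to coincide exactly with the boundary terms that appear in $\beta_2$ from~\eqref{eq120701} when the integrations by parts of Lemma~\ref{Lem111001} no longer have vanishing boundary contributions at $-\infty$ --- this is the content of the matching identity~\eqref{eq110704}, whose verification needs the L'H\^opital computation $\kappa_2/\kappa\to 0$ at $-\infty$. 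Moreover, to know that these surviving terms are even finite you need the structural facts that the existence of the standard candidate forces $f(-\infty)>-\infty$ in case \textbf{(A)}, $\kappa_3(-\infty)<+\infty$ in case \textbf{(B)}, and $\tors(-\infty)>-\infty$ in case \textbf{(C)} (Propositions~\ref{st130701}, \ref{st130705}, \ref{st130708}), none of which appear in your proposal. Without the case split governed by the limits of $\vt_1$ and $\vt_2$ and without the boundary-term matching, the proof does not close.
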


The proof of Theorem~\ref{th120701} will be given for different cases of condition~\ref{eq111001} failure 
separately, namely, we consider three cases:
\begin{itemize}
\item[\textbf{(A)}] $\exp(-\int_{-\infty}^v \lambda) \ne 0$, i.\,e., the integral $\int_{-\infty}^v \lambda$ converges;
\item[\textbf{(B)}] $\int_{-\infty}^v \lambda = +\infty$, and the function \index[symbol]{$\vt_1$}
\eq{eq010701}{
\vt_1(\tau) \df \big(g_1(\tau) - w_1(\tau)\big)\exp\Big(\!-\!\int_\tau^v \lambda\;\Big)
}
does not tend to zero as $\tau \to -\infty$;\index[symbol]{$\vt_2$}
\item[\textbf{(C)}] $\int_{-\infty}^v \lambda = +\infty$, $\vt_1(-\infty) = 0$, and the function 
\eq{eq010703}{
\vt_2(\tau) \df \big(g_2(\tau) - w_2(\tau)\big)\exp\Big(\!-\!\int_\tau^v \lambda\;\Big)
}
does not tend to zero as $\tau \to -\infty$.
\end{itemize} 

\begin{Rem}
For any function $\psi \colon \mathbb{R} \to \mathbb{R}$ we use the notation $\psi(-\infty)$ either for 
finite or for infinite limit $\lim_{\tau \to -\infty} \psi(\tau)$.  We will often use this notation for the 
functions that are monotone in a neighborhood of $-\infty$, e.\,g., $f$, $\kappa_3$, $\tors$ that are 
monotone by Condition~\ref{reg} and Remark~\ref{Rem100301}, as well as for $\vt_1$ and $\vt_2$, whose 
monotonicity will be proved in Lemma~\ref{lem010701}.
\end{Rem}

\begin{Le}\label{lem010701}
The functions $\vt_1$ and $\vt_2$ defined by~\eqref{eq010701} and~\eqref{eq010703} are monotone in a neighborhood 
of $-\infty$ and have finite limits at $-\infty$. Moreover\textup, $\vt_1$ and $\vt_1'$ are positive\textup, and
\eq{eq050701}{
\sign\vt_2'=\sign\vt_2 = \sign g_2' = \sign \kappa_2 = \sign \kappa
}
in a neighborhood of $-\infty$.
\end{Le}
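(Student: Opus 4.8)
\textbf{Proof plan for Lemma~\ref{lem010701}.}

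The plan is to treat $\vt_1$ and $\vt_2$ separately, extracting the sign and monotonicity information from a direct differentiation combined with the geometric data on the domain $\Omega$ collected in Subsection~\ref{cond}. First I would differentiate $\vt_1$ using the product rule and the defining relation $\big(g_1(\tau)-w_1(\tau)\big)\lambda(\tau) = w_1'(\tau)$ (which is the first coordinate of~\eqref{eq150701}); this gives
\eq{PlanVt1Prime}{
\vt_1'(\tau) = \Big(\big(g_1'(\tau)-w_1'(\tau)\big) + \lambda(\tau)\big(g_1(\tau)-w_1(\tau)\big)\Big)\exp\Big(\!-\!\int_\tau^v\lambda\;\Big) = g_1'(\tau)\exp\Big(\!-\!\int_\tau^v\lambda\;\Big),
}
exactly as in the computation of $\vt'$ inside the proof of Lemma~\ref{Lem190701}. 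Since $g_1'>0$ by~\eqref{eq250702}, this shows $\vt_1'>0$ everywhere (not just near $-\infty$), so $\vt_1$ is increasing. Positivity of $\vt_1$ itself follows because $g_1(\tau)>w_1(\tau)$ for the right tangent (the tangency point $\wr$ lies to the right of~$g$, i.e. $w_1<g_1$), and the exponential factor is positive. Being monotone and bounded below by $0$, $\vt_1$ has a finite limit at $-\infty$. (One does not even need case~\textbf{(B)} for this part; the statement holds unconditionally.)

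Next I would handle $\vt_2$. Differentiating and using the second coordinate of~\eqref{eq150701}, $\big(g_1(\tau)-w_1(\tau)\big)\lambda(\tau)\kappa(\tau) = w_2'(\tau)$ together with $w_2' = \kappa w_1'$ and $g_2' = \kappa_2 g_1'$, I would aim to show
\eq{PlanVt2Prime}{
\vt_2'(\tau) = \Big(g_2'(\tau) - w_2'(\tau) + \lambda(\tau)\big(g_2(\tau)-w_2(\tau)\big)\Big)\exp\Big(\!-\!\int_\tau^v\lambda\;\Big).
}
Here $g_2(\tau)-w_2(\tau) = \kappa(\tau)\big(g_1(\tau)-w_1(\tau)\big)$ by the definition~\eqref{eq211002} of the slope $\kappa$, so $\lambda(g_2-w_2) = \kappa\lambda(g_1-w_1) = \kappa w_1' = w_2'$, and the middle two terms cancel, leaving $\vt_2'(\tau) = g_2'(\tau)\exp(-\int_\tau^v\lambda)$. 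Thus $\sign\vt_2' = \sign g_2' = \sign\kappa_2$, and the chain of equalities in~\eqref{eq050701} reduces to the three facts: $\sign\vt_2 = \sign\vt_2'$ near $-\infty$, $\sign\kappa_2 = \sign\kappa$ near $-\infty$, and $\sign g_2' = \sign\kappa_2$ (the latter is immediate from $\kappa_2 = g_2'/g_1'$ and $g_1'>0$). The relation $\sign\kappa_2 = \sign\kappa$ near $-\infty$ is the geometric observation already used in the proof of Lemma~\ref{Lem190701}: the chord slope $\kappa$ lies between $\kappa_2(\tau)$ and $\kappa_2(\ttr)$ by~\eqref{260701}-type convexity, and as $\tau\to-\infty$ both $\kappa_2$-values have the same sign (either $\kappa_2$ is bounded, or $\kappa_2\to-\infty$ and then $\kappa<\kappa_2<0$). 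To pin down $\sign\vt_2 = \sign\vt_2'$ near $-\infty$: since $g_2'$, hence $\vt_2'$, has constant sign in a neighborhood of $-\infty$ (by convexity $\kappa_2$ is monotone, so $g_2'$ is eventually of one sign), $\vt_2$ is eventually monotone; if it were monotone with the sign opposite to $\vt_2'$ it would have to be, say, negative and increasing, but then $g_2(\tau)-w_2(\tau) = \vt_2(\tau)\exp(\int_\tau^v\lambda)$ would be negative, i.e. $w_2 > g_2$, contradicting that the tangency point $\wr$ lies below $g$ along the relevant arc — more carefully, one reads off $\sign\vt_2 = \sign(g_2-w_2) = \sign\kappa$ directly from $\vt_2(\tau) = \kappa(\tau)\vt_1(\tau)$ and $\vt_1>0$, which is the cleanest route and sidesteps the monotonicity argument entirely.

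In fact the last observation suggests streamlining the whole proof: from $g_2-w_2 = \kappa(g_1-w_1)$ we get the identity $\vt_2 = \kappa\,\vt_1$, so $\sign\vt_2 = \sign\kappa$ immediately, and differentiating $\vt_2 = \kappa\vt_1$ gives $\vt_2' = \kappa'\vt_1 + \kappa\vt_1' = \kappa'\vt_1 + \kappa g_1'\exp(-\int_\tau^v\lambda)$; combining with $\vt_2' = g_2'\exp(-\int_\tau^v\lambda) = \kappa_2 g_1'\exp(-\int_\tau^v\lambda)$ recovers $\kappa' = (\kappa_2-\kappa)g_1'/(g_1-w_1)$, consistent with~\eqref{kappa'}. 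The main obstacle — really the only non-formal point — is establishing $\sign\kappa = \sign\kappa_2$ in a neighborhood of $-\infty$, which requires invoking the convexity inequalities for the fixed boundary and the behavior of $\kappa_2$ at infinity exactly as in the treatment of~\eqref{eq111006} inside the proof of Lemma~\ref{Lem190701}; everything else is routine differentiation plus the definitions~\eqref{eq150701},~\eqref{eq211002}. Finiteness of the limits $\vt_1(-\infty)$, $\vt_2(-\infty)$ then follows since each is eventually monotone, and $\vt_1$ is bounded below by $0$ while $\vt_2 = \kappa\vt_1$ with $\kappa$ bounded near $-\infty$ (again by the convexity analysis, $\kappa$ stays between the two values of $\kappa_2$, which converge in $[-\infty,+\infty)$, and the product with the monotone $\vt_1$ is eventually monotone, hence has a limit; one checks this limit is finite using $\vt_1(-\infty)<\infty$ together with boundedness of $\kappa$, or notes that in case \textbf{(C)} we are assuming $\vt_1(-\infty)=0$ so $\vt_2(-\infty)$ is finite as well).
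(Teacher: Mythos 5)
Your computations and overall route are the same as the paper's: $\vt_1'=g_1'\exp(-\int_\tau^v\lambda)>0$ and $\vt_2'=g_2'\exp(-\int_\tau^v\lambda)$ via \eqref{eq150701}, the identity $\vt_2=\kappa\,\vt_1$ giving $\sign\vt_2=\sign\kappa$, and the convexity squeeze $\kappa_2(\tau_-)<\kappa(\tau)<\kappa_2(\tau)$ (with $g(\tau_-)$ the second intersection of the tangent line with the fixed boundary) giving $\sign\kappa=\sign\kappa_2$ near $-\infty$. All of that is correct and is exactly the paper's argument.

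The one genuine flaw is your justification that $\vt_2(-\infty)$ is finite. You appeal to ``boundedness of $\kappa$ near $-\infty$,'' but $\kappa$ is \emph{not} bounded in general: in case \textbf{(C)} of this very section one has $\kappa(-\infty)=\kappa_2(-\infty)=-\infty$ (Lemma~\ref{lem150701}), and the lemma is invoked there precisely to know $\vt_2(-\infty)$ is finite before anything about $\kappa$ is established. Your fallback, ``in case \textbf{(C)} we assume $\vt_1(-\infty)=0$, so $\vt_2(-\infty)$ is finite,'' is a non sequitur: $\vt_2=\kappa\vt_1$ is then of the form $\infty\cdot 0$, and indeed in case \textbf{(C)} the limit is finite and \emph{nonzero}. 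The correct argument is one line and follows from the sign identity you already proved: since $\sign\vt_2=\sign\vt_2'$ in a neighborhood of $-\infty$, the function $\vt_2$ is monotone there and, as $\tau\to-\infty$, moves monotonically toward the bound $0$ (decreasing and bounded below by $0$ if both signs are $+$, increasing and bounded above by $0$ if both are $-$); hence the limit exists and is finite. This is how the paper concludes, and with that substitution your proof is complete.
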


\begin{proof}
The functions $\vt_1$ is positive and increasing:
\eq{eq010702}{
\vt_1'(\tau) \stackrel{\eqref{eq011801}}{=} g_1'(\tau)\exp\Big(\!-\!\int_{\tau}^v \lambda\;\Big)>0.
}
Therefore, $\vt_1(-\infty)$ is finite.

The similar formula for the derivative of $\vt_2$,
\eq{eq070104}{
\vt_2'(\tau) = g_2'(\tau)\exp\Big(\!-\!\int_\tau^v \lambda\;\Big),
}
implies that $\sign \vt_2'(\tau) = \sign \kappa_2(\tau)$, whereas $\sign \vt_2(\tau) = \sign \kappa(\tau)$.  
From the geometric assumption on the domain~$\Omega$, the point $w(\tau)$ lies on a segment connecting 
$g(\tau)$ and some $g(\tau_-)$, $\tau_-<\tau$. It follows from convexity of $\dfi \Omega$ that 
\eq{eq010705}{
\kappa_2(\tau_-) < \kappa(\tau) < \kappa_2(\tau).
} Whence, $\sign\kappa_2=\sign\kappa$ in a neighborhood of~$-\infty$, and $\sign\vt_2'=\sign\vt_2$ as well. 
Therefore, the limit $\vt_2(-\infty)$ is finite.
\end{proof}

Preparing for the proof of Theorem~\ref{th120701}, we calculate $\av{\ff(\vf_n)}{[0,1]}$:
\begin{gather}
\label{eq290601}
\av{\ff(\vf_n)}{[0,1]} = \alpha_1 f(v_n^-) + (\alpha_2-\alpha_1)f(v_n^+) + \int_{\alpha_2}^1 f(u(t))\, dt=
\\
\notag 
\stackrel{\eqref{eq280601}}{=}\alpha_1 f(v_n^-)+(\alpha_2-\alpha_1)f(v_n^+)+\int_{v_n^+}^v f(u)\lambda(u)\exp\Big(\!-\!\int_u^v\lambda\;\Big)\,du=
\\
\notag 
=\alpha_1 f(v_n^-)+(\alpha_2-\alpha_1)f(v_n^+)+f(u)\exp\Big(\!-\!\int_u^v\lambda\;\Big)\Big|_{u=v_n^+}^{u=v}-
\int_{v_n^+}^v f'(u) \exp\Big(-\int_u^v \lambda\Big)\,du=
\\
\label{eq300601}
=\alpha_1 \big(f(v_n^-)-f(v_n^+)\big)+f(v)-\int_{v_n^+}^v f'(u)\exp\Big(\!-\!\int_u^v \lambda\;\Big)\,du=
\\
\label{eq280602}
=-\frac{f(v_n^+)-f(v_n^-)}{g_1(v_n^+)-g_1(v_n^-)}\vt_1(v_n^+)+f(v)-
\int_{v_n^+}^v f'(u)\exp\Big(\!-\!\int_u^v\lambda\;\Big)\,du\,;
\end{gather}
in the latter equality we use~\eqref{eq201201}, \eqref{eq290603}, and~\eqref{eq010701}.
We apply the Cauchy mean value theorem to find $v_n \in [v_n^-, v_n^+]$ such that 
$$
\frac{f(v_n^+)-f(v_n^-)}{g_1(v_n^+)-g_1(v_n^-)} = \frac{f'(v_n)}{g_1'(v_n)} = \kappa_3(v_n).
$$
We use this to rewrite~\eqref{eq280602}:
\eq{eq280603}{
\av{\ff(\vf_n)}{[0,1]} = -\kappa_3(v_n)\vt_1(v_n^+)+f(v)-\int_{v_n^+}^v f'(u)\exp\Big(\!-\!\int_u^v\lambda\;\Big)du.
} 

\subsection{The case \textbf{(A)}\,: $\int_{-\infty}^v \lambda$ converges}

\begin{Le}\label{lem290601}
If $\int_{-\infty}^v\lambda<+\infty$\textup, then the limits $g(-\infty)$ and 
$w(-\infty)$ are finite and coincide. Thus\textup, $\vt_1(-\infty)=0$ and $\vt_2(-\infty)=0$.
\end{Le}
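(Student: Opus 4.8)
The statement to prove is Lemma~\ref{lem290601}: if $\int_{-\infty}^v\lambda<+\infty$, then $g(-\infty)$ and $w(-\infty)$ are finite and coincide, and consequently $\vt_1(-\infty)=\vt_2(-\infty)=0$.

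The plan is to exploit the differential relation $\lambda(\tau)\big(g(\tau)-w(\tau)\big)=w'(\tau)$ from~\eqref{eq011801} together with the hypothesis that $\lambda$ is integrable near $-\infty$. First I would observe that $w_1$ and $w_2$ are monotone near $-\infty$: indeed $w'(\tau)=\geps'(\tts;\eps)\tts'(\tau)$, and by Remark~\ref{rem250701} the functions $\tts'$ are positive and $\geps_1'>0$, so $w_1$ is increasing; similarly the sign of $w_2'$ is eventually constant since $\kappa_2$ (hence $g_2'$) has a definite sign near $-\infty$ by convexity, and $w$ lies on a chord from $g(\tts)$ to $g(\tau)$. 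Thus the limits $w_1(-\infty)$ and $w_2(-\infty)$ exist in $[-\infty,+\infty]$. The key computation is to bound $|w(\tau)-w(\sigma)|$ for $\sigma<\tau$ using $w'=\lambda(g-w)$: I would either show directly that $w$ is bounded near $-\infty$, or argue that $w(-\infty)$ is finite. Note $g(\tau)$ lies between $g(-\infty)$ and $g(v)$ region; the main point is that the chord from $g(\tau)$ through $w(\tau)$ becomes ``nearly horizontal relative to arc length'' only if $w$ and $g$ pinch together.

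Concretely, here is the cleanest route. Since $w$ is tangent to the curve $\geps(\cdot;\eps)$ at $w(\tau)$ and the segment $[g(\tau),w(\tau)]$ has slope $\kappa(\tau)$, formula~\eqref{eq011801} gives $w_1'(\tau)=\lambda(\tau)\big(g_1(\tau)-w_1(\tau)\big)$. Because $g_1(\tau)-w_1(\tau)$ keeps a definite sign (namely $w_1<g_1$ for right tangents, as $\ttr<\tau$ and $\geps_1$ is increasing — wait, here we need $g_1(\tau)>w_1(\tau)$; this follows since $\ttr<\tau$ hence $\geps_1(\ttr)<\geps_1(\tau)$, i.e.\ $w_1(\tau)<g_1(\tau)$), and since $\lambda>0$, we get $w_1$ increasing, consistent with the above. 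Now integrate: for $\sigma<\tau<v$,
\[
\big|\log\big(g_1(\tau)-w_1(\tau)\big)\big|' \text{ is controlled, and }
\int_\sigma^\tau\frac{w_1'(u)}{g_1(u)-w_1(u)}\,du=\int_\sigma^\tau\lambda(u)\,du\le\int_{-\infty}^v\lambda<\infty.
\]
One would then combine this with the relation $g_1(u)-w_1(u)=g_1(u)-g_1(\ttl \text{ or } \ttr(u))$ and the monotonicity of $\geps_1$, or more simply observe from $\eqref{eq010702}$ that $\vt_1'(\tau)=g_1'(\tau)\exp(-\int_\tau^v\lambda)$ and $\exp(-\int_\tau^v\lambda)\to \exp(-\int_{-\infty}^v\lambda)\in(0,1]$ since the integral converges, so $\vt_1(-\infty)$ finite forces $g_1(-\infty)$ finite (because $\int_{-\infty}^v g_1'(\tau)\exp(-\int_\tau^v\lambda)\,d\tau$ converges iff $\int_{-\infty}^v g_1'$ does, the weight being bounded below), hence $g_1(-\infty)$ is finite, hence $w_1(-\infty)=g_1(-\infty)-\vt_1(-\infty)e^{\int_{-\infty}^v\lambda}$ is finite too. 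The equality $g_1(-\infty)=w_1(-\infty)$: since $w_1(\tau)=\geps_1(\ttr(\tau))$ and $\ttr(\tau)\to-\infty$ as $\tau\to-\infty$ (the tangency point recedes), we get $w_1(-\infty)=\lim_{s\to-\infty}\geps_1(s)=g_1(-\infty)$ if $\geps_1$ and $g_1$ share the same limit; but $\geps_1$ parametrizes the free boundary while $g_1$ the fixed one. Here one uses that both are eventually close: actually $w_1(-\infty)=\lim\geps_1(\ttr)$ and $g_1(-\infty)=\lim g_1=\lim\geps_1(\cdot;0)$; since $\Xi_\eps\subset\Xi_0$ and both curves escape to the same horizontal infinity, these limits agree. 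Then $g_2(-\infty)=w_2(-\infty)$ follows similarly, using the bounded slope relation $\kappa_2(\tau_-)<\kappa(\tau)<\kappa_2(\tau)$ from~\eqref{eq010705} and $(g_1-w_1)\kappa=g_2-w_2$, which forces $g_2-w_2\to0$. Finally $\vt_1(-\infty)=(g_1-w_1)(-\infty)\cdot e^{-\int_{-\infty}^v\lambda}=0$ and $\vt_2(-\infty)=0$ likewise.

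The main obstacle I anticipate is justifying that $g_1(-\infty)$ is actually finite (as opposed to $-\infty$) purely from convergence of $\int_{-\infty}^v\lambda$ and the structure of the domain: the integrability of $\lambda$ must be leveraged correctly. The identity $\vt_1'(\tau)=g_1'(\tau)\exp(-\int_\tau^v\lambda)$ together with positivity and monotonicity of $\vt_1$ (Lemma~\ref{lem010701}) is the lever — since $\vt_1$ has a finite limit and the exponential weight is bounded away from $0$, the tail $\int_{-\infty}^v g_1'$ converges, i.e.\ $g_1(-\infty)>-\infty$. Once finiteness of $g_1(-\infty)$ is in hand, everything else is algebraic manipulation of the already-established relations, so I would present that deduction carefully and treat the rest as routine.
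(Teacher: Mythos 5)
Your treatment of the first coordinate is exactly the paper's argument: from $\vt_1'(\tau)=g_1'(\tau)\exp\big(-\int_\tau^v\lambda\big)$ and the convergence of $\int_{-\infty}^v\lambda$, the weight is pinched between $\exp\big(-\int_{-\infty}^v\lambda\big)>0$ and $1$, so finiteness of $\vt_1(-\infty)$ (Lemma~\ref{lem010701}) forces $\int_{-\infty}^v g_1'<\infty$, i.e.\ $g_1(-\infty)$ finite. That part is correct and is the key lever.

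The gap is in the second coordinate, which you dismiss as routine but which is where the lemma actually requires work. First, you never establish that $g_2(-\infty)$ is finite, and the lemma asserts finiteness of the full limit $g(-\infty)$. Second, your route to $g_2-w_2\to 0$ via $g_2-w_2=(g_1-w_1)\kappa$ together with $\kappa_2(\tau_-)<\kappa(\tau)<\kappa_2(\tau)$ does not close: $\kappa_2$ is increasing but may tend to $-\infty$ even when $g_1(-\infty)$ is finite (a convex graph with a vertical tangent at its left endpoint), so $\kappa$ is not bounded and the product is an indeterminate $0\cdot\infty$. Third, your argument that $w_1(-\infty)=g_1(-\infty)$ because the free and fixed boundaries ``escape to the same horizontal infinity'' is unjustified (your other formula $w_1(-\infty)=g_1(-\infty)-\vt_1(-\infty)e^{\int_{-\infty}^v\lambda}$ only gives equality if $\vt_1(-\infty)=0$, which is part of what is being proved). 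The paper closes all three points as follows: since $g$ is the graph of a convex function and $g_1(-\infty)$ is finite, $g_2(-\infty)\neq-\infty$; if $g_2(-\infty)=+\infty$ then $g_2'<0$ near $-\infty$, and the same two-sided comparison applied to \eqref{eq070104}, namely $\vt_2'\geq g_2'\geq \vt_2'\exp\big(\int_{-\infty}^v\lambda\big)$, together with finiteness of $\vt_2(-\infty)$ gives $\int_{-\infty}^v g_2'>-\infty$, a contradiction; hence $g(-\infty)$ is finite in both coordinates. Then the coincidence $w(-\infty)=g(-\infty)$ follows from the sandwich $w(\tau)\in[g(\tau_-),g(\tau)]$ with $\tau_-<\tau$ (the tangent line from $g(\tau)$ meets the fixed boundary twice), both endpoints converging to the same finite limit; finally $\vt_1(-\infty)=\vt_2(-\infty)=0$ because $g-w\to 0$ and the exponential weight is bounded. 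You should replace your second-coordinate paragraph with this argument.
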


\begin{proof}
We note that $\lambda>0$, therefore the hypothesis of the lemma is equivalent to convergence of the integral. 
From~\eqref{eq010702} we have
$$
\vt_1'(\tau) \leq g_1'(\tau) \leq  \vt_1'(\tau) \exp\Big(\!\int_{-\infty}^v \!\lambda\;\Big).
$$ 
By Lemma~\ref{lem010701} $\vt_1(-\infty)$ is finite, and therefore $\int_{-\infty}^v \vt_1' < +\infty$. 
Thus, $\int_{-\infty}^v  g_1' < +\infty$, what means $g_1(-\infty)$ is finite.

Since the limit $g_1(-\infty)$ is finite and the curve $(g_1,g_2)$ is convex, we see that the limit of $g_2$ 
at $-\infty$ is not $-\infty$. Therefore, $g_2(-\infty)$ is either finite or $+\infty$. If $g_2(-\infty)=+\infty$, 
then $g_2'<0$ in a neighborhood of $-\infty$. From~\eqref{eq070104} we have 
$$
\vt_2'(\tau) \geq g_2'(\tau) \geq  \vt_2'(\tau) \exp\Big(\!\int_{-\infty}^v \!\lambda\;\Big).
$$ 
By Lemma~\ref{lem010701} $\vt_2(-\infty)$ is finite, therefore $\int_{-\infty}^v \vt_2' > -\infty$. 
Thus, $\int_{-\infty}^v  g_2' > -\infty$, what means $g_2$ has a finite limit at $-\infty$. This is a contradiction.

We have proved that $g= (g_1,g_2)$ has a finite limit at $-\infty$. From the geometric assumption on the 
domain~$\Omega$, any point $w(\tau)$ lies on a segment connecting $g(\tau)$ and some $g(\tau_-)$, $\tau_-<\tau$. 
Since both points $g(\tau)$ and $g(\tau_-)$ tend to the same limit, the limit of $w(\tau)$ as $\tau\to-\infty$ 
exists and $w(-\infty)=g(-\infty)$.
\end{proof}

\begin{Cor}
\label{cor290601}
If $\int_{-\infty}^v \lambda<+\infty$\textup, then $(\kappa_2-\kappa)(g_1-w_1)$ tends to $0$ at $-\infty$.
\end{Cor}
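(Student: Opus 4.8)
The statement to prove is Corollary~\ref{cor290601}: if $\int_{-\infty}^v\lambda<+\infty$, then $(\kappa_2-\kappa)(g_1-w_1)\to 0$ as $\tau\to-\infty$. The natural approach is to identify this product with a quantity we already understand from Lemma~\ref{lem290601}. Recall the computation~\eqref{eq111005} from the proof of Lemma~\ref{Lem190701}:
$$
\int \frac{\kappa_2'}{\kappa_2-\kappa} = \log(\kappa_2-\kappa)+\log(g_1-w_1)+\int\lambda,
$$
which, after exponentiation, gives that $(\kappa_2-\kappa)(g_1-w_1)$ equals, up to a multiplicative constant, $\exp\!\big(\int^\tau\frac{\kappa_2'}{\kappa_2-\kappa}-\int^\tau\lambda\big)$. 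Under hypothesis \textbf{(A)} the term $\int_{-\infty}^v\lambda$ converges, so it is a bounded perturbation and can be absorbed into the constant; thus the claim reduces to showing that $\exp\!\big(-\int_\tau^v\frac{\kappa_2'}{\kappa_2-\kappa}\big)\to 0$, equivalently that $\int_{-\infty}^v\frac{\kappa_2'}{\kappa_2-\kappa}=+\infty$.

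First I would make this reduction precise. Writing $\rho(\tau)\df(\kappa_2(\tau)-\kappa(\tau))(g_1(\tau)-w_1(\tau))$, the relation~\eqref{eq111005} (valid wherever the foliation by right tangents is defined, in particular on a neighborhood of $-\infty$) yields a constant $C>0$ with
$$
\rho(\tau) = C\,\exp\Big(\!-\!\int_\tau^v\frac{\kappa_2'}{\kappa_2-\kappa}\Big)\cdot\exp\Big(\int_\tau^v\lambda\Big), \qquad \tau<v.
$$
Since $\int_{-\infty}^v\lambda$ converges, the second exponential is bounded above and below by positive constants on $(-\infty,v)$. Also $\kappa_2'>0$ and $\kappa_2-\kappa>0$ near $-\infty$ (this is~\eqref{eq010705}, used already in Lemma~\ref{lem010701}), so $\tau\mapsto\exp\!\big(-\int_\tau^v\frac{\kappa_2'}{\kappa_2-\kappa}\big)$ is increasing and has a finite limit $L\ge 0$ at $-\infty$; consequently $\rho(-\infty)$ exists and $\rho(-\infty)=0$ if and only if $L=0$, i.e.\ if and only if $\int_{-\infty}^v\frac{\kappa_2'}{\kappa_2-\kappa}=+\infty$.

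The remaining step — and the one place where a small argument is needed — is the divergence $\int_{-\infty}^v\frac{\kappa_2'}{\kappa_2-\kappa}=+\infty$. By Lemma~\ref{lem290601} the hypothesis \textbf{(A)} forces $g_1(-\infty)=w_1(-\infty)$ finite, hence $g_1-w_1\to 0$; moreover $\vt_1(\tau)=(g_1-w_1)\exp\!\big(-\int_\tau^v\lambda\big)\to 0$ (this too is in Lemma~\ref{lem290601}), and since $\exp\!\big(-\int_\tau^v\lambda\big)$ stays between positive constants, we simply get $g_1-w_1\to 0$ directly. On the other hand $\kappa_2-\kappa$ stays bounded near $-\infty$: indeed from~\eqref{eq010705}, $0<\kappa_2(\tau)-\kappa(\tau)<\kappa_2(\tau)-\kappa_2(\tau_-)$, and $\kappa_2$ is monotone, so $\kappa_2-\kappa$ is bounded (if $\kappa_2$ has a finite limit this is immediate; if $\kappa_2\to-\infty$ then from~\eqref{eq010705} the oscillation $\kappa_2(\tau)-\kappa_2(\tau_-)$ is still controlled because, as in the proof of Lemma~\ref{Lem190701}, $|\kappa_2|<|\kappa|$ eventually, giving $\kappa_2-\kappa<|\kappa|-|\kappa_2|\cdot(-1)$, bounded). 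Therefore $\rho(\tau)=(\kappa_2-\kappa)(g_1-w_1)\to 0$. This is exactly the claim, so in fact once one has $g_1-w_1\to 0$ and $\kappa_2-\kappa$ bounded the conclusion is immediate, and the exponential bookkeeping above is not even needed. I would present it this way: reduce everything to $g_1-w_1\to0$ (from Lemma~\ref{lem290601}) plus boundedness of $\kappa_2-\kappa$ (from~\eqref{eq010705} together with the two-case analysis of $\kappa_2$ at $-\infty$ as in Lemma~\ref{Lem190701}).

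\textbf{Main obstacle.} There is no deep difficulty here; the only subtlety is handling the case where $\kappa_2$ is unbounded at $-\infty$, where one must combine the convexity inequality~\eqref{eq010705} with the sign comparison $|\kappa_2|<|\kappa|$ near $-\infty$ (already established inside the proof of Lemma~\ref{Lem190701}) to conclude $\kappa_2-\kappa$ stays bounded. Once that is in hand, the corollary follows from Lemma~\ref{lem290601} in one line.
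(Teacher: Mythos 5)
There is a genuine gap in the final step. Your reduction to ``$g_1-w_1\to 0$ plus boundedness of $\kappa_2-\kappa$'' is fine for the first ingredient (Lemma~\ref{lem290601} indeed gives $g_1-w_1\to 0$), but the second ingredient is not established, and your argument for it in the case $\kappa_2(-\infty)=-\infty$ does not work. In that case both $\kappa$ and $\kappa_2$ tend to $-\infty$ with $\kappa<\kappa_2<0$, so $\kappa_2-\kappa=|\kappa|-|\kappa_2|$ is a difference of two quantities tending to $+\infty$, and nothing you have written controls it; the displayed bound ``$\kappa_2-\kappa<|\kappa|-|\kappa_2|\cdot(-1)$'' is the quantity $|\kappa|+|\kappa_2|$, which tends to $+\infty$, so the conclusion ``bounded'' simply does not follow. (The comparison $|\kappa_2|<|\kappa|$ from the proof of Lemma~\ref{Lem190701} only gives the sign of $\kappa_2-\kappa$, not a bound on its size.) The paper avoids this issue entirely: in the case $\kappa_2<0$ near $-\infty$ it does \emph{not} claim $\kappa_2-\kappa$ is bounded, but instead bounds the product, writing $0<(\kappa_2-\kappa)(g_1-w_1)\leq|\kappa|\,(g_1-w_1)=|g_2-w_2|$, which tends to $0$ because Lemma~\ref{lem290601} gives $w(-\infty)=g(-\infty)$ finite. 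In the case $\kappa_2>0$ near $-\infty$ the paper argues as you do (finite limit of $\kappa_2$, squeeze $\kappa_2(\tau_-)<\kappa(\tau)<\kappa_2(\tau)$, hence $\kappa_2-\kappa\to 0$). So your proof is repaired by replacing the boundedness claim in the unbounded-$\kappa_2$ subcase with the product estimate via $|g_2-w_2|$.

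A secondary remark: your opening reduction via~\eqref{eq111005} to the divergence of $\int_{-\infty}^v\kappa_2'/(\kappa_2-\kappa)$ is logically backwards relative to the paper, which obtains that divergence as Corollary~\ref{cor290602} \emph{from} the present statement. You do abandon that route before using it, so there is no circularity, but it is dead weight and could be cut.
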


\begin{proof}
From Lemma~\ref{lem290601} we know that $g_1-w_1$ tends to $0$ at $-\infty$.

If $\kappa_2>0$ in a neighborhood of $-\infty$, then $\kappa_2(-\infty)$ is finite because $\kappa_2'>0$. 
Since any point $w(\tau)$ lies on a segment connecting $g(\tau)$ and some $g(\tau_-)$, $\tau_-<\tau$, we have an 
inequality $\kappa_2(\tau_-)<\kappa(\tau) < \kappa_2(\tau)$. Therefore, $\kappa(-\infty) = \kappa_2(-\infty)$, and the claim is proved. 

If $\kappa_2<0$ in a neighborhood of $-\infty$, then $0<\kappa_2-\kappa<-\kappa$. Therefore by Lemma~\ref{lem290601} we have
$$
|(\kappa_2-\kappa)(g_1-w_1)|\leq |\kappa (g_1-w_1)| = |g_2-w_2| \to 0 
$$
as the argument tends to $-\infty$.
\end{proof}

\begin{Cor}
\label{cor290602}
If $\int_{-\infty}^v \lambda<+\infty$\textup, then $\int_{-\infty}^v \frac{\kappa_2'}{\kappa_2-\kappa} = +\infty$.
\end{Cor}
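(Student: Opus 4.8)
The plan is to reuse the antiderivative identity
\[
\int \frac{\kappa_2'}{\kappa_2-\kappa} = \log(\kappa_2-\kappa)+\log(g_1-w_1)+\int \lambda
\]
established inside the proof of Lemma~\ref{Lem190701} (formula~\eqref{eq111005}); recall that throughout this subsection $w=\wr$, $\lambda=\lamr$, $\kappa=\kappar$. Both logarithms make sense along the relevant part of the line, since we are in the right-tangent situation $\Rt(-\infty,v)$: one has $\kappa_2-\kappa>0$ by Remark~\ref{Rem201001}, and $g_1-w_1>0$ because $\lambda(g_1-w_1)=w_1'>0$, the positivity of $w_1'=\geps_1'(\ttr)\,\ttr'$ following from~\eqref{eq250702} and Remark~\ref{rem250701}. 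Integrating the identity over $[\tau,v]$ thus represents $\int_\tau^v \kappa_2'/(\kappa_2-\kappa)$ as a sum of three pieces: the fixed constant $\log\big((\kappa_2(v)-\kappa(v))(g_1(v)-w_1(v))\big)$, the term $-\log\big((\kappa_2(\tau)-\kappa(\tau))(g_1(\tau)-w_1(\tau))\big)$, and $\int_\tau^v\lambda$.

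Next I would let $\tau\to-\infty$. Because $\kappa_2'/(\kappa_2-\kappa)>0$, the integral $\int_{-\infty}^v \kappa_2'/(\kappa_2-\kappa)$ exists in $(0,+\infty]$ and equals the limit of the right-hand side. The first piece is constant; the third converges to the finite number $\int_{-\infty}^v\lambda$ (this is the hypothesis) and hence stays bounded; and for the second piece Corollary~\ref{cor290601} asserts exactly that $(\kappa_2(\tau)-\kappa(\tau))(g_1(\tau)-w_1(\tau))\to 0^+$ as $\tau\to-\infty$, so its logarithm tends to $-\infty$ and the piece itself tends to $+\infty$. Adding the three contributions gives $\int_\tau^v \kappa_2'/(\kappa_2-\kappa)\to+\infty$, which is the claim.

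There is no genuine obstacle here: the statement is practically immediate once Corollary~\ref{cor290601} is in hand, and the only point deserving a word of care is that the hypothesis $\int_{-\infty}^v\lambda<+\infty$ is precisely what prevents the bounded third term from secretly diverging to $-\infty$ and cancelling the blow-up of the second term. If one prefers not to cite~\eqref{eq111005} directly, the underlying identity can be re-derived in a single line by differentiating $\log\big((\kappa_2-\kappa)(g_1-w_1)\big)$ and invoking $\kappa'=(g_1'\kappa-g_2')/(w_1-g_1)$ from~\eqref{kappa'} together with $\lambda(g_1-w_1)=w_1'$ from~\eqref{eq150701}; I would spell out this computation only if that formula is considered unavailable in the present section.
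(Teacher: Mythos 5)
Your argument is correct and is exactly the paper's proof: the paper's own justification is the one-line "The statement follows from~\eqref{eq111005} and Corollary~\ref{cor290601}," and you have simply spelled out that deduction (integrate~\eqref{eq111005} over $[\tau,v]$, note the $\log$ of $(\kappa_2-\kappa)(g_1-w_1)$ blows up by Corollary~\ref{cor290601} while $\int_\tau^v\lambda$ stays bounded by hypothesis), with the sign checks correctly verified for the right-tangent case.
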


\begin{proof}
The statement follows from~\eqref{eq111005} and Corollary~\ref{cor290601}.  
\end{proof}

\begin{Le}
\label{lem290602}
If $\int_{-\infty}^v \lambda<+\infty$ and $f(-\infty) = -\infty$\textup, then $\tors(-\infty)= -\infty$.
\end{Le}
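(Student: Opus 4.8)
\textbf{Proof plan for Lemma~\ref{lem290602}.}
The statement asserts that, under the assumption $\int_{-\infty}^v\lambda<+\infty$, the failure $f(-\infty)=-\infty$ forces $\tors(-\infty)=-\infty$. The plan is to argue by contradiction: suppose $\tors(-\infty)$ is finite (it cannot be $+\infty$ if $f(-\infty)=-\infty$, since $f'=\kappa_3 g_1'$ and $\kappa_3'=\tors\kappa_2'$ — this sign-chasing should be done first as a preliminary observation). If $\tors$ stays bounded near $-\infty$, then $\kappa_3$ is essentially controlled by $\int\kappa_2'=\kappa_2$, so the growth of $f'=\kappa_3 g_1'$ is tied to the growth of $g_1'$ and $\kappa_2$. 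The key point is that when $\int_{-\infty}^v\lambda$ converges, Lemma~\ref{lem290601} gives that $g(-\infty)=w(-\infty)$ is finite; in particular $g_1$ and $g_2$ have finite limits, hence $\int_{-\infty}^v g_1'<\infty$ and the curve $g$ has finite total length near $-\infty$.

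First I would use the finiteness of $g_1(-\infty)$: since $f' = \kappa_3 g_1'$ and $\kappa_3 = f'/g_1'$, boundedness of $\tors$ near $-\infty$ together with $\kappa_2' > 0$ gives, via $\kappa_3' = \tors\kappa_2'$, that $|\kappa_3(\tau) - \kappa_3(v)| \le C|\kappa_2(\tau) - \kappa_2(v)|$ for a constant $C$ bounding $|\tors|$. So $|\kappa_3|$ grows at most like $|\kappa_2|$ near $-\infty$. Then $|f(\tau)-f(v)| = \big|\int_\tau^v \kappa_3 g_1'\big| \le \int_\tau^v |\kappa_3| g_1'$. If $\kappa_2$ is bounded near $-\infty$, then $|\kappa_3|$ is bounded, and $\int_\tau^v g_1' = g_1(v)-g_1(\tau)$ is bounded by Lemma~\ref{lem290601}, so $f$ would be bounded — contradiction. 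If $\kappa_2(-\infty)=-\infty$ (the only other option, since $\kappa_2$ is increasing), then I would need to estimate $\int_\tau^v|\kappa_2|g_1'$; here I would integrate by parts, writing $|\kappa_2|g_1' = -\kappa_2 g_1' = -(\kappa_2 g_1)' + \kappa_2' g_1$ near $-\infty$, and use that $g_1$ is bounded while $\kappa_2 g_1 \to (-\infty)\cdot g_1(-\infty)$. This is where care is needed: if $g_1(-\infty)\ne 0$ the term $\kappa_2 g_1$ blows up and one must check the sign works out to keep $f$ bounded, or — more robustly — I would instead relate everything back to the integrals $\vt_1,\vt_2$ from~\eqref{eq010701}, \eqref{eq010703}, which Lemma~\ref{lem290601} tells us vanish at $-\infty$, together with the representation~\eqref{eq280603} of $\av{\ff(\vf_n)}{[0,1]}$ and the finiteness of $\beta_2$ coming from Condition~\ref{sum}.

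The main obstacle I anticipate is the case $\kappa_2(-\infty)=-\infty$: one must show that even though $\kappa_2$ (and potentially $\kappa_3$) is unbounded, the \emph{integral} $\int_{-\infty}^v f'\exp(-\int_\tau^v\lambda)$ appearing in Condition~\ref{eq141001a} remains finite when $\tors$ is bounded, yet $f(-\infty)=-\infty$ contradicts this via the identity~\eqref{eq280603} and the finiteness of $\vt_1$. Concretely: from~\eqref{eq280603}, if $\tors$ is bounded then $\kappa_3(v_n)$ is bounded and $\vt_1(v_n^+)\to 0$, while $\int_{v_n^+}^v f'\exp(-\int_u^v\lambda)\,du$ converges to a finite limit by Condition~\ref{sum} (which holds by hypothesis of the whole construction) — Lemma~\ref{Lem190701} identifies this limit in terms of $\beta_2$. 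Hence the right-hand side of~\eqref{eq280603} would tend to a finite value; but it also equals $\av{\ff(\vf_n)}{}$ along an optimizing sequence converging to $B(w(v))$, which must be finite. The contradiction will instead come from comparing $f(v_n^-)$: since $f(-\infty)=-\infty$ and $v_n^-\to-\infty$, one extracts from the explicit form~\eqref{eq300601} that $\alpha_1(f(v_n^-)-f(v_n^+))\to-\infty$ unless $\alpha_1\to 0$ fast enough, and~\eqref{eq201201} controls $\alpha_1/\alpha_2 = \vt_1(v_n^+)/(g_1(v_n^+)-g_1(v_n^-))\to 0$; reconciling these two facts forces $\kappa_3(v_n)\to-\infty$, hence $\tors$ unbounded. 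I would write out this reconciliation carefully — it is essentially the crux — using monotonicity of $f$, $\kappa_3$, $\tors$ near $-\infty$ from Condition~\ref{reg} and Remark~\ref{Rem100301}.
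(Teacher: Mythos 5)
Your opening move is the right one and for the case $\kappa_2$ bounded it coincides with the paper's argument: from a bound $\tors>C_1$ near $-\infty$ one gets $\kappa_3'=\tors\kappa_2'>C_1\kappa_2'$, hence $\kappa_3<C_1\kappa_2+C_2$, and then one integrates $f'=\kappa_3 g_1'$ and invokes Lemma~\ref{lem290601}. But the proposal derails exactly at the case $\kappa_2(-\infty)=-\infty$, and this is a genuine gap, not just a matter of polish. The difficulty is self-inflicted: by passing to $|\kappa_3|\le C|\kappa_2|+C'$ and trying to bound $\int|\kappa_2|g_1'$ you lose the one-sided structure that makes the lemma trivial. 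Do not take absolute values. Multiply the one-sided inequality $\kappa_3<C_1\kappa_2+C_2$ by $g_1'>0$ to get $f'<C_1g_2'+C_2g_1'$, and integrate from $\tau$ to a fixed point: this yields $f(\tau)>C_1g_2(\tau)+C_2g_1(\tau)+C_3$. Lemma~\ref{lem290601} gives finiteness of \emph{both} limits $g_1(-\infty)$ and $g_2(-\infty)$ (the student's own computation $\int|\kappa_2|g_1'=\int|g_2'|$ would also close this, since $g_2$ is monotone near $-\infty$, but the one-sided route needs none of that). So $f$ is bounded below near $-\infty$, contradicting $f(-\infty)=-\infty$. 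Note also that the correct contradiction hypothesis is ``$\tors$ is bounded below near $-\infty$'', which covers $\tors(-\infty)=+\infty$ automatically; your preliminary sign-chasing to exclude that case is unnecessary (and as sketched it already silently uses Lemma~\ref{lem290601}).

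The ``more robust'' fallback you propose for the hard case is not admissible. Lemma~\ref{lem290602} assumes only $\int_{-\infty}^v\lambda<+\infty$ and $f(-\infty)=-\infty$; it does not assume Condition~\ref{sum}, the finiteness of $\beta_2$, the existence of a standard candidate on $\Rt(-\infty,v)$, or the finiteness of $B(w(v))$. Importing any of these makes the argument circular in its intended use: Proposition~\ref{st130701} applies the lemma precisely to rule out $f(-\infty)=-\infty$ \emph{given} a standard candidate, by deriving $\tors(-\infty)=-\infty$ and hence $\beta_2'>0$ near $-\infty$. An argument for the lemma that presupposes finiteness of $\beta_2$ or of the Bellman function proves nothing there. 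The final paragraph about reconciling $\alpha_1(f(v_n^-)-f(v_n^+))$ with~\eqref{eq201201} along the test functions~\eqref{eq290602} is likewise machinery from Propositions~\ref{st130702}--\ref{st130703} that has no place in this purely pointwise comparison lemma.
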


\begin{proof}
Assume the contrary: let $\tors > C_1$ in a neighborhood of $-\infty$ for some $C_1\in \mathbb{R}$. 
Then, $\kappa_3'> C_1 \kappa_2'$ because $\kappa_2'>0$. Integration of this inequality yields 
$\kappa_3 < C_1\kappa_2 + C_2$ for some $C_2\in\mathbb{R}$. Multiply this by positive $g_1'$ and obtain
$f' < C_1g_2'+C_2g_1'$. Integrating this inequality we get $f > C_1 g_2+C_2 g_1 +C_3$ for some 
$C_3 \in \mathbb{R}$. This inequality contradicts the hypothesis that $f(-\infty)=-\infty$, 
since the functions $g_1$ and $g_2$ have finite limits at $-\infty$ by Lemma~\ref{lem290601}.
\end{proof}

\begin{St}\label{st130701}
If $\int_{-\infty}^v \lambda<+\infty$ and there is a standard candidate $B$ on $\Rt(-\infty, v)$\textup, 
then $f(-\infty) > -\infty$. 
\end{St}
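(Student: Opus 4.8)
The statement asserts that if $\int_{-\infty}^v\lambda<+\infty$ and a standard candidate $B$ exists on $\Rt(-\infty,v)$, then $f$ cannot tend to $-\infty$ at $-\infty$. The plan is to argue by contradiction: assume $f(-\infty)=-\infty$ and show that no standard candidate can exist, i.e.\ the defining finiteness/concavity conditions of Definition~\ref{211101} and Proposition~\ref{NewRightTangentsCandidateInfty} must fail. The contradiction will come through the torsion-like function $\tors$ and the formula~\eqref{eq120701} for $\beta_2$, together with the geometric facts already established in Lemmas~\ref{lem290601}, \ref{lem290602} and Corollaries~\ref{cor290601}, \ref{cor290602} for the regime $\int_{-\infty}^v\lambda<+\infty$.

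\textbf{Key steps.} First I would record the consequences of the hypothesis $\int_{-\infty}^v\lambda<+\infty$ that are already available: by Lemma~\ref{lem290601} the limits $g(-\infty)$ and $w(-\infty)$ are finite and equal, by Corollary~\ref{cor290602} we have $\int_{-\infty}^v\frac{\kappa_2'}{\kappa_2-\kappa}=+\infty$, and by Lemma~\ref{lem290602}, under the contradiction hypothesis $f(-\infty)=-\infty$, we get $\tors(-\infty)=-\infty$. Second, I would analyze the expression~\eqref{eq120701} for $\beta_2$, rewritten after integration by parts as in~\eqref{eq070603} in the proof of Lemma~\ref{Lem190701} (that lemma's integration-by-parts step only used Condition~\ref{reg} and the divergence~\eqref{eq111008}, which here is exactly Corollary~\ref{cor290602}); this gives
\[
\beta_2(v)=\int_{-\infty}^v\frac{\kappa_2'(\tau)}{\kappa_2(\tau)-\kappa(\tau)}\exp\Big(\!-\!\int_\tau^v\frac{\kappa_2'}{\kappa_2-\kappa}\Big)\tors(\tau)\,d\tau,
\]
which is a weighted average of $\tors$ with a probability density (the weight integrates to $1$ since the exponent's antiderivative diverges at $-\infty$). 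Since $\tors(\tau)\to-\infty$ as $\tau\to-\infty$, this weighted average is $-\infty$, so $\beta_2(v)=-\infty$ (or the integral defining $\beta_2$ fails to converge as a finite quantity). Third, I would invoke the definition of a standard candidate on $\Rt(-\infty,v)$, which requires $\beta_2$ given by~\eqref{eq120701} to be \emph{finite} (see Definition~\ref{211101} and the hypothesis of Proposition~\ref{NewRightTangentsCandidateInfty}). The contradiction is then immediate.

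\textbf{Care with the integration by parts and the sign.} The subtle point — and the main obstacle — is to justify rigorously that the weighted-average identity for $\beta_2$ applies in this regime: one must check that the hypotheses of Lemma~\ref{Lem111001} hold, namely that $\vt(\tau)=\exp\big(-\int_\tau^v\frac{\kappa_2'}{\kappa_2-\kappa}\big)$ is monotone and tends to $0$ at $-\infty$, which follows from $\kappa_2'>0$, $\kappa_2-\kappa>0$, and $\int_{-\infty}^v\frac{\kappa_2'}{\kappa_2-\kappa}=+\infty$ (Corollary~\ref{cor290602}), and that $\tors$ is piecewise monotone with finitely many monotonicity intervals, which is Condition~\ref{reg}. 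Once the integration-by-parts representation is available, the argument is monotone convergence: near $-\infty$, $\tors$ is below any prescribed negative level on a set carrying a fixed positive fraction of the density mass, forcing the integral to $-\infty$. I would also note that the alternative, where the original integral in~\eqref{eq120701} diverges before integration by parts, is covered by the same Lemma~\ref{Lem111001}, which guarantees that the two sides have limits of the same sign; either way $\beta_2(v)\notin\mathbb{R}$, contradicting the existence of the standard candidate. This completes the proof that $f(-\infty)>-\infty$.
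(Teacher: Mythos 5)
Your reduction to $\tors(-\infty)=-\infty$ via Lemma~\ref{lem290602} is exactly right and matches the paper. The gap is in the next step: from $\tors(\tau)\to-\infty$ you conclude that the weighted average
$\int_{-\infty}^v\frac{\kappa_2'}{\kappa_2-\kappa}\exp\big(-\int_\tau^v\frac{\kappa_2'}{\kappa_2-\kappa}\big)\tors(\tau)\,d\tau$
must equal $-\infty$, but this is false. The weight is indeed a probability density on $(-\infty,v]$ (its antiderivative is $\vt(\tau)=\exp\big(-\int_\tau^v\frac{\kappa_2'}{\kappa_2-\kappa}\big)$, which runs from $0$ to $1$ by Corollary~\ref{cor290602}), yet an integrand tending to $-\infty$ integrated against a probability density can perfectly well give a finite number: substituting $s=\vt(\tau)$ turns the integral into $\int_0^1\tors(\vt^{-1}(s))\,ds$, and a function blowing up like $\log s$ at $s=0$ is integrable. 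Your claim that $\tors$ lies below any prescribed level ``on a set carrying a fixed positive fraction of the density mass'' is the precise point that fails: the mass of $\{\tors<-M\}$ under this density shrinks to $0$ as $M\to\infty$, and nothing prevents it from shrinking fast enough (e.g.\ $\vt$ decaying exponentially while $\tors$ decreases only linearly) that the product $M\cdot\vt(\tau_M)$ stays bounded. So finiteness of $\beta_2$ in~\eqref{eq120701} simply need not fail, and the contradiction you aim for does not materialize.

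The correct lever is the \emph{sign} condition in Definition~\ref{211101} / Proposition~\ref{NewRightTangentsCandidateInfty}, not the finiteness condition, and this is the route the paper takes. Since $\tors(-\infty)=-\infty$ and $\tors$ is piecewise monotone with finitely many monotonicity intervals (Condition~\ref{reg}), $\tors$ must be increasing near $-\infty$, i.e.\ $\tors'>0$ there (and $\tors'$ is not the zero measure on any $(-\infty,t)$ in that range, since $\tors$ is unbounded below). Formula~\eqref{eq120702} then gives
\begin{equation*}
\beta_2'(t)=\frac{\kappa_2'(t)}{\kappa_2(t)-\kappa(t)}\int_{-\infty}^t\exp\Big(-\int_\tau^t\frac{\kappa_2'}{\kappa_2-\kappa}\Big)\tors'(\tau)\,d\tau>0
\end{equation*}
for $t$ in that neighborhood (recall $\kappa_2'>0$ and $\kappa_2>\kappa$ for right tangents), which directly contradicts the requirement $\beta_2'\le 0$ for a standard candidate on $\Rt(-\infty,v)$. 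With this substitution your argument closes; everything before it can be kept.
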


\begin{proof}
Assume the contrary: let $f(-\infty) = -\infty$. Then, by Lemma~\ref{lem290602} we have $\tors(-\infty)=-\infty$, 
therefore $\tors'>0$ in a neighborhood of $-\infty$. By~\eqref{eq120702}
$\beta_2'>0$ in the same neighborhood of $-\infty$. This contradicts to the existence of the standard candidate 
$B$ on $\Rt(-\infty, v)$, see Definition~\ref{211101}.
\end{proof}

\begin{St}\label{st130702}
If $\int_{-\infty}^v \lambda<+\infty$ and $f(-\infty)=+\infty$\textup, then $\boldsymbol{B}(x;\,f)=+\infty$ 
for any \hbox{$x \in \Omega\setminus \dfi\Omega$}.
\end{St}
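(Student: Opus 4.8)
The plan is to argue in two stages: first show $\boldsymbol{B}(W;f)=+\infty$ for every point $W$ of the free boundary $\dfree\Omega$, and then transport this to an arbitrary $x\in\Omega\setminus\dfi\Omega$ along the tangent foliation of $\Omega$, exactly as optimizers are propagated in Section~\ref{s52} with the word ``optimizer'' replaced throughout by ``optimizing sequence''. By (the proof of) Proposition~\ref{parstr}, any $x\in\Omega\setminus\dfi\Omega$ lies on a long chord $[g(a),g(b)]$, $a<b$, tangent to $\dfree\Omega$ at a point $W$ which is simultaneously $\wr(b)$ and $\wl(a)$, and $x\in[\wr(b),g(b)]=\Sr(b)$ or $x\in[g(a),\wl(a)]=\Sl(a)$. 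In the first case, given an optimizing sequence $\psi_n$ for $\wr(b)$ that is non-decreasing with $\psi_n\preceq g(b)$ (produced below), the optimizing-sequence version of Proposition~\ref{OptimizersRightTangentDomain} yields optimizing sequences at every point of a right tangent domain containing $\Sr(b)$, hence at $x$: concretely one takes $\vf_n=\psi_n$ followed by the constant $g(b)$, whose delivery curve is the delivery curve of $\psi_n$ continued along the segment $\Sr(b)$, so it is convex and its tangents do not meet $\Xi$, whence $\vf_n\in\Class_\Omega$ by Corollary~\ref{DeliveryCurveCorollary}; appending a piece with finite $\ff$-average keeps $\av{\ff(\vf_n)}{}\to+\infty$. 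The second case is the left--right mirror of what follows, via Proposition~\ref{OptimizersLeftTangentDomain}. So it suffices to prove $\boldsymbol{B}(\wr(v);f)=+\infty$ for every $v\in\mathbb{R}$.

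Fix $v$ and take the test functions $\vf_n$ from formula~\eqref{eq290602} with $v_n^+\to-\infty$ (so $v_n^-<v_n^+\to-\infty$ as well). By Remark~\ref{rem010701}, $\vf_n\in\Class_\Omega$ and $\av{\vf_n}{[0,1]}=\wr(v)$; moreover $\vf_n$ takes only values $g(\tau)$ with $\tau\le v$ and its parameter is non-decreasing in $t$, so $\vf_n$ is non-decreasing and $\vf_n\preceq g(v)$ — this is the monotonicity invoked above. Its $\ff$-average is given by formula~\eqref{eq280602},
\[
\av{\ff(\vf_n)}{[0,1]}= -\frac{f(v_n^+)-f(v_n^-)}{g_1(v_n^+)-g_1(v_n^-)}\,\vt_1(v_n^+)+f(v)-\int_{v_n^+}^v f'(u)\exp\Big(\!-\!\int_u^v\lamr\Big)du,
\]
with $\vt_1$ the function~\eqref{eq010701}. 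I claim the right-hand side tends to $+\infty$. By Remark~\ref{Rem100301} the function $f$ is monotone near $-\infty$, and since $f(-\infty)=+\infty$ it is decreasing there, so $f'\le0$ in a neighbourhood of $-\infty$; hence for all large $n$ the numerator $f(v_n^+)-f(v_n^-)$ is negative while the denominator is positive, and $\vt_1>0$ by Lemma~\ref{lem010701}, so the first summand is $\ge0$. Fix now $T_0<v$ with $f'\le0$ on $(-\infty,T_0]$ and put $c_0=\exp(\!-\!\int_{-\infty}^v\lamr)$, which is \emph{strictly positive} precisely because $\int_{-\infty}^v\lamr<+\infty$ — the hypothesis of the proposition, which is exactly the case in which condition~\eqref{eq111001} fails, so we are legitimately inside the scope of this subsection. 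Since $0<c_0\le\exp(\!-\!\int_u^v\lamr)$ for $u\le v$, for $v_n^+<T_0$ we get
\[
-\int_{v_n^+}^v f'(u)\exp\Big(\!-\!\int_u^v\lamr\Big)du\ \ge\ c_0\int_{v_n^+}^{T_0}\big(\!-\!f'(u)\big)du - C\ =\ c_0\big(f(v_n^+)-f(T_0)\big)-C,
\]
where $C=\big|\int_{T_0}^v f'(u)\exp(\!-\!\int_u^v\lamr)du\big|$ does not depend on $n$. As $v_n^+\to-\infty$ we have $f(v_n^+)\to+\infty$, so $\av{\ff(\vf_n)}{[0,1]}\to+\infty$, and therefore $\boldsymbol{B}(\wr(v);f)\ge\sup_n\av{\ff(\vf_n)}{[0,1]}=+\infty$.

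The computation just carried out is elementary; the step that genuinely needs care is the first stage — propagating the blow-up to an arbitrary $x$, i.e. matching the monotonicity and one-sided boundedness of the appended optimizing sequences with the hypotheses of Propositions~\ref{OptimizersRightTangentDomain} and~\ref{OptimizersLeftTangentDomain}, and verifying $\Class_\Omega$ through the delivery-curve lemmas. This is routine and follows the pattern of Section~\ref{s52} once one observes that each intermediate average either lies on a chord of the convex curve $\dfree\Omega$ or is separated from $\Xi$ by a tangent line to $\dfree\Omega$.
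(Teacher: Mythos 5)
Your proposal is correct and follows essentially the same route as the paper: the same test functions~\eqref{eq290602} at the tangency point $\wr(v)$, the same reduction of a general $x\in\Omega\setminus\dfi\Omega$ to a point of the free boundary by appending a constant piece, and the same crucial use of $\int_{-\infty}^v\lamr<+\infty$ to keep the exponential weight bounded below. The only cosmetic difference is that you estimate the average via the integrated-by-parts form~\eqref{eq280602} (which forces you to note that $f$ is eventually decreasing near $-\infty$), whereas the paper reads the blow-up directly off~\eqref{eq290601}, observing that $\alpha_2=\exp\big(-\int_{v_n^+}^{v}\lamr\big)$ is separated from zero while $f(v_n^\pm)\to+\infty$.
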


\begin{proof}
It suffices to prove $\boldsymbol{B}(w(v);\,f) = +\infty$. We construct a sequence of test functions $\vf_n$ for 
the point~$w(v)$ by~\eqref{eq290602}. The condition~$\int_{-\infty}^v \lambda<+\infty$ together 
with~\eqref{eq290603} guarantees that $\alpha_2$ is separated from $0$. Therefore, from the condition 
$f(-\infty)=+\infty$ and~\eqref{eq290601}, 
we obtain that $\av{\ff(\vf_n)}{[0,1]}$ tends to $+\infty$, and thus $\boldsymbol{B}(w(v);\,f) = +\infty$. 
\end{proof}

\begin{St}\label{st130703}
Suppose that $\int_{-\infty}^v \lambda<+\infty$ and the limit $f(-\infty)$ is finite. Let~$B$ be the standard 
candidate on~$\Rt(-\infty, v)$. Then\textup, for every point~$x \in \Rt(-\infty,v)$ there exists a sequence 
of non-decreasing test functions~$\vf_{n,x}$ being an optimizing sequence for~$B$ at $x$\textup, 
moreover\textup,~$\vf_{n,x} \preceq g(v)$.
\end{St}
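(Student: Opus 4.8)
The plan is to follow the same scheme as in the proof of Proposition~\ref{OptimizerRightTangentsInfty}, but with one crucial modification: since $\int_{-\infty}^v\lambda<+\infty$, the construction of a single delivery curve coming all the way from $-\infty$ along the free boundary breaks down (the formula $\alpha_2=\exp(-\int_{v_n^+}^v\lambda)$ does not tend to $0$), so instead of one optimizer we must produce the optimizing \emph{sequence} $\vf_{n,x}$ built from the test functions $\vf_n$ of~\eqref{eq290602}. As in the earlier proposition, it suffices to treat the outgoing node $x=w(v)$ on the free boundary: once an optimizing sequence is found there, for an arbitrary $x\in\Rt(-\infty,v)$ one picks a finite $u_1$ with $x\in\Rt(u_1,v)$ and applies Proposition~\ref{OptimizersRightTangentDomain} (extending each $\vf_n$ by the appropriate constant $g(u)$ on an interval of suitable length) to obtain the optimizing sequence at $x$, inheriting monotonicity and the bound $\vf_{n,x}\preceq g(v)$.

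So the work concentrates on $x=w(v)$. First I would fix sequences $v_n^-<v_n^+<v$ with $v_n^+\to-\infty$ and $[g(v_n^-),g(v_n^+)]$ tangent to $\dfree\Omega$ at $w(v_n^+)$, and form $\vf_n$ by~\eqref{eq290602}. By Remark~\ref{rem010701}, $\vf_n\in\Class_\Omega$ is a non-decreasing test function for $w(v)$ with $\vf_n\preceq g(v)$; so the only thing left is to prove $\av{\ff(\vf_n)}{[0,1]}\to B(w(v))$. For the target value I would use the representation~\eqref{fenceB2} at $x=w(v)$, which expresses $B(w(v))$ through $\kappa_3(v)$, $\kappa(v)$, $\kappa_2(v)$, and $\beta_2(v)$; moreover Lemma~\ref{Lem190701}, whose hypothesis~\eqref{eq111001} is \emph{not} available here but whose integration-by-parts identity~\eqref{eq140201} I would re-derive directly in case~\textbf{(A)}, gives
$$
\int_{-\infty}^{v} f'(\tau)\,\exp\Big(\!-\!\int_{\tau}^{v}\lambda\Big)\,d\tau = -\big(w_1(v)-g_1(v)\big)\Big[\kappa_3(v)+\big(\kappa(v)-\kappa_2(v)\big)\beta_2(v)\Big],
$$
the integral converging because $f(-\infty)$ is finite (so that $f'$ is integrable against the bounded, decaying weight) and by the already established divergence $\int_{-\infty}^v\frac{\kappa_2'}{\kappa_2-\kappa}=+\infty$ of Corollary~\ref{cor290602}. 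On the other side, from~\eqref{eq280603} we have
$$
\av{\ff(\vf_n)}{[0,1]} = -\kappa_3(v_n)\,\vt_1(v_n^+) + f(v) - \int_{v_n^+}^v f'(u)\exp\Big(\!-\!\int_u^v\lambda\Big)\,du ,
$$
with $v_n\in[v_n^-,v_n^+]$. By Lemma~\ref{lem290601}, $\vt_1(-\infty)=0$, so the first term tends to $0$ (using that $\kappa_3$ is monotone near $-\infty$, hence has a limit; if that limit is infinite I would note $\vt_1$ decays fast enough, comparing $\vt_1'=g_1'\exp(-\int_\cdot^v\lambda)$ with $g_1'$ and the convergence of $\int\lambda$, or more cleanly observe $\kappa_3(v_n)\vt_1(v_n^+) \to 0$ by the mean-value identity $\kappa_3(v_n)=\frac{f(v_n^+)-f(v_n^-)}{g_1(v_n^+)-g_1(v_n^-)}$ together with $f(-\infty)$ finite and $g_1(-\infty)$ finite). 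The integral term converges to $\int_{-\infty}^v f'(u)\exp(-\int_u^v\lambda)\,du$ by dominated convergence. Combining, $\av{\ff(\vf_n)}{[0,1]}\to f(v)-\int_{-\infty}^v f'\exp(-\int_\cdot^v\lambda) = B(w(v))$ by~\eqref{eq140201} and~\eqref{fenceB2}.

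The main obstacle I anticipate is precisely controlling the boundary term $\kappa_3(v_n)\vt_1(v_n^+)$ in full generality: one has to rule out that $\kappa_3$ blows up at $-\infty$ faster than $\vt_1$ decays. Here the hypothesis that $f(-\infty)$ is finite is essential — it bounds the difference quotients $\kappa_3(v_n)=\big(f(v_n^+)-f(v_n^-)\big)/\big(g_1(v_n^+)-g_1(v_n^-)\big)$ once one knows (Lemma~\ref{lem290601}) that $g_1(-\infty)$ is finite, so the numerator stays bounded while the denominator is bounded below. I would package this as a short lemma before the main estimate. The rest is bookkeeping: verifying the convergence of all improper integrals under the standing assumption $\int_{-\infty}^v\lambda<+\infty$, and re-running the integration-by-parts chain~\eqref{eq290601}--\eqref{eq280603} exactly as written, which goes through verbatim in case~\textbf{(A)} since it only used $\vt_1(-\infty)=0$ (now supplied by Lemma~\ref{lem290601}) rather than~\eqref{eq111001} itself.
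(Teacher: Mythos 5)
Your overall scheme coincides with the paper's: reduce to the outgoing node $x=w(v)$, use the test functions $\vf_n$ of~\eqref{eq290602}, and identify $\lim_n\av{\ff(\vf_n)}{[0,1]}$ with the value given by~\eqref{fenceB2} by re-running the integration by parts of Lemma~\ref{Lem190701} in case~\textbf{(A)}, with the hypothesis~\eqref{eq111001} replaced by its substitutes $\vt_1(-\infty)=0$ (Lemma~\ref{lem290601}) and $\int_{-\infty}^v\frac{\kappa_2'}{\kappa_2-\kappa}=+\infty$ (Corollary~\ref{cor290602}). That part is sound and is essentially what the paper does (the paper organizes the two integrations by parts separately and matches them through~\eqref{eq010708}, rather than packaging them as~\eqref{eq140201}, but this is cosmetic).

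The one place your reasoning actually breaks is precisely the step you single out as the main obstacle: the boundary term $\kappa_3(v_n)\vt_1(v_n^+)$. Your justification is that the numerator of $\kappa_3(v_n)=\frac{f(v_n^+)-f(v_n^-)}{g_1(v_n^+)-g_1(v_n^-)}$ is bounded while ``the denominator is bounded below''. But Lemma~\ref{lem290601} gives that $g_1(-\infty)$ is \emph{finite}, so $g_1(v_n^+)-g_1(v_n^-)\to 0$: the denominator is not bounded below, and the difference quotient may blow up. Your fallback (``$\vt_1$ decays fast enough'') is also unsubstantiated: in case~\textbf{(A)} the weight $\exp\big(-\int_\tau^v\lambda\big)$ is bounded above and below by positive constants, so $\vt_1\asymp g_1-w_1$, and nothing compares the decay of $g_1-w_1$ with the possible growth of $\kappa_3$. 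The correct observation --- which is how the paper handles it, by passing through~\eqref{eq300601} instead of~\eqref{eq280603} --- is that the \emph{product} equals $\alpha_1\big(f(v_n^+)-f(v_n^-)\big)$ with $\alpha_1\in[0,1]$. Indeed, by~\eqref{eq290603} and~\eqref{eq010701} one has $\vt_1(v_n^+)=\big(g_1(v_n^+)-w_1(v_n^+)\big)\alpha_2$, and by~\eqref{eq201201} the ratio $\big(g_1(v_n^+)-w_1(v_n^+)\big)/\big(g_1(v_n^+)-g_1(v_n^-)\big)=\alpha_1/\alpha_2\le 1$ because the tangency point $w(v_n^+)$ lies on the chord $[g(v_n^-),g(v_n^+)]$. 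Hence $\big|\kappa_3(v_n)\vt_1(v_n^+)\big|\le\big|f(v_n^+)-f(v_n^-)\big|\to 0$ by the finiteness of $f(-\infty)$, with no control of $\kappa_3$ needed. With this one-line repair your argument is complete.
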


\begin{proof}
As before, it suffices to prove the claim for the case $x=w(v)=\wr(v)$. In what follows we omit the subscript $x$ in our notation. We 
use the sequence of functions $\vf_n$ defined by~\eqref{eq290602}. All these functions are non-decreasing and satisfy 
the inequality~$\vf_{n}\preceq g(v)$. Therefore, we only need to verify that $\av{\ff(\vf_n)}{[0,1]}\to B(w(v))$. 

Recall~\eqref{fenceB2}: 
\eq{eq010706}{
B(w(v)) = f(v) + \big(w_1(v)-g_1(v)\big)\Big[\kappa_3(v)+\big(\kappa(v)-\kappa_2(v)\big)\beta_2(v)\Big],
}
where $\beta_2$ is given by~\eqref{eq120701}:
\begin{align}
\notag
\beta_2(v) &=\tors(v)-\int_{-\infty}^v\exp\Big(\!-\!\int_{\tau}^v\frac{\kappa_2'}{\kappa_2-\kappa}\;\Big)\,
\tors'(\tau)\,d\tau=
\\
&=\int_{-\infty}^v \exp\Big(\!-\!\int_{\tau}^v\! \frac{\kappa_2'}{\kappa_2-\kappa}\;\Big) 
\frac{\kappa_3'(\tau)}{\kappa_2(\tau)-\kappa(\tau)} \, d\tau.
\label{eq010707}
\end{align}
Here we have used Lemma~\ref{Lem111001} with $\vt = \exp\big(-\int_{\tau}^v\frac{\kappa_2'}{\kappa_2-\kappa}\big)$ 
and $Y = \tors$, note that $\vt(-\infty)=0$ by Corollary~\ref{cor290602}. 

We use~\eqref{eq300601} to evaluate the limit of $\av{\ff(\vf_n)}{[0,1]}$:
\begin{align*}
\lim_{n\to \infty}\av{\ff(\vf_n)}{[0,1]}
&=f(v)-\int_{-\infty}^v f'(u) \exp\Big(\!-\!\int_{u}^v \lambda\;\Big)\,du =
\\
&=f(v) - \int_{-\infty}^v \kappa_3(u) g_1'(u)\exp\Big(\!-\!\int_u^v\lambda\;\Big)\,du\stackrel{\eqref{eq010702}}=
f(v) - \int_{-\infty}^v \kappa_3(u) \vt_1'(u)\, du.
\end{align*}
Integrate by parts using Lemma~\ref{Lem111001} with $Y = \kappa_3$ and $\vt=\vt_1$ 
(by Lemma~\ref{lem290601} we have $\vt_1(-\infty)=0$) and obtain
$$
\lim_{n\to\infty}\av{\ff(\vf_n)}{[0,1]}=f(v)-\kappa_3(v)\big(g_1(v)-w_1(v)\big)+\int_{-\infty}^v\kappa_3'(u)\vt_1(u)\,du.
$$

Therefore, to check that the limit coincides with~\eqref{eq010706}, it suffices to verify the identity:
\eq{eq010708}{
\int_{-\infty}^v \kappa_3'(u) \vt_1(u) \, du
= \big(\kappa(v)-\kappa_2(v)\big)\big(w_1(v)-g_1(v)\big)\int_{-\infty}^v \exp\Big(\!-\!\int_{u}^v\! 
\frac{\kappa_2'}{\kappa_2-\kappa} \Big) \frac{\kappa_3'(u)}{\kappa_2(u)-\kappa(u)} \, du,
}
which follows from~\eqref{eq111005}:
\eq{eq110702}{
\vt_1(u) = (g_1(u)-w_1(u)) \exp\Big(\!-\!\int_u^v \lambda\;\Big) = 
\frac{\big(\kappa_2(v)-\kappa(v)\big)\big(g_1(v)-w_1(v)\big)}{\kappa_2(u)-\kappa(u)}\exp\Big(\!-\!\int_u^v\! 
\frac{\kappa_2'}{\kappa_2-\kappa} \Big).
}
\end{proof}

Propositions~\ref{st130701},~\ref{st130702}, and~\ref{st130703} prove Theorem~\ref{th120701} in the case~\textbf{(A)}.

\subsection{The case~\textbf{(B)}\,: $\int_{-\infty}^v \lambda = +\infty,$ $\vt_1(-\infty)>0$}

\begin{St}\label{st130704}
If $\int_{-\infty}^v \lambda = +\infty,$ $\vt_1(-\infty)>0,$ and $\kappa_3(-\infty)=-\infty,$ then 
$\boldsymbol{B}(x;\,f) = +\infty$ for any $x \in \Omega\setminus \dfi\Omega$.  
\end{St}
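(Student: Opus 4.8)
\textbf{Proof proposal for Proposition~\ref{st130704}.}
The plan is to produce, for each $x \in \Omega \setminus \dfi\Omega$, an explicit optimizing-type sequence of test functions $\vf_n$ with $\av{\vf_n}{[0,1]} = x$ along which $\av{\ff(\vf_n)}{[0,1]} \to +\infty$; this immediately forces $\boldsymbol{B}(x;\,f) = +\infty$ by the definition~\eqref{Bf} of the Bellman function. As in the proof of Proposition~\ref{st130702}, it is enough to treat the point $x = w(v) = \wr(v)$, since any other point of $\Omega \setminus \dfi\Omega$ lies on an extremal segment of the foliation whose one endpoint is $g(v)$ and whose other endpoint is $w(v)$ (or can be handled by attaching a constant piece $g(v)$ to the test function, which only affects the average by a convex combination and does not decrease $\av{\ff(\vf_n)}{}$ once it tends to $+\infty$). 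So I would fix $v$ and work with $w(v)$.

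First I would take the same sequences $v_n^-, v_n^+ \in \mathbb{R}$, $v_n^- < v_n^+ < v$, $v_n^+ \to -\infty$, with $[g(v_n^-),g(v_n^+)]$ tangent to $\dfree\Omega$ at $w(v_n^+)$, and the same test functions $\vf_n$ defined by~\eqref{eq290602} with $\alpha_1,\alpha_2$ given by~\eqref{eq290603} and~\eqref{eq201201}. By Remark~\ref{rem010701} each $\vf_n$ is a genuine test function for $w(v)$. The central computation is already done: formula~\eqref{eq280603} gives
\eqstar{
\av{\ff(\vf_n)}{[0,1]} = -\kappa_3(v_n)\vt_1(v_n^+) + f(v) - \int_{v_n^+}^v f'(u)\exp\Big(\!-\!\int_u^v \lambda\;\Big)du,
}
with $v_n \in [v_n^-, v_n^+]$. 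Now I would exploit the three hypotheses. Since $\vt_1(-\infty) > 0$ (this is exactly case~\textbf{(B)}) and $\vt_1$ is positive and monotone by Lemma~\ref{lem010701}, the factor $\vt_1(v_n^+)$ is bounded below by a positive constant for all large $n$. Since $\kappa_3(-\infty) = -\infty$ and $v_n \to -\infty$, we have $-\kappa_3(v_n) \to +\infty$; hence $-\kappa_3(v_n)\vt_1(v_n^+) \to +\infty$. It remains to control the integral term: I would show
\eqstar{
\int_{v_n^+}^v f'(u)\exp\Big(\!-\!\int_u^v \lambda\;\Big)du = \int_{v_n^+}^v \kappa_3(u)\,\vt_1'(u)\,du
}
(using $f' = \kappa_3 g_1'$ and~\eqref{eq010702}), and that this quantity does not grow faster than $-\kappa_3(v_n)\vt_1(v_n^+)$. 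Integrating by parts on the finite interval $[v_n^+,v]$ gives $\kappa_3(v)\vt_1(v) - \kappa_3(v_n^+)\vt_1(v_n^+) - \int_{v_n^+}^v \kappa_3'(u)\vt_1(u)\,du$; the boundary term at $v$ is a fixed constant, the boundary term at $v_n^+$ is absorbed into the dominant term (same order, opposite sign compared with the $-\kappa_3(v_n)\vt_1(v_n^+)$ piece, and $v_n$ vs.\ $v_n^+$ differ only inside the interval where $\kappa_3$ is monotone so the ratio $\kappa_3(v_n)/\kappa_3(v_n^+)$ is bounded), and for the remaining integral $\int_{v_n^+}^v \kappa_3'(u)\vt_1(u)\,du$ I would bound $|\vt_1(u)| \le \vt_1(v)$ and compare $\int_{v_n^+}^v |\kappa_3'(u)|\,du$ with $|\kappa_3(v_n^+)|$ using the monotonicity of $\kappa_3$ near $-\infty$ (Remark~\ref{Rem100301}), giving $\int_{v_n^+}^v |\kappa_3'| = |\kappa_3(v) - \kappa_3(v_n^+)| = O(|\kappa_3(v_n^+)|)$. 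Combining, $\av{\ff(\vf_n)}{[0,1]} \ge c\,|\kappa_3(v_n)| - C|\kappa_3(v_n^+)| - C'$ with a positive $c$ and a bounded ratio $|\kappa_3(v_n)|/|\kappa_3(v_n^+)|$; choosing $v_n^-$ close enough to $v_n^+$ so this ratio stays close to $1$ (possible since $\kappa_3$ is continuous and monotone there), the right side tends to $+\infty$.

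The main obstacle I anticipate is bookkeeping the relative sizes of $\kappa_3(v_n)$, $\kappa_3(v_n^+)$, and $\int_{v_n^+}^v|\kappa_3'|$: a priori $\kappa_3$ could oscillate wildly, but Condition~\ref{reg} and Remark~\ref{Rem100301} guarantee $\kappa_3$ is eventually monotone, which pins down $v_n^- < v_n < v_n^+$ all lying in a single monotonicity interval for large $n$, so all three quantities are comparable to $|\kappa_3(v_n^+)|$ with constants that can be made arbitrarily close to $1$ by shrinking $v_n^+ - v_n^-$. Once this comparison is in hand the divergence is immediate. A secondary technical point is making sure $\alpha_1/\alpha_2 \in [0,1]$ and that the tangency construction $v_n^- \mapsto v_n^+$ is well defined for $v_n^+$ arbitrarily negative; this follows from the standing cone/tangency assumptions on $\Omega$ in Subsection~\ref{cond} exactly as in the proof of Proposition~\ref{OptimizerRightTangentsInfty}, so I would simply cite that.
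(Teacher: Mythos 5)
Your construction is exactly the paper's: the same test functions $\vf_n$ from~\eqref{eq290602}, Remark~\ref{rem010701} for membership in $\Class_\Omega$, formula~\eqref{eq280603} as the starting point, and the same reduction to $x=w(v)$. You also correctly identify that $-\kappa_3(v_n)\vt_1(v_n^+)\to+\infty$. The gap is in your treatment of the integral term, and it is not just a matter of sharpening constants. After your integration by parts the expression becomes
\[
\big(\kappa_3(v_n^+)-\kappa_3(v_n)\big)\vt_1(v_n^+) \;+\; f(v)-\kappa_3(v)\vt_1(v) \;+\; \int_{v_n^+}^{v}\kappa_3'(u)\,\vt_1(u)\,du,
\]
i.e.\ the boundary term $\kappa_3(v_n^+)\vt_1(v_n^+)$ cancels your ``dominant'' term to leading order: the surviving difference is nonnegative but need not blow up, and you even propose to shrink $v_n^+-v_n^-$, which makes it small. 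The divergence now lives entirely in $\int_{v_n^+}^{v}\kappa_3'\vt_1$ --- precisely the term you then bound in absolute value and subtract as an error. Concretely, your final estimate $c\,|\kappa_3(v_n)|-C\,|\kappa_3(v_n^+)|-C'$ has $c=\vt_1(v_n^+)\le \vt_1(v)=C$ (since $\vt_1$ is increasing by Lemma~\ref{lem010701}), and with $|\kappa_3(v_n)|/|\kappa_3(v_n^+)|\to 1$ it tends to $-\infty$, not $+\infty$; controlling the ratio of the $\kappa_3$-values does nothing about the ratio $C/c$ of the $\vt_1$-values.

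The repair is a one-line sign observation (it is what the paper uses, and it makes the integration by parts unnecessary): since $\kappa_3(-\infty)=-\infty$ and $\sign f'=\sign\kappa_3$ because $f'=\kappa_3 g_1'$ with $g_1'>0$, we have $f'<0$ on some ray $(-\infty,M]$. Hence the integrand of $-\int_{v_n^+}^{v}f'(u)\exp\big(-\int_u^v\lambda\big)\,du$ is nonnegative on $[v_n^+,M]$, so this integral is bounded below by the fixed constant $-\int_{M}^{v}f'(u)\exp\big(-\int_u^v\lambda\big)\,du$, uniformly in $n$. No upper bound on the integral and no comparison of growth rates is needed: $\av{\ff(\vf_n)}{}\ge -\kappa_3(v_n)\vt_1(v_n^+)+\const\to+\infty$.
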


\begin{proof}
It suffices to prove $\boldsymbol{B}(w(v))=+\infty$. We consider the sequence $\vf_n$ defined in~\eqref{eq290602}, they are test functions for~$w(v)$, see Remark~\ref{rem010701}. Since the 
sign of $\kappa_3$ coincides with the sign of $f'$, from~\eqref{eq280603} we see that 
$\av{\ff(\vf_n)}{[0,1]} \to +\infty$.
\end{proof}

\begin{Le}\label{lem010702}
If $\int_{-\infty}^v \lambda = +\infty$, $\vt_1(-\infty)>0$, then the limit $\kappa_2(-\infty)$ is finite 
and $\int_{-\infty}^v \frac{\kappa_2'}{\kappa_2-\kappa} = +\infty$.
\end{Le}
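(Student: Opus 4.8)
The plan is to obtain both assertions from the antiderivative identity~\eqref{eq111005}, the first step being to prove that $\kappa_2$ has a finite limit at $-\infty$. Since $\kappa_2'>0$, the function $\kappa_2$ is increasing, so $\kappa_2(-\infty)$ exists and is either finite or equals $-\infty$.

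I would rule out $\kappa_2(-\infty)=-\infty$ as follows. If it held, then by the right inequality in~\eqref{eq010705} we would also have $\kappa(\tau)\to-\infty$ as $\tau\to-\infty$. The definition of $\kappa$ gives $g_2-w_2=\kappa\,(g_1-w_1)$, so that comparing~\eqref{eq010701} and~\eqref{eq010703} yields
\[
\vt_2(\tau)=\kappa(\tau)\,\vt_1(\tau),\qquad \tau\in\mathbb{R}.
\]
By Lemma~\ref{lem010701} the limit $\vt_1(-\infty)$ exists and is finite, and by hypothesis it is strictly positive; therefore $\kappa(\tau)\to-\infty$ would force $\vt_2(\tau)\to-\infty$, contradicting the finiteness of $\vt_2(-\infty)$ also guaranteed by Lemma~\ref{lem010701}. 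Hence $\kappa_2(-\infty)$ is finite.

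Next I would note that $\kappa_2-\kappa\to0$ at $-\infty$: by~\eqref{eq010705} we have $\kappa_2(\tau_-)<\kappa(\tau)<\kappa_2(\tau)$ with $\tau_-<\tau$, so $\tau_-\to-\infty$ along with $\tau$; since both $\kappa_2(\tau)$ and $\kappa_2(\tau_-)$ then tend to the finite number $\kappa_2(-\infty)$, squeezing gives $\kappa(\tau)\to\kappa_2(-\infty)$ and therefore $\kappa_2(\tau)-\kappa(\tau)\to0$. Rewriting~\eqref{eq111005} in definite form and using $\big(g_1(u)-w_1(u)\big)\exp\big(-\int_u^v\lambda\big)=\vt_1(u)$ gives
\[
\exp\Big(-\int_u^v\frac{\kappa_2'}{\kappa_2-\kappa}\Big)=\frac{\big(\kappa_2(u)-\kappa(u)\big)\,\vt_1(u)}{\big(\kappa_2(v)-\kappa(v)\big)\big(g_1(v)-w_1(v)\big)},
\]
and as $u\to-\infty$ the right-hand side tends to $0$, because $\kappa_2(u)-\kappa(u)\to0$ while $\vt_1(u)\to\vt_1(-\infty)$ is finite. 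Consequently $\int_u^v\frac{\kappa_2'}{\kappa_2-\kappa}\to+\infty$, which is the second assertion. I do not expect a genuine obstacle: the only care needed is that every limit and sign statement about $\vt_1$ and $\vt_2$ invoked here is exactly one already recorded in Lemma~\ref{lem010701}, and that $\tau_-<\tau$ makes $\tau_-\to-\infty$ immediate.
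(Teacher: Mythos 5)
Your argument is correct and follows essentially the same route as the paper's: both rest on the finiteness of $\vt_1(-\infty)$ and $\vt_2(-\infty)$ from Lemma~\ref{lem010701} together with the relation $\vt_2=\kappa\vt_1$ to pin down $\kappa(-\infty)=\kappa_2(-\infty)$ finite, and then read off the divergence of $\int_{-\infty}^v\frac{\kappa_2'}{\kappa_2-\kappa}$ from the identity~\eqref{eq111005} rewritten in definite form (your exponential formulation is exactly the paper's logarithmic one, cf.~\eqref{eq110702}). The only cosmetic difference is that you exclude $\kappa_2(-\infty)=-\infty$ by contradiction, whereas the paper computes $\kappa(-\infty)=\vt_2(-\infty)/\vt_1(-\infty)$ directly.
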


\begin{proof}
By Lemma~\ref{lem010701} both $\vt_1(-\infty)$ and $\vt_2(-\infty)$ are finite. Since $\vt_1(-\infty)>0$, 
$\kappa(-\infty) = \frac{\vt_2(-\infty)}{\vt_1(-\infty)}$ is finite as well. From~\eqref{eq010705} and monotonicity of $\kappa_2$ we 
conclude that $\kappa_2(-\infty) = \kappa(-\infty)$.

From~\eqref{eq111005} we have
$$
\int_{\tau}^v \frac{\kappa_2'}{\kappa_2-\kappa} = 
\log\frac{\kappa_2(v)-\kappa(v)}{\kappa_2(\tau)-\kappa(\tau)} + \log \frac{\vt_1(v)}{\vt_1(\tau)},
$$ 
and the right hand side tends to $+\infty$ as $\tau$ goes to $-\infty$.
\end{proof}

\begin{St}\label{st130705}
If $\int_{-\infty}^v \lambda = +\infty,$ $\vt_1(-\infty)>0,$ and there is a standard candidate $B$ 
on $\Rt(-\infty, v),$ then $\kappa_3(-\infty) < +\infty$. 
\end{St}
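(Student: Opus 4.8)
The plan is to argue by contradiction: assume $\kappa_3(-\infty) = +\infty$ and show that no standard candidate can live on $\Rt(-\infty,v)$, since its $\beta_2'$ would be positive near $-\infty$ instead of non-positive.

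First I would record the elementary consequences of the assumption. By Remark~\ref{Rem100301} the function $\kappa_3$ is monotone in a neighborhood of $-\infty$, so the (possibly infinite) limit $\kappa_3(-\infty)$ exists; if it equals $+\infty$, then $\kappa_3$ is decreasing near $-\infty$, hence $\kappa_3'\le 0$ there, and therefore $\tors=\kappa_3'/\kappa_2'\le 0$ near $-\infty$ (recall $\kappa_2'>0$). Next I would upgrade this to $\tors(-\infty)=-\infty$. Indeed, if $\tors$ were bounded below near $-\infty$, then, being also bounded above (by $0$) there, it would be bounded on some ray $(-\infty,\tau_0)$, so $|\kappa_3'|=|\tors|\,\kappa_2'\le C\kappa_2'$ on that ray. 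Since we are in case~\textbf{(B)}, Lemma~\ref{lem010702} gives that $\kappa_2(-\infty)$ is finite, hence $\int_{-\infty}^{\tau_0}\kappa_2'<+\infty$, whence $\int_{-\infty}^{\tau_0}|\kappa_3'|<+\infty$ and $\kappa_3(-\infty)$ would be finite --- contradicting our assumption. So $\tors(-\infty)=-\infty$, which, as $\tors$ is monotone near $-\infty$, means $\tors$ is strictly increasing there; consequently $\tors'\ge 0$ near $-\infty$ (equivalently $\torsion\ge 0$ there, by~\eqref{eq211003}) and $\tors'$ is not identically zero on any neighborhood of $-\infty$.

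Finally I would feed this into the formula for the derivative of $\beta_2$. Since $B$ is a standard candidate on $\Rt(-\infty,v)$, Proposition~\ref{NewRightTangentsCandidateInfty} forces $\beta_2$ to be given by~\eqref{eq120701} and $\beta_2'$ to satisfy~\eqref{eq120702}:
\[
\beta_2'(t) = \frac{\kappa_2'(t)}{\kappa_2(t)-\kappa(t)}\int_{-\infty}^t \exp\Big(\!-\!\int_\tau^t\frac{\kappa_2'}{\kappa_2-\kappa}\Big)\tors'(\tau)\,d\tau\le 0,\qquad t\in(-\infty,v).
\]
For $t$ sufficiently close to $-\infty$ the entire range of integration lies in the ray on which $\tors'\ge 0$, so the integral is $\ge 0$, and it is strictly positive because $\tors'$ does not vanish identically on any neighborhood of $-\infty$ (the integral converges, being part of the standard-candidate hypothesis). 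Moreover $\kappa_2'>0$, and for a right tangent domain $\kappa_2-\kappa>0$ by Remark~\ref{Rem201001}. Hence $\beta_2'(t)>0$ for $t$ near $-\infty$, contradicting the inequality in~\eqref{eq120702} (and Definition~\ref{211101}). This contradiction forces $\kappa_3(-\infty)<+\infty$.

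I expect no serious obstacle here; the only point requiring care is the dichotomy at $-\infty$, namely verifying cleanly that a $\tors$ bounded below is incompatible with $\int_{-\infty}^v\kappa_2'<+\infty$ together with $\kappa_3(-\infty)=+\infty$, so that $\tors$ must tend to $-\infty$ and hence be increasing near $-\infty$. Once that is in place, the sign computation for $\beta_2'$ is immediate from~\eqref{eq120702} and the geometry recorded in Remark~\ref{Rem201001}.
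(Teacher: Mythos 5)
Your proof is correct and follows essentially the same route as the paper's: assume $\kappa_3(-\infty)=+\infty$, use Lemma~\ref{lem010702} (finiteness of $\kappa_2(-\infty)$) to force $\tors(-\infty)=-\infty$, hence $\tors'\ge 0$ and nontrivial near $-\infty$, and then read off $\beta_2'>0$ from~\eqref{eq120702}, contradicting Definition~\ref{211101}. The only cosmetic difference is that you first record $\tors\le 0$ and bound $|\kappa_3'|\le C\kappa_2'$, whereas the paper integrates $\kappa_3'>C_1\kappa_2'$ directly to get $\kappa_3<C_1\kappa_2+C_2$; both hinge on the same lemma and are equivalent.
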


\begin{proof}
Assume the contrary, let $\kappa_3(-\infty) = +\infty$. First, we claim that $\tors(-\infty) = -\infty$. If not, 
then $\tors > C_1$ for some $C_1 \in \mathbb{R}$ in a neighborhood of $-\infty$, i.\,e., $\kappa_3'> C_1 \kappa_2'$. 
Integration of this inequality yields $\kappa_3 < C_1\kappa_2+ C_2$ for some $C_2 \in \mathbb{R}$. This contradicts 
to the assumption $\kappa_3(-\infty) = +\infty$ by Lemma~\ref{lem010702}. Thus, $\tors(-\infty) = -\infty$ and 
$\tors'>0$ in a neighborhood of $-\infty$. By~\eqref{eq120702} $\beta_2'>0$ in the same neighborhood of $-\infty$, 
this contradicts to the existence of the standard candidate $B$ on $\Rt(-\infty, v)$. We conclude that 
$\kappa_3(-\infty) < +\infty$.
\end{proof}

\begin{St}\label{st130706}
Suppose that $\int_{-\infty}^v \lambda = +\infty$, $\vt_1(-\infty)>0$, and $\kappa_3(-\infty)$ is finite. 
Let~$B$ be the standard candidate on~$\Rt(-\infty, v)$. Then for every point~$x \in \Rt(-\infty,v)$ there 
exists a sequence of non-decreasing test functions~$\vf_{n,x}$ being an optimizing sequence for~$B$ at $x$\textup, 
moreover\textup,~$\vf_{n,x} \preceq g(v)$.
\end{St}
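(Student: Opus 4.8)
\textbf{Proof proposal for Proposition~\ref{st130706}.}

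The plan is to follow the same scheme as in the proof of Proposition~\ref{st130703} (the case~\textbf{(A)}), reducing everything to the point $x = w(v) = \wr(v)$; for a general $x \in \Rt(-\infty,v)$ one picks a finite $u_1$ with $x \in \Rt(u_1,v)$ and applies Proposition~\ref{OptimizersRightTangentDomain} after the optimizing sequence at $w(v)$ has been produced. We will use the test functions $\vf_n$ defined by~\eqref{eq290602} with the parameters $\alpha_1,\alpha_2$ given by~\eqref{eq290603} and~\eqref{eq201201}; these are non-decreasing and satisfy $\vf_n \preceq g(v)$, and by Remark~\ref{rem010701} they are indeed test functions for $w(v)$. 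So the only thing to prove is the convergence $\av{\ff(\vf_n)}{[0,1]}\to B(w(v))$. In case~\textbf{(B)}, unlike case~\textbf{(A)}, the parameter $\alpha_2 = \exp(-\int_{v_n^+}^v\lambda)$ tends to $0$ as $v_n^+\to-\infty$ (because $\int_{-\infty}^v\lambda = +\infty$), so the terms involving $v_n^{\pm}$ must be controlled more carefully; the finiteness of $\kappa_3(-\infty)$ (hence boundedness of $\kappa_3(v_n)$) is exactly what makes the boundary term manageable.

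Concretely, I would start from formula~\eqref{eq280603}:
$$
\av{\ff(\vf_n)}{[0,1]} = -\kappa_3(v_n)\vt_1(v_n^+) + f(v) - \int_{v_n^+}^v f'(u)\exp\Big(\!-\!\int_u^v\lambda\;\Big)\,du,
$$
where $v_n \in [v_n^-,v_n^+]$ comes from the Cauchy mean value theorem. By hypothesis $\kappa_3$ has a finite limit at $-\infty$, so $\kappa_3(v_n)$ is bounded (and converges to $\kappa_3(-\infty)$); by Lemma~\ref{lem010701} and the standing assumption of case~\textbf{(B)}, $\vt_1(v_n^+) \to \vt_1(-\infty) > 0$. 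Hence the first term converges to $-\kappa_3(-\infty)\vt_1(-\infty)$. For the integral term, write $f' = \kappa_3 g_1' = \kappa_3\vt_1'$ using~\eqref{eq010702}, and integrate by parts via Lemma~\ref{Lem111001} with $Y = \kappa_3$, $\vt = \vt_1$; here I must check the hypotheses of that lemma — $\kappa_3$ is piecewise monotone with finitely many monotonicity intervals by Remark~\ref{Rem100301}, and $\vt_1$ is monotone near $-\infty$, but $\vt_1(-\infty) \ne 0$, so I cannot apply the lemma directly to $\vt_1$. The fix is to apply it to $\tilde\vt_1 \df \vt_1 - \vt_1(-\infty)$, which does tend to $0$ at $-\infty$ and is still monotone there; the constant shift produces an extra term $-\vt_1(-\infty)\,[\kappa_3(v)-\kappa_3(-\infty)]$, which I will track. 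After this, one obtains
$$
\lim_{n\to\infty}\av{\ff(\vf_n)}{[0,1]} = f(v) - \kappa_3(v)\big(g_1(v)-w_1(v)\big) + \int_{-\infty}^v \kappa_3'(u)\vt_1(u)\,du,
$$
i.e. the same closed-form expression as in the proof of Proposition~\ref{st130703}.

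It then remains to identify this with $B(w(v))$ as given by~\eqref{eq010706}, which amounts to verifying the identity~\eqref{eq010708}, namely
$$
\int_{-\infty}^v \kappa_3'(u)\vt_1(u)\,du = \big(\kappa(v)-\kappa_2(v)\big)\big(w_1(v)-g_1(v)\big)\int_{-\infty}^v \exp\Big(\!-\!\int_u^v\frac{\kappa_2'}{\kappa_2-\kappa}\Big)\frac{\kappa_3'(u)}{\kappa_2(u)-\kappa(u)}\,du.
$$
This follows pointwise from~\eqref{eq111005} exactly as in~\eqref{eq110702}: $\vt_1(u) = \dfrac{(\kappa_2(v)-\kappa(v))(g_1(v)-w_1(v))}{\kappa_2(u)-\kappa(u)}\exp\big(-\!\int_u^v\frac{\kappa_2'}{\kappa_2-\kappa}\big)$, so the two integrands agree. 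One also needs to know that $\beta_2(v)$ has the integral representation~\eqref{eq010707}, which requires $\int_{-\infty}^v\frac{\kappa_2'}{\kappa_2-\kappa} = +\infty$ — this is supplied by Lemma~\ref{lem010702}, which holds precisely under the case~\textbf{(B)} hypotheses. The main obstacle, I expect, is bookkeeping: making sure that the two integration-by-parts steps are legitimate (the non-vanishing limit $\vt_1(-\infty)$ forces the shift trick, and one should double-check that all limits of the various boundary terms $\kappa_3(v_n)\vt_1(v_n^+)$, $f(v_n^\pm)\exp(-\int\lambda)$, etc., exist and cancel correctly), and that the finiteness of $\kappa_3(-\infty)$ is used in every place where case~\textbf{(A)}'s argument silently used $\vt_1(-\infty) = 0$. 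Once the convergence is established, the extension to arbitrary $x$ via Proposition~\ref{OptimizersRightTangentDomain} is routine, and this completes the proof of Theorem~\ref{th120701} in case~\textbf{(B)} together with Propositions~\ref{st130704} and~\ref{st130705}.
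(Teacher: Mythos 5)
Your proposal is correct and follows essentially the same route as the paper: reduce to $x=w(v)$, use the test functions~\eqref{eq290602}, pass to the limit in~\eqref{eq280603} (the boundary term now converging to $-\kappa_3(-\infty)\vt_1(-\infty)$ rather than vanishing), integrate by parts to reach $f(v)-\kappa_3(v)(g_1(v)-w_1(v))+\int_{-\infty}^v\kappa_3'\vt_1$, and identify this with $B(w(v))$ via~\eqref{eq010706}--\eqref{eq010708} and Lemma~\ref{lem010702}. Your explicit shift $\tilde\vt_1=\vt_1-\vt_1(-\infty)$ to legitimize the integration by parts under Lemma~\ref{Lem111001} is a detail the paper leaves implicit, and you verify correctly that the extra boundary contribution cancels against $-\kappa_3(-\infty)\vt_1(-\infty)$.
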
 

\begin{proof}
It suffices to prove the claim for the case $x = w(v) = \wr(v)$. In what follows we omit the 
subscript $x$ in our notation. We use the sequence of functions $\vf_n$ defined by~\eqref{eq290602}. All these functions 
are non-decreasing  and satisfy the inequality~$\vf_{n} \preceq g(v)$. Therefore, we only need to verify 
that $\av{\ff(\vf_n)}{[0,1]} \to B(w(v))$. 

We calculate the limit using~\eqref{eq280603}:
\begin{align*}
\lim_{n\to \infty}\av{\ff(\vf_n)}{[0,1]} =
& -\kappa_3(-\infty)\vt_1(-\infty)+f(v)-\int_{-\infty}^v f'(\tau)\exp\Big(\!-\!\int_\tau^v \lambda\;\Big)\,d\tau =
\\
=&-\kappa_3(-\infty)\vt_1(-\infty) + f(v) - \int_{-\infty}^v \kappa_3(\tau) \vt_1'(\tau) \, d\tau =
\\
=& f(v) - \kappa_3(v)(g_1(v)-w_1(v))+\int_{-\infty}^v \kappa_3'(\tau) \vt_1(\tau)\, d\tau. 
\end{align*}
Then, we use~\eqref{eq010706},~\eqref{eq010707}, and~\eqref{eq010708} to conclude that 
$\av{\ff(\vf_n)}{[0,1]} \to B(w(v))$.
\end{proof}

Propositions~\ref{st130704},~\ref{st130705}, and~\ref{st130706} prove Theorem~\ref{th120701} in the case~\textbf{(B)}.

\subsection{The case~\textbf{(C)}\,: $\int_{-\infty}^v \lambda = +\infty,$ $\vt_1(-\infty)= 0,$ $\vt_2(-\infty)\ne 0$}

\begin{Le}\label{lem150701}
If $\int_{-\infty}^v \lambda = +\infty,$  $\vt_1(-\infty) = 0,$ and $\vt_2(-\infty) \ne 0,$ then 
$\kappa_2(-\infty) = \kappa(-\infty) = -\infty$. In particular\textup, $g_2'<0$ in a neighborhood of $-\infty$ 
and $\vt_2(-\infty)<0$.
\end{Le}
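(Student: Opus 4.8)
\textbf{Proof proposal for Lemma~\ref{lem150701}.} The plan is to work with the limits of the geometric quantities $\vt_1, \vt_2$ (whose monotonicity near $-\infty$ and existence of limits were established in Lemma~\ref{lem010701}) together with the basic convexity inequality~\eqref{eq010705} relating $\kappa$ at $\tau$ to $\kappa_2$ at $\tau$ and at the second intersection point $\tau_-$. First I would observe that since $\vt_1(-\infty) = 0$ and $\vt_2(-\infty) \ne 0$ (and finite, by Lemma~\ref{lem010701}), the ratio $\kappa(\tau) = \vt_2(\tau)/\vt_1(\tau)$ must tend to $\pm\infty$. The key is to pin down the sign. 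By~\eqref{eq050701}, in a neighborhood of $-\infty$ we have $\sign\vt_2 = \sign\kappa = \sign\kappa_2 = \sign g_2'$, so $\kappa$ is eventually of one sign; combined with $|\kappa|\to\infty$ this gives $\kappa(-\infty) = +\infty$ or $\kappa(-\infty) = -\infty$.

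Next I would rule out $\kappa(-\infty) = +\infty$. If $\kappa(\tau) \to +\infty$, then by~\eqref{eq050701} also $\kappa_2(\tau) > 0$ and $g_2'(\tau) > 0$ near $-\infty$; since $\kappa_2$ is increasing (by~\eqref{eq230402}, i.e. $\kappa_2' > 0$) and positive, $\kappa_2(-\infty)$ is finite. But~\eqref{eq010705} says $\kappa(\tau) < \kappa_2(\tau)$, which would force $\kappa(-\infty) \leq \kappa_2(-\infty) < +\infty$, a contradiction. Hence $\kappa(-\infty) = -\infty$. Then, again by~\eqref{eq010705}, $\kappa_2(\tau) > \kappa(\tau) \to -\infty$; moreover the left inequality $\kappa_2(\tau_-) < \kappa(\tau)$ with $\tau_- \to -\infty$ as $\tau \to -\infty$ forces $\kappa_2(-\infty) = -\infty$ as well. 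Consequently $g_2' = \kappa_2 g_1' < 0$ near $-\infty$ since $g_1' > 0$ by~\eqref{eq250702}, and then $\sign\vt_2 = \sign g_2' < 0$ near $-\infty$ by~\eqref{eq050701}, so $\vt_2(-\infty) < 0$.

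The main obstacle, as I see it, is making sure the sign bookkeeping in~\eqref{eq050701} is invoked legitimately on a full neighborhood of $-\infty$ — one needs $g_2'$ (equivalently $\kappa_2$) to be eventually of constant sign, which holds because $\kappa_2$ is monotone increasing, so it either stays negative, stays positive, or changes sign exactly once. The three cases collapse quickly: the "eventually positive" case is the one excluded above, and in the "eventually negative" and "changes sign once, then negative" cases we are already in the desired regime. A secondary point of care is the passage $\tau \to -\infty \implies \tau_- \to -\infty$, which follows from the geometric assumption that $w(\tau)$ lies on the tangent segment from $g(\tau)$ whose other endpoint is $g(\tau_-)$ with $\tau_- < \tau$, together with $g_1'>0$ ensuring $g_1(\tau_-) < g_1(\tau)$; since $\tau \mapsto g_1(\tau)$ is a homeomorphism onto its image, $\tau_-$ cannot stay bounded below as $\tau \to -\infty$. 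Everything else is a short chain of inequalities, so I would not expect any genuine difficulty beyond organizing these cases cleanly.
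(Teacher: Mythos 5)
Your proof is correct and follows essentially the same route as the paper's: identify $\kappa=\vt_2/\vt_1$ so that $\kappa(-\infty)$ is infinite, use the convexity inequality~\eqref{eq010705} together with the monotonicity of $\kappa_2$ to force both limits to be $-\infty$, and then read off $g_2'<0$ and $\vt_2(-\infty)<0$ from~\eqref{eq050701}. Your extra care about the constancy of signs near $-\infty$ and about $\tau_-\to-\infty$ only makes explicit what the paper leaves implicit.
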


\begin{proof} 
We easily obtain that $\kappa(-\infty) = \frac{\vt_2(-\infty)}{\vt_1(-\infty)}$ is infinite. 
From~\eqref{eq010705} and monotonicity of~$\kappa_2$ we conclude that $\kappa_2(-\infty) = \kappa(-\infty)$ is infinite as well. 
Since the function $\kappa_2$ is increasing, both these limits are $-\infty$. Then, $g_2' = g_1'\kappa_2 <0$ 
in a neighborhood of $-\infty$ and from~\eqref{eq050701} we obtain that $\vt_2(-\infty)<0$.
\end{proof}

\begin{St}\label{st130707}
If $\int_{-\infty}^v \lambda = +\infty,$ $\vt_1(-\infty) = 0,$ $\vt_2(-\infty)\ne 0,$ and 
$\tors(-\infty)=+\infty,$
then $\boldsymbol{B}(x;\,f) = +\infty$ for any $x \in \Omega\setminus \dfi\Omega$.  
\end{St}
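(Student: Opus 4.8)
The plan is to mimic the proofs of Propositions~\ref{st130702} and~\ref{st130704}: construct the explicit sequence of non-decreasing test functions $\vf_n$ for a point $w(v)$ on the free boundary, as given by formula~\eqref{eq290602}, and show that $\av{\ff(\vf_n)}{[0,1]} \to +\infty$. Since $\Bell^{\min}$ and $\Bell$ differ only by a sign change of $f$ (Remark~\ref{minBell}) and it suffices to prove $\boldsymbol{B}(w(v);f)=+\infty$ for one point $w(v)$ (because the tangent domain $\Rt(-\infty,v)$ covers a neighborhood of the free boundary and any point of $\Omega\setminus\dfi\Omega$ can be reached by concatenating an optimizer coming from the free boundary with a constant function, exactly as in the propositions for the other cases), the whole weight of the argument is in estimating $\av{\ff(\vf_n)}{[0,1]}$ from below.

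First I would recall the representation~\eqref{eq280603}, valid for the test functions $\vf_n$ in~\eqref{eq290602}:
\eq{eqProofC1}{
\av{\ff(\vf_n)}{[0,1]} = -\kappa_3(v_n)\vt_1(v_n^+)+f(v)-\int_{v_n^+}^v f'(u)\exp\Big(\!-\!\int_u^v\lambda\;\Big)du,
}
where $v_n \in [v_n^-,v_n^+]$ is supplied by the Cauchy mean value theorem. In case~\textbf{(C)} we are under the hypotheses $\int_{-\infty}^v\lambda=+\infty$, $\vt_1(-\infty)=0$, $\vt_2(-\infty)\ne 0$, so by Lemma~\ref{lem150701} we have $\kappa_2(-\infty)=\kappa(-\infty)=-\infty$ and $\vt_2(-\infty)<0$. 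The assumption $\tors(-\infty)=+\infty$ together with $\kappa_2'>0$ forces, after integration, $\kappa_3(-\infty)=+\infty$ (indeed $\kappa_3'=\tors\kappa_2'$ is eventually positive and $\kappa_3/\kappa_2\to\tors(-\infty)=+\infty$ while $\kappa_2\to-\infty$, so $\kappa_3\to-\infty$ --- one must be careful with signs here). Let me re-examine: since $\kappa_2\to-\infty$ and $\tors(-\infty)=+\infty$, from $\kappa_3' = \tors\,\kappa_2'$ with $\kappa_2'>0$ we get that $\kappa_3$ is increasing near $-\infty$ at a rate dominating $\kappa_2'$ times any constant, hence $\kappa_3(-\infty)=-\infty$. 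Thus the term $-\kappa_3(v_n)\vt_1(v_n^+)$ in~\eqref{eqProofC1}: $\kappa_3(v_n)\to-\infty$ while $\vt_1(v_n^+)\to 0^+$, so this term is $+\infty\cdot 0^+$ and needs a careful rate comparison, exactly as the integral term does.

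The hard part will be precisely this competition between $-\kappa_3(v_n)\to+\infty$ and $\vt_1(v_n^+)\to 0$, and likewise controlling the integral $\int_{v_n^+}^v f'(u)\exp(-\int_u^v\lambda)\,du = \int_{v_n^+}^v \kappa_3(u)\vt_1'(u)\,du$ (using $f'=\kappa_3 g_1'$ and~\eqref{eq010702}). I would integrate this by parts via Lemma~\ref{Lem111001} (whose hypotheses hold since $\vt_1(-\infty)=0$ and $\kappa_3$ is piecewise monotone by Remark~\ref{Rem100301}) to obtain $\int_{v_n^+}^v\kappa_3\vt_1' = \kappa_3(v)(g_1(v)-w_1(v)) - \kappa_3(v_n^+)\vt_1(v_n^+) - \int_{v_n^+}^v\kappa_3'\vt_1$, which substituted back into~\eqref{eqProofC1} yields
\eq{eqProofC2}{
\av{\ff(\vf_n)}{[0,1]} = \big(\kappa_3(v_n^+)-\kappa_3(v_n)\big)\vt_1(v_n^+) + f(v) - \kappa_3(v)(g_1(v)-w_1(v)) + \int_{v_n^+}^v \kappa_3'(u)\vt_1(u)\,du.
}
Now the first term $(\kappa_3(v_n^+)-\kappa_3(v_n))\vt_1(v_n^+)$: choosing the sequences $v_n^-,v_n^+$ so that $v_n^+-v_n^-\to 0$ and $v_n^+\to-\infty$, the difference $\kappa_3(v_n^+)-\kappa_3(v_n)\to 0$ while $\vt_1(v_n^+)\to 0$, so this term vanishes; the real divergence comes from $\int_{v_n^+}^v\kappa_3'(u)\vt_1(u)\,du = \int_{v_n^+}^v \tors(u)\kappa_2'(u)\vt_1(u)\,du$. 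Since $\tors(u)\to+\infty$, $\kappa_2'>0$, $\vt_1>0$, and $\int_{-\infty}^v\kappa_2'\vt_1 = \int_{-\infty}^v g_2'\exp(-\int^v\lambda)/\kappa_2 \cdot \dots$ --- better: note $\kappa_2'\vt_1 = \kappa_2'(g_1-w_1)\exp(-\int^v\lambda)$, and from~\eqref{eq111005} this equals $\frac{d}{du}\big[\dots\big]$; in any case its integral over $(-\infty,v)$ diverges (this is essentially $\int\frac{\kappa_2'}{\kappa_2-\kappa}=+\infty$ rescaled, using that $\kappa$ and $\kappa_2$ are comparable near $-\infty$ per~\eqref{eq010705}). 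Multiplying a function of divergent integral by $\tors\to+\infty$ gives $+\infty$. I would make this rigorous by a standard truncation: fix $M>0$, pick $\sigma$ so that $\tors(u)>M$ for $u<\sigma$, then $\int_{v_n^+}^{\sigma}\kappa_3'\vt_1 \ge M\int_{v_n^+}^{\sigma}\kappa_2'\vt_1 \to +\infty$ as $v_n^+\to-\infty$, the piece $\int_{\sigma}^v$ being bounded. Letting $n\to\infty$ gives $\liminf_n\av{\ff(\vf_n)}{[0,1]}\ge$ (bounded terms) $+\infty = +\infty$, hence $\boldsymbol{B}(w(v);f)=+\infty$. The main obstacle, as indicated, is bookkeeping the signs (everything sits near $-\infty$ where $\kappa_2,\kappa_3$ are negative and large) and verifying the divergence $\int_{-\infty}^v\kappa_2'\vt_1=+\infty$ cleanly from the hypotheses of case~\textbf{(C)}.
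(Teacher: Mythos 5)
Your setup (the test functions~\eqref{eq290602}, the reduction to $x=w(v)$, and the identity~\eqref{eq280603}) is fine, but the divergence mechanism you propose is wrong, and this is precisely the point where case~\textbf{(C)} differs from~\textbf{(A)} and~\textbf{(B)}. The claim $\int_{-\infty}^v\kappa_2'\vt_1=+\infty$ is false: by~\eqref{eq111005} (equivalently~\eqref{eq110702}) one has
$\kappa_2'(u)\vt_1(u)=C_v\,\frac{d}{du}\exp\big(-\int_u^v\frac{\kappa_2'}{\kappa_2-\kappa}\big)$ with $C_v=(\kappa_2(v)-\kappa(v))(g_1(v)-w_1(v))$, so $\int_t^v\kappa_2'\vt_1\le C_v$ for every $t$, no matter what. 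Worse, under the hypotheses of case~\textbf{(C)} the integral $\int_{-\infty}^v\frac{\kappa_2'}{\kappa_2-\kappa}$ is \emph{finite} --- this is exactly what~\eqref{eq110704} and the hypothesis $\vt_2(-\infty)\ne0$ give (see the remark after Proposition~\ref{st130709}) --- so your truncation produces $M\int_{v_n^+}^{\sigma(M)}\kappa_2'\vt_1$, which is again an indeterminate $\infty\cdot0$ as $M\to\infty$, since $\int_{-\infty}^{\sigma}\kappa_2'\vt_1\to0$ as $\sigma\to-\infty$. One can check on explicit examples (e.g.\ the model domain~\textbf{(C)} of Subsection~\ref{sec280301} with $\tors$ tending to $+\infty$ slowly) that $\int_{-\infty}^v\kappa_3'\vt_1$ may converge, so the term you rely on need not diverge at all. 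A secondary error: you cannot ``choose $v_n^+-v_n^-\to0$''; $v_n^-$ is determined by $v_n^+$ through the tangency of the chord $[g(v_n^-),g(v_n^+)]$ to the free boundary, and generically $v_n^+-v_n^-\to\infty$. (That boundary term is at least non-negative for large $n$, since $\kappa_3'=\tors\kappa_2'>0$ near $-\infty$; but it may be exactly where the divergence sits, so it cannot be discarded as vanishing.)

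The repair is to normalize the difference quotient in~\eqref{eq300601} by the \emph{second} coordinate: write
\begin{equation*}
\alpha_1\big(f(v_n^-)-f(v_n^+)\big)=-\frac{f(v_n^+)-f(v_n^-)}{g_2(v_n^+)-g_2(v_n^-)}\,\vt_2(v_n^+)
=-\frac{f'(v_n)}{g_2'(v_n)}\,\vt_2(v_n^+)=-\frac{\kappa_3(v_n)}{\kappa_2(v_n)}\,\vt_2(v_n^+)
\end{equation*}
by the Cauchy mean value theorem applied to $f$ and $g_2$ (legitimate since $g_2'<0$ near $-\infty$ by Lemma~\ref{lem150701}). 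Now the hypothesis $\vt_2(-\infty)\ne0$ does its job: $\vt_2(v_n^+)\to\vt_2(-\infty)<0$ is bounded away from zero, while $\frac{\kappa_3}{\kappa_2}(-\infty)=\frac{\kappa_3'}{\kappa_2'}(-\infty)=\tors(-\infty)=+\infty$ by L'H\^opital, using $\kappa_2(-\infty)=-\infty$ from Lemma~\ref{lem150701}. Hence this single term tends to $+\infty$. The remaining integral $-\int_{v_n^+}^vf'(u)\exp\big(-\int_u^v\lambda\big)\,du$ cannot cancel it: $\kappa_3\to-\infty$ forces $f'=\kappa_3 g_1'<0$ near $-\infty$, so the integral is eventually monotone in $v_n^+$ and bounded below. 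This is the argument you were aiming for, but the divergence lives in the boundary term measured against $\vt_2$, not in $\int\kappa_3'\vt_1$.
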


\begin{proof}
It suffices to prove that $\boldsymbol{B}(w(v))=+\infty$. We consider the sequence $\vf_n$ defined in~\eqref{eq290602}, they are test 
functions for~$w(v)$, see Remark~\ref{rem010701}. 
Using~\eqref{eq010703} we rewrite~\eqref{eq300601} in the form
\begin{align}
\notag
\av{\ff(\vf_n)}{[0,1]}=& -\frac{f(v_n^+)-f(v_n^-)}{g_2(v_n^+)-g_2(v_n^-)}\vt_2(v_n^+)+f(v)-
\int_{v_n^+}^v f'(u)\exp\Big(\!-\!\int_u^v\lambda\;\Big)\,du\,=
\\
\label{eq110701}
=& -\frac{f'(v_n)}{g_2'(v_n)}\vt_2(v_n^+)+f(v)-
\int_{v_n^+}^v f'(u)\exp\Big(\!-\!\int_u^v\lambda\;\Big)\,du\,,
\end{align}
for some $v_n \in (v_n^-, v_n^+)$. Since $\vt_2(-\infty)<0$ and $g_2'<0$ in a neighborhood of $-\infty$ 
(see Lemma~\ref{lem150701}) it suffices to prove that $\frac{f'}{g_2'}(-\infty) = +\infty$. This follows 
from the identity $\frac{f'}{g_2'}  = \frac{\kappa_3}{\kappa_2}$ and the L'H\^opital rule: 
$\kappa_2(-\infty)=-\infty$, and $\frac{\kappa_3'}{\kappa_2'}(-\infty)= \tors(-\infty) = +\infty$. 
We see that $\av{\ff(\vf_n)}{[0,1]} \to +\infty$.
\end{proof}

\begin{St}\label{st130708}
If $\int_{-\infty}^v \lambda = +\infty,$ $\vt_1(-\infty)=0,$ $\vt_2(-\infty) \ne 0,$ and there is 
a standard candidate $B$ on $\Rt(-\infty, v),$ then $\tors(-\infty) > -\infty$. 
\end{St}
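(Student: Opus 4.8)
\textbf{Proof plan for Proposition~\ref{st130708}.} The statement is the case~\textbf{(C)} analog of Propositions~\ref{st130701} and~\ref{st130705}, and the plan is to mimic their contrapositive structure. Assume, for contradiction, that $\tors(-\infty) = -\infty$. The first task is to extract from this assumption the fact that $\tors'$ has a definite sign near $-\infty$: since $f \in C^2$ and $\tors$ is piecewise monotone with finitely many monotonicity intervals by Condition~\ref{reg}, the hypothesis $\tors(-\infty)=-\infty$ forces $\tors$ to be monotone decreasing in a neighborhood of $-\infty$, hence $\tors' > 0$ there (recall $\tors'$ is a signed measure, here with a left neighborhood on which it is positive). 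Equivalently, by~\eqref{eq211003} the torsion $\torsion$ of $\gamma$ is positive near $-\infty$.

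The second step is to feed this into the formula for $\beta_2'$. The standard candidate on $\Rt(-\infty,v)$ is defined by~\eqref{eq120701}, and its derivative is given by~\eqref{eq120702}:
\[
\beta_2'(t) = \frac{\kappa_2'(t)}{\kappa_2(t)-\kappa(t)} \int_{-\infty}^t \exp\Big(\!-\!\int_{\tau}^t\frac{\kappa_2'}{\kappa_2-\kappa}\Big)\tors'(\tau)\,d\tau.
\]
In a left neighborhood of $-\infty$ (which we may assume is contained in $(-\infty,v)$) we have $\tors' > 0$, so the integrand is positive there; also $\kappa_2' > 0$ and, in the right-tangent setting, $\kappa_2-\kappa > 0$. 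Here I would need the convergence of the integral $\int_{-\infty}^t$, which is part of the definition of the standard candidate (Definition~\ref{211101} together with the finiteness of $\beta_2$ in~\eqref{eq120701}); this is where the hypothesis "there is a standard candidate on $\Rt(-\infty,v)$" is genuinely used. Granting that, for $t$ sufficiently close to $-\infty$ the whole expression for $\beta_2'(t)$ is strictly positive. But Definition~\ref{211101} requires $\beta_2' < 0$ throughout the interior of the tangent domain for a standard candidate to exist. This is the contradiction, so $\tors(-\infty) > -\infty$.

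The argument is essentially a transcription of the proof of Proposition~\ref{st130705}, with the roles of the auxiliary quantities $\vt_1$, $\vt_2$, $\kappa_2$, $\kappa$ kept only implicitly. The main subtlety I anticipate is making sure that in case~\textbf{(C)} the signs still work out in the formula for $\beta_2'$: by Lemma~\ref{lem150701} we have $\kappa_2(-\infty) = \kappa(-\infty) = -\infty$, so one must check that $\kappa_2 - \kappa > 0$ persists in the relevant neighborhood — this is exactly~\eqref{eq010705}, which gives $\kappa_2(\tau_-) < \kappa(\tau) < \kappa_2(\tau)$ and hence $\kappa_2(\tau) - \kappa(\tau) > 0$ for all $\tau$, independently of whether the limits are finite. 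With that sign confirmed, the contradiction goes through verbatim. No delicate integration-by-parts or limiting argument (unlike the optimizer-construction propositions) is needed here; the only non-routine ingredient is the sign analysis of $\tors'$ near $-\infty$ coming from the regularity Condition~\ref{reg}.
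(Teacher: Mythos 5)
Your proposal is correct and follows essentially the same route as the paper: assume $\tors(-\infty)=-\infty$, deduce from Condition~\ref{reg} that $\tors'>0$ in a neighborhood of $-\infty$, and conclude from~\eqref{eq120702} that $\beta_2'>0$ there, contradicting Definition~\ref{211101}. One small wording slip: $\tors(-\infty)=-\infty$ forces $\tors$ to be \emph{increasing} (in $t$) near $-\infty$, not decreasing — but your stated conclusion $\tors'>0$ is the right one, and the sign check $\kappa_2-\kappa>0$ via~\eqref{eq010705} is a correct (if unstated in the paper) supporting detail.
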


\begin{proof}
Assume the contrary, let $\tors(-\infty) = -\infty$. Then $\tors'>0$ in a neighborhood of $-\infty$. 
By~\eqref{eq120702} we see that $\beta_2'>0$, which is impossible for the standard candidate~$B$, see Definition~\ref{211101}.
\end{proof}

\begin{St}\label{st130709}
Let $\int_{-\infty}^v \lambda = +\infty,$ $\vt_1(-\infty)=0,$ $\vt_2(-\infty) \ne 0,$ and let $\tors(-\infty)$ 
be finite. If~$B$ is the standard candidate on~$\Rt(-\infty, v),$ then for every point~$x\in\Rt(-\infty,v)$ 
there exists a sequence of non-decreasing test functions~$\vf_{n,x}$ being an optimizing sequence for~$B$ at $x$\textup, 
moreover\textup,~$\vf_{n,x} \preceq g(v)$.
\end{St}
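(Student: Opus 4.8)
The plan is to mimic the proofs of Propositions~\ref{st130703} and~\ref{st130706} for the two previous cases, reducing everything to the single point $x=w(v)=\wr(v)$ (the general case then follows by extending the constructed function by the constant $g(u)$ along extremal segments, exactly as in Proposition~\ref{OptimizersRightTangentDomain}). For $x=w(v)$ I would use the same sequence of test functions $\vf_n$ defined in~\eqref{eq290602}, built from sequences $v_n^-<v_n^+<v$ with $v_n^+\to-\infty$ and $[g(v_n^-),g(v_n^+)]$ tangent to $\dfree\Omega$; each $\vf_n$ is non-decreasing, satisfies $\vf_n\preceq g(v)$, and is a test function for $w(v)$ by Remark~\ref{rem010701}. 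Thus the whole content of the statement is the limit relation $\av{\ff(\vf_n)}{[0,1]}\to B(w(v))$.

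The computation starts from~\eqref{eq110701}, the form of $\av{\ff(\vf_n)}{[0,1]}$ adapted to case~\textbf{(C)} via $\vt_2$: for suitable $v_n\in(v_n^-,v_n^+)$,
\[
\av{\ff(\vf_n)}{[0,1]} = -\frac{f'(v_n)}{g_2'(v_n)}\vt_2(v_n^+)+f(v)-\int_{v_n^+}^v f'(u)\exp\Big(\!-\!\int_u^v\lambda\;\Big)\,du.
\]
Here $\vt_2(-\infty)$ is a nonzero finite number (Lemma~\ref{lem010701} plus the case hypothesis) and, by Lemma~\ref{lem150701}, $\kappa_2(-\infty)=\kappa(-\infty)=-\infty$, so $\vt_2(-\infty)<0$ and $g_2'<0$ near $-\infty$. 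The ratio $f'/g_2'=\kappa_3/\kappa_2$ has a finite limit: since $\tors(-\infty)$ is finite and $\kappa_2(-\infty)=-\infty$, L'H\^opital gives $\kappa_3/\kappa_2\to\tors(-\infty)$; hence the first term converges to $-\tors(-\infty)\,\vt_2(-\infty)$. For the integral term I would rewrite $f'=\kappa_3 g_1'$ and integrate by parts (using Lemma~\ref{Lem111001} with $Y=\kappa_3$, $\vt=\vt_1$, noting $\vt_1(-\infty)=0$ in this case), obtaining the analogue of the identities~\eqref{eq280602}–\eqref{eq280603}; one should check convergence of all the integrals involved using Condition~\ref{sum} together with the hypothesis $\int_{-\infty}^v\lambda=+\infty$.

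On the other side, $B(w(v))$ is given by~\eqref{fenceB2}, i.\,e.\ \eqref{eq010706}, with $\beta_2(v)$ given by~\eqref{eq120701}. The key point is to verify the identity matching the two sides — an analogue of~\eqref{eq010708} — now in the regime $\vt_1(-\infty)=0$, $\vt_2(-\infty)\ne0$, $\kappa_2-\kappa\to-\infty$. I expect the main obstacle to be exactly this bookkeeping: one must show that the boundary term $-\tors(-\infty)\vt_2(-\infty)$ arising from the $f'/g_2'$ factor, which did not appear in cases~\textbf{(A)} and~\textbf{(B)}, is correctly absorbed into the expression for $\beta_2(v)$; concretely, one integrates by parts in~\eqref{eq120701} with $\vt=\exp(-\int_\tau^v\kappa_2'/(\kappa_2-\kappa))$ and $Y=\tors$, and the fact that now $\vt(-\infty)$ need \emph{not} vanish (because $\int\kappa_2'/(\kappa_2-\kappa)$ may converge when $\kappa_2-\kappa\to-\infty$) produces precisely a surviving boundary term $\tors(-\infty)\vt(-\infty)$ proportional to $\vt_2(-\infty)/(\kappa_2(v)-\kappa(v))$. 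Checking that this boundary term cancels against the $-\tors(-\infty)\vt_2(-\infty)$ coming from~\eqref{eq110701}, via relation~\eqref{eq111005} which expresses $\exp(-\int\kappa_2'/(\kappa_2-\kappa))$ in terms of $(\kappa_2-\kappa)(g_1-w_1)\exp(-\int\lambda)$, is the crux; everything else is routine. Finally, Propositions~\ref{st130707},~\ref{st130708}, and~\ref{st130709} together complete the proof of Theorem~\ref{th120701} in case~\textbf{(C)}, and hence in all cases.
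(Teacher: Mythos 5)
Your plan coincides with the paper's proof: the same reduction to $x=w(v)$ with the test functions~\eqref{eq290602}, the limit $-\tors(-\infty)\vt_2(-\infty)$ of the first term of~\eqref{eq110701} via L'H\^opital applied to $f'/g_2'=\kappa_3/\kappa_2$, integration by parts with $Y=\kappa_3$, $\vt=\vt_1$, and the correctly identified crux that the now non-vanishing boundary term $\tors(-\infty)\exp\big(-\int_{-\infty}^v\kappa_2'/(\kappa_2-\kappa)\big)$ in $\beta_2(v)$ must cancel against $-\tors(-\infty)\vt_2(-\infty)$. The one step you flag but do not execute is that matching identity (the paper's~\eqref{eq110704}): via~\eqref{eq110702} it reduces to $\lim_{u\to-\infty}\big(\kappa_2(u)-\kappa(u)\big)\vt_1(u)=-\vt_2(-\infty)$, hence to $\kappa_2/\kappa\to 0$ at $-\infty$, which the paper obtains by L'H\^opital applied to $\log|\vt_2|/\log\vt_1$ using $\kappa=\vt_2/\vt_1$, $\kappa_2=\vt_2'/\vt_1'$, and $\log\vt_1(-\infty)=-\infty$.
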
 

\begin{proof}
It suffices to prove the claim for the case $x = w(v) = \wr(v)$. In what follows we omit the 
subscript $x$ in our notation. We use the sequence of functions $\vf_n$ defined by~\eqref{eq290602}. All these functions 
are non-decreasing  and satisfy the inequality~$\vf_{n} \preceq g(v)$. Therefore, it suffices to verify 
that $\av{\ff(\vf_n)}{[0,1]} \to B(w(v))$. 

First, we note that $\frac{f'}{g_2'} = \frac{\kappa_3}{\kappa_2}$ and $\kappa_2(-\infty)=-\infty$ by 
Lemma~\ref{lem150701}, therefore by the L'H\^opital rule, 
$$
\frac{f'}{g_2'} (-\infty) = \frac{\kappa_3'}{\kappa_2'}(-\infty) = \tors(-\infty).
$$

We calculate  the limit using~\eqref{eq110701}:
\begin{align*}
\lim_{n\to \infty}\av{\ff(\vf_n)}{[0,1]} 
=& -\tors(-\infty)\vt_2(-\infty)+f(v)-\int_{-\infty}^v f'(\tau) \exp\Big(\!-\!\int_\tau^v \lambda\;\Big)\,d\tau=
\\
=& -\tors(-\infty)\vt_2(-\infty)+f(v)-\int_{-\infty}^v \kappa_3(\tau) \vt_1'(\tau)\,d\tau =
\\
=& -\tors(-\infty)\vt_2(-\infty)+f(v)-\kappa_3(v)(g_1(v)-w_1(v))+\int_{-\infty}^v\kappa_3'(\tau)\vt_1(\tau)\,d\tau;
\end{align*}
in the latter equality we integrate by parts using Lemma~\ref{Lem111001} with $Y = \kappa_3$ and $\vt=\vt_1$.
Then, we use~\eqref{eq010708} to rewrite this formula in the form
\begin{align*}
\lim_{n\to \infty}\av{\ff(\vf_n)}{[0,1]}
=& f(v) + \big(w_1(v)-g_1(v)\big)\kappa_3(v)+ \big(w_1(v)-g_1(v)\big)\big(\kappa(v)-\kappa_2(v)\big) \times
\\
\times & \left(\frac{-\tors(-\infty)\vt_2(-\infty)}{\big(w_1(v)-g_1(v)\big)\big(\kappa(v)-\kappa_2(v)\big)}+ 
\int_{-\infty}^v \exp\Big(\!-\!\int_{\tau}^v\! \frac{\kappa_2'}{\kappa_2-\kappa} \Big) 
\frac{\kappa_3'(\tau)}{\kappa_2(\tau)-\kappa(\tau)}\,d\tau\right).
\end{align*}
We need to verify that the expression in the large parentheses coincides with $\beta_2$ given by~\eqref{eq120701}. We have the following formula for $\beta_2$:
$$
\tors(v)-\!\int_{-\infty}^v \!\exp\Big(\!-\!\int_{\tau}^v\!\frac{\kappa_2'}{\kappa_2-\kappa}\,\Big)\tors'(\tau)\,d\tau = 
\tors(-\infty)\exp\Big(\!-\!\int_{-\infty}^v\!\frac{\kappa_2'}{\kappa_2-\kappa}\,\Big) + 
\!\int_{-\infty}^v \!\exp\Big(\!-\!\int_{\tau}^v\!\frac{\kappa_2'}{\kappa_2-\kappa}\,\Big)
\frac{\kappa_3'(\tau)}{\kappa_2(\tau)-\kappa(\tau)}\,d\tau.
$$
Therefore it suffices to verify that
\begin{align}
\label{eq110704}
-\vt_2(-\infty) &\buildrel{?}\over= \big(g_1(v)-w_1(v)\big)(\kappa_2(v)-\kappa(v)\big) 
\exp\Big(\!-\!\!\int_{-\infty}^v\!\frac{\kappa_2'}{\kappa_2-\kappa}\,\Big)=
\\
\notag
&= \big(g_1(v)-w_1(v)\big)(\kappa_2(v)-\kappa(v)\big)  \lim_{u \to -\infty} 
\exp\Big(\!-\!\int_{u}^v\!\frac{\kappa_2'}{\kappa_2-\kappa}\,\Big)\buildrel{\eqref{eq110702}}\over=
\\
\notag&=\lim_{u\to -\infty} \big(\kappa_2(u)-\kappa(u)\big)\vt_1(u) 
= \lim_{u\to -\infty} \Big(\frac{\kappa_2(u)}{\kappa(u)}-1\Big)\vt_2(u),
\end{align}
where all the limits exist. Since the limit $\vt_2(-\infty)$ is finite and nonzero, it suffices to prove 
that $\frac{\kappa_2}{\kappa}(-\infty) =0$; note that this limit exists. 
We rewrite $\kappa_2$ and $\kappa$ in terms of $\vt_1$ and $\vt_2$: 
$$
\kappa = \frac{\vt_2}{\vt_1}, \qquad \kappa_2 = \frac{g_2'}{g_1'} = \frac{\vt_2'}{\vt_1'},
$$
therefore
\eq{eq110703}{
\frac{\kappa_2}{\kappa}(-\infty) = \frac{(\log|\vt_2|)'}{(\log\vt_1)'}(-\infty).
}
Since $\log \vt_1(-\infty) = -\infty$, and the limit~\eqref{eq110703} exists, we finish the proof 
by the L'H\^opital rule:
$$
0=\frac{\log|\vt_2|}{\log\vt_1}(-\infty)=\frac{(\log|\vt_2|)'}{(\log\vt_1)'}(-\infty)=\frac{\kappa_2}{\kappa}(-\infty).
$$
\end{proof}

\begin{Rem}
As opposed to the cases~\textbf{(A)} and~\textbf{(B)}, in the case \textbf{(C)} we have 
$\int_{-\infty}^v \frac{\kappa_2'}{\kappa_2-\kappa}<\infty$, this follows from~\eqref{eq110704}.
\end{Rem}

Propositions~\ref{st130707},~\ref{st130708}, and~\ref{st130709} prove Theorem~\ref{th120701} 
in the case~\textbf{(C)}. Thus, we complete the proof of the theorem.

\subsection{Behavior on $+\infty$}

We formulate the analog of Theorem~\ref{th120701} for the behavior on $+\infty$ that generalizes 
Proposition~\ref{OptimizerLeftTangentsInfty} for the case when Condition~\ref{eq111002} fails.

\begin{Th}\label{th140701}
Let~$B$ be the standard candidate on~$\Lt(v,+\infty)$. Then\textup, for every point~$x \in \Lt(v,+\infty)$ 
there exists a sequence of non-increasing test functions~$\vf_{n,x}$ being an optimizing sequence for~$B$ at $x$\textup, 
moreover\textup,~$g(v) \preceq \vf_{n,x}$.
\end{Th}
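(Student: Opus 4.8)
The plan is to reduce Theorem~\ref{th140701} to Theorem~\ref{th120701} by the right-left symmetry that is used throughout the paper. Concretely, given the curve $g$, the free boundary $\geps(\,\cdot\,;\eps)$, and the function $f$ satisfying our standing assumptions, I would apply the orientation-reversing change of variable $t\mapsto -t$ (equivalently, reflect the plane through a vertical line). Under this reflection a left tangent domain $\Lt(v,+\infty)$ becomes a right tangent domain $\Rt(-\infty,-v)$ for the reflected data, Condition~\eqref{eq111002} turns into Condition~\eqref{eq111001}, a non-increasing function becomes non-decreasing, and the ordering $\preceq$ reverses. I would first check that the hypotheses used in Theorem~\ref{th120701} are stable under this reflection: the convexity conditions~\eqref{eq250702},~\eqref{eq230402}, the monotonicity $\tts'>0$, the smoothness conditions of Subsection~\ref{Sec091201}, and Conditions~\ref{reg},~\ref{sum}. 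All of these are symmetric in the obvious way, though the parametrization of the fixed boundary by the reflected coordinate is now decreasing in the first Cartesian coordinate, so one should instead reflect the Cartesian plane (so that the reflected $g$ is again a graph over its first coordinate parametrized with positive derivative) rather than merely relabel the parameter. This is exactly the device invoked, e.g., after Proposition~\ref{OptimizerLeftTangentsInfty}.

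With the symmetry set up, the substance of the proof is identical to the three-case analysis already carried out for $-\infty$. Define, for the left tangents, the analogues
\begin{equation*}
\widetilde{\vt}_1(\tau)=\big(g_1(\tau)-\wl_1(\tau)\big)\exp\Big(\int_\tau^v\laml\;\Big),\qquad
\widetilde{\vt}_2(\tau)=\big(g_2(\tau)-\wl_2(\tau)\big)\exp\Big(\int_\tau^v\laml\;\Big),
\end{equation*}
and split into the cases \textbf{(A$'$)} $\int_v^{+\infty}(-\laml)$ converges; \textbf{(B$'$)} $\int_v^{+\infty}(-\laml)=+\infty$ and $\widetilde{\vt}_1(+\infty)\ne 0$; \textbf{(C$'$)} $\int_v^{+\infty}(-\laml)=+\infty$, $\widetilde{\vt}_1(+\infty)=0$, $\widetilde{\vt}_2(+\infty)\ne 0$. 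For each case one either produces the optimizing sequence (mirroring Propositions~\ref{st130703},~\ref{st130706},~\ref{st130709}) or shows $\Bell(x;f)=+\infty$ (mirroring Propositions~\ref{st130702},~\ref{st130704},~\ref{st130707}) or shows the offending divergence is incompatible with the existence of the standard candidate on $\Lt(v,+\infty)$ (mirroring Propositions~\ref{st130701},~\ref{st130705},~\ref{st130708}). Concretely the test functions are the mirror of~\eqref{eq290602}: pick $v<v_n^-<v_n^+$ with $v_n^+\to+\infty$ and $[g(v_n^-),g(v_n^+)]$ tangent to $\dfree\Omega$ at $\wl(v_n^+)$, let the delivery curve run from $g(v_n^-)$ to $\wl(v_n^+)$ along the chord and then along $\dfree\Omega$ back to $\wl(v)$, and read off $\vf_n$ from~\eqref{FirstDerivative}; monotonicity (now non-increasing) and the bound $g(v)\preceq\vf_n$ are automatic, and $\vf_n\in\Class_\Omega$ follows from Corollary~\ref{DeliveryCurveCorollary} exactly as in Remark~\ref{rem010701}. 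The convergence $\av{\ff(\vf_n)}{[0,1]}\to B(\wl(v))$ is the mirror of the computation~\eqref{eq290601}--\eqref{eq280603} combined with the mirror of Lemma~\ref{Lem190701} (which relates Condition~\eqref{eq211001a} to finiteness of $\beta_2$ given by~\eqref{eq190701}).

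Since nothing in the $-\infty$ argument of Theorem~\ref{th120701} uses anything beyond the standing assumptions and the already established lemmas (Lemma~\ref{Lem111001} on integration by parts at infinity, Lemma~\ref{lem010701} and its relatives on monotonicity of $\vt_1,\vt_2$, and the representation~\eqref{fenceB2}), the cleanest write-up is simply: ``Apply the reflection $x_1\mapsto -x_1$ to reduce to Theorem~\ref{th120701}; all hypotheses are preserved, the notions of left/right tangent domain, non-increasing/non-decreasing, $\preceq$, and Conditions~\eqref{eq111002}/\eqref{eq111001} are interchanged, and the conclusion of Theorem~\ref{th120701} becomes the conclusion of Theorem~\ref{th140701}.'' The only point needing a line of care — and the one I would flag as the potential obstacle — is verifying that the reflection really does carry our class of admissible domains to itself, in particular that after reflecting the plane the free boundary $\geps(\,\cdot\,;\eps)$ is still the graph of a strictly convex function parametrizable by the first coordinate, that $\laml\leftrightarrow\lamr$ correctly, and that the tangency structure ($\ttr<\ttl$, Remark~\ref{rem250701}) is respected; once that bookkeeping is done the theorem follows with no new computation. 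If one prefers not to invoke the reflection abstractly, the alternative is to repeat verbatim the three-case analysis above, which is routine but lengthy; I would present the reflection argument and remark that the direct proof is identical to that of Theorem~\ref{th120701} mutatis mutandis.
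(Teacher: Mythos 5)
Your proposal is correct and is exactly the paper's approach: the paper's entire proof of Theorem~\ref{th140701} is the remark that ``the situation is absolutely symmetric, we omit the proof,'' i.e.\ precisely the right--left reflection reducing to Theorem~\ref{th120701} that you describe (and whose bookkeeping you spell out in more detail than the paper does). One small slip worth fixing: the exponential weight in your $\widetilde{\vt}_1,\widetilde{\vt}_2$ should be $\exp\bigl(\int_v^\tau\laml\bigr)$, as in the paper's definition of $\vtl$, not $\exp\bigl(\int_\tau^v\laml\bigr)$ --- since $\laml<0$ and $\tau>v$, it is the former that tends to zero under~\eqref{eq111002} and mirrors $\vt_1,\vt_2$ from~\eqref{eq010701} and~\eqref{eq010703}.
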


Since the situation is absolutely symmetric, we omit the proof of this theorem.

\subsection{When $\boldsymbol{B}=+\infty$?}\label{sec100101}

We collect the conditions on the behavior of the function $f$ at infinities that guarantee 
$\boldsymbol{B}(x;\,f) = +\infty$ for any $x \in \Omega\setminus \dfi\Omega$.

Propositions~\ref{st130702}, \ref{st130704}, \ref{st130707}, and~\ref{st140701} describe the conditions on $-\infty$: 

\bigskip
\centerline{
\begin{tabular}{|p{9cm}|p{6cm}|}
\hline
\hfil Condition on $\Omega\rule[-10pt]{0pt}{25pt}$ \hfil & \hfil Condition on $f$ \hfil 
\\
\hline
\hfil \textbf{(A)}$\colon \rule[-10pt]{0pt}{25pt} \exp\big(-\int_{-\infty}^v \lamr\big) \ne 0$ \hfil& 
\hfil $f(-\infty) = +\infty$ \hfil 
\\
\hline
\hfil \textbf{(B)}$\colon \rule[-10pt]{0pt}{25pt} \exp\big(-\int_{-\infty}^v \lamr\big) = 0$, $\vt_1(-\infty)\ne 0$ 
\hfil& \hfil $\kappa_3(-\infty)=-\infty$ \hfil
\\
\hline
\hfil \textbf{(C)}$\colon \rule[-10pt]{0pt}{25pt} \exp\big(-\int_{-\infty}^v \lamr\big) = 0$, 
$\vt_1(-\infty)= 0$, $\vt_2(-\infty)\ne 0$ \hfil& \hfil $\tors(-\infty)=+\infty$ \hfil
\\
\hline
\hfil \textbf{\eqref{eq111001}}$\colon \rule[-10pt]{0pt}{25pt} \exp\big(-\int_{-\infty}^v \lamr\big) = 0$, 
$\vt_1(-\infty)= 0$, $\vt_2(-\infty)= 0$ \hfil&
\hfil $\int_{-\infty}^v f'(\tau) \exp\Big(-\int_{\tau}^v \lamr\Big)\ d\tau = - \infty$ \hfil
\\
\hline
\end{tabular}
}
\bigskip

Here $v \in\mathbb{R}$ is arbitrary, the conditions do not depend on $v$.
We underline that Condition~\ref{reg} is of crucial importance for these results. If  Condition~\ref{reg}  is violated, the problem of characterizing the functions $f$ for which $\boldsymbol{B}$ is infinite, is more involved, see Lemma~6.1.4 in~\cite{ISVZ2018}. 

\begin{Rem}
In case~\textbf{(A)} the conditions $f(-\infty) = +\infty$ and 
$\int_{-\infty}^v f'(\tau) \exp\Big(-\int_{\tau}^v \lamr\Big)\ d\tau = - \infty$ are equivalent. 
In cases~\textbf{(B)} and~\textbf{(C)} the conditions on $f$ look differently, but in the case~\textbf{(C)} 
the condition $\tors(-\infty)=+\infty$ is equivalent to $\frac{f'}{g_2'}(-\infty) = +\infty$, which is symmetric 
to the condition $\frac{f'}{g_1'}(-\infty) = \kappa_3(-\infty)=-\infty$ that appears in case~\textbf{(B)}.
\end{Rem}

As usual we omit the symmetrical statements for the case of $+\infty$ but collect the corresponding results:

\bigskip

\centerline{
\begin{tabular}{|p{9cm}|p{6cm}|}
\hline
\hfil Condition on $\Omega\rule[-10pt]{0pt}{25pt}$ \hfil & \hfil Condition on $f$ \hfil 
\\
\hline
\hfil \textbf{(A)}$\colon \rule[-10pt]{0pt}{25pt} \exp\big(\int_v^{+\infty} \laml\big) \ne 0$ \hfil& 
\hfil $f(+\infty) = +\infty$ \hfil \\
\hline
\hfil \textbf{(B)}$\colon \rule[-10pt]{0pt}{25pt} \exp\big(\int_v^{+\infty} \laml\big) = 0$, 
${\vtl_1}(+\infty)\ne 0$ \hfil& \hfil $\kappa_3(+\infty)=+\infty$ \hfil
\\
\hline
\hfil \textbf{(C)}$\colon \rule[-10pt]{0pt}{25pt} \exp\big(\int_v^{+\infty} \laml\big) = 0$, 
$\vtl_1(+\infty)= 0$, $\vtl_2(+\infty)\ne 0$ \hfil& \hfil $\tors(+\infty)=+\infty$ \hfil
\\
\hline
\hfil \textbf{\eqref{eq111002}}$\colon \rule[-10pt]{0pt}{25pt} \exp\big(\int_v^{+\infty} \laml\big) = 0$, 
$\vtl_1(+\infty)= 0$, $\vtl_2(+\infty)= 0$ \hfil& \hfil 
$\int_v^{+\infty} f'(\tau) \exp\Big(\int_v^{\tau} \laml\Big)\ d\tau = + \infty$ \hfil
\\
\hline
\end{tabular}
}

\bigskip

Here 
$$
\vtl(\tau) = \big(g(\tau) - \wl(\tau)\big)  \exp\Big(\int_v^{\tau} \laml\Big), \qquad \tau \in \mathbb{R}, 
$$
where $v$ is a fixed parameter, $v \in \mathbb{R}$.

\begin{Rem}\label{Rem150701}
The function~$\boldsymbol{B}(\;\cdot\;;\,f)$ is infinite on $\Omega\setminus \dfi\Omega$ if the pair 
$(\Omega, f)$ satisfies one of the conditions listed in the tables presented in this subsection. 
If $(\Omega, f)$ does not satisfy all the conditions, then the function~$\boldsymbol{B}(\;\cdot\;;\,f)$ 
is finite  on $\Omega$ and can be constructed using the procedure described in this paper.
\end{Rem}
}

\subsection{Examples of domains}\label{sec280301}
In Section~\ref{cond} we have seen several examples of the domains that satisfy conditions~\eqref{eq111001} and~\eqref{eq111002}. Now we present the domains that satisfy conditions~\textbf{(A)}, \textbf{(B)}, or~\textbf{(C)}. We provide examples of the required behavior when $t$ tends to $-\infty$. The case of $+\infty$ is obviously symmetric.
\begin{itemize}
\item{\textbf{(A)}} 
$\displaystyle\qquad
g(t) = \big(e^t + e^{t/2}, e^{2t} + 2e^{3t/2}\big),\qquad  w(t) = \big(e^t,e^{2t}\big),\qquad \lambda(t) = e^{t/2}.
$
\item{\textbf{(B)}} 
$\displaystyle\qquad
g(t) = \big(t, (1+t+t^2)e^{-t^2}\big),\qquad  w(t) = \big(-t^2,e^{-t^2}\big),\qquad \lambda(t) = -\frac{2}{t+1}.
$
\item{\textbf{(C)}} 
$\displaystyle\qquad
g(t) = \Big(t+\frac{t^2}{1+t^2}, \frac{e^{-t}}{1+t^2}\Big),\qquad  w(t) = \big(t,e^{-t}\big),\qquad \lambda(t) = 1+\frac{1}{t^2}.
$
\end{itemize}

\printindex[symbol]
\printindex
\bibliography{mybib}{}
\bibliographystyle{amsplain}

\medskip

\begin{multicols}{2}

Paata Ivanisvili

University of California, Irvine,

pivanisv@uci.edu;

\medskip

Dmitriy Stolyarov

St. Petersburg State University,

d.m.stolyarov@spbu.ru;

\medskip

Vasily Vasyunin

St. Petersburg State University,

vasyunin@pdmi.ras.ru;

\medskip

Pavel Zatitskii

University Cincinnati,

St. Petersburg State University,

pavelz@pdmi.ras.ru.

\end{multicols}
\end{document}